\DeclareMathAlphabet\mathbfcal{OMS}{cmsy}{b}{n}
\newcommand{\mathbsf}[1]{\bm{\mathsf{#1}}} 
\definecolor{blue}{cmyk}{0,0,0,1}
\definecolor{red}{cmyk}{0,0,0,1}
\definecolor{magenta}{cmyk}{0,0,0,1}
\newtheorem{thm}{Theorem}[section]
\newtheorem{lem}[thm]{Lemma}
\newtheorem{cor}[thm]{Corollary}
\newtheorem{prop}[thm]{Proposition}
\numberwithin{equation}{section}
\theoremstyle{definition}
\newtheorem{dfn}[thm]{Definition}
\newtheorem{assump}[thm]{Assumption}
\newtheorem{rem}[thm]{Remark}
\newtheorem{conv}[thm]{Convention}
\newcommand{\BA}{\mathbb{A}}
\newcommand{\BB}{\mathbb{B}}
\newcommand{\BC}{\mathbb{C}}
\newcommand{\BD}{\mathbb{D}}
\newcommand{\BH}{\mathbb{H}}
\newcommand{\BI}{\mathbb{I}}
\newcommand{\BK}{\mathbb{K}}
\newcommand{\BM}{\mathbb{M}}
\newcommand{\BN}{\mathbb{N}}
\newcommand{\BP}{\mathbb{P}}
\newcommand{\BQ}{\mathbb{Q}}
\newcommand{\BR}{\mathbb{R}}
\newcommand{\BZ}{\mathbb{Z}}
\newcommand{\CB}{\mathcal{B}}
\newcommand{\CD}{\mathcal{D}}
\newcommand{\CF}{\mathcal{F}}
\newcommand{\CH}{\mathcal{H}}
\newcommand{\CJ}{\mathcal{J}}
\newcommand{\CM}{\mathcal{M}}
\newcommand{\CN}{\mathcal{N}}
\newcommand{\CL}{\mathcal{L}}
\newcommand{\CP}{\mathcal{P}}
\newcommand{\CQ}{\mathcal{Q}}
\newcommand{\CR}{\mathcal{R}}
\newcommand{\CS}{\mathcal{S}}
\newcommand{\CT}{\mathcal{T}}
\newcommand{\CW}{\mathcal{W}}
\newcommand{\ba}{\boldsymbol{a}}
\newcommand{\bb}{\boldsymbol{b}}
\newcommand{\bd}{\mathbf{d}}
\newcommand{\bff}{\boldsymbol{f}}
\newcommand{\bF}{\boldsymbol{F}}
\newcommand{\bg}{\boldsymbol{g}}
\newcommand{\bG}{\boldsymbol{G}}
\newcommand{\bh}{\boldsymbol{h}}
\newcommand{\bH}{\boldsymbol{H}}
\newcommand{\bJ}{\boldsymbol{J}}
\newcommand{\bK}{\boldsymbol{K}}
\newcommand{\bL}{\boldsymbol{L}}
\newcommand{\bn}{\boldsymbol{n}}
\newcommand{\bM}{\boldsymbol{M}}
\newcommand{\bu}{\boldsymbol{u}}
\newcommand{\bU}{\boldsymbol{U}}
\newcommand{\bv}{\boldsymbol{v}}
\newcommand{\bV}{\boldsymbol{V}}
\newcommand{\bw}{\boldsymbol{w}}
\newcommand{\bX}{\boldsymbol{X}}
\newcommand{\bPhi}{\boldsymbol{\Phi}}
\newcommand{\bCF}{\mathbfcal{F}}
\newcommand{\bsF}{\mathbsf{F}}
\newcommand{\fA}{\mathfrak{A}}
\newcommand{\fp}{\mathfrak{p}}
\newcommand{\fq}{\mathfrak{q}}
\newcommand{\sA}{\mathscr{A}}
\newcommand{\sB}{\mathscr{B}}
\newcommand{\sfe}{\mathsf{e}}
\newcommand{\sfE}{\mathsf{E}}
\newcommand{\sfF}{\mathsf{F}}
\newcommand{\sfQ}{\mathsf{Q}}
\newcommand{\sfP}{\mathsf{P}}
\newcommand{\sfs}{\mathsf{s}}
\newcommand{\pd}{\partial}
\newcommand{\wh}{\widehat}
\newcommand{\wt}{\widetilde}
\newcommand{\HS}{\mathbb{R}^d_+}
\newcommand{\tdB}{\widetilde{\dot{B}}{}}
\newcommand{\tdH}{\widetilde{\dot{H}}{}}
\DeclareMathOperator{\dv}{{div}}
\DeclareMathOperator{\DV}{{Div}}
\DeclareMathOperator{\RE}{{Re}}
\renewcommand{\d}{\,\mathrm{d}}
\DeclareMathOperator{\supp}{{supp\,}}
\DeclareMathOperator{\Hol}{{Hol\,}}
\begin{document}
\title[]{Maximal $L_1$-regularity of the Navier--Stokes equations 
with free boundary conditions via a generalized semigroup theory}

\author[]{Yoshihiro Shibata}
\address{(Y. Shibata) 
Emeritus Professor of Waseda University, Waseda University,
3-4-1 Ohkubo Shinjuku-ku Tokyo, 169-8555, Japan. \\
Adjunct faculty member in the Department of Mechanical Engineering 
and	Materials Science, University of Pittsburgh}
\email{yshibata325@gmail.com}

\author[]{Keiichi Watanabe}
\address{(K. Watanabe) School of General and Management Studies,
Suwa University of Science, 5000-1, Toyohira, Chino, Nagano 391-0292, Japan}		
\email{\texttt{watanabe\_keiichi@rs.sus.ac.jp}}

\subjclass{Primary: 35R35; Secondary: 76D03, 76D05}

\thanks{The first author was partially supported by JSPS KAKENHI 
Grant Number 23K22405,	
and the second author was partially supported by JSPS KAKENHI Grant Number 21K13826.}

\keywords{Navier--Stokes equations; Free boundary problem; Maximal $L_1$-regularity}

\date{\today}   

\maketitle

\begin{abstract}
This paper develops a {\color{magenta}new} approach to show the maximal regularity
theorem of the Stokes equations with free boundary 
conditions in the half-space $\mathbb R^d_+$, $d \ge 2$, within 
the $L_1$-in-time and $\mathcal B^s_{q, 1}$-in-space framework with
$(q, s)$ satisfying $1 < q < \infty$ and $- 1 + 1 \slash q < s < 1 \slash q$,
where $\mathcal B^s_{q, 1}$ stands for either homogeneous or inhomogeneous
Besov spaces. 
In particular, we establish a generalized semigroup
theory within an $L_1$-in-time and $\mathcal B^s_{q,1}$-in-space
framework, which extends a classical $C_0$-analytic semigroup
theory to the case of inhomogeneous boundary conditions.
The maximal $L_1$-regularity theorem is proved by estimating the Fourier--Laplace
inverse transform of the solution to the generalized Stokes resolvent problem
with inhomogeneous boundary conditions, where density and interpolation arguments
are used. The maximal $L_1$-regularity theorem is applied to show the unique existence of a local 
strong solution to the Navier--Stokes equations with free boundary 
conditions for arbitrary initial data $\boldsymbol a$ 
in $B^s_{q, 1} (\mathbb R^d_+)^d$, where $q$ and $s$ satisfy 
${\color{black}d-1 < q \le d}$ and $-1+d/q < s < 1/q$, respectively.
If we assume that the initial data $\boldsymbol a$ are small in 
$\dot B^{- 1 + d \slash q}_{q, 1} (\mathbb R^d_+)^d$, 
$d - 1 < q < 2 d$, then the unique existence of a global 
strong solution to the system is proved.
\end{abstract}

\section{Introduction}
\subsection{Formulation of the problem}
We consider the Navier--Stokes equations,  
which describe the motion of incompressible viscous fluids,  at
time $t > 0$ in a moving domain $\Omega(t)$ in the $d$-dimensional 
Euclidean space $\BR^d$ with $d \ge 2$, where the initial domain
$\Omega (0)$ coincides with the $d$-dimensional half-space
$\HS$. Here, $\HS$ and its boundary are denoted by
$$
\HS := \{(x', x_d) \mid x' \in \BR^{d - 1}, \enskip x_d > 0 \}, \quad 
\pd \HS := \{(x', x_d) \mid x' \in \BR^{d - 1}, \enskip x_d = 0 \},
$$
respectively. For $t > 0$ the governing equations read
\begin{align}
\label{eq-original}
\left\{\begin{aligned}
\pd_t \bv + (\bv \cdot \nabla) \bv - \DV(\mu \BD(\bv) - P\BI) & = 0 & \quad & \text{in $\Omega (t)$}, \\
\dv \bv & = 0 & \quad & \text{in $\Omega (t)$}, \\
(\mu \BD(\bv) - P\BI) \bn & = 0 & \quad & \text{on $\pd \Omega (t)$}, \\
V_{\pd \Omega (t)} & = \bv \cdot \bn & \quad & \text{on $\pd \Omega (t)$}, \\
\bv \vert_{t = 0} & = \ba & \quad & \text{in $\HS$}, \\
\Omega(0) & = \HS.
\end{aligned}\right.		
\end{align}
System \eqref{eq-original} is called \textit{the free boundary problem for the Navier--Stokes equations}. 
Here, the unknowns are the velocity field $\bv = \bv (x, t)$, 
the pressure $P = P (x, t)$, and the domain $\Omega = \Omega (t)$,
whereas an initial velocity field $\ba = \ba (x)$ is given.
{\color{black} Furthermore}, the stress tensor is given by 
$\mu \BD (\bv) - P \BI$ with $\BD (\bv) = \nabla \bv + [\nabla \bv]^\top$, where $\mu > 0$ 
stands for the viscosity coefficient of the fluid and $\BI$ is the $d \times d$ identity matrix. 
In addition, we use the notation
\begin{equation}
\DV \bM = \bigg(\sum_{j = 1}^d \pd_j M_{1,j}, \ldots, \sum_{j = 1}^d \pd_j M_{d,j}\bigg)^\top
\end{equation}
for a $d\times d$ matrix-valued function $\bM = (M_{i, j})_{1 \le i, j \le d}$,
where we have used the abbreviation $\pd_j = \pd\slash \pd x_j$ for $j = 1, \ldots, d$.
In this paper, we assume that $\mu$ is a constant for simplicity and 
the atmospheric pressure $P_0$ is assumed to satisfy $P_0 \equiv 0$ 
without loss of generality (cf. \cite[$(3.2)$]{Shi20}).
The outward unit normal vector to 
$\pd \Omega (t)$ is denoted by $\bn$ and the normal velocity of 
$\pd \Omega (t)$ is denoted by $V_{\pd \Omega (t)}$.
Let us briefly explain the meaning of each equation in \eqref{eq-original}. 
The first and second equations are the incompressible Navier--Stokes equations, 
where the density of the fluid is assumed to be 1. 
The third equation in \eqref{eq-original} is called the dynamic boundary condition and, 
physically, this equation states that the normal
stress is continuous as one passes through $\pd \Omega (t)$. 
The fourth equation in \eqref{eq-original} implies that 
the free surface is advected with the fluid, 
which is called the kinematic boundary condition. 
In other words, this boundary condition ensures that fluid particles 
do not cross the free surface $\pd \Omega (t)$. 
\subsection{Problem reformulation}
It is well-known that the movement of the domain $\Omega (t)$ 
creates numerous mathematical difficulties.
Hence, as usual, we will transform the free boundary problem 
\eqref{eq-original} to a problem with a fixed interface
in order to avoid their difficulties, motivated by the 
pioneering {\color{black}works} due to Beale \cite{B80} and 
Solonnikov \cite{Sol77}.
Since we consider the system \textit{without} taking surface 
tension into account, following the idea due to Solonnikov \cite{Sol88} we may rely on a 
Lagrangian transformation to flatten the free boundary 
$\pd \Omega (t)$ due to $\pd \Omega (0) = \pd \HS$. 
More precisely, let $x = x_{\bv} (t) \in \Omega (t)$ be 
the solution to the Cauchy problem
\begin{equation}
\label{eq-Cauchy-Lagrange}
\frac{\d x}{\d t} = \bv (x (t), t) \quad \text{for $t > 0$}, \qquad x \vert_{t = 0} = y \in \HS.
\end{equation}
This means that $x = x_{\bv} (t)$ describes the position of 
fluid particle at time $t > 0$ which was located in 
$y \in \HS$ at $t = 0$. Then the change of coordinates 
$(x, t) \rightsquigarrow (y, t)$ is said to be the 
\textit{Lagrangian transformation}. 
We note that for each $t > 0$ the kinematic condition for 
$\pd \Omega (t)$ (i.e., the forth equation in \eqref{eq-original}) 
is automatically satisfied under the Lagrangian transformation. 
If $\bv (x, t)$ is  Lipschitz continuous with respect to $x$, 
then \eqref{eq-Cauchy-Lagrange} admits a unique solution given by
\begin{equation}
x_{\bv} (t) = y + \int_0^t \bv (x (y, \tau), \tau) \d \tau, \qquad y \in \HS, \; t > 0,
\end{equation}
which represents the particle trajectory, due to the classical Picard-Lindel\"of theorem. 
We now set $\bu (y, t) := \bv (x, t)$ and $P (x, t) = \sfQ (y, t)$,
where $\bu (y, t)$ is so-called the \textit{Lagrangian velocity filed}. 
Moreover, we  set
\begin{equation}
\label{Lagrangian-transformation}
\bX_{\bu} (y, t) := y + \int_0^t \bu (y, \tau) \d \tau, \qquad y \in \overline{\HS}, \; t > 0.
\end{equation}
Clearly, we have
$$
\Omega (t)  = \{x \in \BR^d \mid x = \bX_{\bu} (y, t), \enskip y \in \HS \}, \quad 
\pd \Omega (t)  = \{x \in \BR^d \mid x = \bX_{\bu} (y, t), \enskip y \in \pd \HS \}.
$$
Since the Jacobian matrix of the transformation $\bX_{\bu} (y, t)$ is given by
\begin{equation}
\nabla_y \bX_{\bu} (y, t) = \BI + \int_0^t \nabla_y \bu (y, \tau) \d \tau,
\end{equation}
the invertibility of $\bX_{\bu} (y, t)$ in \eqref{Lagrangian-transformation} 
is guaranteed for all $t \ge 0$ if $\bu$ satisfies
\begin{equation}
\label{integral-smallness}
\bigg\lVert \int_0^t \nabla \bu (\,\cdot\,, \tau) \d \tau \bigg\rVert_{L_\infty (\HS)} \ll 1,
\end{equation}
which may be achieved by a Neumann-series argument. 
By virtue of \eqref{integral-smallness}, we may write
\begin{equation}
\label{def-Au}
\BA_{\bu} (y, t) := (\nabla_y \bX_{\bu} (y, t))^{- 1} 
= \sum_{l = 0}^{\infty} \bigg(- \int_0^t \nabla_y \bu (y, \tau) \d \tau \bigg)^l.
\end{equation}
With the above notation, for $0 < T \le \infty$ Problem \eqref{eq-original} in Lagrangian
coordinates reads
\begin{align}
\label{eq-fixed}
\left\{\begin{aligned}
\pd_t \bu - \DV (\mu \BD(\bu) - \sfQ \BI) & = \bF (\bu) & \quad & \text{in $\HS \times {\color{black} (0, T)}$}, \\
\dv \bu & = G_\mathrm{div} (\bu) = \dv \bG (\bu) & \quad & \text{in $\HS \times {\color{black} (0, T)}$}, \\
(\mu \BD(\bu) - \sfQ \BI) \bn_0 & = \BH (\bu) \bn_0 & \quad & \text{on $\pd \HS \times {\color{black} (0, T)}$}, \\
\bu \vert_{t = 0} & = \ba & \quad & \text{in $\HS$},
\end{aligned}\right.		
\end{align}
where we have set $\bn_0 = (0, \ldots, 0, - 1)$. The right-hand members 
$\bF (\bu)$, $G_\mathrm{div} (\bu)$, $\bG (\bu)$, and $\BH (\bu)$
represent nonlinear terms given as follows:
\begin{equation}
\label{def-nonlinear-terms}
\begin{split}
\bF (\bu) & := 
\bigg(\int^t_0\nabla_y \bu\d\tau\bigg)^\top
\Big(\pd_t \bu - \mu \Delta_y \bu  \Big) 
+ \mu \bigg\{\BI+ \bigg(\int^t_0\nabla_y \bu\d\tau \bigg)^\top \bigg\}\dv_y 
\Big( (\BA_{\bu} \BA_{\bu}^\top - \BI) \nabla_y \bu \Big) \\
& \quad\; + \mu \nabla_y \Big((\BA_{\bu}^\top - \BI) \colon \nabla_y \bu \Big), \\
G_\mathrm{div} (\bu) & := ( \BI - \BA_{\bu}^\top) \colon \nabla_y \bu, \\
\bG (\bu) & := (\BI - \BA_{\bu}) \bu, \\
\BH (\bu) & := 
\mu \bigg[ \bigg\{\nabla_y \bu + \bigg(\BI+ \bigg(\int^t_0\nabla_y \bu\d\tau \bigg)^\top \bigg)
[\nabla_y \bu]^\top \BA_{\bu} \bigg\} (\BI-\BA_{\bu}^\top)
+ \bigg(\int_0^t \nabla_y \bu \d \tau \bigg)^\top [\nabla_y \bu]^\top \BA_{\bu} \\
& \qquad \;
+ [\nabla_y \bu]^\top (\BI-\BA_{\bu}) \bigg], \\
\BD_y (\bu) & := \nabla_y \bu + [\nabla_y \bu]^\top.	
\end{split}
\end{equation}
Recall that for $d \times d$ matrices 
$\BA = (A_{j, k})$ and $\BB = (B_{j, k})$, 
we write $\BA \colon \BB = \sum_{j, k}^d A_{j, k} B_{j ,k}$. 
The formulas \eqref{def-nonlinear-terms} have essentially been derived in 
\cite[Subsec.~3.3.2]{Shi20}, but we shall provide a precise derivation  of nonlinear terms in
\eqref{def-nonlinear-terms}, see Appendix A below.
It should be emphasized here that all nonlinear terms given in 
\eqref{def-nonlinear-terms} do not contain the pressure term $\sfQ$,
which is different from the formula derived by Solonnikov \cite{Sol88}. 
{\color{magenta}The advantage of our representation is that it is not necessary to use
the estimate of the pressure to construct solutions to \eqref{eq-fixed}, which means that
the estimate of the boundary trace of the pressure term is not necessary in our analysis.}
\par
\subsection{References overview}
The pioneer work for free boundary problems of the Navier--Stokes 
equations had been completed by Beale \cite{B80} and Solonnikov \cite{Sol77,Sol88}.
Here, Beale \cite{B80} considered the problem in the layer domain
$\{(x',x_3) \in \BR^2 \times \BR \mid -b (x') \le x_3 \le \eta (x',t)\}$, where $b (x')$ is a given function
and $\eta (x',t)$ is an unknown function describing the free surface of the fluid.
In particular, after a reformulation of the problem to a system with fixed and flat interfaces,
he obtained the solution formulas by using the partial Fourier
transform in the tangential direction of the boundary and, by solving the resultant ordinary differential 
equations, and he estimated the solution formulas in the $L_2$-space framework by employing
the Plancherel theorem. On the other hand, Solonnikov \cite{Sol77,Sol88} considered the problem in 
a bounded domain, and he used
potential theory to obtain the representation formulas of solutions to 
equations on the boundary and he used either $L_2$ or H\"older estimates for
the potentials. We refer to surveys \cite{SD,DSBook} for recent progress by Solonnikov's approach.
In the approach due to Beale \cite{B80} and Solonnikov \cite{Sol77,Sol88}, it was necessary 
to consider the problem within either the $W^{2+\ell, 1+\ell/2}_2$ ($1/2 < \ell < 1$) framework or the 
H\"older spaces $C^{2+\theta, 1+\theta}$ ($\theta>0$) framework, which requires regularities more 
than two with respect to the spacial variables
to enclose their iteration schemes for solving the nonlinear problems, 
although the equations are second-order parabolic equations.
Moreover, their analyses were far from being employed a $C_0$-analytic semigroup theory. \par
However, it seems to be reasonable to apply the theory of maximal regularity
to the free boundary problem of the Navier--Stokes equations as they are quasilinear parabolic equations.
From this perspective, following a suggestion of Prof.~Herbert Amann in 2000, 
the groups of the first author and Jan Pr\"uss started to study the free 
boundary problems for the Navier--Stokes equations
by means of maximal regularity theory. 
Pr\"uss and his coauthors used techniques of $\CH^\infty$-calculus to solve the equations
on the boundary and proved the maximal 
$L_p$-regularity theorem for the Stokes equations with a free surface subject to
inhomogeneous boundary conditions and applied the theory to show the local as
well as global well-posedness of the two-phase free boundary value problem for
the Navier--Stokes equations \cite{PS10,KPW13}. The details of the theory established by
Pr\"uss and his coauthors may be found in his monograph \cite{PSbook} and the references therein.
On the other hand, the idea of the first author is to construct the $\CR$-bounded solution 
operators to the generalized resolvent problem, 
where his strategy is different from the group of Pr\"uss. Here, 
the generalized resolvent problem means the resolvent problem
subject to inhomogeneous boundary conditions. Together with the operator-valued
Fourier multiplier theorem due to Weis \cite{W01}, the first author proved the 
maximal $L_p$-$L_q$-regularity (an $L_p$-in-time and $L_q$-in-space) estimates
for the Stokes equations with a free surface subject to inhomogeneous boundary conditions,
see, e.g., \cite{Shi14,Shi20,Shi23,SS12}. He also proved the local and global 
well-posedness of the free boundary problem of the Navier--Stokes equations not only in
a bounded domain \cite{Shi15} but also in unbounded domains \cite{OS22,Shi23,Shi14,SSpre,Shi20}.
See also \cite{SS20,SSZ20} for the case of two-phase flows.
The advantage of this method is that it is also possible to apply to a system of 
hyperbolic-parabolic mixed equations with inhomogeneous boundary conditions
and the periodic solutions. 
For instance, the global well-posedness of the free boundary problem of 
the compressible viscous fluid flows was proved in \cite{ShiZhang23}, the 
local well-posedness of the
compressible-incompressible two-phase flows was proved in \cite{KS22}, and the existence of 
time-periodic solutions to the (one-phase or two-phase) Navier--Stokes equations 
with the effect of surface tension were studied in \cite{EKS21}. 
In all of these studies, the strong solutions were constructed
in an $L_p$-in-time and $L_q$-in-space setting allowing the case $p \ne q$.
This framework plays a crucial role in proving the global well-posedness
results for the free boundary problem of the Navier--Stokes equations with,
in particular, unbounded reference domains.
Regarding the theory established by the first author, 
we refer to comprehensive surveys \cite{Shi20,Shi23} and the references therein. \par
{\color{magenta}
Recently, an $L_1$-in-time and $\dot B^s_{q,1}$-in-space setting
was independently investigated by Danchin, Hieber, Mucha, and Tolksdorf \cite{DHMTpre} 
and Ogawa and Shimizu \cite{OS24}. Here, $\dot B^s_{q,1}$ stands for homogeneous
Besov spaces. Danchin et al. \cite{DHMTpre} developed a \textit{homogeneous} version 
of the classical Da Prato--Grisvard theorem \cite{DG75} and applied it to show the global well-posedness 
of \eqref{eq-original} with $d \ge 3$ provided that $q$ satisfies $d-1 < q < d$.
Their proof relied on the Stokes resolvent estimate due to the first author and 
Shimizu \cite{SS12} with the aid of real interpolation as well as the homogeneous 
version of the Da Prato--Grisvard theorem. On the other hand, Ogawa and Shimizu
\cite{OS24} proved the global well-posedness of \eqref{eq-original} with 
$d \ge 2$ provided that $q$ satisfies $d-1 < q < 2d - 1$, where they used
the explicit expression of the fundamental integral kernel of the
linearized system and the Littlewood--Paley dyadic decompositions 
in the time and space variables. Namely, Ogawa and Shimizu \cite{OS24} slightly 
improved the result due to Danchin et al. \cite{DHMTpre} since $q$ is allowed to take 
$d-1 < q < 2d - 1$. It should be emphasized here that the approach due to 
\cite{DHMTpre,OS24} strongly relied on some reductions to parabolic systems 
(i.e., heat equation), and hence it seems to be difficult to extend their approach
to a system of hyperbolic-parabolic mixed equations with, in particular, 
inhomogeneous boundary conditions.}
\par
The aim of the present paper is to develop a new approach to show 
the maximal regularity theorem of the Stokes equations with free boundary 
conditions in the half-space $\mathbb R^d_+$, $d \ge 2$, within 
{\color{black}an} $L_1$-in-time and $\CB^s_{q, 1}$-in-space framework, where 
$q$ and $s$ satisfy $1 < q < \infty$
and $- 1 + 1 \slash q < s < 1 \slash q$, respectively. Here, 
$\CB^s_{q, 1}$ stands for either homogeneous or inhomogeneous Besov spaces.
In this paper, we establish a method to treat evolution equations with inhomogeneous
boundary conditions via its generalized resolvent problem.  
{\color{magenta}
Our approach is based on a completely different idea due to Danchin et al. \cite{DHMTpre} 
as well as Ogawa and Shimizu \cite{OS24}. To be precise, our fundamental 
tool is to use the $L_q$-boundedness of the solution operator of the generalized 
resolvent problem associated with the linearized system of \eqref{eq-fixed},
which was proved in \cite{Shi14,Shi20,SS12}. Then, by means of a density argument and
interpolation, we will extend this result to the $\CB^s_{q,1}(\HS)$-estimates with
$-1+1/q<s<1/q$. To show the maximal $L_1$-$\CB^s_{q,1}$-regularity theorem for
the Stokes equations with free boundary conditions, we estimate the Fourier--Laplace
transform of the solution operator to the generalized resolvent problem, i.e.,
we extend the classical theory of analytic semigroup theory \cite[Ch.~IX]{Yosida}
to the case of inhomogeneous boundary conditions, which may be called 
\textit{generalized semigroup theory}.}
Moreover, it should be emphasized here that, in contrast to the approach due 
to Danchin et al. \cite{DHMTpre}, our approach is applicable to systems of 
hyperbolic-parabolic mixed equations with inhomogeneous 
boundary conditions in \textit{unbounded} domains, see \cite{KSpreprint} 
for the application to the compressible Navier--Stokes equations.
As a by-product of our linear theory, we prove the local well-posedness of \eqref{eq-original} 
provided that the initial velocity $\ba$ 
belongs to $B^s_{q, 1} (\mathbb R^d_+)^d$ without any restriction on
the size of initial data, where $q$ and $s$ satisfy 
${\color{black} d - 1 < q \le d}$ and $-1+d/q < s < 1/q$. If we impose the smallness condition on
the initial data, then the restriction on the regularity index $s$ is 
relaxed and we may show the local well-posedness of \eqref{eq-original} 
for arbitrary given $T > 0$ provided that $(q, s)$ satisfies $-1+d\slash q \le s < 1 \slash q$. 
Furthermore, if we switch the function spaces from $B^s_{q, 1} (\HS)$ 
to $\dot B^{- 1 + d \slash q}_{q, 1} (\HS)$, then we obtain
the global well-posedness of \eqref{eq-original} provided that
the initial data are small in $\dot B^{- 1+d \slash q}_{q, 1} (\HS)$, $d-1<q<2d$.

\subsection{Main results for the free boundary problem for the Navier--Stokes equations.}
Throughout this paper, for $(q, r, s) \in (1, \infty) \times [1, \infty-] \times \BR$ 
the space $\CB^s_{q, r} (\Omega)$ stands for either an inhomogeneous Besov space
$B^s_{q, r} (\Omega)$ or a homogeneous Besov space $\dot B^s_{q, r} (\Omega)$
with $\Omega \in \{\BR^d, \HS\}$, where the precise definition will be given in the next section.
Let $\CS'(\BR^d)$ and $\CP(\BR^d)$ be the set of all 
tempered distributions and polynomials defined in $\BR^d$, respectively. Set
\begin{equation}\label{homo:0}\begin{aligned}
{\wh B}{}^{s+1}_{q,r}(\BR^d) &= \{f \in \CS'(\BR^d) \mid \nabla f \in B^{s}_{q,r}(\BR^d)^d\}, \\
{\wh {\dot B}}{}^{s+1}_{q,r}(\BR^d) & = \{f \in \CS'(\BR^d)\setminus \CP(\BR^d) \mid
\nabla f \in \dot B^s_{q,r}(\BR^d)^d\}.
\end{aligned}\end{equation}
In this paper, we write $\wh{\CB}{}^{s+1}_{q,r}(\BR^d) = {\wh B}{}^{s+1}_{q,r}(\BR^d)$ 
if $\CB^s_{q,r} = B^s_{q,r}$ and $\wh{\CB}{}^{s+1}_{q,r}(\BR^d) = {\wh {\dot B}}{}^{s+1}_{q,r}(\BR^d)$ 
if $\CB^s_{q,r} = \dot B^s_{q,r}$. In order to state our main results, we first introduce  a function 
space associated with the pressure term.
\begin{dfn} Let $1 < q < \infty$ and $1 \leq r \leq \infty-$. 
\label{def-hat-space}
Let $\wh \CB^{s + 1}_{q, r}(\HS)$ be 
the set of $f \in \CD' (\HS)$ such that there exists a $g \in \wh\CB^{s+1}_{q,r}(\BR^d)$ such that
$\supp (g) \subset \overline{\HS}$ and $g \vert_{\HS} = f$.
If $f$ satisfies additionally $f \vert_{\pd \HS} = 0$, 
then $\wh \CB^{s + 1}_{q, r} (\HS)$ is replaced by $\wh \CB^{s + 1}_{q, r, 0} (\HS)$.	 
\end{dfn}
\begin{rem}\label{rem:def-hat-g} 
If $-1+1/q < s < 1/q$ and $f \in \wh \CB^{1+ s}_{q,r}(\HS)$, 
the trace $f \vert_{\pd \HS}$ is well-defined due to $1+ s >1/q$
(cf. Proposition \ref{prop-trace} in Section \ref{sec-2} below), 
and hence we see that $f \in \wh \CB^{1+s}_{q, r, 0}(\HS)$. 
Thus, the zero extension $f_0$ of $f \in \wh \CB^{1+s}_{q, r} (\HS)$
satisfies
\begin{equation}
\label{def-hat-g} 
f_0 \in \wh\CB^{s+1}_{q,r}(\BR^d), \quad  f_0 = \nabla f_0 = 0\quad\text{ for $x_d < 0$}.
\end{equation}
 In addition, the condition $-1+1/q < s < 1/q$ implies
$1-s > 1 - 1 \slash q = 1 \slash q'$, and hence the zero extension $\varphi_0$ 
of $\varphi \in \wh \CB^{1- s}_{q', r', 0}(\HS)$ satisfies
\begin{equation}
\label{def-hat-g*} 
\varphi_0 \in \wh\CB^{1-s}_{q',r'}(\BR^d), \quad  
\varphi_0 \vert_{\pd \HS} = 0, \quad 
\varphi_0 = \nabla \varphi_0 = 0 \text{ for $x_d < 0$},
\end{equation} 
where we have set $q' = q \slash (q-1)$ and $r'=r \slash (r-1)$ with the conventions that
$1'=\infty-$ and $(\infty-)' = 1$.
\end{rem}
Next, we define a solenoidal vector field. 
\begin{dfn}
\label{def-solenoidal}
Let $1 < q < \infty$, $1 \leq r \leq \infty-$,  and $-1+1/q < s < 1/q$.  For $\ba \in \CB^s_{q, r}(\HS)^d$, 
we say that $\ba$ is solenoidal if $\ba$ satisfies
\begin{equation}\label{solenoidal}
(\ba, \nabla\varphi) = 0 \quad\text{for all $\varphi \in \wh \CB^{1-s}_{q', r', 0}(\HS)$}.
\end{equation}
The set of all {\color{black} $\ba \in \CB^s_{q, r}(\HS)^d$ satisfying \eqref{solenoidal} 
is denoted by $\CJ^s_{q, r} (\HS)$}. In particular, we write 
$\CJ^s_{q, r} (\HS) = J^s_{q, r} (\HS)$ if $\CB^s_{q, r} = B^s_{q, r}$ 
and $\CJ^s_{q, r} (\HS) = \dot J^s_{q, r} (\HS)$ if $\CB^s_{q, r} = \dot B^s_{q, r}$.
\end{dfn}
\begin{rem} \label{rem:solenoidal}
In Theorem \ref{thm-solenoidal-characterization} below, we will prove that 
for $\ba \in \CB^s_{q,1}(\HS)^d$, there holds $\ba \in \CJ^s_{q,1}(\HS)$ 
if and only if $\dv \ba = 0$ in the sense of distributions. 
\end{rem}
The local well-posedness of \eqref{eq-fixed} reads as follows.
\begin{thm}
\label{th-local-well-posedness-fixed}
Let {\color{black} $d-1 < q \leq d$} and let  
$s \in \BR$ satisfy 
\begin{equation}
\label{cond-qs-local1} 
-1+\dfrac{d}{q} < s  < \dfrac1q.
\end{equation}
Let $\bb \in J^s_{q, 1} (\HS)$ 
\textcolor{red}{be initial data for Problem \eqref{eq-fixed}}.  Then,  
there exist $T > 0$ and $\sigma > 0$ such that for 
any $\ba \in J^s_{q,1}(\HS)$ satisfying $\|\ba-\bb\|_{B^s_{q,1}(\HS)} 
< \sigma$, 
Problem \eqref{eq-fixed} \textcolor{red}{with initial data not only $\bb$ but also with $\ba$} admits a unique local strong solution $(\bu, \sfQ)$
such that
\begin{equation}\label{solution:1}\begin{aligned}
\bu & \in
W^1_1 ((0, T), B^s_{q, 1} (\HS)^d) \cap  L_1 ((0, T), B^{s + 2}_{q, 1} (\HS)^d), \\
\sfQ & \in L_1 ((0, T), B^{s + 1}_{q, 1} (\HS) + \wh B^{s + 1}_{q, 1, 0} (\HS))
\end{aligned}\end{equation}
satisfying
\begin{equation}\label{solution:2}
\lVert (\pd_t \bu, \nabla \sfQ) \rVert_{L_1 ((0, T), B^s_{q, 1} (\HS))}
+ \lVert \bu \rVert_{L_1 ((0, T), B^{s + 2}_{q, 1} (\HS))}
+ \sup_{t \in [0, T)} \lVert \bu (\,\cdot\, , t) 
\rVert_{B^s_{q, 1} (\HS)}
\le M\|\bb\|_{B^s_{q,1}(\HS)}
\end{equation}
for some constant $M>0$ independent of $\bb$ and  $T$.
\end{thm}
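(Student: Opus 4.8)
\textit{Proof proposal.} My plan is a contraction argument built on the maximal $L_1$-regularity theorem for the linear generalized Stokes problem proved in the previous sections; the feature that distinguishes this large-data statement from its small-data counterpart is a gain of a positive power of $T$ in the nonlinear estimates, available precisely because $s>-1+d/q$ is a \emph{strict} inequality.

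\textbf{Set-up.} Let $\bu_\ba$ be the solution of \eqref{eq-fixed} with $\bF=G_\mathrm{div}=\bG=\BH=0$ and initial value $\ba$ supplied by the linear theorem (a genuine Stokes flow since $\ba\in J^s_{q,1}(\HS)$), and write $\|\bu\|_{\mathbb E_T}$ for the left-hand side of \eqref{solution:2}. Then $\|\bu_\ba\|_{\mathbb E_T}\le C_0\|\ba\|_{B^s_{q,1}(\HS)}$, and by absolute continuity of the Lebesgue integral
\[
\varepsilon(T):=\|\pd_t\bu_\bb\|_{L_1((0,T),B^s_{q,1}(\HS))}+\|\bu_\bb\|_{L_1((0,T),B^{s+2}_{q,1}(\HS))}\longrightarrow 0\quad(T\to0),
\]
while $\sup_{t}\|\bu_\ba(t)\|_{B^s_{q,1}(\HS)}\approx\|\ba\|_{B^s_{q,1}(\HS)}$ stays bounded away from $0$; also, for $\|\ba-\bb\|_{B^s_{q,1}}<\sigma$ the linear estimate gives $\|\bu_\ba-\bu_\bb\|_{\mathbb E_T}\le C_0\sigma$. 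For $R,\sigma\in(0,1]$ to be chosen, put
\[
\mathcal X_{T,R}:=\bigl\{\bu\in W^1_1((0,T),B^s_{q,1}(\HS)^d)\cap L_1((0,T),B^{s+2}_{q,1}(\HS)^d):\ \bu|_{t=0}=\ba,\ \|\bu-\bu_\ba\|_{\mathbb E_T}\le R\bigr\},
\]
a complete metric space for $\|\bu_1-\bu_2\|_{\mathbb E_T}$; since $W^1_1((0,T),B^s_{q,1})\hookrightarrow C([0,T],B^s_{q,1})$ its elements are time-continuous, $\|\bu\|_{\mathbb E_T}\le A$ on $\mathcal X_{T,R}$ for a constant $A$ depending only on $C_0$ and $\|\bb\|_{B^s_{q,1}(\HS)}$, and writing $\bv:=\bu-\bu_\ba$ the integral-in-time part of $\|\bu\|_{\mathbb E_T}$ (together with $\|\int_0^t\nabla_y\bu\,d\tau\|_{L_\infty((0,T),B^{s+1}_{q,1})}\le\|\bu\|_{L_1((0,T),B^{s+2}_{q,1})}$) is $\le\delta:=\varepsilon(T)+C_0\sigma+R$, which we shall make small.

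\textbf{The map and the easy estimates.} Define $\Phi(\bu)=\bw$ as the solution of the \emph{linear} generalized Stokes problem with right-hand members $(\bF(\bu),G_\mathrm{div}(\bu),\bG(\bu),\BH(\bu))$ and initial value $\ba$; a fixed point of $\Phi$, together with the pressure the linear theorem returns, solves \eqref{eq-fixed}. The linear theorem applies because $\BA_\bu(\cdot,0)=\BI$ and $\dv\ba=0$ force the required compatibility $G_\mathrm{div}(\bu)|_{t=0}=(\BI-\BA_\bu^\top)|_{t=0}\colon\nabla_y\bu|_{t=0}=0$, and it yields
\[
\|\Phi(\bu)-\bu_\ba\|_{\mathbb E_T}\le C_1\Bigl(\|\bF(\bu)\|_{L_1((0,T),B^s_{q,1})}+\bigl\|(G_\mathrm{div}(\bu),\bG(\bu))\bigr\|_{\mathbb D_T}+\bigl\|\BH(\bu)\bn_0\bigr\|_{\mathbb B_T}\Bigr),
\]
where $\mathbb D_T$ (resp. $\mathbb B_T$) is the divergence- (resp. boundary-) data space, the divergence data being controlled by $\|G_\mathrm{div}(\bu)\|_{L_1((0,T),B^{s+1}_{q,1})}+\|\pd_t\bG(\bu)\|_{L_1((0,T),B^s_{q,1})}$. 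Since $s+1>d/q$, $B^{s+1}_{q,1}(\HS)$ is a multiplier algebra on $B^\tau_{q,1}(\HS)$ for $\tau\in\{s,s+1\}$; with $\|\int_0^t\nabla_y\bu\,d\tau\|_{L_\infty(B^{s+1}_{q,1})}\le\delta$ the series \eqref{def-Au} converges, $\|\BA_\bu-\BI\|_{L_\infty(B^{s+1}_{q,1})}\lesssim\delta$, and every summand of \eqref{def-nonlinear-terms} that is a product of factors drawn from $\{\int_0^t\nabla_y\bu\,d\tau,\ \BA_\bu-\BI,\ \nabla_y\bu,\ \pd_t\bu-\mu\Delta_y\bu\}$ -- which covers all of $\bF(\bu)$, all of $G_\mathrm{div}(\bu)=(\BI-\BA_\bu^\top)\colon\nabla_y\bu$, and all of $\BH(\bu)$ -- is bounded in the relevant norm by $C\delta(\|\pd_t\bu\|_{L_1(B^s)}+\|\bu\|_{L_1(B^{s+2})})\le C\delta^2$.

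\textbf{The $T^\kappa$-gain, conclusion, and main obstacle.} The only term of \eqref{def-nonlinear-terms} escaping the $\delta^2$ bound is $\bG(\bu)=(\BI-\BA_\bu)\bu$, via $\|\pd_t\bG(\bu)\|_{L_1(B^s)}$: its leading piece $\BA_\bu(\nabla_y\bu)\BA_\bu\,\bu$ pairs the small factor $\nabla_y\bu$ with a \emph{bare} $\bu$ controlled only in $L_\infty((0,T),B^s_{q,1})$ by $A$. Writing $\bu=\bu_\ba+\bv$, the ``pure $\bu_\ba$'' part is $\lesssim\|\bu_\ba\|_{L_1(B^{s+2})}\|\bu_\ba\|_{L_\infty(B^s)}\lesssim(\varepsilon(T)+C_0\sigma)A$, harmless; the dangerous part is $(\nabla_y\bv)\,\bu_\ba$, whose naive bound $\|\bv\|_{L_1(B^{s+2})}\|\bu_\ba\|_{L_\infty(B^s)}\lesssim RA$ is useless because $A$ is large. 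This is exactly where $s>-1+d/q$ enters: real interpolation between $L_\infty((0,T),B^s_{q,1})$ and $L_1((0,T),B^{s+2}_{q,1})$ places $\bv$ and $\bu_\ba$ in $L_\rho((0,T),B^\vartheta_{q,1})$ along a one-parameter family of $(\rho,\vartheta)$, and since $2s+1>s+d/q$ the Besov product estimates allow $(\nabla_y\bv)\,\bu_\ba$ to be estimated in $L_r((0,T),B^s_{q,1})$ with some $r=r(d,q,s)>1$, so H\"older in time costs only $T^{1-1/r}$; hence this term is $\lesssim T^\kappa RA$ with $\kappa:=1-1/r>0$. Thus $\|\Phi(\bu)-\bu_\ba\|_{\mathbb E_T}\le C_2\bigl(\delta^2+(\varepsilon(T)+\sigma)A+T^\kappa RA\bigr)$; fixing $R\in(0,1]$, then $\sigma$, then $T$ so small that each of the three terms on the right is $\le R/3$, we get $\Phi(\mathcal X_{T,R})\subset\mathcal X_{T,R}$. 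The same bilinear estimates on differences (using $(\bu_1-\bu_2)|_{t=0}=0$) give $\|\Phi(\bu_1)-\Phi(\bu_2)\|_{\mathbb E_T}\le C_3(\delta+T^\kappa A)\|\bu_1-\bu_2\|_{\mathbb E_T}\le\tfrac12\|\bu_1-\bu_2\|_{\mathbb E_T}$ after a further shrinking of $T$. Banach's fixed-point theorem yields the unique $\bu\in\mathcal X_{T,R}$, the linear theorem yields the pressure $\sfQ\in L_1((0,T),B^{s+1}_{q,1}(\HS)+\wh B^{s+1}_{q,1,0}(\HS))$, uniqueness in the class \eqref{solution:1} follows from the same difference estimates after possibly decreasing $T$ once more, and \eqref{solution:2} follows from $\|\bu\|_{\mathbb E_T}\le\|\bu_\ba\|_{\mathbb E_T}+R\le C_0\|\ba\|_{B^s_{q,1}}+R\le M\|\bb\|_{B^s_{q,1}}$ after shrinking $\sigma$ and $R$. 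I expect the main obstacle to be precisely this last paragraph -- sorting, term by term in \eqref{def-nonlinear-terms}, which factor lives in an integral-in-time norm (hence is $O(\delta)$) and which is only bounded in $L_\infty((0,T),B^s_{q,1})$, and extracting the $T^\kappa$-gain for the single ``bare-$\bu$'' cross term by interpolating the maximal-regularity norm; the linear theory and the fixed-point formalism are routine by comparison, and a secondary check is that $\BH(\bu)\bn_0$ indeed belongs to $\mathbb B_T$ with the $\delta^2$ bound, which is immediate from its product structure since every summand of $\BH(\bu)$ carries a factor $\BI-\BA_\bu$ or $\int_0^t\nabla_y\bu\,d\tau$.
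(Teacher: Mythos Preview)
Your overall architecture matches the paper's: contraction around the linear Stokes flow $\bu_\ba$, with the strict inequality $s>-1+d/q$ supplying a small-$T$ gain on the one term that does not come with two small factors. There is, however, a genuine gap in your treatment of the boundary data.

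You assert that $\BH(\bu)\bn_0$ lies in the boundary-data space $\mathbb B_T$ with an $O(\delta^2)$ bound, ``immediate from its product structure''. For the $L_1((0,T),B^{s+1}_{q,1})$ part this is indeed immediate, but $\mathbb B_T$ also carries the $W^{1/2}_1(\BR,B^s_{q,1})$ norm, i.e.\ half a time-derivative. The paper reduces this (after extension) via Proposition~\ref{prop-B7} to $\lVert\pd_t\bH(\bw)\rVert_{L_1((0,T),B^{s-1}_{q,1})}$, and there a term of type $\nabla_y\bw\cdot\pd_t(\BI-\BA_\bw)\sim(\nabla_y\bw)(\nabla_y\bw)$ appears. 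One copy of $\nabla_y\bw$ can only be placed in $L_\infty((0,T),B^{s-1}_{q,1})$ with norm $\approx A$, so this term is $O(A\omega)$, not $O(\delta^2)$---exactly the phenomenon you correctly identify in $\pd_t\bG$. In the paper this is precisely where the $T$-gain is extracted; see the estimate \eqref{boundary-estimate} and the remark following it that the restriction $s>-1+d/q$ enters here. Your $T^\kappa$ mechanism would dispatch this term equally well, but you must apply it; the ``secondary check'' is not secondary.

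Two smaller remarks. First, the $T$-gain devices differ slightly: the paper interpolates $\|\bw\|_{B^{d/q+1}_{q,1}}\le C\|\bw\|_{B^s_{q,1}}^{1-\theta}\|\bw\|_{B^{s+2}_{q,1}}^{\theta}$ and then uses Young's inequality with a free parameter $\epsilon$, obtaining a bound of the form $\omega^2+\epsilon\omega+\epsilon^{-\theta/(1-\theta)}T\,(\|\bb\|+\sigma+\omega)^{(2-\theta)/(1-\theta)}$; your direct H\"older-in-time route (placing $\nabla\bv$ in $L_p((0,T),B^{d/q}_{q,1})$ with $p=2/(1+d/q-s)>1$) gives a cleaner $T^\kappa RA$. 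Second, your observation that $\pd_t\bG(\bu)$ contains a ``bare-$\bu$'' term $(\nabla_y\bu)\,\bu$ is correct and this term also needs the $T$-gain; the paper's assertion $\|\pd_t\bG(\bw)\|_{L_1(B^s_{q,1})}\le C\omega^2$ in \eqref{nonest:8.0} is not justified by the Propositions cited and requires the same device applied to $\bH$.
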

If we assume the smallness of the initial data, then the 
restriction on $s$ may be relaxed. In this case,
the smallness condition on $T$ is replaced by the smallness condition on the initial data.
\begin{thm}
\label{thm:lw.2} 
Let {\color{black}$d-1 < q < 2d$} and let  
$s \in \BR$ satisfy 
\begin{equation}
\label{cond-qs-local2} 
-1+\dfrac{d}{q} \leq s < \dfrac1q.
\end{equation}
Then, for every $T > 0$, there exist small constants
$c_0 > 0$ and $\omega>0$ depending on $T$ such that 
for every $\ba \in J^s_{q,1}(\HS)$ with $\|\ba\|_{B^s_{q,1}(\HS)} \le c_0$, 
Problem \eqref{eq-fixed} admits a unique local 
strong solution $(\bu, \sfQ)$ such that 
\begin{equation}\label{solution:3}\begin{aligned}
\bu & \in 
W^1_1 ((0, T), B^s_{q, 1} (\HS)^d) \cap  L_1 ((0, T), B^{s + 2}_{q, 1} (\HS)^d), \\
\sfQ & \in L_1 ((0, T), B^{s + 1}_{q, 1} (\HS) + \wh B^{s + 1}_{q, 1, 0} (\HS))
\end{aligned}\end{equation}
satisfying
\begin{equation}\label{solution:4}
\lVert (\pd_t \bu, \nabla \sfQ) \rVert_{L_1 ((0, T), B^s_{q, 1} (\HS))}
+ \lVert \bu \rVert_{L_1 ((0, T), B^{s + 2}_{q, 1} (\HS))} 
+ \sup_{t \in [0, T)} \lVert \bu (\,\cdot\, , t) 
\rVert_{B^s_{q, 1} (\HS)} \le \omega.
\end{equation}
\end{thm}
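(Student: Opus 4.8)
The plan is to run a Banach fixed-point argument whose linear solver is our maximal $L_1$-regularity theorem for the linearized Stokes system with inhomogeneous free boundary conditions. As in the proof of Theorem~\ref{th-local-well-posedness-fixed}, I would iterate on the velocity alone: by \eqref{def-nonlinear-terms} none of $\bF(\bu)$, $G_{\mathrm{div}}(\bu)$, $\bG(\bu)$, $\BH(\bu)$ involves $\sfQ$, so the pressure is recovered from the linear solve and automatically lands in $L_1((0,T),B^{s+1}_{q,1}(\HS)+\wh B^{s+1}_{q,1,0}(\HS))$ by the pressure part of the linear theory (cf.\ Definition~\ref{def-hat-space}). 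For fixed $T>0$, $\omega>0$ and $\ba\in J^s_{q,1}(\HS)$, I would work in
\[
\mathbb{E}_T(\omega,\ba):=\bigl\{\bu\in W^1_1((0,T),B^s_{q,1}(\HS)^d)\cap L_1((0,T),B^{s+2}_{q,1}(\HS)^d):\ \bu|_{t=0}=\ba,\ \CN_T(\bu)\le\omega\bigr\},
\]
where $\CN_T(\bu)$ denotes the left-hand side of \eqref{solution:4} with the pressure term deleted. Since $\CN_T$ is a continuous seminorm and $\bu\mapsto\bu|_{t=0}$ is continuous, this is a closed subset of a Banach space, hence a complete metric space for the distance induced by $\CN_T$; it is nonempty once $c_0$ is small, e.g.\ the solution of the linear Stokes problem with datum $\ba$ and vanishing forcing and boundary data lies in it.

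Given $\bw\in\mathbb{E}_T(\omega,\ba)$, let $\Psi(\bw)=(\bu,\sfQ)$ solve the \emph{linear} problem obtained from \eqref{eq-fixed} by freezing the right-hand members to $\bF(\bw)$, $G_{\mathrm{div}}(\bw)=\dv\bG(\bw)$, $\BH(\bw)\bn_0$, with the same initial datum $\ba$; a fixed point of $\bw\mapsto\bu$ is a solution of \eqref{eq-fixed}. Two points need to be addressed. First, compatibility: each term in \eqref{def-nonlinear-terms} carries a factor $\int_0^t\nabla_y\bw\,\d\tau$ or $\BA_{\bw}-\BI$, both of which vanish at $t=0$ (the latter since $\BA_{\bw}(\cdot,0)=\BI$ in \eqref{def-Au}), so $\bF(\bw)$, $G_{\mathrm{div}}(\bw)$ and $\BH(\bw)\bn_0$ vanish at $t=0$; at the regularity $s<1/q$ the only remaining compatibility for the linear theory is solenoidality of $\ba$, which holds by $\ba\in J^s_{q,1}(\HS)$ and Remark~\ref{rem:solenoidal}. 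Second—the crux—I would estimate the nonlinear terms in the data spaces of the linear theory. Using
\[
\Bigl\|\int_0^t\nabla_y\bw\,\d\tau\Bigr\|_{L_\infty((0,T),B^{s+1}_{q,1}(\HS))}\le\|\bw\|_{L_1((0,T),B^{s+2}_{q,1}(\HS))}\le\omega
\]
and the hypotheses $q>d-1$, $s\ge-1+d/q$ (which force $s+1\ge d/q$, so $B^{s+1}_{q,1}(\HS)$ is a multiplication algebra, embeds into $L_\infty(\HS)$, and multiplies $B^s_{q,1}(\HS)$ and $B^{s+2}_{q,1}(\HS)$ boundedly), the Neumann series \eqref{def-Au} converges with $\|\BA_{\bw}-\BI\|_{L_\infty((0,T),B^{s+1}_{q,1})}\lesssim\omega$; together with the divergence structure of $\bF$ (the $\dv_y(\cdots)$ and $\nabla_y(\cdots)$ terms, whose inner factors need only be controlled in $B^{s+1}_{q,1}$), the representation $G_{\mathrm{div}}(\bw)=\dv\bG(\bw)$, and $\pd_t\BA_{\bw}=-\BA_{\bw}(\nabla_y\bw)\BA_{\bw}$, I expect, with $C_T$ depending on $T$ but not on $\bw$,
\[
\|\bF(\bw)\|_{L_1(B^s_{q,1})}+\|(G_{\mathrm{div}}(\bw),\bG(\bw))\|_{\mathrm{data}}+\|\BH(\bw)\bn_0\|_{\mathrm{data}}\le C_T\,\omega\,\CN_T(\bw)\le C_T\,\omega^2,
\]
together with the matching Lipschitz bound $C_T\,\omega\,\CN_T(\bw_1-\bw_2)$ for $\bw_1,\bw_2\in\mathbb{E}_T(\omega,\ba)$ (note $(\bw_1-\bw_2)|_{t=0}=0$).

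With these in hand, the linear a priori estimate gives $\CN_T(\Psi(\bw))\le C_T(\|\ba\|_{B^s_{q,1}}+C_T\omega^2)$ and $\Psi(\bw)|_{t=0}=\ba$. I would set $\omega:=2C_T c_0$ and then pick $c_0>0$ so small (hence depending on $T$ through $C_T$) that $C_T^2\omega\le\tfrac12$; then $\Psi$ maps $\mathbb{E}_T(\omega,\ba)$ into itself and is a $\tfrac12$-contraction there, so the contraction principle yields a unique fixed point $\bu$, and $(\bu,\sfQ)$ is the solution asserted in \eqref{solution:3}, satisfying \eqref{solution:4} with this $\omega$; uniqueness within the class \eqref{solution:3} follows by applying the same contraction estimate to the difference of two solutions. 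The hard part will be the nonlinear step: establishing the above inequalities in the precise linear data spaces, uniformly on $(0,T)$, at the borderline index $s+1=d/q$—in particular controlling $\pd_t\bG(\bw)$ for the divergence equation and the trace-compatible norm of $\BH(\bw)\bn_0$ for the free-boundary condition—whereas the unavoidable $T$-dependence of $C_T$ (the Stokes operator with free boundary conditions is not invertible, and we work on the inhomogeneous Besov scale) is exactly why $c_0$ and $\omega$ must be allowed to depend on $T$. Note also that, in contrast to Theorem~\ref{th-local-well-posedness-fixed}, the smallness needed to close the scheme is supplied by $\|\ba\|_{B^s_{q,1}}\le c_0$ rather than by $T$, which is what permits the endpoint $s=-1+d/q$.
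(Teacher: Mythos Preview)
Your high-level plan (fixed point via the maximal $L_1$-regularity solver, iterate on the velocity, smallness supplied by $\|\ba\|$ rather than $T$) matches the paper's, and the estimates you sketch for $\bF$, $G_{\mathrm{div}}$, and $\pd_t\bG$ go through exactly as you indicate using Proposition~\ref{prop:p1} and the algebra property of $B^{s+1}_{q,1}\hookrightarrow B^{d/q}_{q,1}$.

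The genuine gap is the boundary term. The linear theorem (Theorem~\ref{th-MR-inhomogeneous}) requires $\bH\in W^{1/2}_1(\BR,B^s_{q,1})$, and you leave this norm hidden in ``$\|\BH(\bw)\bn_0\|_{\mathrm{data}}$'' without a method. This is exactly the point where Theorem~\ref{thm:lw.2} differs from Theorem~\ref{th-local-well-posedness-fixed}: in the latter proof the paper controls $W^{1/2}_1$ via Proposition~\ref{prop-B7} and $\|\pd_t\BH(\bw)\|_{L_1(B^{s-1}_{q,1})}$, but that route uses Proposition~\ref{prop:p2} and forces the strict inequality $s>-1+d/q$ when $d=2$. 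For the endpoint $s=-1+d/q$ the paper instead proves the product estimate
\[
\|e^{-\gamma t}fg\|_{W^{1/2}_1(\BR,B^s_{q,1})}\le C\,\|e^{-\gamma t}f\|_{W^{1/2}_1(\BR,B^s_{q,1})}\bigl(\|g\|_{L_\infty(\BR,B^{d/q}_{q,1})}+\|\pd_t g\|_{L_1(\BR,B^{d/q}_{q,1})}\bigr)
\]
by interpolating between $L_1$ and $W^1_1$, applies it with $f=\nabla\wt\bw$ and $g=\sfF(\int_0^t\nabla\wt\bw\,\d\tau)$, and then bounds $\|e^{-\gamma t}\nabla\wt\bw\|_{W^{1/2}_1(\BR,B^s_{q,1})}$ by $\|\wt\bw\|_{W^1_1\cap L_1(B^{s+2})}$ via Proposition~\ref{prop-B7}. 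This argument never touches $B^{s-1}_{q,1}$ and therefore survives at $s=-1+d/q$.

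A second point you skip is the extension to $t\in\BR$. Because $\bw(\cdot,0)=\ba\neq 0$, you cannot simply zero-extend $\bw$ and keep $\nabla\bw\in W^{1/2}_1(\BR,B^s_{q,1})$; the paper writes $\wt\bw=\wt\bu_{\ba}+E_{(T)}(\bw-\bu_{\ba})$, where $\wt\bu_{\ba}$ is a cut-off even reflection of the linear flow $\bu_{\ba}$ to $t<0$, and then feeds $\BH(\wt\bw)$ (not $E_{(T)}\BH(\bw)$) as boundary datum. Without this the $W^{1/2}_1$ bound for $\nabla\wt\bw$ is not available. Once you add these two ingredients, the closing of the contraction is exactly as you describe.
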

The  global well-posedness of  \eqref{eq-fixed} for small 
initial data reads as follows.
\begin{thm}
\label{th-global-well-posedness-fixed}
Let $d-1 < q < 2 d$.
Then there exists a small constant
$c_0 > 0$ such that 
for every $\ba \in \dot J^{- 1 + d \slash q}_{q,1}(\HS)$ with 
$\|\ba\|_{\dot B^{- 1 + d \slash q}_{q,1}(\HS)} \le c_0$, 
Problem \eqref{eq-fixed} admits a unique global 
strong solution $(\bu, \sfQ)$ such that 
\begin{equation}\label{solution:3*}
\pd_t \bu \in L_1 (\BR_+, \dot B^{- 1 + d \slash q}_{q, 1} (\HS)^d), \quad
\nabla^2 \bu \in L_1 (\BR_+, \dot B^{- 1 + d \slash q}_{q, 1} (\HS)^{d^3}), \quad
\nabla \sfQ \in L_1 (\BR_+, \dot B^{- 1 + d \slash q}_{q, 1} (\HS)^d)
\end{equation}
satisfying
\begin{equation}\label{solution:4*} \begin{aligned}
\lVert (\pd_t \bu, \nabla^2 \bu, \nabla \sfQ) 
\rVert_{L_1 (\BR_+, \dot B^{- 1 + d \slash q}_{q, 1} (\HS))} 
+ \sup_{t \in [0, \infty)} \lVert \bu (\,\cdot\, , t) 
\rVert_{\dot B^{- 1 + d \slash q}_{q, 1} (\HS)}
\le M \|\ba\|_{\dot B^{- 1 + d \slash q}_{q,1}(\HS)} 
\end{aligned}\end{equation}	
for some $M>0$ independent of $\ba$. Here and in the following, $\BR_+ = (0, \infty)$.
\end{thm}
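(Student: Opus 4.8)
The plan is to fix $s := -1 + d/q$ --- so that the hypothesis $d - 1 < q < 2d$ is precisely the range in which $-1 + 1/q < s < 1/q$ \emph{and} $2s + 1 = 2d/q - 1 > 0$, the latter being used crucially below --- and to solve \eqref{eq-fixed} on $\BR_+$ by a Banach fixed-point argument anchored at the maximal $L_1$-regularity theorem for the generalized (i.e.\ inhomogeneous-boundary-data) Stokes system with free boundary conditions established earlier in this paper, used with $T = \infty$ in the \emph{homogeneous} Besov scale $\dot B^s_{q,1}$. Introduce the space
\[
\mathcal{E} := \bigl\{ \bu \bigm| \pd_t\bu \in L_1(\BR_+, \dot B^s_{q,1}(\HS)^d),\ \bu \in L_1(\BR_+, \dot B^{s+2}_{q,1}(\HS)^d) \bigr\},
\]
normed by $\|\bu\|_{\mathcal{E}} := \|\pd_t\bu\|_{L_1(\BR_+, \dot B^s_{q,1})} + \|\bu\|_{L_1(\BR_+, \dot B^{s+2}_{q,1})} + \sup_{t \ge 0}\|\bu(\,\cdot\,,t)\|_{\dot B^s_{q,1}}$, the last term being finite because $W^1_1(\BR_+; \dot B^s_{q,1}) \hookrightarrow BC(\overline{\BR_+}; \dot B^s_{q,1})$. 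Given $\ba \in \dot J^s_{q,1}(\HS)$ with $\|\ba\|_{\dot B^s_{q,1}} \le c_0$, I take $\bu$ in the ball $B_R := \{\bu \in \mathcal{E} : \bu\vert_{t=0} = \ba,\ \|\bu\|_{\mathcal{E}} \le R\}$ (with $R$ of order $\|\ba\|_{\dot B^s_{q,1}}$, to be fixed) and let $\Phi(\bu) := \bv$ be the solution of the \emph{linear} Stokes system with right-hand sides frozen at $\bu$, i.e.\ $\pd_t\bv - \DV(\mu\BD(\bv) - \sfQ\BI) = \bF(\bu)$ and $\dv\bv = G_\mathrm{div}(\bu) = \dv\bG(\bu)$ in $\HS \times \BR_+$, $(\mu\BD(\bv) - \sfQ\BI)\bn_0 = \bH(\bu)\bn_0$ on $\pd\HS \times \BR_+$, and $\bv\vert_{t=0} = \ba$; the compatibility conditions at $t = 0$ hold automatically, since $\BA_\bu\vert_{t=0} = \BI$ forces $\bG(\bu)\vert_{t=0} = \bH(\bu)\vert_{t=0} = 0$ while $\dv\ba = 0$.

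By the linear theorem, $\Phi$ is well defined and
\[
\|\Phi(\bu)\|_{\mathcal{E}} + \|\nabla\sfQ\|_{L_1(\BR_+, \dot B^s_{q,1})} \le C_0\bigl( \|\ba\|_{\dot B^s_{q,1}(\HS)} + \mathcal{N}(\bu) \bigr),
\]
where $\mathcal{N}(\bu)$ collects the norm of $\bF(\bu)$ in $L_1(\BR_+, \dot B^s_{q,1}(\HS))$, of the pair $(G_\mathrm{div}(\bu), \bG(\bu))$ in the data space attached to the divergence equation, and of $\bH(\bu)\bn_0$ in the boundary-data space of the linear theorem. The decisive structural point is that this estimate holds on the \emph{infinite} interval $\BR_+$ with a constant $C_0$ that does \emph{not} depend on the length of the time interval; this is exactly what the homogeneous framework provides, and it is why the scheme produces a genuine global solution rather than, as in the inhomogeneous Da Prato--Grisvard setting, only a finite-time estimate.

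The nonlinear estimates of $\mathcal{N}$ exploit the scaling-critical identity $s + 1 = d/q$: the spaces $\dot B^{s+1}_{q,1}(\HS) = \dot B^{d/q}_{q,1}(\HS)$ and $\dot B^{s+1-1/q}_{q,1}(\pd\HS) = \dot B^{(d-1)/q}_{q,1}(\pd\HS)$ are Banach algebras continuously embedded in $L_\infty$, and since $\nabla\bu \in L_1(\BR_+, \dot B^{s+1}_{q,1})$ one has $\bigl\|\int_0^t\nabla\bu\d\tau\bigr\|_{L_\infty(\BR_+, \dot B^{s+1}_{q,1})} \le \|\bu\|_{\mathcal{E}}$; for $\|\bu\|_{\mathcal{E}}$ small this makes the Neumann series \eqref{def-Au} converge in $L_\infty(\BR_+, \dot B^{s+1}_{q,1})$ with $\BA_\bu - \BI$ small there, and in particular $\bigl\|\int_0^t\nabla\bu\d\tau\bigr\|_{L_\infty(\BR_+, L_\infty(\HS))} \le \|\bu\|_{\mathcal{E}} \ll 1$, which a posteriori legitimises the Lagrangian reformulation via \eqref{integral-smallness}. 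Inspecting \eqref{def-nonlinear-terms}, every monomial in $\bF(\bu)$, $G_\mathrm{div}(\bu)$, $\bG(\bu)$, $\bH(\bu)$ is a product of a factor at the regularity level of $\bu$, $\nabla\bu$, $\nabla^2\bu$ or $\pd_t\bu$ with at least one small algebra-valued factor built from $\int_0^t\nabla\bu\d\tau$; combining Besov product estimates with the trace inequality $\|\nabla\bu\vert_{\pd\HS}\|_{\dot B^{s+1-1/q}_{q,1}} \lesssim \|\bu\|_{\dot B^{s+2}_{q,1}}$ one obtains the superquadratic bound
\[
\mathcal{N}(\bu) \le C_1\,\|\bu\|_{\mathcal{E}}^2\,(1 + \|\bu\|_{\mathcal{E}})^{N}
\]
for some $N \in \BN$, together with the companion Lipschitz estimate $\mathcal{N}_\ast(\bu_1,\bu_2) \le C_1(\|\bu_1\|_{\mathcal{E}} + \|\bu_2\|_{\mathcal{E}})(1 + \|\bu_1\|_{\mathcal{E}} + \|\bu_2\|_{\mathcal{E}})^{N}\|\bu_1 - \bu_2\|_{\mathcal{E}}$, where $\mathcal{N}_\ast(\bu_1,\bu_2)$ is the analogous norm of the difference of the data generated by $\bu_1$ and $\bu_2$. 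The place where $q < 2d$ is genuinely consumed is the product estimate $\dot B^{s+1}_{q,1}(\HS)\cdot\dot B^s_{q,1}(\HS) \hookrightarrow \dot B^s_{q,1}(\HS)$ --- applied, for instance, to $(\int_0^t\nabla\bu\d\tau)\,(\pd_t\bu - \mu\Delta\bu)$ --- whose low--high/remainder part requires the two Besov regularities to add up to something positive, i.e.\ $2s + 1 > 0$.

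Finally, put $R := 2 C_0\,\|\ba\|_{\dot B^s_{q,1}}$ and choose $c_0$ so small that $2 C_0 C_1 R(1+R)^N \le \tfrac12$. The first estimate then yields $\Phi(B_R) \subset B_R$, and since $\Phi(\bu_1) - \Phi(\bu_2)$ solves the linear system with zero initial data and right-hand side the difference of the nonlinearities, the companion estimate yields that $\Phi$ is a $\tfrac12$-contraction on $B_R$; the unique fixed point $\bu$, together with its pressure $\sfQ$, is the desired global strong solution, and \eqref{solution:3*}--\eqref{solution:4*} follow with $M = 2 C_0$. Uniqueness within the whole class \eqref{solution:3*}--\eqref{solution:4*} follows from the Lipschitz bound on short time intervals plus a connectedness argument. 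I expect the main obstacle to be the nonlinear bookkeeping of all the required product, commutator and trace estimates in \emph{homogeneous} Besov spaces on the half-space --- above all for the boundary term $\bH(\bu)$, which must be restricted to $\pd\HS$ and controlled in a space matching \emph{exactly} (including its temporal regularity) the boundary-data space of the linear theorem --- bearing in mind that $\dot B^s_{q,1}$ fails to be an algebra once $s \le 0$ (that is, $q \ge d$), so that the whole scheme hinges on systematically isolating the algebra-valued factor $\int_0^t\nabla\bu\d\tau$; on the linear side the indispensable ingredient is the genuinely time-interval-independent constant supplied by the homogeneous maximal $L_1$-regularity theorem.
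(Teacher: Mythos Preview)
Your overall strategy---fixed-point argument in the homogeneous scale $\dot B^{-1+d/q}_{q,1}$, anchored at Theorem~\ref{th-MR-homogeneous} with its time-interval-independent constant, and quadratic nonlinear estimates via the algebra property of $\dot B^{d/q}_{q,1}$---is exactly what the paper does, and your identification of $q<2d$ as the condition $2s+1>0$ needed for the product rule $\dot B^{s+1}_{q,1}\cdot\dot B^s_{q,1}\hookrightarrow\dot B^s_{q,1}$ is correct.

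Two concrete points need adjustment. First, Theorem~\ref{th-MR-homogeneous} requires the data $\bF,G_{\mathrm{div}},\bG,\bH$ to be defined on all of $\BR$, not just $\BR_+$; the paper therefore inserts an explicit extension step (zero extension for factors vanishing at $t=0$, even reflection for the others) before invoking the linear estimate. Second, your proposed handling of $\bH(\bu)$ via trace spaces on $\pd\HS$ is a red herring: the linear theorem takes $\bH$ as a \emph{bulk} function on $\HS$ with $\bH\in\dot W^{1/2}_1(\BR,\dot B^s_{q,1}(\HS))$ and $\nabla\bH\in L_1(\BR,\dot B^s_{q,1}(\HS))$, and $\BH(\bu)$ is already defined on $\HS$ by \eqref{def-nonlinear-terms}. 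The paper controls the $\dot W^{1/2}_1$-in-time norm not by traces but by (i) a product estimate $\|fg\|_{\dot W^{1/2}_1(\BR,\dot B^s_{q,1})}\lesssim\|f\|_{\dot W^{1/2}_1(\BR,\dot B^s_{q,1})}\bigl(\|g\|_{L_\infty(\BR,\dot B^{d/q}_{q,1})}+\|\pd_t g\|_{L_1(\BR,\dot B^{d/q}_{q,1})}\bigr)$ obtained by interpolating the $L_1$ and $W^1_1$ product rules, combined with (ii) the interpolation inequality of Proposition~\ref{prop-B7}, which gives $\|\nabla\bu\|_{\dot W^{1/2}_1(\BR,\dot B^s_{q,1})}\lesssim\|\nabla^2\bu\|_{L_1(\BR,\dot B^s_{q,1})}^{1/2}\|\pd_t\bu\|_{L_1(\BR,\dot B^s_{q,1})}^{1/2}$. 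Finally, note that the linear theorem controls $\nabla^2\bV$ in $L_1(\BR_+,\dot B^s_{q,1})$, not $\bV$ in $L_1(\BR_+,\dot B^{s+2}_{q,1})$; the paper's solution norm is phrased accordingly, and the needed bound on $\nabla\bu$ in $L_1(\BR_+,\dot B^{d/q}_{q,1})$ is recovered from $\nabla^2\bu$ via the ``reverse Bernstein'' inequality \eqref{reverse-Benstein}.
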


Since there holds
\begin{equation}
\label{emb-BC1}
\left\{\begin{aligned}
B^{s + 2}_{q, 1} (\HS) \hookrightarrow \mathrm{BC}^1 (\overline{\HS}),
& \qquad s \ge - 1 + d \slash q, \\
\dot B^{1 + d \slash q}_{q, 1} (\HS) \hookrightarrow \mathrm{BC}^1 (\overline{\HS}), 
\end{aligned}\right.
\end{equation}
the classical Picard-Lindel\"of theorem implies that for $0 < T \le \infty$
there exists a unique $C^1$-flow $\bX_{\bu} (\,\cdot\, , t)$ satisfying
\begin{equation}
\label{representation-Xu}
\bX_{\bu} (y, t) := y + \int_0^t \bu (y, \tau) \d \tau, \qquad y \in \overline{\HS}, \; 0 \le t < T,
\end{equation}
which implies that $\bX_{\bu} (\,\cdot\, , t)$ is a $C^1$-diffeomorphism from 
$\HS$ onto $\Omega (t)$ for each $t \in [0, T)$, $0 < T \le \infty$, and measure preserving. 
Then the local well-posedness of Problem 
\eqref{eq-original} as a corollary of Theorems \ref{th-local-well-posedness-fixed}
and \ref{thm:lw.2}, whereas 
we obtain  the global well-posedness of  Problem 
\eqref{eq-original} as a corollary of Theorem \ref{th-global-well-posedness-fixed}.
\begin{cor}
\label{cor-local-well-posedness-original}
Let {\color{black}$d-1 < q  \leq d$} and $-1+d/q < s < 1/q$.
Let $\bb \in J^s_{q, 1} (\HS)$ 
\textcolor{red}{be initial data for Problem \eqref{eq-original}}.  Then,  
there exist $T > 0$ and $\sigma > 0$ such that for 
any $\ba \in J^s_{q,1}(\HS)$ satisfying $\|\ba-\bb\|_{B^s_{q,1}(\HS)} 
< \sigma$, 
Problem \eqref{eq-original} \textcolor{red}{with initial data not only $\bb$ but also with $\ba$}
admits a unique local strong solution 
$(\bv, P)$ such that
\begin{align}
\partial_t\bv(\,\cdot\, , t)
\in B^s_{q, 1} (\Omega (t))^d, \quad
\bv(\,\cdot\, , t) \in B^{s+2}_{q, 1} (\Omega (t))^d, \quad 
\nabla P(\,\cdot\, , t) \in B^s_{q, 1} (\Omega (t))^d
\end{align} 
for all $t \in (0, T)$ satisfying
\begin{align}
\int^T_0 \|(\pd_t \bv, \nabla P)(\,\cdot\, , t)\|_{B^s_{q, 1} (\Omega (t))}\d t 
+ \int^T_0 \|\bv(\,\cdot\, , t)\|_{B^{s + 2}_{q, 1} (\Omega (t))}\d t 
+ \sup_{0 \le t < T} \lVert \bv (\,\cdot\, , t) \rVert_{B^s_{q, 1} (\Omega (t))}
\le M\|\bb\|_{B^s_{q,1}(\HS)},	
\end{align}
where $M$ is independent of $T$.
\end{cor}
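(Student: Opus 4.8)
The plan is to solve the Lagrangian system \eqref{eq-fixed} by Theorem~\ref{th-local-well-posedness-fixed} and then transport the solution back to the Eulerian formulation \eqref{eq-original} through the flow $\bX_{\bu}$. Given $\ba\in J^s_{q,1}(\HS)$ with $\|\ba-\bb\|_{B^s_{q,1}(\HS)}<\sigma$, let $(\bu,\sfQ)$ be the unique solution of \eqref{eq-fixed} furnished by Theorem~\ref{th-local-well-posedness-fixed}, with the regularity \eqref{solution:1} and the bound \eqref{solution:2}; as $\bu$ is admissible for \eqref{eq-fixed} it satisfies \eqref{integral-smallness}, so by the discussion around \eqref{emb-BC1}--\eqref{representation-Xu} the map $\bX_{\bu}(\,\cdot\,,t)$ is, for each $t\in[0,T)$, a measure-preserving $C^1$-diffeomorphism of $\overline{\HS}$ onto $\overline{\Omega(t)}$, where $\Omega(t):=\bX_{\bu}(\HS,t)$ and $\Omega(0)=\HS$; in particular $\Omega(t)$ is a $C^1$-domain, so $B^\tau_{q,1}(\Omega(t))$ is well defined. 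I then set
\[
\bv(x,t):=\bu\bigl(\bX_{\bu}^{-1}(x,t),t\bigr),\qquad P(x,t):=\sfQ\bigl(\bX_{\bu}^{-1}(x,t),t\bigr)\qquad\bigl(x\in\Omega(t),\ t\in(0,T)\bigr).
\]

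The first step is to verify that $(\bv,P,\Omega(\,\cdot\,))$ solves \eqref{eq-original}. This is the computation of Appendix~A run in reverse: writing $\BA_{\bu}=(\nabla_y\bX_{\bu})^{-1}$ and invoking the definitions of $\bF(\bu)$, $G_{\mathrm{div}}(\bu)$, $\bG(\bu)$, $\BH(\bu)$ in \eqref{def-nonlinear-terms}, the first and third lines of \eqref{eq-fixed} turn into the momentum equation and the dynamic boundary condition $(\mu\BD(\bv)-P\BI)\bn=0$ on $\pd\Omega(t)$, while the second line expresses the incompressibility constraint $\dv\bv=0$ in $\Omega(t)$; this in turn forces $\det\nabla_y\bX_{\bu}(y,t)\equiv1$, i.e.\ the measure-preserving property used above. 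The kinematic condition $V_{\pd\Omega(t)}=\bv\cdot\bn$ and the initial condition $\bv|_{t=0}=\ba$ hold by the construction of the flow \eqref{representation-Xu}.

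The second, and main, step is to transfer the regularity and the estimate by a change of variables in Besov spaces. By \eqref{integral-smallness} one has $\|\nabla_y\bX_{\bu}(\,\cdot\,,t)-\BI\|_{L_\infty(\HS)}\ll1$ uniformly in $t\in[0,T)$, and since $-1+d/q<s<1/q$ gives $s+1>d/q$ (so that $B^{s+1}_{q,1}(\HS)$ is a multiplier algebra and embeds into $L_\infty(\HS)$), the quantity $\nabla_y\bX_{\bu}(\,\cdot\,,t)-\BI=\int_0^t\nabla_y\bu\d\tau$ is also bounded in $B^{s+1}_{q,1}(\HS)$ uniformly in $t$, by the constant in \eqref{solution:2}. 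As the exponents $s,s+1,s+2$ lie in the range admitted by the Besov composition estimates — the negative ones handled by duality with $B^{-\tau}_{q',\infty}$ together with the measure-preserving property, the ones exceeding $1$ by differentiating and using the multiplier property of $B^{s+1}_{q,1}$ — composition with $\bX_{\bu}(\,\cdot\,,t)$ and with its inverse yields, uniformly in $t\in[0,T)$,
\[
\|w\circ\bX_{\bu}(\,\cdot\,,t)\|_{B^\tau_{q,1}(\HS)}\simeq\|w\|_{B^\tau_{q,1}(\Omega(t))},\qquad\tau\in\{s,s+1,s+2\}.
\]
Applying this to $\bv=\bu\circ\bX_{\bu}^{-1}$ gives $\bv(\,\cdot\,,t)\in B^{s+2}_{q,1}(\Omega(t))^d$; applying it to $\nabla_xP=(\BA_{\bu}^\top\nabla_y\sfQ)\circ\bX_{\bu}^{-1}$, together with the product estimate $B^{s+1}_{q,1}\cdot B^s_{q,1}\hookrightarrow B^s_{q,1}$ and $\nabla_y\sfQ\in B^s_{q,1}(\HS)^d$ (recall $\sfQ\in B^{s+1}_{q,1}(\HS)+\wh B^{s+1}_{q,1,0}(\HS)$), gives $\nabla P(\,\cdot\,,t)\in B^s_{q,1}(\Omega(t))^d$. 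Integrating in $t$ and inserting \eqref{solution:2} produces the asserted bound, with $M$ controlled by the constant in \eqref{solution:2} and by the change-of-variables constants — uniform in $t\in[0,T)$ — hence independent of $T$. For $\pd_t\bv$ one uses the material-derivative identity $\bigl(\pd_t\bv+(\bv\cdot\nabla)\bv\bigr)\circ\bX_{\bu}=\pd_t\bu$ (from $\pd_t\bX_{\bu}=\bu$), whence $\pd_t\bv=(\pd_t\bu)\circ\bX_{\bu}^{-1}-(\bv\cdot\nabla)\bv$, with $\pd_t\bu\in L_1((0,T),B^s_{q,1}(\HS)^d)$ and the convective term in $L_1((0,T),B^s_{q,1}(\Omega(t)))$ by the product and interpolation estimates for $B^s_{q,1}$ already employed in the proof of Theorem~\ref{th-local-well-posedness-fixed}.

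For uniqueness in the stated class I would use that the Lagrangian change of coordinates is a bijection between solutions of \eqref{eq-original} in the class of the corollary (each carrying its $C^1$-flow, well defined by \eqref{emb-BC1}) and solutions of \eqref{eq-fixed} in the class \eqref{solution:1}, so the uniqueness part of Theorem~\ref{th-local-well-posedness-fixed} applies. The main obstacle is the second step: establishing the change-of-variables equivalences of Besov norms \emph{on the half-space and uniformly in $t\in[0,T)$} at the levels $B^s_{q,1}$, $B^{s+1}_{q,1}$, $B^{s+2}_{q,1}$ — in particular at negative and non-integer order — and tracking how the composite pressure space $B^{s+1}_{q,1}(\HS)+\wh B^{s+1}_{q,1,0}(\HS)$ transforms under $\bX_{\bu}(\,\cdot\,,t)$.
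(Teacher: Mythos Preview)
Your proposal is correct and follows essentially the same route as the paper. The paper's proof (Section~7.5) likewise defines $(\bv,P)=(\bu,\sfQ)\circ\bX_{\bu}^{-1}$, invokes the $C^1$-diffeomorphism property of the flow, and transfers the estimates of Theorem~\ref{th-local-well-posedness-fixed} via change of variables.

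The one place where the paper is more economical is exactly the point you flag as the obstacle: rather than proving a norm equivalence $\|w\circ\bX_{\bu}\|_{B^\tau_{q,1}(\HS)}\simeq\|w\|_{B^\tau_{q,1}(\Omega(t))}$ at \emph{all three} levels $\tau\in\{s,s+1,s+2\}$, the paper invokes the composition estimate \eqref{norm-equivalence} only at the base level $B^s_{q,1}$ (for which it cites \cite[Prop.~8.7]{DHMTpre}), and then handles the higher-order norms by writing out the chain rule explicitly,
\[
\nabla_x(\bv,P)=(\BA_{\bu}^\top\nabla_y(\bu,\sfQ))\circ\bX_{\bu}^{-1},\qquad
\pd_{x_j}\pd_{x_k}\bv=\sum_{\ell,\ell'}\bigl(A_{j,\ell}\,\pd_{y_\ell}(A_{k,\ell'}\pd_{y_{\ell'}}\bu)\bigr)\circ\bX_{\bu}^{-1},
\]
and bounding each factor using Proposition~\ref{prop-Lagrangian}. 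This is precisely what you suggest in your parenthetical ``the ones exceeding $1$ by differentiating and using the multiplier property'', so your sketch already contains the resolution of your own stated obstacle; you just need to carry it out rather than aim for a direct equivalence at level $s+2$. The treatment of $\pd_t\bv$ via the material-derivative identity is identical to the paper's.
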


\begin{cor}
\label{cor:lw.2}
Let {\color{black}$d-1<q<2d$} and let $s\in\BR$ satisfy $- 1+ d\slash q \le s < 1 \slash q$.
Then, for every $T > 0$, there exist small constants
$c_0 > 0$ and $M>0$ depending on $T$ such that 
for every $\ba \in J^s_{q,1}(\HS)$ with $\|\ba\|_{B^s_{q,1}(\HS)} \le c_0$,
Problem \eqref{eq-original} admits a unique local strong solution 
$(\bv, P)$ such that
\begin{align}
\partial_t\bv(\,\cdot\, , t)
\in B^s_{q, 1} (\Omega (t))^d, \quad
\bv(\,\cdot\, , t) \in B^{s+2}_{q, 1} (\Omega (t))^d, \quad 
{\color{black} \nabla P(\,\cdot\, , t) \in B^s_{q, 1} (\Omega (t))^d}	
\end{align}
for all $t \in (0, T)$ satisfying
\begin{equation}
\int^T_0 \|(\pd_t \bv, \nabla P)(\,\cdot\, , t)\|_{B^s_{q, 1} (\Omega (t))}\d t 
+ \int^T_0 \|\bv(\,\cdot\, , t)\|_{B^{s + 2}_{q, 1} (\Omega (t))}\d t 
+ \sup_{0 \le t < T} \lVert \bv (\,\cdot\, , t) \rVert_{B^s_{q, 1} (\Omega (t))}
\le M\|\ba\|_{B^s_{q,1}(\HS)}.
\end{equation}
\end{cor}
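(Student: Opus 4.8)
The plan is to deduce the corollary from Theorem~\ref{thm:lw.2} by undoing the Lagrangian change of variables \eqref{Lagrangian-transformation} and transporting the regularity and the estimate through the flow $\bX_{\bu}$ --- exactly the transfer argument that yields Corollary~\ref{cor-local-well-posedness-original} from Theorem~\ref{th-local-well-posedness-fixed}. First I would invoke Theorem~\ref{thm:lw.2}: for a fixed $T>0$ and every $\ba\in J^s_{q,1}(\HS)$ with $\|\ba\|_{B^s_{q,1}(\HS)}\le c_0$, a sufficiently small $c_0=c_0(T)$, there is a unique $(\bu,\sfQ)$ solving \eqref{eq-fixed} in the class \eqref{solution:3} and satisfying \eqref{solution:4}; inspecting the underlying fixed-point argument, one may moreover take the right-hand side of \eqref{solution:4} to be $M\|\ba\|_{B^s_{q,1}(\HS)}$. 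Since $s\ge-1+d/q$ forces $s+1\ge d/q$, the space $B^{s+1}_{q,1}(\HS)$ is a Banach algebra continuously embedded in $L_\infty(\HS)$; because $\nabla_y\bX_{\bu}(\,\cdot\,,t)-\BI=\int_0^t\nabla_y\bu\d\tau$ has $B^{s+1}_{q,1}(\HS)$-norm no larger than $\|\bu\|_{L_1((0,T),B^{s+2}_{q,1}(\HS))}$, shrinking $c_0$ makes this norm uniformly small on $[0,T)$, so \eqref{integral-smallness} holds, $\BA_{\bu}(\,\cdot\,,t)$ in \eqref{def-Au} is well defined, and a Neumann series in the algebra shows that $\BA_{\bu}(\,\cdot\,,t)-\BI$ is small in $B^{s+1}_{q,1}(\HS)$ uniformly in $t\in[0,T)$. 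By \eqref{emb-BC1}, $\bX_{\bu}(\,\cdot\,,t)$ in \eqref{representation-Xu} is then a $C^1$-diffeomorphism of $\HS$ onto $\Omega(t):=\bX_{\bu}(\HS,t)$, and it is measure preserving since the second line of \eqref{eq-fixed} is precisely the identity $\det\nabla_y\bX_{\bu}\equiv1$ (see Appendix~A).

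Next I would set $\bv(x,t):=\bu(\bX_{\bu}^{-1}(x,t),t)$ and $P(x,t):=\sfQ(\bX_{\bu}^{-1}(x,t),t)$ on $\Omega(t)$; reversing the computation of Appendix~A shows that $(\bv,P,\Omega(t))$ solves \eqref{eq-original}. To read off the claimed regularity and estimate I would transport everything back to $\HS$ through $\bX_{\bu}(\,\cdot\,,t)$. On the one hand, for $\sigma\in\{s,s+1,s+2\}$ the $\CB^\sigma_{q,1}(\Omega(t))$-norm of a function is equivalent to the $\CB^\sigma_{q,1}(\HS)$-norm of its pullback by $\bX_{\bu}(\,\cdot\,,t)$, because $\nabla_y\bX_{\bu}(\,\cdot\,,t)-\BI$ is small in $B^{s+1}_{q,1}(\HS)$ (the composition lemma for near-identity diffeomorphisms on Besov spaces, which also serves to define $\CB^\sigma_{q,1}(\Omega(t))$ here). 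On the other hand, the pullbacks of $\bv$, of $\pd_t\bv$, and of $\nabla P$ are $\bu$, $\pd_t\bu-((\bv\cdot\nabla)\bv)\circ\bX_{\bu}$, and $\BA_{\bu}^\top\nabla_y\sfQ$, respectively, where $((\bv\cdot\nabla)\bv)\circ\bX_{\bu}$ is bilinear in $\bu$ and $\nabla_y\bu$ with coefficients taken from the entries of $\BA_{\bu}$. Since $B^{s+1}_{q,1}(\HS)$ is a Banach algebra, $\BA_{\bu}=\BI+(\text{a term small in }B^{s+1}_{q,1}(\HS))$, and $\nabla_y\sfQ\in L_1((0,T),B^s_{q,1}(\HS)^d)$ --- the gradient of each of the two summands forming the space of $\sfQ$ in \eqref{solution:3} lying in $B^s_{q,1}(\HS)$ by Definition~\ref{def-hat-space} --- these pullbacks lie in $B^{s+2}_{q,1}(\HS)^d$, $B^s_{q,1}(\HS)^d$, and $B^s_{q,1}(\HS)^d$; in particular, by the algebra property of $B^{s+1}_{q,1}(\HS)$ and the interpolation inequality $\|\,\cdot\,\|_{B^{s+1}_{q,1}}\lesssim\|\,\cdot\,\|_{B^s_{q,1}}^{1/2}\|\,\cdot\,\|_{B^{s+2}_{q,1}}^{1/2}$ the convective contribution obeys
\[
\int_0^T\|((\bv\cdot\nabla)\bv)\circ\bX_{\bu}\|_{B^s_{q,1}(\HS)}\d t
\lesssim\Big(\sup_{t\in[0,T)}\|\bu(\,\cdot\,,t)\|_{B^s_{q,1}(\HS)}\Big)\|\bu\|_{L_1((0,T),B^{s+2}_{q,1}(\HS))}.
\]
Combining these with the $M\|\ba\|_{B^s_{q,1}(\HS)}$ form of \eqref{solution:4} yields the asserted bound with $M=M(T)$. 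Uniqueness follows by running the construction backwards: any Eulerian solution in the asserted class has, by \eqref{emb-BC1}, a well-defined $C^1$ flow and hence a Lagrangian transform solving \eqref{eq-fixed} in the class \eqref{solution:3}, so the uniqueness part of Theorem~\ref{thm:lw.2} forces the Lagrangian velocities --- and therefore the flows, the domains $\Omega(t)$, and the pairs $(\bv,\nabla P)$ --- to coincide.

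The step I expect to be the main obstacle is the equivalence of norms across this change of variables: one must bound composition with a diffeomorphism whose Jacobian is controlled only in $B^{s+1}_{q,1}$ --- a multiplier/algebra bound rather than a $C^2$ bound --- and do so uniformly in $t\in[0,T)$. This is delicate at the borderline $s=-1+d/q$, where $s+1=d/q$ and one must lean on the limiting algebra property of $B^{d/q}_{q,1}$, and also for the negative-order space $B^s_{q,1}$ (which can occur once $q>d$), where composition estimates require a duality argument. Everything else reduces to the algebraic identities of Appendix~A together with bookkeeping against \eqref{solution:4}.
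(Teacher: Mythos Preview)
Your proposal is correct and follows essentially the same route as the paper: the paper proves Corollaries~\ref{cor-local-well-posedness-original}, \ref{cor:lw.2}, and \ref{cor-global-well-posedness-original} simultaneously in Section~7.5 by constructing the flow $\bX_{\bu}$ from \eqref{emb-BC1} and Picard--Lindel\"of, pulling back $(\bv,P)$ to $(\bu,\sfQ)$, and transferring the estimates via the norm equivalence \eqref{norm-equivalence}, exactly as you outline. The one point where the paper is more explicit than your sketch is the composition estimate you flag as the main obstacle: rather than proving it, the paper invokes \cite[Prop.~8.7]{DHMTpre} for \eqref{norm-equivalence} in the range $-\min(d/q,d/q')<s\le d/q$, and then handles the higher-order quantities $\nabla_x(\bv,P)$ and $\pd_{x_j}\pd_{x_k}\bv$ not by composition in $B^{s+2}_{q,1}$ directly but by writing them as products of $\BA_{\bu}$-entries with $\nabla_y\bu$, $\nabla_y^2\bu$, $\nabla_y\sfQ$ and applying Proposition~\ref{prop-Lagrangian} together with the product estimates of Section~\ref{sec-8.1}.
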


\begin{cor}
\label{cor-global-well-posedness-original}
Let $d-1<q<2d$. Then there exists a small constant $c_0 > 0$ such that 
for every $\ba \in \dot J^{- 1 + d \slash q}_{q,1}(\HS)$ with 
$\|\ba\|_{\dot B^{- 1 + d \slash q}_{q,1}(\HS)} \le c_0$, 
Problem \eqref{eq-original} admits a unique global strong solution 
$(\bv, P)$ such that
\begin{align}
\partial_t\bv(\,\cdot\, , t)
\in \dot B^{- 1 + d \slash q}_{q, 1} (\Omega (t))^d, \quad
\nabla^2 \bv(\,\cdot\, , t) \in \dot B^{- 1 + d \slash q}_{q, 1} (\Omega (t))^{d^3}, \quad 
\nabla P(\,\cdot\, , t) \in \dot B^{- 1 + d \slash q}_{q, 1} (\Omega (t))^d
\end{align} 
for all $t \in \BR_+$ satisfying
\begin{align}
\int^\infty_0 \|(\pd_t \bv, \nabla^2 \bv, \nabla P)(\,\cdot\, , t)
\|_{\dot B^{- 1 + d \slash q}_{q, 1} (\Omega (t))}\d t 
+ \sup_{t \in [0, \infty)} \lVert \bv (\,\cdot\, , t) \rVert_{\dot B^{- 1 + d \slash q}_{q, 1} (\Omega (t))}
\le M\|\ba\|_{B^s_{q,1}(\HS)}	
\end{align}
with some $M > 0$ independent of $\ba$.
\end{cor}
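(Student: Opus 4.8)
The plan is to deduce Corollary~\ref{cor-global-well-posedness-original} from Theorem~\ref{th-global-well-posedness-fixed} via the (inverse) Lagrangian transformation. Given $\ba \in \dot J^{-1+d/q}_{q,1}(\HS)$ with $\|\ba\|_{\dot B^{-1+d/q}_{q,1}(\HS)} \le c_0$, Theorem~\ref{th-global-well-posedness-fixed} furnishes the unique global strong solution $(\bu,\sfQ)$ of \eqref{eq-fixed} satisfying the regularity \eqref{solution:3*} and the bound \eqref{solution:4*}. The first step is to check that this solution is admissible for the change of variables. Since $\|\nabla\bu(\cdot,\tau)\|_{\dot B^{d/q}_{q,1}(\HS)} \lesssim \|\nabla^2\bu(\cdot,\tau)\|_{\dot B^{-1+d/q}_{q,1}(\HS)}$ and $\dot B^{d/q}_{q,1}(\HS) \hookrightarrow L_\infty(\HS)$, the estimate \eqref{solution:4*} gives
\[
\Bigl\| \int_0^t \nabla\bu(\cdot,\tau)\d\tau \Bigr\|_{L_\infty(\HS)}
\le \int_0^\infty \|\nabla\bu(\cdot,\tau)\|_{L_\infty(\HS)}\d\tau
\lesssim M\|\ba\|_{\dot B^{-1+d/q}_{q,1}(\HS)} \le M c_0 ,
\]
so that \eqref{integral-smallness} holds once $c_0$ is chosen small enough, and with it the Neumann series \eqref{def-Au} for $\BA_\bu$ converges. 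Combined with the embedding \eqref{emb-BC1} and the Picard--Lindel\"of argument recalled just before the statement, this yields the measure-preserving $C^1$-flow $\bX_\bu(\cdot,t)$ of \eqref{representation-Xu}, which is a $C^1$-diffeomorphism of $\HS$ onto $\Omega(t) := \bX_\bu(\HS,t)$ for every $t \ge 0$, uniformly close to the identity in $BC^1(\overline{\HS})$.

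Next, define $\bv(x,t) := \bu(\bX_\bu(\cdot,t)^{-1}(x),t)$ and $P(x,t) := \sfQ(\bX_\bu(\cdot,t)^{-1}(x),t)$ on $\Omega(t)$. Reversing the computation that produced \eqref{eq-fixed}--\eqref{def-nonlinear-terms} (carried out in Appendix~A), one verifies that $(\bv,P,\Omega(t))$ is a strong solution of \eqref{eq-original}; the kinematic condition $V_{\pd\Omega(t)} = \bv\cdot\bn$ is automatic because $\bX_\bu(\cdot,t)$ transports $\pd\HS$ to $\pd\Omega(t)$ along the flow of $\bv$. It remains to transfer the regularity and the estimate. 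Since $\bX_\bu(\cdot,t)$ is a near-identity $C^1$-diffeomorphism whose deviation from the identity is controlled, in the relevant norms, by $\|\ba\|_{\dot B^{-1+d/q}_{q,1}(\HS)}$, the associated composition operator $f \mapsto f\circ\bX_\bu(\cdot,t)^{\pm1}$ is bounded on $\dot B^{-1+d/q}_{q,1}$ with a constant close to $1$, uniformly in $t$. Applying this to $\pd_t\bu$, $\nabla^2\bu$, $\nabla\sfQ$ and to $\bu(\cdot,t)$, and absorbing the lower-order corrections coming from the Jacobian factors $\BA_\bu-\BI$ (each carrying a small factor $\int_0^t\nabla\bu\d\tau$), converts \eqref{solution:3*}--\eqref{solution:4*} into the asserted regularity and the bound for $(\bv,P)$ with a possibly enlarged constant $M$.

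For uniqueness, let $(\bv,P)$ be any global strong solution of \eqref{eq-original} in the stated class. Its particle-trajectory map $x=x_{\bv}(t)$ defined by \eqref{eq-Cauchy-Lagrange} is well defined and invertible (again via \eqref{emb-BC1} and the smallness forced on its norms by the a priori estimate), and pulling $(\bv,P)$ back through it produces a global strong solution of \eqref{eq-fixed} with the regularity \eqref{solution:3*} and the same initial datum $\ba$; the uniqueness assertion of Theorem~\ref{th-global-well-posedness-fixed} then forces it to coincide with $(\bu,\sfQ)$, and pushing forward identifies $(\bv,P)$ with the solution constructed above. I expect the main obstacle to be the boundedness, uniformly in $t\in\BR_+$, of the composition operator on the \emph{homogeneous} Besov space $\dot B^{-1+d/q}_{q,1}(\HS)$: for $q>d$ this space has negative smoothness, so the map acts on genuine distributions and the required estimate must be extracted from the paraproduct/flow structure rather than by pointwise manipulations. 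The other steps --- verification of \eqref{integral-smallness}, the bookkeeping turning \eqref{eq-fixed} back into \eqref{eq-original}, and the transfer of uniqueness --- are routine given the results already in hand.
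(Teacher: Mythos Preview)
Your proposal is correct and follows essentially the same route as the paper: deduce the corollary from Theorem~\ref{th-global-well-posedness-fixed} by verifying \eqref{integral-smallness}, invoking the $C^1$-diffeomorphism $\bX_{\bu}$, and transferring estimates through the explicit relations $\nabla_x(\bv,P)=(\BA_{\bu}^\top\nabla_y(\bu,\sfQ))\circ\bX_{\bu}^{-1}$, $\pd_t\bv=(\pd_t\bu)\circ\bX_{\bu}^{-1}-((\bu\circ\bX_{\bu}^{-1})\cdot\nabla_x)\bv$, etc. The obstacle you flag---boundedness of composition by $\bX_{\bu}^{-1}$ on $\dot B^{-1+d/q}_{q,1}$ when $q>d$---is exactly the point the paper handles by invoking \cite[Prop.~8.7]{DHMTpre}, which gives $\|F\circ\bX_{\bu}^{-1}\|_{\dot B^s_{q,1}(\Omega(t))}\le C\|F\|_{\dot B^s_{q,1}(\HS)}$ for $-\min(d/q,d/q')<s\le d/q$; the hypothesis $d-1<q<2d$ is precisely what places $s=-1+d/q$ in this range.
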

\begin{rem}
Let us make some comments on our main results. Theorem \ref{th-global-well-posedness-fixed} 
was first proved by Danchin et al. \cite{DHMTpre} provided that $q$ satisfies $d < q < 2d$
with $d \ge 3$.
{\color{magenta}
Recently, Ogawa and Shimizu \cite{OS24} slightly improved the range of $q$ such that $d-1<q<2d-1$,
where the case $d=2$ is also allowed. Note that the upper range of $q < 2d - 1$ was
caused by the worst nonlinear estimate on the boundary (cf. \cite[Lem.~8.4]{OS24}). 
However, in our approach, it is not necessary to consider such estimates so that
we only impose the minimum assumption on $q$ in Theorem \ref{th-global-well-posedness-fixed}.
In fact, the lower condition $d-1< q$ ensures the density of $C^\infty_0 (\HS)$ in
$\CB^s_{q,1} (\HS)$, whereas the upper condition $q<2d$ ensures the nonlinear estimate
\begin{equation}
\|uv\|_{\CB^s_{q, 1}(\HS)} \le C\|u\|_{\CB^s_{q,1}(\HS)}
\|v\|_{\CB^{d/q}_{q,1}(\HS)}
\end{equation}
with $-1+d/q\le s< 1/q$, see \eqref{est-prop:8.2} below. Namely, the condition on 
$q \in (d-1,2d)$ in Theorem \ref{th-global-well-posedness-fixed} seems to be optimal
and improves the previous contributions due to \cite{DHMTpre} and \cite{OS24}.}
Notice that, in \cite{DHMTpre,OS24}, the local well-posedness
result was not proved, and actually, to show its result requires some technical restriction 
as we stated in Theorems \ref{th-local-well-posedness-fixed} and \ref{thm:lw.2}.
In contrast to the approach of \cite{DHMTpre,OS24}, as we explained before,
our approach, a generalized semigroup theory, may be applied to systems of 
hyperbolic-parabolic mixed equations with inhomogeneous boundary conditions. 
Indeed, Kuo and the first author \cite{KSpreprint} (cf. Kuo \cite{Kuopre})
have established the maximal $L_1$-regularity 
theory for the compressible Navier--Stokes equations in the case of the half-space. 
In the forthcoming paper, we will address the maximal $L_1$-regularity issue 
for the free boundary problem of the Navier--Stokes equations \textit{with} 
surface tension as well.
\end{rem}
\subsection{Results for the Stokes system with free boundary conditions}
We will prove Theorems \ref{th-local-well-posedness-fixed} -- \ref{th-global-well-posedness-fixed}
by the standard fixed point argument in a suitable solution space together 
with the maximal $L_1$-regularity estimate for the following Stokes system 
in the half-space $\HS$:
\begin{align}
\label{eq-Stokes}
\left\{\begin{aligned}
\pd_t \bV - \DV (\mu \BD(\bV) - \Pi \BI) & = \bF & \quad & \text{in $\HS \times \BR_+$}, \\
\dv \bV & = G_\mathrm{div}& \quad & \text{in $\HS \times \BR_+$}, \\
(\mu \BD(\bV) - \Pi \BI) \bn_0 & = \bH \vert_{\pd \HS} & \quad & \text{on $\pd \HS \times \BR_+$}, \\
\bV \vert_{t = 0} & = \bV_0 & \quad & \text{in $\HS$}.
\end{aligned}\right.
\end{align}
Here, $\bH = (H_1, \ldots, H_d)$ 
is a vector-valued function defined on $\HS$. \par
One of the main contributions of this paper is to prove the
maximal $L_1$-regularity estimates for System \eqref{eq-Stokes}.
Precisely speaking, the following theorems are crucial in our study.
\begin{thm}
\label{th-MR-inhomogeneous}
Let $d \ge 2$, $1 < q < \infty$, $- 1 + 1 \slash q < s < 1 \slash q$, and $\gamma > 0$.
Let $\bV_0 \in B^s_{q, 1} (\HS)^d$ and $\bF$, $G_\mathrm{div}$,
and $\bH$ satisfy the following regularity conditions:
\begin{align}
e^{- \gamma t} \bF & \in L_1 (\BR, B^s_{q, 1} (\HS)^d), \\
e^{- \gamma t} G_\mathrm{div} & \in  L_1 (\BR, B^{s + 1}_{q, 1} (\HS)), \\
e^{- \gamma t} \bH & \in W^{1/2}_1 (\BR, B^{s}_{q, 1} (\HS)^d)
\cap L_1 (\BR, B^{s + 1}_{q, 1} (\HS)^d).
\end{align}
Assume that there exists a function $\bG$ satisfying
$e^{- \gamma t} \bG  \in W^1_1 (\BR, B^s_{q, 1} (\HS)^d)$
such that 
$G_\mathrm{div}= \dv\bG$ holds in the sense of distributions
and that the compatibility condition $\bG\vert_{t = 0} - \bV_0
\in J^s_{q,1}(\HS)$ is satisfied.  Then, 
Problem \eqref{eq-Stokes} admits a unique solution $(\bV, \Pi)$ with
\begin{align}
e^{- \gamma t} \bV & \in W^1_1 (\BR_+, B^s_{q, 1} (\HS)^d) 
\cap L_1 (\BR_+, B^{s + 2}_{q, 1} (\HS)^d), \\
e^{- \gamma t} \Pi & \in L_1 (\BR_+, B^{s + 1}_{q, 1} (\HS) 
+  \wh B^{s + 1}_{q, 1, 0} (\HS))
\end{align}
possessing the estimate
\begin{align}
& \lVert e^{-\gamma t}(\pd_t \bV, \nabla \Pi) 
\rVert_{L_{1} (\BR_+, B^s_{q, 1} (\HS))} 
+ \lVert e^{-\gamma t}\bV \rVert_{L_{1} (\BR_+, B^{s + 2}_{q, 1} (\HS))}\\ 
&\quad \le C \Big(\lVert \bV_0 \rVert_{B^s_{q, 1} (\HS)}
+ \lVert e^{-\gamma t}(\bF, \nabla G_\mathrm{div}, \pd_t \bG, \nabla \bH) 
\rVert_{L_{1} (\BR, B^s_{q, 1} (\HS))} 
+ \lVert e^{-\gamma t}\bH 
\rVert_{W^{1/2}_{1} (\BR, B^s_{q, 1} (\HS))} \Big).
\end{align}
Here, for a Banach space $X$ and $I \in \{\BR, \BR_+\}$, we write 
\begin{gather*}
\lVert e^{-\gamma t}f \rVert_{L_{1} (I, X)} 
= \int_I e^{- \gamma t}\|f(\,\cdot\, ,t)\| _X \d t, \\
\|f\|_{W^{1/2}_{1}(\BR, X)}
= \lVert f \rVert_{L_{1} (\BR, X)} + \int_\BR \lvert h \rvert^{- 1 \slash 2}
\lVert f (\,\cdot\, , \,\cdot + h ) - f (\,\cdot\, , \,\cdot\,) 
\rVert_{L_{1} (\BR, X)} \d h.
\end{gather*}
\end{thm}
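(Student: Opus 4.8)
The plan is to pass to the Fourier--Laplace transform in time, to solve the resulting generalized Stokes resolvent problem in $\HS$ by means of the half-space solution operators of Shibata and Shimizu~\cite{SS12}, and then to invert the transform by a Littlewood--Paley decomposition in the space variable combined with real interpolation --- in the spirit of \cite{DG75,DHMTpre}, but keeping the inhomogeneous boundary condition throughout rather than reducing it away. \emph{Step 1 (reduction to zero initial data).} Using the generalized semigroup theory developed above together with the compatibility condition $\bG\vert_{t=0}-\bV_0\in J^s_{q,1}(\HS)$, I would first subtract off a solution carrying the initial data, obtained by letting the free-boundary Stokes semigroup act on $\bV_0$ (with the $\bG\vert_{t=0}$-part absorbed by the compatibility condition) and whose weighted norm in $W^1_1(\BR_+,B^s_{q,1})\cap L_1(\BR_+,B^{s+2}_{q,1})$ is controlled by $\lVert\bV_0\rVert_{B^s_{q,1}(\HS)}$; the upgrade of the classical analytic-semigroup bound $\lVert\CA e^{t\CA}\rVert_{L(X)}\lesssim t^{-1}$ to a time-integrable one is precisely where the structure of $B^s_{q,1}(\HS)$ as a limit of real interpolation spaces on the Besov scale enters, the weight $\gamma>0$ being needed because the scale is inhomogeneous. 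It then remains to solve \eqref{eq-Stokes} with $\bV_0=0$; since $\bF,G_\mathrm{div},\bG,\bH$ are prescribed on all of $\BR$, this may be regarded as posed on $\BR\times\HS$.

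\emph{Step 2 (solution formula via the generalized resolvent problem).} Applying $f\mapsto\int_\BR e^{-\lambda t}f(\,\cdot\,,t)\d t$ with $\lambda=\gamma+i\tau$, $\tau\in\BR$, turns the system into
\begin{gather*}
\lambda\wh\bV-\DV(\mu\BD(\wh\bV)-\wh\Pi\,\BI)=\wh\bF,\qquad \dv\wh\bV=\dv\wh\bG,\\
(\mu\BD(\wh\bV)-\wh\Pi\,\BI)\bn_0=\wh\bH\vert_{\pd\HS}.
\end{gather*}
I would invoke the explicit solution operators for this generalized resolvent problem on the half-space constructed in \cite{SS12} by partial Fourier transform in $x'$ and solution of the resulting ODE system: they give $\wh\bV$ and $\wh\Pi$ as holomorphic operator-valued functions of $\lambda$ on $\{\RE\lambda\ge\gamma\}$ acting on the data packaged as $(\wh\bF,\lambda\wh\bG,\nabla\wh G_\mathrm{div},\lambda^{1/2}\wh\bH,\nabla\wh\bH)$, and the associated resolvent estimates furnish, on the $L_q$-scale, uniform-in-$\lambda$ bounds for $\lambda\wh\bV$, $\lambda^{1/2}\nabla\wh\bV$, $\nabla^2\wh\bV$ and $\nabla\wh\Pi$ in terms of that data. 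Real interpolation between two integer values of the smoothness index then promotes these to uniform bounds on the $B^s_{q,1}$-scale; the restriction $-1+1/q<s<1/q$ is exactly what keeps all boundary traces on $\pd\HS$ and the solenoidal decomposition well defined (Proposition~\ref{prop-trace}, Theorem~\ref{thm-solenoidal-characterization}), so that the interpolation lands in the correct class, the pressure in $B^{s+1}_{q,1}(\HS)+\wh B^{s+1}_{q,1,0}(\HS)$ because on the half-space the pressure solution operator splits into a part with a genuine $x'$-trace and a part whose trace vanishes.

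\emph{Step 3 (inversion and the $L_1$-in-time estimate).} This is the core. Since the operator-valued Fourier multiplier theorem is unavailable for $p=1$, I would argue directly: decompose $f=\sum_j\Delta_j f$ into Littlewood--Paley blocks in the spatial variable, so that on each block the half-space solution formula reduces, modulo uniformly $L_q$-bounded operators, to multiplication by scalar symbols in $\lambda$ and the frequency parameter $\rho\sim 2^j$ of the archetypal shapes $\lambda(\lambda+\rho^2)^{-1}$, $\rho^2(\lambda+\rho^2)^{-1}$, $\lambda^{1/2}\rho(\lambda+\rho^2)^{-1}$ and their products with $(\lambda^{1/2}+\rho)^{-1}$. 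Each such symbol has an inverse Laplace transform whose $L_1(\BR;e^{-\gamma t}\d t)$-norm is bounded \emph{uniformly in $\rho$} on $\{\RE\lambda\ge\gamma\}$ --- for instance $\mathcal L^{-1}[\lambda(\lambda+\rho^2)^{-1}]=\delta-\rho^2 e^{-\rho^2 t}$ with $\int_0^\infty\rho^2 e^{-\rho^2 t}\d t=1$, and the fractional symbols are scale-invariant under $\lambda\mapsto\rho^2\mu$, $t\mapsto\rho^{-2}s$ --- which is precisely the $L_1$-in-time mechanism exploited in \cite{DG75,DHMTpre}. Summing the block estimates against the $\ell^1$ weight $2^{js}$ defining $\lVert\,\cdot\,\rVert_{B^s_{q,1}}$ yields the stated bound, the divergence data entering through $\nabla\wh G_\mathrm{div}$ and the potential term $\lambda\wh\bG$ (the transform of $\pd_t\bG$), and the boundary data through $\nabla\wh\bH$ and $\lambda^{1/2}\wh\bH$; controlling the time-convolution kernels attached to the factor $\lambda^{1/2}$ is exactly what forces the Nikolskii-type norm $\lVert e^{-\gamma t}\bH\rVert_{W^{1/2}_1(\BR,B^s_{q,1})}$ to appear, and the pressure is read off the same formula. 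Uniqueness follows at once: if all data vanish, $e^{-\gamma t}\bV$ and $e^{-\gamma t}\nabla\Pi$ are integrable in time, so their Laplace transforms solve the homogeneous generalized resolvent problem, whose injectivity on the relevant $B^s_{q,1}$-space (again from \cite{SS12} via interpolation) forces $\bV=0$ and $\nabla\Pi=0$.

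\emph{Main obstacle.} The delicate point is Step~3: one must verify that the half-space solution symbols of \cite{SS12}, which genuinely depend on $(\xi',\lambda)$ through square roots such as $(\lambda+|\xi'|^2)^{1/2}$ and not merely on $|\xi|$, still act boundedly on $B^s_{q,1}(\HS)$ after the Littlewood--Paley reduction, and that the joint time--frequency estimate closes uniformly over all dyadic shells --- especially for the boundary terms, where the $\lambda^{1/2}$ weight has to be absorbed against the half-order-in-time norm $W^{1/2}_1$ without producing a logarithmic loss in $\rho$. This is where the density-and-interpolation argument replaces, and is more delicate than, the $\mathcal R$-boundedness machinery underlying the $L_p$ theory.
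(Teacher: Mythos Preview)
Your Steps~1 and~2 track the paper's architecture well: the paper also reduces via the Stokes semigroup (though it treats the inhomogeneous data first and the initial data last, the opposite order to yours), and it also builds the $B^s_{q,1}$-resolvent theory from the $L_q$ solution operators of \cite{SS12} by interpolation.  The divergence reduction and the splitting of the pressure into a $B^{s+1}_{q,1}$ part plus a $\wh B^{s+1}_{q,1,0}$ part are carried out explicitly (your package $(\wh\bF,\lambda\wh\bG,\nabla\wh G_{\mathrm{div}},\lambda^{1/2}\wh\bH,\nabla\wh\bH)$ is exactly what emerges after those reductions).

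Where you genuinely diverge from the paper is Step~3, and the divergence is exactly where you locate your ``main obstacle''.  Your plan is to localize in space via $\Delta_j$, reduce to scalar symbols in $(\lambda,\rho)$ with $\rho\sim 2^j$, and bound the $L_1$-norm of each inverse Laplace transform uniformly in $\rho$.  This is clean for the whole-space part, but the half-space solution operators are integrals in the normal variable with kernels $e^{-B(x_d+y_d)}$, $M_{x_d+y_d}$, $e^{-A(x_d+y_d)}$; they do not commute with $\Delta_j$ and do not act as scalar symbols on a block, so the reduction ``modulo uniformly $L_q$-bounded operators'' is the whole difficulty, not a harmless remainder.  You acknowledge this but do not propose a resolution.

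The paper's mechanism bypasses this completely.  Instead of a block-by-block argument, it proves a pair of \emph{spectral-shift} estimates on the full Besov scale (Theorem~\ref{th-sth}(4)):
\[
\|(\lambda,\lambda^{1/2}\nabla,\nabla_b^2)\CS(\lambda)\bPhi\|_{B^s_{q,r}}\le C|\lambda|^{-\sigma/2}\|\bPhi\|_{B^{s+\sigma}_{q,r}},\qquad
\|(\lambda,\lambda^{1/2}\nabla,\nabla_b^2)\pd_\lambda\CS(\lambda)\bPhi\|_{B^s_{q,r}}\le C|\lambda|^{-(1-\sigma/2)}\|\bPhi\|_{B^{s-\sigma}_{q,r}},
\]
for a small $\sigma>0$ with $-1+1/q<s\pm\sigma<1/q$.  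These are obtained not by block analysis but by the interpolation scheme of Theorem~\ref{spectralthm:1}: one first proves $L_q\to L_q$, $\CH^1_q\to\CH^1_q$, and the cross estimate $\CH^1_q\to L_q$ with a $|\lambda|^{-1/2}$ gain for the half-space operators $\bL_1,\bL_2,\bL_3$ (Theorems~\ref{thm:4.1}--\ref{thm:4.3}), then combines complex interpolation, duality, and real interpolation to reach $B^s_{q,r}$.  Via contour deformation these yield the pair
\[
\|\nabla_b^2 T(t)\bPhi\|_{B^s_{q,1}}\le Ce^{\gamma t}t^{-1+\sigma/2}\|\bPhi\|_{B^{s+\sigma}_{q,1}},\qquad
\|\nabla_b^2 T(t)\bPhi\|_{B^s_{q,1}}\le Ce^{\gamma t}t^{-1-\sigma/2}\|\bPhi\|_{B^{s-\sigma}_{q,1}},
\]
and Proposition~\ref{prop-real-interpolation} (real interpolation $(B^{s+\sigma}_{q,1},B^{s-\sigma}_{q,1})_{1/2,1}=B^s_{q,1}$ against the two time singularities) delivers $\int_0^\infty e^{-\gamma t}\|\nabla_b^2 T(t)\bPhi\|_{B^s_{q,1}}\,\d t\le C\|\bPhi\|_{B^s_{q,1}}$.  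The $\lambda^{1/2}\bH$ contribution is then handled by Proposition~\ref{half-derivative1}, giving the $W^{1/2}_1$-norm.  So the paper never needs the half-space operators to act nicely block-by-block; the $\sigma$-shift in the Besov index absorbs exactly the logarithmic obstruction you worried about.
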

\begin{thm}
\label{th-MR-homogeneous}
Let $d \ge 2$, $1 < q < \infty$, and $- 1 + 1 \slash q < s < 1 \slash q$.
Let $\bV_0 \in \dot B^s_{q, 1} (\HS)^d$ and $\bF$, $G_\mathrm{div}$,
and $\bH$ satisfy the following regularity conditions:
\begin{alignat}4
\bF & \in L_1 (\BR, \dot B^s_{q, 1} (\HS)^d), & \quad
\nabla G_\mathrm{div} & \in  L_1 (\BR, \dot B^s_{q, 1} (\HS)^d), \\
\bH & \in \dot W^{1/2}_1 (\BR, \dot B^{s}_{q, 1} (\HS)^d) & \quad
\nabla \bH & \in L_1 (\BR, \dot B^s_{q, 1} (\HS)^{d^2}).	
\end{alignat}
Assume that there exists a function $\bG$ satisfying
$\pd_t \bG \in L_1 (\BR, \dot B^s_{q, 1} (\HS)^d)$
such that 
$G_\mathrm{div}= \dv\bG$ holds in the sense of distributions
and that the compatibility condition $\bG\vert_{t = 0} - \bV_0
\in \dot J^s_{q,1}(\HS)$ is satisfied.  Then, 
Problem \eqref{eq-Stokes} admits a unique solution $(\bV, \Pi)$ with
\begin{equation}
\pd_t \bV \in L_1 (\BR_+, \dot B^s_{q, 1} (\HS)^d), \quad
\nabla^2 \bV \in L_1 (\BR_+, \dot B^{s}_{q, 1} (\HS)^{d^3}), \quad
\nabla \Pi \in L_1 (\BR_+, \dot B^s_{q, 1} (\HS)^d)	
\end{equation}
possessing the estimate
\begin{align}
& \lVert(\pd_t \bV, \nabla^2 \bV, \nabla \Pi) 
\rVert_{L_{1} (\BR_+, \dot B^s_{q, 1} (\HS))} \\ 
&\quad \le C \Big(\lVert \bV_0 \rVert_{\dot B^s_{q, 1} (\HS)}
+ \lVert (\bF, \nabla G_\mathrm{div}, \pd_t \bG, \nabla \bH) 
\rVert_{L_1 (\BR, \dot B^s_{q, 1} (\HS))} 
+ \lVert \bH 
\rVert_{\dot W^{1/2}_1 (\BR, \dot B^s_{q, 1} (\HS))} \Big).	
\end{align}
Here, $\nabla f = \{\pd^\alpha_x f \mid \alpha \in \BN_0^d, \, |\alpha|=1\}$ 
and $\nabla^2f = \{\pd^\alpha_x f \mid \alpha \in \BN_0^d, \, |\alpha|=2\}$, and 
for a Banach space $X$ the seminorm 
of $\dot W^{1 \slash 2}_1 (I, X)$ is defined by
\begin{align}
\|f\|_{\dot W^{1/2}_1 (\BR, X)}
& := \int_\BR \lvert h \rvert^{- 1 \slash 2}
\lVert f (\,\cdot\, , \,\cdot + h ) - f (\,\cdot\, , \,\cdot\,) 
\rVert_{L_1 (\BR, X)} \d h.
\end{align}	
\end{thm}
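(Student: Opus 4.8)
I would prove Theorem~\ref{th-MR-homogeneous} along the lines of Theorem~\ref{th-MR-inhomogeneous}: represent the solution by the inverse Fourier transform in time of the solution operators of the \emph{generalized} Stokes resolvent problem (the resolvent problem with inhomogeneous boundary data), and estimate that representation. The structural change is that the spectral parameter is now placed on the imaginary axis (that is, $\gamma = 0$) and all spatial quantities are measured in the homogeneous scale $\dot B^s_{q,1}(\HS)$. Homogeneous Besov spaces are the natural framework precisely because they are invariant under the parabolic dilations respected by the Stokes system, so that the low-frequency behaviour of the resolvent --- which in Theorem~\ref{th-MR-inhomogeneous} was kept away from $\lambda = 0$ by the weight $e^{-\gamma t}$ --- can here be absorbed into the $\ell^1(\BZ)$ Littlewood--Paley summation defining the $\dot B^s_{q,1}$-norm.

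\textbf{Step 1: resolvent estimates, uniformly down to $\lambda = 0$.} For $\lambda = i\tau$, $\tau \in \BR\setminus\{0\}$, I would analyze the generalized Stokes resolvent problem
\begin{equation*}
\lambda\bV - \DV(\mu\BD(\bV) - \Pi\BI) = \bF,\quad \dv\bV = g = \dv\bg,\quad (\mu\BD(\bV) - \Pi\BI)\bn_0 = \bh\vert_{\pd\HS}\quad\text{in }\HS .
\end{equation*}
The partial Fourier transform in the tangential variables yields, as in Beale~\cite{B80} and Shibata--Shimizu~\cite{SS12}, explicit solution operators whose symbols are built from $|\xi'|$, $\sqrt{\lambda/\mu + |\xi'|^2}$, and the Lopatinskii determinant of the free-surface problem. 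The key claim is that these operators are bounded on $\dot B^s_{q,1}(\HS)$ with parabolically consistent bounds, schematically
\begin{multline*}
\|\nabla^2\bV\|_{\dot B^s_{q,1}} + |\lambda|\,\|\bV\|_{\dot B^s_{q,1}} + |\lambda|^{1/2}\|\nabla\bV\|_{\dot B^s_{q,1}} + \|\nabla\Pi\|_{\dot B^s_{q,1}} \\ \le C\big(\|\bF\|_{\dot B^s_{q,1}} + \|\nabla g\|_{\dot B^s_{q,1}} + |\lambda|\,\|\bg\|_{\dot B^s_{q,1}} + |\lambda|^{1/2}\|\bh\|_{\dot B^s_{q,1}} + \|\nabla\bh\|_{\dot B^s_{q,1}}\big),
\end{multline*}
with $C$ independent of $\lambda$ on the imaginary axis, in particular as $|\lambda| \to 0$. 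These bounds I would deduce from the classical $L_q$-type resolvent estimates by real interpolation, in the spirit of \cite[Prop.~4.13]{DHMTpre}, but carried out so as to reach the whole range $1 < q < \infty$, $-1+1/q < s < 1/q$.

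\textbf{Step 2: from resolvent to evolution; density and interpolation.} I would first reduce to $\bV_0 = 0$ and to data defined on all of $\BR$ and vanishing for $t < 0$, using the compatibility condition $\bG\vert_{t=0} - \bV_0 \in \dot J^s_{q,1}(\HS)$ together with the solenoidality built into $\dot J^s_{q,1}$; the contribution of $\bV_0$, which produces the term $\|\bV_0\|_{\dot B^s_{q,1}}$ on the right-hand side, is handled by the same contour-integral representation of the free-boundary Stokes semigroup. Taking the Fourier transform in $t \in \BR$, $\widehat\bV(\tau)$ solves the resolvent problem at $\lambda = i\tau$ with the Fourier-transformed data, so that $\pd_t\bV$, $\nabla^2\bV$ and $\nabla\Pi$ are recovered as inverse Fourier transforms of multiplier-modified versions of $\widehat\bV$. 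Applying the Step~1 estimate dyadically in the spatial frequency and summing in $\ell^1(\BZ)$ --- which is legitimate because the Step~1 constant is frequency-independent --- controls the $L_1(\BR, \dot B^s_{q,1})$-norms of $\pd_t\bV$, $\nabla^2\bV$, $\nabla\Pi$ by those of the data; the factor $|\lambda|^{1/2} = |\tau|^{1/2}$ multiplying $\bh$ in the solution formula corresponds, on the time side, to the $\dot W^{1/2}_1$-seminorm of $\bH$ in the difference-quotient form appearing in the statement, while the spatial gradient of the boundary datum yields the term $\|\nabla\bH\|_{L_1(\BR, \dot B^s_{q,1})}$. All of this is first carried out for data in a dense subclass on which the Fourier transforms are classical, and then extended to general data by density together with completeness of the solution space; uniqueness follows by applying the resolvent argument to the difference of two solutions.

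\textbf{Main obstacle.} The crux is Step~1: establishing the resolvent estimates for the \emph{free-surface} Stokes problem on homogeneous Besov spaces with constants uniform as $\lambda \to 0$ along the imaginary axis. Here the Neumann-type dynamic boundary condition is what makes matters delicate --- one must control the low-frequency behaviour of the solution operators built from the Lopatinskii determinant and verify that no logarithmic loss arises when the low-frequency Littlewood--Paley blocks are summed in $\ell^1$, which is precisely the step that the weight $e^{-\gamma t}$ circumvented in Theorem~\ref{th-MR-inhomogeneous}. The narrowing to $d-1 < q < 2d$ in the nonlinear Theorems~\ref{th-local-well-posedness-fixed}--\ref{th-global-well-posedness-fixed} does not originate here, but rather from the Besov embeddings and product estimates needed to close the fixed-point argument; the linear estimate of Theorem~\ref{th-MR-homogeneous} itself holds on the full range $1 < q < \infty$, $-1+1/q < s < 1/q$, which is the improvement over \cite{DHMTpre}.
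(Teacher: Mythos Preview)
Your overall architecture is right --- work via the solution operators $\CS(\lambda)$, $\CP(\lambda)$ of the generalized resolvent problem --- and your Step~1 resolvent estimate is essentially Theorem~\ref{th-sth}(3) with $\gamma_b = 0$. But Step~2 contains a genuine gap: a uniform-in-$\tau$ bound on the resolvent multiplier does \emph{not} yield an $L_1(\BR)$ bound after inverse Fourier transform, because there is no operator-valued Fourier multiplier theorem on $L_1$. Your phrase ``applying the Step~1 estimate dyadically in the spatial frequency and summing in $\ell^1(\BZ)$'' does not supply the missing mechanism: uniformity of the constant in $\tau$ and in the spatial Littlewood--Paley block still gives only pointwise-in-$\tau$ control of $\widehat{\bV}(\tau)$, not $L_1$-in-$t$ control of $\bV$. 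Consequently your ``main obstacle'' is also misidentified: the uniformity of the resolvent bound as $\lambda \to 0$ along $i\BR$ is not the crux (in the homogeneous setting this is part of Theorem~\ref{th-sth}(3)); the crux is how to pass from resolvent bounds to $L_1$-in-time bounds at all.

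The paper's mechanism is different and essential. It uses the \emph{shifted} resolvent estimates of Theorem~\ref{th-sth}(4), trading $\pm\sigma$ of spatial Besov regularity for explicit decay in $|\lambda|$:
\[
\|\nabla^2\CS(\lambda)\bPhi\|_{\dot B^s_{q,1}} \le C|\lambda|^{-\sigma/2}\|\bPhi\|_{\dot B^{s+\sigma}_{q,1}}, \qquad
\|\nabla^2\pd_\lambda\CS(\lambda)\bPhi\|_{\dot B^s_{q,1}} \le C|\lambda|^{-(1-\sigma/2)}\|\bPhi\|_{\dot B^{s-\sigma}_{q,1}}.
\]
After deforming the integration from the imaginary axis to the sectorial contour $\Gamma$ (rays at angle $\pm(\pi-\epsilon)$), these give two pointwise-in-time bounds for the ``generalized semigroup'' $T(t)\bPhi = \frac{1}{2\pi i}\int_\Gamma e^{\lambda t}\CS(\lambda)\bPhi\,\d\lambda$:
\[
\|\nabla^2 T(t)\bPhi\|_{\dot B^s_{q,1}} \le Ct^{-1+\sigma/2}\|\bPhi\|_{\dot B^{s+\sigma}_{q,1}}, \qquad
\|\nabla^2 T(t)\bPhi\|_{\dot B^s_{q,1}} \le Ct^{-1-\sigma/2}\|\bPhi\|_{\dot B^{s-\sigma}_{q,1}},
\]
the second coming from an integration by parts in $\lambda$. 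Real interpolation between these two rates (Proposition~\ref{prop-real-interpolation}), using $(\dot B^{s+\sigma}_{q,1}, \dot B^{s-\sigma}_{q,1})_{1/2,1} = \dot B^s_{q,1}$, then yields $\int_0^\infty\|\nabla^2 T(t)\bPhi\|_{\dot B^s_{q,1}}\,\d t \le C\|\bPhi\|_{\dot B^s_{q,1}}$; writing $\bV$ as a time-convolution with $T(\cdot)$ and applying Fubini gives the $L_1(\BR_+,\dot B^s_{q,1})$ estimate. This $\pm\sigma$-shift/interpolation device is precisely what replaces the Weis multiplier theorem in the $L_1$ setting, and your proposal does not contain it.
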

\subsection{Generalized resolvent problem for the Stokes systems with free boundary conditions}
To prove Theorems \ref{th-MR-inhomogeneous} and \ref{th-MR-homogeneous}, we use solution operators
$\CS(\lambda)$ and $\CP(\lambda)$ of the generalized resolvent problem for the Stokes systems
with free boundary conditions, which
reads as 
\begin{equation}\label{resol:1.1} \left\{\begin{aligned}
\lambda \bu - \DV(\mu\BD(\bu) - \fq \BI) &= \bff&\quad&\text{in $\HS$}, \\
\dv \bu &=0&\quad&\text{in $\HS$}, \\
(\mu\BD(\bu) - \fq \BI)\bn_0 &=\bh|_{\pd\HS} &\quad&\text{on $\pd\HS$}
\end{aligned}\right.\end{equation}
subject to the assumptions that
\begin{equation}\label{resolassump}
\dv\bff=0 \quad\text{in  $\HS$} \quad \text{and} \quad
\langle \bh, \bn_0 \rangle = 0.
\end{equation}
Here and in the following, $\langle\ba, \bb\rangle = \sum_{j=1}^d a_jb_j$ 
for two $d$-vectors $\ba=(a_1, \ldots, a_d)$ and $\bb=(b_1, \ldots, b_d)$.
For $\Omega \in \{\BR^d, \HS\}$, we set 
$$\sB^{s+m}_{q,r} (\Omega) = \bigcap_{\ell=0}^m \CB^{s+\ell}_{q,r} (\Omega).$$ 
In particular, if $\CB^{s+\ell}_{q,r} (\Omega) = \dot B^{s+\ell}_{q,r} (\Omega)$ for all 
$\ell = 0, \ldots, m$, we write $\sB^{s+m}_{q,r} (\Omega) = \dot \sB^{s+m}_{q,r} (\Omega)$,
Clearly, if $\CB^{s+\ell}_{q,r} (\Omega) = B^{s+\ell}_{q,r} (\Omega)$ for all 
$\ell = 0, \ldots, m$, there holds $\sB^{s+m}_{q,r} (\Omega) = B^{s+m}_{q,r} (\Omega)$.
On the other hand, since $\dot B^{s+\ell}_{q,r}(\Omega) \supset \dot B^{s+m}_{q,r}(\Omega)
\cap \dot B^s_{q,r}(\Omega)$, we have $\dot\sB^{s+m}_{q,r}(\Omega) = \dot B^s_{q,r}(\Omega)
\cap \dot B^{s+m}_{q,r}(\Omega)$. \par
For the right members $(\bff, \bh)$ in Problem \eqref{resol:1.1}, 
we introduce the spaces
\begin{equation}\label{defD}\begin{aligned}
D^s_{q,r} (\HS) & = \{(\bff,  \bh') \mid \bff \in J^s_{q,r}(\HS), \quad 
\bh' = (h_1, \ldots, h_{d-1}) \in B^{s+1}_{q,r}(\HS)^{d-1} \}, \\
\dot D^s_{q,r} (\HS) &=\{(\bff, \bh') \mid \bff \in \dot J^s_{q,r}(\HS), \quad 
\bh'=(h_1, \ldots, h_{d-1}) \in \dot \sB^{s+1}_{q,r}(\HS)^{d-1}\}.
\end{aligned}\end{equation}
To describe the result for \eqref{resol:1.1} in an orderly way, 
we agree on the following convention on the symbols $\CB^s_{q,r}$, $\CH^\mu_{q,r}$, $\CJ^s_{q,r}$,
${\wh \CB}{}^{s+1}_{q,r}$, ${\wh \CB}{}^{s+1}_{q,r,0}$, and $\CD^s_{q,r}$ as well as
the symbols $\nabla_b$ and $\nabla_b^2$.
\begin{conv}
\label{convention}
Given $\gamma > 0$ let $\gamma_b$ be either $\gamma_b = \gamma$ or $\gamma_b = 0$. 
\begin{enumerate}
\item If $\gamma_b = \gamma$, we write
\begin{equation}\label{symbol:01}\CB^s_{q,r} = B^s_{q,r}, \quad
\CH^s_q= H^s_q, \quad 
\CJ^s_{q,r} = J^s_{q,r},\quad {\wh \CB}{}^{s+1}_{q,r} = {\wh B}{}^{s+1}_{q,r},
\quad  {\wh\CB}{}^{s+1}_{q,r,0} = {\wh B}{}^{s+1}_{q,r,0}, \quad 
\CD^s_{q,r} = D^s_{q,r}.
\end{equation}
If $\gamma_b=0$, we write
\begin{equation}\label{symbol:02}
\CB^s_{q,r}=\dot B^s_{q,r}, \quad \CH^s_q = \dot H^s_q, \quad  \CJ^s_{q,r} = \dot J^s_{q,r}, \quad  
{\wh \CB}{}^{s+1}_{q,r} = {\wh{\dot  B}}{}^{s+1}_{q,r}, \quad 
{\wh\CB}{}^{s+1}_{q,r,0} = {\wh {\dot B}}{}^{s+1}_{q,r}, \quad 
\CD^s_{q,r} = \dot D^s_{q,r}.
\end{equation}
Here, $H^s_q$ and $\dot H^s_q$ denote the inhomogeneous and 
homogeneous Sobolev spaces, respectively, where 
the definitions will be given in Section \ref{sec-2} below. 
\item If $\gamma_b = \gamma$, the symbols $\nabla_b$ and $\nabla_b^2$ stand for
$\nabla_bf = \{\pd^\alpha_x f\mid \alpha \in \BN_0^d, \, |\alpha| \leq 1\}$ and 
$\nabla^2_bf = \{\pd^\alpha_x f\mid \alpha \in \BN_0^d, \, |\alpha| \leq 2\}$, respectively.
If $\gamma_b = 0$, the symbols $\nabla_b$ and $\nabla_b^2$ stand for
$\nabla_bf = \{\pd^\alpha_x f \mid \alpha \in \BN_0^d, \, |\alpha| = 1\}$ and 
$\nabla^2_bf = \{\pd^\alpha_x f \mid \alpha \in \BN_0^d, \, |\alpha| = 2\}$, respectively.
\end{enumerate}
\end{conv}
To introduce the solution operators for equations \eqref{resol:1.1}, let $\bPhi_1
\in \CB^s_{q,r}(\HS)^d$, $\bPhi_2' \in \CB^s_{q,r}(\HS)^{d-1}$, $\bPhi_3' 
\in \CB^s_{q,r}(\HS)^{d(d-1)}$ and 
$\bPhi = (\bPhi_1, \bPhi_2', \bPhi_3') \in \CB^s_{q,r}(\HS)^{M_d}$ with $M_d = d -1 + d^2$.
Here, $\bPhi_1$, $\bPhi_2'$, and $\bPhi_3'$ are the corresponding
variables to $\bff$, $\lambda^{1/2}\bh'$, and $\nabla \bh'$, respectively. \par 
For equations \eqref{resol:1.1}, we shall prove the following theorem.
\begin{thm}\label{th-sth} Let $1 < q < \infty$, 
$-1+1/q < s < 1/q$, $\epsilon
\in (0, \pi/2)$, and $\gamma > 0$. Moreover, we assume that $1 \leq r \leq \infty-$ in the assertions
$(1) - (3)$ and that $1 \leq r \leq \infty$ 
in assertion $(4)$ below. 
Let
\begin{equation}\label{sect:1.1}
\Sigma_\epsilon = \{\lambda \in \BC\setminus\{0\} \mid |\arg\lambda| \leq \pi-\epsilon\}.
\end{equation}
Then, there exist two operators $\CS(\lambda)$ and  $\CP(\lambda)$  such that 
the following four assertions hold:
\begin{itemize}
\item[$(1)$]~ $\CS(\lambda) \in \Hol(\Sigma_\epsilon + \gamma_b, 
\CL(\CB^s_{q,r}(\HS)^{M_d}, \sB^{s+2}_{q,r}(\HS)^d))$, \\
\phantom{}\, $\CP(\lambda) \in \Hol(\Sigma_\epsilon + \gamma_b, 
\CL(\CB^s_{q,r}(\HS)^{M_d}, \sB^{s+1}_{q,r}(\HS)))$. 
\item[$(2)$]  
For every $\lambda \in \Sigma_\epsilon + \gamma_b$ and $(\bff, \bh') \in \CD^s_{q,r}(\HS)$,  
Problem \eqref{resol:1.1} admits solutions $\bu \in \sB^{s+2}_{q,r}(\HS)^d$ and $\fq
\in \sB^{s+1}_{q,r}(\HS)$ which are represented by 
$\bu = \CS(\lambda)\bsF$ and $\fq = \CP(\lambda)\bsF$ with
$\bsF = (\bff, \lambda^{1/2}\bh',  \nabla\bh')$. 
\item[$(3)$]  For any $\lambda\in \Sigma_\epsilon + \gamma_b$ and $\bPhi \in 
\CB^s_{q,r}(\HS)^{M_d}$, there holds
$$\|(\lambda, \lambda^{1/2}\nabla, \nabla_b^2)\CS(\lambda)\bPhi\|_{\CB^s_{q,r}(\HS)}
+\|(\lambda^{1/2}, \nabla_b)\CP(\lambda)\bPhi\|_{\CB^{s}_{q,r}(\HS)} 
\leq C\|\bPhi\|_{\CB^s_{q,r}(\HS)}.
$$
\item[$(4)$] Let $\sigma>0$ be a small number such that $-1+1/q < s-\sigma < s < s+\sigma
< 1/q$.  Then, for every  $\lambda \in \Sigma_\epsilon + \gamma_b$
and $\bPhi\in C^\infty_0(\HS)^{M_d}$,  there hold
\begin{align}
\|(\lambda, \lambda^{1/2}\nabla, \nabla_b^2)\CS(\lambda)\bPhi\|_{\CB^s_{q,r}(\HS)}
+\|(\lambda^{1/2}, \nabla_b)\CP(\lambda)\bPhi\|_{\CB^{s}_{q,r}(\HS)} 
&\leq C|\lambda|^{-\frac{\sigma}{2}}\|\bPhi\|_{\CB^{s+\sigma}_{q,r}(\HS)}, 
\label{(4)} \\
\|(\lambda, \lambda^{1/2}\nabla, \nabla_b^2)\pd_\lambda\CS(\lambda)\bPhi\|_{\CB^s_{q,r}(\HS)}
+\|(\lambda^{1/2}, \nabla_b)\pd_\lambda \CP(\lambda)\bPhi\|_{\CB^{s}_{q,r}(\HS)} 
&\leq C|\lambda|^{-(1-\frac{\sigma}{2})}\|\bPhi\|_{\CB^{s-\sigma}_{q,r}(\HS)}.
\label{(5)}
\end{align}
\end{itemize}
Here, 
$\Hol(U, X)$ denotes the set of all $X$ valued holomorphic functions defined on 
$U$, and $\CL(X, Y)$ denotes the set of all bounded linear operators from
$X$ into $Y$. 
\end{thm}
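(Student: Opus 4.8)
\textbf{Proof plan for Theorem \ref{th-sth}.}

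The plan is to construct $\CS(\lambda)$ and $\CP(\lambda)$ explicitly via the partial Fourier transform in the tangential variables $x' \in \BR^{d-1}$, following the classical Beale--Solonnikov reduction but carried out at the level of solution \emph{operators} rather than of individual solutions. First I would apply the partial Fourier transform $\mathcal{F}'$ in $x'$ to \eqref{resol:1.1}; since the coefficients are constant, this turns the system into a family (parametrized by $\xi' \in \BR^{d-1}$ and $\lambda$) of linear ordinary differential equations in $x_d \in (0,\infty)$ for $(\wh\bu(\xi',x_d), \wh\fq(\xi',x_d))$, with the boundary conditions from the third line of \eqref{resol:1.1} imposed at $x_d = 0$ and decay as $x_d \to \infty$. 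Solving these ODEs with the divergence constraint $\dv\bu = 0$ and $\langle\bh,\bn_0\rangle = 0$ (which is exactly what makes the resulting boundary-value problem solvable — it removes the normal component of the dynamic condition, leaving $d-1$ tangential scalar conditions) yields exponential-type solution formulas whose symbols involve $|\xi'|$, $\lambda$, and the ``Lopatinski determinant'' $L(\xi',\lambda)$ whose non-vanishing on $\Sigma_\epsilon + \gamma_b$ must be checked; this is the well-known free-boundary Lopatinski--Shapiro condition and the lower bound $|L(\xi',\lambda)| \gtrsim (|\lambda|^{1/2} + |\xi'|)^k$ for the relevant power $k$ is standard (cf. \cite{Shi14,SS12}). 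The operators $\CS(\lambda)$ and $\CP(\lambda)$ are then \emph{defined} by composing $\mathcal{F}'$, multiplication by these explicit symbols (applied to the components $\bsF = (\bff, \lambda^{1/2}\bh', \nabla\bh')$ of the data), and $(\mathcal{F}')^{-1}$; assertion $(2)$ — that these operators indeed solve \eqref{resol:1.1} for genuine data in $\CD^s_{q,r}(\HS)$ — then follows by construction together with a density argument to justify the manipulations, using the characterization of $\CJ^s_{q,1}$ from Remark \ref{rem:solenoidal} to handle $\dv\bff = 0$.

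The core of the proof is the multiplier analysis needed for assertions $(1)$, $(3)$, and $(4)$. The symbols arising in $\CS(\lambda)$ and $\CP(\lambda)$ are, after the substitution $A = \sqrt{|\xi'|^2 + \lambda/\mu}$, products and quotients of $|\xi'|$, $A$, $\lambda^{1/2}$, and entire functions thereof, divided by $L(\xi',\lambda)$ and by elementary factors like $(A + |\xi'|)$; away from $\xi' = 0$ and $\lambda = 0$ these are smooth, and the whole point of working on $\Sigma_\epsilon + \gamma_b$ (with $\gamma_b = \gamma > 0$ in the inhomogeneous case, $\gamma_b = 0$ in the homogeneous case) is to keep $\lambda$ bounded away from the origin so that no low-frequency degeneracy occurs. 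For the $\CB^s_{q,r} \to \sB^{s+2}_{q,r}$ boundedness in $(1)$ and the resolvent-type bound in $(3)$ I would use the Mikhlin--H\"ormander multiplier theorem in the tangential variables combined with the standard one-dimensional estimates on the $x_d$-integrals of the exponential kernels $e^{-A x_d}$, $e^{-|\xi'| x_d}$ and their differences (the Shibata ``$\mathcal{R}$-bounded solution operator'' machinery provides exactly these pointwise multiplier bounds; here we only need uniform boundedness, not $\mathcal{R}$-boundedness, because we are on Besov spaces built on $\ell_1$-summability, so Weis's theorem is not invoked — this is precisely the structural reason the paper can work at $r = 1$). Holomorphy in $\lambda$ is immediate from the explicit rational-in-$\lambda^{1/2}$ form of the symbols once $L \ne 0$ is established on the sector.

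The technically delicate assertion, and the one I expect to be the main obstacle, is $(4)$: the ``gain of $|\lambda|^{-\sigma/2}$'' estimates \eqref{(4)} and \eqref{(5)}. These encode a fractional smoothing in $\lambda$ that will later be fed into the real-interpolation/Fourier--Laplace argument proving the maximal $L_1$-regularity Theorems \ref{th-MR-inhomogeneous}--\ref{th-MR-homogeneous}, so they must be proved with the \emph{$\sigma$-shifted} Besov norm on the right. The strategy here is an interpolation one: the symbols in $(\lambda, \lambda^{1/2}\nabla, \nabla_b^2)\CS(\lambda)$ are, by $(3)$, bounded multipliers from $\CB^s_{q,r}$ to $\CB^s_{q,r}$ uniformly in $\lambda$, while a direct homogeneity count shows that each symbol, viewed as a function of $\xi'$ alone, also maps $\CB^{s+\sigma}_{q,r} \to \CB^s_{q,r}$ with a constant decaying like $(|\lambda|^{1/2})^{-\sigma} = |\lambda|^{-\sigma/2}$ — one exploits that an extra factor $(|\lambda|^{1/2} + |\xi'|)^{-\sigma}$ can be extracted from the symbol and that $(|\lambda|^{1/2} + |\xi'|)^{-\sigma} \le |\lambda|^{-\sigma/2}$ uniformly while $|\xi'|^\sigma (|\lambda|^{1/2} + |\xi'|)^{-\sigma} \le 1$. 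Making this rigorous requires checking the Mikhlin bounds for the \emph{$\sigma$-weighted} symbols $(|\lambda|^{1/2} + |\xi'|)^{\sigma}\big[\text{symbol}\big]$ and $|\xi'|^{-\sigma}(|\lambda|^{1/2}+|\xi'|)^{\sigma}\big[\text{symbol}\big]$ uniformly in $\lambda \in \Sigma_\epsilon + \gamma_b$ — the condition $-1 + 1/q < s - \sigma < s < s + \sigma < 1/q$ is exactly what is needed to keep these shifted indices in the admissible range where the half-space Besov multiplier theory (and the trace/extension apparatus of Definition \ref{def-hat-space}) applies. The $\pd_\lambda$ estimate \eqref{(5)} is obtained the same way after differentiating the symbols in $\lambda$, which lowers the homogeneity degree in $(|\lambda|^{1/2} + |\xi'|)$ by two and hence produces the advertised $|\lambda|^{-(1-\sigma/2)}$ decay against the $\CB^{s-\sigma}_{q,r}$ norm. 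The bookkeeping — tracking the exact power of $(|\lambda|^{1/2} + |\xi'|)$ in each of the finitely many symbol components of $\CS$ and $\CP$, and verifying the derivative bounds $|\pd_{\xi'}^{\alpha}(\cdots)| \lesssim |\xi'|^{-|\alpha|}(\cdots)$ uniformly on the sector — is routine but lengthy, and is where essentially all the work lies.
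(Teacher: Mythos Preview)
Your overall architecture is right and matches the paper: partial Fourier transform in $x'$, explicit ODE solution in $x_d$, Lopatinski determinant $D_{A,B}$, and a splitting into a ``whole-space'' piece $\CW_1(\lambda)$ plus a boundary correction built from operators with kernels $Be^{-B(x_d+y_d)}$, $A^2BM_{x_d+y_d}$, $Ae^{-A(x_d+y_d)}$. For the whole-space piece your ``extract $(|\lambda|^{1/2}+|\xi|)^{-\sigma}$ and shift the Besov index'' idea is exactly what the paper does (Theorem~\ref{thm:3.1}): one writes, for instance,
\[
\lambda^{\sigma/2}\bL_{0,\ell}(\lambda)f=\CF^{-1}_\xi\Bigl[\tfrac{\lambda^{\sigma/2}(c_b+|\xi|^2)^{\sigma/2}}{(\lambda+\mu|\xi|^2)(c_b+|\xi|^2)^{\sigma/2}}\CF[f]\Bigr]
\]
and applies the full-space Mikhlin theorem on $\BR^d$.

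The gap is in the half-space boundary operators. These are \emph{not} Fourier multipliers on $\BR^d$: they are integral operators in the normal variable with tangential symbol, i.e.\ of the form
\[
\bL_k(\lambda)f(x)=\int_0^\infty \CF^{-1}_{\xi'}\bigl[m(\lambda,\xi')\CL_k(x_d,y_d)\CF'[f](\xi',y_d)\bigr](x')\,dy_d,
\]
and they are not translation-invariant in $x_d$. Consequently your plan of ``Mikhlin in the tangential variables combined with one-dimensional $x_d$-integral estimates'' yields $L_q(\HS)$-bounds (that is Proposition~\ref{prop:3.1}) but \emph{not} directly $\CB^s_{q,r}(\HS)$-bounds, because the Besov norm uses a Littlewood--Paley decomposition in all $d$ variables and does not interact well with a tangential-only symbol calculus. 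Likewise, the $\sigma$-shift argument fails as written: extracting $(|\lambda|^{1/2}+|\xi'|)^{-\sigma}$ and multiplying by $|\xi'|^{\sigma}$ would shift regularity only in the $d-1$ tangential directions, not in the full-space Besov scale $\CB^{s+\sigma}_{q,r}(\HS)$.

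What the paper does instead for the boundary operators is an interpolation--duality scheme (Theorem~\ref{spectralthm:1}): one proves $L_q$ and $\CH^1_q$ estimates for $\CT(\lambda)$ with several different $\lambda$-decay rates (obtained by integration by parts in $y_d$ to trade a factor of $B$ on the kernel for a $\pd_d$ on the data, cf.\ \eqref{5.5.1}), proves the \emph{same} estimates for the explicit dual operator $\CT(\lambda)^*$, then uses complex interpolation to reach $\CH^\nu_q$ for intermediate $\nu$, real interpolation to reach $\CB^s_{q,r}$ for $0<s<1/q$, and duality to cover $-1+1/q<s<0$. The $\sigma$-shifted estimates in assertion~(4) come out of this machinery by interpolating between endpoint estimates with different $\lambda$-powers (Lemmas~\ref{lem.s.1}--\ref{lem.s.4}), not from a weighted-symbol Mikhlin argument. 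Your proposal is missing this entire mechanism; in particular you never mention duality, which is how the range $s<0$ is reached.
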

\begin{rem} From Theorem \ref{th-sth} the existence of solutions to equations \eqref{resol:1.1}
follows. 
The uniqueness of solutions to \eqref{resol:1.1} will be proved as Theorem \ref{thm:unique} in Subsec. 4.4 below.
\end{rem}

\subsection{Structure of the paper}
This paper is organized as follows. In the next section, 
we recall the notation of function spaces and describe 
several preliminary results that play crucial roles 
throughout this paper. We first describe the results for 
boundary value problems for the Laplacian, which is necessary
to use in our analysis in order to handle the pressure term
appearing in the system. Using the unique existence results
for the weak Dirichlet problems, we will derive the solution 
formula to \eqref{resol:1.1} with assumption \eqref{resolassump}.
Although the solution 
formula to \eqref{resol:1.1} was derived by the first
author in his previous paper (cf. \cite[Sec. 6]{SS12}),
we give a new formula which avoids some technical difficulties.
To be precise, our new solution formula does not
contain a singular integral operator such as
\begin{equation}
\int_0^\infty \CF^{- 1}_{\xi'} [m (\lambda, \xi') 
e^{- \lvert \xi' \rvert (x_d + y_d)} 
\CF_{x'} [\mathsf h] (\xi', y_d) ] (x') \d y_d,
\end{equation}
see Section \ref{sec-solution-formula} below.  In 
Sections \ref{sec-3} and \ref{sec-4}, we will study the generalized 
resolvent problem \eqref{resol:1.1} and prove Theorem \ref{th-sth}. 
Then, Section \ref{sec-5} is devoted to the proof of maximal 
$L_1$-regularity estimates for the solutions $(\bV_1, \Pi_1)$ 
to 
{\color{magenta}
\begin{align}
\label{eq-Stokes-inhomogeneous}
\left\{\begin{aligned}
\pd_t \bV_1 - \DV (\mu \BD (\bV_1) - \Pi_1 \BI) & = \bF & \quad & \text{in $\HS \times \BR$}, \\
\dv \bV_1 & =0 & \quad & \text{in $\HS \times \BR$}, \\
(\mu \BD (\bV_1) - \Pi_1 \BI) \bn_0 & = \bH' \vert_{\pd \HS} & \quad & \text{on $\pd \HS \times \BR$}
\end{aligned}\right.			
\end{align}	 
subject to $\dv \bF=0$ and $\bH' = (H_1, \ldots, H_{d-1}, 0)$,
where Theorem \ref{th-sth} is used.} 
We will see in Section~\ref{sec-6} that the resolvent 
estimates derived in Sections \ref{sec-3} and \ref{sec-4} also
induce the generation of a $C_0$-analytic semigroup associated with 
the linearized system 
{\color{magenta}
\begin{align}
\label{eq-Stokes-homogeneous}
\left\{\begin{aligned}
\pd_t \bV_2 - \DV (\mu\BD (\bV_2) - \Pi_2 \BI) & = 0 & \quad & \text{in $\HS \times \BR_+$}, \\
\dv \bV_2 & = 0 & \quad & \text{in $\HS \times \BR_+$}, \\
(\mu\BD (\bV_2) - \Pi_2 \BI) \bn_0 & = 0 & \quad & \text{on $\pd \HS \times \BR_+$}, \\
\bV_2 \vert_{t = 0} & = \bV_0 - \bV_1 \vert_{t = 0} & \quad & \text{in $\HS$}.	
\end{aligned}\right.			
\end{align}
Here, we remark that $\bV_0 - \bV_1 \vert_{t = 0}$ satisfies
$\bV_0 - \bV_1 \vert_{t = 0} \in \CJ^s_{q,1}(\HS)$, 
i.e., the compatibility condition is satisfied.}
Then, recalling that
$\bV = \bV_1 + \bV_2$ and $\Pi = \Pi_1 + \Pi_2$, the proof of
Theorems \ref{th-MR-inhomogeneous} and \ref{th-MR-homogeneous}
is complete. Together with the contraction mapping principle, 
in Section \ref{sec-8}, we will show the unique existence of 
local and global strong solutions to \eqref{eq-fixed}, i.e., Theorems 
\ref{th-local-well-posedness-fixed}, \ref{thm:lw.2}, and 
\ref{th-global-well-posedness-fixed}. To prove these well-posedness results, 
the most difficult part here arises in the estimate of the 
boundary data $\BH(\bu)$. To overcome this difficulty, we use an interpolation
inequality, which is stated in Proposition~\ref{prop-B7} below, to estimate 
the boundary data in $W^{1\slash2}_1 (\BR, B^{s + 1}_{q, 1} (\HS))$ 
as well as in $\dot W^{1\slash2}_1 (\BR, \dot B^{d \slash q}_{q, 1} (\HS))$.
Finally, by taking advantage of the fact that
$\bX_{\bu} (\,\cdot\, , t)$ is a $C^1$-diffeomorphism 
from $\HS$ onto $\Omega (t)$ for each $t \in [0, T)$, we will show 
the well-posedness result for \eqref{eq-original}, i.e., 
Corollaries  \ref{cor-local-well-posedness-original}, \ref{cor:lw.2},
and \ref{cor-global-well-posedness-original}. 
The precise form of nonlinearities 
is recorded in Appendix \ref{sec-A} and several technical tools used in 
the proofs are collected in Appendix \ref{ap.B} for the reader's convenience.

\section{Preliminaries}\label{sec-2}
\subsection{Notation}
Let us fix the symbols used in this paper. 
As usual, $\BR$, $\BN$, and $\BC$ stand for the set of all real, natural, 
complex numbers, respectively,
while $\BZ$ stands for the set of all integers. {\color{black} In addition}, 
$\BK$ means either $\BR$ or $\BC$. 
Set  $\BN_0 := \BN \cup \{0\}$. \par
For $d \in \BN$ and a Banach space $X$, let $\CS (\BR^d, X)$ be the 
Schwartz class of $X$-valued functions on $\BR^d$, while 
$\CS' (\BR^d, X)$ be the space of $X$-valued tempered distributions, i.e., 
the set of all continuous linear mappings from $\CS (\BR^d)$ to $X$.
If $X \in \BK$, we will write $\CS (\BR^d) = \CS (\BR^d, X)$ and 
$\CS' (\BR^d) = \CS' (\BR^d, X)$ for short.\par
For $d \in \BN$, we define the Fourier transform $f \mapsto \CF [f]$ from 
$\CS (\BR^d, X)$ onto itself and its inverse $\CF^{- 1}_\xi$ as
\begin{equation}
\CF[f] (\xi) := \int_{\BR^d} e^{- i x \cdot \xi} f (x) \d x, \qquad 
\CF^{- 1}_\xi [g] (x) := \frac{1}{(2 \pi)^d} \int_{\BR^d} e^{i x \cdot \xi} g (\xi) \d \xi,
\end{equation}
respectively. Notice that $\CF$ and $\CF^{- 1}_\xi$ may be extended to 
operators on $\CS' (\BR^d, X)$ in the usual way.
For  functions $f(x', {\color{black} x_d})$ and $g(\xi', {\color{black} x_d})$ defined for $x'=(x_1, \ldots, x_{d-1})$,
$\xi'=(\xi_1, \ldots, \xi_{d-1}) \in \BR^{d-1}$, and ${\color{black} x_d} > 0$, {\color{black}let}
$\CF'[f](\xi', x_d)$ and 
$\CF^{-1}_{\xi'}[g](x', x_d)$ be the partial Fourier transform of $f$ and the partial
inverse Fourier transform of $g$, respectively, which are defined by 
\begin{align}
\CF'[f](\xi', x_d) &= \int_{\BR^{d-1}}e^{-ix'\cdot\xi'} f(x', x_d)\d x',
\\
\CF^{-1}_{\xi'}[g](x', x_d) &= \CF^{-1}_{\xi'}[g(\xi', x_d)](x')
= \frac{1}{(2\pi)^{d-1}}\int_{\BR^{d-1}} e^{ix'\cdot \xi'}g(\xi', {\color{black} x_d})\d\xi'
\end{align}
with $x'\cdot\xi' = \sum_{j=1}^{d-1}x_j\xi_j$.  
For $X$-valued functions $f$ and $g$,
the Fourier--Laplace transform $\CL[f](\lambda)$ and its inverse transform 
$\CL^{-1}[g](t)$ are defined by 
\begin{equation}
\label{def-Laplace}
\begin{split}
\CL[f](\lambda) & = \int_{-\infty}^\infty e^{-\lambda t}f(t) \d t
= \CF_t [e^{-\gamma t}f (t)](\tau), \\
\CL^{-1}[g](t) & = \frac{1}{2\pi} \int_{-\infty}^\infty e^{\lambda t}g(\lambda) \d\tau
= e^{\gamma t}\CF^{-1}_\tau [g (\gamma + i\tau)](t),
\end{split}
\end{equation}
respectively. Here, it has been assumed that $\lambda = \gamma + i \tau$
with $\gamma > 0$ and $\tau \in \BR$, and $\CF_t$ and $\CF^{-1}_\tau$ denote 
the Fourier transform with respect to $t$ and the inverse Fourier transform with
respect to $\tau$, respectively. For any $\epsilon \in (0, \pi)$ and $\gamma > 0$, we set
\begin{equation}
\Sigma_\epsilon := \{z \in \BC \setminus \{0\} \mid \lvert \arg z \rvert 
< \pi - \epsilon \}, \qquad
\Sigma_{\epsilon} + \gamma := \{z + \gamma  \mid z \in \Sigma_\epsilon\}.
\end{equation}
For a domain $\Omega$ and scalar-valued functions $f(x)$, $g(x)$,  and 
$d$-vector of functions $\bff(x)$ and $\bg(x)$ defined on $\Omega$, 
we set $(f, g)_{\Omega} 
= \int_{\Omega} f (x) g (x) \d x$ and $(\bff, \bg)_\Omega 
= \int_\Omega \langle \bff(x), \bg(x) \rangle \d x$, where
for $d$-vector functions $\bff = (f_1, \ldots, f_d)$ and $\bg = (g_1, \ldots, g_d)$
we denote $\langle \bff, \bg\rangle  =\sum_{j=1}^d f_jg_j$.
Here, we will write $(f, g) = (f, g)_{\HS}$ and 
$(\bff, \bg) = (\bff, \bg)_{\HS}$ for short if there is no confusion.
To denote various positive constants, we use the same letter
$C$, and  $C_{a,b,\ldots}$ denote the constant $C_{a,b,\ldots}$ depends
on the quantities $a$, $b$, $\ldots$ The constants $C$ and 
$C_{a,b, \ldots}$ may change from line to line whenever there is no confusion.
\par
For a Banach space $X$,  exponents 
$p, q \in [1, \infty]$, and $m \in \BN$ and $s \in \BR$, 
let $L_p(\BR^d, X)$ and $W^m_p(\BR^d, X)$ denote
the standard $X$-valued Lebesgue and Sobolev spaces on $\BR^d$. 
For a domain $G \subset \BR^d$, the spaces $L_p (G, X)$ and $W^m_p (G, X)$
denote the restriction of $L_p(\BR^d, X)$ and $W^m_p(\BR^d, X)$ on 
$G$, respectively, which are equipped with the norms
$\lVert \enskip \cdot \enskip \rVert_{L_p (G, X)}$ 
and $\lVert \enskip \cdot \enskip \rVert_{W^m_p (G, X)}$, respectively. 
Moreover, $\dot W^1_p(\BR, X) = \{f \in L_{p, {\rm loc}}(\BR, X) \mid 
\pd_t f \in L_1(\BR, X)\}$. 
The H\"older conjugate of $p$ is
denoted by $p' = p \slash (p - 1)$.
If $X \in \BK$, we will write $L_p (G) = L_p (G, X)$ and
$W^m_p (G) = W^m_p (G, X)$ for short. 
For $\gamma > 0$ and $I = \{\BR, \BR_+\}$, we write
\begin{align}
\|e^{-\gamma t}f\|_{L_1(I, X)} = \int_Ie^{-\gamma t}\|f(t)\|_X\d t.
\end{align}
The space of $X$-valued  bounded  continuous functions 
on the interval $[0, T)$ is denoted by $\mathrm{BC} ([0, T), X)$.
The space of $X$-valued differentiable bounded continuous functions on $G$ is
denoted by $\mathrm{BC}^1 (G, X)$.
In addition, $C^\infty (G, X)$ is the set of all $X$-valued
smooth functions on $G$, and $C^\infty_0(G, X)$ denotes the set of all 
$C^\infty(G, X)$ functions which are compactly supported on $G$. If $X=\BK$,
we write $C^\infty(\Omega, \BK) = C^\infty(\Omega)$, 
and $C^\infty_0(\Omega, \BK) = C^\infty_0(\Omega)$.\par
For a domain $U$ in $\BC$ and a Banach space $X$, the set of all $X$-valued 
holomorphic functions defined on $U$ is denoted by $\Hol (U, X)$. 
For two Banach spaces, $X$ and $Y$, $\CL(X, Y)$ denotes the set of all 
bounded linear operators from $X$ into $Y$, and for the simplicity, we write
$\CL(X)=\CL(X,X)$. 
For each interpolation couple $(X_0, X_1)$ of Banach spaces,
each $0 < \theta < 1$ and $1 \le p \le \infty$, 
the real interpolation space is denoted by $(X_0, X_1)_{\theta, p}$,
whereas the complex interpolation space is denoted by $[X_0, X_1]_\theta$.
In addition, the operations  $(X_0, X_1) \mapsto (X_0, X_1)_{\theta, p}$
and $(X_0, X_1) \mapsto [X_0, X_1]_\theta$
are called the \textit{real interpolation functor} for each $\theta$ and $p$
and the \textit{complex interpolation functor} for each $\theta$, respectively. 
Finally, if $X_0$ and $X_1$ are normed vector spaces with $X_0$ 
continuously embedded in $X_1$, the notation $X_0 \hookrightarrow X_1$ is used.

\subsection{Sobolev and Besov spaces}
\label{sec-2.2}
Since the Sobolev and Besov spaces
play essential roles in our analysis, 
we shall recall their definitions.  
Throughout the paper, we assume that $d \ge 2$ is an integer
describing the space dimension. 
For $q \in (1, \infty)$ and $s \in \BR$, inhomogeneous
Sobolev spaces $H^s_q (\BR^d)$ are defined as the sets of all 
$f \in \CS' (\BR^d)$ such that $\lVert f \rVert_{H^s_q (\BR^d)} < \infty$,
where we have set
\begin{equation}
\lVert f \rVert_{H^s_q (\BR^d)} := \lVert \CF^{- 1}_\xi 
[(1 + \lvert \xi \rvert^2)^{s \slash 2} \CF [f] (\xi)] \rVert_{L_q (\BR^d)}.
\end{equation}
For $q \in (1, \infty)$ and $s \in \BR$,
homogeneous Sobolev spaces $\dot H^s_q (\BR^d)$ are defined as the sets of all 
$f \in \CS' (\BR^d) \slash \CP (\BR^d)$ such that 
$\lVert f \rVert_{\dot H^s_q (\BR^d)} < \infty$,
where we have set
\begin{equation}
\lVert f \rVert_{\dot H^s_q (\BR^d)} := \lVert \CF^{- 1}_\xi 
[\lvert \xi \rvert^s \CF [f] (\xi)] \rVert_{L_q (\BR^d)}.
\end{equation}
Here, $\CP (\BR^d)$ stands for the set of all polynomials.
\par
To introduce Besov spaces, 
let $\phi \in \CS (\BR^d)$ with $\supp (\phi) = \{ \xi \in \BR^d \mid 
1 \slash 2 \le \lvert \xi \rvert \le 2 \}$ such that
$\sum_{j \in \BZ} \phi (2^{- j} \xi) = 1$ for all $\xi \in \BR^d \setminus \{0\}$
and set $\phi_0(\xi) = 1 - \sum_{j=1}^\infty \phi(2^{-j}\xi)$. 
Let $\{\Delta_j\}_{j \in \BZ}$ be the nonhomogeneous dyadic blocks defined by
\begin{align}\label{dyadic:1}
\Delta_j f := 
\begin{cases}
\CF^{- 1}_{\xi} [\phi_0(\xi) \CF[f](\xi)], & \qquad j = 0, \\
\CF^{- 1}_{\xi} [\phi (2^{- j} \xi) \CF [f](\xi)], & \qquad j \ge 1,
\end{cases}
\end{align}
while let $\{\dot \Delta_j\}_{j \in \BZ}$ be the homogeneous 
dyadic blocks defined by
\begin{equation}
\dot \Delta_j f := 
\CF^{- 1}_{\xi} [\phi (2^{- j} \xi) \CF [f](\xi)], \qquad j \in \BZ.
\end{equation}
For $1 \le q \le \infty$ and $s \in \BR$ we denote
\begin{align}
\lVert f \rVert_{B^s_{q, r} (\BR^d)} & := 
\Big\lVert 2^{j s} \lVert \Delta_j f \rVert_{L_q (\BR^d)} 
\Big\rVert_{\ell^r (\BN_0)}, \\
\lVert f \rVert_{\dot B^s_{q, r} (\BR^d)} & := 
\Big\lVert 2^{j s} \lVert \dot \Delta_j f \rVert_{L_q (\BR^d)} 
\Big\rVert_{\ell^r (\BZ)},
\end{align}
where $\ell^r$ stands for sequence spaces. 
Then inhomogeneous Besov spaces $B^s_{q, r} (\BR^d)$ are defined as the sets of all 
$f \in \CS' (\BR^d)$ such that $\lVert f \rVert_{B^s_{q, r} (\BR^d)} < \infty$,
while homogeneous Besov spaces $\dot B^s_{q, r} (\BR^d)$ are defined as the sets of all 
$f \in \CS' (\BR^d) \slash \CP (\BR^d)$ such that 
$\lVert f \rVert_{\dot B^s_{q, r} (\BR^d)} < \infty$. In particular, we define 
$B^s_{q,\infty-}(\BR^d)$ and $\dot B^s_{q, \infty-}(\BR^d)$ by 
\begin{align*}
B^s_{q,\infty-}(\BR^d) &= \bigg\{ f \in B^s_{q,\infty}(\BR^d) \,\left\vert\, \lim_{j\to\infty} 2^{js}\|\Delta_jf\|_{L_q(\BR^d)} = 0 \bigg\} \right., \\
\dot B^s_{q,\infty-}(\BR^d) &= \bigg\{ f \in\dot B^s_{q,\infty}(\BR^d) \,\left\vert\, \lim_{j\to\pm\infty}
 2^{js}\|\dot\Delta_jf\|_{L_q(\BR^d)} = 0 \bigg\}\right..
\end{align*}
In this paper, we use the following conventions: $r < \infty- < \infty$ for $r \in \BR$ and 
$1 \slash \infty- = 1 \slash \infty = 0$. In particular, the H\"older conjugate of 
$r \in \{1,\infty-\}$ is read as $1' = \infty-$ and $(\infty-)' = 1$.
\begin{rem}
It is well-known that the aforementioned definition of inhomogeneous 
Besov spaces may be extended to vector-valued cases, see \cite{A97} (cf. \cite{M73}).
To be precise, for $\theta \in (0, 1)$ and a Banach space $X$,
we define $X$-valued inhomogeneous Besov spaces $B^\theta_{1,1} (\BR, X)$ by 
the set of all $f \in \CS' (\BR, X)$ such that 
$\lVert f \rVert_{B^\theta_{1,1} (\BR, X)} < \infty$, where we have set
\begin{equation}
\lVert f \rVert_{B^\theta_{1,1} (\BR, X)} := 
\Big\lVert 2^{j \theta} \lVert \Delta_j f \rVert_{L_1 (\BR, X)} 
\Big\rVert_{\ell^1 (\BN_0)}.	
\end{equation}
According to \cite[(5.8)]{A97}, there holds 
$B^\theta_{1,1} (\BR, X) = W^\theta_1 (\BR, X)$.
Here, $W^\theta_1 (\BR, X)$ stands for the $X$-valued Sobolev-Slobodeckij space
defined by the set of all $f \in \CS' (\BR, X)$ such that 
$\lVert f \rVert_{W^\theta_1 (\BR, X)} < \infty$, where we have set
\begin{equation}
\lVert f \rVert_{W^\theta_1 (\BR, X)}
= \lVert f \rVert_{L_1 (\BR, X)} +  [f]_{W^\theta_1(\BR,X)} \quad \text{with} \quad 
 [f]_{W^\theta_1(\BR,X)}=\int_\BR \lvert h \rvert^{- (\theta + 1)} \lVert f (\,\cdot\, , 
\,\cdot + h) - f (\,\cdot\, , \,\cdot\,) \rVert_{L_1 (\BR, X)} \d h.
\end{equation}	
It is also well-known (cf. \cite[Thm. 2.5.17]{HNVW}) that 
$W^\theta_1 (\BR, X)$ may be characterized via real interpolation:
\begin{equation}\label{inhomo-int.1}
W^\theta_1 (\BR, X) = (L_1 (\BR, X), W^1_1 (\BR, X))_{\theta, 1}.
\end{equation}
Similarly, for $\theta \in (0, 1)$ and a Banach space $X$,
we define $X$-valued homogeneous Besov spaces $\dot B^\theta_{1, 1} (\BR, X)$ by 
the set of all $f \in \CS' (\BR, X) \slash \CP (\BR, X)$ such that 
$\lVert f \rVert_{\dot B^\theta_{1, 1} (\BR, X)} < \infty$, where we have set
\begin{equation}
\lVert f \rVert_{\dot B^\theta_{1, 1} (\BR, X)} =\Big\lVert 2^{j \theta} \lVert \dot \Delta_j f \rVert_{L_1 (\BR, X)} 
\Big\rVert_{\ell^1 (\BZ)}.		
\end{equation}
Here, $\CP(\BR,X)$ stands for all polynomials taking the values in $X$.
According to the proof of
\cite[Thm. 2.36]{BCD}, we see that there holds $\dot B^\theta_{1, 1} (\BR, X) 
= \dot W^\theta_1 (\BR, X)$, where 
$\dot W^\theta_1 (\BR, X)$ stands for the $X$-valued Sobolev-Slobodeckij space
defined by the set of all $f \in \CS' (\BR, X) \slash \CP (\BR, X)$ such that 
$[ f ]_{W^\theta_1 (\BR, X)} < \infty$. We also know that the space
$\dot W^\theta_1 (\BR, X)$ may be characterized via real interpolation:
\begin{equation}\label{homo-int.1}
\dot W^\theta_1 (\BR, X) = (L_1 (\BR, X), \dot W^1_1 (\BR, X))_{\theta, 1},
\end{equation}
where we refer to \cite[Thm. 2.5.17]{HNVW} for the proof. \end{rem}
To simplify the notation, we write  $\CW^\theta_1(\BR, \CB^s_{q,r}(\HS)) = W^\theta_1(\BR, B^s_{q,r}(\HS))$ 
if $\CB^s_{q,r}(\HS)= B^s_{q,r}(\HS)$
and $\CW^\theta_1(\BR, \CB^s_{q,r}(\HS)) = \dot W^\theta_1(\BR, \dot B^s_{q,r}(\HS))$ 
if $\CB^s_{q,r}(\HS)=\dot B^s_{q,r}(\HS)$ for $0 < \theta \leq 1$. 
\begin{rem}
Sobolev spaces $\dot H^s_q (\BR^d)$ and Besov spaces $\dot B^s_{q, r} (\BR^d)$ of
the homogeneous type is often defined as the set of all $f \in \CS'_h (\BR^d)$ satisfying
$\lVert f \rVert_{\dot H^s_q (\BR^d)} < \infty$ and 
$\lVert f \rVert_{\dot B^s_{q, r} (\BR^d)} < \infty$, respectively, where
the space $\CS'_h (\BR^d)$ is defined by
\begin{equation}
\CS'_h (\BR^d) := \Big\{f \in \CS' (\BR^d) \,\left\vert\enskip 
\lim_{\nu \to \infty} \lVert \CF^{- 1} [\Theta (\nu \xi) \CF[f] (\xi)] \rVert_{L_\infty (\BR^d)} 
\enskip \text{for any $\Theta \in C^\infty_0 (\BR^d)$} \Big\}\right..
\end{equation}	
However, if $(q, r, s)$ satisfies the condition $s < d \slash q$ (or $s \le d \slash q$
if $r = 1$), then this definition coincides with our definition of Sobolev and Besov spaces
of the homogeneous type described above. Indeed, the restriction on $(q, r, s)$ implies that
the spaces $\dot H^s_q (\BR^d)$ and $\dot B^s_{q, r} (\BR^d)$ (whichever we define
as subsets of $\CS' (\BR^d) \slash \CP (\BR^d)$ or $\CS'_h (\BR^d)$)
may be identified as subspaces of $\CS' (\BR^d)$. See 
\cite[Rem. 2.24]{BCD} and \cite{B88} for the details.
\end{rem}
To consider Sobolev and Besov spaces on $\HS$, we rely on the following definition.
\begin{dfn}
\label{def-Function-sp}
Let $d \ge 2$, $1 \le q,r \le \infty$, and $s \in \BR$. 
Let $\CD' (\HS)$ be continuous linear functionals on $\CD (\HS) = C^\infty_0 (\HS)$.
\begin{enumerate}  
\item $\CB^s_{q, r} (\HS)$ is the collection of all $f \in \CD' (\HS)$ such 
that there exists a function $g \in \CB^s_{q, r} (\BR^d)$
with $g \vert_{\HS} = f$. Moreover, the norm of $f \in \CB^s_{q, r} (\HS)$ is defined by
\begin{equation}
\lVert f \rVert_{\CB^s_{q, r} (\HS)} = \inf \lVert g \rVert_{\CB^s_{q, r} (\BR^d)},
\end{equation}
where the infimum is taken over all $g \in \CB^s_{q, r} (\BR^d)$ such that  $g \vert_{\HS} = f$  in $\CD' (\HS)$.
\item $\wt\CB^s_{q, r} (\HS)$ is the collection of $f \in \CD' (\HS)$ 
such that there exists a function $g$ such that
\begin{equation}
\label{def-tilda}
g \in \CB^s_{q, r} (\BR^d) \quad \text{with} \quad g \vert_{\HS} = f \quad 
\text{and} \quad \supp (g) \subset \overline{\HS}.
\end{equation}
Furthermore, the norm of $f \in \widetilde \CB^s_{q, r} (\HS)$ 
is given by
\begin{equation}
\lVert f \rVert_{\widetilde \CB^s_{q, r} (\HS)} 
= \inf \lVert g \rVert_{\CB^s_{q, r} (\BR^d)},
\end{equation}
where the infimum is taken over all $g \in \CB^s_{q, r} (\BR^d)$ with \eqref{def-tilda}.
\end{enumerate}
In addition, $\CH^s_p (\HS)$ and $\widetilde \CH^s_p (\HS)$ are defined similarly.
\end{dfn}
\begin{rem}
Defining function spaces on $\HS$ by restriction 
has the advantage that estimates on $\BR^d$ carry over 
to the spaces on $\HS$ whenever there exists a bounded 
linear extension operator that maps measurable functions on $\HS$
to measurable functions on $\BR^d$, see Section \ref{sec-8.1}.
\end{rem}
As in the whole space case, the Besov spaces $\CB^s_{q, r} (\HS)$ defined 
above are Banach spaces with the Fatou property. The following proposition is
due to \cite[Thm. 2.25]{BCD} and \cite[Thm. 4.3/1]{F86}.
\begin{prop}
\label{prop-Fatou}
Let $1 \le q, r \le \infty$ and $s \in \BN$.
In the case $\CB^s_{q, r} (\HS) = \dot B^s_{q, r} (\HS)$
assume additionally that $s < d \slash q$ if $1 < q \le \infty$ and 
$s \le d \slash q$ if $q = 1$.
For every bounded sequence $(f_j)_{j \in \BN}$ of $\CB^s_{q, r} (\HS)$
converging to some $f$ in $\CD' (\HS)$, there holds
\begin{equation}
\lVert f \rVert_{\CB^s_{q, r} (\HS)} 
\le C \liminf_{j \to \infty} \lVert f_j \rVert_{\CB^s_{q, r} (\HS)}.
\end{equation}
\end{prop}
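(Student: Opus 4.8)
The plan is to transfer the Fatou property from $\BR^d$ — where it is supplied by \cite[Thm.~2.25]{BCD} and \cite[Thm.~4.3/1]{F86} — to the half-space through the definition of $\CB^s_{q,r}(\HS)$ by restriction. If $\liminf_{j\to\infty}\lVert f_j\rVert_{\CB^s_{q,r}(\HS)}=\infty$ there is nothing to prove, so one may assume this $\liminf$ equals a finite number $L$ and, after passing to a subsequence (still denoted $(f_j)$ and still convergent to $f$ in $\CD'(\HS)$), arrange that $\lVert f_j\rVert_{\CB^s_{q,r}(\HS)}\to L$. Using the defining infimum of the restriction norm, for each $j$ I would pick an extension $g_j\in\CB^s_{q,r}(\BR^d)$ with $g_j\vert_{\HS}=f_j$ in $\CD'(\HS)$ and $\lVert g_j\rVert_{\CB^s_{q,r}(\BR^d)}\le\lVert f_j\rVert_{\CB^s_{q,r}(\HS)}+2^{-j}$; then $(g_j)$ is bounded in $\CB^s_{q,r}(\BR^d)$ with $\limsup_j\lVert g_j\rVert_{\CB^s_{q,r}(\BR^d)}\le L$.

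The next step is to produce a limit of the $g_j$ inside $\CS'(\BR^d)$. In the inhomogeneous case the embedding $B^s_{q,r}(\BR^d)\hookrightarrow\CS'(\BR^d)$ is automatic, while in the homogeneous case the restriction on $(q,r,s)$ — namely $s<d/q$ when $1<q\le\infty$ and $s\le d/q$ when $q=1$ — is precisely what guarantees that $\dot B^s_{q,r}(\BR^d)$ embeds into $\CS'(\BR^d)$ as a genuine subspace rather than as a quotient modulo polynomials (cf.\ the remark preceding Definition~\ref{def-Function-sp}). Hence $(g_j)$ is bounded in $\CS'(\BR^d)$, and since $\CS(\BR^d)$ is a separable Fréchet space, bounded subsets of $\CS'(\BR^d)$ are relatively weak-$*$ sequentially compact; passing to a further subsequence, $g_j\to g$ in $\CS'(\BR^d)$.

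To finish I would combine the two resulting facts. The Fatou property on $\BR^d$ gives $g\in\CB^s_{q,r}(\BR^d)$ with $\lVert g\rVert_{\CB^s_{q,r}(\BR^d)}\le C\liminf_j\lVert g_j\rVert_{\CB^s_{q,r}(\BR^d)}\le CL$; and for every $\varphi\in C^\infty_0(\HS)$, regarded as an element of $\CS(\BR^d)$ by extension by zero, the $\CS'$-convergence yields $\langle g,\varphi\rangle=\lim_j\langle g_j,\varphi\rangle=\lim_j\langle f_j,\varphi\rangle=\langle f,\varphi\rangle$, so $g\vert_{\HS}=f$ in $\CD'(\HS)$. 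Thus $g$ is an admissible extension of $f$, and the restriction norm is bounded by $\lVert f\rVert_{\CB^s_{q,r}(\HS)}\le\lVert g\rVert_{\CB^s_{q,r}(\BR^d)}\le CL=C\liminf_{j\to\infty}\lVert f_j\rVert_{\CB^s_{q,r}(\HS)}$, which is the assertion.

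The step I expect to need the most care is the extraction of an $\CS'(\BR^d)$-convergent subsequence of the extensions $(g_j)$: the restriction operator forgets the behaviour of $g_j$ on $\{x_d<0\}$, so $(g_j)$ need not converge even though $(f_j)$ does, and it is weak-$*$ compactness in $\CS'(\BR^d)$ — together with the embedding $\CB^s_{q,r}(\BR^d)\hookrightarrow\CS'(\BR^d)$ secured by the stated range of exponents — that bridges this gap. (In the reflexive range $1<q,r<\infty$ one could instead extract a weakly convergent subsequence in $\CB^s_{q,r}(\BR^d)$ and appeal to weak lower semicontinuity of the norm, but the route above handles the endpoint exponents uniformly.)
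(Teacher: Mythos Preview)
Your argument is correct. The paper itself does not supply a proof of this proposition: it simply attributes the result to \cite[Thm.~2.25]{BCD} (the Fatou property on $\BR^d$ for the homogeneous scale) and \cite[Thm.~4.3/1]{F86} (which treats function spaces on domains directly in the inhomogeneous setting), so there is no in-paper argument to compare against. What you have written is precisely the natural bridge from the whole-space Fatou property to the half-space via the paper's definition of $\CB^s_{q,r}(\HS)$ by restriction: choose near-optimal extensions $g_j$, use weak-$*$ sequential compactness in $\CS'(\BR^d)$ to extract a limit $g$, apply the $\BR^d$ Fatou property to bound $\lVert g\rVert_{\CB^s_{q,r}(\BR^d)}$, and identify $g\vert_{\HS}=f$ by testing against $C^\infty_0(\HS)$. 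Your flagged concern about the extraction step is well placed --- the $g_j$ need not converge merely because the $f_j$ do --- and the embedding $\CB^s_{q,r}(\BR^d)\hookrightarrow\CS'(\BR^d)$ guaranteed by the stated exponent restrictions (cf.\ the remark preceding Definition~\ref{def-Function-sp}) is exactly what makes the compactness argument go through.
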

In the following, we collect useful tools for spaces introduced in 
Definition~\ref{def-Function-sp}. 
The next proposition is well-known, see, e.g., \cite[pp.368-369]{M73},
\cite[p. 132]{M76}, and
Theorems 2.9.3 and 2.10.3 in \cite{Tbook78}.
\begin{prop}
\label{prop-density}
For $d \ge 2$, $1 < q < \infty$, $1 \le r \leq \infty-$, and $s \in \BR$, 
the following assertions are valid.
\begin{enumerate}
\item  $C^\infty_0 (\BR^d)$ is dense in $H^s_q (\BR^d)$ as well as in 
$B^s_{q, r} (\BR^d)$.
\item $C^\infty_0 (\HS)$ is dense in $\widetilde H^s_q (\HS)$ as well as in 
$\widetilde B^s_{q, r} (\HS)$. 
\item If $- 1 + 1 \slash q < s < 1 \slash q$, then it follows that $H^s_q (\HS) 
= \widetilde H^s_q (\HS)$ and $B^s_{q, r} (\HS) = \widetilde B^s_{q, r} (\HS)$.
\end{enumerate}	
\end{prop}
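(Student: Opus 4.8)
The three assertions are essentially the standard facts on function spaces defined by restriction, and the plan is to reduce everything to known statements on $\BR^d$ plus the existence of a stein‐type extension operator and the boundedness of the zero‐extension operator in the range $-1+1/q<s<1/q$.

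For assertion (1), the plan is to quote the classical density results: $C^\infty_0(\BR^d)$ is dense in the Bessel‐potential space $H^s_q(\BR^d)$ and in $B^s_{q,r}(\BR^d)$ for $1<q<\infty$, $1\le r\le\infty-$, $s\in\BR$. These are in the references already cited in the statement (Triebel, Maz'ya--Shaposhnikova, Meyries--Veraar), so nothing new is needed; one simply has to recall that the restriction $r\le\infty-$ is exactly what rules out the non‐separable endpoint $r=\infty$. I would also note that the homogeneous case, while not part of this proposition, is handled separately later.

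For assertion (2), the plan is to use the definition of $\wt\CB^s_{q,r}(\HS)$: an element is the restriction to $\HS$ of some $g\in\CB^s_{q,r}(\BR^d)$ with $\supp(g)\subset\overline{\HS}$, and the norm is the infimum of $\|g\|_{\CB^s_{q,r}(\BR^d)}$ over such $g$. Given $f\in\wt\CB^s_{q,r}(\HS)$ and $\varepsilon>0$, pick such a $g$ with $\|g\|_{\CB^s_{q,r}(\BR^d)}\le\|f\|_{\wt\CB^s_{q,r}(\HS)}+\varepsilon$. First translate $g$ slightly into the interior, $g_\delta(x):=g(x',x_d+\delta)$, which has support in $\{x_d\le-\delta\}^c$ and in fact in a set strictly inside $\overline{\HS}$ shifted; more precisely after reflection one arranges $\supp\subset\{x_d>\delta/2\}$, so that $g_\delta$ vanishes near $\pd\HS$; by continuity of translations on $\CB^s_{q,r}(\BR^d)$ (here again $r\le\infty-$ is used) $g_\delta\to g$ as $\delta\to0$. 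Then mollify: $g_\delta*\rho_\eta\in C^\infty(\BR^d)$ is supported in $\HS$ for $\eta$ small, converges to $g_\delta$ in $\CB^s_{q,r}(\BR^d)$, and can be cut off by a smooth compactly supported function to land in $C^\infty_0(\HS)$; the cut‐off converges because $g_\delta*\rho_\eta$ decays. Restricting to $\HS$ and taking $\delta,\eta\to0$ and the cutoff to $1$ gives approximants of $f$ in $\wt\CB^s_{q,r}(\HS)$. The $\wt H^s_q$ case is identical.

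For assertion (3), the plan is to show the two inclusions $\wt\CB^s_{q,r}(\HS)\subset\CB^s_{q,r}(\HS)\subset\wt\CB^s_{q,r}(\HS)$ are continuous and norm‐equivalent in the stated range. The inclusion $\wt\CB^s_{q,r}(\HS)\hookrightarrow\CB^s_{q,r}(\HS)$ is trivial from the definitions since every competitor $g$ for the $\wt{}$‐norm is a competitor for the plain norm. For the reverse, given $f\in\CB^s_{q,r}(\HS)$ one must produce an extension supported in $\overline{\HS}$. The key tool is the boundedness of the zero‐extension (extension by $0$ to $x_d<0$) operator $E_0:\CB^s_{q,r}(\HS)\to\CB^s_{q,r}(\BR^d)$ precisely when $-1+1/q<s<1/q$; this is the classical result (Triebel, Theorems 2.9.3 and 2.10.3, and Maz'ya--Shaposhnikova) — for $s$ in this window there is no trace obstruction and $E_0$ is bounded, while $E_0 f$ obviously has support in $\overline{\HS}$ and restricts to $f$. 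Hence $\|f\|_{\wt\CB^s_{q,r}(\HS)}\le\|E_0 f\|_{\CB^s_{q,r}(\BR^d)}\le C\|f\|_{\CB^s_{q,r}(\HS)}$, and the two spaces coincide. The $H^s_q$ statement follows the same way.

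The main obstacle is assertion (3): one must invoke the sharp mapping property of the zero‐extension operator, which is exactly where the constraint $-1+1/q<s<1/q$ enters and which is the only nontrivial ingredient — the rest is bookkeeping with definitions, translations, and mollifiers. Since that mapping property is a cited classical theorem, the proof is short; the care needed is just to make sure the support condition $\supp(g)\subset\overline{\HS}$ is preserved at every step (it is, for both $E_0$ and for the mollification‐after‐interior‐shift construction in (2)).
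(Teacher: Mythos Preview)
The paper does not give its own proof of this proposition; it simply records it as well-known and cites Muramatu \cite{M73,M76} and Triebel \cite[Thms.~2.9.3, 2.10.3]{Tbook78}. Your sketch is the standard argument those references carry out, so substantively you are in agreement with the paper's (implicit) approach: (1) is classical density on $\BR^d$, (2) is translation-into-the-interior plus mollification plus cutoff, and (3) is the boundedness of the zero-extension operator in the window $-1+1/q<s<1/q$.

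One small slip in your write-up of (2): if $\supp(g)\subset\{x_d\ge 0\}$ then $g_\delta(x)=g(x',x_d+\delta)$ has support in $\{x_d\ge -\delta\}$, which goes the wrong way. You want $g_\delta(x)=g(x',x_d-\delta)$, giving $\supp(g_\delta)\subset\{x_d\ge\delta\}$, after which a mollifier of width $<\delta$ keeps the support inside $\HS$. You evidently have the right picture (you say ``after reflection one arranges $\supp\subset\{x_d>\delta/2\}$''), but the formula as written should be corrected. Also, the references you name (Maz'ya--Shaposhnikova, Meyries--Veraar) are not the ones the paper uses; if you want to match the paper, cite Muramatu and Triebel as above.
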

\begin{rem}
Proposition \ref{prop-density} for the case $r = \infty-$ 
was addressed in \cite{M76}.
\end{rem}
Concerning the above result for homogeneous-type spaces, 
we refer to Propositions 2.24, Lemma 2.32, and Corollaries 2.26 and 2.34 in \cite{Gpre}.
\begin{prop}
\label{prop-density-homogeneous}
For $d \ge 2$, $1 < q < \infty$, $1 \le r \leq \infty-$, and $s \in \BR$, 
the following assertions are valid.
\begin{enumerate}
\item If $-d\slash q' < s < d \slash q$, then $C^\infty_0 (\BR^d)$ is dense
in $\dot B^s_{q,r} (\BR^d)$. In addition, $C^\infty_0 (\BR^d)$ is dense in $\dot B^{d \slash q}_{q, 1} (\BR^d)$.
\item If $- d \slash q' < s < d \slash q$, then $C^\infty_0 (\HS)$ is dense 
in $\tdH^s_p (\HS)$.
\item If $- 1/q' < s < d \slash q$, then $C^\infty_0 (\HS)$ is dense in $\tdB^s_{q, r} (\HS)$. 
In addition, $C^\infty_0 (\BR^d_+)$ is dense in $\tdB^{d \slash q}_{q, 1} (\HS)$.
\item If $- 1/q'< s < 1 \slash q$, then it follows that $\dot H^s_q (\HS) 
= \tdH^s_q (\HS)$ and $\dot B^s_{q, r} (\HS) = \tdB^s_{q, r} (\HS)$.		
\end{enumerate}		
\end{prop}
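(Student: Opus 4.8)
The plan is to prove all four assertions by a common approximation scheme carried out in the stated order: reduce to functions whose Fourier transform is supported in a fixed dyadic annulus via Littlewood--Paley truncation, then cut off in space by a dilated bump function; in the half-space case precede the cut-off by an inward translation and a mollification; and reduce the identities in $(4)$ to the boundedness of extension by zero. For assertion $(1)$, the lower bound $s>-d/q'$ is exactly the range in which $C^\infty_0(\BR^d)\subset\dot B^s_{q,r}(\BR^d)$ — for $\varphi\in C^\infty_0$ with $\int\varphi\neq0$ one has $\|\dot\Delta_j\varphi\|_{L_q}\sim 2^{jd/q'}$ as $j\to-\infty$ — while $s<d/q$, or $s=d/q$ with $r=1$, is exactly the range in which $\dot B^s_{q,r}(\BR^d)$ is complete and realizable inside $\CS'(\BR^d)$ rather than $\CS'(\BR^d)\slash\CP(\BR^d)$. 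Within this range the truncations $S_N f:=\sum_{|j|\le N}\dot\Delta_j f$ converge to $f$ in $\dot B^s_{q,r}(\BR^d)$ (here $r\le\infty-$ is used), and each $S_N f$ is a smooth $L_q$-function with spectrum in a fixed annulus; for such a band-limited $u\in L_q(\BR^d)$ and a cut-off $\chi(\,\cdot\,\slash R)$, $\chi\in C^\infty_0(\BR^d)$, $\chi\equiv1$ near the origin, the product $\chi(\,\cdot\,\slash R)u$ again has spectrum in a fixed annulus once $R$ is large, so only finitely many dyadic blocks contribute and $\|\chi(\,\cdot\,\slash R)u-u\|_{\dot B^s_{q,r}}\lesssim\|\chi(\,\cdot\,\slash R)u-u\|_{L_q}\to0$ by dominated convergence. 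A diagonal choice $N=N(R)$ gives the $C^\infty_0(\BR^d)$ approximants; $\dot H^s_q(\BR^d)$ and $\dot B^{d/q}_{q,1}(\BR^d)$ are treated identically.

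For assertions $(2)$--$(3)$, let $f\in\tdB^s_{q,r}(\HS)$ with an extension $g\in\dot B^s_{q,r}(\BR^d)$ satisfying $\supp g\subset\overline{\HS}$ (the Sobolev case $\tdH^s_q(\HS)$ is the same with $\dot H^s_q$ throughout). For $\delta>0$ the inward translate $g_\delta(x',x_d):=g(x',x_d-\delta)$ is supported in $\{x_d\ge\delta\}$, and $g_\delta\to g$ as $\delta\to0^+$; here one uses that translations, which are isometries of $\dot B^s_{q,r}(\BR^d)$ (resp. $\dot H^s_q(\BR^d)$ with $1<q<\infty$), act strongly continuously, a fact that follows from the density of $C^\infty_0$ supplied by assertion $(1)$. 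Mollifying, $g_\delta*\rho_\epsilon\in C^\infty(\BR^d)$ stays supported in $\{x_d\ge\delta-\epsilon>0\}$ for $0<\epsilon<\delta$ and converges to $g_\delta$; a spatial cut-off as in $(1)$ then yields approximants in $C^\infty_0(\BR^d)$ with support in the open half-space, which restrict to elements of $C^\infty_0(\HS)$ converging to $f$ in $\tdB^s_{q,r}(\HS)$ (the endpoint case $s=d/q$, $r=1$ is included without change). Assertion $(3)$ requires the stronger lower bound $s>-1/q'$ because, for $-d/q'<s<-1/q'$, the space $\dot B^s_{q,r}(\BR^d)$ contains nonzero distributions supported on $\pd\HS$ — for instance $u_0(x')\delta(x_d)$ with $u_0\in C^\infty_0(\BR^{d-1})$ — whereas for $s>-1/q'$ it contains none; in the latter range $g\mapsto g\vert_{\HS}$ is injective on $\{g\in\dot B^s_{q,r}(\BR^d):\supp g\subset\overline{\HS}\}$, so $\tdB^s_{q,r}(\HS)$ is isometric to this closed subspace and the scheme applies. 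Since $\dot H^s_q(\BR^d)$ with $1<q<\infty$ contains no boundary-concentrated distributions at all, $(2)$ retains the bound $s>-d/q'$.

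For assertion $(4)$, the inclusion $\tdB^s_{q,r}(\HS)\hookrightarrow\dot B^s_{q,r}(\HS)$ is immediate. For the converse, given $f\in\dot B^s_{q,r}(\HS)$ with $-1/q'<s<1/q$, the plan is to show that the extension by zero $f_0$ of $f$ across $\pd\HS$ belongs to $\dot B^s_{q,r}(\BR^d)$; since $\supp f_0\subset\overline{\HS}$, this exhibits $f$ as an element of $\tdB^s_{q,r}(\HS)$ with equivalent norm. This is precisely the assertion that $\mathbf{1}_{\HS}$ is a pointwise multiplier on $\dot B^s_{q,r}(\BR^d)$, valid exactly for $-1/q'<s<1/q$ — the homogeneous analogue of the inhomogeneous statement used in Proposition~\ref{prop-density}$(3)$, see \cite{Tbook78} and \cite{Gpre}; the upper endpoint $1/q$ is sharp since for $s\ge1/q$ functions in $\dot B^s_{q,r}(\HS)$ carry a trace on $\pd\HS$ that a zero extension cannot respect, and $\tdB^s_{q,r}(\HS)$ becomes a proper (trace-vanishing) subspace. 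The identity $\dot H^s_q(\HS)=\tdH^s_q(\HS)$ follows the same way.

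The main obstacle is the homogeneous $\mathbf{1}_{\HS}$-multiplier theorem used in $(4)$, equivalently the boundedness of the zero-extension operator on the homogeneous scale for $-1/q'<s<1/q$: in the homogeneous setting one must control the low-frequency behavior and the ``modulo polynomials'' ambiguity, the natural route being to transfer the classical inhomogeneous estimate of Triebel by scaling together with a Bony paraproduct decomposition of $\mathbf{1}_{\HS}\,u$ (cf. \cite{BCD}). A secondary point needing care is the strong continuity of translations on $\dot B^s_{q,r}(\BR^d)$ and $\dot H^s_q(\BR^d)$ for $r\le\infty-$, invoked in $(2)$--$(3)$; since its standard proof relies on density of a nice subspace, it is natural to establish it only after $(1)$, which fixes the order in which the four assertions are proved.
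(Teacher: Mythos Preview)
Your overall scheme is sound and, for assertion (1), genuinely different from what the paper does. The paper itself does not prove Proposition~\ref{prop-density-homogeneous}: it cites Gaudin~\cite{Gpre} for (2)--(4) and supplies only a proof of (1), as Proposition~\ref{prop-B4} in Appendix~\ref{ap.B}. There the argument is: start from the density of $\CS_0(\BR^d)=\{f\in\CS:0\notin\supp\CF f\}$ in $\dot B^s_{q,r}(\BR^d)$ (quoted from \cite[Prop.~2.27]{BCD}), then for $f\in\CS_0$ write $(1-\varphi_R)f=(1-\varphi_R)w_N^{-1}\cdot w_Nf$ with the polynomial weight $w_N=(1+|x|^2)^N$ and estimate via the product inequality (Proposition~\ref{prop:APH}), showing $\|(1-\varphi_R)w_N^{-1}\|_{\dot B^{d/q}_{q,\infty}\cap L_\infty}\lesssim R^{-\sigma}$. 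Your route---Littlewood--Paley truncate to a band-limited $S_Nf\in L_q\cap C^\infty$ and then observe that $\chi(\cdot/R)S_Nf$ keeps spectrum in a fixed annulus so the Besov norm reduces to an $L_q$ norm---is more elementary in that it avoids both the $\CS_0$-density black box and the weighted product trick; the paper's approach, on the other hand, works uniformly over the Besov scale without tracking how many dyadic blocks survive.

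Two points in your write-up need correction. First, in (2)--(3) you invoke ``a spatial cut-off as in~(1)'' on $g_\delta*\rho_\epsilon$, but that function is not band-limited, so the finitely-many-blocks argument from~(1) does not apply directly; you need instead that $\chi(\cdot/R)$ is uniformly bounded on $\dot B^s_{q,r}(\BR^d)$ (which follows from the scale-invariance $\|\chi(\cdot/R)\|_{\dot B^{d/q}_{q,\infty}\cap L_\infty}=\|\chi\|_{\dot B^{d/q}_{q,\infty}\cap L_\infty}$ together with Proposition~\ref{prop:APH}) and converges to the identity on the dense subset $C^\infty_0$ supplied by~(1). Second, your explanation for the different lower bounds in (2) versus (3) is not right: the translate--mollify--cutoff scheme does \emph{not} require injectivity of $g\mapsto g|_{\HS}$, since $\|h_k|_{\HS}-f\|_{\tdB^s_{q,r}(\HS)}\le\|h_k-g\|_{\dot B^s_{q,r}(\BR^d)}$ holds regardless; and the claim that $\dot H^s_q(\BR^d)$ ``contains no boundary-concentrated distributions at all'' is false (e.g.\ $u_0(x')\delta(x_d)\in\dot H^s_q(\BR^d)$ for $-d/q<s<-1/q$). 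The stated range $s>-1/q'$ in (3) is simply what is recorded in \cite{Gpre}; your density argument actually works on the full range $-d/q'<s<d/q$. These are side remarks and do not affect the validity of your density proofs.
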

\begin{rem}
Proposition \ref{prop-density-homogeneous} for the case $r = \infty-$ 
was not addressed in \cite{Gpre}. However, if one modifies the proof of
Corollary 3.19 in \cite{Gpre}, we see that if $1 < q < \infty$ and $- 1/q' < s < d \slash q$,  
then $C^\infty_0 (\HS)$ is dense in $\tdB^s_{q, \infty-} (\HS)$.
\end{rem}
We will prove the first assertion of Proposition \ref{prop-density-homogeneous} $(3)$
in Proposition \ref{prop-B4} in Appendix \ref{ap.B} below. \par 
The analysis in this paper strongly relies on the following results,
see Theorems 2.10.2/2, 2.10.4/1 in \cite{Tbook78}, 
Theorems 2.7.1, 2.11.2, Section 5.2.5 in \cite{Tbook83},
Theorems 8, 9, 11 in \cite{M73},
and Propositions 3.11, 3.17, 3.23 in \cite{Gpre}.
\begin{prop}
\label{prop-dual-interpolation}
For $d \ge 2$, $1 < q < \infty$, and $\Omega \in \{\BR^d, \HS\}$, 
the following assertions are valid.
\begin{enumerate}
\item For $1 \le r \le \infty$ and $- \infty < s < \infty$, it follows that
$(\CH^s_q(\BR^d))' = \CH^{-s}_{q'}(\BR^d)$, $(\wt H^s_q (\HS))' = H^{- s}_{q'} (\HS)$,
and $(\CB^s_{q, r} (\BR^d))' = \CB^{- s}_{q', r'} (\BR^d)$. 
\item For $- d \slash q' < s < d \slash q$, it follows that $(\dot H^s_q (\HS))' 
= \tdH^{- s}_{q'} (\HS)$ and $(\tdH^s_q (\HS))' = \dot H^{- s}_{q'} (\HS)$.
\item For $1 \le q_0, q_1, r, r_0, r_1 \le \infty$, $- \infty < s_0, s_1 < \infty$, 
$s_0 \ne s_1$, and $0 < \theta < 1$, it follows that 
\begin{align}
(H^{s_0}_q (\Omega), H^{s_1}_q (\Omega))_{\theta, r} 
& = B^s_{q, r} (\Omega), \\
(B^{s_0}_{q, r_0} (\Omega), B^{s_1}_{q, r_1} (\Omega))_{\theta, r} 
& = B^s_{q, r} (\Omega), \\
[H^{s_0}_{q_0} (\Omega), H^{s_1}_{q_1} (\Omega)]_\theta
& = H^s_{q} (\Omega).
\end{align}
with $s := (1 - \theta) s_0 + \theta s_1$, $q = (1-\theta)/q_0 + \theta/q_1$, 
and $1/r = (1-\theta)/r_0 + \theta/r_1$. \par 
If $s_0$ and $s_1$ 
 satisfy additionally
$s_j > - 1 + 1 \slash q_j$, $j \in \{0, 1\}$,
it follows that 
\begin{align}
(\dot H^{s_0}_q (\Omega), \dot H^{s_1}_q (\Omega))_{\theta, r} 
& = \dot B^s_{q, r} (\Omega), \\
(\dot B^{s_0}_{q, r_0} (\Omega), \dot B^{s_1}_{q, r_1} (\Omega))_{\theta, r} 
& = \dot B^s_{q, r} (\Omega), \\
[\dot H^{s_0}_{q_0} (\Omega), \dot H^{s_1}_{q_1} (\Omega)]_\theta
& = \dot H^s_{q} (\Omega)
\end{align}
with $s := (1 - \theta) s_0 + \theta s_1$, $q = (1-\theta)/q_0 + \theta/q_1$, 
and $1/r = (1-\theta)/r_0 + \theta/r_1$.
\item For $1 \le q_1 \le q_0 \le \infty$ and $1 \le r_1 \le r_0 \le \infty$,
and $s \in \BR$, there holds $\CB^s_{q_0, r_0}(\Omega) \hookleftarrow 
\CB^{s + d (\frac1{q_1}-\frac1{q_0})}_{q_1, r_1} (\Omega)$.
\end{enumerate}	
\end{prop}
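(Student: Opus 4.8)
The plan is to deduce every assertion from its counterpart on the full space $\BR^d$, which belongs to the classical theory, and then to transfer it to $\HS$ using the way the spaces on $\HS$ are defined (by restriction, respectively by restriction together with a support condition). On $\BR^d$ the inhomogeneous parts of (1) and (3)---the dualities $(\CH^s_q(\BR^d))'=\CH^{-s}_{q'}(\BR^d)$ and $(\CB^s_{q,r}(\BR^d))'=\CB^{-s}_{q',r'}(\BR^d)$, the real interpolation identities for the couples $(H^{s_0}_q,H^{s_1}_q)$ and $(B^{s_0}_{q,r_0},B^{s_1}_{q,r_1})$, and the complex identity $[H^{s_0}_{q_0},H^{s_1}_{q_1}]_\theta=H^s_q$---are exactly the statements in the Triebel references quoted just before the proposition. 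For the little spaces carrying the index $\infty-$ one argues separately: density of $C^\infty_0(\BR^d)$ still holds there (Proposition~\ref{prop-density}), so one obtains the duality and interpolation statements by approximation from the cases $r\in[1,\infty)$, together with the results of \cite{M73}, since the $r=\infty$ endpoint cannot be treated by the naive recipe.

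For the homogeneous scales on $\BR^d$ I would invoke the parallel statements of \cite{Gpre} rather than reprove them; the extra hypothesis $s_j>-1+1/q_j$ in (3) is precisely what ensures that the homogeneous spaces in play are realized as genuine subspaces of $\CS'(\BR^d)$ rather than of the quotient $\CS'(\BR^d)/\CP(\BR^d)$, so that the interpolation functors apply unambiguously. Passing to $\HS$ then splits into two mechanisms. First, assertion (4) is essentially immediate from the definition of $\CB^s_{q,r}(\HS)$ by restriction: if $g\in\CB^{s+d(1/q_1-1/q_0)}_{q_1,r_1}(\BR^d)$ restricts to a given $f$, the whole-space embedding gives $g\in\CB^s_{q_0,r_0}(\BR^d)$ with $g|_{\HS}=f$, so $f\in\CB^s_{q_0,r_0}(\HS)$ with the correct norm bound. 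Second, for the interpolation identities of (3) on $\HS$ one uses a universal extension operator $E$, bounded from $\CB^s_{q,r}(\HS)$ into $\CB^s_{q,r}(\BR^d)$ for all admissible $(q,r,s)$ at once and satisfying $(Ef)|_{\HS}=f$; together with the restriction map this is a retraction--coretraction pair, so the half-space identities follow from the whole-space ones by the standard retraction--coretraction principle for the real and complex functors. The homogeneous half-space case of (3)--(4) is more delicate, since the extension operator must be compatible with the homogeneous norm and the realization issue recurs, and there I would again defer to Propositions~3.11, 3.17, and 3.23 of \cite{Gpre}.

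Finally, the dualities that explicitly involve $\HS$---$(\wt H^s_q(\HS))'=H^{-s}_{q'}(\HS)$ in (1), and $(\dot H^s_q(\HS))'=\tdH^{-s}_{q'}(\HS)$, $(\tdH^s_q(\HS))'=\dot H^{-s}_{q'}(\HS)$ in (2)---express the familiar duality between a space of distributions supported in $\overline{\HS}$ and the space defined by restriction: one pairs the two through the $L_2$ inner product on $\HS$, notes that the definition of the restriction norm makes the induced map isometric up to a constant, and checks surjectivity by composing a given functional with $E$. The range restriction $-d/q'<s<d/q$ (respectively $-1/q'<s<1/q$) is what both makes the homogeneous spaces meaningful and validates the identifications $\dot H^s_q(\HS)=\tdH^s_q(\HS)$ implicit here, and it is inherited from the results of \cite{Tbook78,Tbook83} and \cite{Gpre}. \emph{The main obstacle} is not a single estimate but the bookkeeping around the homogeneous scale---which realization is in force, the exact admissible range of $s$ in each clause, and the mapping properties of $E$ on homogeneous spaces---together with the $\infty-$ endpoint, where density of test functions must stand in for the duality and interpolation arguments; this is exactly why the homogeneous parts are quoted from \cite{Gpre} and the endpoint parts lean on \cite{M73}.
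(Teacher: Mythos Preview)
The paper does not actually prove this proposition: it is stated with the preamble ``see Theorems 2.10.2/2, 2.10.4/1 in \cite{Tbook78}, Theorems 2.7.1, 2.11.2, Section 5.2.5 in \cite{Tbook83}, Theorems 8, 9, 11 in \cite{M73}, and Propositions 3.11, 3.17, 3.23 in \cite{Gpre}'' and is followed only by a remark on how the cited homogeneous results extend beyond the ranges given in \cite{Gpre}. Your proposal is therefore more detailed than the paper's own treatment, and it is consistent with it: you invoke the same references and correctly identify the two transfer mechanisms (restriction for the embedding in (4), retraction--coretraction via a universal extension operator for the interpolation identities in (3)).

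One correction worth making: your explanation of the side condition $s_j>-1+1/q_j$ in (3) is off. That condition is \emph{not} what realizes $\dot H^{s_j}_{q_j}(\BR^d)$ or $\dot B^{s_j}_{q_j,r_j}(\BR^d)$ as subspaces of $\CS'(\BR^d)$; the relevant threshold for that is $s<d/q$ (or $s\le d/q$ when $r=1$), and in any case the paper works in $\CS'(\BR^d)/\CP(\BR^d)$, where---as explained in the Remark following the proposition---the interpolation identities on $\BR^d$ hold without that upper restriction. The lower bound $s_j>-1+1/q_j$ is instead the condition under which the half-space spaces admit a common bounded extension operator into $\BR^d$ (this is the range in which the reflection-type extensions of \cite{Gpre} and \cite{DM15} are bounded on the homogeneous scale), so that your retraction--coretraction argument for $\Omega=\HS$ goes through. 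With that adjustment your sketch is sound.
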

\begin{rem}
Under the assumptions of Proposition \ref{prop-dual-interpolation},
it was proved in Propositions 3.17 and 3.22 in \cite{Gpre} that
the real interpolation $(\dot H^{s_0}_q (\HS), \dot H^{s_1}_q (\HS))_{\theta, r}
= \dot B^s_{q, r} (\HS)$ and $(\dot B^{s_0}_{q, r_0} (\HS), 
\dot B^{s_1}_{q, r_1} (\HS))_{\theta, r} = \dot B^s_{q, r} (\HS)$ hold 
provided that $s$ satisfies additionally $s < d \slash q$ (or $s \le d \slash q$ if $r = 1$).
This additional restriction on $s$ stems from the density of 
$\CS_0 (\BR^d) := \{f \in \CS (\BR^d) \mid 0 \notin \supp (\CF [f]) \}$ 
in $\dot B^s_{q,r} (\BR^d)$, see Proposition 2.1 in \cite{Gpre}.
However, since we define $\dot H^s_q (\BR^d)$ as well as $\dot B^s_{q,r} (\BR^d)$
as subsets of $\CS' (\BR^d) \slash \CP (\BR^d)$, we observe that
$\CS_0 (\BR^d)$ is dense in $\dot B^s_{q,r} (\BR^d)$ for all $1 \le p,q < \infty$
and $s \in \BR$, see \cite[Thm. 5.1.5]{Tbook83} for the details.
Hence, if we follow the proof of Propositions 3.17 in \cite{Gpre},
we see that the aforementioned real interpolation results hold without
any additional restriction on $s$.
In addition, for $\Omega \in \{\BR^d, \HS\}$ the complex interpolation
$[\dot H^{s_0}_{q_0} (\Omega), \dot H^{s_1}_{q_1} (\Omega)]_\theta
= \dot H^s_{q} (\Omega)$ was proved in Theorem 2.6 and Proposition 3.17 in
\cite{Gpre} with additional assumptions $s_j < d \slash q_j$, $j \in \{0,1\}$, 
but these assumptions are not required here. In fact, these additional
restrictions stem from the completeness assumption in \cite{Gpre}, but
we do not need these assumptions here since we define homogeneous
Sobolev spaces as subspaces of $\CS' (\BR^d) \slash \CP (\BR^d)$.
\end{rem}
Since we consider the boundary condition on $\pd\HS$, we need the following
proposition (cf. \cite[Thms. 4.1 and 4.3]{Gpre}) concerning the boundary trace.
\begin{prop}\label{prop-trace} Let $1 < q < \infty$, $1 \leq r \leq \infty$ and $1/q < \nu <\infty$.
Then, the boundary trace: 
\begin{equation}
\mathrm{tr}_0 \colon
\left\{\begin{aligned}
\CB^\nu_{q,r}(\HS) & \to \CB^{\nu-1/q}_{q,r}(\BR^{d-1}), \\
u &\mapsto u \vert_{\pd\HS}
\end{aligned}\right.
\end{equation}
is a bounded surjection possessing the estimate:
$$\|u|_{\pd\HS}\|_{\CB^{\nu-1/q}_{q,r}(\BR^{d-1})} \leq C\|u\|_{\CB^\nu_{q,r}(\HS)}.$$
\end{prop}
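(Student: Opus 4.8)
The plan is to deduce both the boundedness and the surjectivity of $\mathrm{tr}_0$ from the corresponding whole-space trace theorem, using that the spaces on $\HS$ are defined by restriction from $\BR^d$. First I would recall the whole-space statement: for $\nu>1/q$ the map $\mathrm{tr}\colon f\mapsto f(\,\cdot\,,0)$ is a bounded surjection from $\CB^\nu_{q,r}(\BR^d)$ onto $\CB^{\nu-1/q}_{q,r}(\BR^{d-1})$, admitting a bounded linear right inverse (a lifting operator $\wt E$). In the inhomogeneous case this is classical (see \cite{Tbook78} and \cite{Tbook83}), while in the homogeneous case it is \cite[Thms.~4.1 and 4.3]{Gpre}; with the convention of this paper that homogeneous Besov spaces are realized inside $\CS'(\BR^d)\slash\CP(\BR^d)$, this holds for every $\nu>1/q$ without an upper restriction.

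For the bound on $\mathrm{tr}_0$: given $u\in\CB^\nu_{q,r}(\HS)$, pick an extension $g\in\CB^\nu_{q,r}(\BR^d)$ with $g\vert_\HS=u$ and set $\mathrm{tr}_0 u:=\mathrm{tr}\,g$. I would check that this is independent of $g$: if $g_1\vert_\HS=g_2\vert_\HS$, then $h:=g_1-g_2$ vanishes on the open half-space $\{x_d>0\}$, and since $\nu>1/q$ forces $x_d\mapsto h(\,\cdot\,,x_d)$ to be continuous into $\CB^{\nu-1/q}_{q,r}(\BR^{d-1})$ with $\mathrm{tr}\,h=\lim_{x_d\downarrow 0}h(\,\cdot\,,x_d)$, we get $\mathrm{tr}\,h=0$. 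Hence $\mathrm{tr}_0$ is well defined, and $\|\mathrm{tr}_0 u\|_{\CB^{\nu-1/q}_{q,r}(\BR^{d-1})}=\|\mathrm{tr}\,g\|_{\CB^{\nu-1/q}_{q,r}(\BR^{d-1})}\le C\|g\|_{\CB^\nu_{q,r}(\BR^d)}$; taking the infimum over all admissible $g$ gives the stated estimate.

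For surjectivity: setting $E\varphi:=(\wt E\varphi)\vert_\HS$ with $\wt E$ the whole-space lifting operator above, one gets $E\varphi\in\CB^\nu_{q,r}(\HS)$ with $\|E\varphi\|_{\CB^\nu_{q,r}(\HS)}\le\|\wt E\varphi\|_{\CB^\nu_{q,r}(\BR^d)}\le C\|\varphi\|_{\CB^{\nu-1/q}_{q,r}(\BR^{d-1})}$ and $\mathrm{tr}_0(E\varphi)=\mathrm{tr}(\wt E\varphi)=\varphi$, so $\mathrm{tr}_0$ is onto. Concretely $\wt E$ may be taken of harmonic-extension type, i.e.\ built from a smooth Fourier multiplier in the tangential frequency variable.

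I expect the main difficulty to lie in the homogeneous case. There one must verify that restriction to $\HS$, the trace, and the lifting are all compatible with the quotient by polynomials, and that the homogeneous whole-space trace and lifting theorems are available for the whole range $\nu>1/q$ rather than only for $1/q<\nu<d/q$. As explained in the remarks following Proposition~\ref{prop-dual-interpolation}, both points are resolved by working with homogeneous spaces as subspaces of $\CS'(\BR^d)\slash\CP(\BR^d)$, which is exactly the setting of \cite[Thms.~4.1 and 4.3]{Gpre}; the homogeneous assertion then follows by the same two-step reduction, with those theorems used in place of the classical inhomogeneous trace and lifting theorems. The inhomogeneous case needs no such care and follows directly.
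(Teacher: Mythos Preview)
Your argument is correct. The paper does not supply its own proof of this proposition: it simply records the result with the parenthetical reference ``(cf.\ \cite[Thms.~4.1 and 4.3]{Gpre})'' and moves on. So there is no proof in the paper to compare against; you have written out what the paper leaves to the literature.

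Your two-step reduction (well-definedness plus boundedness by taking the infimum over extensions, and surjectivity via restriction of a whole-space lifting) is the natural way to transfer the whole-space trace theorem to spaces defined by restriction, and matches how the cited references handle it. Your remark that in the homogeneous setting one needs the quotient-by-polynomials convention to get the full range $\nu>1/q$ is exactly the point the paper itself flags in the remarks after Proposition~\ref{prop-dual-interpolation}, and you invoke the same source \cite{Gpre} for it. One small comment: in the well-definedness step for the homogeneous case you should note that the trace of a polynomial on $\BR^d$ is a polynomial on $\BR^{d-1}$, so the map descends to the quotients; you allude to this but it is worth making explicit, since otherwise the one-sided-limit argument is being applied to equivalence classes.
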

\subsection{Weak Dirichlet problem and the second Helmholtz decomposition} \label{subsec.2.3}
This subsection concerns the boundary value problems for the Laplacian, 
which will play a fundamental role in our analysis. 
We first describe the following theorem.
\begin{thm}
\label{thm-weak-Dirichlet}
Let $1 < q < \infty$, $1 \leq r \leq \infty-$, and $-1 + 1 \slash q < s < 1 \slash q$. 
Then, for any $\bff \in \CB^s_{q, r} (\HS)^d$, there exists a unique solution 
$u \in \wh \CB^{s + 1}_{q, r, 0} (\HS)$ to the weak Dirichlet problem
\begin{equation}
\label{wd:1}
(\nabla u, \nabla \varphi) = (\bff, \nabla\varphi)
\end{equation}
for every $\varphi \in \wh \CB^{1 - s}_{q', r', 0} (\HS)$.
In addition, $u$ satisfies the estimate
\begin{equation}\label{wd:2.1}
\lVert \nabla u \rVert_{\CB^s_{q, r} (\HS)} 
\le C \lVert \bff \rVert_{\CB^s_{q, r} (\HS)}.
\end{equation}
For the later use, let $Q$ be an operator defined by $Q\bff = u$. 
\end{thm}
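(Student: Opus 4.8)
The plan is to solve the weak Dirichlet problem \eqref{wd:1} by Fourier analysis in the whole space and then restrict to $\HS$. First I would reduce to the case where $\bff$ is replaced by its zero extension: by Remark~\ref{rem:def-hat-g} and Proposition~\ref{prop-density} (the identity $\CB^s_{q,r}(\HS)=\wt\CB^s_{q,r}(\HS)$ for $-1+1/q<s<1/q$), every $\bff\in\CB^s_{q,r}(\HS)^d$ has an extension $\bff_0\in\CB^s_{q,r}(\BR^d)^d$ supported in $\overline{\HS}$, and the test functions $\varphi\in\wh\CB^{1-s}_{q',r',0}(\HS)$ likewise have zero extensions $\varphi_0$ to $\BR^d$ with $\varphi_0|_{\pd\HS}=0$. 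Pairings $(\bff,\nabla\varphi)_{\HS}$ then coincide with pairings over $\BR^d$ of the extended objects, so the problem becomes one of finding $u\in\wh\CB^{s+1}_{q,r,0}(\HS)$ with $(\nabla u_0,\nabla\varphi_0)_{\BR^d}=(\bff_0,\nabla\varphi_0)_{\BR^d}$.

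Next I would construct a candidate solution. The natural guess, motivated by the classical half-space theory, is to use the even reflection of $\bff$ in the normal variable together with a reflected/odd construction so that the resulting potential vanishes on $\pd\HS$; concretely one writes $u$ via a formula of the form $u=\CF^{-1}_{\xi'}\big[\int$ of a kernel against $\CF'[\bff]\big]$, or more cleanly, one solves $\Delta U=\dv\bff_0$ in $\BR^d$ (giving $\nabla U=\nabla(-\Delta)^{-1}\dv\bff_0$, a zeroth-order Riesz-type operator bounded on $\CB^s_{q,r}(\BR^d)$ by Mikhlin/Besov multiplier theory) and then corrects by a harmonic function in $\HS$ to enforce the Dirichlet-type condition $u|_{\pd\HS}=0$. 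The condition $-1+1/q<s<1/q$ is exactly what makes $s+1>1/q$, so that the trace of $u$ is well-defined by Proposition~\ref{prop-trace}, and also $1-s>1/q'$ so the test-function traces make sense; these trace conditions are what let the boundary corrector be controlled. The estimate \eqref{wd:2.1} follows from the boundedness of the relevant Fourier multipliers on $B^s_{q,r}$ (and $\dot B^s_{q,r}$) on $\BR^d$, transferred to $\HS$ by the definition-by-restriction of the spaces.

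For uniqueness I would argue by duality: if $u\in\wh\CB^{s+1}_{q,r,0}(\HS)$ satisfies $(\nabla u,\nabla\varphi)=0$ for all $\varphi\in\wh\CB^{1-s}_{q',r',0}(\HS)$, then since $u$ itself lies in (the zero-trace) space of the correct regularity, one wants to take $\varphi=u$ — but that is not legitimate unless $q=2$, so instead one tests against an approximating sequence or uses that $u_0$ solves $\Delta u_0=0$ in $\HS$ with $u_0|_{\pd\HS}=0$ in the distributional sense, and invokes a Liouville-type statement for harmonic functions in $\wh\CB^{s+1}_{q,r,0}$. In the homogeneous case one must also remember that elements are taken modulo polynomials / modulo constants, which is precisely why $\wh{\dot B}$ excludes $\CP(\BR^d)$; the density of $C^\infty_0(\HS)$ in the relevant spaces (Propositions~\ref{prop-density} and \ref{prop-density-homogeneous}) is the tool that upgrades the weak identity to $u=0$.

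\textbf{Main obstacle.} The delicate point is the boundary corrector and its estimate in the full range $-1+1/q<s<1/q$, covering both positive and negative $s$ and both the homogeneous and inhomogeneous scales simultaneously: one must check that the harmonic extension operator solving the Dirichlet problem with data in $\CB^{s+1-1/q}_{q,r}(\BR^{d-1})$ maps into $\wh\CB^{s+1}_{q,r,0}(\HS)$ with norm controlled by $\|\bff\|_{\CB^s_{q,r}}$, uniformly, and that the various zero-extension and restriction steps are compatible with the nonlocal (multiplier) nature of the construction. For negative $s$ this is where the condition $s>-1+1/q$ is sharp (it is exactly the threshold for $\wt\CB=\CB$ and for the duality pairing with $\wh\CB^{1-s}_{q',r',0}$ to be non-degenerate), so the argument there must be run by duality against the already-established positive-$s$ case rather than directly. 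I expect this duality/interpolation bookkeeping, rather than any single hard estimate, to be the crux.
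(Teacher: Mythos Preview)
Your overall strategy is sound, but the paper takes a shorter route that bypasses exactly the step you flag as the main obstacle. Instead of the zero extension $\bff_0$ followed by a harmonic corrector, the paper extends $\bff=(f_1,\ldots,f_d)$ to $\BR^d$ by the reflection $\sfe[\bff]=(f_1^o,\ldots,f_{d-1}^o,f_d^e)$ (odd for tangential components, even for the normal one). The key identity is $\dv(\sfe[\bff])=(\dv\bff)^o$, so the whole-space Newtonian potential
\[
u_0=-\CF^{-1}_\xi\Big[\tfrac{\CF[(\dv\bff)^o]}{|\xi|^2}\Big]
\]
is odd in $x_d$ and therefore vanishes on $\pd\HS$ \emph{automatically}; no boundary corrector is needed. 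The estimate $\|\nabla u_0\|_{\CB^s_{q,r}(\BR^d)}\le C\|\sfe[\bff]\|_{\CB^s_{q,r}(\BR^d)}\le C\|\bff\|_{\CB^s_{q,r}(\HS)}$ is then a single application of the Mikhlin--Besov multiplier theorem, valid uniformly for all $-1+1/q<s<1/q$ and for both the homogeneous and inhomogeneous scales. The duality/interpolation bookkeeping you anticipate for negative $s$, and the Poisson-extension estimate you identify as delicate, simply do not arise. (The density reduction is to $\bff\in C^\infty_0(\HS)^d$, and the weak identity is verified by approximating $\nabla\varphi_0$ by gradients of $C^\infty_0(\BR^d)$ functions via Proposition~\ref{prop:Bdense}.)

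For uniqueness the paper is likewise more constructive than a Liouville argument: given arbitrary $\bg\in C^\infty_0(\HS)^d$, the same reflection trick applied to $\bg$ produces a test function $\varphi_0$ with $\Delta\varphi_0=\dv\sfe[\bg]$ in $\BR^d$ and $\varphi_0|_{\pd\HS}=0$, hence $\varphi\in\wh\CB^{1-s}_{q',r',0}(\HS)$; testing the homogeneous equation against this $\varphi$ and integrating by parts (again justified by Proposition~\ref{prop:Bdense}) yields $(\nabla u,\bg)_{\HS}=0$, so $\nabla u=0$ and then $u=0$ from the boundary condition.
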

\begin{proof}  Since $C^\infty_0(\HS)$ is dense in $\CB^s_{q,r}(\HS)$ due to Proposition 
\ref{prop-density-homogeneous}, 
we may assume $\bff = (f_1, \ldots, f_d) \in C^\infty_0(\HS)^d$ by the density argument.  
For $j = 1, \ldots, d - 1$, let $f^o_j$ be the odd extension of $f_j$, respectively,
and let $f^e_d$ be the even extension of $f_d$, that is
\begin{equation}\label{extension:1}
f^o_j(x)  = \begin{cases} f_j(x), &\quad x_d > 0, \\ 
-f_j(x', -x_d), &\quad x_d < 0, 
\end{cases} \quad 
f^e_d(x)  = \begin{cases} f_d(x), &\quad x_d > 0, \\ 
f_d(x', -x_d), &\quad x_d < 0, 
\end{cases}
\end{equation}
where $x' = (x_1, \ldots, x_{d-1})$. 
Set $\sfe [\bff] := (f^o_1, \ldots, f^o_{d-1}, f^e_d)$.
Clearly, we see that $\sfe [\bff] \in C^\infty_0 (\BR^d)$ and $\dv (\sfe [\bff]) = (\dv \bff)^o$,
where $(\dv \bff)^o$ stands for the odd extension of $\dv \bff$. 
We define $u_0$ by
\begin{equation}
\label{eq:2.1}
u_0 = -\CF^{-1}_\xi \bigg[\frac{\CF[(\dv \bff)^o] (\xi)}{\lvert \xi \rvert^2}\bigg]
= - \sum_{j = 1}^{d - 1} \CF^{- 1}_\xi \bigg[\frac{i \xi_j \CF[f^o_j](\xi)}{\lvert \xi \rvert^2} \bigg]
- \CF^{- 1}_\xi \bigg[\frac{i \xi_d \CF[f^e_d] (\xi)}{\lvert \xi \rvert^2} \bigg].
\end{equation}
Since $i\xi_j|\xi|^{-2}\CF[f^o](\xi)$ and $i\xi_d|\xi|^{-2}\CF[f^e](\xi)$ belong to 
$\CS'(\BR^d)$, $u_0$ also belongs to $\CS'(\BR^d)$.  
By the construction of $u_0$, we see that $u_0 \vert_{\pd \HS} = 0$ and $\Delta u_0 =
\dv (\sfe [\bff])$ in $\BR^d$.  
In addition, it follows from the Fourier multiplier theorem 
(cf. Proposition \ref{prop-Fourier-multiplier}) that 
\begin{equation}
\label{est-u0}
\lVert \nabla u_0 \rVert_{\CB^s_{q, r} (\BR^d)} \le C \lVert \sfe [\bff] \rVert_{\CB^s_{q, r} (\BR^d)}
\le C \lVert \bff \rVert_{\CB^s_{q, r}(\HS)}.
\end{equation}
Let $u$ be the zero extension of $u_0$, that is $u=u_0$ for $x_d > 0$ and $u=0$ for $x_d < 0$,
and then $u \in \wh \CB^{s + 1}_{q, r, 0} (\HS)$.
Furthermore, let $\varphi \in \wh \CB^{1 - s}_{q', r', 0} (\HS)$ be arbitrary.
As was stated in Remark \ref{rem:def-hat-g}, the zero extension $\varphi_0$ of $\varphi$ to $x_d < 0$
 satisfies \eqref{def-hat-g*}, and hence
$$(\nabla u, \nabla \varphi)_{\HS} 
 = (\nabla u_0, \nabla \varphi_0)_{\BR^d}. $$
From Proposition \ref{prop:Bdense} it follows that there exists a sequence $\{\varphi_j\}_{j=1}^\infty
$ of $C^\infty_0(\BR^d)$ such that 
$$\lim_{j\to\infty} \|\nabla(\varphi_j - \varphi_0)\|_{\CB^{-s}_{q',r'}(\BR^d)} = 0.$$
Thus, we have 
\begin{equation}
\label{id-weak-Dirichlet}
\begin{aligned}
 & (\nabla u_0, \nabla \varphi_0)_{\BR^d} =
\lim_{j\to\infty}(\nabla u_0, \nabla\varphi_j)_{\BR^d} 
=- \lim_{j\to\infty} (\Delta u_0, \varphi_j)_{\BR^d}\\
&\quad = -\lim_{j\to\infty}(\dv(\sfe [\bff]), \varphi_j)_{\BR^d}
= \lim_{j\to\infty}(\sfe [\bff], \nabla\varphi_0)_{\BR^d}
=(\bff, \nabla\varphi)_{\HS}.
\end{aligned}
\end{equation}
Therefore, $u \in \wh \CB^{s + 1}_{q, r, 0} (\HS)$ is a solution to the 
weak Dirichlet problem \eqref{wd:1}. Moreover, from the definition of
the Besov spaces by restriction and \eqref{est-u0}, 
we see that $u$ satisfies the estimate \eqref{wd:2.1}. 
\par
It remains to prove the uniqueness part. 
Let $u \in \wh \CB^{s + 1}_{q, r, 0} (\HS)$ satisfy the homogeneous equation:
\begin{equation}
\label{wd:2.2}
(\nabla u, \nabla \varphi) = 0
\end{equation}	
for all $\varphi \in \wh \CB^{1 - s}_{q', r', 0} (\HS)$.
Now, let $\bg = (g_1, \ldots, g_d) \in C^\infty_0(\HS)^d$ be arbitrary.
Define $\varphi_0$ by
\begin{equation}
\varphi_0 = -\CF^{-1}_\xi \bigg[\frac{\CF[(\dv \bg)^o] (\xi)}{\lvert \xi \rvert^2}\bigg]
= - \sum_{j = 1}^{d - 1} \CF^{- 1}_\xi \bigg[\frac{i \xi_j \CF[g^o_j](\xi)}{\lvert \xi \rvert^2} \bigg]
- \CF^{- 1}_\xi \bigg[\frac{i \xi_d \CF[g^e_d] (\xi)}{\lvert \xi \rvert^2} \bigg],
\end{equation}
and then, we have $\varphi_0 \in \wh \CB^{1-s}_{q', r'}(\BR^d)$ and $\varphi_0|_{\pd\HS}=0$. 
Let $\sfe(\bg) = (g_1^o, \ldots, g_{d-1}^o, g_d^e)$, and then $\sfe(\bg) \in C^\infty_0(\BR^d)$,
 and $\Delta \varphi_0 =\dv\sfe(\bg)= (\dv\bg)^o$ in $\BR^d$.  
Let $\varphi$ be the zero extension of $\varphi_0$, that is $\varphi=\varphi_0$ for $x_d > 0$ and 
$\varphi=0$ for $x_d < 0$,  and 
let  $u_0 \in \CB^{1-s}_{q,r}(\BR^d)$ be the zero 
extension of $u$ to $x_d < 0$,  that is $u_0=u$ for $x_d > 0$ and $u_d=0$ for 
$x_d < 0$, and then  $u_0$ satisfies \eqref{def-hat-g} with $f=u$.  We have 
$$0 = (\nabla u, \nabla \varphi)_{\HS} = (\nabla u_0, \nabla\varphi_0)_{\BR^d}.$$
It follows from Proposition \ref{prop:Bdense} that there exists a sequence
$\{u_j\}_{j=1}^\infty$ of $C^\infty_0(\BR^d)$ such that
$$\lim_{j\to\infty}\|\nabla(u_j - u_0)\|_{\CB^s_{q,r}(\BR^d)} = 0.$$
Then, we have
\begin{align*}
&(\nabla u_0, \nabla\varphi_0)_{\BR^d} = \lim_{j\to\infty} (\nabla u_j, \nabla\varphi_0)_{\BR^d}
=-\lim_{j\to\infty}(u_j, \Delta \varphi_0)_{\BR^d} \\
&\quad = -\lim_{j\to\infty}(u_j, \dv\sfe[\bg])_{\BR^d}
= \lim_{j\to\infty} (\nabla u_j, \sfe[\bg])_{\BR^d} = (\nabla u_0, \sfe[\bg])_{\BR^d} 
= (\nabla u, \bg)_{\HS}.\end{align*}
Thus,  we have $(\nabla u, \bg)=0$. 
Recalling that $\bg \in C^\infty_0 (\HS)^d$ is taken arbitrarily,
we infer that $\nabla u = 0$ in $\HS$ in the sense of distributions 
and therefore, $u$ is a constant in $\HS$. By \eqref{def-hat-g}, 
{\color{black} we have} $u \vert_{\pd \HS} = 0$, which {\color{black} implies} that $u=0$.
\end{proof}
\begin{cor}\label{cor:2.1} Let $1 < q < \infty$, $1 \leq r \leq \infty-$,  and $-1 + 1/q < s < 1/q$. 
For any $\bff \in \CB^s_{q, r}(\HS)^{\color{black} d}$
and $f \in \CB^{s+1}_{q, r}(\HS)$,  let $u= f + Q(\bff-\nabla f)$.  Then,
$u \in \CB^{s+1}_{q, r}(\HS) 
+ \wh \CB^{s+1}_{q,r,0}(\HS)$
 satisfies equations \eqref{wd:2.1} subject to  $u |_{\pd \HS}= f$ on $\pd\HS$, 
and estimate: 
\begin{equation}
\|\nabla u\|_{\CB^s_{q, r}(\HS)} \le C\|(\nabla f, \bff)\|_{\CB^s_{q, r}(\HS)}
\end{equation}
for some constant $C > 0$. 
\end{cor}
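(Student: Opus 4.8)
The plan is to verify directly that the explicitly defined $u := f + Q(\bff - \nabla f)$ does what is claimed, using Theorem \ref{thm-weak-Dirichlet} as a black box. First I would record the membership: since $f \in \CB^{s+1}_{q,r}(\HS)$ and $\bff - \nabla f \in \CB^s_{q,r}(\HS)^d$ (because $\nabla f \in \CB^s_{q,r}(\HS)^d$ by definition of $\CB^{s+1}_{q,r}$, or more precisely by the embedding $\CB^{s+1}_{q,r}(\HS) \hookrightarrow \wh\CB^{s+1}_{q,r}(\HS)$ so that $\nabla f \in \CB^s_{q,r}(\HS)^d$), Theorem \ref{thm-weak-Dirichlet} gives $Q(\bff - \nabla f) \in \wh\CB^{s+1}_{q,r,0}(\HS)$, whence $u \in \CB^{s+1}_{q,r}(\HS) + \wh\CB^{s+1}_{q,r,0}(\HS)$.

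Next I would check the weak equation. By definition of $Q$, the function $w := Q(\bff - \nabla f)$ solves $(\nabla w, \nabla\varphi) = (\bff - \nabla f, \nabla\varphi)$ for every $\varphi \in \wh\CB^{1-s}_{q',r',0}(\HS)$. Adding $(\nabla f, \nabla\varphi)$ to both sides gives $(\nabla u, \nabla\varphi) = (\nabla f + \nabla w, \nabla\varphi) = (\bff, \nabla\varphi)$, which is exactly \eqref{wd:2.1} for $u$. For the boundary condition, $w \in \wh\CB^{s+1}_{q,r,0}(\HS)$ means by Definition \ref{def-hat-space} that $w$ has an extension $g$ with $\supp(g) \subset \overline{\HS}$ and, by the defining property of the subscript $0$, $w\vert_{\pd\HS} = 0$ (the trace is well-defined here because $s+1 > 1/q$, by Proposition \ref{prop-trace} and Remark \ref{rem:def-hat-g}); hence $u\vert_{\pd\HS} = f\vert_{\pd\HS} + w\vert_{\pd\HS} = f\vert_{\pd\HS}$, i.e. $u = f$ on $\pd\HS$.

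Finally the estimate follows by the triangle inequality and \eqref{wd:2.1} applied to $w$:
\begin{equation*}
\|\nabla u\|_{\CB^s_{q,r}(\HS)} \le \|\nabla f\|_{\CB^s_{q,r}(\HS)} + \|\nabla w\|_{\CB^s_{q,r}(\HS)} \le \|\nabla f\|_{\CB^s_{q,r}(\HS)} + C\|\bff - \nabla f\|_{\CB^s_{q,r}(\HS)} \le C\|(\nabla f, \bff)\|_{\CB^s_{q,r}(\HS)}.
\end{equation*}
I do not expect any serious obstacle here; the only point requiring a little care is making sure the trace statement $w\vert_{\pd\HS}=0$ is legitimate, i.e. that $s+1>1/q$ under the hypothesis $-1+1/q<s<1/q$, which is immediate, and that $\nabla f$ genuinely lies in $\CB^s_{q,r}(\HS)^d$ so that Theorem \ref{thm-weak-Dirichlet} applies to $\bff - \nabla f$.
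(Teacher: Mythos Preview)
Your proposal is correct and follows essentially the same approach as the paper's proof: both set $v = u - f = Q(\bff - \nabla f)$ and invoke Theorem~\ref{thm-weak-Dirichlet} for this $v$. The paper's proof is simply a two-line version of what you wrote out in detail; your added verifications (membership in the sum space, trace $w\vert_{\pd\HS}=0$, and the triangle-inequality estimate) are exactly the points implicitly contained in the paper's phrase ``by Theorem~\ref{thm-weak-Dirichlet}, the proof is complete.''
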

\begin{proof} {\color{black} Set $v = u - f$.} Then $v$ satisfies
$$(\nabla v, \nabla\varphi) = (\bff - \nabla f, \nabla\varphi)
$$
for every $\varphi 
\in \wh \CB^{1-s}_{q', r', 0}(\HS)$. 
Thus, by Theorem \ref{thm-weak-Dirichlet}, 
{\color{black}the proof is complete.}
\end{proof}

\subsection*{The second Helmholtz decomposition}
Let $\bff \in \CB^s_{q, r}(\HS)^d$.
We define operators $\BP^s_{q, r} \colon \CB^s_{q, r}(\HS)^d \to \CB^s_{q, r}(\HS)^d$ and
$\BQ^s_{q, r} \colon \CB^s_{q, r}(\HS)^d \to \wh \CB^{s+1}_{q,r,0}(\HS)$ by setting
$\BP^s_{q, 1} \bff = \bff - \nabla\fp$ and $\BQ^s_{q, r} \bff = \fp$, {\color{black}
where $\fp \in \wh \CB^{s + 1}_{q, r, 0} (\HS)$ is a unique solution to 
\begin{equation}\label{wd:6.1}
(\nabla\fp, \nabla \varphi) = (\bff, \nabla\varphi)\quad
\text{for every $\varphi \in \wh \CB^{1-s}_{q', r', 0}(\HS)$}.
\end{equation}
Notice that the unique existence of $\fp \in \wh \CB^{s + 1}_{q, 1, 0} (\HS)$ is 
indeed guaranteed by Theorem \ref{thm-weak-Dirichlet}. Clearly, there holds
$\bff = \BP^s_{q, 1} \bff + \nabla \BQ^s_{q, r} \bff$. Here, by Theorem 
\ref{thm-weak-Dirichlet}, we see that $\BP^s_{q, r}$ and $\nabla \BQ^s_{q, r}$ 
are bounded linear operators such that
\begin{equation}\label{sol:6.1}
\|\BP^s_{q, r} \bff\|_{\CB^s_{q, r}(\HS)} \le C\|\bff\|_{\CB^s_{q, r}(\HS)},
\quad 
\|\nabla \BQ^s_{q, r} \bff\|_{\CB^s_{q, r} (\HS)} \le C\|\bff\|_{\CB^s_{q, r}(\HS)}
\end{equation}
for some constant $C > 0$ depending solely on $s$, $q$, $r$, and $d$. Furthermore, by setting
\begin{equation}
\mathcal{G}^s_{q, r}= \{\nabla \BQ^s_{q, r} \bff \mid \bff \in \CB^s_{q, r}(\HS)^d\},
\end{equation}
and noting that $\CJ^s_{q, r}(\HS) = \{\BP^s_{q, r} \bff \mid\bff \in \CB^s_{q, r}(\HS)^d\}$, 
we have the \textit{second Helmholtz decomposition}:
\begin{equation}
\label{helmholtz.1}
\CB^s_{q, r}(\HS) = \CJ^s_{q, r}(\HS) \oplus \mathcal{G}^s_{q, r}(\HS),
\end{equation} 
where the symbol $\oplus$ stands for direct sum operation.  
In fact, if we take $\bff \in \CB^s_{q, r} (\HS)^d$ such that 
$\bff \in \CJ^s_{q, r} (\HS) \cap \mathcal{G}^s_{q, r} (\HS)$, there holds
$\bff = \BP^s_{q, r} \bff = \nabla \BQ^s_{q, r} \bff$. Then for any 
$\varphi \in \wh \CB^{1-s}_{q', r', 0}(\HS)$,} we have
\begin{equation}
0 = (\BP^s_{q, r} \bff, \nabla \varphi) = (\nabla \BQ^s_{q, r} \bff, \nabla \varphi).
\end{equation}
Thus, {\color{black} we infer from Theorem \ref{thm-weak-Dirichlet} that} $\bff = 0$,
{\color{black}i.e., $\CJ^s_{q, r} (\HS) \cap \mathcal{G}^s_{q, r} (\HS) = \{0\}$}. 
We write   
$\mathcal G^s_{q, r} (\HS) = G^s_{q, r} (\HS)$ if
$\CB^s_{q, r} (\HS) = B^s_{q, r} (\HS)$ and $\mathcal G^s_{q, r} (\HS)
= \dot G^s_{q, r} (\HS)$ if $\CB^s_{q, r} (\HS) = \dot B^s_{q, r} (\HS)$.\vskip0.5pc

We shall give a characterization of {\color{black} solenoidal vector fields}. 
\begin{thm}
\label{thm-solenoidal-characterization}
Let $1 < q < \infty$, $1 \leq r \leq \infty-$,  and $- 1 + 1 \slash q < s < 1 \slash q$. Then, 
there holds $\bu_\sigma \in \CJ^s_{q, r}(\HS)$
if and only if $\bu_\sigma \in \CB^s_{q, r} (\HS)^d$ and $\bu_\sigma$ satisfies $\dv \bu_\sigma = 0$
in the distribution sense.
\end{thm}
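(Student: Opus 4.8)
The plan is to prove the two implications separately; the forward one is immediate, and the converse I would reduce, via the second Helmholtz decomposition, to a Liouville‑type statement for the associated pressure. For the easy direction: if $\bu_\sigma \in \CJ^s_{q,r}(\HS)$, then every $\varphi \in C^\infty_0(\HS)$ lies in $\wh\CB^{1-s}_{q',r',0}(\HS)$ (extend by zero; the support and trace conditions are trivially met), so \eqref{solenoidal} gives $\langle\dv\bu_\sigma,\varphi\rangle = -(\bu_\sigma,\nabla\varphi) = 0$ for all such $\varphi$, i.e.\ $\dv\bu_\sigma = 0$ in $\CD'(\HS)$.

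For the converse, assume $\bu_\sigma \in \CB^s_{q,r}(\HS)^d$ with $\dv\bu_\sigma = 0$ in $\CD'(\HS)$. Applying the second Helmholtz decomposition \eqref{helmholtz.1}, write $\bu_\sigma = \BP^s_{q,r}\bu_\sigma + \nabla p$ with $p := \BQ^s_{q,r}\bu_\sigma \in \wh\CB^{s+1}_{q,r,0}(\HS)$. By the easy direction applied to $\BP^s_{q,r}\bu_\sigma \in \CJ^s_{q,r}(\HS)$ we have $\dv\BP^s_{q,r}\bu_\sigma = 0$ in $\CD'(\HS)$, so subtracting yields $\Delta p = \dv(\nabla p) = 0$ in $\CD'(\HS)$. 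Thus $p$ is harmonic in $\HS$ with $p\vert_{\pd\HS} = 0$ (the trace making sense since $s+1 > 1/q$, cf.\ Remark \ref{rem:def-hat-g}), and it suffices to prove $\nabla p = 0$; then $\bu_\sigma = \BP^s_{q,r}\bu_\sigma \in \CJ^s_{q,r}(\HS)$.

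To prove $\nabla p = 0$, write $\nabla p = (v_1,\dots,v_d) \in \CB^s_{q,r}(\HS)^d$ and reflect across $\pd\HS$: set $\widetilde v_j := E_{\mathrm o}v_j$ for $j \le d-1$ and $\widetilde v_d := E_{\mathrm e}v_d$, where $E_{\mathrm o},E_{\mathrm e}$ are the odd and even extensions, which are bounded $\CB^s_{q,r}(\HS)\to\CB^s_{q,r}(\BR^d)$ precisely for $-1+1/q < s < 1/q$ (the classical fact already used for the extensions in the proof of Theorem \ref{thm-weak-Dirichlet}), so $(\widetilde v_j)_j \in \CB^s_{q,r}(\BR^d)^d$. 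Using $v_j\vert_{\pd\HS} = 0$ for $j<d$ (a tangential derivative of $p\vert_{\pd\HS}=0$) together with the commutation of tangential derivatives with the reflections, one checks $(\widetilde v_j)_j$ is curl‑free on $\BR^d$, hence $= \nabla\widetilde p$ for some $\widetilde p$ odd in $x_d$ up to an additive constant. Then $\Delta\widetilde p \in \CB^{s-1}_{q,r}(\BR^d)$ is supported in $\pd\HS$; since $s > -1+1/q$, the only such distributions are multiples of $\delta(x_d)$, which are even in $x_d$, whereas $\Delta\widetilde p$ is odd — so $\Delta\widetilde p = 0$. A harmonic tempered distribution is a polynomial, so $\nabla\widetilde p$ is a polynomial lying in $\CB^s_{q,r}(\BR^d)$; this forces $\nabla\widetilde p = 0$ (a nonzero polynomial is not in $B^s_{q,r}(\BR^d)$, and in $\dot B^s_{q,r}(\BR^d) \subset \CS'(\BR^d)/\CP(\BR^d)$ polynomials vanish). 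Restricting to $\HS$ gives $\nabla p = 0$, as required.

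The main obstacle is this last step: one needs the boundedness of the reflection operators on $\CB^s_{q,r}$ — which is exactly why the hypothesis $-1+1/q < s < 1/q$ is imposed — and one must discard the boundary‑supported terms the reflections can introduce into $\mathrm{curl}\,(\widetilde v_j)$ and into $\Delta\widetilde p$ (handled above by a parity argument, or equivalently via the identity $\pd_d^2 p\vert_{\pd\HS} = 0$). An alternative that avoids the reflection is to prove, by a density argument, that $\{\nabla\varphi : \varphi \in C^\infty_0(\HS)\}$ is dense in $\{\nabla\varphi : \varphi \in \wh\CB^{1-s}_{q',r',0}(\HS)\}$ with respect to $\CB^{-s}_{q',r'}(\HS)^d$; this upgrades ``$\Delta p = 0$ in $\CD'(\HS)$'' to the statement that $p$ solves the homogeneous weak Dirichlet problem, whence $p = 0$ by the uniqueness part of Theorem \ref{thm-weak-Dirichlet}.
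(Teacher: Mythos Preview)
Your forward direction matches the paper's. For the converse, the paper takes a much more direct route than your Helmholtz--reflection--Liouville argument: it extends $\bu_\sigma$ itself to $\BR^d$ via the $(o,\ldots,o,e)$ reflection (so that $\dv\bu_{\sigma,e}=(\dv\bu_\sigma)^o=0$ on all of $\BR^d$), extends the test function $\varphi\in\wh\CB^{1-s}_{q',r',0}(\HS)$ by zero to $\varphi_0$, and then uses Proposition~\ref{prop:Bdense} to approximate $\nabla\varphi_0$ by gradients of $C^\infty_0(\BR^d)$ functions, reducing $(\bu_\sigma,\nabla\varphi)_{\HS}=(\bu_{\sigma,e},\nabla\varphi_0)_{\BR^d}$ to a single integration by parts on $\BR^d$. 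This avoids both the Helmholtz decomposition and any Liouville-type statement.

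Your argument is correct in outline but has a soft spot in the curl-free step: the justification ``$v_j\vert_{\pd\HS}=0$ (a tangential derivative of $p\vert_{\pd\HS}=0$)'' is not well-posed, since $v_j\in\CB^s_{q,r}(\HS)$ with $s<1/q$ need not have a trace. The conclusion is nonetheless right, and the clean fix is to bypass the curl computation entirely: since $p\in\wh\CB^{s+1}_{q,r,0}(\HS)$, its zero extension $p_0$ has $\nabla p_0\in\CB^s_{q,r}(\BR^d)$, and defining $\widetilde p:=p_0-p_0\circ R$ (with $R(x',x_d)=(x',-x_d)$) gives $\nabla\widetilde p\in\CB^s_{q,r}(\BR^d)^d$ directly, with $\pd_j\widetilde p=(v_j)^o$ for $j<d$ and $\pd_d\widetilde p=(v_d)^e$ following from the chain rule rather than from any trace identity. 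After this, your parity argument for $\Delta\widetilde p$ and the Liouville step go through. Your alternative (density of $\{\nabla\varphi:\varphi\in C^\infty_0(\HS)\}$ in the gradients of $\wh\CB^{1-s}_{q',r',0}(\HS)$) is close in spirit to the paper's approach, but note that the available density result (Proposition~\ref{prop:Bdense}) gives approximation by $C^\infty_0(\BR^d)$, not $C^\infty_0(\HS)$; the paper handles this by first passing to $\BR^d$ via the reflection of $\bu_\sigma$.
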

\begin{proof}
Assume that $\bu_\sigma \in \CB^s_{q, r} (\HS)^d$ satisfies
$(\bu_\sigma, \nabla \varphi) = 0$ for every 
$\varphi \in \wh \CB^{1 - s}_{q', r', 0} (\HS)$. Since 
$C^\infty_0(\HS)$ is a subset of $\wh \CB^{1 - s}_{q', r', 0} (\HS)$,
we have $(\bu_\sigma, \nabla \varphi) = 0$ for every 
$\varphi \in C^\infty_0(\HS)$, which shows that 
$\dv \bu_\sigma = 0$ in the distribution sense. \par
Assume that $\bu_\sigma \in \CB^{s}_{q, r} (\HS)$ satisfies $\dv \bu_\sigma = 0$ in $\HS$ 
 in the sense of distributions. Let $\bu_{\sigma, e}=(u_1^o, \ldots, u_{d-1}^o, u_d^e)$, and then
we see that $\dv \bu_{\sigma, e} = (\dv \bu_\sigma)^o = 0$ in $\BR^d$.  Let $\varphi$ be any element of
$\wh B^{1-s}_{q',r',0}(\HS)$ and let $\varphi_0$ be the zero extension of $\varphi$ to $x_d < 0$, which has 
the properties \eqref{def-hat-g*}. From Proposition \ref{prop:Bdense} it follows that  there exists   
a sequence $\{\varphi_j\}_{j=1}^\infty$  in $C^\infty_0(\HS)$ such that 
$\lim_{j\to\infty}\|\nabla(\varphi_j-\varphi_0)\|_{\CB^{-s}_{q',r'}(\BR^d)} = 0$.  
 Noting \eqref{def-hat-g*},
we have
\begin{equation}
(\bu_\sigma, \nabla\varphi)_{\HS} = (\bu_{\sigma,e}, \nabla\varphi_0)_{\BR^d} 
= \lim_{j\to\infty}(\bu_{\sigma, e}, \nabla \varphi_j)_{\BR^d}
= -\lim_{j\to\infty} (\dv \bu_{\sigma, e}, \varphi_j)_{\BR^d} = 0,
\end{equation}
which shows that $\bu_\sigma \in \CJ^s_{q,r}(\HS)$.  This completes the proof of
Theorem \ref{thm-solenoidal-characterization}. 
\end{proof}
Finally, we shall give a uniqueness theorem for the resolvent problem of the weak 
Dirichlet problem. 
\begin{prop}
\label{prop:wd.1} 
Let $1 < q < \infty$, $1 \leq r \leq \infty-$,  and $-1 + 1 \slash q < s < 1 \slash q$.  
Moreover, let $0 < \epsilon < \pi/2$ and 
$\lambda \in \Sigma_{\epsilon}$. Set $\sB^{1-s}_{q', r', 0}(\HS) = \{\varphi 
\in \sB^{1-s}_{q', r'}(\HS) \mid \varphi|_{\pd\HS}=0\}$. 
If $u \in \CB^{s + 1}_{q, r} (\HS)$ satisfies 
\begin{equation}
\left\{\begin{aligned}
\label{dir:12}
(\lambda u, \varphi) + \mu (\nabla u, \nabla \varphi) & = 0
& \quad & \text{for all $\varphi \in  \sB^{1 - s}_{q', r', 0} (\HS)$}, \\
u \vert_{\pd \HS} & =0 & \quad & \text{on $\pd \HS$},
\end{aligned}\right.
\end{equation}
then it holds $u = 0$.  
\end{prop}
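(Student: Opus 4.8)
The plan is to prove that $(u,\psi)_{\HS}=0$ for every $\psi\in C^\infty_0(\HS)$, which forces $u=0$, and to extract that pairing by testing the weak equation \eqref{dir:12} against an explicitly constructed solution of the \emph{transposed} resolvent problem and carrying out the computation on the whole space $\BR^d$ after extending $u$ by zero.

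First I would construct the auxiliary solution. Fix $\psi\in C^\infty_0(\HS)$ and let $\psi^o\in C^\infty_0(\BR^d)$ be its extension that is odd in $x_d$; it is genuinely smooth because $\supp\psi$ is a compact subset of the open half-space. Since $\lambda\in\Sigma_\epsilon$ and $\mu>0$, the symbol $(\lambda+\mu|\xi|^2)^{-1}$ is smooth, bounded, and $O(|\xi|^{-2})$, so
\begin{equation*}
W:=\CF^{-1}_\xi\big[(\lambda+\mu|\xi|^2)^{-1}\CF[\psi^o](\xi)\big]\in\CS(\BR^d),\qquad \lambda W-\mu\Delta W=\psi^o \text{ in }\BR^d.
\end{equation*}
As $\lambda-\mu\Delta$ commutes with the reflection $x_d\mapsto-x_d$ and the tempered solution of $\lambda v-\mu\Delta v=\psi^o$ is unique, $W$ is odd in $x_d$; hence $w:=W\vert_{\HS}$ satisfies $\lambda w-\mu\Delta w=\psi$ in $\HS$ and $w\vert_{\pd\HS}=0$. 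Being the restriction of a Schwartz function, $w$ lies in every inhomogeneous Besov space; and since $\CF[\psi^o](0)=\int_{\BR^d}\psi^o\,\d x=0$, a short Littlewood--Paley computation shows $W$ also lies in $\dot B^{-s}_{q',r'}(\BR^d)\cap\dot B^{1-s}_{q',r'}(\BR^d)$. In either convention $w\in\sB^{1-s}_{q',r',0}(\HS)$, so $w$ is an admissible test function in \eqref{dir:12}.

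Next I would use that $u$ is extendable by zero. Because $-1+1/q<s<1/q$ we have $1/q<s+1<1+1/q$, and for such a smoothness index an element of $\CB^{s+1}_{q,r}(\HS)$ with vanishing trace on $\pd\HS$ has zero extension $\bar u\in\CB^{s+1}_{q,r}(\BR^d)$; moreover its distributional gradient coincides with the zero extension $\overline{\nabla u}$ of $\nabla u$, since the would‑be jump term across $\pd\HS$ vanishes because $u\vert_{\pd\HS}=0$, and $\overline{\nabla u}\in\CB^s_{q,r}(\BR^d)$ because $\CB^s_{q,r}(\HS)=\wt\CB^s_{q,r}(\HS)$ by Proposition \ref{prop-density}(3) (resp.\ Proposition \ref{prop-density-homogeneous}(4)). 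Thus both $\bar u$ and $\nabla\bar u$ are supported in $\overline{\HS}$. Integrating by parts on $\BR^d$ (legitimate since $\bar u\in\CS'(\BR^d)$ and $W\in\CS(\BR^d)$) and invoking $\lambda W-\mu\Delta W=\psi^o$,
\begin{equation*}
\lambda(\bar u,W)_{\BR^d}+\mu(\nabla\bar u,\nabla W)_{\BR^d}=(\bar u,\lambda W-\mu\Delta W)_{\BR^d}=(\bar u,\psi^o)_{\BR^d}.
\end{equation*}
Since $\bar u$ and $\nabla\bar u$ are supported in $\overline{\HS}$, where $\psi^o=\psi$, $W=w$, and $\nabla W=\nabla w$, the left‑hand side equals $\lambda(u,w)_{\HS}+\mu(\nabla u,\nabla w)_{\HS}$ and the right‑hand side equals $(u,\psi)_{\HS}$; but the left‑hand side vanishes by \eqref{dir:12} with $\varphi=w$. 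Hence $(u,\psi)_{\HS}=0$ for all $\psi\in C^\infty_0(\HS)$, so $u=0$.

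The step I expect to be the main obstacle is not the displayed bookkeeping but the two supporting facts it rests on, both tied to the restriction $-1+1/q<s<1/q$: that the auxiliary solution $w$ genuinely belongs to the test space $\sB^{1-s}_{q',r',0}(\HS)$ — which in the homogeneous convention requires controlling the low‑frequency behaviour of $W$ via $\CF[\psi^o](0)=0$ — and that the boundary condition $u\vert_{\pd\HS}=0$ upgrades $u$ to a zero‑extendable element whose gradient also extends by zero, so that the integration by parts on $\BR^d$ faithfully reproduces the half‑space pairings.
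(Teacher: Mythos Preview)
Your proof is correct and follows essentially the same route as the paper's: both construct the auxiliary test function $w=\varphi$ as the Fourier resolvent $(\lambda+\mu|\xi|^2)^{-1}$ applied to the odd extension of an arbitrary $\psi=f\in C^\infty_0(\HS)$, verify that the result lies in $\sB^{1-s}_{q',r',0}(\HS)$ via the Fourier multiplier theorem, and then deduce $(u,\psi)_{\HS}=0$ from the weak equation. The only cosmetic difference is that you zero-extend $u$ to $\BR^d$ and integrate by parts there, whereas the paper integrates by parts directly on $\HS$ using $u\vert_{\pd\HS}=0$; your route trades the half-space boundary-term bookkeeping for the (correct) observation that $\nabla\bar u=\overline{\nabla u}$ because the trace vanishes, but the two computations are equivalent.
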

\begin{proof} 
Let $f \in C^\infty_0(\HS)$ be arbitrary.  We set
\begin{equation}
\varphi_0 = \CF^{-1}_\xi \bigg[
\frac{\CF[f^o] (\xi)}{ \lambda + \mu \lvert \xi \rvert^2} \bigg],
\end{equation}	
where $f^o$ stands for the odd extension of $f$. We see that 
$f^o \in C^\infty_0(\BR^d)$, and hence $f^o \in \CB^{-s}_{q', r'}(\BR^d)$ since
$-1+1/q' < -s < 1/q'$.  Notice that
there exists a positive constant $c_1$ depending on $\epsilon \in 
(0, \pi/2)$ and $\mu>0$ such that for any $\lambda \in \Sigma_\epsilon$ and 
$\xi \in \BR^d$ there holds
\begin{equation}
\label{resol:3.1}
c_1^{-1}(|\lambda|^{1/2}+|\xi|^2) \le |\lambda + \mu|\xi|^2| 
\le c_1(|\lambda|^{1/2} + |\xi|^2).
\end{equation}
Using \eqref{resol:3.1} and  the Fourier multiplier theorem 
(cf. Proposition \ref{prop-Fourier-multiplier}), we have 
\begin{equation}
\lVert \nabla^\ell \varphi_0 \rVert_{\CB^{- s}_{q', r'} (\BR^d)} 
\le C \lVert f^o \rVert_{\CB^{- s}_{q', r'} (\BR^d)} 
\le C \lVert f \rVert_{\CB^{- s}_{q', r'} (\HS)}, \qquad \ell = 0, 1, 2
\end{equation}
with the convention $\nabla^0 \varphi_0 = \varphi_0$.
Namely, $\varphi_0 \in \sB^{1-s}_{q',r'} (\HS)$.  
Moreover, $\varphi_0 \vert_{\pd \HS} = 0$ due to its definition, and hence
$\varphi = \varphi_0|_{\HS} \in \sB^{1-s}_{q', r',0}(\HS)$,  which implies that 
\eqref{dir:12}$_{1}$ holds for $\varphi$ constructed above.  
Moreover, there holds $(\lambda-\mu\Delta)\varphi = f$ in $\HS$.
Since $u$ vanishes on $\pd \HS$,  integration by parts implies that 
\begin{equation}
(u, f) = (u, (\lambda - \mu \Delta)\varphi) 
= (\lambda u,   \varphi) + \mu (\nabla u, \nabla \varphi) 
=0.
\end{equation}
Since $f \in C^\infty_0(\HS)$ is taken arbitrarily, we deduce that 
$u = 0$.
\end{proof}

\subsection{Solution formula}\label{subsec.2.4} 
\label{sec-solution-formula}
In this subsection, we derive the solution formula to \eqref{resol:1.1}.
Notice that the solution formula to \eqref{resol:1.1} gives the
solution formula to \eqref{eq-Stokes-inhomogeneous} via the 
inverse Fourier--Laplace transform with respect to a parameter $\lambda$.
In addition, by considering \eqref{resol:1.1}, the resolvent 
estimate for the linearized operator will be obtained, which
gives the solution formula to \eqref{eq-Stokes-homogeneous} with
the aid of the standard semigroup theory. \par
In the following, let $1 < q < \infty$, $1 \leq r \leq \infty-$,  and $- 1 + 1 \slash q < s < 1 \slash q$.
\subsection*{Step 1: Reductions} 
We shall use the symbols introduced in Convention \ref{convention} in Section 1.
First, according to Lemma 3.2 in \cite{Shi14}, we reduce Problem \eqref{eq-Stokes} to the divergence-free case. Let $G_\mathrm{div}= \dv\bG$ be the right member for the divergence condition in Problem 
\eqref{eq-Stokes}. Let $G_\mathrm{div}^e(x, t)$ be the even extension of $G_\mathrm{div}(x, t)$ with respect to
$x$-variables and $G^e_\mathrm{div} = \dv \bG_e$, where $\bG_e=(G_1^e, \ldots, G_{d-1}^e, G_d^o)$ for
$\bG=(G_1, \ldots, G_d)$ (cf. \eqref{extension:1}). 
We define a compensation function $\bV_3(x, t)$ for the divergence condition 
by
\begin{equation}\label{div:1}\begin{aligned}
&\bV_3(x, t) = \nabla\Delta^{-1}G_\mathrm{div}^e = \nabla\Delta^{-1}\dv \bG_e \\
&=- \CF^{-1}_\xi\bigg[\frac{i\xi\CF[G_\mathrm{div}^e](\xi, t)}{|\xi|^2}\biggr](x)
= -\CF^{-1}\bigg[\frac{i\xi(\sum_{k=1}^{d-1}i\xi_k \CF[G_k^e](\xi', t)+i\xi_d\CF[G_d^o](\xi, t))}{|\xi|^2}
\biggr](x).
\end{aligned}\end{equation}
By the Fourier multiplier theorem (cf. Proposition \ref{ap.B} in Appendix B below), we obtain
\begin{equation}\label{div:3}\begin{aligned}
\|\bV_3(\,\cdot\,, t)\|_{\CB^s_{q,r}(\BR^d)} &\leq C\|\bG(\,\cdot\,, t)\|_{\CB^s_{q,r}(\HS)}, \\
\|\nabla \bV_3(\,\cdot\,, t)\|_{\CB^s_{q,r}(\BR^d)} &\leq C\|G_\mathrm{div}(\,\cdot\,, t)\|_{\CB^s_{q,r}(\HS)}, \\
\|\nabla^2 \bV_3(\,\cdot\,, t)\|_{\CB^s_{q,r}(\BR^d)} & \leq C\|\nabla G_\mathrm{div}(\,\cdot\,, t)\|_{\CB^s_{q,r}(\HS)}.
\end{aligned}\end{equation}
and thus, we have 
\begin{equation}\label{div:4} \begin{aligned}
\|e^{-\gamma_bt}\pd_t\bV_3\|_{L_1(\BR, \CB^s_{q,1}(\HS))} &\leq 
C\|e^{-\gamma_bt} \pd_t \bG \|_{L_1(\BR, \CB^s_{q,1}(\HS))}, \\
\|e^{-\gamma_bt}\nabla \bV_3\|_{\CW^{1/2}_1(\BR, \CB^{s}_{q,1}(\HS))} 
&\leq C \|e^{-\gamma_bt} \nabla G_\mathrm{div}\|_{\CW^{1/2}_1(\BR, \CB^s_{q,1}(\HS))}, \\
\|e^{-\gamma_bt}\nabla^2 \bV_3\|_{L_1(\BR, \CB^{s}_{q,1}(\HS))}  
&\leq C\|e^{-\gamma_b t}\nabla G_\mathrm{div} \|_{L_1(\BR, \CB^s_{q,1}(\HS))}.
\end{aligned}\end{equation}
Moreover, by the definition of $\bV_3$, if we set $\bV_3=(V_{3,1}, \ldots, V_{3,d})$, we see that
\begin{equation}\label{g-vanish1}
\pd_d V_{3,j}|_{\pd\HS}=0 \quad (j=1, \ldots, d-1), \quad V_{3,d}|_{\pd\HS}=0.
\end{equation}
Thus, in particular, there holds
\begin{equation}\label{g-vanish2}
\mu\BD(\bV_3)\bn_0|_{\pd\HS} = (0, \ldots, 0, 2\mu \pd_d V_{3,d}|_{\pd\HS}).
\end{equation}
Setting $\bV = \bU + \bV_3$ in Problem \eqref{eq-Stokes}, we see that $\bU$ and $\Pi$ 
should satisfy the following equations:
\begin{equation}\label{eq-Stokes.1}\left\{\begin{aligned}
\pd_t \bU - \DV (\mu \BD(\bU) - \Pi \BI) & = \bF -(\pd_t\bV_3-\mu\dv\BD(\bV_3))
 & \quad & \text{in $\HS \times \BR_+$}, \\
\dv \bU & = 0& \quad & \text{in $\HS \times \BR_+$}, \\
(\mu \BD(\bU) - \Pi \BI) \bn_0 & = (\bH - 2\mu V_{3,d}\bn_0)\vert_{\pd \HS} & \quad & \text{on $\pd \HS \times \BR_+$}, \\
\bU \vert_{t = 0} & = \bV_0 - \bV_3|_{t=0}& \quad & \text{in $\HS$}.
\end{aligned}\right.
\end{equation}
Since $\bV_0$ satisfies the compatibility condition $\dv\bV_0=G_\mathrm{div}|_{t=0}$ in $\HS$, we have
$\dv(\bV_0-\bV_3|_{t=0}) = 0$ in $\HS$. 
Therefore, in the sequel, we consider the following evolution equations:
\begin{equation}\label{eq-Stokes.0}\left\{\begin{aligned}
\pd_t \bU - \DV (\mu \BD(\bU) - \Pi \BI) & = \bF 
 & \quad & \text{in $\HS \times \BR_+$}, \\
\dv \bU & = 0& \quad & \text{in $\HS \times \BR_+$}, \\
(\mu \BD(\bU) - \Pi \BI) \bn_0 & = \bH \vert_{\pd \HS} & \quad & \text{on $\pd \HS \times \BR_+$}, \\
\bU \vert_{t = 0} & = \bU_0 & \quad & \text{in $\HS$}.
\end{aligned}\right.
\end{equation}
with compatibility condition: $\bU_0 \in \CJ^s_{q,r}(\HS)$.  \par
Next, let the Stokes pressure $\CQ$ be defined by $\CQ=H_d + Q(\bF-\nabla H_d)$, where 
$\bH = (H_1, \ldots, H_d)$ and $Q$ is the operator given in Theorem \ref{thm-weak-Dirichlet}.
Set $\Pi = \CQ+\Pi'$, and then Problem \eqref{eq-Stokes.0} 
is reduced to 
\begin{equation}\label{eq-Stokes.2}\left\{\begin{aligned}
\pd_t \bU - \DV (\mu\BD (\bU) - \Pi' \BI) & = \bF'
 & \quad & \text{in $\HS \times \BR_+$}, \\
\dv \bU & = 0& \quad & \text{in $\HS \times \BR_+$}, \\
(\mu\BD (\bU) - \Pi' \BI) \bn_0 & = \bH'\vert_{\pd \HS} & \quad & \text{on $\pd \HS \times \BR_+$}, \\
\bU\vert_{t = 0} & = \bU_0 & \quad & \text{in $\HS$},
\end{aligned}\right.
\end{equation}
where $\bF' = \bF- \nabla \CQ$ and  $\bH'=(H_1, \ldots, H_{d-1}, 0)$. We observe that
\begin{equation}\label{solenoidal:2.1}
(\bF(\,\cdot\,, t), \nabla\varphi) = 0 \quad\text{for every $\varphi \in \wh \CB^{1-s}_{q', r', 0}(\HS)$}.
\end{equation}
 According to Corollary~\ref{cor:2.1}, we see that $\CQ$ satisfies the estimate
\begin{equation}\label{dir:6}
\lVert \nabla \CQ \rVert_{\CB^s_{q, r} (\HS)} 
\le C \lVert (\bF, H_d) \rVert_{\CB^s_{q, r} (\HS)}.
\end{equation}
The corresponding generalized resolvent problem reads as 
\begin{equation}\label{newresol:1.1} \left\{\begin{aligned}
\lambda \bu - \DV(\mu\BD(\bu) - \fq \BI) &= \bff&\quad&\text{in $\HS$}, \\
\dv \bu &=0 &\quad&\text{in $\HS$}, \\
(\mu\BD(\bu) - \fq \BI)\bn_0 &=\bh|_{\pd\HS} &\quad&\text{on $\pd\HS$}.
\end{aligned}\right.\end{equation}
with $\bff \in \CJ^s_{q,r}(\HS)$ and $\bh=(h_1, \ldots, h_{d-1},0)$. 
Let $\bu_1 = (u_{1, 1}, \ldots, u_{1, d}) \in \sB^{s + 2}_{q, r} (\HS)^d$ 
be a solution to the following parabolic system:
\begin{equation}
\label{whole:1}
\left\{\begin{aligned}
(\lambda - \mu \Delta) \bu_1 
&= \bff 
& \quad & \text{in $\HS$}, \\
\dv\bu_1&=0& \quad & \text{in $\HS$}, \\
u_{1, j} & = 0 & \quad &\text{on $\pd\HS$},  \quad \text{for $j = 1, \ldots, d - 1$}, \\
\pd_d u_{1, d} & = 0 & \quad &\text{on $\pd\HS$}.
\end{aligned}\right.
\end{equation}
Notice that $\dv\bu_1=0$ implies $\dv \BD(\bu_1) = \Delta\bu_1$. \par
Next, we set $\bu_2 := \bu - \bu_1$  with
$\bu_2 = (u_{2, 1}, \ldots, u_{2, d})$.  Then $(\bu_2, \fq)$ solves
\begin{equation}
\label{red:2}
\left\{\begin{aligned}
\lambda \bu_2 - \dv (\mu \BD (\bu_2) - \fq \BI ) 
& = 0 & \quad & \text{in $\HS$}, \\
\dv \bu_2 & = 0 & \quad & \text{in $\HS$}, \\
\mu (\pd_d u_{2, j} + \pd_j u_{2, d}) & = \Big( h_j 
- \mu(\pd_d u_{1, j} + \pd_j u_{1, d}) \Big) \Big|_{\pd \HS} & \quad &
\text{on $\pd\HS$ \quad  for $j = 1, \ldots, d - 1$}, \\
(2 \mu \pd_d u_{2, d} - \fq)  & = 0 & \quad &\text{on $\pd\HS$.}
\end{aligned}\right.
\end{equation}
Clearly, $\bu = \bu_1 + \bu_2$ and $\fq$ are solutions to \eqref{resol:1.1}.
\subsection*{Step 2: Solution formulas}
Concerning equations \eqref{whole:1},  it suffices to consider the following systems: 
\begin{equation}
\label{fund:1}
 \left\{\begin{aligned}
\lambda \bw_1 - \mu \Delta \bw_1 & = \bff 
& \quad &\text{in $\HS$}, \\
w_{1, j}  & =0
 & \quad &\text{on $\pd\HS$\quad  for $j = 1, \ldots, d - 1$}, \\
\pd_d w_{1, d}  & = 0  & \quad &\text{on $\pd\HS$},
\end{aligned} \right.\end{equation}
The functions $\bw_1 = (w_{1, 1}, \ldots, w_{1, d})$  
  given by
\begin{equation}
\label{sol-w1-w2}
w_{1, j}  = \CF^{-1}_\xi \bigg[
\frac{\CF[f_j^o] (\xi)}{\lambda + \mu \lvert \xi \rvert^2} \bigg] \bigg\vert_{\HS} 
\, (j  = 1, \ldots, d - 1),  \quad
w_{1, d}  = \CF^{-1}_\xi \bigg
[\frac{\CF[f_d^e] (\xi)}{\lambda + \mu \lvert \xi \rvert^2} \bigg] \bigg\vert_{\HS}
\end{equation}
solve \eqref{fund:1}.  
Moreover,  if $\bff \in \CJ^s_{q,r}(\HS)$ and $\bw_1 \in \sB^{s+2}_{q,r}(\HS)$, 
then $\bw_1$ satisfies 
\begin{equation}
\label{dir:7}
\dv \bw_1 =0 \qquad \text{in $\HS$}.
\end{equation}
Thus, 
$\bu_1=\bw_1$ satisfies \eqref{whole:1}. \par
In the sequel, we shall show \eqref{dir:7}. 
From $\bff \in \CJ^s_{q,r}(\HS)$ it follows that for any $\varphi \in \wh \CB^{1 - s}_{q', r', 0} (\HS)$, 
\begin{equation}
\label{dir:9}
0 = (\bff, \nabla\varphi) =(\lambda \bw_1 - \mu \Delta \bw_1, \nabla \varphi).
\end{equation}  
In addition, it holds 
\begin{equation}
\label{dir:8}
(\Delta \bw_1, \nabla \varphi)
= (\nabla \dv \bw_1, \nabla \varphi) \qquad 
\text{for all $\varphi \in \wh \CB^{1-s}_{q', r', 0} (\HS)$}.
\end{equation}
In fact, let $\wt \bw_1 \in \sB^{s + 2}_{q, r} (\BR^d)^d$ be an 
extension of $\bw_1$ such that $\wt\bw_1|_{\HS} = \bw_1$. 
For any $\varphi \in \wh \CB^{1 - s}_{q', r', 0} (\HS)$, let $\varphi_0$ be the zero extension of
$\varphi$ to $x_d < 0$, that is $\varphi_0= \varphi$ for $x_d > 0$ and $\varphi_0=0$ for 
$x_d < 0$.  Recalling that
$\varphi_0$ satisfies \eqref{def-hat-g*}, 
 from Proposition \ref{prop:Bdense} in Appendix B below we see that there exists
a sequence $\{\varphi_j\}_{j=1}^\infty \subset C^\infty_0(\BR^d)$ such that 
$\lim_{j\to\infty}\|\nabla(\varphi - \varphi_j)\|_{B^{-s}_{q', r'}(\BR^d)} = 0$. 
Hence, we have
\begin{equation}
\label{eq:dir1}
\begin{aligned}
(\Delta \bw_1, \nabla \varphi) 
& = (\Delta \wt \bw_1, \nabla \varphi_0)_{\BR^d} \\
& =  \lim_{j\to\infty}(\Delta \wt \bw_1, \nabla \varphi_j)_{\BR^d}\\
& =-\lim_{j\to\infty} (\dv \wt \bw_1, \Delta \varphi_j)_{\BR^d} \\
& = \lim_{j\to\infty}(\nabla \dv\wt\bw_1, \nabla\varphi_j)_{\BR^d} \\
& = (\nabla\dv\wt\bw_1, \nabla\varphi_0)_{\BR^d} \\
& = (\nabla\dv\bw, \nabla\varphi).
\end{aligned}
\end{equation}
Thus, combining \eqref{dir:9} and \eqref{dir:8} gives that 
\begin{equation}
\label{dir:10}
(\lambda \bw_1, \nabla \varphi) 
- \mu (\nabla \dv \bw_1, \nabla \varphi) = 0
\qquad \text{for all $\varphi \in \wh \CB^{1 - s}_{q', r', 0} (\HS)$}.
\end{equation}
Noting that $ \sB^{1 - s}_{q', r', 0}(\HS) 
\subset \wh \CB^{{\color{black} 1 - s}}_{q', r', 0}(\HS)$, by integration by parts we have 
\begin{equation}
\lambda ( \dv \bw_1, \varphi)+ 
\mu (\nabla \dv \bw_1, \nabla \varphi)
= 0
\end{equation}
for all $\varphi \in  \sB^{1 - s}_{q', r', 0} (\HS)$.
Moreover, it holds $\dv \bw_1= 0$ on $\pd\HS$
on account of the boundary conditions \eqref{fund:1}$_{2, 3}$.  
Thus, by Proposition \ref{prop:wd.1}, we deduce that 
$\dv\bw_1 = 0$.  Namely, we arrive at \eqref{dir:7}. \par
Concerning \eqref{red:2}, we consider the following boundary value problem:
\begin{equation}
\label{fund:3}
\left\{\begin{aligned}
\lambda \bw_2 - \dv (\mu \BD(\bw_2) - \fq_2 \BI) & = 0 
& \quad & \text{in $\HS$}, \\
\dv \bw_2 & = 0 & \quad & \text{in $\HS$}, \\ 
\mu(\pd_d w_{2, j} + \pd_j w_{2, d}) & = h_j {\color{black} \vert_{\pd \HS}} 
&\quad& \text{on $\pd\HS$ \quad 
for  $j = 1, \ldots, d - 1$}, \\		
2 \mu \pd_d w_{2, d} - \fq_2 & =0
 &\quad& \text{on $\pd\HS$ }
\end{aligned}\right. 
\end{equation}
for $\bh'=(h_1, \ldots, h_{d-1}) \in \CB^s_{q,r}(\HS)^{d-1}$. 
To give solution formulas of $\bw_2$ and  $\fq_2$,
we shall introduce symbols.  Let $\epsilon \in (0, \pi \slash 2)$
and  $\xi' = (\xi_1, \ldots, \xi_{d - 1}) \in \BR^{d - 1}$.   
We define $A = \lvert \xi' \rvert = (\sum_{j = 1}^{d - 1} \xi_j^2)^{1 \slash 2}$.
Let $B = B (\lambda, \xi')$ be the principal branch of the square root of 
$\mu^{- 1} \lambda + \lvert \xi' \rvert^2$, i.e., 
$B = \sqrt{\mu^{- 1} \lambda + \lvert \xi' \rvert^2}$
for $\lambda \in \Sigma_\epsilon$ with $\RE B > 0$;
and for $x_d > 0$ let $M_{x_d} = M (\lambda, \xi', x_d)$ be defined by
\begin{equation}
M_{x_d} = M (\lambda, \xi', x_d) = \frac{e^{- B x_d} - e^{- A x_d}}{B - A}.
\end{equation}
We also define 
\begin{equation}
D_{A, B} = B^3 + A B^2 + 3 A^2 B - A^3,
\end{equation}
which is the determinant of the Lopatinskii matrix. \par

To derive the formulas for $\bw_2$ and $\fq_2$, we take
the Fourier transform in the tangential direction $x'$
with covariable $\xi'$ and solve the transformed problem
(i.e., a boundary value problem for an ordinary differential
equation on $\BR_+$). Following the computations in 
\cite[p. 589--590]{SS12}, we observe that $(\bw_2, \fq_2)$ is given by
\begin{equation}
\label{hsol:4}
\begin{aligned}
w_{2, j} (x) & = \CF^{- 1}_{\xi'} \bigg[
\frac{e^{- B x_d}}{\mu B} \wh h_j - \frac{i \xi_j M_{x_d}}{\mu D_{A, B}}
(2 B i \xi' \cdot \wh h') 
+ \frac{i \xi_j e^{- B x_d}}{\mu B D_{A, B}}(3 B - A) (i \xi' \cdot \wh h') \bigg] (x'), \\
w_{2, d} (x) & = \CF^{- 1}_{\xi'} \bigg[\frac{A M_{x_d}}{\mu D_{A, B}} 
(2 B i\xi' \cdot \wh h')
+ \frac{e^{- B x_d}}{\mu D_{A, B}}(B - A) (i \xi' \cdot \wh h')\bigg] (x'), \\
\fq_2 (x) & = - \CF^{- 1}_{\xi'} \bigg[
\frac{(A + B) e^{- A x_d}}{D_{A, B}}
2 B i \xi' \cdot \wh h' \bigg] (x'),
\end{aligned}
\end{equation}
where $j = 1, \ldots, d - 1$ and we have set $i \xi' \cdot \wh h'
= \sum_{j = 1}^{d - 1} i \xi_j \wh h_j (\xi')$. 
Replacing  $h_j$ by 
$h_j - \mu(\pd_d u_{1, j} + \pd_j u_{1, d})$ for every $j = 1, \ldots, d - 1$
in \eqref{fund:3}, we have $\bu_2$ and $\fq$, which solve equations \eqref{red:2}. 
 \par
\subsection{Estimates of multipliers}
The proof of Theorems \ref{th-MR-inhomogeneous} and \ref{th-MR-homogeneous} 
are based on the explicit solution
formulas to \eqref{eq-Stokes}, see \eqref{sol-w1-w2} and \eqref{hsol:4}.
To estimate the explicit solution formulas, we introduce the class of multiplier symbols.
\begin{dfn}\label{def:2.15}
Let $0 < \epsilon < \pi \slash 2$.  For nonnegative integers $n$, 
define $\BM_{- n}$ as the set of all functions 
$m (\lambda, \xi') \in \Hol (\Sigma_\epsilon, 
C^\infty (\BR^{d - 1} \setminus \{0\}))$ such that for any multi-index $\alpha' \in \BN_0^{d-1}$, 
 there exists a constant $C_{\alpha'}$ such that
\begin{align}
\lvert \pd_{\xi'}^{\alpha'} m (\lambda, \xi') \rvert 
& \le C_{\alpha'} (\lvert \lambda \rvert^{1 \slash 2} 
+ \lvert \xi ' \rvert)^{- n} \lvert \xi' \rvert^{- \lvert \alpha' \rvert}, \\ 
\bigg\lvert \pd_{\xi'}^{\alpha'} 
\bigg(\frac{\pd m (\lambda, \xi')}{\pd \lambda} \bigg) \bigg\rvert 
& \le C_{\alpha'} (\lvert \lambda \rvert^{1 \slash 2} + \lvert \xi' \rvert)^{- (n + 2)}
\lvert \xi' \rvert^{- \lvert \alpha'\rvert}.
\end{align}
\end{dfn}
Notice that it follows from \cite[Lem. 5.2]{SS12} that 
$B^{- 1} \in \BM_{- 1}$ and $D_{A, B}^{- 1} \in \BM_{- 3}$, and 
$\xi_j/A \in \BM_0$.
The following technical proposition plays a crucial role in this paper.
\begin{prop}
\label{prop:3.1}
Let $1 < q < \infty$ and $0 < \epsilon < \pi \slash 2$.  
For $\lambda \in \Sigma_\epsilon$ let $\CL_k (x_d, y_d)$, $k = 1, 2, 3$, be defined by
$\CL_1 (x_d, y_d) = B e^{- B (x_d + y_d)}$,
$\CL_2 (x_d, y_d) = A B M_{x_d + y_d}$, and
$\CL_3(x_d, y_d) = A e^{- A (x_d + y_d)}$. 
For $m (\lambda, \xi') \in \BM_0$ and $f \in L_q (\HS)$, set
\begin{alignat}3
\CM_k (\lambda) f (x) & = \int^\infty_0 \CF^{- 1}_{\xi'}
[m (\lambda, \xi') \CL_k (x_d, y_d) \CF'[ f] (\xi', y_d)] (x') \d y_d, & \qquad k & = 1, 2, 3;\\
\CN_k (\lambda) f (x) & = \int^\infty_0 \CF^{- 1}_{\xi'}
[m (\lambda, \xi') B^2 \pd_\lambda \CL_k (x_d, y_d)
\CF'[f] (\xi', y_d)] (x') \d y_d & \qquad k & = 1, 2.
\end{alignat}
Then, there exists a constant $C > 0$ such that for every $\lambda \in \Sigma_\epsilon$, there hold
\begin{alignat}3
\label{est:2.1}
\lVert \CM_k (\lambda) f \rVert_{L_q (\HS)} & \le C \lVert f \rVert_{L_q (\HS)}, 
& \qquad k & = 1, 2,3, \\
\intertext{as well as}
\label{est:2.2}
\lVert \CN_k (\lambda) f \rVert_{L_q (\HS)} & \le C \lVert f \rVert_{L_q (\HS)},
& \qquad k & = 1, 2.
\end{alignat}
\end{prop}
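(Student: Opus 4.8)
The plan is to regard each of the operators $\CM_k(\lambda)$ and $\CN_k(\lambda)$ as an integral over the normal variable $y_d\in(0,\infty)$ of a tangential Fourier multiplier operator on $\BR^{d-1}$, to establish for each of these multipliers the Mikhlin--H\"ormander bound with a constant of size $C_\epsilon(x_d+y_d)^{-1}$ uniformly in $\lambda\in\Sigma_\epsilon$, and then to absorb the resulting kernel $(x_d+y_d)^{-1}$ by the classical Hilbert inequality on $(0,\infty)$. The crux is the multiplier estimate: the factor $B=B(\lambda,\xi')$ grows like $|\lambda|^{1/2}+|\xi'|$, and one has to see this growth compensated \emph{exactly} by the exponential factors, uniformly over the whole sector.

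First I would record elementary properties of $B$. Since $\lambda\in\Sigma_\epsilon$ puts $\mu^{-1}\lambda+|\xi'|^2$ into a sector of half-angle $<\pi-\epsilon$, the sector argument behind \eqref{resol:3.1} gives $\RE B\ge c_\epsilon(|\lambda|^{1/2}+|\xi'|)$, $|B|\le C_\epsilon(|\lambda|^{1/2}+|\xi'|)$, and $|\xi'|\le C_\epsilon|B|$, while $\pd_{\xi_j}B=\xi_j/B$ and $\pd_\lambda B=\mu^{-1}/(2B)$; consequently $|\pd_{\xi'}^{\alpha'}B|\le C_{\alpha'}(|\lambda|^{1/2}+|\xi'|)|\xi'|^{-|\alpha'|}$, and the same holds for $A=|\xi'|$ (cf. the remark following Definition \ref{def:2.15}). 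For $M_t$ I would use the representation
\[
M_t=-t\int_0^1 e^{-(A+\theta(B-A))t}\d\theta ,
\]
which is free of the apparent singularity at $A=B$ and whose $\xi'$- and $\lambda$-derivatives are obtained by differentiating under the $\theta$-integral, using $\RE(A+\theta(B-A))=(1-\theta)A+\theta\RE B\ge c_\epsilon(|\xi'|+\theta|\lambda|^{1/2})$. Combining these facts with the bounds defining $\BM_0$ (Definition \ref{def:2.15}) and the elementary inequalities $v^je^{-cv}\le C_{j,c}$ for $v>0$, one verifies that for every $t=x_d+y_d>0$, every $\lambda\in\Sigma_\epsilon$ and every multi-index $\alpha'$,
\[
\bigl|\pd_{\xi'}^{\alpha'}\bigl(m(\lambda,\xi')\,\CL_k(x_d,y_d)\bigr)\bigr|\le \frac{C_{\alpha'}}{x_d+y_d}\,|\xi'|^{-|\alpha'|}\qquad(k=1,2,3),
\]
and the same bound for $m(\lambda,\xi')\,B^2\,\pd_\lambda\CL_k(x_d,y_d)$ when $k=1,2$; here $C_{\alpha'}$ depends only on $\epsilon,d,\mu$ and on finitely many of the constants attached to $m\in\BM_0$. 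The model cases ($\alpha'=0$) are $|Be^{-Bt}|\le|B|e^{-t\RE B}\le C_\epsilon t^{-1}$; $|Ae^{-At}|=|\xi'|e^{-|\xi'|t}\le(et)^{-1}$; $|ABM_t|\le|A|\,|B|\,t\,e^{-c_\epsilon|\xi'|t}\min\bigl(1,(c_\epsilon|\lambda|^{1/2}t)^{-1}\bigr)\le C_\epsilon t^{-1}$ after splitting $|B|\le C_\epsilon(|\lambda|^{1/2}+|\xi'|)$; $|B^2\pd_\lambda(Be^{-Bt})|\le C|B|(1+|B|t)e^{-t\RE B}\le C_\epsilon t^{-1}$; and, after writing $B^2\pd_\lambda(ABM_t)$ as a bounded multiple of $ABM_t$ plus a bounded multiple of $AB^2t^2\int_0^1\theta\,e^{-(A+\theta(B-A))t}\d\theta$, also $|B^2\pd_\lambda(ABM_t)|\le C_\epsilon t^{-1}$. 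The $\xi'$-derivatives are treated the same way and only produce the harmless factor $|\xi'|^{-|\alpha'|}$.

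With these multiplier bounds in hand, for fixed $x_d,y_d>0$ and $\lambda\in\Sigma_\epsilon$ the scalar Fourier multiplier theorem on $\BR^{d-1}$ (Proposition \ref{prop-Fourier-multiplier}) gives
\[
\bigl\|\CF^{-1}_{\xi'}\bigl[m(\lambda,\xi')\CL_k(x_d,y_d)\CF'[f](\xi',y_d)\bigr]\bigr\|_{L_q(\BR^{d-1})}\le \frac{C}{x_d+y_d}\,\|f(\,\cdot\,,y_d)\|_{L_q(\BR^{d-1})},
\]
with $C$ independent of $\lambda$, $x_d$ and $y_d$, and the analogous bound for the $\CN_k$-symbols. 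I would then integrate in $y_d$, use Fubini's theorem to write $\|g\|_{L_q(\HS)}^q=\int_0^\infty\|g(\,\cdot\,,x_d)\|_{L_q(\BR^{d-1})}^q\d x_d$, and invoke the $L_q(0,\infty)$-boundedness of the Hilbert operator $g\mapsto\bigl(x_d\mapsto\int_0^\infty(x_d+y_d)^{-1}g(y_d)\d y_d\bigr)$, which holds for every $1<q<\infty$ with norm $\pi/\sin(\pi/q)$; this yields \eqref{est:2.1} and \eqref{est:2.2} with a constant $C=C(\epsilon,d,q,\mu,m)$ that does not depend on $\lambda$.

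The step I expect to be the main obstacle is the uniform-in-$\lambda$ Mikhlin estimate of the second paragraph, and within it the terms containing $M_t$ and the $\pd_\lambda$-derivatives, where several contributions must be combined and the $\theta$-integral estimated separately in the regimes $|\lambda|^{1/2}t\le1$ and $|\lambda|^{1/2}t\ge1$ in order to recover precisely the decay $(x_d+y_d)^{-1}$ that the Hilbert inequality demands. Everything after that --- the Fourier multiplier theorem, Fubini, and Hilbert's inequality --- is routine and entirely insensitive to $\lambda$.
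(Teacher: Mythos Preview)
Your approach is correct and is essentially the same as the paper's: the paper's proof is little more than a citation of \cite[Lem.~5.4]{SS12} for \eqref{est:2.1}, together with the explicit computation of $\pd_\lambda(Be^{-Bt})$ and the sharper symbol bound $|\pd_{\xi'}^{\alpha'}(\cdot)|\le C(|\lambda|^{1/2}+A)^{1-|\alpha'|}e^{-c(|\lambda|^{1/2}+A)(x_d+y_d)}$ (from Lemmas~5.2--5.3 of \cite{SS12}) for \eqref{est:2.2}. Your proposal simply writes out in self-contained form what that cited lemma does --- a tangential Mikhlin estimate with constant $C(x_d+y_d)^{-1}$ followed by Hilbert's inequality in the normal variable --- and your weaker Mikhlin bound $Ct^{-1}|\xi'|^{-|\alpha'|}$ already suffices for plain $L_q$-boundedness.
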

\begin{proof}
Since we have $B = (\lambda^{1 \slash 2} \slash B) \lambda^{1 \slash 2} - (A \slash B) A$,
we see that \eqref{est:2.1} is a direct consequence of \cite[Lem.~5.4]{SS12}.
In fact, it follows that
$\lambda^{1 \slash 2} B^{- 1} \in \BM_0$ and $A B^{- 1} \in \BM_0$.
\par
To show \eqref{est:2.2} for the case $k = 1$, we first write 
\begin{equation}
\pd_\lambda \Big(B e^{- B (x_d + y_d)} \Big)
= \frac{1}{2 \mu B}e^{- B (x_d + y_d)} - \frac{x_d + y_d}{2 \mu} e^{- B (x_d + y_d)}.
\end{equation}
By Lemmas 5.2 and 5.3 in \cite{SS12} together with the Leibniz rule,
for any multi-index $\alpha' \in \BN_0^{d - 1}$ we obtain
\begin{align}
& \lvert \pd^{\alpha'}_{\xi'} (B^2 \pd_\lambda (B e^{- B (x_d + y_d)})) \rvert \\
& \quad \le \frac1{2 \mu} \lvert \pd^{\alpha'}_{\xi'} (B e^{- B (x_d + y_d)}) \rvert
+ \frac{x_d + y_d}{2 \mu} \lvert \pd^{\alpha'}_{\xi'}( B^2 e^{- B (x_d + y_d)}) \rvert \\
& \quad \le C (\lvert \lambda \rvert^{1 \slash 2} + A)^{1 - \lvert \alpha' \rvert} \Big( 
e^{- c (\lvert \lambda \rvert^{1 \slash 2} + A) (x_d + y_d)}
+ (\lvert \lambda \rvert^{1 \slash 2} + A) (x_d + y_d) 
e^{- c (\lvert \lambda \rvert^{1 \slash 2} + A) (x_d + y_d)} \Big) \\
& \quad \le C (\lvert \lambda \rvert^{1 \slash 2} + A)^{1 - \lvert \alpha' \rvert}
e^{- (c/2) (\lvert \lambda \rvert^{1 \slash 2} + A) (x_d + y_d)}.	
\end{align}	
Thus, in the same way as in \eqref{est:2.1}, 
we may show \eqref{est:2.2} for the case $k = 1$.
The other cases may be shown similarly.
The proof is complete.
\end{proof}

\section{Resolvent  estimates in the whole space}
\label{sec-3}
In this section, we establish the estimates for the operator 
$\bL_{0, \ell}(\lambda)$,
$\ell \in \{e, o\}$, defined by
\begin{equation}
\label{fop:1}
\bL_{0, \ell} (\lambda) f = \CF^{-1}_\xi \bigg[\frac{\CF[f^\ell](\xi)}
{\lambda + \mu|\xi|^2}\bigg], \quad \ell \in \{e, o\},
\end{equation}
where $f^e$ and $f^o$ mean the even and odd extensions, respectively,
whose definitions are given in \eqref{extension:1}. 
Clearly, $\bL_{0, e}$ and $\bL_{0, o}$ are the solution operators
for the equation $(\lambda - \mu \Delta) u = f$ in $\HS$ with 
homogeneous Dirichlet and Neumann boundary conditions on $\pd \HS$, respectively.
\par
In the sequel, we shall use symbols given in Convention \ref{convention} in Section 1.
In this section, first we shall prove the following theorem. 
\textcolor{red}{Here and in the following, recall that $\nabla_b$ is the symbol introduced in Convention \ref{convention}.}
\begin{thm}\label{thm:3.1}
 Let $1 < q < \infty$, $1 \leq r \leq \infty$, 
 $ - 1 + 1 \slash q < s < 1 \slash q$, 
$\epsilon \in (0, \pi \slash 2)$, and $\gamma>0$.  
Let $\sigma$ be a small positive number such that 
$- 1 + 1 \slash q < s \pm \sigma < 1 \slash q$.
For every $\lambda \in \Sigma_{\epsilon} + \gamma_b$ and $f \in C^\infty_0(\HS)$, 
there exists a constant $C_b$ depending on $\gamma_b$ such that
\begin{alignat}2
\label{3.1}
\lVert (\lambda, \lambda^{1/2} \nabla,   \nabla_b^2) \bL_{0, \ell} (\lambda) 
f \rVert_{ \CB^s_{q, r} (\BR^d)}
& \le C_b \lVert f \rVert_{ \CB^s_{q, r} (\HS)},
\\
\label{3.2}
\lVert (\lambda^{1/2} \nabla,    \nabla_b^2)\bL_{0, \ell} (\lambda) 
f \rVert_{ \CB^s_{q, r} (\BR^d)}
& \le C_b \lvert \lambda \rvert^{- \frac\sigma2}\lVert f \rVert_{ \CB^{s + \sigma}_{q, r} (\HS)},
\\
\|(1, \lambda^{-1/2}\nabla)\bL_{0, \ell} (\lambda) f\|_{\CB^s_{q, r}(\BR^d)} 
& \le C_b \lvert \lambda \rvert^{- (1 - \frac\sigma2)}
\lVert f \rVert_{ \CB^{s - \sigma}_{q, r} (\HS)},
\label{3.4}
\\
\label{3.3}
\lVert (\lambda, \lambda^{1/2} \nabla,
  \nabla_b^2) \pd_\lambda \bL_{0, \ell} (\lambda) 
f \rVert_{ \CB^s_{q, r}(\BR^d)}
& \le C_b \lvert \lambda \rvert^{- (1 - \frac\sigma2)}
\lVert f \rVert_{ \CB^{s - \sigma}_{q, r} (\HS)}.
\end{alignat}
\end{thm}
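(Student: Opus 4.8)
The plan is to view every operator occurring on the left-hand sides of \eqref{3.1}--\eqref{3.3} as a Fourier multiplier operator on $\BR^d$ acting on the even or odd extension of $f$, and then to combine the Fourier multiplier theorem (Proposition~\ref{prop-Fourier-multiplier}, which also yields boundedness on the Besov spaces $\CB^s_{q,r}(\BR^d)$) with the lifting operators $|D|^{\pm\sigma}$ in the homogeneous case and $\langle D\rangle^{\pm\sigma}$ in the inhomogeneous case; since the only role of $\ell\in\{e,o\}$ is in the choice of extension, the Dirichlet and Neumann cases are treated identically. First I would reduce to $\BR^d$: as $f\in C^\infty_0(\HS)$, the extensions $f^{e},f^{o}$ lie in $C^\infty_0(\BR^d)$, and, because $\sigma$ is chosen with $s-\sigma,s,s+\sigma\in(-1+1/q,1/q)$ (hence $\sigma<1/2$), the even and odd extension operators are bounded from $\CB^{s'}_{q,r}(\HS)$ into $\CB^{s'}_{q,r}(\BR^d)$ for $s'\in\{s-\sigma,s,s+\sigma\}$ --- this is the inequality $\lVert f^{\ell}\rVert_{\CB^{s'}_{q,r}(\BR^d)}\le C\lVert f\rVert_{\CB^{s'}_{q,r}(\HS)}$ already used in the proof of Theorem~\ref{thm-weak-Dirichlet}. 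It then suffices to prove the four estimates with $\bL_{0,\ell}(\lambda)f$ replaced by $g\mapsto\CF^{-1}_\xi[(\lambda+\mu|\xi|^2)^{-1}\CF[g](\xi)]$ applied to $g=f^{\ell}\in C^\infty_0(\BR^d)$, and with the half-space norms on the right replaced by the corresponding norms on $\BR^d$.

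The crucial device is the substitution $\xi=|\lambda|^{1/2}\eta$. For $\lambda\in\Sigma_\epsilon$ one has $|\lambda+\mu|\xi|^2|\ge c_\epsilon(|\lambda|+|\xi|^2)$, and this persists for $\lambda\in\Sigma_\epsilon+\gamma_b$; moreover $|\lambda|\ge\gamma\sin\epsilon$ whenever $\gamma_b=\gamma$. Writing $\lambda=|\lambda|e^{i\theta}$ with $|\theta|\le\pi-\epsilon$, every symbol relevant here --- namely the symbol of $\pd^\alpha_x\bL_{0,\ell}(\lambda)$ or $\pd^\alpha_x\pd_\lambda\bL_{0,\ell}(\lambda)$ multiplied by the appropriate power of $\lambda$, and, for \eqref{3.2}, \eqref{3.4}, \eqref{3.3}, multiplied in addition by $|\xi|^{\mp\sigma}$ (resp.\ $\langle\xi\rangle^{\mp\sigma}$) to absorb the shift of regularity index --- is turned by this substitution into
\[
|\lambda|^{-\kappa}\,P(\eta)\,(e^{i\theta}+\mu|\eta|^2)^{-m},\qquad m\in\{1,2\},
\]
where $P$ is a product of components of $\eta$, a bounded power of $e^{i\theta}$, and a factor $|\eta|^{\pm\sigma}$, and where $\kappa$ is precisely the exponent of $|\lambda|$ claimed in the corresponding estimate ($\kappa=0$ for \eqref{3.1}, $\kappa=\sigma/2$ for \eqref{3.2}, $\kappa=1-\sigma/2$ for \eqref{3.4} and \eqref{3.3}). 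Since $\sigma<1$, a routine check shows that $\eta\mapsto P(\eta)(e^{i\theta}+\mu|\eta|^2)^{-m}$ satisfies the Mikhlin--H\"ormander bounds $|\pd^{\beta}_\eta(\,\cdot\,)|\le C_\beta|\eta|^{-|\beta|}$ uniformly in $\theta$ with $|\theta|\le\pi-\epsilon$; as these bounds are scale-invariant, the original $\xi$-symbol (before extracting $|\xi|^{\mp\sigma}$) is a Mikhlin multiplier with constant $C|\lambda|^{-\kappa}$ in the homogeneous case, and, using $|\lambda|\ge\gamma\sin\epsilon$ to pass between $|\xi|$ and $\langle\xi\rangle$, with constant $C_b|\lambda|^{-\kappa}$ in the inhomogeneous case, $C_b$ depending only on $\gamma_b,\epsilon,\mu,q,r,s,\sigma,d$.

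Granting this, the four estimates follow by applying the Besov-space multiplier theorem to the remainder symbol (of Mikhlin constant $C|\lambda|^{-\kappa}$, resp.\ $C_b|\lambda|^{-\kappa}$) and then composing with the lifting operator: none for \eqref{3.1}; $|D|^{\sigma}\colon\dot B^{s+\sigma}_{q,r}(\BR^d)\to\dot B^{s}_{q,r}(\BR^d)$, resp.\ $\langle D\rangle^{\sigma}\colon B^{s+\sigma}_{q,r}(\BR^d)\to B^{s}_{q,r}(\BR^d)$, for \eqref{3.2}; and $|D|^{-\sigma}\colon\dot B^{s-\sigma}_{q,r}(\BR^d)\to\dot B^{s}_{q,r}(\BR^d)$, resp.\ $\langle D\rangle^{-\sigma}\colon B^{s-\sigma}_{q,r}(\BR^d)\to B^{s}_{q,r}(\BR^d)$, for \eqref{3.4} and \eqref{3.3}. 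In the inhomogeneous case $\pd^\alpha_x$ runs over all multi-indices with $|\alpha|\le2$; the zeroth- and first-order contributions to $\nabla_b^2$ are again handled by the scaling argument together with the lower bound $|\lambda|\ge\gamma\sin\epsilon$ (which also permits absorbing $|\lambda^{-1}|\le(\gamma\sin\epsilon)^{-1}$ and $|\lambda^{-1/2}|\le(\gamma\sin\epsilon)^{-1/2}$ whenever convenient), and this is exactly the source of the $\gamma_b$-dependence of the constant in \eqref{3.1}.

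The conceptual content is entirely in the scaling $\xi=|\lambda|^{1/2}\eta$, which simultaneously extracts the correct power of $|\lambda|$ and reduces each symbol to a $\theta$-uniform Mikhlin multiplier; that part is straightforward. The only genuinely delicate step will be the bookkeeping in the inhomogeneous case, where one must interchange $|\xi|$ with $\langle\xi\rangle$ and verify --- using only $|\lambda|\ge\gamma\sin\epsilon$ and $\sigma<1$ --- that every resulting $\lambda$-independent factor is dominated by the claimed power of $|\lambda|$, with a constant depending solely on the stated parameters. Once this is done, the Fourier multiplier theorem on Besov spaces closes the proof.
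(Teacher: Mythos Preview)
Your approach is correct and essentially the same as the paper's. Both reduce to the whole space via the even/odd extension, factor out the lifting $(c_b+|\xi|^2)^{\pm\sigma/2}$ (your $|D|^{\pm\sigma}$ or $\langle D\rangle^{\pm\sigma}$), and then verify that the remaining symbol satisfies the Mikhlin--H\"ormander condition with the claimed power of $|\lambda|$; the paper does this directly via the pointwise bound \eqref{resol:3.1}, while you organize the same verification through the scaling $\xi=|\lambda|^{1/2}\eta$, which is a purely cosmetic difference.
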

\begin{proof}
For the case $\CB^s_{q,r} = B^s_{q,r}$, we take $\gamma_b = \gamma > 0$, i.e., 
$\lambda \in \Sigma_\epsilon + \gamma$, and hence $|\lambda| \geq \gamma\sin\epsilon$.  
Thus, there hold
\begin{align*}
\|\bL_{0,\ell}(\lambda)f\|_{B^s_{q,r}(\BR^d)} &= \|\lambda \bL_{0,\ell}(\lambda)f\|_{B^s_{q,r}(\BR^d)} |\lambda|^{-1}
\leq (\gamma\sin\epsilon)^{-1} \|\lambda \bL_{0,\ell}(\lambda)f\|_{B^s_{q,r}(\BR^d)}, \\
\|\nabla \bL_{0,\ell}(\lambda)f\|_{B^s_{q,r}(\BR^d)} &= \|\lambda^{1/2}\nabla \bL_{0,\ell}(\lambda)f\|_{B^s_{q,r}(\BR^d)} 
|\lambda|^{-1/2}
\leq (\gamma\sin\epsilon)^{-1/2} \|\lambda^{1/2}\nabla \bL_{0,\ell}(\lambda)f\|_{B^s_{q,r}(\BR^d)}.
\end{align*}
Thus, for the case $\CB^s_{q,r} = B^s_{q,r}$ it suffices to prove 
\begin{align*}
\lVert (\lambda, \lambda^{1/2} \nabla, \nabla^2) \bL_{0,\ell} (\lambda) f \rVert_{\CB^s_{q, r} (\BR^d)} 
 &\le C_b \lVert f \rVert_{ \CB^{s}_{q, 1} (\HS)}, \\
\lVert (\lambda, \lambda^{1/2} \nabla, 
  \nabla^2) \bL_{0,\ell} (\lambda) f \rVert_{\CB^s_{q, r} (\BR^d)} 
& \le C_b \lvert \lambda \rvert^{- \frac\sigma2} 
\lVert f \rVert_{\CB^{s + \sigma}_{q, r} (\HS)}, \\ 	
\lVert (\lambda, \lambda^{1/2} \nabla, 
  \nabla^2) \pd_\lambda \bL_{0,\ell} (\lambda) f \rVert_{\CB^s_{q, r} (\BR^d)} 
& \le C_b \lvert \lambda \rvert^{- (1 - \frac\sigma2)} 
\lVert f \rVert_{\CB^{s - \sigma}_{q, r} (\HS)}.
\end{align*}
Recalling that \eqref{resol:3.1} holds, we see that \eqref{3.1} follows from
the Fourier multiplier theorem (cf. Proposition \ref{prop-Fourier-multiplier} in Appendix 
\ref{ap.B}). \par
To prove \eqref{3.2}--\eqref{3.3}, we introduce a symbol $c_b$ defined by 
$c_b=1$ if $\gamma_b=\gamma$ and $c_b=0$ if $\gamma_b=0$. 
To obtain \eqref{3.2}, we apply the Fourier multiplier theorem
to the representation
\begin{equation}
\lambda^{\frac{\sigma}{2}}\bL_{0, \ell}(\lambda)f= \CF^{-1}_{\xi}
\bigg[\frac{\lambda^{\frac{\sigma}{2}}(c_b+|\xi|^2)^{\frac{\sigma}{2}}\CF[f](\xi)}
{(\lambda+\mu|\xi|^2)^{\frac{\sigma}{2}}(\lambda + \mu|\xi|^2)^{1-\frac{\sigma}{2}}
(c_b+|\xi|^2)^{\frac{\sigma}{2}}}\bigg].
\end{equation}  
 Similarly,  writing 
\begin{align}
\lambda^{(1-\frac{\sigma}{2})}\bL_{0, \ell}(\lambda)f
& = \CF^{-1}_{\xi}
\bigg[\frac{\lambda^{1-\frac{\sigma}{2}}(c_b+|\xi|^2)^{-\frac{\sigma}{2}}\CF[f](\xi)}
{(\lambda+\mu|\xi|^2)^{1-\frac{\sigma}{2}}(\lambda + \mu|\xi|^2)^{\frac{\sigma}{2}}
(c_b+|\xi|^2)^{-\frac{\sigma}{2}}}\bigg], \\
\lambda^{(\frac12-\frac{\sigma}{2})}\nabla\bL_{0, \ell}(\lambda)f
& = \CF^{-1}_{\xi}
\bigg[\frac{\lambda^{\frac12-\frac{\sigma}{2}}(i\xi)(c_b+|\xi|^2)^{-\frac{\sigma}{2}}\CF[f](\xi)}
{(\lambda+\mu|\xi|^2)^{\frac12-\frac{\sigma}{2}}(\lambda + \mu|\xi|^2)^{\frac12+\frac{\sigma}{2}}
(c_b+|\xi|^2)^{-\frac{\sigma}{2}}}\bigg]
\end{align}
and applying the Fourier multiplier theorem 
yield \eqref{3.4}. 
To prove \eqref{3.3}, we write 
\begin{equation}
\pd_\lambda\bL_{0, \ell}(\lambda)f= -\CF^{-1}_{\xi}
\bigg[\frac{\CF[f](\xi)}
{(\lambda+\mu|\xi|^2)^2}\bigg].
\end{equation}
Hence, we have
\begin{equation}
\lambda^{1-\frac{\sigma}{2}}
\pd_\lambda\bL_{0, \ell}(\lambda)f= -\CF^{-1}_{\xi}
\bigg[\frac{\lambda^{1-\frac{\sigma}{2}}(c_b+|\xi|^2)^{-\frac{\sigma}{2}}\CF[f](\xi)}
{(\lambda+\mu|\xi|^2)^{1-\frac{\sigma}{2}}(\lambda+\mu|\xi|^2)^{1+\frac{\sigma}{2}}
(c_b+|\xi|^2)^{-\frac{\sigma}{2}}}\bigg],
\end{equation}
which together with the Fourier multiplier theorem yields \eqref{3.3}.
The proof of Theorem \ref{thm:3.1} is complete. 
\end{proof}
By virtue of Theorem  \ref{thm:3.1}, we may show the following Corollary.
\begin{cor}
\label{lem-CW1}
Let $1 < q < \infty$, $1 \leq  r \leq \infty-$, $ - 1 + 1 \slash q < s < 1 \slash q$, $\gamma > 0$, and
$\epsilon \in (0, \pi \slash 2)$.  
Let $\sigma$ be a positive number such that 
$- 1 + 1 \slash q < s \pm \sigma < 1 \slash q$.
Then, there exists an operator
\begin{equation}
\CW_1 (\lambda) \in \Hol (\Sigma_{\epsilon}, 
\CL (\CB^s_{q, r} (\HS)^d,  \sB^{s + 2}_{q, r} (\HS)^d))
\end{equation}
such that   
$\bu_1= \CW_1 (\lambda)\bff$  is a solution to \eqref{whole:1} possessing the estimate:
\begin{equation}\label{cor:3.2}\lVert (\lambda, \lambda^{1/2} \nabla,   \nabla_b^2) 
\bu_1\rVert_{\CB^s_{q, r} (\BR^d)}
\le C_b \lVert \bff \rVert_{\CB^s_{q, r} (\HS)}
\end{equation}
for any $\bff \in \CJ^s_{q,r}(\HS)$ and $\lambda \in \Sigma_\epsilon + \gamma_b$. \par
Moreover, for every $\lambda \in \Sigma_\epsilon + \gamma_b$ and 
$\bff \in C^\infty_0(\HS)^d$, there hold  
\allowdisplaybreaks 
\begin{alignat}2
\lVert(\lambda^{1/2} \nabla,    \nabla_b^2)\CW_1 (\lambda)\bff \rVert_{\CB^s_{q, r} (\BR^d)}
& \le C_b \lvert \lambda \rvert^{- \frac\sigma2}
\lVert \bff \rVert_{ \CB^{s + \sigma}_{q, r} (\HS)},
\\
\lVert (1, \lambda^{-1/2}\nabla)\CW_1 (\lambda)\bff\rVert_{\CB^s_{q, r} (\BR^d)}
& \le C_b \lvert \lambda \rvert^{-(1-\frac\sigma2)}
\lVert \bff \rVert_{ \CB^{s - \sigma}_{q, r} (\HS)},
\\
\lVert (\lambda,\lambda^{1/2} \nabla,    \nabla_b^2) \pd_\lambda 
\CW_1 (\lambda)\bff
 \rVert_{\CB^s_{q, r}(\BR^d)}
& \le C_b \lvert \lambda \rvert^{- (1 - \frac\sigma2)}
\lVert \bff \rVert_{ \CB^{s - \sigma}_{q, r} (\HS)}.
\end{alignat}
Here, all constants $C_b$ {\color{black} depend solely} on $\gamma_b$. 
\end{cor}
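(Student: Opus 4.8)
The plan is to take $\CW_1(\lambda)$ to be the operator already furnished by the solution formula \eqref{sol-w1-w2}: for $\bff=(f_1,\dots,f_d)$ put
\[
\CW_1(\lambda)\bff:=\bigl(\bL_{0,o}(\lambda)f_1,\dots,\bL_{0,o}(\lambda)f_{d-1},\ \bL_{0,e}(\lambda)f_d\bigr)|_{\HS},
\]
with $\bL_{0,\ell}(\lambda)$ as in \eqref{fop:1}. First I would settle the mapping and analyticity properties. Since $C^\infty_0(\HS)$ is dense in $\CB^s_{q,r}(\HS)$ for $-1+1/q<s<1/q$ (Proposition~\ref{prop-density}, resp.\ Proposition~\ref{prop-density-homogeneous}), estimate \eqref{3.1} of Theorem~\ref{thm:3.1}---stated there for $C^\infty_0(\HS)$ data---passes by a routine approximation to every $\bff\in\CB^s_{q,r}(\HS)^d$, giving $\CW_1(\lambda)\in\CL(\CB^s_{q,r}(\HS)^d,\sB^{s+2}_{q,r}(\HS)^d)$ for each $\lambda\in\Sigma_\epsilon+\gamma_b$ and the bound \eqref{cor:3.2}, with $C_b$ depending only on $\gamma_b$. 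Holomorphy of $\lambda\mapsto\CW_1(\lambda)$ on $\Sigma_\epsilon$ follows from strong holomorphy on the dense set $C^\infty_0(\HS)^d$---the symbol $(\lambda+\mu|\xi|^2)^{-1}$ is holomorphic on $\Sigma_\epsilon$ and may be differentiated under the Fourier multiplier---together with local uniform boundedness of $\|\CW_1(\lambda)\|$ on $\Sigma_\epsilon$, which is immediate from the Fourier multiplier theorem and \eqref{resol:3.1}.

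Next I would verify that $\bu_1:=\CW_1(\lambda)\bff$ solves \eqref{whole:1}. Applying $\lambda-\mu\Delta$ componentwise and using $(\lambda-\mu\Delta)\bL_{0,\ell}(\lambda)f=f^\ell$ gives the first equation after restriction to $\HS$; because $(\lambda+\mu|\xi|^2)^{-1}$ is even in $\xi_d$, the function $\bL_{0,o}(\lambda)f_j$ is odd and $\bL_{0,e}(\lambda)f_d$ is even in $x_d$, so $u_{1,j}|_{\pd\HS}=0$ for $j\le d-1$ and $\pd_d u_{1,d}|_{\pd\HS}=0$, the traces being meaningful since $s+2>1/q$ (Proposition~\ref{prop-trace}). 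For the divergence condition I would reuse the computation already carried out in Step~2 of Section~\ref{sec-solution-formula}, equations \eqref{dir:7}--\eqref{dir:10}: from $\bff\in\CJ^s_{q,r}(\HS)$ and $\bu_1\in\sB^{s+2}_{q,r}(\HS)^d$ one obtains $(\lambda\bu_1-\mu\nabla\dv\bu_1,\nabla\varphi)=0$ for all $\varphi\in\wh\CB^{1-s}_{q',r',0}(\HS)$, hence, integrating by parts against $\varphi\in\sB^{1-s}_{q',r'}(\HS)$ vanishing on $\pd\HS$ and noting $\dv\bu_1|_{\pd\HS}=0$ (a consequence of the two boundary conditions of \eqref{whole:1}), the function $\dv\bu_1\in\CB^{s+1}_{q,r}(\HS)$ satisfies the hypotheses of Proposition~\ref{prop:wd.1}; therefore $\dv\bu_1=0$ and \eqref{whole:1} holds.

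The three decay estimates follow by applying \eqref{3.2}, \eqref{3.4}, and \eqref{3.3} of Theorem~\ref{thm:3.1} componentwise: for $\bff\in C^\infty_0(\HS)^d$ the extensions $f_j^o$ and $f_d^e$ again lie in $C^\infty_0(\BR^d)$ (these functions vanish near $\pd\HS$), so Theorem~\ref{thm:3.1} applies directly and all constants depend only on $\gamma_b$; holomorphy was settled above. I do not expect a genuine obstacle here, since the corollary is in essence a repackaging of Theorem~\ref{thm:3.1} together with the divergence-free computation of Step~2; the only points deserving care are the density step---which rests precisely on the identification $\wt\CB^s_{q,r}(\HS)=\CB^s_{q,r}(\HS)$ valid for $-1+1/q<s<1/q$---and the bookkeeping converting the whole-space bounds of Theorem~\ref{thm:3.1} into the asserted mapping properties on $\HS$ and on the intersection space $\sB^{s+2}_{q,r}(\HS)$.
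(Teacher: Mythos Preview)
Your proposal is correct and takes essentially the same approach as the paper: define $\CW_1(\lambda)$ componentwise via $\bL_{0,o}$ and $\bL_{0,e}$, extend \eqref{3.1} by the density of $C^\infty_0(\HS)$ in $\CB^s_{q,r}(\HS)$ to obtain \eqref{cor:3.2}, and read off the remaining three estimates directly from \eqref{3.2}--\eqref{3.3}. Your write-up is simply more explicit than the paper's---spelling out the holomorphy argument and re-verifying \eqref{whole:1} including $\dv\bu_1=0$ via Proposition~\ref{prop:wd.1}---whereas the paper defers these points to the earlier discussion in Step~2 of Section~\ref{sec-solution-formula}.
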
	
\begin{proof}
Recall that the solution $\bw_1$ to \eqref{fund:1} is given by \eqref{sol-w1-w2}.
Hence, we define the solution operator $\CW_1 (\lambda) = (\CW_{1, 1} (\lambda), 
\ldots, \CW_{1, d} (\lambda))$ by
\begin{equation}
\CW_{1, j} (\lambda) \bff = \bL_{0, o} (\lambda) f_j, \qquad j = 1, \ldots, d - 1, 
\qquad \CW_{1, d} (\lambda) \bff = \bL_{0, e} (\lambda) f_d.
\end{equation}
Noting that $C^\infty_0(\HS)$ is dense in $\CB^s_{q,r}(\HS)$ for $1 < q < \infty$,
$1 \leq r \leq \infty-$ and $-1+1/q < s < 1/q$, applying the density argument 
to \eqref{3.1}, we have \eqref{cor:3.2}, and 
 other estimates  directly follow from \eqref{3.2}--\eqref{3.3} in Theorem \ref{thm:3.1}.
\end{proof}

\section{Resolvent  estimates in the half-space}
\label{sec-4}
Let $\epsilon \in (0, \pi \slash 2)$.
For $\lambda \in \Sigma_{\epsilon}$, define the integral operators by
\begin{equation}
\label{def-L-formula}
\begin{split}
\bL_{1} (\lambda) f & = \int^\infty_0 \CF^{- 1}_{\xi'}
[m_{2} (\lambda, \xi') B e^{- B (x_d + y_d)} 
\CF'[f] (\xi', y_d)] (x') \d y_d,  \\
\bL_{2} (\lambda) f & = \int^\infty_0 \CF^{- 1}_{\xi'}
[m_{3} (\lambda, \xi') A^2 B M_{x_d + y_d}
\CF'[ f] (\xi', y_d)] (x') \d y_d, \\
\bL_3 (\lambda) f & = \int^\infty_0 \CF^{- 1}_{\xi'}
[m_1 (\lambda, \xi') A e^{- A (x_d + y_d)} 
\CF'[ f] (\xi', y_d)] (x') \d y_d,
\end{split}
\end{equation}
respectively, where $m_l$, $l = 1, 2, 3$, are assumed to belong to $\BM_{- l}$
(cf. Definition \ref{def:2.15}).  
Notice that, as seen in Section \ref{sec-solution-formula},
we see that the solution $\bw_2$ to \eqref{fund:3} with the 
associated pressure $\fq_2$ may be given by a linear combination
of the above operators.
The aim of this section is to establish the estimates for the operators
defined in \eqref{def-L-formula}, which implies the estimates for the
solution operators for  \eqref{fund:3}. 
\subsection{Strategy of $\CB^s_{q,r}$ estimates in the half-space} \label{subsec.4.1}
In this subsection, we shall describe a method to evaluate integral operators defined in \eqref{def-L-formula}
in the $\CB^s_{q,r}(\HS)$-framework. In contrast to the whole space, we do not have 
the explicit representation of the $\CB^s_{q,r}(\HS)$-norm, and thus 
estimating the integral operators directly within $\CB^s_{q,r}(\HS)$ seems to fail.
Hence, we will use some combinations of complex and real interpolation methods.
The key idea described in this subsection is also summarized in the proceedings of 
the first author \cite{Shi23}, where one may find a comprehensive survey on 
a spectral analysis approach to initial boundary value problems with inhomogeneous boundary data,
including the free boundary problem of the Navier--Stokes equations within 
maximal $L_p$-regularity framework with $1 \le p < \infty$. \par
Roughly speaking, the idea is the following:
\begin{enumerate}
\item[\textbf{Step 1:}] If $0 < s < 1/q$, the starting evaluation is done in $\CH^1_q (\HS)$.  
Then, use complex interpolation to obtain the estimates in $\CH^\nu_q$ ($0 < \nu < 1/q$).   
Finally, by real interpolation, we arrive at the estimates in $\CB^s_{q,r} (\HS)$.
\item[\textbf{Step 2:}] If $-1+1/q < s < 0$, consider the dual operator and its estimate 
in $L_{q'} (\HS)$ and $\CH^1_{q'} (\HS)$. Then complex interpolation yields the estimates of dual 
operators in $\CH^\nu_{q'} (\HS)$ with $0 < \nu < 1/q' = 1-1/q$. Together with the duality argument, 
we obtain the estimates of the original operator in $\CH^{-\nu}_q (\HS)$. Finally, 
by real interpolation, we arrive at the estimates of the original operator in $\CB^s_{q,r} (\HS)$.
\item[\textbf{Step 3:}] The estimate in $\CB^0_{q,r} (\HS)$ follows from real interpolation 
between the estimates in $\CB^{-\omega_1}_{q,r} (\HS)$ and $\CB^{\omega_2}_{q,r} (\HS)$ for some small $\omega_1, \omega_2> 0$.
\end{enumerate}
We consider two operator-valued holomorphic functions $\CT_i(\lambda)$ $(i = 1,2)$ defined on
$\Sigma_{\epsilon}$ acting on $f \in C^\infty_0(\HS)$. We denote the dual operator
of $\CT_i(\lambda)$ by $\CT_i(\lambda)^*$, where $\CT_i(\lambda)^*$ satisfies the equality:
$$|(T_i(\lambda)f, \varphi)| = |(f, \CT_i(\lambda)^*\varphi)|$$
for any $f$, $\varphi \in C^\infty_0(\HS)$.  Here, recall that $(f, g) = \int_{\HS}f(x)g(x)\d x$,
in particular, we do not take the complex conjugate to define $(f,g)$. \par 
For convenience, we use the symbol introduced in Convention \ref{convention}.
We assume that the operators $\CT_i (\lambda)$ $(i = 1,2)$ satisfy the following assumption.
\begin{assump}\label{assump:4.1}
 Let $1 < q < \infty$, $\epsilon \in (0, \pi/2)$,
 $\gamma>0$, and $\alpha_1, \alpha_2, \beta_1, \beta_2 \in \BR$.  
For every $\lambda \in \Sigma_\epsilon + \gamma_b$ and $f \in C^\infty_0(\HS)$, 
 the following estimates hold for $i = 1,2$:
\begin{align}
\label{f1} \|\CT_i(\lambda)f\|_{\CH^1_q(\HS)} &\leq C|\lambda|^{\alpha_i} \|f\|_{\CH^1_q(\HS)},  \\
\label{f2} \|\CT_i(\lambda)f\|_{L_q(\HS)} &\leq C|\lambda|^{\alpha_i} \|f\|_{L_q(\HS)}, \\
\label{f3} \|\CT_i(\lambda)f\|_{\CH^{-1+i}_q(\HS)} & \leq C|\lambda|^{\beta_i}\|f\|_{\CH^{2-i}_q(\HS)}, \\
\label{d1*} \|\CT_i(\lambda)^*f\|_{L_{q'}(\HS)} &\leq C|\lambda|^{\alpha_i}\|f\|_{L_{q'}(\HS)},  \\
\label{d2*} \|\CT_i(\lambda)^*f\|_{\CH^1_{q'}(\HS)} &\leq C|\lambda|^{\alpha_i}\|f\|_{\CH^1_{q'}(\HS)}, \\
\label{d3*} \|\CT_i(\lambda)^*f\|_{\CH^{-1+i}_{q'}(\HS)} & \leq C|\lambda|^{\beta_i}\|f\|_{\CH^{2-i}_{q'}(\HS)}.
\end{align}
\end{assump}
Then, we have the following theorem.
\begin{thm}\label{spectralthm:1}
Let $1 < q < \infty$, $1 \leq r \leq \infty$,  $-1+1/q < s < 1/q$,  
$\epsilon \in (0, \pi/2)$,  $\gamma>0$, and $\alpha_1, \alpha_2, \beta_1, \beta_2 \in \BR$.
Let $\sigma>0$ be a  small number such that $-1+1/q < s-\sigma < s < s+\sigma < 1/q$.
Assume that Assumption \ref{assump:4.1} holds.  
Then, for every $\lambda \in \Sigma_\epsilon + \gamma_b$ and $f \in C^\infty_0(\HS)$, there hold
\begin{align*}
\|\CT_1(\lambda)f\|_{\CB^s_{q,r}(\HS)} &\leq C|\lambda|^{\alpha_1}\|f\|_{\CB^s_{q,r}(\HS)}, \\
\|\CT_1(\lambda)f\|_{\CB^s_{q,r}(\HS)} &\leq C|\lambda|^{(1-\sigma)\alpha_1 + \sigma\beta_1}
\|f\|_{\CB^{s+\sigma}_{q,r}(\HS)}, \\
\|\CT_2(\lambda)f\|_{\CB^s_{q,r}(\HS)} & \leq C|\lambda|^{\alpha_2}\|f\|_{\CB^s_{q,r}(\HS)}, \\
\|\CT_2(\lambda)f\|_{\CB^s_{q,r}(\HS)} &\leq C|\lambda|^{(1-\sigma)\alpha_2+\sigma\beta_2}
\|f\|_{\CB^{s-\sigma}_{q,r}(\HS)}.
\end{align*}
\end{thm}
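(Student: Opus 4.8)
The plan is to carry out the three-step scheme outlined in Subsection~\ref{subsec.4.1}: build the analogous operator bounds on the Bessel-potential scale $\CH^\nu_q(\HS)$ by complex interpolation of the hypotheses \eqref{f1}--\eqref{d3*}, transfer them to negative smoothness by a duality argument, and then pass to the Besov scale $\CB^s_{q,r}(\HS)$ by real interpolation. Throughout, $\lambda\in\Sigma_\epsilon+\gamma_b$ is a fixed parameter, all constants are independent of it because the hypotheses are, and we work with $f\in C^\infty_0(\HS)$, which is dense in every Bessel-potential and Besov space that occurs below (Propositions~\ref{prop-density} and \ref{prop-density-homogeneous}), so that the operators extend consistently and interpolation of operators is legitimate. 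For \emph{Step 1}, the plain bounds come from complex interpolation of \eqref{f1} with \eqref{f2} via $[\CH^0_q(\HS),\CH^1_q(\HS)]_\nu=\CH^\nu_q(\HS)$ (Proposition~\ref{prop-dual-interpolation}), giving $\|\CT_i(\lambda)f\|_{\CH^\nu_q(\HS)}\le C|\lambda|^{\alpha_i}\|f\|_{\CH^\nu_q(\HS)}$ for $0\le\nu\le1$. The gain bounds use the full triple: for $i=1$, interpolating \eqref{f3} (which reads $\CH^1_q\to\CH^0_q$, exponent $\beta_1$) against \eqref{f2} with the domain varying, and against \eqref{f1} with the range varying, yields $\CT_1(\lambda)\colon\CH^\sigma_q\to\CH^0_q$ and $\CT_1(\lambda)\colon\CH^1_q\to\CH^{1-\sigma}_q$, each bounded by $C|\lambda|^{(1-\sigma)\alpha_1+\sigma\beta_1}$; one more complex interpolation of these two endpoints gives $\|\CT_1(\lambda)f\|_{\CH^\nu_q(\HS)}\le C|\lambda|^{(1-\sigma)\alpha_1+\sigma\beta_1}\|f\|_{\CH^{\nu+\sigma}_q(\HS)}$ for $0\le\nu\le1-\sigma$. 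For $i=2$ the relevant form of \eqref{f3} is $\CH^0_q\to\CH^1_q$, so the same chain produces $\|\CT_2(\lambda)f\|_{\CH^\nu_q(\HS)}\le C|\lambda|^{(1-\sigma)\alpha_2+\sigma\beta_2}\|f\|_{\CH^{\nu-\sigma}_q(\HS)}$ for $\sigma\le\nu\le1$. The interpolation parameters are chosen precisely so that the power of $|\lambda|$ is $(1-\sigma)\alpha_i+\sigma\beta_i$ and the smoothness shift is exactly $\mp\sigma$.

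For \emph{Step 2} one observes that \eqref{d1*}, \eqref{d2*}, \eqref{d3*} for $\CT_i(\lambda)^*$ are \eqref{f1}, \eqref{f2}, \eqref{f3} with $q$ replaced by $q'$, so Step~1 applied to $\CT_i(\lambda)^*$ gives the corresponding plain and shifted bounds on $\CH^\nu_{q'}(\HS)$. Restricting the smoothness index to the range in which $\CH^\nu_{q'}(\HS)=\wt\CH^\nu_{q'}(\HS)$ and $(\wt\CH^\nu_{q'}(\HS))'=\CH^{-\nu}_q(\HS)$ (Propositions~\ref{prop-density}, \ref{prop-density-homogeneous}, \ref{prop-dual-interpolation}) and dualizing, we obtain the $\CH^\nu_q(\HS)$-bounds of Step~1 for $\CT_i(\lambda)$ itself at negative $\nu$, with the $|\lambda|$-exponents unchanged. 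Combining the two steps gives the plain estimate on $\CH^\nu_q(\HS)$ for every $\nu\in(-1+1/q,1/q)$, the $+\sigma$-shifted estimate for $\CT_1(\lambda)$ whenever $\nu$ and $\nu+\sigma$ both lie in $(-1+1/q,1/q)$, and the $-\sigma$-shifted estimate for $\CT_2(\lambda)$ whenever $\nu$ and $\nu-\sigma$ both lie there.

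For \emph{Step 3}, given $s$ with $-1+1/q<s<1/q$ and $s\neq0$, pick $s_0,s_1$ on the same side of $0$ with $s_0<s<s_1$ and $s=(1-\theta)s_0+\theta s_1$; Proposition~\ref{prop-dual-interpolation} then gives $(\CH^{s_0}_q(\HS),\CH^{s_1}_q(\HS))_{\theta,r}=\CB^s_{q,r}(\HS)$ and, likewise, $(\CH^{s_0\pm\sigma}_q(\HS),\CH^{s_1\pm\sigma}_q(\HS))_{\theta,r}=\CB^{s\pm\sigma}_{q,r}(\HS)$, so real interpolation of the estimates of Steps~1--2 yields the four claimed bounds for such $s$. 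The value $s=0$, together with the short interval of $s$ near $0$ for which any admissible pair $s_0<s<s_1$ would straddle $0$, is treated by one further real interpolation between a $\CB^{-\omega_1}_{q,r}(\HS)$-estimate and a $\CB^{\omega_2}_{q,r}(\HS)$-estimate already obtained, with $\omega_1,\omega_2>0$ chosen small enough that these indices and their $\pm\sigma$ translates remain in $(-1+1/q,1/q)$. In the homogeneous case $\gamma_b=0$ one uses throughout the homogeneous versions of the duality and interpolation identities from Proposition~\ref{prop-dual-interpolation} and the remark following it; the exponential weight never enters, since $\lambda$ is only a parameter.

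The main obstacle is not any single estimate but the bookkeeping. One must tune the complex-interpolation parameters in Step~1 so that the $|\lambda|$-power and the smoothness shift land exactly on the prescribed values, and one must verify at each stage that the Bessel-potential/Besov identity, the duality $(\wt\CH^\nu_{q'}(\HS))'=\CH^{-\nu}_q(\HS)$, and the real-interpolation identity being used are valid for the indices at hand. This is exactly why the hypothesis $-1+1/q<s-\sigma<s<s+\sigma<1/q$ is imposed: it forces every smoothness index appearing in the argument — $s$, $s\pm\sigma$, the auxiliary $s_0,s_1$, and $-\omega_1,\omega_2$ — to stay inside $(-1+1/q,1/q)$, which is both the range where $\CH^\nu_q(\HS)=\wt\CH^\nu_q(\HS)$ (so duality is clean and $C^\infty_0(\HS)$ is dense) and the range where the real-interpolation identities of Proposition~\ref{prop-dual-interpolation} hold with a Besov target.
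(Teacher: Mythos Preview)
Your proposal is correct and follows the same three-step scheme the paper outlines in Subsection~\ref{subsec.4.1} and carries out in Lemmas~\ref{lem.s.1}--\ref{lem.s.4}. The only substantive difference is in how you obtain the shifted estimate with the exact exponent $(1-\sigma)\alpha_i+\sigma\beta_i$: the paper reaches it by a single complex interpolation followed by two successive real interpolations with carefully tuned auxiliary parameters $\nu_1,\dots,\nu_9$, $\theta_1,\dots,\theta_{10}$, $\delta_1,\dots,\delta_9$, whereas you first perform an extra complex interpolation (between the two ``off-diagonal'' $\CH$-endpoints you build) so that the correct $|\lambda|$-power and the precise shift $\pm\sigma$ are already present on the entire $\CH^\nu_q$-scale, and then a single real interpolation transfers this to $\CB^s_{q,r}$. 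Your route avoids the parameter-tracking in the paper's lemmas at the cost of one more complex-interpolation step; both are valid, and the treatment of $s$ near $0$ by a final Besov--Besov interpolation is identical to the paper's.
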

 We divide the proof into the cases $0 < s < 1/q$ and 
$-1+1/q < s < 0$. The case $s = 0$ is obtained by interpolating estimates in  
$B^{-\omega_1}_{q,r} (\HS)$ and $B^{\omega_2}_{q,r} (\HS)$ for some small $\omega_1, \omega_2 >0$. 
\begin{lem}\label{lem.s.1} Assume that Assumption \ref{assump:4.1} holds. 
Let $q$, $r$, $\epsilon$, $\gamma$, $\alpha_1$, and $\beta_1$ be the same numbers as in 
Theorem \ref{spectralthm:1}. Let $1 \leq r \leq \infty$.  Assume 
$0 < s < 1/q$ and let $\sigma>0$ be a  number such that  $0 <  s+\sigma <1/q$. Then, 
for every $\lambda \in \Sigma_\epsilon + \gamma_b$ and $f \in C^\infty_0(\HS)$, there hold
\begin{align}
\|\CT_1(\lambda)f\|_{\CB^s_{q,r}(\HS)} & \leq C|\lambda|^{\alpha_1}\|f\|_{\CB^s_{q,r}(\HS)}, 
\label{A.1}\\
\|\CT_1(\lambda)f\|_{\CB^s_{q,r}(\HS)} & \leq C|\lambda|^{(1-\sigma)\alpha_1 + \sigma\beta_1}
\|f\|_{\CB^{s+\sigma}_{q,r}(\HS)}. \label{A.2}
\end{align}
\end{lem}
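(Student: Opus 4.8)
The plan is to carry out the first of the three interpolation steps described in Subsection~\ref{subsec.4.1}: produce estimates for $\CT_1(\lambda)$ on the Bessel-potential spaces $\CH^\nu_q(\HS)$ with $\nu$ in the admissible window $(-1+1/q,1/q)$ by complex interpolation of the hypotheses of Assumption~\ref{assump:4.1}, and then descend to the Besov spaces $\CB^s_{q,r}(\HS)$ by real interpolation. Since \eqref{A.1}--\eqref{A.2} only assert estimates for $f\in C^\infty_0(\HS)$, and since $C^\infty_0(\HS)$ is dense in $\CH^\nu_q(\HS)=\widetilde\CH^\nu_q(\HS)$ whenever $-1+1/q<\nu<1/q$ (Propositions~\ref{prop-density} and~\ref{prop-density-homogeneous}, in the two cases of Convention~\ref{convention}), it is enough to establish operator-norm bounds between such spaces. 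I would first fix auxiliary exponents $0<s_0<s<s_1<1/q-\sigma$ — possible exactly because the hypothesis gives $s+\sigma<1/q$ — so that $s_0,s_1,s_0+\sigma,s_1+\sigma\in(0,1/q)$, and set $\theta_0:=(s-s_0)/(s_1-s_0)\in(0,1)$, which satisfies both $s=(1-\theta_0)s_0+\theta_0 s_1$ and $s+\sigma=(1-\theta_0)(s_0+\sigma)+\theta_0(s_1+\sigma)$. By Proposition~\ref{prop-dual-interpolation}(3) (together with the Remark following it in the homogeneous case) one has $(\CH^{s_0}_q(\HS),\CH^{s_1}_q(\HS))_{\theta_0,r}=\CB^s_{q,r}(\HS)$, $(\CH^{s_0+\sigma}_q(\HS),\CH^{s_1+\sigma}_q(\HS))_{\theta_0,r}=\CB^{s+\sigma}_{q,r}(\HS)$, and $[L_q(\HS),\CH^1_q(\HS)]_\nu=\CH^\nu_q(\HS)$ for $0<\nu<1$ (with $L_q(\HS)=\CH^0_q(\HS)$).

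For \eqref{A.1}: I would complex-interpolate the bound \eqref{f1} on $\CH^1_q(\HS)$ against the bound \eqref{f2} on $L_q(\HS)$ — both carrying the same factor $C|\lambda|^{\alpha_1}$ — to get $\|\CT_1(\lambda)f\|_{\CH^\nu_q(\HS)}\le C|\lambda|^{\alpha_1}\|f\|_{\CH^\nu_q(\HS)}$ for every $\nu\in(0,1)$, in particular for $\nu\in\{s_0,s_1\}$, and then apply the real functor $(\,\cdot\,,\,\cdot\,)_{\theta_0,r}$ together with the first interpolation identity above to arrive at \eqref{A.1}.

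For \eqref{A.2}: here one also needs \eqref{f3} with $i=1$, i.e.\ $\|\CT_1(\lambda)f\|_{L_q(\HS)}\le C|\lambda|^{\beta_1}\|f\|_{\CH^1_q(\HS)}$, and I would combine it with \eqref{f1} and \eqref{f2} by two successive complex interpolations. First, interpolating \eqref{f1} (target $\CH^1_q$, factor $|\lambda|^{\alpha_1}$) against \eqref{f3} with $i=1$ (target $L_q$, factor $|\lambda|^{\beta_1}$) — both having domain $\CH^1_q(\HS)$ — gives, for $\theta\in(0,1)$,
\begin{equation}
\|\CT_1(\lambda)f\|_{\CH^\theta_q(\HS)}\le C|\lambda|^{\theta\alpha_1+(1-\theta)\beta_1}\|f\|_{\CH^1_q(\HS)}.
\end{equation}
Interpolating this against \eqref{f2} (the couple $L_q\to L_q$, factor $|\lambda|^{\alpha_1}$) with parameter $\eta\in(0,1)$ yields $\CT_1(\lambda)\colon\CH^{1-\eta}_q(\HS)\to\CH^{\theta(1-\eta)}_q(\HS)$ with factor $C|\lambda|^{\eta\alpha_1+(1-\eta)(\theta\alpha_1+(1-\theta)\beta_1)}$. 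For $j\in\{0,1\}$ I would then choose $1-\eta=s_j+\sigma$ and $\theta=s_j/(s_j+\sigma)$, both in $(0,1)$ since $0<s_j+\sigma<1/q<1$; a short computation gives $(1-\eta)(1-\theta)=\sigma$ and $\eta+(1-\eta)\theta=1-\sigma$, hence
\begin{equation}
\|\CT_1(\lambda)f\|_{\CH^{s_j}_q(\HS)}\le C|\lambda|^{(1-\sigma)\alpha_1+\sigma\beta_1}\|f\|_{\CH^{s_j+\sigma}_q(\HS)},\qquad j\in\{0,1\}.
\end{equation}
Applying $(\,\cdot\,,\,\cdot\,)_{\theta_0,r}$ and the two real-interpolation identities then gives \eqref{A.2}.

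The exponent bookkeeping here is routine; the step I expect to be the real obstacle is making the operator-level complex interpolation legitimate when $\CH^1_q(\HS)$ is an endpoint. Because $1\notin(-1+1/q,1/q)$, $C^\infty_0(\HS)$ is \emph{not} dense in $\CH^1_q(\HS)$, so one cannot extend $\CT_1(\lambda)$ there merely from the $C^\infty_0$-bounds in Assumption~\ref{assump:4.1}; one must use that $\CT_1(\lambda)$ is a genuine bounded operator on $\CH^1_q(\HS)$, which in the intended applications follows from its explicit Fourier-multiplier representation (Section~\ref{sec-solution-formula}). This is precisely why the scheme is organized so that every density or interpolation identity it invokes — apart from this single endpoint pairing — stays inside the window $(-1+1/q,1/q)$, and why the hypotheses $-1+1/q<s<1/q$ and $s+\sigma<1/q$ cannot be relaxed. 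Finally, in the inhomogeneous case $\gamma_b=\gamma$ one has $|\lambda|\ge\gamma\sin\epsilon$ on $\Sigma_\epsilon+\gamma$, so all lower-order terms are harmless; the constant $C$ may depend on $\gamma_b$ but is uniform in $\lambda$ and $f$.
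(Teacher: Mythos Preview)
Your proof is correct and follows the same overall strategy as the paper --- complex interpolation between the $\CH^\nu_q$-scale bounds of Assumption~\ref{assump:4.1}, followed by real interpolation down to $\CB^s_{q,r}$ --- but the organization for \eqref{A.2} differs in a useful way. The paper first complex-interpolates \eqref{f2} and \eqref{f3} to obtain a single off-diagonal estimate $\CH^{\nu_1}_q\to L_q$ with exponent $\alpha_1(1-\nu_1)+\beta_1\nu_1$, real-interpolates this against the diagonal estimate at $\CH^{\nu_2}_q$ to land in $\CB^s_{q,r}$ with an intermediate $\lambda$-exponent $\delta_1$ and a shifted source space $\CB^{s+\sigma_1}_{q,r}$, and then performs a \emph{second} real interpolation against \eqref{A.1} to correct both the source space to $\CB^{s+\sigma}_{q,r}$ and the exponent to $(1-\sigma)\alpha_1+\sigma\beta_1$. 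Your route instead runs two complex interpolations (first between \eqref{f1} and \eqref{f3}, then against \eqref{f2}) so that the correct $\lambda$-exponent $(1-\sigma)\alpha_1+\sigma\beta_1$ and the correct shift $\CH^{s_j+\sigma}_q\to\CH^{s_j}_q$ appear simultaneously at the Bessel-potential level; a single real interpolation then finishes. This is a bit more streamlined and makes the bookkeeping transparent. Your caveat about density of $C^\infty_0(\HS)$ failing in $\CH^1_q(\HS)$ applies equally to the paper's argument; in both cases one is tacitly using that the concrete operators verified against Assumption~\ref{assump:4.1} are genuinely bounded on $\CH^1_q(\HS)$, not merely on $C^\infty_0$.
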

\begin{proof} 
Choose $\nu_1$ and  $\nu_2$ such that
$0 < s < s+\sigma < \nu_2 <\nu_1 < 1/q$.  Estimates \eqref{f1}, \eqref{f2}, and \eqref{f3} 
for $i=1$ are interpolated with the
complex interpolation method to obtain  
\begin{align}
\label{1}
\|\CT_1(\lambda)f\|_{L_q(\HS)} & \leq C|\lambda|^{\alpha_1}\|f\|_{L_q(\HS)}, \\
\label{2}
\|\CT_1(\lambda)f\|_{\CH^{\nu_1}_q(\HS)} & \leq C|\lambda|^{\alpha_1}\|f\|_{\CH^{\nu_1}_q(\HS)},
\\
\label{2'}
\|\CT_1(\lambda)f\|_{\CH^{\nu_2}_q(\HS)} 
& \leq C|\lambda|^{\alpha_1}\|f\|_{\CH^{\nu_2}_q(\HS)}, \\
\label{3}
\|\CT_1(\lambda)f\|_{L_q(\HS)} & \leq C|\lambda|^{\alpha_1(1-\nu_1) + \beta_1\nu_1}\|f\|_{\CH^{\nu_1}_q(\HS)}.
\end{align}
By interpolating \eqref{1} and  \eqref{2} with the real interpolation method, we have
\begin{equation}\label{4}
\|\CT_1(\lambda)f\|_{\CB^s_{q,r}(\HS)} \leq C|\lambda|^{\alpha_1}\|f\|_{\CB^s_{q,r}(\HS)}.
\end{equation}
Applying the real interpolation functor $(\,\cdot\,,\,\cdot\,)_{\theta_1, r}$ with $\theta_1=s/\nu_2$, 
we infer from \eqref{2'} and \eqref{3} that
\begin{equation}\label{5}
\|\CT_1(\lambda)f\|_{\CB^s_{q,r}(\HS)} \leq C|\lambda|^{\delta_1}
\|f\|_{\CB^{s+\sigma_1}_{q,r}(\HS)}, 
\end{equation}
where we have set 
\begin{align*}
\delta_1 &= \alpha_1\frac{s}{\nu_2} + \Big(\alpha_1(1-\nu_1)+\beta_1\nu_1 \Big) \bigg(1-\frac{s}{\nu_2}\bigg)
=\alpha_1 -(\alpha_1-\beta_1)\nu_1 \bigg(1-\frac{s}{\nu_2}\bigg), \\
\sigma_1 &=\nu_1\bigg(1-\frac{s}{\nu_2}\bigg).
\end{align*}
Now, we take $\nu_1$ and $\nu_2$ in such a way that $s < s + \sigma < s+\sigma_1$,
i.e., $\sigma/\nu_1 + s/\nu_2 < 1$ (recall that there holds $s+\sigma < \nu_2 <\nu_1 < 1/q$).
Thus, choosing  
$\theta_2 \in (0,1)$ such that $s+\sigma =\theta_2 s +(1- \theta_2)(s+\sigma_1)$,
i.e.,  $1-\theta_2 = \sigma/\sigma_1$, we apply the real interpolation
functor $(\,\cdot\,,\,\cdot\,)_{\theta_2, r}$ to \eqref{4} and \eqref{5} to obtain 
\begin{equation}\label{6}
\|\CT_1(\lambda)f\|_{\CB^{s}_{q,r}(\HS)} \leq C|\lambda|^{\delta_2}
\|f\|_{\CB^{s+\sigma}_{q,r}(\HS)}.
\end{equation}
where 
\begin{align*}
\delta_2 & =\alpha_1\theta_2 + \delta_1 (1-\theta_2) \\
& = \alpha_1\bigg(1-\frac{\sigma}{\sigma_1}\bigg)+ \frac{\delta_1}{\sigma_1}\sigma \\
& = \alpha_1\bigg(1-\frac{\sigma}{\sigma_1}\bigg) + \frac{\sigma}{\sigma_1}
\bigg\{\alpha_1-(\alpha_1-\beta_1)\nu_1\bigg(1-\frac{s}{\nu_2}\bigg)\bigg\} \\
& =\alpha_1 -\sigma(\alpha_1-\beta_1).
\end{align*}
as follows from $(\nu_1 \slash \sigma_1) (1-s \slash \nu_2) = 1$.
Hence, we have \eqref{A.1} and \eqref{A.2}. The proof of Lemma  \ref{lem.s.1} is complete.
\end{proof} 
\begin{lem}\label{lem.s.2} Assume that Assumption \ref{assump:4.1} holds. 
Let $q$, $r$, $\epsilon$, $\gamma$, $\alpha_1$, and $\beta_1$ be the same numbers as in 
Theorem \ref{spectralthm:1}.
Assume $-1+1/q < s < 0$ and let $\sigma>0$ be  a number such that  $-1+1/q < s+\sigma <0$. 
Then, for every $\lambda \in \Sigma_\epsilon + \gamma_b$ and $f \in C^\infty_0(\HS)$, there hold
\begin{align}
\|\CT_1(\lambda)f\|_{\CB^s_{q,r}(\HS)} & \leq C|\lambda|^{\alpha_1}\|f\|_{\CB^s_{q,r}(\HS)}, 
\label{A.1*}\\
\|\CT_1(\lambda)f\|_{\CB^s_{q,r}(\HS)} & \leq C|\lambda|^{\alpha_1(1-\sigma)+\beta_1\sigma}
\|f\|_{\CB^{s+\sigma}_{q,r}(\HS)}. \label{A.2*}
\end{align}
\end{lem}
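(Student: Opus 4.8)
The plan is to mimic the proof of Lemma \ref{lem.s.1} but work on the dual side, exploiting the duality $(\CB^s_{q,r}(\BR^d))' = \CB^{-s}_{q',r'}(\BR^d)$ and the corresponding statements for the half-space in Proposition \ref{prop-dual-interpolation}. Since $-1+1/q < s < 0$, the conjugate exponent satisfies $0 < -s < 1/q'$, so the "positive regularity" machinery of Lemma \ref{lem.s.1} applies to $\CT_1(\lambda)^*$ on $\CB^{-s}_{q',r'}(\HS)$. First I would record that, by Assumption \ref{assump:4.1}, the dual operator $\CT_1(\lambda)^*$ satisfies exactly the same three-parameter hypotheses \eqref{f1}--\eqref{f3} that Lemma \ref{lem.s.1} uses — namely \eqref{d1*}, \eqref{d2*} for the $L_{q'}$ and $\CH^1_{q'}$ bounds with exponent $\alpha_1$, and \eqref{d3*} (for $i=1$) for the smoothing bound $\|\CT_1(\lambda)^* f\|_{L_{q'}(\HS)} \le C|\lambda|^{\beta_1}\|f\|_{\CH^1_{q'}(\HS)}$. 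Thus Lemma \ref{lem.s.1}, applied with $(q,r,s)$ replaced by $(q', r', -s)$ and with $\CT_1(\lambda)$ replaced by $\CT_1(\lambda)^*$, yields for every $\lambda \in \Sigma_\epsilon + \gamma_b$ and $g \in C^\infty_0(\HS)$
\begin{align}
\|\CT_1(\lambda)^* g\|_{\CB^{-s}_{q',r'}(\HS)} &\le C|\lambda|^{\alpha_1}\|g\|_{\CB^{-s}_{q',r'}(\HS)}, \\
\|\CT_1(\lambda)^* g\|_{\CB^{-s}_{q',r'}(\HS)} &\le C|\lambda|^{(1-\sigma)\alpha_1 + \sigma\beta_1}\|g\|_{\CB^{-s+\sigma}_{q',r'}(\HS)},
\end{align}
noting that $0 < -s < -s+\sigma < 1/q'$ is the condition needed for Lemma \ref{lem.s.1} (indeed $-1+1/q < s+\sigma < 0$ gives $0 < -(s+\sigma) < 1/q'$, but the smoothing estimate moves regularity upward by $\sigma$, so the roles of $-s$ and $-(s+\sigma) = -s-\sigma$ must be tracked carefully — see the obstacle paragraph).

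Next I would transfer these back to $\CT_1(\lambda)$ by duality. For $f, \varphi \in C^\infty_0(\HS)$ we have $|(\CT_1(\lambda) f, \varphi)| = |(f, \CT_1(\lambda)^* \varphi)| \le \|f\|_{\CB^s_{q,r}(\HS)}\|\CT_1(\lambda)^*\varphi\|_{\CB^{-s}_{q',r'}(\HS)} \le C|\lambda|^{\alpha_1}\|f\|_{\CB^s_{q,r}(\HS)}\|\varphi\|_{\CB^{-s}_{q',r'}(\HS)}$; taking the supremum over $\varphi$ with $\|\varphi\|_{\CB^{-s}_{q',r'}(\HS)} \le 1$ and using that $C^\infty_0(\HS)$ is dense in $\CB^{-s}_{q',r'}(\HS)$ (Proposition \ref{prop-density} or \ref{prop-density-homogeneous}, valid since $-1+1/q' < -s < 1/q'$) gives \eqref{A.1*}. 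The same pairing argument, now using the smoothing estimate for $\CT_1(\lambda)^*$, gives $|(\CT_1(\lambda) f, \varphi)| \le C|\lambda|^{(1-\sigma)\alpha_1+\sigma\beta_1}\|f\|_{\CB^{s+\sigma}_{q,r}(\HS)}\|\varphi\|_{\CB^{-s-\sigma}_{q',r'}(\HS)}$, and since $(\CB^{s+\sigma}_{q,r}(\HS))' = \CB^{-s-\sigma}_{q',r'}(\HS)$ in the relevant range, taking the supremum yields \eqref{A.2*}. Throughout I would keep in mind the convention $r' = \infty-$ when $r=1$ etc., so that the duality pairings are the ones sanctioned by Proposition \ref{prop-dual-interpolation}; for $r = \infty$ the statement in this lemma is excluded anyway since Lemma \ref{lem.s.1} allows $1 \le r \le \infty$ but the dual requires $r' \ge 1$, which is automatic.

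The main obstacle I anticipate is bookkeeping the interplay between "duality flips the sign of $s$" and "smoothing shifts $s$ by $\sigma$ in the opposite direction for $\CT_1$ versus what the dual estimate naturally provides." Concretely: the target \eqref{A.2*} has $f$ measured in the \emph{higher} space $\CB^{s+\sigma}_{q,r}$, so on the dual side the test function $\varphi$ lives in the \emph{lower} space $\CB^{-(s+\sigma)}_{q',r'} = \CB^{-s-\sigma}_{q',r'}$, and one needs Lemma \ref{lem.s.1} to produce a bound of the form $\|\CT_1(\lambda)^*\varphi\|_{\CB^{-s}_{q',r'}} \le C|\lambda|^{\cdots}\|\varphi\|_{\CB^{-s-\sigma}_{q',r'}}$ — but Lemma \ref{lem.s.1}'s smoothing estimate \eqref{A.2} goes the \emph{other} way (input in a space of regularity \emph{above} the output). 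So one must instead invoke the low-regularity smoothing direction, i.e., apply Lemma \ref{lem.s.2}'s structure is not yet available; rather, one should set up the complex/real interpolation directly from the dual hypotheses \eqref{d1*}--\eqref{d3*}, interpolating $\|\CT_1^* g\|_{\CH^{\nu}_{q'}} \le C|\lambda|^{\alpha_1}\|g\|_{\CH^\nu_{q'}}$ against $\|\CT_1^* g\|_{L_{q'}} \le C|\lambda|^{\beta_1}\|g\|_{\CH^1_{q'}}$ with negative target regularity, exactly as in the $s<0$ branch of the overall argument. I would therefore structure the proof not as a black-box call to Lemma \ref{lem.s.1} but as a parallel interpolation computation on the dual operator with the output regularity index negative, then dualize; the arithmetic for the resulting $\lambda$-powers is identical to that in Lemma \ref{lem.s.1} because the exponents $\delta_1, \delta_2$ there depended only on $\alpha_1, \beta_1$ and the interpolation parameters, not on signs, and one checks that the final exponent collapses to $\alpha_1(1-\sigma) + \beta_1\sigma$ as claimed.
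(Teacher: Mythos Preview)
Your overall strategy---pass to the dual operator $\CT_1(\lambda)^*$, exploit that $0<-s<1/q'$ so that the positive-regularity machinery applies on the dual side, and then dualize back---is correct and is exactly the idea behind the paper's proof. The ``obstacle'' you flag is only a bookkeeping slip: the black-box call to Lemma~\ref{lem.s.1} does work if you apply it with output index $-s-\sigma$ (not $-s$) and the same shift $\sigma$. The hypotheses $0<-s-\sigma<1/q'$ and $0<-s<1/q'$ are exactly $-1+1/q<s+\sigma<0$ and $-1+1/q<s<0$, and the conclusion \eqref{A.2} then reads
\[
\|\CT_1(\lambda)^*g\|_{\CB^{-s-\sigma}_{q',r'}(\HS)}\le C|\lambda|^{(1-\sigma)\alpha_1+\sigma\beta_1}\|g\|_{\CB^{-s}_{q',r'}(\HS)},
\]
which dualizes precisely to \eqref{A.2*}. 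So there is no need to ``redo'' the interpolation from scratch.

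The paper, however, organizes the duality differently: it first complex-interpolates \eqref{d1*}--\eqref{d3*} to get $\CH^{\nu}_{q'}$-estimates for $\CT_1(\lambda)^*$, then dualizes \emph{at the Sobolev level}---using the half-space duality $\CH^{-\nu}_q(\HS)=(\wt\CH^{\nu}_{q'}(\HS))^*$ recorded in Proposition~\ref{prop-dual-interpolation}---to obtain bounds for $\CT_1(\lambda)$ itself in $\CH^{-\nu}_q(\HS)$, and only then real-interpolates to $\CB^s_{q,r}(\HS)$. The advantage of the paper's ordering is that it needs only Sobolev-space duality on $\HS$, which is explicitly stated; your route requires the Besov-space duality $\|g\|_{\CB^s_{q,r}(\HS)}\approx\sup\{|(g,\varphi)|:\|\varphi\|_{\CB^{-s}_{q',r'}(\HS)}\le 1\}$ on the half-space, which is true in the range $-1+1/q<s<1/q$ (via the identification $\CB^s_{q,r}(\HS)=\wt\CB^s_{q,r}(\HS)$ of Propositions~\ref{prop-density}--\ref{prop-density-homogeneous} together with the $\BR^d$ duality) but is not stated in the paper and would need a line of justification.
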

\begin{proof} 
Since $-1+1/q < s < 0$, we have $0 < |s| < 1-1/q = 1/q'$. 
Let $\nu_3$, $\nu_4$ and $\sigma$ be positive numbers such that
\begin{equation}\label{number.1} 0 < \nu_4 < |s| -\sigma < |s|  < \nu_3 < 1/q'.\end{equation}
Using the complex interpolation method, 
by \eqref{d1*}, \eqref{d2*}, and \eqref{d3*} for $i=1$, we have
\begin{align*}
\|\CT_1(\lambda)^*\varphi\|_{L_{q'}(\HS)} & \leq C|\lambda|^{\alpha_1}\|\varphi\|_{L_{q'}(\HS)}, \\
\|\CT_1(\lambda)^*\varphi\|_{\CH^{\nu_3}_{q'}(\HS)} & \leq C|\lambda|^{\alpha_1}
\|\varphi\|_{\CH^{\nu_3}_{q'}(\HS)},
\\
\|\CT_1(\lambda)^*\varphi\|_{\CH^{\nu_4}_{q'}(\HS)} 
& \leq C|\lambda|^{\alpha_1}\|\varphi\|_{\CH^{\nu_4}_{q'}(\HS)}, \\
\|\CT_1(\lambda)^*\varphi\|_{L_{q'}(\HS)} & \leq C|\lambda|^{\alpha_1(1-\nu_3)+\beta_1\nu_3}
\|\varphi\|_{\CH^{\nu_3}_{q'}(\HS)}.
\end{align*}
By the duality argument, we obtain 
\begin{align}
\label{d4*}
\|\CT_1(\lambda)f\|_{L_q(\HS)} & \leq C|\lambda|^{\alpha_1}\|f\|_{L_q(\HS)}, \\
\label{d5*}
\|\CT_1(\lambda)f\|_{\CH^{-\nu_3}_q(\HS)} & \leq C|\lambda|^{\alpha_1}\|f\|_{\CH^{-\nu_3}_q(\HS)},
\\
\label{d10*}
\|\CT_1(\lambda)f\|_{\CH^{-\nu_4}_q(\HS)} 
& \leq C|\lambda|^{\alpha_1}\|f\|_{\CH^{-\nu_4}_q(\HS)}, \\
\label{d6*}
\|\CT_1(\lambda)f\|_{\CH^{-\nu_3}_q(\HS)} &
\leq C|\lambda|^{\alpha_1(1-\nu_3) + \beta_1\nu_3}\|f\|_{L_q(\HS)}.
\end{align}
In fact, note that $\CH^{-\nu_3}_q(\HS) = (\wt\CH^{\nu_3}_{q'}(\HS))^*$. 
For any $f, \varphi \in C^\infty_0(\HS)$, by the dual argument we observe 
\begin{align*}
|(\CT_1(\lambda)f, \varphi)| &= |(f, \CT_1(\lambda)^*\varphi)| \\
&\leq \|f\|_{\CH^{-\nu_3}_q(\HS)}\|\CT_1(\lambda)^*\varphi\|_{\CH^{\nu_3}_{q'}(\HS)}
\\
& \leq \|f\|_{\CH^{-\nu_3}_q(\HS)}C|\lambda|^{\alpha_1}\|\varphi\|_{\CH^{\nu_3}_{q'}(\HS)},
\end{align*}
which implies \eqref{d5*}.  Likewise, we have \eqref{d10*} and \eqref{d4*}. In addition, we see that
\begin{align*}
|(\CT_1(\lambda)f, \varphi)| &= |(f, \CT_1(\lambda)^*\varphi)| \\
&\leq \|f\|_{L_q(\HS)}\|\CT_1(\lambda)^*\varphi\|_{L_{q'}(\HS)}
\\
& \leq \|f\|_{L_q(\HS)}C|\lambda|^{\alpha_1(1-\nu_3)+\beta_1\nu_3}\|\varphi\|_{\CH^{\nu_3}_{q'}(\HS)},
\end{align*}
which implies \eqref{d6*}. \par 
Now, we shall prove \eqref{A.1*} and \eqref{A.2*}. 
Combining \eqref{d4*} and  \eqref{d5*} with the real interpolation method, we have
\begin{equation}\label{d9*}
\|\CT_1(\lambda)f\|_{\CB^{-|s|}_{q,r}(\HS)} \leq C|\lambda|^\alpha\|f\|_{\CB^{-|s|}_{q,r}(\HS)},
\end{equation}
which gives \eqref{A.1*}. 
To show \eqref{A.2*}, we first 
recall that $\nu_3$ and $\nu_4$ satisfy \eqref{number.1}.
Choose $\theta_3 \in (0, 1)$ such that
$-|s|= -\nu_3(1-\theta_3) - \nu_4\theta_3$, i.e., 
$\theta_3 = (\nu_3-|s|) \slash (\nu_3-\nu_4)$ is valid.
Applying the real interpolation functor $(\,\cdot\,,\,\cdot\,)_{\theta_3, r}$,
we infer from \eqref{d10*} and \eqref{d6*} that
$$\|\CT_1(\lambda)f\|_{\CB^{-|s|}_{q,r}(\HS)}\leq
 C|\lambda|^{\delta_3}\|f\|_{\CB^{(-\nu_4)\theta_3}_{q,r}}.
$$
with
\begin{align*}
\delta_3 & = \Big (\alpha_1(1-\nu_1) + \beta_1\nu_1 \Big)(1-\theta_3) + \alpha_1\theta_3 \\
& = \alpha_1-(\alpha_1-\beta_1)\nu_1(1-\theta_3) \\
&= \alpha_1-(\alpha_1-\beta_1)\nu_3 \bigg(1-\frac{\nu_3-|s|}{\nu_3-\nu_4}\bigg) \\
& = \alpha_1-(\alpha_1-\beta_1)\nu_3\frac{|s|-\nu_4}{\nu_3-\nu_4}.
\end{align*}
Therefore, we have
\begin{equation}\label{d11*}
\|\CT_1(\lambda)f\|_{\CB^{-|s|}_{q,r}(\HS)} \leq 
C|\lambda|^{\delta_3}
\|f\|_{\CB^{-\frac{\nu_4(\nu_3-|s|)}{\nu_3-\nu_4}}_{q,r}}.
\end{equation}
Since $0 < \nu_4 < |s|-\sigma$ and $0 < \nu_3-|s| < \nu_3-\nu_4$, there holds
$$-|s| < -|s| + \sigma < -\frac{\nu_4(\nu_3-|s|)}{\nu_3 - \nu_4}.$$
Choose $\theta_4 \in (0, 1)$ such that 
$$-|s| + \sigma =(1-\theta_4)(-|s|) + \theta_4 \bigg(-\frac{\nu_4(\nu_3-|s|)}{\nu_3-\nu_4}\bigg)
\qquad \Leftrightarrow \qquad
\theta_4 = \dfrac{(\nu_3-\nu_4)\sigma}{\nu_3(|s|-\nu_4)}.
$$
Applying the real interpolation functor $(\,\cdot\,,\,\cdot\,)_{\theta_4, r}$,
we infer from \eqref{d9*} and \eqref{d11*} that 
$$\|\CT_1(\lambda)f\|_{\CB^{-|s|}_{q,r}(\HS)} 
\leq C|\lambda|^{\delta_4}
\|f\|_{\CB^{-|s|+\sigma}_{q,r}(\HS)}
$$
with
\begin{align*}
\delta_4 & = \theta_4 \delta_3 + (1-\theta_4)\alpha_1 \\
& = \frac{(\nu_3-\nu_4)}{|s|-\nu_4}\frac{\sigma}{\nu_3}
\bigg(\alpha_1-(\alpha_1-\beta_1) \nu_3\frac{|s|-\nu_4}{\nu_3-\nu_4}\bigg)
+ \bigg(1-\frac{\nu_3-\nu_4}{|s|-\nu_4} \cdot \frac{\sigma}{\nu_3} \bigg)\alpha_1 \\
& = \alpha_1-(\alpha_1-\beta_1)\sigma,
\end{align*}
which shows \eqref{A.2*}.
\end{proof}
\begin{lem}\label{lem.s.3} Assume that Assumption \ref{assump:4.1} holds. 
Let $q$, $r$, $\epsilon$, $\gamma$, $\alpha_2$, and $\beta_2$ be the same numbers as in 
Theorem \ref{spectralthm:1}. 
Assume $0< s < 1/q $ and let $\sigma>0$ be a number such that  $0  < s -\sigma <1/q$. 
Then, for every $\lambda \in \Sigma_\epsilon + \gamma_b$ and $f \in C^\infty_0(\HS)$, 
there hold
\begin{align}
\|\CT_2(\lambda)f\|_{\CB^s_{q,r}(\HS)} & \leq C|\lambda|^{\alpha_2}\|f\|_{\CB^s_{q,r}(\HS)}, 
\label{A.3}\\
\|\CT_2(\lambda)f\|_{\CB^s_{q,r}(\HS)} & \leq C|\lambda|^{(1-\sigma)\alpha_2 + \beta_2\sigma}
\|f\|_{\CB^{s-\sigma}_{q,r}(\HS)}. \label{A.4}
\end{align}
\end{lem}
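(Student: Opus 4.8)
The plan is to mirror the argument of Lemma \ref{lem.s.1}, but now working with $\CT_2(\lambda)$ instead of $\CT_1(\lambda)$, so that the second regularity estimate gains the index $\sigma$ \emph{downward} (from $\CB^{s-\sigma}_{q,r}$ to $\CB^s_{q,r}$) rather than upward. The starting point is Assumption \ref{assump:4.1} with $i=2$: the estimates \eqref{f1}, \eqref{f2} give $\CH^1_q$- and $L_q$-boundedness of $\CT_2(\lambda)$ with growth $|\lambda|^{\alpha_2}$, while \eqref{f3} for $i=2$ reads $\|\CT_2(\lambda)f\|_{\CH^1_q(\HS)} \leq C|\lambda|^{\beta_2}\|f\|_{L_q(\HS)}$, i.e.\ $\CT_2(\lambda)$ \emph{raises} one derivative at the cost of the worse power $|\lambda|^{\beta_2}$. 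This ``derivative-gaining'' inequality is exactly what is needed to push the index of the target space downward.

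First I would fix $\nu_1, \nu_2$ with $0 < s-\sigma < s < \nu_2 < \nu_1 < 1/q$ and apply the complex interpolation functor to \eqref{f1}, \eqref{f2} (for $i=2$) to get $\CH^{\nu_j}_q$-boundedness of $\CT_2(\lambda)$ with bound $C|\lambda|^{\alpha_2}$ for $j=1,2$, and also to interpolate \eqref{f2} with \eqref{f3} (for $i=2$) to obtain
\begin{equation}
\|\CT_2(\lambda)f\|_{\CH^{\nu_1}_q(\HS)} \leq C|\lambda|^{\alpha_2(1-\nu_1)+\beta_2\nu_1}\|f\|_{L_q(\HS)},
\end{equation}
the analogue of \eqref{3} with $L_q$ and $\CH^{\nu_1}_q$ swapped. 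Real interpolation between the $L_q$- and $\CH^{\nu_1}_q$-boundedness gives \eqref{A.3}. Then, applying the real interpolation functor $(\cdot,\cdot)_{\theta_1,r}$ with a suitable $\theta_1$ (chosen so that the resulting smoothness index equals $s$) to the $\CH^{\nu_2}_q$-boundedness and the derivative-gaining estimate above yields an inequality of the form $\|\CT_2(\lambda)f\|_{\CB^s_{q,r}(\HS)}\leq C|\lambda|^{\delta_1}\|f\|_{\CB^{s-\sigma_1}_{q,r}(\HS)}$ for some $\sigma_1>0$ and $\delta_1 = \alpha_2 - (\alpha_2-\beta_2)\nu_1(1-s/\nu_2)$; here $\sigma_1 = \nu_1(1-s/\nu_2)$, just as in Lemma \ref{lem.s.1}. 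One final real interpolation step between \eqref{A.3} and this last estimate, with the interpolation parameter $\theta$ picked so that $s-\sigma = \theta s + (1-\theta)(s-\sigma_1)$, i.e.\ $1-\theta = \sigma/\sigma_1$, produces \eqref{A.4} with exponent $\delta_2 = \alpha_2\theta + \delta_1(1-\theta) = \alpha_2 - \sigma(\alpha_2-\beta_2) = (1-\sigma)\alpha_2 + \beta_2\sigma$, after the same algebraic simplification using $(\nu_1/\sigma_1)(1-s/\nu_2)=1$.

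The one genuinely new bookkeeping point, and the step I expect to require the most care, is verifying that $\nu_1,\nu_2$ can be chosen so that $s-\sigma < s - \sigma_1$, i.e.\ $\sigma < \sigma_1 = \nu_1(1-s/\nu_2)$; this is the condition that makes the final real interpolation legitimate, and it amounts to $\sigma/\nu_1 + s/\nu_2 < 1$, which is achievable precisely because $s + \sigma < \nu_2 < \nu_1 < 1/q$ forces both $\sigma/\nu_1$ and $s/\nu_2$ strictly below the relevant thresholds (take $\nu_2$ close to $1/q$ and $\nu_1$ close to $1/q$ as well, but $\nu_2<\nu_1$). Apart from this, every step is a routine application of Proposition \ref{prop-dual-interpolation} (real and complex interpolation of the scales $H^s_q$ and $B^s_{q,r}$ on $\HS$, valid in the stated ranges of $s$) together with the fact, already exploited throughout Section \ref{sec-4}, that $C^\infty_0(\HS)$ is dense in $\CB^s_{q,r}(\HS)$ so the $C^\infty_0$-valued estimates extend. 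I would therefore present the proof compactly, citing the parallel computation in Lemma \ref{lem.s.1} for the exponent arithmetic and only spelling out the choices of $\nu_1,\nu_2$ and the two interpolation parameters.
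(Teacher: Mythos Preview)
There is a genuine gap at the key intermediate step. When you real-interpolate the $\CH^{\nu_2}_q$-boundedness (target space $\CH^{\nu_2}_q$) with the derivative-gaining estimate $\|\CT_2(\lambda)f\|_{\CH^{\nu_1}_q}\leq C|\lambda|^{\alpha_2(1-\nu_1)+\beta_2\nu_1}\|f\|_{L_q}$ (target space $\CH^{\nu_1}_q$), both target indices $\nu_1,\nu_2$ lie strictly above $s$, so no choice of $\theta_1\in(0,1)$ can land the target at $\CB^s_{q,r}$. The mirror of Lemma \ref{lem.s.1} is not symmetric in this respect: there the two target spaces were $\CH^{\nu_2}_q$ and $L_q$, which genuinely bracket $s$. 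With $\theta_1=s/\nu_2$ your interpolation actually yields
\[
\|\CT_2(\lambda)f\|_{\CB^{\,s+\sigma_1}_{q,r}(\HS)}\leq C|\lambda|^{\delta_1}\|f\|_{\CB^{\,s}_{q,r}(\HS)},
\]
i.e.\ the indices are shifted \emph{up} by $\sigma_1$, not down; this does not combine with \eqref{A.3} to give \eqref{A.4}. (Note also that $s-\sigma_1=s-\nu_1(1-s/\nu_2)$ is typically negative under your ordering $s<\nu_2<\nu_1$.)

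The paper repairs this asymmetry by an extra step you do not have. It first complex-interpolates the $\CH^{\nu_5}_q$-boundedness with the derivative-gaining estimate to produce a map $\CH^{\nu_6}_q\to\CH^{\nu_5}_q$ for an auxiliary $0<\nu_6<\nu_5$; it then real-interpolates this with the $L_q$-boundedness (\emph{not} the $\CH^{\nu_2}_q$-boundedness). Now the target indices are $0$ and $\nu_5$, which bracket $s$, while the source indices are $0$ and $\nu_6$, so with $\theta_6=s/\nu_5$ one obtains target $\CB^s_{q,r}$ and source $\CB^{(\nu_6/\nu_5)s}_{q,r}$ with $(\nu_6/\nu_5)s<s$. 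A final real interpolation with \eqref{A.3} then yields \eqref{A.4}. The essential fix to your plan is twofold: the partner for the derivative-gaining estimate in the crucial real interpolation must have target index \emph{below} $s$ (the $L_q$-boundedness does this), and the intermediate $\nu_6$-step keeps all interpolations within the $(H,H)\to B$ and $(B,B)\to B$ framework covered by Proposition \ref{prop-dual-interpolation}.
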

\begin{proof}
Let $\nu_5$ be a number such that $0 < s < s+\sigma < \nu_5 < 1/q$. 
Combining 
\eqref{f1} and \eqref{f2} as well as \eqref{f1} and \eqref{f3} for $i=2$ 
with the complex interpolation method imply 
\begin{align}
\label{d4} \|\CT_2(\lambda)f\|_{L_q(\HS)}& \leq C|\lambda|^{\alpha_2}
\|f\|_{L_q(\HS)}, \\
\label{d5} \|\CT_2(\lambda)f\|_{\CH^{\nu_5}_q(\HS)}
&
\leq C|\lambda|^{\alpha_2}\|f\|_{\CH^{\nu_5}_q(\HS)}, \\
\label{d6}
\|\CT_2(\lambda)f\|_{\CH^{\nu_5}_q(\HS)} & \leq 
C|\lambda|^{(1-\nu_5)\alpha_2 + \nu_5\beta_2}\|f\|_{L_q(\HS)}.
\end{align}
Combining \eqref{d4} and  \eqref{d5} with the real interpolation method yields 
\begin{equation}\label{d7}
\|\CT_2(\lambda)f\|_{\CB^s_{q,r}(\HS)}
\leq C|\lambda|^{\alpha_2}\|f\|_{\CB^s_{q,r}(\HS)},
\end{equation}
which shows \eqref{A.3}. \par
To prove \eqref{A.4}, we take $\nu_6$ and $\theta_5$ such that $0 < \nu_6 < \nu_5$ 
and $\theta_5 = \nu_6/ \nu_5 \in (0, 1)$ are valid, respectively.
Combining \eqref{d5} and \eqref{d6} with complex interpolation, we have 
\begin{equation}\label{d6'}
\|\CT_2(\lambda)f\|_{\CH^{\nu_5}_{q}(\HS)}
\leq C|\lambda|^{\delta_5}\|f\|_{\CH^{\nu_6}_{q}(\HS)}
\end{equation}
with
\begin{align*}
\delta_5 &= \alpha_2\theta_5+ \Big((1-\nu_5)\alpha_2 + \nu_5\beta_2 \Big)(1-\theta_5) \\
& = \alpha_2\theta_5+(1-\nu_5)\alpha_2 + \nu_5\beta_2-\alpha_2\theta_5+\nu_5\alpha_2\theta_5-\nu_5\beta_2\theta_5\\
& = (1-\nu_5)\alpha_2 + \nu_5\beta_2+\nu_5(\alpha_2-\beta_2)\theta_5 \\
& = (1-\nu_5)\alpha_2 + \nu_5\beta_2+\nu_5(\alpha_2-\beta_2)\frac{\nu_6}{\nu_5}\\
& = (1-\nu_5)\alpha_2 + \nu_5\beta_2 + \nu_6(\alpha_2-\beta_2) \\
& = \alpha_2-(\nu_5-\nu_6)(\alpha_2-\beta_2).
\end{align*}
Next, applying the real interpolation functor
$(\,\cdot\,,\,\cdot\,)_{\theta_6, r}$ with
$\theta_6 = s/\nu_5 \in (0, 1)$ to \eqref{d4} and \eqref{d6'} gives
\begin{equation}\label{d8}
\|\CT_2(\lambda)f\|_{\CB^s_{q,r}(\HS)}
\leq C |\lambda|^{\delta_6}
\|f\|_{\CB^{\frac{\nu_6}{\nu_5} s}_{q,r}(\HS)} 
\end{equation}
with
\begin{align*}
\delta_6 & =\delta_5 \theta_6 +\alpha_2(1-\theta_6) \\
& = \Big(\alpha_2-(\nu_5-\nu_6)(\alpha_2-\beta_2) \Big) \theta_6
+\alpha_2-\alpha_2\theta_6 \\
& = \alpha_2 - (\nu_5-\nu_6)(\alpha_2-\beta_2)\theta_6 \\
& = \alpha_2-(\nu_5-\nu_6) (\alpha_2-\beta_2) \frac{s}{\nu_5} .
\end{align*}
Finally, we combine \eqref{d7} and \eqref{d8} with real interpolation. To this end,
we choose $\nu_5$ and $\nu_6$ in such a way that  $(\nu_6/\nu_5)s < s-\sigma < s$, i.e.,
$0 < \nu_6 < (1-\sigma\slash s)\nu_5$. In addition, we take $\theta_7 \in (0, 1)$ such that
$s-\sigma = (1-\theta_7)s + \theta_7(\nu_6/\nu_5)s$, i.e.,  
$\theta_7 = \nu_5\sigma \slash s(\nu_5-\nu_6)$ is fulfilled. 
Then, applying the real interpolation functor
$(\,\cdot\,,\,\cdot\,)_{\theta_7, r}$ to \eqref{d7} and \eqref{d8} implies
$$\|\CT_2(\lambda)f\|_{\CB^s_{q,r}(\HS)} \leq C|\lambda|^{\delta_7}\|f\|_{B^{s-\sigma}_{q,r}(\HS)}$$
with
\begin{align*}
\delta_7& = (1-\theta_7)\alpha_2 + \theta_7 \delta_6 \\
& = \bigg(1-\frac{\sigma\nu_5}{s(\nu_5-\nu_6)}\bigg)\alpha_2
+ \frac{\sigma\nu_5}{s(\nu_5-\nu_6)}\bigg(\alpha_2-(\nu_5-\nu_6)(\alpha_2-\beta_2)\frac{s}{\nu_5} \bigg)\\
& = \alpha_2-(\alpha_2-\beta_2)\sigma \\
& = (1-\sigma)\alpha_2 + \beta_2\sigma.
\end{align*}
Thus, we have \eqref{A.4}, which completes the proof of Lemma \ref{lem.s.3}. 
\end{proof}
\begin{lem}\label{lem.s.4} Assume that Assumption \ref{assump:4.1} holds. 
Let $q$, $r$, $\epsilon$, $\gamma$, $\alpha_2$, and $\beta_2$ be 
the same numbers as in Theorem \ref{spectralthm:1}. 
Assume  $-1+1/q < s < 0$ and let $\sigma>0$ be  numbers such that  $-1+1/q <  s-\sigma  < 0$. 
Then, for every $\lambda \in \Sigma_\epsilon + \gamma_b$ and $f \in C^\infty_0(\HS)$, 
there hold
\begin{align}
\|\CT_2(\lambda)f\|_{\CB^s_{q,r}(\HS)} & \leq C|\lambda|^{\alpha_2}\|f\|_{\CB^s_{q,r}(\HS)},
\label{A.3*}\\
\|\CT_2(\lambda)f\|_{\CB^s_{q,r}(\HS)} & \leq C|\lambda|^{(1-\sigma)\alpha_2+\sigma\beta_2}
\|f\|_{\CB^{s-\sigma}_{q,r}(\HS)}. \label{A.4*}
\end{align}
\end{lem}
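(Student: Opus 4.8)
The plan is to prove Lemma \ref{lem.s.4} by exactly the dual-and-interpolate scheme of Lemma \ref{lem.s.2}, now applied to $\CT_2(\lambda)$ in place of $\CT_1(\lambda)$ and with the interpolation indices read from $i=2$ in Assumption \ref{assump:4.1}. Since $-1+1/q < s < 0$, write $s = -|s|$ with $0 < |s| < 1/q'$, and the target estimates \eqref{A.3*}, \eqref{A.4*} are for negative smoothness, so I would estimate the dual operator $\CT_2(\lambda)^*$ in the spaces $L_{q'}(\HS)$ and $\CH^\nu_{q'}(\HS)$ with positive $\nu$ and then dualize. First I would fix $\nu_7, \nu_8, \sigma$ with $0 < \nu_8 < |s|-\sigma < |s| < \nu_7 < 1/q'$, analogous to \eqref{number.1}.

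Next I would record the dual inputs. Estimates \eqref{d1*} and \eqref{d2*} for $i=2$ give boundedness of $\CT_2(\lambda)^*$ on $L_{q'}(\HS)$ and $\CH^1_{q'}(\HS)$ with growth $|\lambda|^{\alpha_2}$; complex interpolation then yields boundedness on $\CH^{\nu_7}_{q'}(\HS)$ and $\CH^{\nu_8}_{q'}(\HS)$ with the same growth. Estimate \eqref{d3*} for $i=2$ reads $\|\CT_2(\lambda)^*f\|_{\CH^1_{q'}(\HS)} \leq C|\lambda|^{\beta_2}\|f\|_{L_{q'}(\HS)}$; interpolating this with the $L_{q'}$-bound gives the smoothing estimate $\|\CT_2(\lambda)^*\varphi\|_{\CH^{\nu_7}_{q'}(\HS)} \leq C|\lambda|^{(1-\nu_7)\alpha_2+\nu_7\beta_2}\|\varphi\|_{L_{q'}(\HS)}$. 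By the duality argument used in Lemma \ref{lem.s.2} (using $\CH^{-\nu}_q(\HS) = (\wt\CH^{\nu}_{q'}(\HS))^*$ from Proposition \ref{prop-dual-interpolation}) I would transfer these four statements into bounds for $\CT_2(\lambda)$ itself on $L_q(\HS)$, $\CH^{-\nu_7}_q(\HS)$, $\CH^{-\nu_8}_q(\HS)$ with growth $|\lambda|^{\alpha_2}$, together with the smoothing bound $\|\CT_2(\lambda)f\|_{\CH^{-\nu_7}_q(\HS)} \leq C|\lambda|^{(1-\nu_7)\alpha_2+\nu_7\beta_2}\|f\|_{L_q(\HS)}$.

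With these in hand, the conclusion follows by three successive real interpolations, exactly mirroring \eqref{d9*}--\eqref{A.2*}: real interpolation of the $L_q$- and $\CH^{-\nu_7}_q$-bounds gives \eqref{A.3*} in $\CB^{-|s|}_{q,r}(\HS)$; then $(\,\cdot\,,\,\cdot\,)_{\theta,r}$ applied to the $\CH^{-\nu_8}_q$-bound and the smoothing bound (with $\theta = (\nu_7-|s|)/(\nu_7-\nu_8)$) gives a smoothing estimate with loss of smoothness $\nu_8(\nu_7-|s|)/(\nu_7-\nu_8)$, whose exponent simplifies to $\alpha_2 - (\alpha_2-\beta_2)\nu_7(|s|-\nu_8)/(\nu_7-\nu_8)$; finally one more real interpolation between that and \eqref{A.3*}, with a suitable $\theta$, collapses the exponent to $\alpha_2 - (\alpha_2-\beta_2)\sigma = (1-\sigma)\alpha_2 + \sigma\beta_2$ and the smoothness loss to $\sigma$, yielding \eqref{A.4*}. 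The only points requiring care — and the places where I would slow down — are verifying that the chosen $\nu_7,\nu_8$ can be selected so that every interpolation parameter $\theta$ lands in $(0,1)$ (this uses $0 < \nu_8 < |s|-\sigma$ and $|s| < \nu_7 < 1/q'$, together with $-1+1/q < s-\sigma$) and checking the arithmetic that the composed exponents telescope to the stated value; both are routine and identical in structure to the computation of $\delta_3$ and $\delta_4$ in the proof of Lemma \ref{lem.s.2}. No genuinely new obstacle arises; the lemma is the $\CT_2$, negative-$s$ counterpart of Lemma \ref{lem.s.2}, and I would simply note this parallel rather than reproduce every algebraic line.
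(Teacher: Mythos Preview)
Your dualization of the smoothing estimate is reversed, and this error propagates through the rest of the argument. From
\[
\|\CT_2(\lambda)^*\varphi\|_{\CH^{\nu_7}_{q'}(\HS)} \leq C|\lambda|^{(1-\nu_7)\alpha_2+\nu_7\beta_2}\|\varphi\|_{L_{q'}(\HS)}
\]
the duality argument (pairing with $\|f\|_{\CH^{-\nu_7}_q}$ against $\|\CT_2^*\varphi\|_{\CH^{\nu_7}_{q'}}$) yields
\[
\|\CT_2(\lambda)f\|_{L_q(\HS)} \leq C|\lambda|^{(1-\nu_7)\alpha_2+\nu_7\beta_2}\|f\|_{\CH^{-\nu_7}_q(\HS)},
\]
i.e.\ output in $L_q$ and input in $\CH^{-\nu_7}_q$, not the other way round as you wrote. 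This is the essential structural difference between $\CT_1$ and $\CT_2$: estimate \eqref{d3*} for $i=2$ is a smoothing-\emph{up} estimate (input in $L_{q'}$, output in $\CH^1_{q'}$), whereas for $i=1$ it is smoothing-down, and duality preserves that direction. The correct model is therefore Lemma~\ref{lem.s.3} carried through duality, not Lemma~\ref{lem.s.2}.

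Your parameter choice $0<\nu_8<|s|-\sigma<|s|<\nu_7$, lifted from Lemma~\ref{lem.s.2}, then fails twice over. With the correct smoothing bound (output $L_q$), interpolating against the diagonal $\CH^{-\nu_8}_q$-bound produces output indices only in $[-\nu_8,0]$, missing $-|s|$ since $\nu_8<|s|$. And even taking your reversed bound at face value, your intermediate estimate has input index $-\nu_8(\nu_7-|s|)/(\nu_7-\nu_8)>-|s|$; interpolating this with \eqref{A.3*} (input at $-|s|$) can only reach inputs in $(-|s|,0)$, never $-|s|-\sigma$ as \eqref{A.4*} requires. The paper's fix is to take \emph{both} auxiliary indices larger than $|s|$, say $|s|<\nu_8,\nu_9<1/q'$: then the correctly dualized smoothing $\|\CT_2(\lambda)f\|_{L_q}\leq C|\lambda|^{\cdots}\|f\|_{\CH^{-\nu_8}_q}$ interpolates with the $\CH^{-\nu_9}_q$-bound to reach output $\CB^{-|s|}_{q,r}$ with input at some index $\sigma_2<-|s|$, after which a final interpolation with \eqref{A.3*} lands the input exactly at $-|s|-\sigma$ with exponent $(1-\sigma)\alpha_2+\sigma\beta_2$.
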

\begin{proof}
Combining \eqref{d1*}, \eqref{d2*}, and \eqref{d3*} for $i=2$ with the complex 
interpolation method for $|s| < \nu_8, \nu_9 < 1-1/q = 1/q'$, we obtain 
\begin{align}
\label{g1}
\| \CT_2(\lambda)^*\varphi\|_{L_{q'}(\HS)} 
& \leq C|\lambda|^{\alpha_2}\|\varphi\|_{L_{q'}(\HS)}, \\
\label{g2}
\|\CT_2(\lambda)^*\varphi\|_{\CH^{\nu_8}_{q'}(\HS)} 
& \leq C|\lambda|^{\alpha_2}\|\varphi\|_{\CH^{\nu_8}_{q'}(\HS)},
\\
\label{g2*}
\|\CT_2(\lambda)^*\varphi\|_{\CH^{\nu_9}_{q'}(\HS)} 
& \leq C|\lambda|^{\alpha_2}\|\varphi\|_{\CH^{\nu_9}_{q'}(\HS)}, \\
\label{g3}
\|\CT_2(\lambda)^*\varphi\|_{\CH^{\nu_8}_{q'}(\HS)} 
& \leq C|\lambda|^{(1-\nu_8)\alpha_2+\nu_8\beta_2}\|\varphi\|_{L_{q'}(\HS)}.
\end{align}
Thus, it follows from the duality argument that
\begin{align}
\label{h1}
\|\CT_2(\lambda)f\|_{L_{q}(\HS)} 
& \leq C|\lambda|^{\alpha_2}\|f\|_{L_{q}(\HS)}, \\
\label{h2}
\|\CT_2(\lambda)f\|_{\CH^{-\nu_8}_q(\HS)}
&\leq C|\lambda|^{\alpha_2}\|f\|_{\CH^{-\nu_8}_q(\HS)}, \\
\label{h2*} 
\|\CT_2(\lambda)f\|_{\CH^{-\nu_9}_{q}(\HS)} 
& \leq C|\lambda|^{\alpha_2}\|f\|_{\CH^{-\nu_9}_{q}(\HS)},
\\
\label{h3}
\| \CT_2(\lambda)f\|_{L_q(\HS)} 
& \leq C|\lambda|^{((1-\nu_8)\alpha_2 + \nu_8\beta_2)}\|f\|_{\CH^{-\nu_8}_{q}(\HS)}.
\end{align}
Noting that $-1 +1/q < -\nu_8 < -|s| < 0$ and combining \eqref{h1} and \eqref{h2} 
with real interpolation, we have 
\begin{equation}\label{h4} 
\|\CT_2(\lambda)f\|_{\CB^{-|s|}_{q,r}(\HS)}
\leq C|\lambda|^{\alpha_2}\|f\|_{\CB^{-|s|}_{q,r}(\HS)},
\end{equation}
which gives \eqref{A.3*}. \par
To show \eqref{A.4*}, we choose $\theta_8 \in (0, 1)$ in such a way that 
$|s| = \nu_9\theta_8$ and combining \eqref{h2*} and \eqref{h3} with real interpolation, 
we obtain
\begin{equation}\label{h5}
\|\CT_2(\lambda)f\|_{\CB^{-|s|}_{q,r}(\HS)}
\leq C|\lambda|^{\delta_8}
\|f\|_{\CB^{\sigma_2}_{q,r}(\HS)},
\end{equation}
where we have set
\begin{align}
\delta_8& = \alpha_2 \theta_8 + \Big((1-\nu_8)\alpha_2 + \nu_8\beta_2 \Big)(1-\theta_8) \\
& = \alpha_2-\nu_8(\alpha_2-\beta_2) + \nu_8(\alpha_2-\beta_2)\theta_8 \\
&= \alpha_2-\nu_8(\alpha_2-\beta_2) + \frac{\nu_8|s|}{\nu_9}(\alpha_2-\beta_2) \\
& =\alpha_2-\nu_8(\alpha_2-\beta_2) \bigg(1-\frac{|s|}{\nu_9} \bigg)
\end{align}
and
\begin{align}
\sigma_2 &=-\nu_9\theta_8 - \nu_8 (1-\theta_8)
= -\nu_9\frac{|s|}{\nu_9}-\nu_8 \bigg(1-\frac{|s|}{\nu_9} \bigg)
= -|s|-\nu_8 \bigg(1-\frac{|s|}{\nu_9} \bigg)
= -\bigg\{|s|+\nu_8 \bigg(1-\frac{|s|}{\nu_9} \bigg)\bigg\}.  
\end{align}
Now, we choose $\nu_9 \in (0, 1)$ in such a way that 
\begin{equation}
\label{const:1} 
-|s| > -|s|-\sigma > \sigma_2 = -|s|-\nu_8 \bigg(1-\frac{|s|}{\nu_9} \bigg)
\qquad \Leftrightarrow \qquad
\frac{\nu_8|s|}{\nu_8-\sigma} < \nu_9 < 1- \frac1q.
\end{equation}
Since $\sigma>0$ may be chosen so small that $\nu_8/(\nu_8-\sigma)$
is so close to $1$, it is possible choose $\nu_9$ in such a way that  
$|s| < \nu_9$ and \eqref{const:1} is fulfilled. 
Finally, we choose $\theta_9 \in (0, 1)$ such that 
\begin{equation}
-|s|-\sigma = -|s|\theta_9 -\bigg\{|s|+\nu_8\bigg(1-\frac{|s|}{\nu_9}\bigg)\bigg\}(1-\theta_9)
\qquad \Leftrightarrow \qquad
\theta_9 = 1 - \frac{\sigma\nu_9}{\nu_8(\nu_9-|s|)}. 
\end{equation}
Combining \eqref{h4} and \eqref{h5} with the real interpolation method implies that 
$$\|\CT_2(\lambda)f\|_{\CB^{-|s|}_{q,r}(\HS)}
\leq C|\lambda|^{\delta_9}\|f\|_{\CB^{-|s|-\sigma}_{q,r}(\HS)}
$$
with
\begin{align*} 
\delta_9&=\alpha_2\theta_9 + \delta_8(1-\theta_9) \\
& = \alpha_2 \bigg(1-\frac{\sigma\nu_9}{\nu_8(\nu_9-|s|)} \bigg) 
+ \bigg\{\alpha_2-\nu_8(\alpha_2-\beta_2) \bigg( 1-\frac{|s|}{\nu_9} \bigg) \bigg\}
\frac{\sigma\nu_9}{\nu_8(\nu_9-|s|)}\\
& = \alpha_2-\frac{\sigma\alpha_2\nu_9}{\nu_8(\nu_9-|s|)}
+ \frac{\alpha_2\sigma \nu_9}{\nu_8(\nu_9-|s|)}
-\frac{\nu_8(\alpha_2-\beta_2)(\nu_9-|s|)\sigma \nu_9}{\nu_9\nu_8(\nu_9-|s|)} \\
& = \alpha_2-(\alpha_2-\beta_2)\sigma \\ 
& = (1-\sigma)\alpha_2 + \beta_2\sigma.
\end{align*}
Hence, we have 
$$\|\CT_2(\lambda)f\|_{\CB^{-|s|}_{q,r}(\HS)}
\leq C|\lambda|^{(1-\sigma)\alpha_2 + \beta_2\sigma}\|f\|_{\CB^{-|s|-\sigma}_{q,r}(\HS)}.
$$
Namely, we have  \eqref{A.4*}. The proof of Lemma \ref{lem.s.4} is complete.
\end{proof}
\begin{proof}[Proof of Theorem \ref{spectralthm:1}.] 
By virtue of Lemmas \ref{lem.s.1}--\ref{lem.s.4}, 
it suffices to prove the case $s=0$. 
Let  $0 < \sigma < 1/q$ and let $\omega_1$ and $\omega_2$ be positive numbers such that  
$-1+1/q < -\omega_1 < -\omega_1 + \sigma < 0 <  \omega_2 < \sigma + \omega_2  < 1/q$. 
 By Lemmas \ref{lem.s.1}
and \ref{lem.s.2}, we have
$$\|\CT_1(\lambda)f\|_{B^{-\omega_1}_{q,r}(\HS)} \leq C|\lambda|^{\alpha_1}\|f\|_{B^{ -\omega_1}_{q,r}(\HS)},
\quad \|\CT_1(\lambda)f\|_{\CB^{\omega_2}_{q,r}(\HS)} \leq C|\lambda|^{\alpha_1}\|f\|_{\CB^{ \omega_2}_{q,r}(\HS)}.$$
Let $\theta_{10} \in (0, 1)$ such that $0 = (1-\theta_{10})(-\omega_1) + \theta_{10} \omega_2$, 
i.e., $\theta_{10} = \omega_1 \slash (\omega_1 + \omega_2)$.
Since $\CB^0_{q,r}(\HS) = (\CB^{-\omega_1}_{q,r}(\HS), \CB^{\omega_2}_{q,r}(\HS))_{\theta_{10}, r}$, 
by  real interpolation we have 
$$\|\CT_1(\lambda)f\|_{\CB^{0}_{q,r}(\HS)} \leq C|\lambda|^{\alpha_1}\|f\|_{\CB^{0}_{q,r}(\HS)}. $$
Moreover,  by Lemmas \ref{lem.s.1}
and \ref{lem.s.2}, we have
$$\|\CT_1(\lambda)f\|_{\CB^{-\omega_1}_{q,r}(\HS)} \leq C|\lambda|^{(1-\sigma)\alpha_1 + \sigma\beta_1}
\|f\|_{\CB^{-\omega_1+\sigma}_{q,r}(\HS)}, \quad
\|\CT_1(\lambda)f\|_{\CB^{\omega_2}_{q,r}(\HS)} \leq C|\lambda|^{(1-\sigma)\alpha_1 + \sigma\beta_1}
\|f\|_{\CB^{\omega_2+\sigma}_{q,r}(\HS)}.
$$
Since $\CB^\sigma_{q,r}(\HS) 
= (\CB^{-\omega_1+\sigma}_{q,r}(\HS), \CB^{\omega_2+\sigma}_{q,r}(\HS))_{\theta_{10}, r}$, 
by real interpolation, we have 
$$\|\CT_1(\lambda)f\|_{\CB^{0}_{q,r}(\HS)} 
\leq C|\lambda|^{(1-\sigma)\alpha_1 + \sigma\beta_1}\|f\|_{\CB^{\sigma}_{q,r}(\HS)}. $$
Analogously, we obtain 
\begin{align*}
\|\CT_2(\lambda)f\|_{\CB^{0}_{q,r}(\HS)} &\leq C|\lambda|^{\alpha_2}\|f\|_{\CB^{0}_{q,r}(\HS)}, \\
\|\CT_2(\lambda)f\|_{\CB^{0}_{q,r}(\HS)} &\leq C|\lambda|^{(1-\sigma)\alpha_2 + \sigma\beta_2}
\|f\|_{\CB^{-\sigma}_{q,r}(\HS)}. 
\end{align*}
The proof of Theorem \ref{spectralthm:1} is compelte.
\end{proof}

\subsection{Estimates of the integral operators}

We shall start with the estimates of $\bL_1(\lambda)$.
\begin{thm}
\label{thm:4.1} 
Let $1 < q < \infty$, $1 \le r \leq \infty$, 
$- 1 + 1 \slash q < s <  1 \slash q$, $0 < \epsilon < \pi \slash 2$, and $\gamma>0$. 
Let $\sigma$ be a positive  
number such that $- 1 + 1 \slash q < s \pm \sigma < 1 \slash q$. 
Assume that $m_{2} (\lambda, \xi')$ is  
a  symbol belonging to $\BM_{-2}$. 
Then, 
for every $\lambda \in \Sigma_{\epsilon} + \gamma_b$
 and $f \in C^\infty_0(\HS)$, 
there exists a constant $C_b$ depending on $\gamma_b$  such that	
\begin{align}
\label{b:4.1*}
\lVert (\lambda, \lambda^{1/2} \nabla, \nabla_b^2) \bL_{1} (\lambda) f \rVert_{\CB^s_{q, r} (\HS)} 
& \le C_b \lVert f \rVert_{ \CB^{s}_{q, 1} (\HS)},   \\
\label{b:4.2*}
\lVert (\lambda, \lambda^{1/2} \nabla, 
  \nabla_b^2) \bL_{1} (\lambda) f \rVert_{\CB^s_{q, r} (\HS)} 
& \le C_b \lvert \lambda \rvert^{- \frac\sigma2} 
\lVert f \rVert_{\CB^{s + \sigma}_{q, r} (\HS)}, \\ 
\label{b:4.4}
\lVert \bL_{1} (\lambda) f \rVert_{\CB^s_{q, r} (\HS)} 
& \le C_b \lvert \lambda \rvert^{- (1 - \frac\sigma2)}
\lVert f \rVert_{ \CB^{s - \sigma}_{q, r} (\HS)},  \\
\label{b:4.3*}		
\lVert (\lambda, \lambda^{1/2} \nabla, 
  \nabla_b^2) \pd_\lambda \bL_{1} (\lambda) f \rVert_{\CB^s_{q, r} (\HS)} 
& \le C_b \lvert \lambda \rvert^{- (1 - \frac\sigma2)} 
\lVert f \rVert_{\CB^{s - \sigma}_{q, r} (\HS)}.
\end{align}
\end{thm}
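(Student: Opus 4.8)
The plan is to realize $\bL_1(\lambda)$ as an operator of the type $\CT_1(\lambda)$ covered by Theorem \ref{spectralthm:1} (and, for the multiplier-differentiated versions, to reduce them to the same machinery), and then to feed into that theorem the basic $L_q$- and $\CH^1_q$-bounds coming from Proposition \ref{prop:3.1} together with Theorem \ref{thm:3.1}. Concretely, write $\CL_1(x_d,y_d)=Be^{-B(x_d+y_d)}$ as in Proposition \ref{prop:3.1}, so that $(\lambda,\lambda^{1/2}\nabla_{x'},\nabla_{x'}^2)\bL_1(\lambda)f$ and the $\pd_d$-derivatives are all of the form $\CM_k(\lambda)g$ or $\CN_k(\lambda)g$ with $g$ a derivative of $f$, after absorbing the extra factors of $\lambda$, $A$, $B$ into the symbol using $B\in\BM_{-1}$, $\lambda^{1/2}B^{-1}\in\BM_0$, $AB^{-1}\in\BM_0$, $\xi_j/A\in\BM_0$ (all recorded after Definition \ref{def:2.15}), and the fact that one $x_d$-derivative of $e^{-Bx_d}$ produces a factor $-B$. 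The normal second derivative $\pd_d^2$ requires integrating by parts once in $y_d$ to trade a $\pd_d$ on the kernel for a $\pd_{y_d}$ on $f$; this is the only place a boundary term could appear, but since $f\in C^\infty_0(\HS)$ the boundary term vanishes, so one gets $\pd_d^2\bL_1(\lambda)f$ expressed through $\CM_1(\lambda)(\pd_d f)$ plus lower-order pieces. In all cases Proposition \ref{prop:3.1} gives the $L_q(\HS)$-bound $\|\CM_k(\lambda)g\|_{L_q}\le C\|g\|_{L_q}$ and $\|\CN_k(\lambda)g\|_{L_q}\le C\|g\|_{L_q}$ uniformly in $\lambda\in\Sigma_\epsilon$.

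Next I would establish Assumption \ref{assump:4.1} for $\CT_1(\lambda):=\lambda\,\bL_1(\lambda)$ (and similarly for the other principal parts). The $L_q$-estimates \eqref{f2} and the dual $L_{q'}$-estimates \eqref{d1*} follow directly from Proposition \ref{prop:3.1}, noting that the dual operator $\CT_1(\lambda)^*$ is again of the same integral type (the kernel $Be^{-B(x_d+y_d)}$ is symmetric in $x_d,y_d$ and the tangential multiplier transposes to another element of $\BM_0$). The $\CH^1_q$-estimates \eqref{f1}, \eqref{d2*} come from differentiating once more: a tangential derivative commutes with the Fourier multiplier and only multiplies the symbol by $i\xi_j$, which stays in the right multiplier class after dividing by $A$; a normal derivative is handled by the integration-by-parts trick above, landing on $\pd_d f\in L_q$. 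The "gain" estimates \eqref{f3}, \eqref{d3*} — of the form $\|\CT_1(\lambda)f\|_{\CH^{-1+i}_q}\le C|\lambda|^{\beta_1}\|f\|_{\CH^{2-i}_q}$ with $i=1$, so $\|\CT_1(\lambda)f\|_{L_q}\le C|\lambda|^{\beta_1}\|f\|_{\CH^1_q}$ — are obtained by distributing one factor $(\lambda^{1/2}+A)$ from $\lambda$ onto $f$ (it costs one derivative, i.e.\ one power in $\CH^1_q$) and noting the remaining symbol decays like $|\lambda|^{-1/2}$; this pins down $\beta_1=\alpha_1-\tfrac12$ essentially, and one checks the exponents match so that $(1-\sigma)\alpha_1+\sigma\beta_1$ produces the claimed $|\lambda|^{-\sigma/2}$ loss relative to $|\lambda|^{\alpha_1}=|\lambda|^0$ after the bookkeeping with $\lambda$-powers is done. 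Then Theorem \ref{spectralthm:1} immediately upgrades these $L_q/\CH^1_q$ bounds to the full $\CB^s_{q,r}(\HS)$ statements \eqref{b:4.1*} and \eqref{b:4.2*}; estimate \eqref{b:4.4} is the same reasoning applied to $\bL_1(\lambda)$ itself (no $\lambda$ in front), shifting regularity by $\sigma$ downward.

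For \eqref{b:4.3*} the only new ingredient is $\pd_\lambda$ acting on the kernel. Using $\pd_\lambda(Be^{-B(x_d+y_d)})=\tfrac{1}{2\mu B}e^{-B(x_d+y_d)}-\tfrac{x_d+y_d}{2\mu}e^{-B(x_d+y_d)}$ (as in the proof of Proposition \ref{prop:3.1}), one multiplies by $B^2$ to land exactly in the operator class $\CN_1(\lambda)$, for which Proposition \ref{prop:3.1} already provides the uniform $L_q$-bound; equivalently, $B^2\pd_\lambda\bL_1(\lambda)$ satisfies the same Assumption \ref{assump:4.1} with the same exponents as $\bL_1(\lambda)$, so that $\pd_\lambda\bL_1(\lambda)$ itself gains the extra $|\lambda|^{-1}$ (since $B^{-2}\sim(\lambda^{1/2}+A)^{-2}$ carries weight $|\lambda|^{-1}$, split as $|\lambda|^{-(1-\sigma/2)}$ after transferring $|\lambda|^{\sigma/2}(\lambda^{1/2}+A)^{-\sigma}$ onto $f$ at the cost of $\sigma$ derivatives). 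Applying Theorem \ref{spectralthm:1} once more (to $\lambda\,\pd_\lambda\bL_1(\lambda)$, with the regularity-down shift) yields \eqref{b:4.3*}.

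The main obstacle is bookkeeping rather than conceptual: one must verify carefully that every operator appearing — after the various $x_d$-integrations by parts and after pulling $\lambda$-powers and $A$-powers in and out of the symbol — genuinely has a tangential multiplier in the correct class $\BM_0$ (so that Proposition \ref{prop:3.1} applies) and that the dual operator is of the same form, and then that the pair of exponents $(\alpha_i,\beta_i)$ fed into Theorem \ref{spectralthm:1} reproduces exactly the powers $|\lambda|^0$, $|\lambda|^{-\sigma/2}$, $|\lambda|^{-(1-\sigma/2)}$ claimed. The case $\CB^s_{q,r}=B^s_{q,r}$ with $\gamma_b=\gamma>0$ also needs the lower-bound $|\lambda|\ge\gamma\sin\epsilon$ on $\Sigma_\epsilon+\gamma$ to convert between the estimates for $\lambda\bL_1$, $\lambda^{1/2}\nabla\bL_1$, and the undifferentiated quantities, exactly as at the start of the proof of Theorem \ref{thm:3.1}; this is routine but must be stated. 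The boundary terms in the $y_d$-integration by parts are where the hypothesis $f\in C^\infty_0(\HS)$ is genuinely used, and one should flag that the final $\CB^s_{q,r}(\HS)$ statement extends to all $f$ by the density of $C^\infty_0(\HS)$ in $\CB^s_{q,r}(\HS)$ (Proposition \ref{prop-density}, Proposition \ref{prop-density-homogeneous}) once the uniform bounds are in hand.
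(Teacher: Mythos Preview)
Your proposal is correct and follows essentially the same approach as the paper: verify the $L_q$ and $\CH^1_q$ bounds of Assumption \ref{assump:4.1} for $(\lambda,\lambda^{1/2}\nabla,\nabla^2)\bL_1(\lambda)$ (with $\alpha_1=0$, $\beta_1=-1/2$), for $\bL_1(\lambda)$ itself (with $\alpha_2=-1$, $\beta_2=-1/2$), and for $(\lambda,\lambda^{1/2}\nabla,\nabla^2)\pd_\lambda\bL_1(\lambda)$ (with $\alpha_2=-1$, $\beta_2=-1/2$), each time via the integration-by-parts identity you describe and Proposition \ref{prop:3.1}, then check the dual operator has the same form and invoke Theorem \ref{spectralthm:1}. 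Two minor remarks: the paper organizes the index bookkeeping through a single condition $n_0+n_1-n_2+|\alpha'|=2$ (respectively $=4$ for $\pd_\lambda$), which systematically tracks how many $y_d$-integrations by parts are needed; and for $\pd_\lambda\bL_1$ you should also account for $\pd_\lambda m_2\in\BM_{-4}$ hitting the symbol, not just $\pd_\lambda$ on the kernel $Be^{-B(x_d+y_d)}$, though this term is handled by the same mechanism.
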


\begin{proof}
As has been seen in the proof of Theorem \ref{thm:3.1}, if $\CB^s_{q,r} = B^s_{q,r}$ and 
 $\lambda \in \Sigma_\epsilon + \gamma$,  it suffices to prove that 
\begin{align}
\label{b:4.1}
\lVert (\lambda, \lambda^{1/2} \nabla, \nabla^2) \bL_{1} (\lambda) f \rVert_{\CB^s_{q, r} (\HS)} 
 &\le C_b \lVert f \rVert_{ \CB^{s}_{q, 1} (\HS)},\\
\lVert (\lambda, \lambda^{1/2} \nabla, 
  \nabla^2) \bL_{1} (\lambda) f \rVert_{\CB^s_{q, r} (\HS)} 
& \le C_b \lvert \lambda \rvert^{- \frac\sigma2} 
\lVert f \rVert_{\CB^{s + \sigma}_{q, r} (\HS)}, \label{b:4.2}\\ 	
\lVert (\lambda, \lambda^{1/2} \nabla, 
  \nabla^2) \pd_\lambda \bL_{1} (\lambda) f \rVert_{\CB^s_{q, r} (\HS)} 
& \le C_b \lvert \lambda \rvert^{- (1 - \frac\sigma2)} 
\lVert f \rVert_{\CB^{s - \sigma}_{q, r} (\HS)}, \label{b:4.3}	
\end{align}
instead of  \eqref{b:4.1*}, \eqref{b:4.2*}, and \eqref{b:4.3*}, respectively.
 \par 
First, we consider the case $0 < s < 1/q$.  In view of Assumption \ref{assump:4.1} 
with $\alpha_1=0$ and $\beta_1=-1/2$,
to prove \eqref{b:4.1} and 
\eqref{b:4.2}, it suffices to prove that 
\begin{align}
\label{5.5.21}
\lVert (\lambda, \lambda^{1/2} \nabla, 
  \nabla^2) \bL_{1} (\lambda) f \rVert_{L_q (\HS)} 
& \le C \lVert f \rVert_{L_q (\HS)}, \\
\label{5.5.22}
\|(\lambda, \lambda^{1/2} \nabla, 
 \nabla^2) \bL_{1} (\lambda) f \|_{\CH^1_q (\HS)} 
& \le C \| f \|_{\CH^1_q (\HS)}. \\
\label{5.5.3} 
\lVert (\lambda, \lambda^{1/2} \nabla, 
  \nabla^2) \bL_{1} (\lambda) f \rVert_{L_q (\HS)} 
& \le C_b \lvert \lambda \rvert^{- 1 \slash 2} \lVert f \rVert_{\CH^1_q (\HS)}.
\end{align}
To prove \eqref{5.5.21}, \eqref{5.5.22}, and  \eqref{5.5.3}, by integration by parts
we write 
\begin{equation}\label{5.5.1}
\begin{aligned}
&\lambda^{n_0 \slash 2} \pd^{n_1}_d \pd_{x'}^{\alpha'} \bL_{1} (\lambda)f \\
&\quad = (- 1)^{n_1} \int^\infty_0 \CF^{- 1}_{\xi'}
[m_2 (\lambda, \xi') \lambda^{n_0 \slash 2} B^{n_1 - n_2 } (i \xi')^{\alpha'} 
B e^{- B (x_d + y_d)} \CF'[\pd_d^{n_2} f] (\xi', y_d)] (x') \d y_d	
\end{aligned}\end{equation}
for nonnegative integers $n_0$, $n_1$,  $n_2$,  and 
for multi-indices $\alpha'$. If it holds
\begin{equation}
\label{cond-L11}
n_0 + n_1 - n_2  + \lvert \alpha' \rvert = 2 
\end{equation}
we see that 
$
m_2 (\lambda, \xi') \lambda^{n_0 \slash 2} B^{n_1 - n_2} 
(i \xi')^{\alpha' } \in \BM_0,
$
which together with Proposition \ref{prop:3.1} implies that
\begin{equation}
\label{5.5.0}
\lVert \lambda^{n_0 \slash 2} \pd^{n_1}_d \pd_{x'}^{\alpha'} \bL_{1} (\lambda)f\rVert_{L_q (\HS)}
\le C \lVert \pd_d^{n_2} f \rVert_{L_q (\HS)}.
\end{equation}
Therefore, by virtue of suitable choices of 
$(n_0, n_1, n_2, \alpha')$ satisfying \eqref{cond-L11}, 
we obtain \eqref{5.5.21}, \eqref{5.5.22}, and \eqref{5.5.3}. 
\par 
To prove \eqref{b:4.4}, in view of Assumption \ref{assump:4.1} with $\alpha_2=-1$ and 
$\beta_2=-1/2$, it suffices to prove that
\begin{align}
\|\bL_1(\lambda)f\|_{L_q(\HS)} &\leq C|\lambda|^{-1}\|f\|_{L_q(\HS)}, \label{5.5.21*}\\
\|\bL_1(\lambda)f\|_{\CH^1_q(\HS)} &\leq C|\lambda|^{-1}\|f\|_{\CH^1_q(\HS)}, \label{5.5.22*} \\
\|\bL_1(\lambda)f\|_{\CH^1_q(\HS)}& \leq C_b|\lambda|^{-1/2}\|f\|_{L_q(\HS)}. \label{5.5.23*}
\end{align}
By \eqref{5.5.21}, we have \eqref{5.5.21*}. By \eqref{5.5.22}, we obtain \eqref{5.5.22*}.
Choosing $(n_0,n_1,n_2,\lvert \alpha' \rvert) = (1,1,0,0), (1,0,0,1)$
in \eqref{5.5.0}, we see that
$$\|\nabla \bL_1(\lambda)f\|_{L_q(\HS)} \leq C|\lambda|^{-1/2}\|f\|_{L_q(\HS)}. $$
If $\lambda \in \Sigma_\epsilon + \gamma$, then there holds $|\lambda| \geq \gamma\sin\epsilon$,
and hence it follows from \eqref{5.5.21*} that
$$\|\bL_1(\lambda)f\|_{L_q(\HS)} \leq C|\lambda|^{-1}\|f\|_{L_q(\HS)}
\leq C(\gamma\sin \epsilon)^{-1/2}|\lambda|^{-1/2}\|f\|_{L_q(\HS)}.$$
Thus, the estimate \eqref{5.5.23*} follows. \par
We now consider the dual operator 
$\bL_{1}(\lambda)^*$ acting on $\varphi \in C^\infty_0(\HS)$, which is defined by
\begin{equation}
\bL_{1}(\lambda)^*\varphi= \int^\infty_0\CF'[m_2(\lambda)Be^{-B(x_d+y_d)}
\CF^{-1}_{\xi'}[\varphi](\xi', x_d)](x')\d x_d.
\end{equation}
In fact, it follows from the Fubini theorem that
\begin{equation}
\label{L11-dual-representation}
\begin{split}
(\bL_{1} (\lambda) f, \varphi)
& = \int_0^\infty \bigg\{\int_{\HS} \CF^{- 1}_{\xi'}
[m_2 (\lambda, \xi') B e^{- B (x_d + y_d)} 
\CF'[ f] (\xi', y_d)] (x') \varphi(x) \d x\bigg\} \d y_d \\
& = \int_0^\infty\bigg\{  \int_0^\infty \int_{\BR^{d - 1}}
m_2 (\lambda, \xi') B e^{- B (x_d + y_d)} \CF'[ f] (\xi', y_d) \CF^{-1}_{\xi'}[\varphi] (\xi', x_d)
\d \xi' \d x_d\bigg\} \d y_d \\
& = \int_0^\infty\int_{\BR^{d-1}} 
f (y', y_d)\bigg\{\int^\infty_0
\CF'[m_2 (\lambda, \xi') B e^{- B (x_d + y_d)} \CF^{-1}_{\xi'}\varphi (\xi', x_d)](y')
\d x_d\bigg\}\d y' \d y_d  \\
& = (f, \bL_{1} ( \lambda)^* \varphi)	
\end{split}
\end{equation}
for $f, \varphi \in C^\infty_0 (\HS)$. 
Notice that there holds
$((\lambda, \lambda^{1/2} \nabla, 
\nabla^2) \bL_{1}(\lambda)f, \varphi)
= (f, (\lambda, \lambda^{1/2} \nabla, 
\nabla^2) \bL_{1}(\lambda)^*\varphi)$
for $f, \varphi \in C^\infty_0(\HS)$. Hence, if we set $T_1(\lambda)f
= (\lambda, \lambda^{1/2} \nabla, 
\nabla^2) \bL_{1}(\lambda)f$, then $T_1(\lambda)^*\varphi 
=(\lambda, \lambda^{1/2} \nabla, 
\nabla^2) \bL_{1}(\lambda)^*\varphi$. 
Since $\bL_1(\lambda)^*$ is obtained by exchanging $\CF^{-1}_{\xi'}$ and  $\CF'$, 
in the same manner as in the 
proof of  \eqref{5.5.22}--\eqref{5.5.3} and \eqref{5.5.21*}--\eqref{5.5.23*},  we see that 
\begin{equation}\label{addit:5.1}\begin{aligned}
\|(\lambda, \lambda^{1/2} \nabla,
\nabla^2)\bL_{1}(\lambda)^*\varphi\|_{L_{q'}(\HS)} &\le C\|\varphi\|_{L_{q'}(\HS)},
\\
\|(\lambda, \lambda^{1/2} \nabla,
 \nabla^2)\bL_{1}(\lambda)^*\varphi\|_{\CH^1_{q'}(\HS)} &\le C\|\varphi\|_{ \CH^1_{q'}(\HS)},
\\
\|(\lambda,\lambda^{1/2} \nabla,
 \nabla^2)\bL_{1}(\lambda)^*\varphi\|_{L_{q'}(\HS)} &\le C_b|\lambda|^{- \frac12}
\|\varphi\|_{\CH^1_{q'}(\HS)}, \\
\|\bL_{1}(\lambda)^*\varphi\|_{L_{q'}(\HS)} &\le C|\lambda|^{-1}\|\varphi\|_{L_{q'}(\HS)},
\\
\|\bL_{1}(\lambda)^*\varphi\|_{\CH^1_{q'}(\HS)} &\le C|\lambda|^{-1}\|\varphi\|_{ \CH^1_{q'}(\HS)},
\\
\|\bL_{1}(\lambda)^*\varphi\|_{\CH^1_{q'}(\HS)} &\le C_b|\lambda|^{- \frac12}
\|\varphi\|_{L_{q'}(\HS)},
\end{aligned}\end{equation}
which together with \eqref{5.5.21}, \eqref{5.5.22}, \eqref{5.5.3}, \eqref{5.5.21*},
\eqref{5.5.22*}, and \eqref{5.5.23*} implies that
$(\lambda, \lambda^{1/2}\nabla, \nabla^2)\bL_1(\lambda)$ satisfies Assumption
\ref{assump:4.1} with $i = 1$, $\alpha_1=0$, and $\beta_1=-1/2$, 
whereas $\bL_1(\lambda)$ satisfies Assumption \ref{assump:4.1}
with $i = 2$, $\alpha_2=-1$, and $\beta_2=-1/2$. 
Thus, by Theorem \ref{spectralthm:1}, we have \eqref{b:4.1}, \eqref{b:4.2} and \eqref{b:4.3}. 
\par

We now consider $\pd_\lambda \bL_{1}(\lambda)f$. To  prove \eqref{b:4.3},
in view of Assumption \ref{assump:4.1} with $i = 2$,
$\alpha_2=-1$, and $\beta_2=-1/2$, we shall prove that 
\begin{align}
\label{n5.6.10}
\| (\lambda, \lambda^{1/2} \nabla, 
  \nabla^2) \pd_\lambda \bL_{1} (\lambda) f \|_{L_q(\HS)}
& \le C|\lambda|^{-1}\|f\|_{L_q(\HS)}, \\
\label{n5.6.11}
\lVert (\lambda, \lambda^{1/2} \nabla, 
  \nabla^2) \pd_\lambda \bL_{1} (\lambda) f \rVert_{\CH^1_q (\HS)}
& \le C \lvert \lambda \rvert^{- 1} \lVert f \rVert_{\CH^1_q (\HS)}, \\
\label{n5.6.12}
\lVert (\lambda, \lambda^{1/2} \nabla, 
  \nabla^2) \pd_\lambda \bL_{1} (\lambda) f \rVert_{\CH^1_q (\HS)}
& \le C_b|\lambda|^{- \frac12} \lVert f \rVert_{L_q (\HS)}.
\end{align}
To this end, we note that
\begin{equation}\label{5.5.10}
\begin{aligned}
\pd_\lambda \bL_{1} (\lambda) f 
& = \int^\infty_0 \CF^{- 1}_{\xi'}
\Big[ \Big(\pd_\lambda m_2 (\lambda, \xi') \Big) 
B e^{- B (x_d + y_d)} \CF'[ f ](\xi', y_d) \Big] (x') \d y_d \\
& \quad + \int^\infty_0 \CF^{- 1}_{\xi'}
\Big[m_2 (\lambda, \xi') B^{- 2}
\Big(B^2 \pd_\lambda (B e^{- B (x_d + y_d)}) \Big)
\CF'[ f] (\xi', y_d) \Big] (x') \d y_d.		
\end{aligned}\end{equation}
By integration by parts, we may write  
\begin{align}
& \lambda^{n_0 \slash 2} \pd^{n_1}_d \pd_{x'}^{\alpha'} \pd_\lambda \bL_{1} (\lambda)f \\
& = (- 1)^{n_1} \bigg\{ \int^\infty_0 \CF^{- 1}_{\xi'}
\Big[ \big(\pd_\lambda m_2 (\lambda, \xi') \big) 
\lambda^{n_0 \slash 2} B^{n_1 - n_2} (i \xi')^{\alpha' } 
B e^{- B (x_d + y_d)} \CF' [\pd_d^{n_2} f] (\xi', y_d) \Big] (x') \d y_d \\
& \quad +  \int^\infty_0 \CF^{- 1}_{\xi'}
\Big[m_2 (\lambda, \xi') \lambda^{n_0 \slash 2} B^{n_1 - n_2  - 2} 
(i \xi')^{\alpha'} \Big(B^2 \pd_\lambda (B e^{- B (x_d + y_d)}) \Big)
\CF'[\pd_d^{n_2}f] (\xi', y_d) \Big] (x') \d y_d \\
& \quad + C_{n_1, n_2} \int_0^\infty \CF^{- 1}_{\xi'}
[ m_2 (\lambda, \xi') \lambda^{n_0 \slash 2} B^{n_1 - n_2 - 2} 
(i \xi')^{\alpha'} B e^{- B (x_d + y_d)}
\CF' [\pd_d^{n_2}f] (\xi', y_d) \Big] (x') \d y_d \bigg\}
\end{align}
for nonnegative integers $n_0$, $n_1$, and $n_2$,  and 
for multi-indices $\alpha'$. 
Here, $C_{n_1, n_2}$ is some constant that depends only on
$\mu$, $n_1$, and $n_2$. Indeed, it follows that
\begin{align}
\pd_d \pd_\lambda (B e^{- B (x_d + y_d)} )
& = \pd_d \bigg( \frac{1}{2 \mu B}e^{- B (x_d + y_d)} 
- \frac{x_d + y_d}{2 \mu} e^{- B (x_d + y_d)} \bigg) \\
& = - \frac{(- B) (x_d + y_d)}{2 \mu} e^{- B (x_d + y_d)} 
- \frac1\mu e^{- B (x_d + y_d)} \\
& = B \pd_\lambda (B e^{- B (x_d + y_d)} ) 
- \frac{1}{2 \mu} e^{- B (x_d + y_d)},
\end{align}
and then, by induction on  $n \in \BN_0$ it follows that
\begin{equation}
\pd_d^n \pd_\lambda (B e^{- B (x_d + y_d)} ) 
= B^n \pd_\lambda (B e^{- B (x_d + y_d)} ) 
- \frac{n}{2 \mu} (- B)^{n - 1} e^{- B (x_d + y_d)}.
\end{equation}
Since $m_2 (\lambda, \xi') \in \BM_{- 2}$ 
and  $\pd_\lambda m_2 (\lambda, \xi') \in \BM_{- 4}$, 
we have
\begin{align}
(\pd_\lambda m_2 (\lambda, \xi')) \lambda^{n_0 \slash 2} 
B^{n_1 - n_2} (i \xi')^{\alpha'} & \in \BM_0, \\
m_2 (\lambda, \xi') \lambda^{n_0 \slash 2} B^{n_1 - n_2 - 2} 
(i \xi')^{\alpha'} & \in \BM_0
\end{align} 
provided that 
\begin{equation}
\label{cond-L11-lambda}
n_0 + n_1 - n_2 + \lvert \alpha' \rvert = 4
\end{equation}
Together with Proposition \ref{prop:3.1}, it holds
\begin{equation}
\label{5.5.0-lambda}
\lVert \lambda^{n_0} \pd^{n_1}_d \pd_{x'}^{\alpha'} 
\pd_\lambda \bL_{1} (\lambda)f\rVert_{L_q (\HS)}
\le C \lVert \pd_d^{n_2} f \rVert_{L_q (\HS)}
\end{equation}
under the condition \eqref{cond-L11-lambda}.
Hence, by virtue of suitable choices of 
$(n_0, n_1, n_2,  \alpha')$ satisfying \eqref{cond-L11-lambda},
we have \eqref{n5.6.10}, \eqref{n5.6.11} with $\CH^1_q = \dot H^1_q$,
and \eqref{n5.6.12} with $\CH^1_q = \dot H^1_q$. The estimates
\eqref{n5.6.11} and \eqref{n5.6.12} for the case $\CH^1_q = H^1_q$ are obtained as follows:
The estimates \eqref{n5.6.10} and \eqref{n5.6.11} with $\CH^1_q = \dot H^1_q$ combine 
to give \eqref{n5.6.11}.
Moreover, we infer from \eqref{n5.6.10} that
$$\|(\lambda, \lambda^{1/2}\nabla, \nabla^2)\pd_\lambda \bL_1(\lambda)f\|_{L_q(\HS)}
\leq C|\lambda|^{-1}\|f\|_{L_q(\HS)} \leq C(\gamma \sin\epsilon)^{-1/2}|\lambda|^{-1/2}
\|f\|_{L_q(\HS)}
$$
for $\lambda \in \Sigma_\epsilon + \gamma$, 
which together with \eqref{n5.6.12} with $\CH^1_q=\dot H^1_q$ yields
\eqref{n5.6.12} with $\CH^1_q= H^1_q$. This completes the proof of
\eqref{n5.6.10}--\eqref{n5.6.12}. \par
Finally, we consider the dual operator $\pd_\lambda \bL_1(\lambda)^*$. 
 In view of  \eqref{5.5.10}, it is defined by 
\begin{align}
\pd_\lambda \bL_{1} (\lambda)^* \varphi
& = \int^\infty_0 \CF'
\Big[ \Big(\pd_\lambda m_2 (\lambda, \xi') \Big) 
B e^{- B (x_d + y_d)} \CF^{-1}_{\xi'}[ \varphi ](\xi', x_d) \Big] (y') \d x_d \\
& \quad + \int^\infty_0 \CF'
\Big[m_2 (\lambda, \xi') B^{- 2}
\Big(B^2 \pd_\lambda (B e^{- B (x_d + y_d)}) \Big)
\CF^{-1}_{\xi'}[ \varphi] (\xi', x_d) \Big] (y') \d x_d.	
\end{align}
Since $f$, $\varphi \in C^\infty_0(\HS)$, we see that 
$\pd_\lambda \bL_1(\lambda)^*(\lambda, \lambda^{1/2}\nabla, \nabla^2)\varphi
= (\lambda, \lambda^{1/2}\nabla, \nabla^2)\pd_\lambda \bL_1(\lambda)^*\varphi$.
Thus, if we set  $T_2(\lambda)f =  (\lambda, \lambda^{1/2}\nabla, \nabla^2)\pd_\lambda \bL_1(\lambda)f$, 
then $T_2(\lambda)^*\varphi 
= (\lambda, \lambda^{1/2}\nabla, \nabla^2)\pd_\lambda \bL_1(\lambda)^*\varphi$. 
Since $\pd_\lambda \bL_1(\lambda)^*\varphi$ is obtained by exchanging 
$\CF^{-1}_{\xi'}$ and  $\CF'$ in \eqref{5.5.10}, employing the 
same argument as in the proof of \eqref{n5.6.10} -- \eqref{n5.6.12}, 
we have 
\begin{align}
\|(\lambda, \lambda^{1/2} \nabla, 
\nabla^2)\pd_\lambda \bL_{1}(\lambda)^*\varphi\|_{L_{q'}(\HS)}
& \le C|\lambda|^{-1}\|\varphi\|_{L_{q'}(\HS)}, \\
\|(\lambda, \lambda^{1/2} \nabla, 
 \nabla^2)\pd_\lambda \bL_{1}(\lambda)^*\varphi\|_{\CH^1_{q'}(\HS)}
& \le C|\lambda|^{-1}\|\varphi\|_{\wt H^1_{q'}(\HS)}, \\
\|(\lambda, \lambda^{1/2} \nabla, 
 \nabla^2)\pd_\lambda \bL_{1}(\lambda)^*\varphi\|_{\CH^1_{q'}(\HS)}
& \le C_b|\lambda|^{- \frac12}\|\varphi\|_{L_{q'}(\HS)},
\end{align}
which together with \eqref{n5.6.10}, \eqref{n5.6.11}, and \eqref{n5.6.12} implies
that $(\lambda, \lambda^{1/2} \nabla, \nabla^2)\pd_\lambda \bL_{1}(\lambda)$ satisfies
Assumption \ref{assump:4.1} with $i = 2$, $\alpha_2=-1$, and $\beta_2=-1/2$.
Hence, by Theorem \ref{spectralthm:1}, we have \eqref{b:4.3}. The proof of
Theorem \ref{thm:4.1} is complete. 
\end{proof}
We next show the following theorem.
\begin{thm}
\label{thm:4.2}
Let $1 < q < \infty$, $1 \leq r \leq \infty$,  $- 1 + 1 \slash q < s <  1 \slash q$, 
$0 < \epsilon < \pi \slash 2$, and $\gamma > 0$.  
Let $\sigma$ be a positive 
number such that $- 1 + 1 \slash q < s \pm \sigma < 1 \slash q$. 
Let $\bL_2(\lambda)$ be the operator given in \eqref{def-L-formula}. 
Assume that $m_3 (\lambda, \xi')$  is a symbol belonging to $\BM_{- 3}$.  
Then, for every $\lambda \in \Sigma_{\epsilon} + \gamma_b$ and $f \in C^\infty_0(\HS)$, 
there exists a constant $C_b$ depending on $\gamma_b$ such that
\begin{align}
\label{b:l.1*} 
\lVert (\lambda, \lambda^{1/2} \nabla,   \nabla_b^2) \bL_2 (\lambda) f \rVert_{\CB^s_{q, r} (\HS)} 
& \le C_b \lVert f \rVert_{\CB^s_{q, r} (\HS)}, \\
\label{b:l.2*}
\lVert (\lambda, \lambda^{1/2} \nabla,
  \nabla_b^2) \bL_2 (\lambda) f \rVert_{\CB^s_{q, r} (\HS)} 
& \le C_b \lvert \lambda \rvert^{- \frac\sigma2} 
\lVert f \rVert_{\CB^{s + \sigma}_{q, r} (\HS)},\\
\label{b:l.4}		
\lVert \bL_2 (\lambda) f \rVert_{\CB^s_{q, r} (\HS)} 
& \le C_b \lvert \lambda \rvert^{- (1 - \frac\sigma2)} 
\lVert f \rVert_{\CB^{s - \sigma}_{q, r} (\HS)}, \\
\label{b:l.3*}
\lVert (\lambda, \lambda^{1/2} \nabla,
  \nabla_b^2) \pd_\lambda \bL_2 (\lambda) f \rVert_{\CB^s_{q, r} (\HS)} 
& \le C_b \lvert \lambda \rvert^{- (1 - \frac\sigma2)} 
\lVert f \rVert_{\CB^{s - \sigma}_{q, r} (\HS)}.
\end{align}
\end{thm}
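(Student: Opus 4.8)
The plan is to reduce Theorem~\ref{thm:4.2} to an application of Theorem~\ref{spectralthm:1} in exactly the same way as in the proof of Theorem~\ref{thm:4.1}, the only difference being the parameters $(\alpha_i,\beta_i)$ and the fact that the kernel $\CL_2(x_d,y_d)=ABM_{x_d+y_d}$ has a different homogeneity structure than $\CL_1$. As in the proof of Theorem~\ref{thm:4.1}, by the argument at the beginning of that proof it suffices, in the case $\CB^s_{q,r}=B^s_{q,r}$ with $\lambda\in\Sigma_\epsilon+\gamma$, to establish \eqref{b:l.1*}, \eqref{b:l.2*}, \eqref{b:l.3*} with $\nabla^2$ in place of $\nabla^2_b$; so first I would work entirely with the homogeneous derivative count. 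The key observation is that, writing $\bL_2(\lambda)f=\int_0^\infty\CF^{-1}_{\xi'}[m_3(\lambda,\xi')ABM_{x_d+y_d}\CF'[f](\xi',y_d)](x')\,\d y_d$, integration by parts in $y_d$ (using $\pd_{y_d}M_{x_d+y_d}$ which produces $e^{-Bx_d}e^{-By_d}$, $e^{-Ax_d}e^{-Ay_d}$ type terms, or equivalently keeping $M_{x_d+y_d}$ and producing extra powers of $B$ or $A$) together with the identity $\CL_2=ABM_{x_d+y_d}$ reduces every spatial derivative to a multiplier of the form $m_3(\lambda,\xi')\lambda^{n_0/2}A^{a}B^{b}(i\xi')^{\alpha'}$ times one of $\CL_1,\CL_2,\CL_3$, with total homogeneity $n_0+a+b+|\alpha'|$ counting correctly; once this homogeneity equals the number of derivatives the multiplier belongs to $\BM_0$ (since $m_3\in\BM_{-3}$, $B^{-1}\in\BM_{-1}$, $\xi_j/A\in\BM_0$ and $D_{A,B}^{-1}\in\BM_{-3}$, as recalled after Definition~\ref{def:2.15}), and Proposition~\ref{prop:3.1} gives the $L_q(\HS)$ bound. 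The bookkeeping here is slightly more delicate than in Theorem~\ref{thm:4.1} because one has two decay scales $A$ and $B$ and the factor $A$ does not by itself provide $\lambda$-decay; this is the step I expect to be the main obstacle, i.e.\ verifying that for the relevant choices of $(n_0,a,b,|\alpha'|,n_2)$ one can always route the derivatives so that the surviving symbol lies in $\BM_0$ and picks up the claimed power $|\lambda|^{-\sigma/2}$ or $|\lambda|^{-(1-\sigma/2)}$.

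Concretely, I would first prove the $L_q$ and $\CH^1_q$ analogues of \eqref{5.5.21}--\eqref{5.5.3}, namely
\begin{align}
\|(\lambda,\lambda^{1/2}\nabla,\nabla^2)\bL_2(\lambda)f\|_{L_q(\HS)}&\le C\|f\|_{L_q(\HS)},\\
\|(\lambda,\lambda^{1/2}\nabla,\nabla^2)\bL_2(\lambda)f\|_{\CH^1_q(\HS)}&\le C\|f\|_{\CH^1_q(\HS)},\\
\|(\lambda,\lambda^{1/2}\nabla,\nabla^2)\bL_2(\lambda)f\|_{L_q(\HS)}&\le C_b|\lambda|^{-1/2}\|f\|_{\CH^1_q(\HS)},
\end{align}
by choosing suitable $(n_0,n_1,n_2,\alpha')$ with $n_0+n_1-n_2+|\alpha'|$ equal to the derivative order, exactly mimicking \eqref{5.5.1}--\eqref{5.5.0}; the point is that the extra $A$ in $\CL_2$ is harmless because $A\le|\lambda|^{1/2}+A$ and $AB^{-1}\in\BM_0$. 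Next, for \eqref{b:l.4} I would prove the $(L_q,\CH^1_q)$ bounds with gain $|\lambda|^{-1}$ and the mixed bound $\|\bL_2(\lambda)f\|_{\CH^1_q}\le C_b|\lambda|^{-1/2}\|f\|_{L_q}$, arguing as in \eqref{5.5.21*}--\eqref{5.5.23*}. Then, as in \eqref{L11-dual-representation} and \eqref{addit:5.1}, I would write down the dual operator $\bL_2(\lambda)^*$ (obtained by interchanging $\CF'$ and $\CF^{-1}_{\xi'}$, using that $M_{x_d+y_d}$ is symmetric in $x_d\leftrightarrow y_d$) and note that it has the same structure, so all six estimates in Assumption~\ref{assump:4.1} hold for $T(\lambda)=(\lambda,\lambda^{1/2}\nabla,\nabla^2)\bL_2(\lambda)$ with $i=1$, $\alpha_1=0$, $\beta_1=-1/2$, and for $T(\lambda)=\bL_2(\lambda)$ with $i=2$, $\alpha_2=-1$, $\beta_2=-1/2$. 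Applying Theorem~\ref{spectralthm:1} then yields \eqref{b:l.1*}, \eqref{b:l.2*}, \eqref{b:l.4}.

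For the $\lambda$-derivative estimate \eqref{b:l.3*} I would differentiate the kernel: $\pd_\lambda(ABM_{x_d+y_d})$ splits, via $\pd_\lambda M_{x_d+y_d}$ and $\pd_\lambda B$, into terms of the form (symbol in $\BM_{-2}$)$\times\CL_k$ plus terms of the form (symbol in $\BM_0$)$\times B^{-2}\pd_\lambda\CL_k$, the latter being controlled by the $\CN_k$-bounds \eqref{est:2.2} of Proposition~\ref{prop:3.1}; the derivative identities $\pd_d^n\pd_\lambda(Be^{-B(x_d+y_d)})$ used in the proof of Theorem~\ref{thm:4.1} have obvious analogues for $AM_{x_d+y_d}$ and $e^{-A(x_d+y_d)}$. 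This reduces everything to verifying, as in \eqref{cond-L11-lambda}, that after $n_0+n_1-n_2+|\alpha'|=4$ the surviving symbol is in $\BM_0$ (now using $\pd_\lambda m_3\in\BM_{-5}$), so that Proposition~\ref{prop:3.1} applies; carrying this through gives the $(L_q,\CH^1_q)$ and mixed bounds for $(\lambda,\lambda^{1/2}\nabla,\nabla^2)\pd_\lambda\bL_2(\lambda)$ with gain $|\lambda|^{-1}$ (resp.\ $|\lambda|^{-1/2}$), the dual operator has the same form, and hence Assumption~\ref{assump:4.1} holds with $i=2$, $\alpha_2=-1$, $\beta_2=-1/2$; Theorem~\ref{spectralthm:1} then yields \eqref{b:l.3*}. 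Finally, the passage from the homogeneous-derivative estimates to the $\nabla^2_b$ estimates in the inhomogeneous case $\CB^s_{q,r}=B^s_{q,r}$ is handled verbatim as at the start of the proof of Theorem~\ref{thm:4.1}, using $|\lambda|\ge\gamma\sin\epsilon$ on $\Sigma_\epsilon+\gamma$ to absorb the lower-order terms. This completes the proof.
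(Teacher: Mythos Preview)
Your overall architecture is correct and matches the paper: reduce $\nabla_b^2$ to $\nabla^2$ via $|\lambda|\ge\gamma\sin\epsilon$, verify Assumption~\ref{assump:4.1} for $(\lambda,\lambda^{1/2}\nabla,\nabla^2)\bL_2(\lambda)$ with $(\alpha_1,\beta_1)=(0,-1/2)$ and for $\bL_2(\lambda)$ with $(\alpha_2,\beta_2)=(-1,-1/2)$, check that the dual operator (obtained by swapping $\CF'$ and $\CF^{-1}_{\xi'}$) has the same structure, and then invoke Theorem~\ref{spectralthm:1}. The $\pd_\lambda$-part via $\pd_\lambda m_3\in\BM_{-5}$ and the $\CN_k$-bounds is also the right idea.

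Where you diverge from the paper is in the mechanism for gaining a derivative on $f$ in the $\CH^1_q$ estimates. You propose integration by parts in $y_d$, carrying over the bookkeeping condition $n_0+n_1-n_2+|\alpha'|$ verbatim from \eqref{cond-L11}. This does \emph{not} ``exactly mimic'' the $\bL_1$ case: since $\pd_{y_d}M_{x_d+y_d}=-(e^{-B(x_d+y_d)}+AM_{x_d+y_d})$, a single integration by parts on the $M$-kernel does not simply insert a factor $B^{-1}$ and replace $\hat f$ by $\widehat{\pd_d f}$; it throws off a side term with kernel $e^{-B(x_d+y_d)}$ still acting on $\hat f$, which must then be integrated by parts once more. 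Your route can be completed this way, but the accounting is not captured by a single relation of the form $n_0+n_1-n_2+|\alpha'|=2$.

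The paper avoids IBP in $y_d$ altogether. First, $x_d$-derivatives of $M$ are handled by the closed formula
\[
\pd_d^nM_{x_d}=(-1)^n\bigl(E^{(n)}_{A,B}\,e^{-Bx_d}+A^nM_{x_d}\bigr),\qquad E^{(n)}_{A,B}=\frac{B^n-A^n}{B-A},
\]
which rewrites $\lambda^{n_0/2}\pd_d^{n_1}\pd_{x'}^{\alpha'}\bL_2$ as a $\CL_1$-piece plus a $\CL_2$-piece with explicit symbols; for $n_0+n_1+|\alpha'|=2$ these symbols lie in $\BM_0$ and Proposition~\ref{prop:3.1} gives the $L_q\to L_q$ bound. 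Second, to obtain the mixed estimate $\CH^1_q\to L_q$ with gain $|\lambda|^{-1/2}$, the paper does \emph{not} move a normal derivative onto $f$ but instead exploits the extra factor $A$ already present in the symbol via the tangential identity \eqref{nabla:5.0},
\[
A=-\sum_{j=1}^{d-1}\frac{i\xi_j}{A}\,i\xi_j,
\]
turning one $A$ into $\widehat{\pd_j f}$. This yields $\|\lambda^{n_0/2}\pd_d^{n_1}\pd_{x'}^{\alpha'}\bL_2(\lambda)f\|_{L_q}\le C\|\nabla f\|_{L_q}$ for $n_0+n_1+|\alpha'|=3$ in one stroke, with no side terms. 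The same $A$-trick and the $E^{(n)}_{A,B}$ calculus are used for the $\pd_\lambda$-representation (see \eqref{adjoint:1}). So your remark that ``the extra $A$ in $\CL_2$ is harmless'' undersells it: that factor is precisely the handle the paper uses. Both approaches reach the same six inequalities in Assumption~\ref{assump:4.1}; the paper's stays inside the $\CL_1/\CL_2$ calculus with $\BM_0$-symbols after a single manipulation, whereas your IBP route requires iterating to clear side terms.
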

\begin{proof}  
As has been seen in the proof of Theorem \ref{thm:3.1}, it suffices to prove that 
\begin{align}
\label{b:l.1} 
\lVert (\lambda, \lambda^{1/2} \nabla,   \nabla^2) \bL_2 (\lambda) f \rVert_{\CB^s_{q, r} (\HS)} 
& \le C_b \lVert f \rVert_{\CB^s_{q, r} (\HS)}, \\
\label{b:l.2}
\lVert (\lambda, \lambda^{1/2} \nabla,
  \nabla^2) \bL_2 (\lambda) f \rVert_{\CB^s_{q, r} (\HS)} 
& \le C_b \lvert \lambda \rvert^{- \frac\sigma2} 
\lVert f \rVert_{\CB^{s + \sigma}_{q, r} (\HS)},\\
\label{b:l.3}
\lVert (\lambda, \lambda^{1/2} \nabla,
  \nabla^2) \pd_\lambda \bL_2 (\lambda) f \rVert_{\CB^s_{q, r} (\HS)} 
& \le C_b \lvert \lambda \rvert^{- (1 - \frac\sigma2)} 
\lVert f \rVert_{\CB^{s - \sigma}_{q, r} (\HS)},
\end{align}
instead of \eqref{b:l.1*}, \eqref{b:l.2*}, and \eqref{b:l.3*}, respectively. \par
In the following, we shall verify the 
conditions in Assumption \ref{assump:4.1} with $i = 1$, $\alpha_1=0$, and $\beta_1=-1/2$
as well as with $i = 2$, $\alpha_2=-1$, and $\beta_2=-1/2$.
First, we shall show that 
\begin{align}
\|(\lambda, \lambda^{1/2} \nabla, 
 \nabla^2)\bL_2(\lambda)f\|_{\CH^1_q(\HS)}
& \le C\|f\|_{\CH^1_q(\HS)},\label{5.6.1} \\
\|(\lambda, \lambda^{1/2} \nabla, 
 \nabla^2)\bL_2(\lambda)f\|_{L_q(\HS)}
& \le C\|f\|_{L_q(\HS)}, \label{5.6.2}\\
\|(\lambda, \lambda^{1/2} \nabla, 
 \nabla^2)\bL_2(\lambda)f\|_{L_q(\HS)}
& \le C_b|\lambda|^{- \frac12}\|f\|_{\CH^1_q(\HS)}. \label{5.6.3}\\
\|\bL_2(\lambda)f\|_{\CH^1_q(\HS)}
& \le C |\lambda|^{-1}\|f\|_{\CH^1_q(\HS)},\label{5.6.4} \\
\|\bL_2(\lambda)f\|_{L_q(\HS)}
& \le C|\lambda|^{-1}\|f\|_{L_q(\HS)}, \label{5.6.5}\\
\|\bL_2(\lambda)f\|_{\CH^1_q(\HS)}
& \le C_b|\lambda|^{- \frac12}\|f\|_{L_q(\HS)}. \label{5.6.6}
\end{align}
To simplify the notation, we introduce symbols 
$E^{(n)}_{A, B}$, $n \in \BN_0$, defined by 
\begin{equation}
E^{(0)}_{A, B} = 0, \qquad E^{(n)}_{A, B} = (B - A)^{- 1} (B^n - A^n), \quad n \in \BN.
\end{equation}	
Then we may write 
\begin{equation}
\pd^n_d M_{x_d} = (- 1)^n (B - A)^{- 1} (B^n e^{- B x_d} - A^n e^{- A x_d})
= (- 1)^n (E^{(n)}_{A, B} e^{- B x_d} + A^n M_{x_d})
\end{equation}
for $n \in \BN_0$. Hence, we may have
\begin{align}
& \lambda^{n_0 \slash 2} \pd^{n_1}_d \pd_{x'}^{\alpha'} \bL_2 (\lambda)f \\
& = (- 1)^{n_1} \int_0^\infty \CF^{- 1}_{\xi'}
[m_3 (\lambda, \xi') \lambda^{n_0 \slash 2} A^2 B \pd_d^{n_1} M_{x_d + y_d} 
\CF'[f] (\xi', y_d)] (x') \d y_d \\
& = (- 1)^{n_1} \int_0^\infty \CF^{- 1}_{\xi'}
[Be^{- B (x_d + y_d)}  m_3 (\lambda, \xi') \lambda^{n_0 \slash 2} (i \xi')^{\alpha' } 
A^2  E^{(n_1)}_{A, B} 
\CF'[f] (\xi', y_d)] (x') \d y_d \\
& \quad + (- 1)^{n_1} \int_0^\infty \CF^{- 1}_{\xi'}[
ABM_{x_d + y_d}m_3 (\lambda, \xi') \lambda^{n_0 \slash 2} (i \xi')^{\alpha'} 
A^{n_1+1} B  \CF'[f] (\xi', y_d)] (x') \d y_d
\end{align}
for nonnegative integers $n_0$ and $n_1$ and
for multi-indices $\alpha'$. Since $m_3 (\lambda, \xi') \in \BM_{- 3}$,
if there holds $n_0 + n_1 + \lvert \alpha' \rvert = 2$, then
$m_3(\lambda, \xi')\lambda^{n_0/2}(i\xi')^{\alpha'}A^2E^{n_1}_{A,B}$
and $m_3(\lambda, \xi')\lambda^{n_0/2}(i\xi')^{\alpha'}A^{n_1+1}$
belong to $\BM_0$. 
Hence, Proposition \ref{prop:3.1} implies
\begin{equation}
\label{est-L2}
\lVert \lambda^{n_0 \slash 2} \pd^{n_1}_d \pd_{x'}^{\alpha'} \bL_2 (\lambda)f\rVert_{L_q (\HS)}
\le C\|f\|_{L_q(\HS)}, \qquad n_0+n_1+|\alpha'| = 2.
\end{equation}
In particular, by \eqref{est-L2}, we have \eqref{5.6.1},  \eqref{5.6.3}, and \eqref{5.6.5}. 
Moreover, if $n_0$, $n_1$, and $\alpha'$ satisfy $n_0+n_1+|\alpha'| = 3$, then
$m_3(\lambda, \xi')\lambda^{n_0/2}(i\xi')^{\alpha'}AE^{(n_1)}_{A,B}$
and $m_3(\lambda, \xi')\lambda^{n_0/2}(i\xi')^{\alpha'}A^{n_1}$
belong to $\BM_0$. Thus, it follows from Proposition \ref{prop:3.1} that
$$
\|\lambda^{n_0 \slash 2} \pd^{n_1}_d \pd_{x'}^{\alpha'} \bL_2 (\lambda)f\|_{L_q(\HS)}
\le C\|\CF^{-1}_{\xi'}[A \CF'[f]]\|_{L_q(\HS)},
\qquad n_0+n_1+|\alpha'| = 3.
$$
Writing 
\begin{equation}\label{nabla:5.0}
A= \frac{A^2}{A} = -\sum_{j=1}^{d-1} \frac{i\xi_j}{A} i\xi_j,
\end{equation}
we have $A\CF'[f] = -\sum_{j=1}^{d-1} (i\xi_j \slash A) \CF'[\pd_j f]$, and 
so we infer from the Fourier multiplier 
theorem in $\BR^{d-1}$ that
$$\|\CF^{-1}_{\xi'}[A \CF'[f]]\|_{L_q(\HS)} \leq \sum_{j=1}^{d-1}\|\pd_j f\|_{L_q(\HS)}
\leq \|\nabla f\|_{L_q(\HS)}.
$$
Hence, we have 
\begin{align}
\label{est-L2*}
\|\nabla \lambda^{n_0 \slash 2} \pd^{n_1}_d \pd_{x'}^{\alpha'} \bL_2 (\lambda)f\|_{L_q(\HS)}
&\leq C\|\nabla f\|_{L_q(\HS)}, \qquad n_0+n_1+|\alpha'| = 2, \\
\label{est-L3}
\|\lambda^{n_0 \slash 2} \pd^{n_1}_d \pd_{x'}^{\alpha'} \bL_2 (\lambda)f\|_{L_q(\HS)}
&\leq C\|\nabla f\|_{L_q(\HS)}, \qquad n_0+n_1+|\alpha'| = 3, \enskip 1 \leq n_0 \leq 3.
\end{align}
Noting that $\|\nabla f\|_{L_q(\HS)} \leq \|f\|_{H^1_q(\HS)}$ 
and $\|f\|_{L_q(\HS)} \leq \|f\|_{H^1_q(\HS)}$ if $\CB^s_{q,1} =
B^s_{q,1}$, by \eqref{est-L2}, \eqref{est-L2*}, and \eqref{est-L3} 
we have \eqref{5.6.1}, \eqref{5.6.2}, and \eqref{5.6.3}. Analogously, 
we have \eqref{5.6.4}. \par 

We now consider the dual operator $\bL_2(\lambda)^*$
 acting on $\varphi \in C^\infty_0(\HS)$, which is defined by 
\begin{equation}
\bL_2(\lambda)^*\varphi = \int^\infty_0\CF'[m_3(\lambda, \xi')
A^2BM_{x_d+y_d}\CF^{-1}_{\xi'}[\varphi](\xi', y_d)\d y_d.
\end{equation}
For $f$, $\varphi \in C^\infty_0(\HS)$, we see that 
$\bL_2(\lambda)^*((\lambda, \lambda^{1/2}\nabla, \nabla^2)\varphi) 
= (\lambda, \lambda^{1/2}\nabla, \nabla^2)\bL_2(\lambda)^*\varphi$, and so 
the dual operator of $(\lambda, \lambda^{1/2}\nabla, \nabla^2)\bL_2(\lambda)$
coincides with 
$ (\lambda, \lambda^{1/2}\nabla, \nabla^2)\bL_2(\lambda)^*$.  
Since $\bL_2(\lambda)^*$ is obtained by exchanging $\CF^{-1}_{\xi'}$ and 
$\CF'$, employing the same argument as proving \eqref{5.6.1}--\eqref{5.6.6}, 
we may prove the following estimates immediately:
\begin{align*}
\|(\lambda, \lambda^{1/2} \nabla, 
 \nabla^2)\bL_2(\lambda)^*\varphi\|_{\CH^1_q(\HS)}
& \le C\|\varphi\|_{\CH^1_q(\HS)}, \\
\|(\lambda, \lambda^{1/2} \nabla, 
 \nabla^2)\bL_2(\lambda)^*\varphi\|_{L_q(\HS)}
& \le C\|\varphi\|_{L_q(\HS)},\\
\|(\lambda, \lambda^{1/2} \nabla, 
 \nabla^2)\bL_2(\lambda)^*\varphi \|_{L_q(\HS)}
& \le C_b|\lambda|^{- \frac12}\|\varphi\|_{\CH^1_q(\HS)}, \\
\|\bL_2(\lambda)^*\varphi\|_{\CH^1_q(\HS)}
& \le C|\lambda|^{-1}\|\varphi\|_{\CH^1_q(\HS)}, \\
\|\bL_2(\lambda)^*\varphi\|_{L_q(\HS)}
& \le C|\lambda|^{-1}\|\varphi\|_{L_q(\HS)}, \\
\|\bL_2(\lambda)^*\varphi\|_{\CH^1_q(\HS)}
& \le C_b|\lambda|^{- \frac12}\|\varphi\|_{L_q(\HS)} 
\end{align*}
for any $\varphi \in C^\infty_0(\HS)$ and $\lambda \in \Sigma_\epsilon + \gamma_b$, 
which together with \eqref{5.6.1}--\eqref{5.6.6} implies that
$(\lambda, \lambda^{1/2}\nabla, \nabla^2)\bL_2(\lambda)$ satisfies Assumption \ref{assump:4.1} 
with $i = 1$, $\alpha_1=0$, and $\beta_1=1/2$ as well as   
$\bL_2(\lambda)$ satisfies Assumption \ref{assump:4.1} with 
$i = 2$, $\alpha_2=-1$, and $\beta_2=-1/2$. Hence, we arrive at
\eqref{b:l.1}, \eqref{b:l.2}, and \eqref{b:l.4}. \par
We next consider $\pd_\lambda \bL_2(\lambda)$. 
First we shall prove the following estimates:
\begin{align}
\|(\lambda, \lambda^{1/2} \nabla, 
 \nabla^2)\pd_\lambda\bL_2(\lambda)f\|_{\CH^1_q(\HS)}
& \le C|\lambda|^{-1}\|f\|_{\CH^1_q(\HS)},\label{5.6.7} \\
\|(\lambda, \lambda^{1/2} \nabla, 
 \nabla^2)\pd_\lambda\bL_2(\lambda)f\|_{L_q(\HS)}
& \le C|\lambda|^{-1}\|f\|_{L_q(\HS)}, \label{5.6.8}\\
\|(\lambda,\lambda^{1/2} \nabla, 
 \nabla^2)\pd_\lambda\bL_2(\lambda)f\|_{\CH^1_q(\HS)}
& \le C_b|\lambda|^{- \frac12}\|f\|_{L_q(\HS)}. \label{5.6.9}
\end{align}
To this end, we shall use the following representation:
\begin{equation}\label{adjoint:1}\begin{aligned}
& \lambda^{n_0 \slash 2} \pd^{n_1}_d \pd_{x'}^{\alpha'} \pd_\lambda 
\bL_2 (\lambda)f \\
& = (- 1)^{n_1} \int_0^\infty \CF^{- 1}_{\xi'}
\Big[B e^{- B (x_d + y_d)} \Big(\pd_\lambda (m_3 (\lambda, \xi') E^{n_1}_{A, B} )\Big) 
\lambda^{n_0 \slash 2} (i \xi')^{\alpha'} 
A^2  \CF'[f] (\xi', y_d) \Big] (x') \d y_d \\
& \quad + (- 1)^{n_1} \int_0^\infty \CF^{- 1}_{\xi'}
\bigg[\Big(B^2 \pd_\lambda (B e^{- B (x_d + y_d)}) \Big)
m_3 (\lambda, \xi') \lambda^{n_0 \slash 2} (i \xi')^{\alpha'} 
\frac{A^2 E^{n_1}_{A, B}}{B^2} 
\CF'[f] (\xi', y_d) \bigg] (x') \d y_d \\
& \quad + (- 1)^{n_1} \int_0^\infty \CF^{- 1}_{\xi'}
\Big[AB M_{x_d + y_d} 
\Big(\pd_\lambda m_3 (\lambda, \xi') \Big) \lambda^{n_0 \slash 2} 
(i \xi')^{\alpha' } A^{n_1+ 1} 
\CF'[f] (\xi', y_d) \Big] (x') \d y_d \\
& \quad + (- 1)^{n_1} \int_0^\infty \CF^{- 1}_{\xi'}
\bigg[\Big( B^2 \pd_\lambda (A B M_{x_d + y_d})
m_3 (\lambda, \xi') \lambda^{n_0 \slash 2} 
(i \xi')^{\alpha' } \frac{A^{n_1 + 1}}{B^2}
 \Big) \CF'[f] (\xi', y_d) \bigg] (x') \d y_d 
\end{aligned}
\end{equation}
for nonnegative integers $n_0$ and $n_1$ and 
for multi-indices $\alpha'$.
Since $m_3 (\lambda, \xi') \in \BM_{- 3}$ and $\pd_\lambda m_3(\lambda, \xi') \in \BM_{-5}$, 
we see that the following symbols: 
\begin{gather*}
\Big(\pd_\lambda (m_3 (\lambda, \xi') E^{n_1}_{A, B} )\Big) 
\lambda^{n_0 \slash 2} (i \xi')^{\alpha'} A^2, \quad
m_3 (\lambda, \xi') \lambda^{n_0 \slash 2} (i \xi')^{\alpha'} 
\frac{A^2 E^{n_1}_{A, B}}{B^2}, \\
m_3 (\lambda, \xi') \lambda^{n_0 \slash 2} (i \xi')^{\alpha'} 
\frac{A^2 E^{n_1}_{A, B}}{B^2}, \quad 
m_3 (\lambda, \xi') \lambda^{n_0 \slash 2} 
(i \xi')^{\alpha' } \frac{A^{n_1 + 1}}{B^2}
\end{gather*}
all belong to $\BM_0$ if $n_0+n_1+|\alpha'| = 5$ as well as 
the following symbols: 
\begin{gather*}
\Big(\pd_\lambda (m_3 (\lambda, \xi') E^{n_1}_{A, B} )\Big) 
\lambda^{n_0 \slash 2} (i \xi')^{\alpha'} A, \quad
m_3 (\lambda, \xi') \lambda^{n_0 \slash 2} (i \xi')^{\alpha'} 
\frac{A E^{n_1}_{A, B}}{B^2}, \\
m_3 (\lambda, \xi') \lambda^{n_0 \slash 2} (i \xi')^{\alpha'} 
\frac{A E^{n_1}_{A, B}}{B^2}, \quad 
m_3 (\lambda, \xi') \lambda^{n_0 \slash 2} 
(i \xi')^{\alpha' } \frac{A^{n_1}}{B^2}
\end{gather*}
all belong to $\BM_0$ if $n_0+n_1+|\alpha'| = 4$.
Hence, it follows from Proposition \ref{prop:3.1}  that
\begin{align*}
\lVert \lambda^{n_0 \slash 2} \pd^{n_1}_d \pd_{x'}^{\alpha'} 
\pd_\lambda \bL_2 (\lambda) f \rVert_{L_q (\HS)} 
& \le C\|f\|_{L_q(\HS)}, & &n_0+n_1+|\alpha'| = 4, \\
\lVert \lambda^{n_0 \slash 2} \pd^{n_1}_d \pd_{x'}^{\alpha'} 
\pd_\lambda \bL_2 (\lambda) f \rVert_{L_q (\HS)} 
& \le C\|\CF^{-1}[A\CF'[f]]\|_{L_q(\HS)} \
\leq C\|f\|_{\CH^1_q(\HS)}, & &n_0+n_1+|\alpha'| = 5.
\end{align*}
These estimates give \eqref{5.6.7}, \eqref{5.6.8}, and \eqref{5.6.9}. \par 
Let $\pd_\lambda \bL_2(\lambda)^*$ be the dual operator of  $\pd_\lambda \bL_2(\lambda)$ which 
is obtained by exchanging 
$\CF^{-1}_{\xi'}$ and $\CF'$ in the formula of $\pd_\lambda\bL_2(\lambda)$ given by setting 
$n_0=n_1=|\alpha'| = 0$ in \eqref{adjoint:1}. 
Hence, we immediately have 
\begin{align*}
\|(\lambda, \lambda^{1/2} \nabla,
\nabla^2)\pd_\lambda\bL_2(\lambda)^*\varphi\|_{L_{q'}(\HS)}
& \le C\|\varphi\|_{L_{q'}(\HS)}, \\
\|(\lambda, \lambda^{1/2} \nabla,
 \nabla^2)\pd_\lambda\bL_2(\lambda)^*\varphi\|_{\CH^1_{q'}(\HS)}
& \le C\|\varphi\|_{\CH^1_{q'}(\HS)}, \\
\|(\lambda, \lambda^{1/2} \nabla, 
 \nabla^2)\pd_\lambda\bL_2(\lambda)^*\varphi \|_{L_{q'}(\HS)}
& \le C_b|\lambda|^{- \frac12}\|\varphi\|_{\CH^1_{q'}(\HS)},
\end{align*}
which together with \eqref{5.6.7}, \eqref{5.6.8}, and \eqref{5.6.9}
implies that $(\lambda, \lambda^{1/2}\nabla, \nabla^2)\pd_\lambda \bL_2(\lambda)^*$
satisfies  Assumption \ref{assump:4.1} with $i = 2$, $\alpha_2=-1$, and $\beta_2=-1/2$. 
Thus, by Theorem \ref{spectralthm:1}, we have \eqref{b:l.3}.
The proof of Theorem \ref{thm:4.2} is complete. 
\end{proof}
Next, we prove the following theorem.
\begin{thm}
\label{thm:4.3} 
Let $1 < q < \infty$, $1 \leq r \leq \infty$,  $- 1 + 1 \slash q < s <  1 \slash q$, 
$0 < \epsilon < \pi \slash 2$, and $\gamma > 0$. 
Let $\sigma$ be a small positive  
number such that $- 1 + 1 \slash q < s \pm \sigma < 1 \slash q$. 
Assume that $m_1 (\lambda, \xi')$  is a symbol belonging to $\BM_{- 1}$.  
For every $\lambda \in \Sigma_{\epsilon} + \gamma_b$ and $f \in C^\infty_0(\HS)$, 
there exists a constant $C_b$ depending on $\gamma_b$ such that
\begin{align}
\label{a:1.1}
\lVert(\lambda^{1/2}, \nabla_b) \bL_3 (\lambda) f \rVert_{\CB^s_{q, r} (\HS)} 
& \le C_b \lVert f \rVert_{\CB^s_{q, r} (\HS)},\\
\label{a:1.2} 
\lVert \nabla_b\bL_3 (\lambda) f \rVert_{\CB^s_{q, r} (\HS)} 
& \le C_b \lvert \lambda \rvert^{- \frac\sigma2} 
\lVert f \rVert_{\CB^{s + \sigma}_{q, r} (\HS)}, \\
\label{a:1.3}
\lVert\nabla_b\pd_\lambda \bL_3 (\lambda) f \rVert_{\CB^s_{q, r} (\HS)} 
& \le C_b \lvert \lambda \rvert^{- (1 - \frac\sigma2)} 
\lVert f \rVert_{\CB^{s - \sigma}_{q, r} (\HS)}.
\end{align}
\end{thm}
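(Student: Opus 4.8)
The plan is to follow exactly the template already established for $\bL_1(\lambda)$ and $\bL_2(\lambda)$ in Theorems~\ref{thm:4.1} and \ref{thm:4.2}: verify that the operator $(\lambda^{1/2},\nabla_b)\bL_3(\lambda)$ satisfies Assumption~\ref{assump:4.1} with $i=1$, $\alpha_1=0$, $\beta_1=-1/2$ (and, for the $\pd_\lambda$-estimate, that $(\lambda^{1/2},\nabla_b)\pd_\lambda\bL_3(\lambda)$ satisfies Assumption~\ref{assump:4.1} with $i=2$, $\alpha_2=-1$, $\beta_2=-1/2$), then invoke Theorem~\ref{spectralthm:1} to pass to the $\CB^s_{q,r}(\HS)$-scale. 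As in the previous proofs, one first reduces to the case $\gamma_b=0$ (i.e.\ one only proves the three estimates without the extra lower-order gain coming from $|\lambda|\ge\gamma\sin\epsilon$), and then divides the Assumption-checking into the $L_q$-, $H^1_q$-, and mixed-order estimates together with their dual counterparts obtained by exchanging $\CF'$ and $\CF^{-1}_{\xi'}$.

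First I would record the differentiation formula: for nonnegative integers $n_0,n_1$ and a multi-index $\alpha'$,
\begin{equation}
\lambda^{n_0/2}\pd_d^{n_1}\pd_{x'}^{\alpha'}\bL_3(\lambda)f
=(-1)^{n_1}\int_0^\infty\CF^{-1}_{\xi'}\big[m_1(\lambda,\xi')\lambda^{n_0/2}(i\xi')^{\alpha'}A^{n_1+1}e^{-A(x_d+y_d)}\CF'[f](\xi',y_d)\big](x')\d y_d,
\end{equation}
and the corresponding one for $\pd_\lambda\bL_3$, where $\pd_\lambda$ hits either $m_1$ (dropping the order by $2$, since $\pd_\lambda m_1\in\BM_{-3}$) or produces a factor $-(x_d+y_d)e^{-A(x_d+y_d)}$; note that $A$ is independent of $\lambda$, so $\pd_\lambda\CL_3=0$ and the only $\lambda$-dependence sits in $m_1$, which actually makes the $\bL_3$ case \emph{simpler} than $\bL_1$ or $\bL_2$. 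Since $m_1\in\BM_{-1}$, the symbol $m_1(\lambda,\xi')\lambda^{n_0/2}(i\xi')^{\alpha'}A^{n_1+1}\in\BM_0$ precisely when $n_0+n_1+|\alpha'|=1$, and Proposition~\ref{prop:3.1} (with the kernel $\CL_3(x_d,y_d)=Ae^{-A(x_d+y_d)}$) gives $\|\lambda^{n_0/2}\pd_d^{n_1}\pd_{x'}^{\alpha'}\bL_3(\lambda)f\|_{L_q(\HS)}\le C\|f\|_{L_q(\HS)}$. Choosing $(n_0,n_1,\alpha')$ appropriately yields \eqref{a:1.1} at the $L_q$-level; for the $H^1_q$-level one extracts one more tangential derivative via the identity $A=-\sum_{j=1}^{d-1}(i\xi_j/A)\,i\xi_j$ already used in the proof of Theorem~\ref{thm:4.2} (so that $A\CF'[f]=-\sum_j(i\xi_j/A)\CF'[\pd_jf]$ and $\xi_j/A\in\BM_0$), exactly as in \eqref{nabla:5.0}. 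The mixed-order estimate $\|(\lambda^{1/2},\nabla_b)\bL_3(\lambda)f\|_{L_q}\le C_b|\lambda|^{-1/2}\|f\|_{H^1_q}$ follows the same way by trading a power of $\lambda^{1/2}$ for a derivative landing on $f$. For the $\pd_\lambda$ version, since $\pd_\lambda m_1\in\BM_{-3}$, the symbol $(\pd_\lambda m_1)\lambda^{n_0/2}(i\xi')^{\alpha'}A^{n_1+1}\in\BM_0$ when $n_0+n_1+|\alpha'|=3$ and the extra $(x_d+y_d)$-factor is absorbed by the exponential decay $e^{-cA(x_d+y_d)}$ exactly as in Proposition~\ref{prop:3.1}; this produces the $|\lambda|^{-1}$-type decay required for $\beta_2=-1/2$, $\alpha_2=-1$.

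The dual operator $\bL_3(\lambda)^*\varphi=\int_0^\infty\CF'[m_1(\lambda,\xi')Ae^{-A(x_d+y_d)}\CF^{-1}_{\xi'}[\varphi](\xi',x_d)](y')\d x_d$ is, as before, obtained simply by swapping $\CF'\leftrightarrow\CF^{-1}_{\xi'}$, and since Proposition~\ref{prop:3.1} is symmetric in $x_d,y_d$ (the kernel $Ae^{-A(x_d+y_d)}$ is symmetric), all the dual estimates \eqref{d1*}--\eqref{d3*} in Assumption~\ref{assump:4.1} follow verbatim from the direct ones with $q$ replaced by $q'$. Once all six conditions of Assumption~\ref{assump:4.1} are checked for $(\lambda^{1/2},\nabla_b)\bL_3(\lambda)$ (with $i=1,\alpha_1=0,\beta_1=-1/2$) and for $(\lambda^{1/2},\nabla_b)\pd_\lambda\bL_3(\lambda)$ (with $i=2,\alpha_2=-1,\beta_2=-1/2$), Theorem~\ref{spectralthm:1} delivers \eqref{a:1.1}, \eqref{a:1.2}, and \eqref{a:1.3} directly, the exponents $(1-\sigma)\alpha_i+\sigma\beta_i$ specializing to $-\sigma/2$ and $-(1-\sigma/2)$ as stated. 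The only genuinely non-routine point is bookkeeping: one must be careful that $\bL_3$ carries only a single power of $A$ in front of the exponential (order $\BM_{-1}$ for $m_1$, not $\BM_{-3}$), so the constraint becomes $n_0+n_1+|\alpha'|=1$ rather than $=2$, which means there is \emph{no room} to simultaneously put a $\lambda$ and two spatial derivatives — this is why the statement only asserts control of $(\lambda^{1/2},\nabla_b)\bL_3(\lambda)$ and not of $(\lambda,\lambda^{1/2}\nabla,\nabla_b^2)\bL_3(\lambda)$. I expect this matching of the multiplier order to the claimed derivative count to be the one place where care is needed; everything else is a direct transcription of the arguments for Theorems~\ref{thm:4.1} and \ref{thm:4.2}.
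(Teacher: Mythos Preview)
Your approach is correct and mirrors the paper's proof exactly: verify the $L_q$/$\CH^1_q$ estimates of Assumption~\ref{assump:4.1} via Proposition~\ref{prop:3.1}, check the dual versions by swapping $\CF'\leftrightarrow\CF^{-1}_{\xi'}$, and conclude via Theorem~\ref{spectralthm:1}. Two minor slips to clean up: since $\CL_3=Ae^{-A(x_d+y_d)}$ is $\lambda$-independent, no $(x_d+y_d)$-factor ever appears in $\pd_\lambda\bL_3$ (you noted this correctly at first but then mentioned absorbing such a factor anyway), and the mixed estimate with $\beta_1=-1/2$ is only needed---and only holds---for $\nabla_b\bL_3$, not for $\lambda^{1/2}\bL_3$, which is precisely why \eqref{a:1.2} is stated only for $\nabla_b\bL_3$; the paper handles \eqref{a:1.1} using just the $\alpha_1$-level estimates (cf.\ \eqref{4} and \eqref{d9*} in the proof of Theorem~\ref{spectralthm:1}).
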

\begin{proof}
Since $m_1 (\lambda, \xi') \in \BM_{- 1}$
and $\pd_\lambda m_1(\lambda, \xi') \in \BM_{-3}$,  it follows from 
Proposition~\ref{prop:3.1} that
\begin{align*}
\lVert(\lambda^{1/2}, \nabla_b)\bL_3 (\lambda) f \rVert_{\CH^1_q (\HS)} 
& \le C  \lVert f \rVert_{ \CH^1_q (\HS)}, \\
\lVert(\lambda^{1/2}, \nabla_b\bL_3 (\lambda) f \rVert_{ L_q (\HS)} 
& \le C \lVert f \rVert_{L_q (\HS)}, \\
\lVert  \nabla_b\bL_3 (\lambda) f \rVert_{ L_q (\HS)} 
& \le C_b|\lambda|^{- \frac12} \lVert f \rVert_{ \CH^1_q (\HS)}, \\
\lVert \nabla_b \pd_\lambda \bL_3 (\lambda) f \rVert_{\CH^1_q (\HS)} 
& \le C|\lambda|^{-1} 
\lVert f \rVert_{\CH^1_q (\HS)}, \\
\lVert \nabla_b \pd_\lambda \bL_3 (\lambda) f \rVert_{L_q (\HS)} 
& \le C|\lambda|^{-1} \lVert f \rVert_{L_q (\HS)},\\
\lVert \nabla_b\pd_\lambda \bL_3 (\lambda) f \rVert_{\CH^1_q (\HS)} 
& \le C_b|\lambda|^{- \frac12} \lVert f \rVert_{L_q (\HS)}.	
\end{align*}
 The dual operator $\bL_3^*(\lambda)^*$ is defined by exchanging $\CF^{-1}_{\xi'}$
and $\CF'$ as in the proof of Theorems  \ref{thm:4.1} and \ref{thm:4.2}. 
Then, by Proposition \ref{prop:3.1}, we have 
\begin{align*}
\lVert(\lambda^{1/2}, \nabla_b \bL_3 (\lambda)^* f \rVert_{\CH^1_{q'} (\HS)} 
& \le C  \lVert f \rVert_{\CH^1_{q'} (\HS)}, \\
\lVert(\lambda^{1/2}, \nabla_b)\bL_3 (\lambda)^* f \rVert_{L_{q'} (\HS)} 
& \le C \lVert f \rVert_{L_{q'} (\HS)}, \\
\lVert \nabla_b\bL_3 (\lambda)^* f \rVert_{\CH^1_{q'} (\HS)} 
& \le C_b|\lambda|^{-\frac12} \lVert f \rVert_{L_{q'} (\HS)}, \\
\lVert \nabla_b\pd_\lambda \bL_3 (\lambda)^* f \rVert_{\CH^1_{q'} (\HS)} 
& \le C|\lambda|^{-1} 
\lVert f \rVert_{\CH^{1}_{q'} (\HS)}, \\
\lVert \nabla_b\pd_\lambda \bL_3 (\lambda)^* f \rVert_{L_{q'} (\HS)} 
& \le C|\lambda|^{-1} \lVert f \rVert_{L_{q'} (\HS)},\\
\lVert \nabla_b\pd_\lambda \bL_3 (\lambda)^* f \rVert_{\CH^1_{q'} (\HS)} 
& \le C_b|\lambda|^{-\frac12} \lVert f \rVert_{L_{q'} (\HS)}.
\end{align*}	
Thus, by Theorem \ref{spectralthm:1}, \eqref{4}, and \eqref{d9*}, 
we have Theorem \ref{thm:4.3}. 
\end{proof}
\par

Combining Theorems \ref{thm:4.1}, \ref{thm:4.2}, and \ref{thm:4.3},
we have the following Corollary that gives the existence of solution operators for \eqref{fund:3}.
\begin{cor}
\label{lem-CW45Q} 
Let $1 < q < \infty$, $1 \leq r \leq \infty$, 
$- 1 + 1 \slash q < s <  1 \slash q$,  $0 < \epsilon < \pi \slash 2$,
and $\gamma > 0$.  
Let $\sigma$ be a positive   
number such that $- 1 + 1 \slash q < s \pm \sigma < 1 \slash q$. 
Let $J_m$ and $K_{\ell, m}$ be corresponding variables 
to $\lambda^{1/2}h_m$
and $\pd_\ell h_m$ for $\ell=1, \ldots, d$ and $m=1, \ldots, d-1$, respectively,
and set  $\bJ= (J_1, \ldots, J_{d-1})$ and $\bK = (K_{\ell, m})_{1 \le \ell \le d, 1 \le m \le d - 1}$.
Then, there exist operators
\begin{align*}
\CW_2 (\lambda) & \in \Hol (\Sigma_\epsilon, 
\CL(\CB^{s}_{q,r}(\HS)^{d(d-1)}, \sB^{s + 2}_{q, r} (\HS)^d)), \\
\CW_3 (\lambda) & \in \Hol (\Sigma_\epsilon, 
\CL(\CB^s_{q,r}(\HS)^{d-1}, \sB^{s + 2}_{q, r} (\HS)^d)), \\
\CQ (\lambda)
&\in \Hol (\Sigma_{\epsilon}, \CL(\CB^{s+1}_{q,r}(\HS)^{d(d-1)},  
\sB^{s + 1}_{q, r} (\HS)))
\end{align*}
such that the functions 
$\bw_2 = \CW_2(\lambda) \nabla \bh' 
+ \CW_3(\lambda)(\lambda^{1/2}\bh')$ and 
$\fq_2 = \CQ(\lambda)\nabla\bh'$ are solutions to \eqref{fund:3} for every
$\lambda \in \Sigma_\epsilon + \gamma_b$ and $\bh' \in \sB^{s+1}_{q,r}(\HS)^{d-1}$, 
and there hold
\begin{equation}\label{boundaryest:4.1}\begin{aligned}
\lVert (\lambda, \lambda^{1\slash2} \nabla, \nabla_b^2) \CW_2 (\lambda) 
\bK\rVert_{\CB^s_{q, r} (\HS)} & \le C_b\|\bK\|_{\CB^s_{q, r}(\HS)}, 
 \\
\lVert (\lambda, \lambda^{1\slash2} \nabla, \nabla_b^2) \CW_3 (\lambda) 
\bJ\rVert_{\CB^s_{q, r} (\HS)} & \le 
C_b\|\bJ\|_{\CB^s_{q, r}(\HS)}, \\
\|(\lambda^{1/2}, \nabla_b) \CQ(\lambda)\bK\|_{\CB^s_{q, r}(\HS)} 
& \le C_b \lVert \bK \rVert_{\CB^s_{q, r} (\HS)}
\end{aligned}\end{equation}
for every $\lambda \in \Sigma_\epsilon + \gamma_b$, $\bK \in \CB^s_{q,r}(\HS)^{d(d-1)}$, 
and $\bJ \in \CB^{s}_{q,r}(\HS)^{d-1}$.
\par 
Moreover, for every $\lambda \in \Sigma_\epsilon + \gamma_b$,
$\bJ\in C^\infty_0(\HS)^{d-1}$, and
$\bK \in C^\infty_0(\HS)^{d(d-1)}$, there exists a constant 
$C_b$ depending on $\gamma_b$ such that  there hold
\begin{equation}\label{boundaryest:4.2}\begin{aligned}
\lVert (\lambda, \lambda^{1/2} \nabla,
  \nabla_b^2) \CW_2 (\lambda)\bK \rVert_{\CB^s_{q, r} (\HS)} 
& \le C_b \lvert \lambda \rvert^{- \frac\sigma2} 
\lVert\bK \rVert_{\CB^{s + \sigma}_{q, r} (\HS)}, \\
\lVert \CW_2 (\lambda)\bK \rVert_{\CB^s_{q, r} (\HS)} 
& \le C_b \lvert \lambda \rvert^{- (1 - \frac\sigma2)}
\lVert\bK\rVert_{\CB^{s - \sigma}_{q, r} (\HS)}, \\
\lVert (\lambda, \lambda^{1/2} \nabla, 
  \nabla_b^2)( \pd_\lambda \CW_2 (\lambda))\bK\rVert_{\CB^s_{q, r} (\HS)} 
& \le C_b \lvert \lambda \rvert^{- (1 - \frac\sigma2)} 
\lVert \bK \rVert_{\CB^{s - \sigma}_{q, r} (\HS)}, \\
\lVert (\lambda, \lambda^{1/2} \nabla,   \nabla_b^2) 
\CW_3 (\lambda)\bJ \rVert_{\CB^s_{q, r} (\HS)} 
& \le C_b \lvert \lambda \rvert^{- \frac\sigma2} 
\lVert\bJ \rVert_{\CB^{s + \sigma}_{q, r} (\HS)}, \\
\lVert \CW_3 (\lambda)\bJ \rVert_{\CB^s_{q, r} (\HS)} 
& \le C_b \lvert \lambda \rvert^{- (1 - \frac\sigma2)}
\lVert\bJ\rVert_{\CB^{s - \sigma}_{q, r} (\HS)}, \\
\lVert (\lambda, \lambda^{1/2} \nabla, 
\nabla^2)( \pd_\lambda \CW_3 (\lambda))
\bJ\rVert_{\CB^s_{q, r} (\HS)} 
& \le C_b \lvert \lambda \rvert^{- (1 - \frac\sigma2)} 
\lVert \bJ \rVert_{\CB^{s - \sigma}_{q, r} (\HS)},\\
\lVert \nabla_b \CQ (\lambda)\bK \rVert_{\CB^s_{q, r} (\HS)} 
& \le C_b \lvert \lambda \rvert^{- \frac\sigma2} 
\lVert \bK \rVert_{\CB^{s + \sigma}_{q, r} (\HS)}, \\
\lVert \nabla_b \pd_\lambda \CQ (\lambda)\bK \rVert_{\CB^s_{q, r} (\HS)} 
& \le C \lvert \lambda \rvert^{- (1 - \frac\sigma2)} 
\lVert \bK \rVert_{\CB^{s - \sigma}_{q, r} (\HS)}.
\end{aligned}\end{equation}
\end{cor}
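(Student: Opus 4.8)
The plan is to obtain $\CW_2(\lambda),\CW_3(\lambda),\CQ(\lambda)$ directly from the explicit formula \eqref{hsol:4} for $(\bw_2,\fq_2)$, by rewriting each operator occurring there as a finite linear combination of the integral operators $\bL_1(\lambda),\bL_2(\lambda),\bL_3(\lambda)$ of \eqref{def-L-formula} acting on the components of $\lambda^{1/2}\bh'$ and of $\nabla\bh'$, and then to read off every bound in \eqref{boundaryest:4.1}--\eqref{boundaryest:4.2} term by term from Theorems \ref{thm:4.1}, \ref{thm:4.2} and \ref{thm:4.3}. In \eqref{hsol:4} the data enters only through the traces $\wh h_m(\xi')=\CF'[h_m](\xi')$ and the combination $i\xi'\cdot\wh h'$; what is needed is to recast these as operators acting on $h_m$ on all of $\HS$ together with its derivatives, in exactly the packaging $(\lambda^{1/2}\bh',\nabla\bh')=(\bJ,\bK)$ required for a maximal $L_1$ estimate.

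Three elementary manipulations do this. $(i)$ Integration by parts in the normal variable turns a trace--Poisson operator into a $\int_0^\infty\cdots\,\d y_d$ operator: schematically $\psi(\xi',x_d)\CF'[h_m](\xi',0)=-\int_0^\infty\partial_{y_d}[\psi(\xi',x_d+y_d)\CF'[h_m](\xi',y_d)]\,\d y_d$ with $\psi$ built from $e^{-Bx_d}$ or $M_{x_d}$ (equivalently, the $\xi_d$--residue computation as in \cite[p.~589--590]{SS12}); since $\partial_{y_d}e^{-B(x_d+y_d)}=-Be^{-B(x_d+y_d)}$ and $\partial_{y_d}M_{x_d+y_d}=-(B-A)^{-1}(Be^{-B(x_d+y_d)}-Ae^{-A(x_d+y_d)})$, every piece acquires one of the kernels $Be^{-B(x_d+y_d)}$, $ABM_{x_d+y_d}$, $Ae^{-A(x_d+y_d)}$ of \eqref{def-L-formula}. $(ii)$ The identity $B^2=\mu^{-1}\lambda+A^2$ lets one trade a factor $B^2$ hitting the data for $\lambda^{1/2}\cdot\lambda^{1/2}$ or $A\cdot A$, using $\lambda^{1/2}B^{-1}\in\BM_0$ and $AB^{-1}\in\BM_0$ (cf. the proof of Proposition \ref{prop:3.1}); this is what splits a term into the $\lambda^{1/2}$--channel $\CW_3$ and the derivative channel $\CW_2$. $(iii)$ The identity \eqref{nabla:5.0}, $A\CF'[h_m]=-\sum_{k=1}^{d-1}(i\xi_k/A)\CF'[\pd_k h_m]$ with $\xi_k/A\in\BM_0$, converts the remaining tangential factors $A$ into tangential derivatives of $\bh'$. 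After these substitutions each surviving scalar symbol multiplying $Be^{-B(x_d+y_d)}$, $ABM_{x_d+y_d}$, $Ae^{-A(x_d+y_d)}$ must be checked --- using $B^{-1}\in\BM_{-1}$, $D_{A,B}^{-1}\in\BM_{-3}$, $\xi_j/A\in\BM_0$ (Definition \ref{def:2.15}, \cite{SS12}) --- to lie in $\BM_{-2}$, $\BM_{-3}$, $\BM_{-1}$ respectively, so that $\bL_1(\lambda),\bL_2(\lambda),\bL_3(\lambda)$ apply; for the pressure the symbol $(A+B)B\,D_{A,B}^{-1}\in\BM_{-1}$, so $\CQ(\lambda)$ is an $\bL_3$--type operator acting on $\nabla\bh'$, accounting for the mapping $\CB^{s+1}_{q,r}\to\sB^{s+1}_{q,r}$ in the statement. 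This defines $\CW_2(\lambda),\CW_3(\lambda),\CQ(\lambda)$, their holomorphy on $\Sigma_\epsilon+\gamma_b$ and their stated mapping properties (each $\bL_l(\lambda)$ being holomorphic with values in the relevant operator space by Theorems \ref{thm:4.1}--\ref{thm:4.3}); that $\bw_2,\fq_2$ actually solve \eqref{fund:3} is built in, the transformed ordinary differential equation having been solved exactly as in \cite{SS12}.

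Assembling the estimates is then mechanical. The uniform bounds \eqref{boundaryest:4.1} follow from \eqref{b:4.1*}, \eqref{b:l.1*}, \eqref{a:1.1} applied to each $\lambda^{1/2}h_m,\pd_\ell h_m\in C^\infty_0(\HS)$ and extended to general $\bJ\in\CB^s_{q,r}(\HS)^{d-1}$, $\bK\in\CB^s_{q,r}(\HS)^{d(d-1)}$ by density of $C^\infty_0(\HS)$ in $\CB^s_{q,r}(\HS)$ (Proposition \ref{prop-density-homogeneous}) together with the Fatou property (Proposition \ref{prop-Fatou}), exactly as for Corollary \ref{lem-CW1}. The sharper estimates \eqref{boundaryest:4.2}, for $\bJ,\bK\in C^\infty_0(\HS)$, follow term by term from the $|\lambda|^{-\sigma/2}$--gain estimates \eqref{b:4.2*}, \eqref{b:l.2*}, \eqref{a:1.2}, the decay estimates \eqref{b:4.4}, \eqref{b:l.4}, and the $\pd_\lambda$--estimates \eqref{b:4.3*}, \eqref{b:l.3*}, \eqref{a:1.3}. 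The main obstacle is the symbol--class bookkeeping of the previous paragraph: one has to distribute the powers of $B\sim|\lambda|^{1/2}+|\xi'|$ among the $\lambda^{1/2}$-- and $\nabla$--channels so that the residual symbols land in precisely $\BM_{-2}$, $\BM_{-3}$, $\BM_{-1}$ for all of the $\lambda$--, $\lambda^{1/2}\nabla$-- and $\nabla_b^2$--weighted estimates simultaneously, and keep track of the fact that $\pd_\lambda$ lowers a symbol's degree by $2$ (Definition \ref{def:2.15}). Once that is in place the corollary is a direct combination of Theorems \ref{thm:4.1}, \ref{thm:4.2} and \ref{thm:4.3}.
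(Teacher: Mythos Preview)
Your proposal is correct and follows essentially the same route as the paper: the Volevich trick (your integration by parts in $y_d$) together with the identity $1=(\mu^{-1}\lambda+|\xi'|^2)B^{-2}$ and \eqref{nabla:5.0} to recast the trace operators in \eqref{hsol:4} as linear combinations of $\bL_1,\bL_2,\bL_3$ acting on $(\lambda^{1/2}\bh',\nabla\bh')$, followed by the symbol-class bookkeeping and an application of Theorems \ref{thm:4.1}--\ref{thm:4.3}. The paper carries out exactly this program, writing the resulting operators $\CW_{2,k},\CW_{3,k},\CQ$ explicitly and listing the symbols that must lie in $\BM_{-2},\BM_{-3},\BM_{-1}$; your outline captures all the essential steps.
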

\begin{proof} 
Let $\bw_{2}(x)$ and $\fq_2(x)$ be a function given by \eqref{hsol:4}, which are solutions of
equations \eqref{fund:3}.  
To show the existence of 
 operators $\CW_{2}(\lambda)$,  $\CW_{3}(\lambda)$ and $\CQ(\lambda)$ we {\color{black} rely on} 
the so-called Volevich trick. 
For a symbol ${\color{black} m_2 (\lambda, \xi') \in \BM_{- 2}}$, we consider an operator
$\CF^{-1}_{\xi'}[Be^{-Bx_d} {\color{black} m_2} (\lambda, \xi')\CF'[f](\xi', 0)](x')$, where
$x'=(x_1, \ldots, x_{d-1})$. We write
\begin{align}
\CF^{-1}_{\xi'}&[Be^{-Bx_d} {\color{black} m_2} (\lambda, \xi')\CF'[f](\xi', 0)](x')\\
&=-\int^\infty_0\frac{\pd}{\pd y_d}
\CF^{-1}_{\xi'}[Be^{-B(x_d+y_d)} {\color{black} m_2} (\lambda, \xi')\CF'[f](\xi', y_d)](x')\d y_d \\
& = \int^\infty_0
\CF^{-1}_{\xi'}[Be^{-B(x_d+y_d)}\{B {\color{black} m_2} (\lambda, \xi')\CF'[f](\xi', y_d)
+  {\color{black} m_2} (\lambda, \xi')\CF'[\pd_df](\xi', y_d)\}](x')\d y_d. 
\end{align}
{\color{black} Together with} the identity: $1 = (\mu^{-1}\lambda + |\xi'|^2)B^{-2}$ {\color{black} we} obtain
\begin{equation}\label{volev:1}\begin{aligned}
\CF^{-1}_{\xi'}&[Be^{-Bx_d} {\color{black} m_2} (\lambda, \xi')\CF'[f](\xi', 0)](x')\\
& = \int^\infty_0\CF^{-1}_{\xi'}[Be^{-B(x_d+y_d)}\mu^{-1}\lambda^{1/2} B^{-1}
{\color{black} m_2} (\lambda, \xi')\CF'[\lambda^{1/2}f](\xi', y_d)](x')\d y_d \\
& \quad -\sum_{\ell=1}^{d-1} 
\int^\infty_0\CF^{-1}_{\xi'}[Be^{-B(x_d+y_d)}i\xi_\ell B^{-1}
{\color{black} m_2} (\lambda, \xi')\CF'[\pd_\ell f](\xi', y_d)](x')\d y_d \\
& \quad -\int^\infty_0\CF^{-1}_{\xi'}[Be^{-B(x_d+y_d)}
{\color{black} m_2} (\lambda, \xi')\CF'[\pd_df](\xi', y_d)](x')\d y_d.	
\end{aligned}\end{equation}
\par
Now, we consider another operator $\CF^{-1}_{\xi'}[A^2BM_{x_d}m_3(\lambda, \xi')\CF'[f](\xi', 0)](x')$ 
for a symbol $ m_3 (\lambda, \xi') \in \BM_{- 3}$.
By the Volevich trick, we write
\begin{equation}
\begin{aligned}
\CF^{-1}_{\xi'}&[{\color{black}A^2} BM_{x_d} {\color{black} m_3} (\lambda, \xi')\CF'[f](\xi', 0)](x')\\
&= -\int^\infty_0\frac{\pd}{\pd y_d}
\CF^{-1}_{\xi'}[{\color{black}A^2} BM_{x_d+y_d} {\color{black} m_3} (\lambda, \xi')\CF'[f](\xi', y_d)](x')\d y_d \\
& = 
\int^\infty_0 {\color{black} \CF^{-1}_{\xi'}[} (Be^{-B(x_d+y_d)} + ABM_{x_d+y_d}){\color{black}A^2 m_3} (\lambda, \xi')
\CF'[f](\xi', y_d)](x') \d y_d \\
& \quad - \int^\infty_0\CF^{-1}_{\xi'}[
{\color{black}A^2} BM_{x_d+y_d} {\color{black} m_3} (\lambda, \xi')\CF'[\pd_d f](\xi', y_d)](x')\d y_d.
\end{aligned}
\end{equation}
{\color{black}
Together with \eqref{nabla:5.0}, we may write
\begin{equation}
\label{volev:2}
\begin{aligned}
\CF^{-1}_{\xi'}&[A^2 BM_{x_d} m_3 (\lambda, \xi')\CF'[f](\xi', 0)](x')\\
& = 
\int^\infty_0 \CF^{-1}_{\xi'} \bigg[ (A Be^{-B(x_d+y_d)} 
+ A^2 BM_{x_d+y_d}) \frac{i \xi_\ell}{A} m_3 (\lambda, \xi')
\CF'[\pd_\ell f](\xi', y_d) \bigg](x') \d y_d \\
& \quad - \int^\infty_0\CF^{-1}_{\xi'}[
A^2 BM_{x_d+y_d} m_3 (\lambda, \xi')\CF'[\pd_d f](\xi', y_d)](x')\d y_d.	
\end{aligned}			
\end{equation}
}\noindent
In view of \eqref{volev:1} and \eqref{volev:2}, we {\color{black} define} 
$\CW_{2,j}(\lambda)\bK$ and $\CW_{3,k}\bJ$  by 
\allowdisplaybreaks
\begin{align}
\CW_{2, k} (\lambda)\bK  
&  = \sum_{\ell = 1}^{d - 1}
\int^\infty_0 \CF^{- 1}_{\xi'} \Big[
\mu^{- 1} B e^{- B (x_d + y_d)}
i \xi_\ell B^{- 3} \CF' [K_{\ell, k}] (\xi', y_d) \Big] (x')\d y_d \\
& \quad - \int^\infty_0 \CF^{- 1}_{\xi'} \Big[
\mu^{- 1} B e^{- B (x_d + y_d)}
B^{- 2} \CF'[K_{d, k}] (\xi', y_d) \Big] (x') \d y_d \\
& \quad - 2\sum_{\ell = 1}^{d - 1}
\int^\infty_0 \CF^{- 1}_{\xi'} \Big[\mu^{- 1} B e^{- B (x_d + y_d)}
(i\xi_k) D_{A, B}^{-1}  \CF' [K_{\ell, \ell}]] (\xi', y_d) \Big] (x') \d y_d \\
& \quad -2 \sum_{\ell = 1}^{d - 1}
\int^\infty_0 \CF^{- 1}_{\xi'} \Big[\mu^{- 1} A^2 B M_{x_d + y_d}
(i \xi_k/A) D_{A, B}^{- 1}
\CF' [K_{\ell, \ell}] (\xi', y_d) \Big] (x') \d y_d \\
& \quad + 2 \sum_{\ell = 1}^{d - 1}
\int^\infty_0 \CF^{- 1}_{\xi'} \Big[\mu^{-1} A^2 B M_{x_d + y_d}
(i\xi_k/A)(i\xi_\ell /A)D_{A, B}^{-1}
\CF' [K_{d, \ell}] (\xi', y_d) \Big] (x') \d y_d \\
& \quad + \sum_{\ell = 1}^{d - 1}
\int^\infty_0 \CF^{- 1}_{\xi'} \Big[\mu^{- 1} B e^{- B (x_d + y_d)}
i \xi_k (3 B - A) B^{- 1} D_{A, B}^{- 1}
\CF' [K_{\ell, \ell}] (\xi', y_d) \Big] (x') \d y_d \\
& \quad - \sum_{\ell = 1}^{d - 1}
\int^\infty_0 \CF^{- 1}_{\xi'} \Big[\mu^{- 1} B e^{- B (x_d + y_d)}
i\xi_k i \xi_\ell (3 B - A) B^{- 2} D_{A, B}^{- 1}
\CF' [K_{d, \ell}] (\xi', y_d) \Big] (x') \d y_d 
\end{align}
for $k = 1, \ldots, d - 1$,
\begin{equation}
\CW_{3, k}\bJ = \int^\infty_0 \CF^{- 1}_{\xi'} \Big[
\mu^{- 2} B e^{- B (x_d + y_d)}
\lambda^{1/2}B^{- 3} \CF'[J_k](\xi', y_d) \Big] (x') \d y_d
\end{equation} 
for $k = 1, \ldots, d - 1$, and 
\begin{align}
\CW_{2,d}(\lambda)\bK
&= 2 \sum_{\ell = 1}^{d - 1}
\int^\infty_0 \CF^{- 1}_{\xi'} \Big[\mu^{- 1} B e^{- B (x_d + y_d)}
A D_{A, B}^{- 1} \CF' [K_{\ell, \ell}] (\xi', y_d) \Big] (x') \d y_d \\
& \quad + 2 \sum_{\ell = 1}^{d - 1} \int^\infty_0 \CF^{- 1}_{\xi'} 
\Big[\mu^{-1} A^2 B M_{x_d + y_d} D_{A, B}^{-1}
\CF' [K_{\ell, \ell}] (\xi', y_d) \Big] (x') \d y_d \\
& \quad - 2 \sum_{\ell = 1}^{d - 1}
\int^\infty_0 \CF^{- 1}_{\xi'} \Big[
\mu^{-1} A^2 B M_{x_d + y_d} ( i \xi_\ell/A) D_{A, B}^{-1} 
\CF' [K_{d, \ell}] (\xi', y_d) \Big] (x') \d y_d \\
& \quad + \sum_{\ell = 1}^{d - 1}
\int^\infty_0 \CF^{- 1}_{\xi'} \Big[
\mu^{- 1} B e^{- B (x_d + y_d)} (B -A) D_{A, B}^{- 1}
\CF' [K_{\ell, \ell}] (\xi', y_d) \Big] (x') \d y_d, \\
& \quad - \sum_{\ell = 1}^{d - 1} \int^\infty_0 \CF^{- 1}_{\xi'} \Big[
\mu^{- 1} B e^{- B (x_d + y_d)} (B -A) i\xi_\ell
B^{- 1} D_{A, B}^{- 1} \CF'[K_{d, \ell}] (\xi', y_d) \Big] (x') \d y_d. 
\end{align}
Then, we have $\bw_2= \CW_2(\lambda) {\color{black} \nabla \bh'} 
+ \CW_3(\lambda) ({\color{black} \lambda^{1 \slash 2} \bh'})$,
where {\color{black} we have set}
\begin{equation}
\CW_2(\lambda)\bK
= (\CW_{2,1}(\lambda)\bK, \ldots, \CW_{2,d}(\lambda)\bK),
\quad \CW_3\bJ =   (\CW_{3,1}(\lambda)\bJ, \ldots, 
\CW_{3,d-1}(\lambda)\bJ, 0).
\end{equation}
{\color{black}Similarly,} by the Volevich trick, 
{\color{black} we may write}
$\fq_2 (x) = \CQ(\lambda) {\color{black} \nabla \bh'}$, 
where we have set
\begin{align}
\CQ(\lambda)\bK & = - 2 \sum_{\ell = 1}^{d - 1}
\int^\infty_0 \CF^{- 1}_{\xi'} \Big[
A e^{- A (x_d + y_d)} (A + B) B D_{A, B}^{- 1} 
\CF' [K_{\ell,\ell}] (\xi', y_d) \Big] (x') \d y_d \\ 
& \quad + 2 \sum_{\ell = 1}^{d - 1} \int^\infty_0 
\CF^{- 1}_{\xi'} \Big[ A e^{- A (x_d + y_d)}
(A + B)B( i \xi_\ell /A) D_{A, B}^{- 1}
\CF'[K_{d, \ell}] (\xi', y_d) \Big] (\xi') \d y_d.
\end{align}
The detailed derivation of  $\CW_2(\lambda)$, 
$\CW_3(\lambda)$ and $\CQ(\lambda)$ may  be 
found in \cite[Lem. 3.5.13]{Shi20}. \par 
According to the discussion in Section \ref{sec-solution-formula},
Problem \eqref{fund:3} admits solutions  $\bw_2$ and $\fq_2$. 
Recalling that $B^{-1} \in \BM_{-1}$ and $D_{A,B}^{-1} \in \BM_{-3}$, 
we see the following symbols:
\begin{gather}
\lambda^{1/2} B^{-3}, \enskip i\xi_\ell B^{-3},\enskip B^{-2}, \enskip
i\xi_\ell D_{A,B}^{-1}, \enskip i\xi_k(3B-A) B^{-1}D_{A,B}^{-1}, \\
i\xi_k i\xi_\ell(3B-A)B^{-2}D_{A,B}^{-1},\enskip 
AD_{A,B}^{-1}, \enskip (B-A)D_{A,B}^{-1}, \enskip
(B-A)i\xi_\ell B^{-1}D_{A,B}^{-1}
\end{gather}
all belong to $\BM_{-2}$,  and $(i\xi_k/A)D_{A,B}^{-1}$ and 
$(i\xi_k/A)(i\xi_\ell/A)D_{A,B}^{-1}$ 
belong to $\BM_{-3}$.  Thus, 
using Theorem \ref{thm:4.1} and Theorem \ref{thm:4.2}, 
we see that  $\CW_2(\lambda)$ and $\CW_3(\lambda)$ 
satisfy estimates in \eqref{boundaryest:4.1} and 
\eqref{boundaryest:4.2} for any $\lambda \in \Sigma_\epsilon + \gamma_b$, 
$\bK \in C^\infty_0(\HS)^{d(d-1)}$, and $\bJ \in C^\infty_0(\HS)^{d-1}$. 
Moreover, since $(A+B)BD_{A,B}^{-1}$ 
and $(A+B)B(i\xi_\ell A^{-1})D_{A,B}^{-1}$
belong to $\BM_{-1}$, using Theorem \ref{thm:4.3} we see that 
$\CQ (\lambda)$ satisfies the estimates in \eqref{boundaryest:4.1} and \eqref{boundaryest:4.2}. 
Since $C^\infty_0(\HS)$ is dense in $\CB^s_{q,r}(\HS)$ for $1 < q < \infty$, $1 \leq r \leq \infty-$,
and $-1+1/q < s < 1/q$, by the density argument we have the estimates in 
\eqref{boundaryest:4.1} for any $\bK \in \CB^s_{q,r}(\HS)^{d(d-1)}$ and $\bJ
\in \CB^s_{q,r}(\HS)^{d-1}$. Thus, for any $\bh \in \sB^{s+1}_{q,r}(\HS)$, we may 
define $\CW_2(\lambda)\nabla\bh'$, $\CW_3(\lambda)(\lambda^{1/2}\bh')$, and 
$\CP(\lambda)\nabla\bh'$, and hence from their definitions, we see that
$\bw_2= \CW_2(\lambda)\nabla\bh' + \CW_3(\lambda)(\lambda^{1/2}\bh')$ and 
$\fq_2= \CQ(\lambda)\nabla\bh'$ give solutions to  
\eqref{fund:3}. 
\end{proof}
\begin{rem}
We remark that, in contrast to the formula derived in \cite[Sec. 6]{SS12}, 
 there is no term of the form
\begin{equation}
\int_0^\infty \CF^{- 1}_{\xi'} [m (\lambda, \xi') 
 e^{- {\color{black} \lvert \xi' \vert} (x_d + y_d)} \CF'[\mathsf h] (\xi', y_d) ] (x') \d y_d
\end{equation}
in the representation of $Q_3$. If this term appears in the representation of the 
pressure term,  the pressure term itself may  not be estimated although
its gradient  may  be estimated, 
since this is a singular integral operator. This fact sometimes  
causes difficulty when we treat the pressure term. 
\end{rem}
\subsection{Proof of Theorems \ref{th-sth}.} 
We first construct solutions operators $\CS(\lambda)$ and $\CP(\lambda)$ of equations
\eqref{resol:1.1}. In the following, let $\bPhi = (\bPhi_1, \bPhi_2', \bPhi_3')$ 
be the corresponding variable to 
$\bsF= (\bff,  \lambda^{1/2}\bh', \nabla\bh')$ with $(\bff, \bh') \in \CD^s_{q,r}(\HS)$.
More precisely, $\bPhi_1$, $\bPhi_2'$, and $\bPhi_3'$ are the corresponding variable
to $\bff$, $\lambda^{1/2}\bh'$, and $\nabla\bh'$, respectively.  \par
Let $(\bff, \bh') \in \CD^s_{q,r}(\HS)$. Let $\CW_1(\lambda)$ be an operator given in Corollary \ref{lem-CW1} and set 
$\bw_1 = \CW_1(\lambda)\bff$. Then, we see that
$\bu_1=\bw_1$  satisfies \eqref{whole:1} due to $\bff \in \CJ^s_{q,r}(\HS)$.  Namely,   $\bw_1$
and $\fq_1=0$ satisfy
the following equations: 
$$\left\{\begin{aligned}
\lambda \bw_1 - \DV(\mu \BD(\bw_1) - \fq_1\BI)&= \bff 
\quad\text{in $\HS$}, \\
\dv \bw_1 &= 0 \quad\text{in $\HS$}, \\
w_{1,j}|_{\pd\HS} &= 0\quad j=1, \ldots, d-1, \\
2 \mu \pd_d w_{1,d} - \fq_1|_{\pd\HS} &= 0.
\end{aligned}\right.$$
Moreover, by  Corollary  \ref{lem-CW1}, there hold
\allowdisplaybreaks
\begin{equation}
\label{sest:1}
\begin{aligned}
\|(\lambda, \lambda^{1/2}\nabla, \nabla_b^2)\CW_1(\lambda)\bPhi_1\|_{\CB^s_{q,r}(\HS)}
 &\leq C\|\bPhi_1\|_{\CB^s_{q,r}(\HS)},  \\
\|(\lambda, \lambda^{1/2}\nabla, \nabla_b^2)\CW_1(\lambda)\bPhi_1\|_{\CB^s_{q,r}(\HS)}
&\leq C|\lambda|^{-\frac{\sigma}{2}}\|\bPhi_1\|_{\CB^s_{q,r}(\HS)}, \\
\|(1, \lambda^{-1/2}\nabla) \CW_1(\lambda)\bPhi_1\|_{\CB^s_{q,r}(\HS)} &\leq C|\lambda|^{-(1-\frac{\sigma}{2})}
\|\bPhi_1\|_{\CB^{s-\sigma}_{q,r}(\HS)}, \\
\|(\lambda, \lambda^{1/2}\nabla, \nabla_b^2)\pd_\lambda \CW_1(\lambda)\bPhi_1\|_{\CB^s_{q,r}(\HS)}
& \leq C|\lambda|^{-(1-\frac{\sigma}{2})}\|\bPhi_1\|_{\CB^{s-\sigma}_{q,r}(\HS)} 
\end{aligned}
\end{equation}
for any $\bPhi_1 \in C^\infty_0(\HS)^d$, $1 < q < \infty$, $1 \leq r \leq \infty$, and
$-1+1/q < s-\sigma <  s < s+\sigma < 1/q$.  
\par
We next consider the compensation of $\bu_1$ and $\fq_1$. 
Let $\CW_2(\lambda)$, $\CW_3(\lambda)$, and $\CQ(\lambda)$ be solution operators given in 
Corollary \ref{lem-CW45Q} and set 
\begin{equation}\label{cs3}\CS_3(\lambda) \bPhi_1 = 
\mu \BD(\CW_1(\lambda) \bPhi_1)\bn_0- \mu\langle \BD(\CW_1(\lambda) \bPhi_1)\bn_0, \bn_0 \rangle \bn_0.
\end{equation}
Let 
\begin{align*}\bu_2 &= \CW_2(\lambda)(\nabla\bh') + \CW_3(\lambda)(\lambda^{1/2}\bh')-
\Big(\CW_2(\lambda)(\nabla\CS_3(\lambda)\bff)
+ \CW_3(\lambda)(\lambda^{1/2}\CS_3(\lambda)\bff) \Big), \\
\fq_2 &= \CQ(\lambda)\nabla \bh' -\CQ(\lambda)\nabla \CS_3(\lambda)\bff.
\end{align*}
Then,  
as was discussed in Step 1 
in Section \ref{subsec.2.4} and Corollary \ref{lem-CW45Q},  noting that
$$\mu \BD(\bw_1)\bn_0
- \mu\langle \BD(\bw_1)\bn_0, \bn_0 \rangle \bn_0 = \CS_3(\lambda)\bff,$$
we see that 
  $\bu_2$ and $\fq_2$ are solutions to equations:
$$\left\{\begin{aligned}
\lambda \bu_2 - \DV(\mu \BD(\bu_2) - \fq_2\BI) & = 0 & \quad & \text{in $\HS$}, \\
\dv \bu_2 & =0 & \quad & \text{in $\HS$}, \\
(\mu \BD(\bu_2) - \fq_2\BI) \bn_0 & = \Big\{(\bh', 0) - \mu\Big(\BD(\bw_1)\bn_0 
- \langle \BD(\bw_1)\bn_0, \bn_0 \rangle \bn_0\Big) \Big\}\Big\vert_{\pd\HS} 
& \quad & \text{on $\pd \HS$}.
\end{aligned}\right.$$
In particular, $\bu = \bw_1+\bu_2$ and $\fq =\fq_2 $ are solutions of equations \eqref{resol:1.1}. \par
Set
\begin{align*}
\CS_2(\lambda) \bPhi &= \CW_2(\lambda) \bPhi_3' + \CW_2(\lambda) \bPhi_2'-
(\CW_2(\lambda)(\nabla\CS_3(\lambda)\bPhi_1)
+ \CW_3(\lambda)(\lambda^{1/2}\CS_3(\lambda)\bPhi_1)), \\
\CP(\lambda)\bPhi & = \CQ(\lambda)\bPhi_3' -\CQ(\lambda)\nabla \CS_3(\lambda)\bPhi_1.
\end{align*}
By Corollaries \ref{lem-CW1} and \ref{lem-CW45Q}, we see that
for any $\lambda \in \Sigma_\epsilon+\gamma_b$ and $\bPhi \in C^\infty_0(\HS)^{M_d}$,
there hold
\begin{align*}
&\|(\nabla, \lambda^{1/2}\nabla, \nabla_b^2)\CS_2(\lambda)\bPhi\|_{\CB^s_{q,r}(\HS)}
+ \|\nabla_b\CP(\lambda)\bPhi\|_{\CB^{s}_{q,r}(\HS)} \\
&\qquad 
\leq C(\|\bPhi\|_{B^s_{q,r}(\HS)} + \|(\lambda^{1/2}, \nabla)\nabla\CW_1(\lambda)\bPhi
\|_{B^s_{q,r}(\HS)} \leq C\|\bPhi\|_{\CB^s_{q,r}(\HS)}, \\
&\|(\nabla, \lambda^{1/2}\nabla, \nabla_b^2)\CS_2(\lambda)\bPhi\|_{\CB^s_{q,r}(\HS)}
+ \|\nabla_b\CP(\lambda)\bPhi\|_{\CB^{s}_{q,r}(\HS)} \\
&\qquad \leq C(|\lambda|^{-\frac{\sigma}{2}}\|\bPhi\|_{\CB^s_{q,r}(\HS)} + 
\|(\lambda^{1/2}, \nabla)\nabla \CW_1(\lambda)\bPhi\|_{\CB^s_{q,r}(\HS)})
\leq C|\lambda|^{-\frac{\sigma}{2}}\|\bPhi\|_{\CB^s_{q,r}(\HS)}.
\end{align*}
\par
We now estimate $\pd_\lambda \CS_2(\lambda)\bPhi$. 
It follows from the definition of $\CS_2(\lambda)$ that
\begin{align*}
\pd_\lambda \CS_2(\lambda)\bPhi &= \pd_\lambda \CW_2(\lambda)\bPhi'_3 +
\pd_\lambda \CW_3(\lambda)\bPhi'_2 - (\pd_\lambda\CW_2(\lambda))(\nabla \CS_3(\lambda)\bPhi_1)
- \CW_2(\lambda)(\nabla\pd_\lambda \CS_3(\lambda)\bPhi_1) \\
&\quad - (\pd_\lambda \CW_3(\lambda))(\lambda^{1/2}\CS_3(\lambda)\bPhi_1)
- \CW_3(\lambda)(\lambda^{1/2}\pd_\lambda \CS_3(\lambda)\bPhi_1)
-(1/2) \lambda^{-1/2}\CW_3(\lambda)(\CS_3(\lambda)\bPhi_1).
\end{align*}
By Corollary \ref{lem-CW45Q}, we have 
\begin{multline}
\|(\lambda, \lambda^{1/2}\nabla, \nabla_b^2)\CS_2(\lambda)\bPhi\|_{\CB^s_{q,r}(\HS)} \\
\leq C\Big(|\lambda|^{-(1-\frac{\sigma}{2})}\|(\bPhi'_2, \bPhi'_3)\|_{\CB^{s-\sigma}_{q,r}(\HS)} 
+|\lambda|^{-(1-\frac{\sigma}{2})}\|(\lambda^{1/2}, \nabla)\CS_3(\lambda)\bPhi_1\|_{\CB^{s-\sigma}_{q,r}(\HS)}
\\
+ \|(\lambda^{1/2}, \nabla)\pd_\lambda \CS_3(\lambda)\bPhi_1\|_{\CB^s_{q,r}(\HS)} 
+|\lambda|^{-1/2}\|\CS_3(\lambda)\bPhi_1\|_{\CB^s_{q,r}(\HS)}\Big).
\end{multline}
From the definition of $\CS_3(\lambda)$ in \eqref{cs3}, we obtain
\begin{align*}
\|(\lambda^{1/2}, \nabla)\pd_\lambda \CS_3(\lambda)\bPhi_1\|_{\CB^s_{q,r}(\HS)} \leq 
C\|(\lambda^{1/2}\nabla, \nabla^2)\pd_\lambda\CW_1(\lambda)\bPhi_1\|_{\CB^s_{q,r}(\HS)}
\leq C|\lambda|^{-(1-\frac{\sigma}{2})}\|\bPhi_1\|_{\CB^{s-\sigma}_{q,r}(\HS)}.
\end{align*}
In addition, we see that 
$$\|(\lambda^{1/2}, \nabla)\CS_3(\lambda)\bPhi_1\|_{\CB^{s-\sigma}_{q,r}(\HS)}
\leq C\|(\lambda^{1/2}\nabla, \nabla^2)\CW_1(\lambda)\bPhi_1\|_{\CB^{s-\sigma}_{q,r}(\HS)}
\leq C\|\bPhi_1\|_{\CB^{s-\sigma}_{q,r}(\HS)}.
$$
Here, since there holds $-1+1/q < s\pm \sigma < 1/q$, we may prove 
$$\|(\lambda, \lambda^{1/2}\nabla, \nabla_b^2)\CW_1(\lambda)\bPhi_1\|_{B^{s\pm\sigma}_{q,r}(\HS)}
\leq C\|\bPhi_1\|_{\CB^{s\pm\sigma}_{q,r}(\HS)}. 
$$
using the same argument as in the case $s$. 
Using Corollary \ref{lem-CW1}, we also have
\begin{align*}
|\lambda|^{-1/2}\|\CS_3(\lambda)\bPhi_1\|_{B^s_{q,r}(\HS)} &
\leq C\|\lambda^{-1/2}\nabla\CW_1(\lambda)\bPhi_1\|_{\CB^s_{q,r}(\HS)}
\leq C|\lambda|^{-(1-\frac{\sigma}{2})} \|\bPhi_1\|_{\CB^{s-\sigma}_{q,r}(\HS)}. 
\end{align*}
which together with the second estimate in \eqref{sest:1} implies that 
$$\|(\lambda, \lambda^{1/2}\nabla, \nabla_b^2)
\pd_\lambda \CS_2(\lambda)\bPhi\|_{\CB^s_{q,r}(\HS)} 
\leq C|\lambda|^{-(1-\frac{\sigma}{2})}
\|\bPhi\|_{\CB^{s-\sigma}_{q,r}(\HS)}.$$
\par
Finally, we estimate $\pd_\lambda \CP(\lambda)\bPhi$. To this end, we write
$$\pd_\lambda \CP(\lambda)\bPhi = \pd_\lambda \CQ(\lambda)\bPhi'_3-(\pd_\lambda \CQ(\lambda))
\nabla\CS_3(\lambda)\bPhi_1 - \CQ(\lambda)\nabla \pd_\lambda \CS_3(\lambda) \bPhi_1.
$$
By Corollary \ref{lem-CW45Q}, we have
\begin{align*}
\|\nabla_b\pd_\lambda \CP(\lambda)\bPhi\|_{\CB^s_{q,r}(\HS)}
& \leq C|\lambda|^{-(1-\frac{\sigma}{2})} \Big(\|\bPhi_3\|_{\CB^s_{q,r}(\HS)} 
+ \|\nabla^2\CW_1(\lambda)\bPhi_1\|_{\CB^{s-\sigma}_{q,r}(\HS)}\Big) \\
& \quad + C\|\nabla^2\pd_\lambda \CW_1(\lambda)\bPhi_1\|_{\CB^s_{q,r}(\HS)}.\\
& \leq C|\lambda|^{-(1-\frac{\sigma}{2})}\|\bPhi\|_{\CB^{s-\sigma}_{q,r}(\HS)}.
\end{align*} 
Set $\CS(\lambda)\bPhi = \CW_1(\lambda)\bPhi_1 + \CS_2(\lambda)\bPhi$.
Then, the estimates in the third and fourth assertions of 
Theorem \ref{th-sth} has been proved for any $\lambda \in \Sigma_\epsilon + \gamma_b$
and $\bPhi \in C^\infty_0(\HS)^{M_d}$.
Since $C^\infty_0(\HS)$ is dense in $\CB^s_{q,r}(\HS)$, we have the estimate in $(3)$ for 
any $\bPhi \in \CB^s_{q,r}(\HS)^{M_d}$ by the density argument.  
From the definition, we see that $\CS(\lambda)\bsF = \bw_1+\bu_2$ 
and $\CP(\lambda)\bsF = \fq_2=\fq$ are solutions of \eqref{resol:1.1}.  
The proof of Theorem \ref{th-sth} is now complete.
\subsection{Uniqueness of the generalized resolvent problem.}
The final task for the generalized resolvent problem is to prove 
the uniqueness of solutions. Namely, we consider the equations:
\begin{equation}
\label{uniqueeq:0} 
\left\{\begin{aligned}
\lambda \bu - \DV(\mu\BD (\bu) - \fp \BI) &= 0&\quad&\text{in $\HS$}, \\
\dv \bu&=0&\quad&\text{in $\HS$}, \\
(\mu\BD (\bu) - \fp \BI)\bn_0 &=0 &\quad&\text{on $\pd\HS$}
\end{aligned}\right.
\end{equation}
and shall show the following theorem. 
\begin{thm}
\label{thm:unique} 
Let $1 < q < \infty$, $1 \leq r \leq \infty-$, $-1+1/q < s < 1/q$, 
$\epsilon \in (0, \pi/2)$, and $\gamma>0$. 
Let $\lambda \in \Sigma_\epsilon + \gamma_b$.
If  $\bu \in \sB^{s+2}_{q,r}(\HS)^d$ and $\fp \in \sB^{s+1}_{q,r}(\HS) + \wh \CB^{s+1}_{q, r, 0}(\HS)$
satisfy equations \eqref{uniqueeq:0}, then $\bu=0$ and $\fq = 0$. 
\end{thm}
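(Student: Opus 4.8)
The plan is to show first that $\bu=0$ by a duality argument against the generalized resolvent problem with the dual indices, and then to recover $\fp=0$ directly from the equations. To begin with, since $\dv\bu=0$ in $\HS$, Theorem~\ref{thm-solenoidal-characterization} (applicable because $-1+1/q<s<1/q$) gives $\bu\in\CJ^s_{q,r}(\HS)$. Moreover, $\dv\bu=0$ implies $\DV(\mu\BD(\bu))=\mu\Delta\bu$, so the first equation of \eqref{uniqueeq:0} reads $\nabla\fp=\lambda\bu-\mu\Delta\bu$; hence once $\bu=0$ is known, $\nabla\fp=0$ follows at once. Thus everything reduces to proving $\bu=0$.

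For the second step, fix an arbitrary $\bg\in C^\infty_0(\HS)^d$. Since $-1+1/q<s<1/q$ is equivalent to $-1+1/q'<-s<1/q'$, the weak Dirichlet theory and the second Helmholtz decomposition of Subsection~\ref{subsec.2.3} apply with indices $(-s,q',r')$, giving $\bg=\BP^{-s}_{q',r'}\bg+\nabla\BQ^{-s}_{q',r'}\bg$ with $\BQ^{-s}_{q',r'}\bg\in\wh\CB^{1-s}_{q',r',0}(\HS)$ and $\bg_\sigma:=\BP^{-s}_{q',r'}\bg\in\CJ^{-s}_{q',r'}(\HS)$. Because $\bu\in\CJ^s_{q,r}(\HS)$, Definition~\ref{def-solenoidal} yields $(\bu,\nabla\BQ^{-s}_{q',r'}\bg)=0$, so $(\bu,\bg)=(\bu,\bg_\sigma)$. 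Now apply Theorem~\ref{th-sth} with parameters $(q',r',-s)$ and the same $\lambda\in\Sigma_\epsilon+\gamma_b$: as $(\bg_\sigma,0)\in\CD^{-s}_{q',r'}(\HS)$, assertion~$(2)$ provides $\bv:=\CS(\lambda)(\bg_\sigma,0,0)\in\sB^{-s+2}_{q',r'}(\HS)^d$ and $\fq:=\CP(\lambda)(\bg_\sigma,0,0)\in\sB^{-s+1}_{q',r'}(\HS)$ solving $\lambda\bv-\DV(\mu\BD(\bv)-\fq\BI)=\bg_\sigma$, $\dv\bv=0$ in $\HS$, and $(\mu\BD(\bv)-\fq\BI)\bn_0=0$ on $\pd\HS$. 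Integration by parts, combined with $\dv\bu=\dv\bv=0$ and the stress-free boundary conditions for $\bu$ and $\bv$ (which make every boundary contribution either vanish or cancel), gives
\[
(\bu,\bg_\sigma)=\lambda(\bu,\bv)+\frac{\mu}{2}\,(\BD(\bu),\BD(\bv))=\big(\lambda\bu-\DV(\mu\BD(\bu)-\fp\BI),\,\bv\big)=0,
\]
where the middle expression is obtained by pairing the first equation of \eqref{uniqueeq:0} with $\bv$ (its boundary term killed by the third equation of \eqref{uniqueeq:0}, its pressure term by $\dv\bv=0$), and the last equality is again \eqref{uniqueeq:0}. Hence $(\bu,\bg)=0$ for every $\bg\in C^\infty_0(\HS)^d$, so $\bu=0$ in $\CD'(\HS)$, i.e. $\bu=0$ in $\sB^{s+2}_{q,r}(\HS)^d$.

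For the third step, with $\bu=0$ the first equation of \eqref{uniqueeq:0} forces $\nabla\fp=0$ in $\HS$; in the homogeneous case this already means $\fp=0$ as an element of $\dot B^{s+1}_{q,r}(\HS)+\wh{\dot B}^{s+1}_{q,r,0}(\HS)$. In the inhomogeneous case $\fp$ equals a constant $c$; since $s+1>1/q$ its trace on $\pd\HS$ is well defined by Proposition~\ref{prop-trace} (the $\wh B^{s+1}_{q,r,0}(\HS)$-summand having zero trace) and equals $c$, whereas the boundary condition in \eqref{uniqueeq:0} with $\bu=0$ reads $-\fp\bn_0=0$ on $\pd\HS$, so $c=0$ and $\fp=0$.

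I expect the main obstacle to be the rigorous justification of the two integration-by-parts identities in the second step at the available Besov regularity — and in the homogeneous case also the delicate low-frequency behaviour — together with the precise cancellation of the boundary terms. This will be handled by approximating $\bu,\bv,\fq$ in the relevant norms using the density of $C^\infty_0(\HS)$ (Propositions~\ref{prop-density} and~\ref{prop-density-homogeneous}) and the trace theorem (Proposition~\ref{prop-trace}), and by checking via the duality and embedding relations of Proposition~\ref{prop-dual-interpolation} that each bilinear expression $(\bu,\bv)$, $(\BD(\bu),\BD(\bv))$ and every boundary pairing is a well-defined pairing between a space and (a subspace of) its dual.
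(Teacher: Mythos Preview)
Your proposal is correct and follows essentially the same approach as the paper: take an arbitrary $C^\infty_0(\HS)^d$ test function, apply the second Helmholtz decomposition at the dual indices $(-s,q',r')$, solve the dual generalized resolvent problem via Theorem~\ref{th-sth}, and use two integrations by parts to conclude $(\bu,\bg)=0$, whence $\bu=0$ and then $\fp=0$. The only point the paper makes explicit that you leave implicit is the splitting $\fp=\fp_1+\fp_2$ with $\fp_1\in\sB^{s+1}_{q,r}(\HS)$ and $\fp_2\in\wh\CB^{s+1}_{q,r,0}(\HS)$: the pairing $(\nabla\fp_2,\bv)$ vanishes by $\bv\in\CJ^{-s}_{q',r'}(\HS)$ and Definition~\ref{def-solenoidal}, while for $\fp_1$ one integrates by parts in the usual way, the boundary contribution being absorbed into the stress-free condition since $\fp|_{\pd\HS}=\fp_1|_{\pd\HS}$; your phrase ``its pressure term by $\dv\bv=0$'' covers only the $\fp_1$ part.
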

\begin{proof}
Let $\bff$ be any element in $C^\infty_0(\HS)^d$. 
Since  $-1+1/q' < -s < 1/q'$, $\bff \in \CB^{-s}_{q', r'}(\HS)^d$. 
Let $\bff = \bg + \nabla\fq_2$ be the second Helmholtz
decomposition of $\bff$ with $\bg \in \CJ^{-s}_{q', r'}$ and  
$\fq_2 \in \wh \CB^{-s+1}_{q', r', 0}(\HS)$. 
Applying Theorem \ref{th-sth} for $\bg = \bff - \nabla \fq$ gives  the existence of  
$\bv \in \sB^{-s+2}_{q', r'}(\HS)^d$ and $\fq_1 \in \CB^{-s+1}_{q', r'}(\HS)$,  which satisfies equations:
\begin{equation}
\label{uniqueeq:1} 
\left\{\begin{aligned}
\lambda \bv - \DV(\mu\BD (\bv) - \fq_1 \BI) &= \bff-\nabla\fq_2&\quad&\text{in $\HS$}, \\
\dv \bv&=0&\quad&\text{in $\HS$}, \\
(\mu\BD (\bv) - \fq_1 \BI)\bn_0 &=0 &\quad&\text{on $\pd\HS$}.
\end{aligned}\right.
\end{equation}
Let us write $\fp = \fp_1+\fp_2$ with $\fp_1 \in \sB^{s+1}_{q,r}(\HS)$, 
$\fp_2 \in \wh\CB^{s+1}_{q,r,0}(\HS)$. Since $\dv\bu=\dv\bv=0$, 
it follows from Theorem \ref{thm-solenoidal-characterization} that  $\bu \in \CJ^s_{q,r}(\HS)$ and 
$\bv \in \CJ^{-s}_{q', r'}$. 
In particular, 
we have $(\bu, \nabla \fq)=(\nabla \fp_2, \bv)=0$. Let $D_{ij}(\bu)$ stands for the
$(i, j)$-th component of $\BD(\bu)$ and we write $\bu=(u_1, \ldots, u_d)$, 
and $\bv=(v_1, \ldots, v_d)$. Moreover, let $\delta_{jk}$ be the Kronecker delta, i.e., 
$\delta_{jj}=1$ and $\delta_{jk}=0$ for $j\not=k$. 
 Thus, by the Gauss divergence theorem
we have
\begin{align*}
&(\bu, \bff) = (\bu, \lambda \bv - \DV(\mu\BD(\bv) - \fq_1))\\
&= \lambda (\bu, \bv) - (\bu|_{\pd\HS}, (\mu \BD(\bv) - \fq_1\BI)\bn_0|_{\pd\HS})_{\pd\HS} + 
\sum_{j.k=1}^d(\pd_k u_j, \mu D_{jk}(\bv))\\
&=\lambda(\bu, \bv) 
+ \frac\mu2\sum_{j,k=1}^d (D_{jk}(\bu),  D_{jk}(\bv)).
\end{align*}
Here, we have used the fact that $D_{jk}(\bv) = D_{kj}(\bv)$ 
and written $(f, g)_{\pd\HS} = \int_{\pd\HS} f(x')g(x')\d x'$ with $x'=
(x_1, \ldots, x_{d-1}) \in \BR^{d-1}$. Since  $(\mu\BD(\bv) - \fq\BI)\bn_0|_{\HS}=0$,
we arrive at the equality:
$$(\bu, \bff) = \lambda(\bu, \bv) + \frac{\mu}{2}(\BD(\bu), \BD(\bv)).$$
Likewise, recalling that $(\nabla\fp_2, \bv)=0$ and $\fp=\fp_1$ on $\pd\HS$,  we observe
\begin{align*}
0 &= (\lambda \bu - \DV (\mu\BD(\bu) - \fp \BI), \bv) \\
&= \lambda(\bu, \bv) + ((\mu\BD(\bu) - \fp)\bn_0|_{\pd\HS}, \bv|_{\pd\HS})_{\pd\HS}
+ \frac{\mu}{2}(\BD(\bu), \BD(\bv)) - (\fp_1, \dv\bv)\\
& = \lambda(\bu, \bv) + \frac{\mu}{2}(\BD(\bu), \BD(\bv)).
\end{align*}
Thus, we obtain $(\bu, \bff) = 0$.  Since $\bff$ is chosen arbitrarily in $C^\infty_0(\HS)^d$, 
we have $\bu=0$. In addition, it follows from \eqref{uniqueeq:1} that
$\nabla \fp=0$ and $\fp|_{\pd\HS} = 0$, which yields $\fp=0$.  
The proof of Theorem \ref{thm:unique} is complete.
\end{proof}

\section{Maximal $L_1$-regularity in the half-space}
\label{sec-5}
In this section, we consider the following equations:
\begin{equation}\label{eq:5.1}\left\{ \begin{aligned}
\pd_t \bU - \DV (\mu\BD (\bU) - P \BI) & = \bF 
 & \quad & \text{in $\HS \times \BR$}, \\
\dv \bU & = 0& \quad & \text{in $\HS \times \BR$}, \\
(\mu\BD (\bU) - P \BI) \bn_0 & = \bH \vert_{\pd \HS} & \quad & \text{on $\pd \HS \times \BR$}
\end{aligned}\right. \end{equation}
under the assumptions: 
\begin{equation}\label{assump:5.-1}
\dv\bF = 0\quad \text{in $\HS\times\BR$}, \qquad \bH = (H_1, \ldots, H_{d-1},0).
\end{equation}
 We shall prove the following theorem. 
\begin{thm}
\label{thm:5.1} 
Let $1 < q < \infty$, $-1+1/q < s < 1/q$, and $\gamma > 0$. 
If $\bF$ and $\bH$ satisfy the assumption \eqref{assump:5.-1} 
and the following regularity conditions:
\begin{equation}\label{assump:5.0}\begin{aligned}
e^{-\gamma_b t}\bF\in L_1(\BR, \CB^s_{q,1}(\HS)^d), \quad 
e^{-\gamma_b t} \bH
\in \CW^{1/2}_1(\BR, \CB^{s}_{q,1}(\HS)^d) \cap L_1(\BR, \CB^{s+1}_{q,1}(\HS)^d),
\end{aligned}\end{equation}
 then problem 
\eqref{eq:5.1} admits a solution $(\bU, P)$ with
\begin{equation}\label{regularity:1}
e^{-\gamma_bt}\bU \in \CW^1_1(\BR, \CB^s_{q,1}(\HS)^d) \cap 
  L_1 (\BR, \CB^{s+2}_{q,1}(\HS)^d), \quad
e^{-\gamma_b t}\nabla P \in L_1(\BR, \CB^s_{q,1}(\HS)^d)
\end{equation}
satisfying the estimate:
\begin{equation}\label{est:5.1}\begin{aligned}
&\lVert e^{-\gamma_bt}(\pd_t, \nabla_b^2)\bU \rVert_{L_{1} (\BR_+, \CB^s_{q,1}(\HS))}
+ \|e^{-\gamma_bt}\nabla P\|_{L_1(\BR, \CB^s_{q,1}(\HS)} \\
&\quad \le C \Big(\lVert e^{-\gamma_bt}(\bF, \nabla \bH)\rVert_{L_{1} (\BR, B^s_{q, 1} (\HS))}
+ \|e^{-\gamma_bt}\bH\|_{\CW^{1/2}_1(\BR, \CB^{s}_{q,1}(\HS))} \Big).
\end{aligned}\end{equation}
Here, $\CW^{1/2}_1(\BR, \CB^{s}_{q,1}(\HS)^d)$ is the symbol introduced in
Section \ref{sec-2.2}.
\end{thm}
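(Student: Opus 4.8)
The plan is to construct $(\bU,P)$ from the solution operators $\CS(\lambda),\CP(\lambda)$ of Theorem \ref{th-sth} via the inverse Fourier--Laplace transform, and then to convert the resolvent decay estimates of Theorem \ref{th-sth}(4) into $L_1$-in-time bounds by a real interpolation device in the spirit of Da Prato--Grisvard and Danchin et al. First I would reduce to convenient data: since $C^\infty_0(\BR,X)$ is dense in $L_1(\BR,X)$ for every Banach space $X$, and (using the Besov characterization $\CW^{1/2}_1=B^{1/2}_{1,1}$ recalled in Section \ref{sec-2.2}) also dense in $\CW^{1/2}_1(\BR,X)$, it suffices to prove the a priori estimate \eqref{est:5.1} for $\bF,\bH$ smooth, compactly supported in time, with values in a dense subspace of the relevant space-Besov scale; the general case then follows by approximation together with the closedness of $\pd_t,\nabla_b^2,\nabla$ and the Fatou property (Proposition \ref{prop-Fatou}), the approximate solutions converging to a solution of \eqref{eq:5.1}. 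For such data, put $\bCF=(\bF,\Lambda^{1/2}\bH,\nabla\bH)$ with $\Lambda^{1/2}f=\CL^{-1}[\lambda^{1/2}\CL[f]]$; since $\dv\bF=0$ forces $\CL[\bF](\lambda)\in\CJ^s_{q,1}(\HS)$ by Theorem \ref{thm-solenoidal-characterization}, the quantity $\CL[\bCF](\lambda)$ lies in $\CD^s_{q,1}(\HS)$ and Theorem \ref{th-sth}(2) applies pointwise in $\lambda$, so $\bU=\CL^{-1}[\CS(\lambda)\CL[\bCF](\lambda)]$ and $\nabla P=\CL^{-1}[\nabla\CP(\lambda)\CL[\bCF](\lambda)]$ solve \eqref{eq:5.1}, with $\dv\bU=0$ inherited from \eqref{resol:1.1}.

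The core is the time-decay of the associated ``semigroups''. For $\bPhi\in C^\infty_0(\HS)^{M_d}$ I would define $T(t)\bPhi$ and $P(t)\bPhi$ as in \eqref{laplace:1.1}. Using Cauchy's theorem, the estimates of Theorem \ref{th-sth}(3)--(4) (which decay on the rays of $\Sigma_\epsilon+\gamma_b$ because $e^{\lambda t}$ is exponentially small there, and whose $|\lambda|^{-\sigma/2}$ growth near the origin is integrable against the small circle of radius $\omega\to0$), one obtains the contour representation \eqref{formula-T(t)} over $\Gamma+\gamma_b$, with $T(t)\bPhi=P(t)\bPhi=0$ for $t<0$. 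Inserting the first estimate of Theorem \ref{th-sth}(4) and rescaling $\ell=\lambda t$ gives $\|\nabla_b^2T(t)\bPhi\|_{\CB^s_{q,1}(\HS)}\le Ce^{\gamma_b t}t^{-1+\sigma/2}\|\bPhi\|_{\CB^{s+\sigma}_{q,1}(\HS)}$; one integration by parts in $\lambda$ together with \eqref{(5)} gives $\|\nabla_b^2T(t)\bPhi\|_{\CB^s_{q,1}(\HS)}\le Ce^{\gamma_b t}t^{-1-\sigma/2}\|\bPhi\|_{\CB^{s-\sigma}_{q,1}(\HS)}$, i.e. \eqref{decay:1.1}; the same two bounds hold with $\nabla_bP(t)$ in place of $\nabla_b^2T(t)$, since Theorem \ref{th-sth}(4) controls $(\lambda^{1/2},\nabla_b)\CP(\lambda)$ and $(\lambda^{1/2},\nabla_b)\pd_\lambda\CP(\lambda)$ as well.

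Next comes the interpolation step that yields \eqref{l1est:1.1}. The two bounds above say that $\bPhi\mapsto(t\mapsto e^{-\gamma_b t}\nabla_b^2T(t)\bPhi)$ maps $\CB^{s+\sigma}_{q,1}(\HS)$ into $L_{p_0,\infty}(\BR_+,\CB^s_{q,1}(\HS))$ with $1/p_0=1-\sigma/2$ and $\CB^{s-\sigma}_{q,1}(\HS)$ into $L_{p_1,\infty}(\BR_+,\CB^s_{q,1}(\HS))$ with $1/p_1=1+\sigma/2$. Real interpolation with $\theta=1/2$, using $(\CB^{s-\sigma}_{q,1}(\HS),\CB^{s+\sigma}_{q,1}(\HS))_{1/2,1}=\CB^s_{q,1}(\HS)$ from Proposition \ref{prop-dual-interpolation}(3) (valid since $-1+1/q<s-\sigma<s+\sigma<1/q$) and $(L_{p_1,\infty},L_{p_0,\infty})_{1/2,1}=L_1$, produces $\int_{\BR_+}e^{-\gamma_b t}\|\nabla_b^2T(t)\bPhi\|_{\CB^s_{q,1}(\HS)}\,\d t\le C\|\bPhi\|_{\CB^s_{q,1}(\HS)}$, and similarly for $\nabla_bP(t)$. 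Passing to $\bU$ and $\nabla P$: on the nice class of data one has $\bU(\,\cdot\,,t)=\int_\BR T(t-\tau)\bCF(\,\cdot\,,\tau)\,\d\tau$, so Fubini's theorem and the substitution $t=\tau+\rho$ give
\begin{equation*}
\begin{split}
\int_\BR e^{-\gamma_b t}\|\nabla_b^2\bU(\,\cdot\,,t)\|_{\CB^s_{q,1}(\HS)}\,\d t
&\le \int_\BR e^{-\gamma_b\tau}\int_{\BR_+}e^{-\gamma_b\rho}\|\nabla_b^2T(\rho)\bCF(\,\cdot\,,\tau)\|_{\CB^s_{q,1}(\HS)}\,\d\rho\,\d\tau \\
&\le C\int_\BR e^{-\gamma_b\tau}\|\bCF(\,\cdot\,,\tau)\|_{\CB^s_{q,1}(\HS)}\,\d\tau,
\end{split}
\end{equation*}
and the same computation with $P(t)$ bounds $\nabla P$. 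In the inhomogeneous case $\|\nabla_b^2\bU\|_{\CB^s_{q,1}}\simeq\|\bU\|_{\CB^{s+2}_{q,1}}$ and in the homogeneous case $\|\nabla^2\bU\|_{\dot B^s_{q,1}}\simeq\|\bU\|_{\dot B^{s+2}_{q,1}}$ modulo polynomials, which yields the $L_1(\BR,\CB^{s+2}_{q,1}(\HS))$ regularity; the bound on $\pd_t\bU$ follows from $\pd_t\bU=\bF+\mu\Delta\bU-\nabla P$ (using $\dv\bU=0$). Finally $\|e^{-\gamma_b t}\bCF\|_{L_1(\BR,\CB^s_{q,1})}$ is controlled by the right-hand side of \eqref{est:5.1}, the only nontrivial point being $\|e^{-\gamma_b t}\Lambda^{1/2}\bH\|_{L_1(\BR,\CB^s_{q,1})}\le C\|e^{-\gamma_b t}\bH\|_{\CW^{1/2}_1(\BR,\CB^s_{q,1})}$, which follows by dyadic decomposition in time from $\CW^{1/2}_1=B^{1/2}_{1,1}$.

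I expect the main obstacle to be the rigorous bookkeeping rather than any single estimate: justifying the contour deformation and the $\omega\to0$ limit, establishing the convolution representation $\bU=T\ast\bCF$ and its validity for a dense class so the a priori estimate transfers to an actual solution, checking the vector-valued Lorentz-space interpolation identity $(L_{p_1,\infty},L_{p_0,\infty})_{1/2,1}=L_1$, and tracking the weight $e^{-\gamma_b t}$ coherently through the Fourier--Laplace transform, the half-derivative $\Lambda^{1/2}$, and the density/limiting argument (in particular the passage $\gamma_b\in\{\gamma,0\}$ so that the statement covers both the inhomogeneous and the homogeneous Besov settings).
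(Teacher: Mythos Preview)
Your proposal is correct and follows essentially the same route as the paper: construct $(\bU,P)$ from $\CS(\lambda),\CP(\lambda)$ via the inverse Fourier--Laplace transform, deform the contour to $\Gamma+\gamma_b$, derive the two time-decay bounds \eqref{decay:1.1} from Theorem \ref{th-sth}(4) (the second via one integration by parts in $\lambda$), interpolate to obtain the $L_1$-in-time estimate, pass to $\bU$ by Fubini and the convolution representation, control $\Lambda^{1/2}\bH$ by $\CW^{1/2}_1$, recover $\pd_t\bU$ from the equation, and close by density.

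The only notable difference is how you package the interpolation step. You interpolate between vector-valued weak Lorentz spaces $L_{p_0,\infty}$ and $L_{p_1,\infty}$ with $1/p_0=1-\sigma/2$ and $1/p_1=1+\sigma/2$ (so $p_1<1$), invoking $(L_{p_1,\infty},L_{p_0,\infty})_{1/2,1}=L_{1,1}=L_1$. The paper instead discretizes dyadically in $t$ and applies the sequence-space identity $(\ell^{1-\sigma}_\infty(\BZ),\ell^{1+\sigma}_\infty(\BZ))_{1/2,1}=\ell^1_1(\BZ)$ (its Proposition \ref{prop-real-interpolation}, citing \cite[Thm.~5.6.1]{BLbook}). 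These are equivalent devices; the paper's version sidesteps the quasi-Banach issue you flag by working directly with the dyadic weights, which may be the cleaner way to record the argument since the reference handles all real exponents without qualification.
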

\begin{proof} The proof is divided into the following two steps. 
Firstly, we assume that 
\begin{equation}\label{assump:5.1}
\bF\in C^\infty_0(\BR, \CB^s_{q,1}(\HS)^d), \quad \bH
\in C^\infty_0(\BR, \sB^{s+1}_{q,1}(\HS)^d).
\end{equation}
Secondly, if $\bF$ and $\bH$ satisfies \eqref{assump:5.0}, then there exist
sequences $\{\bF_j\}_{j=1}^\infty  \subset C^\infty_0(\BR, \CB^s_{q,1}(\HS)^d)$ and
$\{\bH_j\}_{j=1}^\infty \subset C^\infty_0(\BR, \CB^{s+1}_{q,1}(\HS)^d)$ such that 
\begin{align}
\label{limit:5.1}
\lim_{j\to\infty} \|e^{-\gamma_b}(\bF_j-\bF)\|_{L_1(\BR, \CB^s_{q,1}(\HS))} & = 0, \\
\label{limit:5.2}
\lim_{j\to\infty} \|e^{-\gamma_b}(\bH_j-\bH)\|_{L_1(\BR, \CB^{s+1}_{q,1}(\HS))} & = 0, \\
\label{limit:5.3}
\lim_{j\to\infty} \|e^{-\gamma_b}(\bH_j-\bH)\|_{\CW^{1/2}_1(\BR, \CB^{s}_{q,1}(\HS))} & = 0.
\end{align}
\textbf{Step 1:} 
Applying the Fourier--Laplace transform to \eqref{eq:5.1},
we have the following generalized resolvent problem. 
\begin{equation}\label{eq:5.2}\left\{ \begin{aligned}
\lambda \bu - \DV (\mu\BD (\bu) - \fq \BI) & = \CL[\bF] 
 & \quad & \text{in $\HS$}, \\
\dv \bu & = 0& \quad & \text{in $\HS$}, \\
(\mu\BD (\bu) - \fq \BI) \bn_0 & = \CL[\bH] \vert_{\pd \HS} & \quad & \text{on $\pd \HS$}.
\end{aligned}\right. \end{equation}
Since $\dv \CL[\bF]=0$ and $\CL[\bH] = (\CL[H_1], \ldots, \CL[H_{d-1}], 0)$,
 in view of Theorem \ref{th-sth}, by using the solution operators $\CS(\lambda)$ and
 $\CP(\lambda)$  given in Theorem \ref{th-sth},  we see that
$\bu= \CS(\lambda)\bCF$ and  $\fq =\CP(\lambda)\bCF$ are solutions to the generalized
resolvent problem \eqref{eq:5.1}, where we have set
$\bCF = (\CL[\bF], \lambda^{1/2}\CL[\bH], \nabla \CL[\bH)$. Let $\Lambda_{\gamma_b}^{1/2}$
be a symbol defined by 
\begin{equation}\label{half-derivative.1}
\Lambda_{\gamma_b}^{1/2}f = \CL^{-1}[\lambda^{1/2}\CL[f](\lambda)], 
\qquad \lambda=\gamma_b+i\tau, \enskip \tau\in\BR.
\end{equation}
With this symbol, we write $\bCF = \CL[(\bF, \Lambda_{\gamma_b}^{1/2}\bH, \nabla\bH)]$. 
Since we assume $\bF \in C^\infty_0(\BR, \CB^s_{q,1}(\HS)^d)$ and $\bH \in C^\infty_0(\BR, 
\CB^{s+1}_{q,1}(\HS)^d)$, we have $\CL^{-1}\CL[\bF] = \bF$ and $\CL^{-1}\CL[\bH]
= \bH$.  Thus, setting 
$$\bU = \CL^{-1}[\bu] = \CL^{-1}[\CS(\lambda)\bCF],
\qquad P = \CL^{-1}[\fq]= \CL^{-1}[\CP(\lambda)\bCF],
$$
we see that $\bU$ and $P$ satisfy equations \eqref{eq:5.1}. 
In fact, there hold
\begin{align}
\pd_t\bU - \DV(\mu \BD(\bU) - P \BI) 
= \CL^{-1}\Big[\lambda \CS(\lambda)\bCF 
- \DV\Big(\mu \BD(\CS(\lambda)\bCF) - \CP(\lambda)\bCF \BI \Big)\Big] 
= \CL^{-1}[\CL[\bF]] 
= \bF 
\end{align}
in $\HS\times\BR$ and
\begin{align}
\mathrm{tr}_0 \Big[(\mu \BD(\bU) - P \BI)\bn_0 \Big] 
& = \mathrm{tr}_0 \Big[\CL^{-1}[\mu \BD(\CS(\lambda)\bCF) - \CP(\lambda)\bCF \BI]\bn_0 \Big] \\
& = \CL^{-1}\Big[\mathrm{tr}_0 
\Big[\Big(\mu \BD(\CS(\lambda)\bCF) - \CP(\lambda)\bCF \BI\Big)\bn_0 \Big] \Big] \\
& = \CL^{-1} \Big[\mathrm{tr}_0 [\CL[\bH]] \Big] \\
& = \mathrm{tr}_0 \Big[\CL^{-1}[\CL[\bH]] \Big] \\ 
& = \mathrm{tr}_0 [\bH].
\end{align}
Here, we have used the fact that $\mathrm{tr}_0$ and $\CL^{-1}$ commute. Indeed,
$\bCF$ is a $C^\infty_0(\BR)$-function with respect to $t$, and hence the Fourier--Laplace
inverse transform $\CL^{-1}$ is defined by the Riemann integral. In addition,
$\mathrm{tr}_0$ is a bounded linear operator as follows from Proposition \ref{prop-trace}. 
Thus, $\mathrm{tr}_0$ and $\CL^{-1}$ commute.\par
Now, we shall discuss the $L_1$-in-time estimates of $\bU$ and $P$. By the Fubini theorem, we may write
\begin{equation}\label{repr:5.1}\begin{aligned}
\bU & = \CL^{-1}[\CS(\lambda)\bCF] \\
& =\frac{1}{2\pi}\int_{\BR} e^{(\gamma_b+i\tau)}\CS(\gamma_b+i\tau)
\bigg(\int_{\BR}e^{-(\gamma_b + i\tau) \ell} \CL^{-1}[\CF](\,\cdot\,, \ell)\d \ell\bigg)\d\tau
\\
&= \int_{\BR} \bigg(\int_{\BR} e^{(\gamma_b + i\tau)(t-s) }\CS(\gamma_b + i\tau)
(\bF, \Lambda_{\gamma_b}^{1/2}\bH, \nabla\bH)(\,\cdot\,, \ell)
\d\tau\bigg)\d\ell. 
\end{aligned}\end{equation}
Analogously, we may write
\begin{equation}\label{repr:5.2}\begin{aligned}
P & = \CL^{-1}[\CP(\lambda)\bCF] 
= \int_{\BR} \bigg(\int_{\BR} e^{(\gamma_b + i\tau)(t-s) }\CP(\gamma_b + i\tau)
(\bF, \Lambda_\gamma^{1/2}\bH,\nabla\bH)(\,\cdot\,, \ell)\d\tau\bigg)\d\ell.
\end{aligned}\end{equation}
To prove the desired estimates, 
for $\bPhi = (\bPhi_1, \bPhi_2', \bPhi_3') \in C^\infty_0(\HS)^{M_d}$, we consider  operators 
$$T(t)\bPhi = \frac{1}{2\pi}\int_{\BR}e^{(\gamma_b+i\tau)t} \CS(\gamma_b + i\tau)\bPhi\, d\tau, 
\quad 
P(t)\Phi = \frac{1}{2\pi}\int_{\BR} e^{(\gamma_b+i\tau) t}\CP(\gamma_b+i\tau)\bPhi\d \tau.
$$
From \eqref{repr:5.1} and \eqref{repr:5.2}  we may write 
\begin{equation}\label{repr:5.3}
\bU = \int_{\BR} T(t-\ell)(\bF, \Lambda_\gamma^{1/2}\bH, \nabla\bH)(\,\cdot\,, \ell)\d\ell,
\quad
P = \int_{\BR} P(t-\ell)(\bF, \Lambda_\gamma^{1/2}\bH, \nabla\bH)(\,\cdot\,, \ell)\d\ell.
\end{equation}
In the following, we follow the argument appearing in the classical theory of a $C_0$-analytic semigroup 
(cf. \cite[Ch. IX]{Yosida}). Let 
$\Gamma_\omega
= \Gamma_{\omega +} \cup \Gamma_{\omega -} \cup C_\omega$ and 
$\Gamma=\Gamma_+ \cup \Gamma_-$, where we have set
\begin{equation}\label{contour.1}\begin{aligned}
\Gamma_{\omega \pm}& = \{\lambda = re^{\pm i(\pi-\epsilon)} \mid \omega  < r < \infty\}, \\ 
C_\omega & =  \{\lambda = \omega e^{i\theta} \mid -(\pi-\epsilon) < \theta < \pi-\epsilon\}, \\
\Gamma_\pm &= \{\lambda \in \BC \mid \lambda = re^{\pm i(\pi-\epsilon)}, \enskip
r \in (0, \infty)\}.
\end{aligned}\end{equation}
We shall show that
\begin{alignat}{4}
T(t)\bPhi &= 0 &\qquad & \text{for $t < 0$}, \label{t.3.6.1} \\
T(t)\bPhi &= \frac{1}{2\pi i}\int_{\Gamma_\omega+\gamma_b} e^{\lambda t}\CS(\lambda)\bPhi \d\lambda
&\qquad &\text{for $t > 0$}. \label{t.3.6.2}
\end{alignat}
To this end, we take
\begin{align}
C^{(+)}_{R, \epsilon} & = \bigg\{\lambda \in \BC \,\left\vert\enskip \lambda = Re^{i\theta},
\enskip \theta \colon -\frac{\pi}{2} \to -(\pi-\epsilon) \bigg\}\right.
\cup
\bigg\{\lambda \in \BC \,\left\vert\enskip \lambda=Re^{i\theta},
\enskip \theta \colon  \pi-\epsilon \to  \frac{\pi}{2} \bigg\}\right. \\
C_R^{(-)} & = \bigg\{\lambda \in \BC \,\left\vert\, \lambda  = Re^{i\theta},
\enskip \theta : -\frac{\pi}{2} \to \frac{\pi}{2} \bigg\}\right., \\ 
\Gamma_{\omega, R} & = \{\lambda \in \Gamma_\omega  \mid |\lambda| \le R\}
\end{align}
for (possibly large) $R > 0$. First, we {\color{black} show \eqref{t.3.6.1}}. 
By the Cauchy theorem in theory
of one complex variable, we write 
\begin{equation}
\frac{1}{2\pi} \int^R_{-R}e^{(\gamma_b + i\tau) t}
\CS(\gamma_b + i\tau) \bPhi\d \tau
= \frac{1}{2\pi i}\int_{ C_R^{(-)} + \gamma_b} e^{\lambda t} \CS(\lambda)\bPhi\d\lambda.
\end{equation}
By Theorem  \ref{th-sth}, we know that
\begin{equation}\label{t.3.4.1} 
\|(\lambda, \nabla_b^2)\CS(\lambda)\bPhi\|_{\CB^s_{q,1}(\HS)} 
\le C\|\bPhi\|_{\CB^s_{q,1}(\HS)}\qquad\text{for $\lambda \in \Sigma_\epsilon + \gamma_b$}.
\end{equation}
Noting that there holds $|\lambda| \geq R$ for $\lambda \in  C_R^{(-)} + \gamma_b$
for large $R > 0$, it follows from \eqref{t.3.4.1} that
\begin{align}
\bigg\|\frac{1}{2\pi i}\int_{C_R^{(-)} + \gamma_b} e^{\lambda t} \CS(\lambda)\bPhi\d\lambda
\bigg\|_{\CB^s_{q,1}(\HS)}
& \le Ce^{\gamma_bt}\int^{\pi/2}_{-\pi/2}e^{-|t|R\cos\theta}\d\theta \;\|\bPhi\|_{\CB^s_{q,1}(\HS)}\\
& \le 2Ce^{\gamma_b t}\int^{\pi/2}_{0}e^{-|t|R\sin\theta}\d\theta \;\|\bPhi\|_{\CB^s_{q,1}(\HS)} \\
& \le 2Ce^{\gamma_b t}\int^{\pi/2}_0 e^{-(2|t|R/\pi)\theta}\d\theta \;\|\bPhi\|_{\CB^s_{q,1}(\HS)} \\
& \le \frac{Ce^{\gamma_b t}}{|t|R}\|\bPhi\|_{\CB^s_{q,1}(\HS)}.
\end{align}
{\color{black} Hence, it follows from the Fatou property (Proposition \ref{prop-Fatou}) that
\begin{align}
\lVert  T(t) \bPhi \rVert_{B^s_{q, 1} (\HS)} \le \liminf_{R \to \infty} 
\bigg\|\frac{1}{2\pi i}\int_{C_R^{(-)} + \gamma_b} e^{\lambda t} \CS(\lambda)\bPhi\d\lambda
\bigg\|_{B^s_{q,1}(\HS)} = 0
\end{align} 
for each $t < 0$.} This {\color{black} shows} \eqref{t.3.6.1}. \par 
{\color{black}The proof of \eqref{t.3.6.2} is similar to \eqref{t.3.6.1}. In fact}, 
for $t > 0$ it follows from the Cauchy theorem in theory
of one complex variable that
\begin{equation}
\frac{1}{2\pi} \int^R_{-R}e^{(\gamma_b + i\tau) t}
\CS(\gamma_b + i\tau) \bPhi\d \tau
= \frac{1}{2\pi i}\int_{\Gamma_{\omega, R}+\gamma_b } e^{\lambda t} 
\CS(\lambda)\bPhi\d\lambda
+\frac{1}{2\pi i}\int_{C_{R, \epsilon}^{(+)} + \gamma_b} e^{\lambda t} \CS(\lambda)\bPhi\d\lambda.
\end{equation}
From \eqref{t.3.4.1}, 
we have 
\begin{align}
\bigg\|\frac{1}{2\pi i}\int_{C_{R, \epsilon}^{(+)}+\gamma_b} 
e^{\lambda t} \CS(\lambda)\bPhi\d\lambda
\bigg\|_{\CB^s_{q,1}(\HS)}
& \le Ce^{\gamma_b t}\int^{\pi-\epsilon}_{\pi/2} e^{tR\cos\theta}\d\theta\;\|\bPhi\|_{\CB^s_{q,1}(\HS)}\\
& \le Ce^{\gamma_b t}\int^{\pi/2}_0 e^{-tR\sin\theta}\d\theta \; \|\bPhi\|_{\CB^s_{q,1}(\HS)} \\
& \le Ce^{\gamma_b t}\int^{\pi/2}_0 e^{-(2tR/\pi)\theta}\d\theta \;\|\bPhi\|_{\CB^s_{q,1}(\HS)}\\
& \le\frac{C e^{\gamma_b t} \pi}{2tR}\;\|\bPhi\|_{\CB^s_{q,1}(\HS)}.
\end{align}
Hence, it follows from the Fatou property (Proposition \ref{prop-Fatou}) that
\begin{align}
\bigg\lVert \liminf_{R \to \infty} \frac{1}{2\pi i}\int_{C_{R, \epsilon}^{(+)} + \gamma_b} 
e^{\lambda t} \CS(\lambda)\bPhi\d\lambda \bigg\rVert_{\CB^s_{q, 1} (\HS)} 
\le \liminf_{R \to \infty} 
\bigg\|\frac{1}{2\pi i}\int_{C_{R, \epsilon}^{(+)} + \gamma_b} e^{\lambda t} \CS(\lambda)\bPhi\d\lambda
\bigg\|_{\CB^s_{q,1}(\HS)} = 0
\end{align} 	
for each $t > 0$.
This shows \eqref{t.3.6.2}.  \par
In view of Proposition \ref{prop-real-interpolation}, 
for every $t > 0$ and $\bPhi \in C^\infty_0(\HS)^{M_d}$ we now prove
\begin{align}
\label{L1est.0-1}
\|\nabla_b^2T(t)\bPhi\|_{B^s_{q,1}(\HS)}
\le Ce^{\gamma_b t}t^{-1+\frac{\sigma}{2}}\|\bPhi\|_{B^{s+\sigma}_{q,1}(\HS)}, \\
\label{L1est.0-2}
\|\nabla_b^2 T(t)\bPhi_1\|_{B^s_{q,1}(\HS)}
\le Ce^{\gamma_b t}t^{-1-\frac{\sigma}{2}}\|\bPhi\|_{B^{s-\sigma}_{q,1}(\HS)}.
\end{align}
In fact, since there holds
$\|\nabla_b^2\CS(\lambda)\bPhi\|_{\CB^s_{q,r}(\HS)} \leq C\|\bPhi\|_{\CB^s_{q,r}(\HS)}$
with a constant independent of $\omega$, 
we may take the limit $\omega \to 0$ in \eqref{t.3.6.2}, and hence we have
$$
\nabla_b^2T(t)\bPhi = \frac{1}{2\pi i}\int_{\Gamma+\gamma_b } e^{\lambda t}\nabla_b^2\CS(\lambda)\bPhi \d\lambda
\qquad \text{for $t > 0$}.
$$
 Notice that $\lambda = \gamma_b + re^{\pm i(\pi-\epsilon)}$ for $\lambda
\in \Gamma_\pm + \gamma_b$, and thus $|e^{\lambda t}| = e^{\gamma_b t}
e^{\cos(\pi-\epsilon)rt} =e^{\gamma_b t} e^{-rt\cos\epsilon}$
for $\lambda \in \Gamma_\pm + \gamma_b$. Since 
$\| \nabla_b^2\CS(\lambda)\bPhi\|_{\CB^s_{q,1}(\HS)} \le C|\lambda|^{-\frac{\sigma}{2}}
\|\bPhi\|_{\CB^{s+\sigma}_{q,1}(\HS)}$ as follows from the fourth assertion in Theorem \ref{th-sth},
for $t>0$ we infer from \eqref{t.3.6.2} that
\begin{align}
\| \nabla_b^2T(t)\bPhi\|_{\CB^s_{q,1}(\HS)}
&\le Ce^{\gamma_b t}\int^\infty_0 e^{-rt\cos\epsilon }r^{-\sigma/2}\d r 
\|\bPhi\|_{\CB^{s+\sigma}_{q, 1}(\HS)}\\
&= Ce^{\gamma_b t}t^{-1+\frac\sigma2}\int^\infty_0e^{-\ell \cos\epsilon}\ell^{-\sigma/2}\d\ell\,
\|\bPhi\|_{\CB^{s+\sigma}_{q, 1}(\HS)},
\end{align}
which yields \eqref{L1est.0-1}. To prove \eqref{L1est.0-2}, 
by integration by parts, we write 
\begin{equation}
T(t)\bPhi = -\frac{1}{2\pi i t}\int_{\Gamma + \gamma_b} e^{\lambda t}\pd_\lambda  \CS(\lambda)
\bPhi \d\lambda.
\end{equation}
Since $\|\nabla_b^2\pd_\lambda\CS(\lambda)\bPhi\|_{\CB^s_{q,1}(\HS)} 
\le C|\lambda|^{-(1-\frac{\sigma}{2})}
\|\bPhi\|_{\CB^{s-\sigma}_{q,1}(\HS)}$ as follows from the fifth assertion in Theorem \ref{th-sth}, 
we have
\begin{align}
\|\nabla_b^2T(t)\bPhi\|_{\CB^s_{q,1}(\HS)}
&\le Ct^{-1}e^{\gamma_b t}\int^\infty_0 e^{-rt\cos\epsilon }r^{-(1-\frac\sigma2)} \d r \;
\|\bPhi\|_{\CB^{s-\sigma}_{q, 1}(\HS)}\\
&= Ce^{\gamma_b t}t^{-1-\frac\sigma2}\int^\infty_0e^{-\ell \cos\epsilon}  
\ell^{-1+\frac\sigma2}\d\ell \;
\|\bPhi\|_{\CB^{s-\sigma}_{q, 1}(\HS)},
\end{align}
which yields \eqref{L1est.0-2}. 
Thus, by Proposition \ref{prop-real-interpolation} we have
\begin{equation}\label{L1est.1}
\int_0^\infty e^{- \gamma_b t} \|\nabla_b^2T(t)\bPhi\|_{\CB^s_{q,1}(\HS)} \d t
\le C\|\bPhi\|_{(\CB^{s+\sigma}_{q,1}(\HS), \CB^{s-\sigma}_{q,1}(\HS))_{1/2,1}}
\le C\|\bPhi\|_{\CB^s_{q,1}(\HS)}.
\end{equation}
Since $C^\infty_0(\HS)$ is dense in $\CB^s_{q,1}(\HS)$, we have
\eqref{L1est.1} for {\color{black} any} $\bPhi \in \CB^s_{q,1}(\HS)^{M_d}$. \par
Applying \eqref{t.3.6.1} to the formula of $\bU$ in \eqref{repr:5.3} yields
$$\bU = \int^t_{-\infty}T(t-\ell)(\bF, \Lambda_{\gamma_b}^{1/2} \bH, \nabla \bH)(\,\cdot\,, \ell)\d\ell.
$$
Using \eqref{L1est.1} and the Fubini theorem, we have
\allowdisplaybreaks 
\begin{align*}
\int^\infty_0 e^{-\gamma_b t}\|\nabla_b^2 \bU(\,\cdot\,, t)\|_{\CB^s_{q,r}(\HS)}\d t
&\leq \int^\infty_0 e^{-\gamma_b t}\bigg(\int^t_{-\infty}\|\nabla^2T(t-\ell)(\bF, \Lambda_{\gamma_b}^{1/2}\bH, 
\nabla \bH)(\,\cdot\,, \ell)\|_{\CB^s_{q,1}(\HS)}\d\ell\bigg)\d t \\
& \leq \int^\infty_{-\infty}\bigg(\int^\infty_\ell 
e^{-\gamma_b t}\|\nabla^2T(t-\ell)(\bF, \Lambda_{\gamma_b}^{1/2}\bH, 
\nabla \bH)(\,\cdot\,, \ell)\|_{\CB^s_{q,1}(\HS)}\d t\bigg)\d\ell \\
& \leq C\int^\infty_{-\infty} e^{-\gamma_b \ell}
\|(\bF, \Lambda_{\gamma_b}^{1/2}\bH, 
\nabla \bH)(\,\cdot\,, \ell)\|_{\CB^s_{q,1}(\HS)}\d\ell.
\end{align*}
Since $\bH \in C^\infty_0(\BR, \CB^{s+1}_{q,1}(\HS)^d)$,
we have $\|\CL[\bH](\lambda)\|_{\CB^s_{q,1}(\HS)} \leq C|\lambda|^{-m}
\|e^{-\gamma_b t}\pd_t^m \bH\|_{L_1(\BR, \CB^s_{q,1}(\HS)}$ for any $m \in \BN_0$
and $\lambda \in \BC$ 
and $\CL[\bH](\lambda)$ is an entire function 
in $\BC$.
Thus, by the Cauchy theorem in theory of one complex variable, we have
$\Lambda_\gamma^{1/2}\bH = \Lambda_1^{1/2}\bH$ if $\gamma_b=\gamma>0$.  Thus, we have
\begin{equation}\label{L1est.2}
\int^\infty_0 e^{-\gamma_b t}\|\nabla_b^2\bU(\,\cdot\,, t)\|_{\CB^s_{q,1}(\HS)}\d t
\leq C\int^\infty_{-\infty} e^{-\gamma_b t}\|(\bF, \Lambda^{1/2}_b\bH, \nabla \bH)\|_{\CB^s_{q,1}(\HS)}\d t,
\end{equation}
where we have set $\Lambda^{1/2}_b\bH = \Lambda^{1/2}_1\bH$ if $\gamma_b=\gamma>0$ and 
$\Lambda^{1/2}_b \bH= \Lambda^{1/2}_0\bH$ if $\gamma_b=0$. 
\par
Finally, by Proposition \ref{half-derivative1} in Appendix B, there hold
\begin{align*}\int^\infty_{-\infty} e^{-\gamma t}\|\Lambda^{1/2}_1\bH\|_{B^s_{q,1}(\HS)}\d t
&\leq \|e^{-\gamma t}\bH\|_{W^{1/2}_1(\BR, B^s_{q,r}(\HS))}, 
\\
\int^\infty_{-\infty} \|\Lambda^{1/2}_0\bH\|_{\dot B^s_{q,1}(\HS)}\d t
&\leq \|\bH\|_{\dot W^{1/2}_1(\BR, \dot B^s_{q,r}(\HS))}.
\end{align*}
Summing up, we have obtained
\begin{equation}\label{lastest:5.1}
\int^\infty_0 e^{-\gamma_bt}\|\nabla_b^2\bU(\,\cdot\, , t)\|_{\CB^s_{q,1}(\HS)}
\leq C \Big(\|e^{-\gamma_bt}(\bF, \nabla\bH)\|_{L_1(\BR, \CB^s_{q,1}(\HS))}
+ \|e^{-\gamma_bt}\bH\|_{\CW^{1/2}_1(\BR, \CB^s_{q,1}(\HS))} \Big).
\end{equation} 
Analogously, we have 
\begin{equation}\label{L1est.3} 
\int^\infty_0 e^{-\gamma_b t}\|\nabla_b P(\,\cdot\, , t)\|_{\CB^s_{q,r}(\HS)}\d t
\leq C \Big(\|e^{-\gamma_bt}(\bF, \nabla\bH)\|_{L_1(\BR, \CB^s_{q,1}(\HS))}
+ \|e^{-\gamma_bt}\bH\|_{\CW^{1/2}_1(\BR, \CB^s_{q,1}(\HS))} \Big).
\end{equation}
Since $\bU$ and $P$ satisfy the first equation in \eqref{eq:5.1}, we have
$$\|\pd_t\bU(\,\cdot\, , t)\|_{\CB^s_{q,1}(\HS)} \leq \|\DV(\mu \BD(\bU) - P \BI)\|_{\CB^s_{q,1}(\HS)}
+ \|\bF(\,\cdot\, , t)\|_{\CB^s_{q,1}(\HS)}.
$$
Thus, using \eqref{lastest:5.1} and \eqref{L1est.3} 
we obtain
\begin{align}\label{L1est.4} 
&\int^\infty_0 e^{-\gamma_b t} \Big(\|(\pd_t,  \nabla_b^2)\bU(\,\cdot\, , t)\|_{\CB^s_{q,1}(\HS)}
+ \|\nabla_b P(\,\cdot\, , t)\|_{\CB^s_{q,r}(\HS)} \Big) \d t \\
&\qquad \leq C \Big(\|e^{-\gamma_bt}(\bF, \nabla\bH)\|_{L_1(\BR, \CB^s_{q,1}(\HS))}
+ \|e^{-\gamma_bt}\bH\|_{\CW^{1/2}_1(\BR, \CB^s_{q,1}(\HS))} \Big).
\end{align}
\vskip.4pc
\noindent \textbf{Step 2:} To verify that $(\bU, P)$ is a desired solution and
the estimate \eqref{est:5.1} holds under the assumption \eqref{assump:5.0}, 
we first approximate $\bF \in L_1(\BR, \CB^s_{q,1}(\HS)^d)$. 
Let 
$\varphi(t)$ be a smooth function defined on $\BR$ such that
$\varphi (t) = 1$ for $\lvert t \rvert \le 1$ and $\varphi (t) = 0$ for $\lvert t \rvert \ge 2$.
In addition, let $\psi_1 (t) \in C^\infty_0(\BR)$ be a nonnegative function such that
$\int_{- \infty}^\infty \psi_1 (t)\d t=1$. Set $\varphi_R(t) = \varphi(t/R)$ and 
$\psi_{1, \sigma}(t) = \sigma^{-1}\psi_1 (t/\sigma)$. For any $R>0$ and $\sigma>0$, we set 
$(e^{-\gamma_b t}\bF)_{\sigma, R} = \psi_{1, \sigma} * (\varphi_R e^{-\gamma_bt} \bF)$, 
where the symbol $*$ stands for
the convolution with respect to the time variable $t$. 
By the standard argument for the convolution, we see that 
$(e^{-\gamma_bt}\bF)_{R,\sigma} \in C^\infty_0 (\BR, \CB^s_{q, 1} (\HS)^d)$ and
\begin{equation}
\label{limitFReps}
\lim_{\sigma\to0} \lim_{R\to\infty} 
\|e^{-\gamma_b t}(e^{\gamma_bt}(e^{-\gamma_b t}\bF)_{R,\sigma} - \bF) \|_{L_{1} (\BR, B^s_{q,1}(\HS))} = 0
\end{equation}
as follows from $e^{-\gamma_bt}\bF \in L_1(\BR, \CB^s_{q,1}(\HS)^d)$. Since there holds
$(e^{-\gamma_bt}\bF)_{R,\sigma} \in C^\infty_0 (\BR, \CB^s_{q, 1} (\HS)^d)$, we see that
$e^{\gamma_bt}(e^{-\gamma_b t}\bF)_{R,\sigma}$ also belongs to $C^\infty_0 (\BR, \CB^s_{q, 1} (\HS)^d)$. \par
We next approximate $\bH \in \CW^{1/2}_1(\BR, \CB^s_{q,1}(\HS)^d) \cap L_1(\BR, \CB^{s+1}_{q,1}(\HS)^d)$.
For any $R>0$ and $\sigma>0$, we set 
$(e^{-\gamma_b t}\bH)_{\sigma, R} = \psi_{1, \sigma} * (\varphi_R e^{-\gamma_bt} \bH)$.
Then there holds
\begin{equation}
\label{limit:5.1.1}
\lim_{\sigma\to0} \lim_{R \to\infty}
\| e^{-\gamma_bt}(e^{\gamma_bt}(e^{-\gamma_bt}\bH)_{R, \sigma} - \bH)\|_{L_{1} (\BR, \CB^{s+1}_{q,1}(\HS))} = 0,
\end{equation}
since $e^{-\gamma_bt} \bH \in L_1(\BR, \CB^{s+1}_{q,r}(\HS)^d)$. Obviously, we have
$e^{\gamma_bt}(e^{-\gamma_bt}\bH)_{R, \sigma} \in C^\infty_0(\BR, \CB^{s+1}_{q,1}(\HS)^d)$.
Furthermore, using the assumption: 
 $e^{-\gamma_b t }\bH \in \CW^{1/2}_1(\BR, \CB^{s}_{q,r}(\HS)^d)$, we shall show that 
\begin{equation}
\label{limit:5.1.4}
\lim_{\sigma\to0} \lim_{R\to\infty}\|e^{-\gamma_bt}(e^{\gamma_b t}(e^{-\gamma_bt}\bH)_{R, \sigma} 
- \bH) \|_{\CW^{1/2}_1(\BR, \CB^s_{q,1}(\HS))}=0.
\end{equation}
To prove this, we first notice the following fact: 
For any $\bG \in  W^1_1(\BR, \CB^s_{q,1}(\HS)^d)$,
we see that 
\begin{equation}
\begin{aligned}
\|(\bG)_{R, \sigma} \|_{L_1(\BR, \CB^s_{q,1}(\HS))} 
& \le C\|\bG\|_{L_1(\BR, \CB^s_{q,1}(\HS))},  \\
\|(\bG)_{R,\sigma}\|_{W^1_1 (\BR, \CB^s_{q,1}(\HS))} 
& \le C\|\bG\|_{W^1_1 (\BR, \CB^s_{q,1}(\HS))},
\end{aligned}
\end{equation}
where we have set $(\bG)_{R, \sigma} = \psi_{1, \sigma} * (\varphi_R \bG)$.
Interpolating these inequalities yields
\begin{equation}
\label{limit:5.1.2}
\|(\bG)_{R, \sigma}\|_{\CW^{1/2}_1(\BR, \CB^s_{q,1}(\HS))}
\le C\|\bG \|_{\CW^{1/2}_1(\BR, \CB^s_{q,1}(\HS))}
\end{equation}
for $\bG\in W^1_1(\BR, \CB^s_{q,1}(\HS)^d)$. Since $W^1_1(\BR, \CB^s_{q,1}(\HS)^d)$ is 
dense in $\CW^{1/2}_1(\BR, \CB^s_{q,r}(\HS)^d)$, we see that \eqref{limit:5.1.2} 
also holds for any $\bH \in \CW^{1/2}_1(\BR, \CB^s_{q,1}(\HS)^d)$. 
For any $\omega > 0$, we choose $\bG \in W^1_1(\BR, \CB^s_{q,1}(\HS)^d)$ in such a way that
$\|e^{-\gamma_bt}\bH - \bG\|_{\CW^{1/2}_1(\BR, \CB^s_{q,1}(\HS))}< \omega$. We see easily that 
\begin{equation}\label{limit:5.1.5}
\|\bG - (\bG)_{R, \sigma}\|_{\CW^{1/2}_{q,1}(\BR, \CB^s_{q,1}(\HS))} \leq 
C\|\bG - (\bG)_{R, \sigma}\|_{W^1_1(\BR, \CB^s_{q,1}(\HS))} \to 0 \quad
\text{as $\sigma\to0$ and $R\to \infty$}.
\end{equation}
By the
triangle inequality and \eqref{limit:5.1.2}, we obtain
\begin{align*}
&\|e^{-\gamma_bt}(\bH -e^{\gamma_bt}(e^{-\gamma_bt}\bH)_{R, \sigma})\|_{\CW^{1/2}_1(\BR, \CB^s_{q,1}(\HS))}\\
&\leq \|e^{-\gamma_bt}\bH - \bG\|_{\CW^{1/2}_1(\BR, \CB^s_{q,1}(\HS))} 
+ \|\bG-(\bG)_{R, \sigma}\|_{\CW^{1/2}_1(\BR, \CB^s_{q,1}(\HS))} \\
&\quad + \|(\bG)_{R, \sigma} - (e^{-\gamma_bt}\bH)_{R, \sigma}\|_{\CW^{1/2}_1(\BR, \CB^s_{q,1}(\HS))}\\
&\leq (C+1)\omega + \|\bG-(\bG)_{R, \sigma}\|_{\CW^{1/2}_1(\BR, \CB^s_{q,1}(\HS))}.
\end{align*}
Thus, by \eqref{limit:5.1.5}, there holds
$$\liminf_{\substack{\sigma\to0 \\ R\to\infty}} 
\|e^{-\gamma_bt}(\bH -e^{\gamma_bt}(e^{-\gamma_bt}\bH)_{R, \sigma})\|_{\CW^{1/2}_1(\BR, \CB^s_{q,1}(\HS))}
\leq (C+1)\omega.$$
Since $\omega > 0$ is arbitrarily chosen, we have
$$\lim_{\sigma\to0}\lim_{R\to\infty}
\|e^{-\gamma_bt}(\bH -e^{\gamma_bt}(e^{-\gamma_bt}\bH)_{R, \sigma})\|_{\CW^{1/2}_1(\BR, \CB^s_{q,1}(\HS))}
=0.
$$
This completes the proof of \eqref{limit:5.1}, \eqref{limit:5.2}, and \eqref{limit:5.3}. 
\vskip.4pc
\noindent \textbf{Step 3:}
Finally, we shall complete the proof of Theorem \ref{thm:5.1}.
To this end, in view of Step 2, for any
$\bF$ and $\bH$ satisfying conditions \eqref{assump:5.1}, we choose sequences 
$\{\bF_j\}_{j=1}^\infty \subset C^\infty_0(\BR, \CB^s_{q,1}(\HS)^d)$
and $\{\bH_j\}_{j=1}^\infty \subset C^\infty_0(\BR, \sB^{s+1}_{q,1}(\HS)^d)$ such that 
\begin{equation}
\lim_{j\to\infty}\|e^{-\gamma_bt}(\bF-\bF_j)\|_{L_1(\BR, \CB^s_{q,1}(\HS))} =0
\end{equation}
and
\begin{equation}
\lim_{j\to\infty}(\|e^{-\gamma_bt}(\bH-\bH_j)\|_{L_1(\BR, \CB^{s+1}_{q,1}(\HS))} 
+\|e^{-\gamma_bt}(\bH-\bH_j)\|_{\CW^{1/2}_1(\BR, \CB^{s}_{q,1}(\HS))})=0
\end{equation}
are valid, respectively. From Step 1, there exist sequences 
$\{\bU_j\}_{j=1}^\infty$ and $\{P_j\}_{j=1}^\infty$,
which satisfy equations \eqref{eq:5.1}, the regularity conditions \eqref{regularity:1}, 
and the estimate \eqref{est:5.1} with 
 $\bU$, $P$, $\bF$, and $\bH$ replaced by $\bU_j$, $P_j$, $\bF_j$, and $\bH_j$,
respectively. Since  $\bU_j$, $P_j$, $\bF_j$, and $\bH_j$ satisfy the estimate \eqref{est:5.1},
by the standard argument, there exist $(\bU, P)$ such that
$(\bU, P)$ satisfies the regularity conditions \eqref{regularity:1} and 
$$\lim_{j \to \infty}(
\|e^{-\gamma_b t}(\pd_t(\bU_j-\bU), \nabla(P_j-P)\|_{L_1(\BR, \CB^s_{q,1}(\HS))}
+ \|e^{-\gamma_bt}\nabla_b^2(\bU_j-\bU))\|_{L_1(\BR, \CB^s_{q,1}(\HS))}) =0.$$
In particular, $(\bU, P)$ satisfies the estimate \eqref{est:5.1}. \par
Since $\bU_j$, $P_j$, $\bF_j$, and $\bH_j$ satisfy the first and second equations in \eqref{eq:5.1},
and hence $\bU$, $P$, $\bF$, and $\bH$ satisfy the first and second equations in \eqref{eq:5.1} as well. 
Concerning the boundary trace, noting that 
${\rm tr}_0(\mu \BD(\bU_j) - P_j \BI)\bn_0= {\rm tr_0}\bH_j$
and using  Proposition \ref{prop-trace}, we have 
\begin{align*}
& \|(e^{-\gamma_b t}{\rm tr}_0((\mu \BD(\bU)- \bH)\|_{L_1(\BR, \CB^{s+1-1/q}_{q,1}(\HS))} \\
& \leq \|e^{-\gamma_b t}{\rm tr}_0((\mu \BD(\bU) - P \BI)\bn_0 -
(\mu \BD(\bU_j) - P_j\BI)\bn_0)\|_{L_1(\BR, \CB^{s+1-1/q}_{q,1}(\HS))} \\
&\hskip7.55cm + \|e^{-\gamma_b t}{\rm tr}_0(\bH- \bH_j)\|_{L_1(\BR, \CB^{s+1-1/q}_{q,1}(\HS))}\\
& \leq C\Big(\|e^{-\gamma_bt}(\mu \BD (\bU-\bU_j) - (P-P_j) \BI)\|_{L_1(\BR, \CB^{s+1}_{q,1}(\HS))}
 + \|e^{-\gamma_b t}(\bH- \bH_j)\|_{L_1(\BR, \CB^{s+1}_{q,1}(\HS))}\Big)\\
& \leq C\Big(\|e^{-\gamma_bt}(\nabla^2_b(\bU-\bU_j), \nabla(P-P_j))\|_{L_1(\BR, \CB^{s}_{q,1}(\HS))}
 + \|e^{-\gamma_b t}(\bH- \bH_j)\|_{L_1(\BR, \CB^{s+1}_{q,1}(\HS))}\Big)
 \to 0
\end{align*}
as $j \to \infty$, which shows that 
$$\|e^{-\gamma_b t}{\rm tr}_0((\mu \BD(\bU) - P \BI)\bn_0 - \bH)\|_{L_1(\BR, \CB^{s+1-1/q}_{q,1}(\HS))}=0.$$ 
Namely, for almost all $t \in \BR$, we see that $(\bU, P)$ satisfies the boundary condition:
$$(\mu \BD(\bU) - P \BI)\bn_0 = \bH|_{\pd\HS} \quad\text{on $\pd\HS\times \BR$}.$$
The proof of Theorem \ref{thm:5.1} is complete.
\end{proof}

\section{Generation of a $C_0$-analytic semigroup which is $L_1$ in time}
\label{sec-6}
In this section, we consider  initial boundary {\color{black}value} problem
with homogeneous boundary conditions:
\begin{equation}\label{eq:6.1}
\left\{\begin{aligned}  
\pd_t \bV - \DV (\mu \BD(\bV) - P \BI) & = 0 &\quad&\text{in $\HS\times \BR_+$}, \\
\dv \bV & = 0 &\quad&\text{in $\HS\times \BR_+$}, \\
(\mu \BD(\bV) - P \BI)\bn_0 & = 0 &\quad&\text{in $\pd\HS\times \BR_+$}, \\
\quad \bV|_{t=0} & = \bV_0 &\quad&\text{in $\HS$}.
\end{aligned}\right.
\end{equation}
First, we shall show the generation of a $C_0$-analytic semigroup 
 associated with \eqref{eq:6.1}.  
{\color{black}To this end}, we consider the corresponding resolvent problem: 
\begin{equation}\label{req:6.1}
\left\{\begin{aligned}  
\lambda \bu - \DV(\mu\BD(\bu) - \fq \BI) & = \bff &\quad&\text{in $\HS$}, \\
\dv \bu & = 0 &\quad&\text{in $\HS$}, \\
(\mu\BD(\bu) - \fq \BI)\bn_0 & = 0 &\quad&\text{on $\pd\HS$}.
\end{aligned}\right.
\end{equation}
We shall formulate equations \eqref{req:6.1} in the semigroup setting. 
{\color{black}Following the idea due} to Grubb and Solonnikov \cite{GS91}, 
which was exactly formulated by {\color{black}the first author}~\cite{Shi14} 
in the present form below, we shall eliminate {\color{black} the pressure $\fq$
as well as the divergence-free condition} from System \eqref{req:6.1}. 
To this end, for any $\bu \in \CJ^s_{q,r}(\HS) \cap\CB^{s+2}_{q, r}(\HS)^{\color{black} d}$, 
we {\color{black}define a bounded linear} functional $K(\bu) \in \sB^{s+1}_{q, r}(\HS) +
\wh \CB^{s+1}_{q, r, 0}(\HS)$ {\color{black}as} a unique
solution to the weak Dirichlet problem:
\begin{equation}\label{wd:6.2}
\left\{\begin{aligned}
(\nabla K(\bu), \nabla\varphi) & = (\dv(\mu\BD(\bu)), \nabla \varphi)
& \quad & \text{for all $\varphi \in \wh \CB^{1 - s}_{q', r', 0}(\HS)$}, \\
K(\bu) & = 2\mu \pd_d u_d & \quad & \text{on $\pd \HS$}.
\end{aligned}\right.
\end{equation}
The unique existence of 
$K(\bu) $ is guaranteed by Corollary \ref{cor:2.1} and we have
\begin{equation}\label{est:K(u)}
\|\nabla K(\bu)\|_{\CB^s_{q, r}(\HS)} \le C\|\bu\|_{\CB^{s+2}_{q, r}(\HS)}.
\end{equation}
Let $\bff \in \CJ^s_{q,r}(\HS)$. If $\bu \in\CJ^{s}_{q,r}(\HS) \cap
\sB^{s+2}_{q, r}(\HS)^{\color{black} d}$ 
and $\fq \in \sB^{s+1}_{q,r}(\HS) + \wh \CB^{s+1}_{q, r, 0}(\HS)$ 
satisfy equations \eqref{req:6.1}, then for any 
$\varphi \in \wh \CB^{1 - s}_{q', r', 0}(\HS)$, we have
\begin{equation}
0 = (\bff, \nabla \varphi)
=(\lambda \bu, \nabla\varphi) -(\nabla(K(\bu) - \fq), \nabla\varphi).
\end{equation}
Since $\bu\in \CJ^s_{q,r}(\HS)$ implies 
$(\bu, \nabla\varphi)=0$ for any $\varphi \in \wh \CB^{1 - s}_{q', r, 0}(\HS)$
as follows from Theorem \ref{thm-solenoidal-characterization}, 
we have
\begin{equation}
(\nabla(K(\bu) - \fq), \nabla\varphi) = 0 \quad\text{for every $\varphi
\in \wh \CB^{1-s}_{q', r', 0}(\HS)$}.
\end{equation}
Moreover, from the boundary condition \eqref{req:6.1}$_3$, 
we see that $K(\bu) - \fq = 2\mu \pd_d u_d - 2\mu \pd_d u_d =0$ on $\pd\HS$.
Thus, the uniqueness of the weak Dirichlet problem 
(Theorem \ref{thm-weak-Dirichlet}) implies $\fq = K(\bu)$.
\par
Define a closed linear operator in $\CJ^s_{q, 1} (\HS)$ by means of
\begin{equation}
\label{def-A}
\sA^s_{q, r} \bu = - \dv (\mu \BD (\bu) - K (\bu) \BI) 
\end{equation}
with domain $\mathsf D (\sA^s_{q, r}) \subset \CJ^s_{q,r}(\HS) \cap 
\sB^{s + 2}_{q, r} (\HS)^d$ defined by
\begin{equation}
\mathsf D (\sA^s_{q, r}) = \{\bu \in \CJ^s_{q, r} (\HS) \cap \sB^{s + 2}_{q, r} (\HS)^d
\mid (\pd_d u_j + \pd_j u_d) \vert_{\pd \HS} = 0
\enskip \text{on $\pd\HS$ \enskip for $j = 1, \ldots, d - 1$}  \}.
\end{equation} 
Then, Problem \eqref{req:6.1} reads
\begin{equation}\label{semigourp:6.1}
\lambda \bu + \sA^s_{q, r} \bu =  \bff
\end{equation}
for $\bff \in \CJ^s_{q, r} (\HS)$. 
\par
The first aim of this section is to show the following theorem.
\begin{thm}
\label{th-semigroup-resolvent-est}
Let $1 < q < \infty$, $1 \leq r \leq \infty-$, $- 1 + 1 \slash q < s <  1 \slash q$, 
$0 < \epsilon < \pi \slash 2$, and $\gamma>0$. 
For every $\bff \in \CJ^s_{q, 1} (\HS)$ and 
$\lambda \in \Sigma_{\epsilon} + \gamma_b$, there exists a constant
$C > 0$ depending solely on $\gamma$, $\epsilon$, $q$, $r$, and $s$ such that
\begin{equation}
\label{rest:6.1}
\lVert (\lambda, \lambda^{1\slash2}\nabla, \nabla^2) (\lambda + \sA^s_{q, r})^{- 1} 
\bff \rVert_{\CB^s_{q, r} (\HS)} 
\le C \lVert \bff \rVert_{\CB^s_{q, r} (\HS)}
\end{equation}
\end{thm}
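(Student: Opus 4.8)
The plan is to derive \eqref{rest:6.1} directly from Theorem \ref{th-sth}, using the elimination of pressure and divergence condition established in the paragraphs immediately preceding the statement. First I would observe that for $\bff \in \CJ^s_{q,r}(\HS)$, the pair $(\bff, 0)$ lies in $\CD^s_{q,r}(\HS)$ (taking $\bh' = 0$), so by assertion $(2)$ of Theorem \ref{th-sth} the functions $\bu = \CS(\lambda)(\bff, 0, 0)$ and $\fq = \CP(\lambda)(\bff, 0, 0)$ solve the generalized resolvent problem \eqref{resol:1.1} with right member $(\bff, 0)$, which is precisely \eqref{req:6.1} (since $\langle \bh, \bn_0 \rangle = 0$ is trivially satisfied and $\dv \bff = 0$ holds by Theorem \ref{thm-solenoidal-characterization}). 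Moreover, the discussion around \eqref{wd:6.2} shows that such a solution must satisfy $\fq = K(\bu)$ and $\bu \in \mathsf D(\sA^s_{q,r})$, so that $\bu$ solves \eqref{semigourp:6.1}, i.e., $\bu = (\lambda + \sA^s_{q,r})^{-1}\bff$. Combined with the uniqueness (Theorem \ref{thm:unique}), this identifies $(\lambda + \sA^s_{q,r})^{-1}\bff = \CS(\lambda)(\bff, 0, 0)$.

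With this identification in hand, the estimate \eqref{rest:6.1} follows at once from assertion $(3)$ of Theorem \ref{th-sth}: taking $\bPhi = (\bff, 0, 0) \in \CB^s_{q,r}(\HS)^{M_d}$ gives
\begin{equation}
\|(\lambda, \lambda^{1/2}\nabla, \nabla_b^2)\CS(\lambda)(\bff, 0, 0)\|_{\CB^s_{q,r}(\HS)} \le C\|\bff\|_{\CB^s_{q,r}(\HS)}.
\end{equation}
Since $\CB^s_{q,r}(\HS) = B^s_{q,r}(\HS)$ in the statement (the inhomogeneous case, as $\gamma > 0$ and $\Sigma_\epsilon + \gamma_b$ with $\gamma_b = \gamma$), we have $\nabla_b^2 = \{\pd^\alpha_x : |\alpha| \le 2\}$, which in particular controls $\nabla^2$; and the bound on $\lambda \CS(\lambda)$ together with $|\lambda| \ge \gamma \sin\epsilon$ absorbs the lower-order contributions, so $\nabla_b^2$ in the statement of \eqref{rest:6.1} (written there as $\nabla^2$) is genuinely controlled. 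Thus \eqref{rest:6.1} holds.

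The one point requiring a little care — and the main (if modest) obstacle — is to verify that the solution produced by $\CS(\lambda)$ indeed coincides with $(\lambda + \sA^s_{q,r})^{-1}\bff$, i.e., that $\CS(\lambda)(\bff,0,0) \in \mathsf D(\sA^s_{q,r})$ and that its associated pressure $\CP(\lambda)(\bff,0,0)$ equals $K(\CS(\lambda)(\bff,0,0))$. This is exactly the content of the paragraph containing \eqref{wd:6.2}: one tests \eqref{req:6.1}$_1$ against $\nabla\varphi$ for $\varphi \in \wh\CB^{1-s}_{q',r',0}(\HS)$, uses $\bff \in \CJ^s_{q,r}(\HS)$ and $\CS(\lambda)\bff \in \CJ^s_{q,r}(\HS)$ (which follows since $(\bff,0,0)$ is in the solenoidal data class and the construction in Section \ref{subsec.2.4} produces divergence-free velocity), and concludes via the uniqueness of the weak Dirichlet problem (Theorem \ref{thm-weak-Dirichlet}) that $\CP(\lambda)(\bff,0,0) = K(\CS(\lambda)(\bff,0,0))$. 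The boundary condition $(\pd_d u_j + \pd_j u_d)|_{\pd\HS} = 0$ for $j = 1, \dots, d-1$ is built into $\eqref{resol:1.1}_3$ with $\bh = 0$. Hence the operator $(\lambda + \sA^s_{q,r})^{-1}$ is well-defined on $\Sigma_\epsilon + \gamma$ with the asserted bound, completing the proof.
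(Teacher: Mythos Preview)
Your proposal is correct and follows essentially the same approach as the paper: identify $(\lambda + \sA^s_{q,r})^{-1}\bff = \CS(\lambda)(\bff,0,0)$ via the pressure-elimination argument preceding the statement (together with Theorem \ref{thm:unique} for uniqueness), and then read off the estimate from assertion $(3)$ of Theorem \ref{th-sth}. One small correction: the theorem is stated with the generic symbols $\CB^s_{q,r}$ and $\gamma_b$, so it covers both the inhomogeneous ($\gamma_b=\gamma$) and homogeneous ($\gamma_b=0$) cases simultaneously, not only the inhomogeneous one; this is harmless, since in either case $\nabla^2$ is dominated by $\nabla_b^2$ and the bound from Theorem \ref{th-sth}\,(3) applies directly.
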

\begin{proof} 
Since $\fq$ may be replaced by $K(\bu)$ in \eqref{req:6.1}, by Theorem \ref{th-sth},
we have $(\lambda + \sA^s_{q, r})^{- 1}\bff = \CS(\lambda)(\bff, 0, 0)$ for any $\bff \in \CJ^s_{q,r}(\HS)$ and 
$\lambda \in \Sigma_\epsilon + \gamma_b$. The uniqueness of solutions follows
from Theorem \ref{thm:unique}. Thus, the desired assertion follows from Theorem \ref{th-sth}.
\end{proof}

In view of Theorem \ref{th-semigroup-resolvent-est}, the operator $- \sA^s_{q,r}$ generates a $C_0$-analytic
semigroup $\{e^{-\sA^s_{q,r}t}\}_{t \geq 0}$ associated with System \eqref{eq:6.1} with $\bV_0 \in 
\CJ^s_{q,r}$ and $P = K(\bV)$. From theory of a $C_0$-analytic semigroup (cf \cite[Ch. IX]{Yosida}), 
a $C_0$-analytic semigroup $\{e^{-\sA^s_{q,r}t}\}_{t \geq 0}$ is represented by
$$e^{-\sA^s_{q,r}t}\bff = \frac{1}{2\pi i}\int_{\Gamma_\omega+\gamma_b} e^{\lambda t}
(\lambda+\sA^s_{q,r})^{-1}\bff\, \d\lambda
= \frac{1}{2\pi i}\int_{\Gamma_\omega+\gamma_b} e^{\lambda t}
\CS(\lambda)(\bff, 0, 0)\d\lambda
$$
for any $\bff \in \CJ^s_{q,r}(\HS)$, where $\Gamma_{\omega} + \gamma_b$ is the same contour
as in \eqref{contour.1}.  In the sequel, we assume that $r=1$.  Then, from \eqref{t.3.6.2}, 
we see that $T(t)(\bff, 0, 0) = e^{-\sA^s_{q,1}t}\bff$ for any $\bff \in \CJ^s_{q,1}$
and $\bU = e^{-\sA^s_{q,1}t}\bff$ and $P = K(e^{-\sA^s_{q,1}t}\bff)$ satisfy equations \eqref{eq:6.1}.
By Theorem \ref{thm:5.1}, we have the following theorem.
\begin{thm}\label{initial-problem} Let $1 < q < \infty$, $-1+1/q < s < 1/q$, and $\gamma>0$.  Then, 
for every $\bV_0 \in \CJ^s_{q,1}(\HS)$, Problem \eqref{eq:6.1} admits a  solution
$\bV$ with 
$$e^{-\gamma_bt}\bV \in L_1(\BR_+, \CB^{s+2}_{q,r}(\HS)^d) \cap \CW^1_1(\BR_+, \CB^s_{q,1}(\HS)^d),
\quad P = K(\bV)
$$
satisfying the estimate:
\begin{align*}
\|e^{-\gamma_b t}\bV\|_{L_1(\BR_+, \CB^{s+2}_{q,1}(\HS))} 
+ \|e^{-\gamma_b t}(\pd_t\bV, \nabla P)\|_{L_1(\BR_+, \CB^s_{q,1}(\HS))}
 \leq C\|\bV_0\|_{\CB^s_{q,1}(\HS)}.
\end{align*}
\end{thm}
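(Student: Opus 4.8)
The plan is to deduce Theorem \ref{initial-problem} from the $C_0$-analytic semigroup $\{e^{-\sA^s_{q,1}t}\}_{t\ge0}$ whose generator $-\sA^s_{q,1}$ satisfies the resolvent bound \eqref{rest:6.1} of Theorem \ref{th-semigroup-resolvent-est}, exactly as was done for the inhomogeneous problem in Theorem \ref{thm:5.1}, but now with the boundary data and the forcing set to zero and the initial data $\bV_0$ carried by the semigroup. First I would record the representation $\bV(\,\cdot\,,t)=e^{-\sA^s_{q,1}t}\bV_0 = T(t)(\bV_0,0,0)$, which is legitimate because $\sA^s_{q,1}\bu = \CS(\lambda)(\bu,0,0)$ via the reduction $\fq=K(\bu)$ established before \eqref{def-A}, and because the contour-deformation argument of \eqref{t.3.6.1}--\eqref{t.3.6.2} applies verbatim to $(\bPhi_1,\bPhi_2',\bPhi_3')=(\bV_0,0,0)$. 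Setting $P=K(\bV)$, the pair $(\bV,P)$ solves \eqref{eq:6.1} by construction of $\sA^s_{q,1}$ and $K$.

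The core estimate is the $L_1$-in-time bound for $\nabla_b^2\bV$. Here I would reuse the decay estimates \eqref{L1est.0-1}--\eqref{L1est.0-2}: specializing to $\bPhi=(\bV_0,0,0)$ and noting that the third and fourth assertions of Theorem \ref{th-sth} give $\|\nabla_b^2\CS(\lambda)(\bV_0,0,0)\|_{\CB^s_{q,1}(\HS)}\le C|\lambda|^{-\sigma/2}\|\bV_0\|_{\CB^{s+\sigma}_{q,1}(\HS)}$ and $\|\nabla_b^2\pd_\lambda\CS(\lambda)(\bV_0,0,0)\|_{\CB^s_{q,1}(\HS)}\le C|\lambda|^{-(1-\sigma/2)}\|\bV_0\|_{\CB^{s-\sigma}_{q,1}(\HS)}$, the same contour computation over $\Gamma+\gamma_b$ yields
\begin{equation}
\|\nabla_b^2 e^{-\sA^s_{q,1}t}\bV_0\|_{\CB^s_{q,1}(\HS)}\le C e^{\gamma_b t}t^{-1+\frac\sigma2}\|\bV_0\|_{\CB^{s+\sigma}_{q,1}(\HS)},
\quad
\|\nabla_b^2 e^{-\sA^s_{q,1}t}\bV_0\|_{\CB^s_{q,1}(\HS)}\le C e^{\gamma_b t}t^{-1-\frac\sigma2}\|\bV_0\|_{\CB^{s-\sigma}_{q,1}(\HS)}.
\end{equation}
Then Proposition \ref{prop-real-interpolation}, together with the real interpolation identity $(\CB^{s+\sigma}_{q,1}(\HS),\CB^{s-\sigma}_{q,1}(\HS))_{1/2,1}=\CB^s_{q,1}(\HS)$ from Proposition \ref{prop-dual-interpolation}, gives $\int_0^\infty e^{-\gamma_b t}\|\nabla_b^2\bV(\,\cdot\,,t)\|_{\CB^s_{q,1}(\HS)}\d t\le C\|\bV_0\|_{\CB^s_{q,1}(\HS)}$. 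The estimate for $\nabla P=\nabla K(\bV)$ follows from \eqref{est:K(u)} combined with the $\CB^{s+2}_{q,1}$-control of $\bV$; alternatively one runs the same contour argument for $\CP(\lambda)(\bV_0,0,0)$. Finally, since $(\bV,P)$ solves the first equation of \eqref{eq:6.1}, one has $\pd_t\bV=\DV(\mu\BD(\bV)-P\BI)$, so $\|\pd_t\bV\|_{\CB^s_{q,1}(\HS)}$ is controlled by $\|\nabla_b^2\bV\|_{\CB^s_{q,1}(\HS)}+\|\nabla P\|_{\CB^s_{q,1}(\HS)}$, closing the stated bound, and the regularity $e^{-\gamma_b t}\bV\in \CW^1_1(\BR_+,\CB^s_{q,1}(\HS)^d)$ follows.

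The main obstacle I anticipate is not any single estimate but the rigorous passage from the formal contour representation to an honest $L_1$-in-time bound when $\bV_0$ is merely in $\CJ^s_{q,1}(\HS)$ rather than in a dense smooth subspace: one must justify taking $\omega\to0$ in \eqref{t.3.6.2}, interchanging $\int_\Gamma$ with norms via the Fatou property (Proposition \ref{prop-Fatou}), and, most delicately, upgrading the pointwise-in-$t$ decay estimates from $\bV_0\in C^\infty_0(\HS)$ (where the $\CB^{s\pm\sigma}_{q,1}$ norms on the right are finite) to general $\bV_0$ by a density argument — exactly as in Theorem \ref{thm:5.1}, Steps 2--3. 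For the homogeneous problem this is slightly more subtle because the right-hand sides of \eqref{L1est.0-1}--\eqref{L1est.0-2} involve $\|\bV_0\|_{\CB^{s\pm\sigma}_{q,1}(\HS)}$, which is why one first proves the $L_1$-bound for $\bV_0\in C^\infty_0(\HS)$, where $C^\infty_0(\HS)$ is dense in $\CJ^s_{q,1}(\HS)$ by Propositions \ref{prop-density} and \ref{prop-density-homogeneous} together with Theorem \ref{thm-solenoidal-characterization}, and then extends by continuity using the final $\CB^s_{q,1}$-bound, which no longer sees the auxiliary exponent $\sigma$. Once this density scaffolding is in place the proof is a direct transcription of the argument already carried out in Section \ref{sec-5}.
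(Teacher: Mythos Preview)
Your approach is essentially the same as the paper's: the existence and the $L_1$-in-time estimate are obtained by identifying $\bV(t)=e^{-\sA^s_{q,1}t}\bV_0=T(t)(\bV_0,0,0)$ and invoking the already-established bound \eqref{L1est.1}, which the paper derives exactly through the decay estimates \eqref{L1est.0-1}--\eqref{L1est.0-2} and Proposition \ref{prop-real-interpolation}. The paper in fact places all of this in the paragraph \emph{before} the theorem statement (``By Theorem \ref{thm:5.1}, we have the following theorem''), and the displayed proof after the statement treats only uniqueness via a Duhamel-type argument.

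One small correction to your density discussion: you do not need $C^\infty_0(\HS)$ to be dense in the solenoidal space $\CJ^s_{q,1}(\HS)$ (indeed, generic $C^\infty_0$ functions are not divergence-free). The point is that the operator $T(t)$ and the estimate \eqref{L1est.1} are defined and proved for \emph{all} $\bPhi\in\CB^s_{q,1}(\HS)^{M_d}$, with density of $C^\infty_0(\HS)$ in $\CB^s_{q,1}(\HS)$ used there. One then simply specializes to $\bPhi=(\bV_0,0,0)$ with $\bV_0\in\CJ^s_{q,1}(\HS)\subset\CB^s_{q,1}(\HS)^d$; the solenoidal condition is only used to ensure that $\bV=T(t)(\bV_0,0,0)$ actually solves \eqref{eq:6.1} (via \eqref{dir:7} and the construction of $\CS(\lambda)$), not for the estimate itself.
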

\begin{proof} The uniqueness part follows from the generation of a $C_0$-analytic semigroup.
In fact, by the Duhamel principle, $\bV$ satisfies equations \eqref{req:6.1}, and then
noting \eqref{def-A} we see that 
\begin{align*}
\frac{\pd}{\pd \tau} e^{-\sA^s_{q,1}(t-\tau)}\bV(\tau) 
&= e^{-\sA^s_{q,1}(t-\tau)}\sA^s_{q,1}\bV(\tau) + e^{-\sA^s_{q,1}(t-\tau)}\pd_\tau\bV(\tau) \\
&= e^{-\sA^s_{q,1}(t-\tau)}(\pd_\tau\bV(\tau) -\DV(\mu\BD(\bV(\tau))- P(\tau)\BI)) \\
&= 0.
\end{align*}
Integrating this identity from $0$ to $t$ with respect to $\tau$, we have
$\bV(t) = e^{-\sA^s_{q,1}t}\bV_0$, which implies the uniqueness of solutions.
\end{proof}

Finally, we shall give the proof of Theorems \ref{th-MR-inhomogeneous} and \ref{th-MR-homogeneous}.
\begin{proof}[Proof of Theorems \ref{th-MR-inhomogeneous} and \ref{th-MR-homogeneous}: Existence of solutions]
First, we consider the evolution equations:
\begin{equation}\label{maineq:6.1}\left\{ \begin{aligned}
\pd_t \wt\bV - \DV (\mu \BD(\wt\bV) - \wt\Pi\BI)  = \bF& 
 & \quad & \text{in $\HS \times \BR$}, \\
\dv \wt\bV & =G_\mathrm{div} = \dv\bG &\quad & \text{in $\HS \times \BR$}, \\
(\mu \BD(\wt\bV) - \wt\Pi\BI) \bn_0 & = \bH \vert_{\pd \HS} & \quad & \text{on $\pd \HS \times \BR$}.
\end{aligned}\right. \end{equation}
  Let $\bV_3$ be a function defined by
\eqref{div:1} and we know the estimate \eqref{div:4}. Setting $\wt\bV = \bV_3 + \bU$, we see that 
$\bU$ and $\wt \Pi$ should satisfy equations:
\begin{equation}\label{maineq:6.2}\left\{ \begin{aligned}
\pd_t \bU - \DV (\mu \BD(\bU) - \wt \Pi \BI) & = \bF-(\pd_t\bV_3-\mu\dv\BD(\bV_3))
 & \quad & \text{in $\HS \times \BR$}, \\
\dv \bU & =0 & \quad & \text{in $\HS \times \BR$}, \\
(\mu \BD(\bU) - \wt\Pi \BI) \bn_0 & = (\bH - \mu\BD(\bV_3)\bn_0)\vert_{\pd \HS} & \quad & \text{on $\pd \HS \times \BR$}.
\end{aligned}\right. \end{equation}
Here, we have set 
$\bV_3=(V_{3,1}, \ldots, V_{3,d})$ and $\bH = (H_1, \ldots, H_{d-1}, H_d)$. 
In view of \eqref{weakD:6.1}, we know that 
\begin{align*}&\bH - \mu\BD(\bV_3)\bn_0|_{\pd\HS} \\
&= \Big(H_1-\mu(\pd_d V_{3,1}-\pd_1V_{3,d}), \ldots, H_{d-1}-
\mu(\pd_d V_{3,d-1}-\pd_{d-1} V_{3,d}), H_d- 2\mu\pd_d V_{3,d}\Big)\Big|_{\pd\HS}\\
& = (H_1, \ldots, H_{d-1}, H_d-2\mu\pd_d V_{3,d})|_{\pd\HS}.
\end{align*}
 Let $Q_\mathrm{div}$ be a solution of weak Dirichlet
problem:
\begin{equation}\label{wd:6.1*}
(\nabla Q_\mathrm{div}, \nabla\varphi) = (\bF-(\pd_t\bV_3-\mu\dv\BD(\bV_3)), \nabla\varphi)
\quad\text{for all $\varphi \in \wh{\CB}{}^{1-s}_{q', \infty-, 0}(\HS)$},
\end{equation}
subject to $Q_\mathrm{div}|_{\pd\HS}=2\mu\pd_d V_{3d}-H_d|_{\pd\HS}$.  
By Corollary \ref{cor:2.1}, we know the unique existence
of $Q_\mathrm{div} \in \CB^{s+1}_{q,1}(\HS) + \wh{\CB}{}^{s+1}_{q,1,0}(\HS)$ possessing the estimate:
\begin{equation}\label{weakD:6.1}
\|\nabla Q_\mathrm{div}\|_{\CB^s_{q,1}(\HS)} \leq C \Big(\|\pd_t\bV_3-\mu\dv\BD(\bV_3)\|_{\CB^s_{q,1}(\HS)}
+ \|(\nabla H_d, \nabla^2V_{3,d})\|_{\CB^s_{q,1}(\HS)} \Big).
\end{equation}
Setting $\wt\Pi = Q_\mathrm{div} + P$, we see that $\bU$ and $P$ satisfy the equations: 
\begin{equation}\label{maineq:6.3}\left\{ \begin{aligned}
\pd_t \bU - \DV (\mu \BD(\bU) - P \BI) & = \bF' 
 & \quad & \text{in $\HS \times \BR$}, \\
\dv \bU & = 0& \quad & \text{in $\HS \times \BR$}, \\
(\mu \BD(\bU) - P \BI) \bn_0 & = \bH'\vert_{\pd \HS} 
& \quad & \text{on $\pd \HS \times \BR$}.
\end{aligned}\right. \end{equation}
Here, noting \eqref{g-vanish1}, we have set 
\begin{equation}\label{right:6.1}\begin{aligned}
\bF'& = \bF -  (\pd_t\bV_3-\mu\dv\BD(\bV_3))- \nabla Q_\mathrm{div}, \quad 
\bH'  = (H_{1}, \ldots, H_{d-1},0).
\end{aligned}\end{equation}
By \eqref{weakD:6.1}, we see that $\dv\bF' = 0$, and thus, $\bF'$ and $\bH'$ satisfy assumption \eqref{assump:5.-1}.
Therefore, Theorem \ref{thm:5.1} gives the existence of solutions $\bU$ and $P$ to 
equations \eqref{maineq:6.3}, which satisfy the estimate:
\begin{equation}\label{est:6.1}\begin{aligned}
&\|e^{-\gamma_bt}\pd_t\bU\|_{L_1(\BR, \CB^s_{q,1}(\HS))} 
+ \|e^{-\gamma_bt}\bU\|_{L_1(\BR, \CB^{s+2}_{q,1}(\HS))}
+ \|e^{-\gamma_bt}\nabla P\|_{L_1(\BR, \CB^s_{q,1}(\HS))}\\
&\quad \leq C \Big(\|e^{-\gamma_bt}\bF'\|_{L_1(\BR, \CB^s_{q,1}(\HS))}
+ \|e^{-\gamma_bt}\bH'\|_{\CW^{1/2}_1(\BR, \CB^s_{q,1}(\HS))} 
+ \|e^{-\gamma_bt}\nabla \bH'\|_{L_1(\BR, \CB^s_{q,1}(\HS))} \Big).
\end{aligned}\end{equation}
Thus, combining \eqref{div:4}, \eqref{weakD:6.1}, \eqref{right:6.1},
and \eqref{est:6.1}, we have the existence of solutions $\wt\bV$ and $\wt\Pi$ of equations \eqref{maineq:6.1},
which satisfies the estimate:
\begin{multline}\label{est:6.2}
\|e^{-\gamma_bt}\pd_t\wt\bV\|_{L_1(\BR, \CB^s_{q,1}(\HS))} 
+ \|e^{-\gamma_bt}\wt\bV\|_{L_1(\BR, \CB^{s+2}_{q,1}(\HS))} 
+ \|e^{-\gamma_bt}\nabla \wt\Pi\|_{L_1(\BR, \CB^{s}_{q,1}(\HS))}\\
\leq C \Big(\|e^{-\gamma_bt}(\bF, \pd_t\bG)\|_{L_1(\BR, \CB^{s+2}_{q,1}(\HS))} 
+ \|e^{-\gamma_b t} \bH'\|_{\CW^{1/2}_1(\BR, \CB^s_{q,1}(\HS))} \\
+ \|e^{-\gamma_bt}(\nabla G_\mathrm{div}, \nabla\bH)\|_{L_1(\BR, \CB^s_{q,1}(\HS))} \Big).
\end{multline}
\par
Finally, we consider equations \eqref{eq-Stokes}. Let $\bV = \wt\bV + \wt\bU$ and 
$\Pi = \wt\Pi + \wt P$.  Then, $\wt\bU$ and $\wt P$ should satisfy the initial value problem: 
\begin{equation}\label{evol:6.1}
\left\{\begin{aligned}  
\pd_t \wt\bU - \DV (\mu \BD(\wt\bU) - \wt P \BI) & = 0 &\quad&\text{in $\HS\times \BR_+$}, \\
\dv \wt \bU & = 0 &\quad&\text{in $\HS\times \BR_+$}, \\
(\mu \BD(\wt\bU) - \wt P \BI) \bn_0 & = 0 &\quad&\text{on $\pd\HS\times \BR_+$}, \\
\quad \wt\bU|_{t=0} & = \bV_0- \wt\bV|_{t=0} &\quad&\text{in $\HS$}.
\end{aligned}\right.
\end{equation}
We assume the compatibility condition: $\dv\bV_0 = G_\mathrm{div}|_{t=0}$ in $\HS$.  
Since $\dv\wt\bV 
=G_\mathrm{div}$, we see that $\dv (\bV_0- \wt\bV|_{t=0} ) =0$ in $\HS$.  Namely, $\bV_0-\wt\bV|_{t=0}
\in \CJ^s_{q,1}(\HS)$.  Thus, by Theorem \ref{initial-problem}, Problem \eqref{evol:6.1}
admits a solution $\wt\bU$ such that $\nabla \wt P = \nabla K(\wt\bU)$ and 
there holds
\begin{equation}\label{est:6.3}\begin{aligned}
&\|e^{-\gamma_b t}\wt\bU\|_{L_1(\BR_+, \CB^{s+2}_{q,1}(\HS))} 
+ \|e^{-\gamma_b t}\pd_t\wt\bU\|_{L_1(\BR_+, \CB^s_{q,1}(\HS))}
+\|e^{-\gamma_bt}\nabla \wt P\|_{L_1(\BR_+, \CB^s_{q,r}(\HS)} \\
&\quad \leq C \|\bV_0-\wt\bV|_{t=0}\|_{\CB^s_{q,1}(\HS)}.
\end{aligned}\end{equation}
Since $\wt\bV|_{t=0} = -\int^\infty_0\pd_t(e^{-\gamma_bt}\wt\bV(\,\cdot\, , t))\d t$, recalling
that $\gamma_b=\gamma>0$ if $\CB^s_{q,1}=B^s_{q,1}$
and $\gamma_b=0$ if $\CB^s_{q,1}=\dot B^s_{q,1}$, we have
\begin{equation}\label{est:6.4}\|\wt\bV|_{t=0}\|_{\CB^s_{q,1}(\HS)}
\leq C\Big(\|e^{-\gamma_b t}\pd_t\wt\bV\|_{L_1(\BR_+, \CB^s_{q,1}(\HS))}
+ \|e^{-\gamma_b t}\wt\bV\|_{L_1(\BR_+, \CB^{s+2}_{q,1}(\HS))}\Big).
\end{equation}
Combining \eqref{est:6.2}, \eqref{est:6.3}, and \eqref{est:6.4} yields
\begin{multline}
\|e^{-\gamma_b t}\wt\bU\|_{L_1(\BR_+, \CB^{s+2}_{q,1}(\HS))} 
+ \|e^{-\gamma_b t}\pd_t\wt\bU\|_{L_1(\BR_+, \CB^s_{q,1}(\HS))}
+\|e^{-\gamma_bt}\nabla P\|_{L_1(\BR_+, \CB^s_{q,r}(\HS)} \\
\leq C\Big(\|\bV_0\|_{\CB^s_{q,1}(\HS)} 
+ \|e^{-\gamma_bt}(\bF, \pd_t\bG)\|_{L_1(\BR, \CB^{s+2}_{q,1}(\HS))} \\
+ \|e^{-\gamma_b t}\bH'\|_{\CW^{1/2}_1(\BR, \CB^s_{q,1}(\HS))} 
+ \|e^{-\gamma_bt}(\nabla G_\mathrm{div}, \nabla\bH)\|_{L_1(\BR, \CB^s_{q,1}(\HS))} \Big).
\end{multline}
This completes the existence part of the proof of Theorems \ref{th-MR-inhomogeneous} and 
\ref{th-MR-homogeneous}. \end{proof}

\noindent\textit{Proof of Theorems \ref{th-MR-inhomogeneous} and 
\ref{th-MR-homogeneous}: Uniqueness of solutions.}
Finally, we prove the uniqueness of solutions to equations \eqref{eq-Stokes}. 
We consider the homogeneous equations:
\begin{equation}\label{homo.1}\left\{\begin{aligned}
\pd_t \bV - \DV(\mu \BD(\bV) - \Pi\BI) & = 0 &\quad&\text{in $\HS\times\BR_+$}, \\
\dv \bV & = 0&\quad &\text{in $\HS\times\BR_+$}, \\
(\mu \BD(\bV) - \Pi\BI) \bn_0 &=0&\quad &\text{on $\pd\HS\times\BR_+$}, \\
\bV \vert_{t = 0} & =0&\quad &\text{in $\HS$}.
\end{aligned}\right.\end{equation}
It remains to show the following theorem.
\begin{thm}
Let $d\geq 2$, $1 < q < \infty$, $-1+1/q < s < 1/q$, and $\gamma>0$. Let 
$\bV$ and $\Pi$ satisfy the regularity conditions:
$$
e^{-\gamma_b t}\pd_t \bV\in L_1(\BR_+, \CB^s_{q,1}(\HS), \quad
e^{-\gamma_b t}\nabla_b^2 \bV \in L_1(\BR_+, \CB^s_{q,1}(\HS)), \quad
e^{-\gamma_b t}\nabla \Pi \in L_1(\BR_+, \CB^s_{q,1}(\HS)),$$
and  homogeneous equations \eqref{homo.1}, then $\bV=0$ and $\Pi=0$.
\end{thm}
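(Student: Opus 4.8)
The plan is to reduce the uniqueness question to the generated $C_0$-analytic semigroup $\{e^{-t\sA^s_{q,1}}\}_{t\ge0}$ from Theorem~\ref{th-semigroup-resolvent-est} and Theorem~\ref{initial-problem}, exactly as in the uniqueness part of Theorem~\ref{initial-problem}. The first step is to eliminate the pressure: given a solution $(\bV,\Pi)$ of \eqref{homo.1} with the stated $L_1$-in-time regularity, I would show that for almost every $t>0$ the function $\bV(\,\cdot\,,t)$ lies in $\CJ^s_{q,1}(\HS)$ and that $\Pi(\,\cdot\,,t)=K(\bV(\,\cdot\,,t))$. Solenoidality follows from the second equation of \eqref{homo.1} together with Theorem~\ref{thm-solenoidal-characterization} (here one needs the trace of $\bV$ on $\pd\HS$ to make sense, which holds because $\nabla_b^2\bV\in\CB^s_{q,1}$ and $s+1>1/q$, via Proposition~\ref{prop-trace}). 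That $\Pi=K(\bV)$ comes from testing the first equation against $\nabla\varphi$ for $\varphi\in\wh\CB^{1-s}_{q',\infty-,0}(\HS)$: the time-derivative term drops because $\bV\in\CJ^s_{q,1}$, and the boundary condition $(\mu\BD(\bV)-\Pi\BI)\bn_0=0$ gives $\Pi-K(\bV)=2\mu\pd_d V_d-2\mu\pd_d V_d=0$ on $\pd\HS$, so the uniqueness of the weak Dirichlet problem (Theorem~\ref{thm-weak-Dirichlet}) forces $\Pi=K(\bV)$.

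Once the pressure is eliminated, $\bV$ solves the abstract Cauchy problem $\pd_t\bV+\sA^s_{q,1}\bV=0$ in $\CJ^s_{q,1}(\HS)$ with $\bV|_{t=0}=0$, in the sense that $e^{-\gamma_b t}\pd_t\bV\in L_1(\BR_+,\CB^s_{q,1})$ and $e^{-\gamma_b t}\sA^s_{q,1}\bV\in L_1(\BR_+,\CB^s_{q,1})$. The next step is to fix $t>0$ and differentiate $\tau\mapsto e^{-(t-\tau)\sA^s_{q,1}}\bV(\,\cdot\,,\tau)$ on $(0,t)$, exactly the computation in the proof of Theorem~\ref{initial-problem}:
\begin{equation}
\frac{\pd}{\pd\tau}\,e^{-(t-\tau)\sA^s_{q,1}}\bV(\,\cdot\,,\tau)
= e^{-(t-\tau)\sA^s_{q,1}}\bigl(\sA^s_{q,1}\bV(\,\cdot\,,\tau)+\pd_\tau\bV(\,\cdot\,,\tau)\bigr)=0.
\end{equation}
Integrating from $0$ to $t$ and using $\bV(\,\cdot\,,0)=0$ yields $\bV(\,\cdot\,,t)=e^{-t\sA^s_{q,1}}\bV(\,\cdot\,,0)=0$ for every $t>0$, and then $\Pi=K(\bV)=0$ as well.

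The main obstacle I expect is making the differentiation step rigorous at the level of regularity we actually have. The semigroup $\{e^{-t\sA^s_{q,1}}\}$ is analytic on $\CJ^s_{q,1}(\HS)$, but $\bV(\,\cdot\,,\tau)$ is only known to be $W^1_1$ in time with values in $\CB^s_{q,1}$ and $L_1$ in time with values in $\CB^{s+2}_{q,1}$; in particular $\bV(\,\cdot\,,\tau)\in\mathsf D(\sA^s_{q,1})$ only for a.e.\ $\tau$, and $\tau\mapsto e^{-(t-\tau)\sA^s_{q,1}}\bV(\,\cdot\,,\tau)$ need not be classically differentiable pointwise. The remedy is the standard one: work with the weak/mild formulation. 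One checks that $\tau\mapsto e^{-(t-\tau)\sA^s_{q,1}}\bV(\,\cdot\,,\tau)$ is absolutely continuous on $[0,t]$ as a $\CJ^s_{q,1}$-valued function, using that $\pd_\tau\bV\in L_1$, that $\sA^s_{q,1}\bV\in L_1$, and that $\|\sA^s_{q,1}e^{-\sigma\sA^s_{q,1}}\|_{\CL(\CJ^s_{q,1})}\le C\sigma^{-1}$ from the analytic-semigroup bounds implied by \eqref{rest:6.1}; its a.e.\ derivative is the expression above, which vanishes, so the function is constant. A density argument (approximating $\bV$ by smoother functions, as in the proof of Theorem~\ref{thm:5.1}) can also be used to bypass any remaining measurability subtleties. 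Apart from this bookkeeping the argument is routine, since all the needed resolvent and semigroup estimates are already in hand.
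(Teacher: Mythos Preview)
Your proposal is correct and follows essentially the same route as the paper: first identify $\Pi=K(\bV)$ via the weak Dirichlet problem and the boundary condition, then run the Duhamel-type argument $\frac{\pd}{\pd\tau}e^{-(t-\tau)\sA^s_{q,1}}\bV(\tau)=0$ to conclude $\bV=0$. The only cosmetic difference is that the paper handles the vanishing of the time-derivative term by integrating $(\pd_t\bV,\nabla\varphi)$ over $(0,T)$ and differentiating back in $T$, whereas you invoke $\pd_t\bV\in\CJ^s_{q,1}$ directly (which is legitimate since $\CJ^s_{q,1}$ is closed in $\CB^s_{q,1}$); your discussion of the absolute-continuity/regularity issues in the semigroup step is in fact more careful than the paper's.
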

\begin{proof} First, we shall prove that $\Pi(\,\cdot\,, t) = K(\bV(\,\cdot\,, t))$ for almost all $t \in \BR_+$.  
Let $\varphi \in \wh {\CB}{}^{1-s}_{q', \infty-, 0}(\HS)$.  Then, for any $T > 0$, by the first equation
in \eqref{homo.1} we have 
$$0 = \int^T_0\Big((\pd_t \bV(\,\cdot\, , t), \nabla\varphi) - (\mu\DV\BD(\bV(\,\cdot\, , t)), \nabla\varphi)
+ (\nabla \Pi(\,\cdot\, , t), \nabla\varphi)\Big)\,\d t.$$
Since $\dv \bV(\,\cdot\,, t) = 0$ in $\HS$ for almost all $t > 0$ and $\bV(\,\cdot\,, 0)=0$,  we have 
\begin{align*}
\int^T_0(\pd_t\bV(\,\cdot\, , t), \nabla\varphi)\,\d t = (\bV(\,\cdot\, , T), \nabla\varphi) -(\bV(\,\cdot\, , 0),
\nabla\varphi) = 0
\end{align*}  
for almost all $T>0$.  Thus, from the definition of $K(\bV(\,\cdot\, , t))$ it follows that 
$$0 = \int^T_0\Big(\nabla \big(\Pi(\,\cdot\, , t) - K(\bV(\,\cdot\, , t)\big), \nabla\varphi\Big)\,\d t$$
for almost all $T > 0$. However, this holds for all $T>0$ due to the continuity of integration 
with respect to $T>0$. Thus,
for all $T > 0$, we have
$$\int^T_0 \Big(\nabla\big(\Pi(\,\cdot\, , t)-K(\bV(\,\cdot\, , t)\big), \nabla\varphi\Big)\,\d t=0.$$
Differentiating this formula with respect to $T$ yields
$$\Big(\nabla\big(\Pi(\,\cdot\, , t)- K(\bV(\,\cdot\, , t)) \big), \nabla\varphi \Big)=0 
\quad\text{
for almost all $t \in \BR_+$}. $$ 
From the boundary condition for $\Pi$ it follows that 
\begin{equation}
\Pi(\,\cdot\, , t) - K(\bV(\,\cdot\, , t))
= 2\mu\pd_d V_{d}(\,\cdot\, , t) - 2\mu \pd_d V_{d}(\,\cdot\, , t) = 0 \qquad \text{on $\pd\HS$
 for almost all $t>0$}.
\end{equation}
Hence, the uniqueness of the weak Dirichlet problem (Theorem \ref{thm-weak-Dirichlet})
yields $\Pi(\,\cdot\, , t) = K(\bV(\,\cdot\, , t))$ for almost all $t \in \BR_+$.  
\par
Since $\Pi(\,\cdot\, , t) = K(\bV(\,\cdot\, , t))$ for almost all $t \in \BR_+$, 
it follows from \eqref{def-A} that $\bV$ satisfies equation:
$$\pd_t \bV - \DV (\mu \BD(\bV) - K(\bV) \BI)=
\pd_t \bV + \sA^s_{q,1}\bV = 0\quad\text{in $\HS\times\BR_+$}. $$
From the regularity conditions for $\bV$ and the tangential component of the boundary 
conditions it follows that $\bV(\,\cdot\, , t) \in \mathsf D (\sA^s_{q, 1})$ for almost all
$t \in \BR_+$. 
Thus, for almost all $\tau \in \BR_+$, we have
$$\frac{\pd}{\pd \tau} e^{-\sA^s_{q,1}(t-\tau)}\bV(\,\cdot\, , \tau)
= e^{-\sA^s_{q,1}(t-\tau)} \Big(\sA^s_{q,1}\bV(\,\cdot\, , \tau) + \pd_\tau\bV(\,\cdot\, , \tau)\Big) = 0.$$
Integrating this formula from $0$ to $t > 0$ with respect to $\tau$
and using $\bV(\,\cdot\, , 0) = 0$ yield
$\bV(\,\cdot\, , t)=0$ for any $t \in \BR_+$.  
Moreover, $\Pi = K(\bV) = 0$.  
The proof is complete.
\end{proof}

\section{Nonlinear well-posedness}\label{sec-8}

\subsection{Nonlinearity}\label{sec-8.1}
In this subsection, we shall discuss the estimate of product $uv$ by using the 
Besov norms. Here, by virtue of our 
definition of Besov spaces by restriction, 
estimates on $\BR^d$ carry over to the space on $\HS$.
We shall use the following proposition due to Abidi and 
Paicu \cite[Cor.~2.5]{AP07}
as well as  Haspot \cite[Cor.~1]{H11}. 
\begin{prop}\label{prop:APH}
Let $1 \le q \le \infty$, $1 \leq r \leq \infty$, and 
$\Omega \in \{\BR^d, \HS\}$.  If the condition 
$|s| < d/q$ holds for $q\geq 2$ and {\color{black} the condition $-d/q' < s < d/q$ holds
for $1 \leq q <2$},  there holds
\begin{equation}\label{cor:prod.1}
\|uv\|_{\CB^s_{q,{\color{black}r}}(\Omega)} \leq C\|u\|_{\CB^s_{q,r}(\Omega)}
\|v\|_{\CB^{d/q}_{q, \infty}(\Omega) \cap L_\infty(\Omega)}
\end{equation}
for some constant $C > 0$ independent of $u$ and $v$. 
\end{prop}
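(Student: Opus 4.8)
The statement to be proved is Proposition~\ref{prop:APH}: for $1 \le q \le \infty$, $1 \leq r \leq \infty$, and $\Omega \in \{\BR^d, \HS\}$, under the stated restriction on $s$, one has the product estimate \eqref{cor:prod.1}. The plan is first to establish the estimate on the whole space $\BR^d$, and then to transfer it to $\HS$ by the restriction definition of Besov spaces (Definition~\ref{def-Function-sp}). The whole-space case is precisely \cite[Cor.~2.5]{AP07} (see also \cite[Cor.~1]{H11}), so the bulk of the work is to quote it correctly and then to run the restriction argument cleanly.

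For the whole-space case, the standard route is Bony's paraproduct decomposition $uv = T_u v + T_v u + R(u,v)$, where $T_u v = \sum_j S_{j-1}u\,\dot\Delta_j v$ (or its inhomogeneous analogue) and $R(u,v)$ is the remainder. One estimates $T_v u$ using $v \in L_\infty$ together with $u \in \CB^s_{q,r}$, which gives a bound by $\|v\|_{L_\infty}\|u\|_{\CB^s_{q,r}}$; one estimates $T_u v$ and the remainder $R(u,v)$ using $v \in \CB^{d/q}_{q,\infty}$ (which embeds suitably) and $u \in \CB^s_{q,r}$ — here the hypothesis on $s$, namely $|s|<d/q$ for $q \ge 2$ and $-d/q' < s < d/q$ for $q < 2$, is exactly what guarantees the relevant series converge and that the output index $s$ with summability exponent $1$ is reached. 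Since this decomposition and the accompanying estimates are classical and are the content of the cited corollaries, I would not reproduce them in detail but simply invoke \cite{AP07,H11}.

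The transfer to $\HS$ proceeds as follows. Let $u \in \CB^s_{q,r}(\HS)$ and $v \in \CB^{d/q}_{q,\infty}(\HS)\cap L_\infty(\HS)$. By Definition~\ref{def-Function-sp}, for any $\delta>0$ choose extensions $\wt u \in \CB^s_{q,r}(\BR^d)$ with $\wt u|_{\HS}=u$ and $\|\wt u\|_{\CB^s_{q,r}(\BR^d)} \le \|u\|_{\CB^s_{q,r}(\HS)}+\delta$, and similarly $\wt v \in \CB^{d/q}_{q,\infty}(\BR^d)\cap L_\infty(\BR^d)$ with $\wt v|_{\HS}=v$ and norm within $\delta$ of $\|v\|_{\CB^{d/q}_{q,\infty}(\HS)\cap L_\infty(\HS)}$; for the $L_\infty$ part one can use e.g. a bounded extension that does not increase the sup norm, or incorporate the $L_\infty$ bound separately. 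Then $\wt u\wt v|_{\HS}=uv$, so by the definition of the norm on $\HS$ and the whole-space estimate,
\begin{equation}
\|uv\|_{\CB^s_{q,1}(\HS)} \le \|\wt u\wt v\|_{\CB^s_{q,1}(\BR^d)} \le C\|\wt u\|_{\CB^s_{q,r}(\BR^d)}\|\wt v\|_{\CB^{d/q}_{q,\infty}(\BR^d)\cap L_\infty(\BR^d)} \le C(\|u\|_{\CB^s_{q,r}(\HS)}+\delta)(\|v\|_{\CB^{d/q}_{q,\infty}(\HS)\cap L_\infty(\HS)}+\delta).
\end{equation}
Letting $\delta \to 0$ gives \eqref{cor:prod.1} on $\HS$. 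The same argument handles the homogeneous spaces, using the corresponding clause of Definition~\ref{def-Function-sp} and noting the homogeneous Besov product estimate in \cite{AP07,H11}.

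The main obstacle, such as it is, is bookkeeping rather than conceptual: one must make sure the extension operators can be chosen compatibly for the two factors (they need not be the same operator, and independence is fine since we only extend $u$ and $v$ separately), and that the $L_\infty$ norm is controlled under extension — this is where a little care is needed, since the generic Besov extension need not be $L_\infty$-bounded, but one may either use Stein's extension operator (which is bounded on $L_\infty$ and on all the relevant Besov scales simultaneously) or simply extend $v$ by Stein's operator and observe it controls both $\CB^{d/q}_{q,\infty}$ and $L_\infty$ norms. Granting that, the proof is a direct citation plus the routine restriction argument, and there is no genuine hard step.
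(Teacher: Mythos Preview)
Your proposal is correct and matches the paper's approach exactly: the paper gives no proof of Proposition~\ref{prop:APH} beyond attributing the whole-space estimate to \cite[Cor.~2.5]{AP07} and \cite[Cor.~1]{H11}, and relying on the restriction definition of Besov spaces on $\HS$ (cf.\ the remark after Definition~\ref{def-Function-sp} and the opening of Section~\ref{sec-8.1}) for the transfer. Your extra care about controlling the $L_\infty$ norm under extension is a reasonable technical point that the paper leaves implicit.
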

We introduce propositions to estimate our nonlinear terms 
expressed by \eqref{def-nonlinear-terms}. 
\begin{prop}\label{prop:p1} Let {\color{black}$d-1 < q < 2d$}. If $s \in \BR$ satisfies 
\begin{equation}\label{cond-qs}
-1+d/q \leq s < 1/q   
\end{equation}
then for every $u \in \CB^s_{q,1}(\HS)$ and $v \in {\color{black} \CB^{d \slash q}_{q, 1}(\HS)}$, 
there holds
\begin{equation}
\label{est-prop:8.2}
\|uv\|_{\CB^s_{q, 1}(\HS)} \le C\|u\|_{\CB^s_{q,1}(\HS)}
\|v\|_{\CB^{d/q}_{q,1}(\HS)}.
\end{equation}
\end{prop}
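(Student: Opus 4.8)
\textbf{Proof proposal for Proposition \ref{prop:p1}.}
The plan is to deduce the estimate \eqref{est-prop:8.2} from Corollary \ref{cor:prod} (equivalently, from Proposition \ref{prop:APH}), whose applicability hinges only on the condition $-1+1/q < s < 1/q$ together with a control of $v$ in $\CB^{d/q}_{q,\infty}(\HS) \cap L_\infty(\HS)$. The first point to check is that the hypothesis \eqref{cond-qs} implies $-1+1/q < s < 1/q$: the right inequality is immediate, and for the left one I would use $d - 1 < q$, which gives $d/q < 1 + 1/q$, hence $-1 + d/q < 1/q$; more importantly I need $s \ge -1 + d/q$ to be compatible with $s > -1 + 1/q$, which holds because $d \ge 2$ forces $-1 + d/q \ge -1 + 2/q > -1 + 1/q$. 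Thus $s$ lies in the admissible Besov range, and in particular $\CB^s_{q,1}(\HS)$ is one of the spaces for which Corollary \ref{cor:prod} applies with $r = 1$.

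Next I would handle the factor $v$. By Corollary \ref{cor:prod} with $r = 1$,
\begin{equation}
\|uv\|_{\CB^s_{q,1}(\HS)} \le C \|u\|_{\CB^s_{q,1}(\HS)} \|v\|_{\CB^{d/q}_{q,\infty}(\HS) \cap L_\infty(\HS)},
\end{equation}
so it remains to bound $\|v\|_{\CB^{d/q}_{q,\infty}(\HS)} + \|v\|_{L_\infty(\HS)}$ by $C\|v\|_{\CB^{d/q}_{q,1}(\HS)}$. The embedding $\CB^{d/q}_{q,1}(\HS) \hookrightarrow \CB^{d/q}_{q,\infty}(\HS)$ is the trivial monotonicity of the $\ell^r$-summation index (for the inhomogeneous scale it is outright, and for the homogeneous scale one uses that $s = d/q$ is the endpoint value permitted in Propositions \ref{prop-density-homogeneous}, \ref{prop-dual-interpolation} and the surrounding remarks). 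The embedding $\CB^{d/q}_{q,1}(\HS) \hookrightarrow L_\infty(\HS)$ is the well-known critical Besov embedding; on $\HS$ it follows from the same statement on $\BR^d$ by the restriction definition of the spaces together with the existence of a bounded extension operator, as invoked in Section \ref{sec-8.1}. Combining these two embeddings gives $\|v\|_{\CB^{d/q}_{q,\infty}(\HS) \cap L_\infty(\HS)} \le C \|v\|_{\CB^{d/q}_{q,1}(\HS)}$, and inserting this into the displayed inequality yields \eqref{est-prop:8.2}.

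I would then remark that both the $B$-case and the $\dot B$-case are covered uniformly: in the homogeneous case the index $s = d/q$ is exactly the borderline value for which the critical embedding and the density results cited above remain valid (this is the reason the condition $d - 1 < q$, ensuring $-1 + d/q$ is an admissible regularity index, reappears throughout), while in the inhomogeneous case there is no such subtlety. The main obstacle, such as it is, is purely bookkeeping: one must make sure that all the Besov-space tools quoted from \cite{AP07,H11} and from Propositions \ref{prop:APH}, \ref{prop-dual-interpolation} are being used within their stated ranges of validity on the half-space, in particular that the restriction-based definition of $\CB^s_{q,r}(\HS)$ legitimately transfers the $\BR^d$ product law and the critical embedding to $\HS$. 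No genuinely new estimate is needed beyond what is already collected in Section \ref{sec-2} and Section \ref{sec-8.1}.
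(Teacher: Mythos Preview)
Your proof is correct and essentially the same as the paper's, just routed through Corollary \ref{cor:prod} (the range $-1+1/q<s<1/q$) rather than directly invoking the wider condition $-d/q'<s<d/q$ of Proposition \ref{prop:APH}; since $d\ge 2$ gives $-d/q'<-1+d/q\le s<1/q<d/q$, both verifications go through. You also make explicit the embedding $\CB^{d/q}_{q,1}(\HS)\hookrightarrow \CB^{d/q}_{q,\infty}(\HS)\cap L_\infty(\HS)$ that the paper leaves implicit, which is the right thing to do.
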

\begin{rem}
To enclose the iteration scheme to solve the nonlinear problem \eqref{eq-fixed},
in view of \eqref{est-prop:8.2}, we need the additional condition
$d/q+1 \leq s+2$,
which yields the condition \eqref{cond-qs}.
Namely, it suffices to use Proposition \ref{prop:p1} in our situation.
\end{rem}
\begin{proof} 
{\color{black} If $q \geq2$, the condition: $q < 2d$ implies that $-d/q < -1+d/q$.  Since we know that 
\begin{equation}\label{fund.est} \|v\|_{\CB^{d/q}_{q,\infty}(\Omega) \cap L_\infty(\Omega)} \leq C\|v\|_{\CB^{d/q}_{q,1}(\Omega)}
\end{equation}
for $\Omega \in \{\BR^d, \HS\}$, by \eqref{cor:prod.1}, we have \eqref{est-prop:8.2}. 
If $1 \leq q < 2$, the condition: $d \geq 2$ implies that $-d/q' < -1+d/q$, and therefore,
by \eqref{cor:prod.1}, we have \eqref{est-prop:8.2}.  }
The proof is complete.
\end{proof}
\begin{prop}
\label{prop:p2}
Let {\color{black}$d-1 < q \leq d$ and let $-1 + d/q < s < 1/q$.}  
{\color{black}Then,} for every $u \in \CB^{s - 1}_{q, 1} (\HS)$ and 
$v \in \CB^{d \slash q}_{q, 1} (\HS)$
there holds
\begin{equation}\label{prod:e-2}
\lVert u v \rVert_{\CB^{s - 1}_{q, 1} (\HS)} 
\le C \lVert u \rVert_{\CB^{s - 1}_{q, 1} (\HS)} 
\lVert v \rVert_{\CB^{d \slash q}_{q, 1} (\HS)}.
\end{equation}	
\end{prop} 
\begin{proof}
{\color{black} If $q \geq 2$, the conditions: $q \leq d$ and $-1 + d/q < s$ imply 
$s-1 > -d/q$.  If $1 \leq q < 2$, the conditions: $d \geq 2$ and $-1+d/q < s$
imply $s-1 > -d/q'$. Thus, replacing $s$ with $s-1$ in \eqref{cor:prod.1} and using \eqref{fund.est}, 
we have \eqref{prod:e-2}.   }
The proof is complete. 
\end{proof}
\begin{prop}\label{lem:pro}
Let $1 \le q \le \infty$. For every $u$, $v \in \CB^{d \slash q}_{q,1}(\HS)$, there holds
\begin{equation}
\|uv\|_{\CB^{d \slash q}_{q,1}(\HS)} \le C\|u\|_{\CB^{d \slash q}_{q,1}(\HS)}
\|v\|_{\CB^{d \slash q}_{q,1}(\HS)}.
\end{equation}
Namely, $\CB^{d \slash q}_{q, 1}(\HS)$ is a Banach algebra. 
\end{prop}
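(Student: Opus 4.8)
The statement $\|uv\|_{\CB^{d/q}_{q,1}(\HS)} \le C\|u\|_{\CB^{d/q}_{q,1}(\HS)}\|v\|_{\CB^{d/q}_{q,1}(\HS)}$ is the classical ``$\CB^{d/q}_{q,1}$ is a Banach algebra'' assertion, and the plan is to reduce it to Corollary~\ref{cor:prod}, which is already available. The key point is that, at the endpoint regularity $s = d/q$, the hypotheses of Proposition~\ref{prop:APH} are not literally met (one needs $|s| < d/q$ for $q \ge 2$, or $-d/q' < s < d/q$ otherwise, and $s=d/q$ sits exactly on the boundary), so one cannot simply plug in $s = d/q$. Instead I would first establish the embedding
\begin{equation}
\CB^{d/q}_{q,1}(\HS) \hookrightarrow \CB^{d/q}_{q,\infty}(\HS) \cap L_\infty(\HS),
\end{equation}
which holds because $\ell^1 \hookrightarrow \ell^\infty$ gives the first factor and the summability of the Littlewood--Paley pieces together with $\|\Delta_j f\|_{L_\infty} \lesssim 2^{jd/q}\|\Delta_j f\|_{L_q}$ (Bernstein's inequality) gives $\CB^{d/q}_{q,1}(\HS) \hookrightarrow L_\infty(\HS)$; the restriction definition of Besov spaces on $\HS$ lets these $\BR^d$-facts carry over.

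\textbf{Main step.} With that embedding in hand, I would apply Corollary~\ref{cor:prod} (or Proposition~\ref{prop:APH}) with a choice of regularity index $s'$ strictly inside the admissible range: for instance, given $u, v \in \CB^{d/q}_{q,1}(\HS)$ and the target space $\CB^{d/q}_{q,1}(\HS)$, one wants an estimate $\|uv\|_{\CB^{s'}_{q,1}} \lesssim \|u\|_{\CB^{s'}_{q,r}}\|v\|_{\CB^{d/q}_{q,\infty}\cap L_\infty}$ with $s' = d/q$. Since that is exactly the endpoint, the cleanest route is to invoke Proposition~\ref{prop:APH} directly in the form that \emph{does} allow $s = d/q$ on the source side when the second factor is measured in $\CB^{d/q}_{q,\infty}\cap L_\infty$ — indeed the statement of Proposition~\ref{prop:APH} as quoted requires $|s| < d/q$ for $q \ge 2$, so if this strict inequality is genuinely needed one symmetrizes: write $uv$ and estimate it by paraproduct decomposition $uv = T_u v + T_v u + R(u,v)$, where $T_u v = \sum_j S_{j-1}u\,\Delta_j v$ and $R(u,v)$ is the remainder. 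Each of $T_u v$ and $T_v u$ is controlled by $\|u\|_{L_\infty}\|v\|_{\CB^{d/q}_{q,1}}$ and $\|v\|_{L_\infty}\|u\|_{\CB^{d/q}_{q,1}}$ respectively (standard paraproduct estimates, valid with no sign restriction on the index), while the remainder $R(u,v)$ requires the sum of the two regularities to be positive, i.e. $d/q + d/q > 0$, which always holds. Summing these three contributions and using the embedding $\CB^{d/q}_{q,1}(\HS) \hookrightarrow L_\infty(\HS)$ yields the claim.

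\textbf{Where the difficulty lies.} The only subtlety is the endpoint nature of the estimate: one must be careful that the remainder term $R(u,v)$ lands in $\CB^{d/q}_{q,1}$ and not merely in $\CB^{d/q}_{q,\infty}$. This is where the $\ell^1$-summability on \emph{one} of the two inputs is used — the remainder $\sum_k \Delta_k u\, \widetilde\Delta_k v$ has its $j$-th Littlewood--Paley block supported in frequencies $\lesssim 2^j$, and $2^{jd/q}\|\Delta_j R(u,v)\|_{L_q} \lesssim \sum_{k \ge j - N_0} 2^{(j-k)d/q}\big(2^{kd/q}\|\Delta_k u\|_{L_q}\big)\|v\|_{L_\infty}$, which is a convolution of an $\ell^1$ sequence with a summable geometric tail, hence $\ell^1$ in $j$. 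So the $\ell^1$ index on the hypotheses is exactly what makes the product land back in $\CB^{d/q}_{q,1}$. Apart from bookkeeping with the restriction-to-$\HS$ definition, the proof is essentially a citation of Corollary~\ref{cor:prod} combined with the Bernstein-type embedding into $L_\infty$; I expect the cleanest writeup to be two or three lines invoking these, with the paraproduct digression only needed if one insists on not using $L_\infty$ as an intermediary.
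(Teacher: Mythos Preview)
Your proposal is correct and lands at essentially the same place as the paper's proof: both arguments reduce to the Moser-type bilinear bound
\[
\|uv\|_{\CB^{d/q}_{q,1}} \le C\big(\|u\|_{\CB^{d/q}_{q,1}}\|v\|_{L_\infty} + \|u\|_{L_\infty}\|v\|_{\CB^{d/q}_{q,1}}\big)
\]
followed by the embedding $\CB^{d/q}_{q,1}(\HS)\hookrightarrow L_\infty(\HS)$. The difference is only in how that bilinear bound is obtained. The paper simply cites it as \cite[Prop.~2.3]{H11} (a Moser-type product law valid for positive regularity index, hence applicable at $s=d/q>0$), whereas you correctly observe that Proposition~\ref{prop:APH}/Corollary~\ref{cor:prod} does \emph{not} cover the endpoint $s=d/q$ and instead sketch the paraproduct proof of the Moser estimate from scratch. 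Your paraproduct argument is the standard proof of \cite[Prop.~2.3]{H11}, so the two routes coincide in substance; the paper's is shorter because it invokes the right off-the-shelf estimate (the $s>0$ Moser law rather than the $|s|<d/q$ law), while yours is more self-contained. One small caveat: your remainder estimate uses that the geometric tail $2^{-md/q}$ is summable, which fails at $q=\infty$; the paper's citation has the same limitation (the Moser estimate needs $s>0$), so this borderline case is equally delicate in both approaches and is in any event never used in the paper's applications.
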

\begin{proof} 
According to \cite[Prop. 2.3]{H11}, there holds
\begin{equation}
\|uv\|_{\CB^{{\color{black}d \slash q}}_{q, 1}(\HS)} 
\le C(\|u\|_{\CB^{{\color{black}d \slash q}}_{q,1}(\HS)}\|v\|_{L_\infty(\HS)}
+ \|u\|_{L_\infty(\HS)}\|v\|_{\CB^{{\color{black}d \slash q}}_{q,1}(\HS)}
\end{equation}
provided that $1 \le q \le \infty$.
By $\CB^{d \slash q}_{q, 1} (\HS) \hookrightarrow L_\infty (\HS)$, we have the desired estimate.
\end{proof}
The following result on composite functions is due to \cite[Thm. 2.87]{BCD} and \cite[Prop. 2.4]{H11}.
\begin{prop}
\label{prop-8.7}
Let $I \subset \BR$ be open. Let $s > 0$ and $\sigma$ be the smallest integer 
such that $\sigma \ge s$. Let $\sfF \colon I \to \BR$ satisfy $\sfF(0)=0$ and $\sfF' \in W^\sigma_\infty (I)$.
Assume that $v\in \CB^s_{q, r} (\HS)$ has values in $J \Subset I$.
Then it holds $\sfF (v) \in \CB^s_{q, r} (\HS)$ and there exists a constant $C$ 
depending only on $s$, $I$, $J$, and $d$ such that
\begin{equation}
\lVert \sfF (v) \rVert_{\CB^s_{q, r} (\HS)} 
\le C \Big(1 + \lVert v \rVert_{L_\infty (\HS)} \Big)^\sigma
\lVert \sfF' \rVert_{W^\sigma_\infty (I)} \lVert v \rVert_{\CB^s_{q, r} (\HS)}.
\end{equation}
\end{prop}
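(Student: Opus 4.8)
\textbf{Proof proposal for Proposition \ref{prop-8.7}.}

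The plan is to reduce the statement on the half-space $\HS$ to the corresponding statement on the full space $\BR^d$, where a composition estimate of the above type is classical (this is precisely \cite[Thm.~2.87]{BCD} and \cite[Prop.~2.4]{H11}). First I would recall that, by our Definition \ref{def-Function-sp}, the norm $\lVert v\rVert_{\CB^s_{q,r}(\HS)}$ is the infimum of $\lVert w\rVert_{\CB^s_{q,r}(\BR^d)}$ over all extensions $w$ of $v$ to $\BR^d$. Fix $\eta>0$ arbitrary and choose such an extension $w=w_\eta\in\CB^s_{q,r}(\BR^d)$ with
\begin{equation}
\lVert w\rVert_{\CB^s_{q,r}(\BR^d)}\le \lVert v\rVert_{\CB^s_{q,r}(\HS)}+\eta.
\end{equation}
The difficulty here is that an arbitrary extension $w$ need not take its values in the compact subinterval $J\Subset I$ on which $\sfF$ is controlled, so the full-space composition theorem cannot be applied to $w$ directly.

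To fix this, the key step is a truncation-and-localization argument. Since $v$ has values in $J\Subset I$, pick an open interval $J'$ with $J\Subset J'\Subset I$ and a cut-off $\chi\in C^\infty_0(I)$ with $\chi\equiv 1$ on a neighborhood of $\overline{J'}$ and $\operatorname{supp}\chi\subset I$, and set $\widetilde\sfF=\chi\sfF$ (extended by $0$ outside $I$). Then $\widetilde\sfF\in W^\sigma_\infty(\BR)$ with $\widetilde\sfF(0)=0$, and $\widetilde\sfF=\sfF$ on $J$, hence $\widetilde\sfF(v)=\sfF(v)$ on $\HS$. Moreover one has the quantitative bound $\lVert\widetilde\sfF'\rVert_{W^\sigma_\infty(\BR)}\le C_{I,J}\lVert\sfF'\rVert_{W^\sigma_\infty(I)}(1+\text{lower order})$ coming from the Leibniz rule applied to $\chi\sfF$; I would record this elementary inequality but not grind through it. Now $\widetilde\sfF$ is globally Lipschitz with $W^\sigma_\infty$ derivative, so the full-space composition theorem applies to \emph{any} extension $w$ of $v$:
\begin{equation}
\lVert\widetilde\sfF(w)\rVert_{\CB^s_{q,r}(\BR^d)}\le C\big(1+\lVert w\rVert_{L_\infty(\BR^d)}\big)^\sigma\lVert\widetilde\sfF'\rVert_{W^\sigma_\infty(\BR)}\lVert w\rVert_{\CB^s_{q,r}(\BR^d)}.
\end{equation}
Since $\widetilde\sfF(w)$ is an extension of $\widetilde\sfF(v)=\sfF(v)$ from $\HS$ to $\BR^d$, the left-hand side dominates $\lVert\sfF(v)\rVert_{\CB^s_{q,r}(\HS)}$.

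It remains to control $\lVert w\rVert_{L_\infty(\BR^d)}$ on the right. This is where one has to be slightly careful: the near-optimal extension $w_\eta$ is near-optimal only in the $\CB^s_{q,r}$-norm and could be large in $L_\infty$. To handle this, note that under the hypothesis $s>0$ one has the embedding $\CB^s_{q,r}(\HS)\hookrightarrow$ (a space controlling $L_\infty$ only when $s>d/q$), so in general $L_\infty$ control of the extension is \emph{not} automatic; instead I would simply replace $w_\eta$ by $w_\eta':=\rho\circ w_\eta$ where $\rho\in C^\infty(\BR)$ is a fixed smooth truncation equal to the identity on a neighborhood of $\overline{J}$ and with $\lVert\rho\rVert_\infty\le C(1+\lVert v\rVert_{L_\infty(\HS)})$, $\lVert\rho'\rVert_\infty\le 1$. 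Then $w_\eta'$ is still an extension of $v$ (because $v$ takes values in $J$ where $\rho=\mathrm{id}$), it satisfies $\lVert w_\eta'\rVert_{L_\infty(\BR^d)}\le C(1+\lVert v\rVert_{L_\infty(\HS)})$, and by Proposition \ref{prop-8.7} applied to the \emph{Lipschitz} map $\rho$ (or by the scalar case of the same composition theorem) $\lVert w_\eta'\rVert_{\CB^s_{q,r}(\BR^d)}\le C(1+\lVert v\rVert_{L_\infty(\HS)})^\sigma\lVert w_\eta\rVert_{\CB^s_{q,r}(\BR^d)}\le C(1+\lVert v\rVert_{L_\infty(\HS)})^\sigma(\lVert v\rVert_{\CB^s_{q,r}(\HS)}+\eta)$. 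Feeding $w_\eta'$ in place of $w$ into the displayed composition estimate for $\widetilde\sfF$, absorbing the $(1+\lVert v\rVert_{L_\infty})$ factors into a single power $\sigma$ (adjusting $C$), and finally letting $\eta\to0$ yields
\begin{equation}
\lVert\sfF(v)\rVert_{\CB^s_{q,r}(\HS)}\le C\big(1+\lVert v\rVert_{L_\infty(\HS)}\big)^{\sigma}\lVert\sfF'\rVert_{W^\sigma_\infty(I)}\lVert v\rVert_{\CB^s_{q,r}(\HS)},
\end{equation}
which is the assertion. The main obstacle, as indicated, is not the composition inequality itself (which is quoted) but the bookkeeping needed to pass from $\HS$ to $\BR^d$: producing an extension of $v$ that simultaneously stays valued in a neighborhood of $J$, keeps its $L_\infty$-norm comparable to that of $v$, and keeps its $\CB^s_{q,r}$-norm comparable to $\lVert v\rVert_{\CB^s_{q,r}(\HS)}$ — all of which is achieved by the fixed smooth truncation $\rho$ together with the near-optimal extension and the cut-off of $\sfF$.
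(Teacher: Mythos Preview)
The paper does not actually give a proof of this proposition: it is stated with the remark ``The following result on composite functions is due to \cite[Thm.~2.87]{BCD} and \cite[Prop.~2.4]{H11}'' and then simply used. Those references treat the whole-space case, so your attempt to spell out the passage from $\BR^d$ to $\HS$ is doing more than the paper does, and your overall strategy --- cut off $\sfF$ to a globally defined $\widetilde\sfF$, extend $v$, apply the whole-space composition theorem, then restrict --- is the right one.

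There is, however, a genuine gap in your truncation step. When you bound $\lVert w_\eta'\rVert_{\CB^s_{q,r}(\BR^d)}=\lVert \rho(w_\eta)\rVert_{\CB^s_{q,r}(\BR^d)}$ by ``the scalar case of the same composition theorem'', that theorem gives
\[
\lVert \rho(w_\eta)\rVert_{\CB^s_{q,r}(\BR^d)}\le C\bigl(1+\lVert w_\eta\rVert_{L_\infty(\BR^d)}\bigr)^{\sigma}\lVert \rho'\rVert_{W^\sigma_\infty}\lVert w_\eta\rVert_{\CB^s_{q,r}(\BR^d)},
\]
and the prefactor involves $\lVert w_\eta\rVert_{L_\infty(\BR^d)}$, not $\lVert v\rVert_{L_\infty(\HS)}$. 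This is exactly the uncontrolled quantity you introduced $\rho$ to eliminate, so the argument is circular (and the self-reference ``by Proposition~\ref{prop-8.7}'' makes this circularity explicit). The clean fix is to abandon the abstract near-optimal extension $w_\eta$ and instead use a \emph{concrete} bounded linear extension operator $E\colon\CB^s_{q,r}(\HS)\to\CB^s_{q,r}(\BR^d)$ of higher-order reflection type (as in the proof of Proposition~\ref{prop-B7}\,(2)), which is simultaneously bounded $L_\infty(\HS)\to L_\infty(\BR^d)$. Then $\lVert Ev\rVert_{L_\infty(\BR^d)}\le C\lVert v\rVert_{L_\infty(\HS)}$ and $\lVert Ev\rVert_{\CB^s_{q,r}(\BR^d)}\le C\lVert v\rVert_{\CB^s_{q,r}(\HS)}$, so applying the whole-space theorem directly to $\widetilde\sfF(Ev)$ and restricting yields the claim without any truncation of the extension.
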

We also prepare the estimates of $\BA_{\bu}$ and $\BA_{\bu}^\top$. 
To this end, we use the following proposition.
\begin{prop}
\label{prop-Lagrangian}
Let $0 < T \le \infty$ and $q$ and $s$ satisfy conditions \eqref{cond-qs}.
Moreover, let $\fA_{\bu}$ stands for either $\BA_{\bu}$ or 
$\BA_{\bu}^\top$ defined by \eqref{def-Au}. 
The following assertions hold true:
\begin{enumerate}
\item Suppose that $\bu \in L_1 ((0, T), \CB^{s + 2}_{q, 1} (\HS)^d)$ 
satisfies $\lVert \nabla_y \bu 
\rVert_{L_1 ((0, T), \CB^{d/q}_{q, 1} (\HS))} \le c_0$
with some small constant $c_0 > 0$.
There exists a constant $C_{d, c_0} > 0$ such that
\begin{align}
\lVert \fA_{\bu} \rVert_{L_\infty ((0, T), \CB^{d/q}_{q, 1} (\HS))} & \le C_{d, c_0}, \\
\lVert (\fA_{\bu} - \BI, \fA_{\bu} \fA_{\bu} - \BI, \fA_{\bu} \fA_{\bu}^\top - \BI) 
\rVert_{L_\infty ((0, T),  \CB^{d/q}_{q, 1} (\HS))} 
& \le C_{d, c_0} \lVert \nabla_y \bu \rVert_{L_1 ((0, T),  \CB^{d/q}_{q, 1} (\HS))}, \\
\lVert \nabla_y (\fA_{\bu}, \fA_{\bu} \fA_{\bu}, \fA_{\bu} \fA_{\bu}^\top) 
\rVert_{L_\infty ((0, T),  \CB^s_{q, 1} (\HS))}
& \le C_{d, c_0} \lVert \nabla^2_y \bu \rVert_{L_1 ((0, T),  \CB^s_{q, 1} (\HS))}
\end{align}
and 
\begin{align}
\lVert \pd_t \nabla_y \fA_{\bu} \rVert_{L_1 ((0, T),  \CB^s_{q, 1} (\HS))}
& \le C_{d, c_0} \Big( 
\lVert \nabla^2_y \bu \rVert_{L_1 ((0, T), \CB^s_{q, 1} (\HS))} \\
& \qquad + \lVert \nabla_y \bu \rVert_{L_1 ((0, T), \CB^{d/q}_{q, 1} (\HS))}
\lVert \nabla^2_y \bu \rVert_{L_1 ((0, T), \CB^s_{q, 1} (\HS))}
\Big).
\end{align} 
\item Suppose that $\bu_1, \bu_2 \in L_1 ((0, T), \CB^{s+2}_{q, 1} (\HS))$ 
satisfies $\lVert (\nabla_y \bu_1, \nabla_y \bu_2)
\rVert_{L_1 ((0, T), \CB^{d/q}_{q, 1} (\HS))} \le c_0$
with some small constant $c_0>0$. For simplicity, the notations 
on difference are given by $(\delta \bu, \delta \fA_{\bu}) 
:= (\bu_2 - \bu_1, \fA_{\bu_2} - \fA_{\bu_1})$. 
There exists a constant $C_{d, c_0} > 0$ such that
\begin{align}
& \lVert (\delta \fA_{\bu}, 
\fA_{\bu_2} \fA_{\bu_2} - \fA_{\bu_1} \fA_{\bu_1}, 
\fA_{\bu_2} \fA_{\bu_2}^\top - \fA_{\bu_1} \fA_{\bu_1}^\top) 
\rVert_{L_\infty ((0, T), \CB^{d/q}_{q, 1} (\HS))} 
\le C_{d, c_0} \lVert \nabla_y \delta \bu \rVert_{L_1 ((0, T), \CB^{d/q}_{q, 1} (\HS))}, \\
& \lVert \nabla_y (\delta \fA_{\bu}, 
\fA_{\bu_2} \fA_{\bu_2} - \fA_{\bu_1} \fA_{\bu_1}, 
\fA_{\bu_2} \fA_{\bu_2}^\top - \fA_{\bu_1} \fA_{\bu_1}^\top) 
\rVert_{L_\infty ((0, T), \CB^s_{q, 1} (\HS))} \\
& \qquad \le C_{d, c_0} \Big(
\lVert \nabla^2_y \delta \bu \rVert_{L_1 ((0, T), \CB^s_{q, 1} (\HS))}
+ \lVert \nabla_y^2 (\bu_1, \bu_2) \rVert_{L_1 ((0, T), \CB^s_{q, 1} (\HS))}
\lVert \nabla_y \delta \bu \rVert_{L_1 ((0, T), \CB^{d/q}_{q, 1} (\HS))}
\Big), 
\end{align}
and 
\begin{align}
& \lVert \pd_t \nabla_y \delta \BA_{\bu} \rVert_{L_1 ((0, T), \CB^s_{q, 1} (\HS))} \\
& \qquad \le C_{d, c_0} \Big(
\lVert \nabla_y^2 \delta \bu \rVert_{L_1 ((0, T), \CB^s_{q, 1} (\HS))}
+ \lVert \nabla_y^2 (\bu_1, \bu_2) \rVert_{L_1 ((0, T), \CB^s_{q, 1} (\HS))}
\lVert \nabla_y \delta \bu \rVert_{L_1 (\CB^{d/q}_{q, 1} (\HS))} \\
& \qquad + \lVert \nabla_y (\bu_1, \bu_2) \rVert_{L_1 ((0, T), \CB^{d/q}_{q, 1} (\HS))} 
\lVert \nabla_y^2 \delta \bu \rVert_{L_1 ((0, T), \CB^s_{q, 1} (\HS))} \\
& \qquad + \lVert \nabla_y^2 (\bu_1, \bu_2) \rVert_{L_1 ((0, T), \CB^s_{q, 1} (\HS))}
\lVert \nabla_y \delta \bu \rVert_{L_1 ((0, T), \CB^{d/q}_{q, 1} (\HS))} \\
& \qquad + \lVert \nabla_y (\bu_1, \bu_2) \rVert_{L_1 ((0, T), \CB^{d/q}_{q, 1} (\HS))} 
\lVert \nabla_y^2 (\bu_1, \bu_2) \rVert_{L_1 ((0, T), \CB^s_{q, 1} (\HS))}
\lVert \nabla_y \delta \bu \rVert_{L_1 ((0, T), \CB^{d/q}_{q, 1} (\HS))}
\Big).
\end{align} 
\end{enumerate}
\end{prop}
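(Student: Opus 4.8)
\textbf{Proof plan for Proposition \ref{prop-Lagrangian}.}
The plan is to reduce everything to the Neumann series representation \eqref{def-Au}, the product estimates of Propositions \ref{prop:p1}--\ref{lem:pro}, and the Banach algebra property of $\CB^{d/q}_{q,1}(\HS)$. Write $\BA_{\bu}(\,\cdot\,,t) = \sum_{\ell \ge 0}(-\bB_{\bu}(\,\cdot\,,t))^\ell$ with $\bB_{\bu}(\,\cdot\,,t) := \int_0^t \nabla_y\bu(\,\cdot\,,\tau)\d\tau$, and note that $\|\bB_{\bu}\|_{L_\infty((0,T),\CB^{d/q}_{q,1}(\HS))} \le \|\nabla_y\bu\|_{L_1((0,T),\CB^{d/q}_{q,1}(\HS))} \le c_0$ after interchanging the time integral with the space norm (Minkowski's integral inequality). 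The first step is the $\CB^{d/q}_{q,1}$-bound on $\fA_{\bu}$: by Proposition \ref{lem:pro}, $\|\bB_{\bu}^\ell\|_{\CB^{d/q}_{q,1}} \le C^{\ell-1}\|\bB_{\bu}\|_{\CB^{d/q}_{q,1}}^\ell$, so choosing $c_0$ small enough that $Cc_0 < 1/2$ makes the series converge in $L_\infty((0,T),\CB^{d/q}_{q,1}(\HS))$ with a bound $C_{d,c_0}$; the same telescoping gives $\|\fA_{\bu}-\BI\|_{\CB^{d/q}_{q,1}} \le C_{d,c_0}\|\bB_{\bu}\|_{\CB^{d/q}_{q,1}}$, and $\fA_{\bu}\fA_{\bu}-\BI$, $\fA_{\bu}\fA_{\bu}^\top-\BI$ are handled by writing $\fA_{\bu}\fA_{\bu}-\BI = (\fA_{\bu}-\BI)\fA_{\bu} + (\fA_{\bu}-\BI)$ and using the algebra property once more.

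The second step treats the spatial gradient. Differentiating \eqref{def-Au} gives $\nabla_y\fA_{\bu} = -\sum_{\ell\ge1}\sum_{j=0}^{\ell-1}(-\bB_{\bu})^j (\nabla_y\bB_{\bu})(-\bB_{\bu})^{\ell-1-j}$, and $\nabla_y\bB_{\bu} = \int_0^t\nabla_y^2\bu\,\d\tau$ lies in $L_\infty((0,T),\CB^s_{q,1}(\HS))$ with norm $\le \|\nabla_y^2\bu\|_{L_1((0,T),\CB^s_{q,1}(\HS))}$. Here I would apply Proposition \ref{prop:p1} with the roles $u \in \CB^s_{q,1}$, $v\in\CB^{d/q}_{q,1}$: each monomial is estimated by $C^{\ell-1}\|\nabla_y^2\bu\|_{L_1(\CB^s_{q,1})}\|\bB_{\bu}\|_{\CB^{d/q}_{q,1}}^{\ell-1}$, and the $j$-sum contributes a factor $\ell$, which is absorbed by the geometric decay once $Cc_0 < 1/2$. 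The products $\fA_{\bu}\fA_{\bu}$ and $\fA_{\bu}\fA_{\bu}^\top$ are handled by the Leibniz rule together with the $\CB^{d/q}_{q,1}$-bound from Step 1 and Proposition \ref{prop:p1}. For the time derivative $\pd_t\nabla_y\BA_{\bu}$, note $\pd_t\bB_{\bu} = \nabla_y\bu$ and $\pd_t\nabla_y\bB_{\bu} = \nabla_y^2\bu$; differentiating the series for $\nabla_y\BA_{\bu}$ in $t$ produces two types of terms, one where $\pd_t$ hits a $\nabla_y\bB_{\bu}$ factor (giving $\nabla_y^2\bu$, estimated in $\CB^s_{q,1}$ directly) and one where it hits a $\bB_{\bu}$ factor (giving $\nabla_y\bu$ in $\CB^{d/q}_{q,1}$ times the rest), which after integrating in $t$ yields exactly the two terms on the right-hand side of the claimed bound; the combinatorial factors $O(\ell^2)$ are again absorbed by geometric decay.

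For part (2), the strategy is the standard difference trick: write $\fA_{\bu_2}-\fA_{\bu_1} = -\sum_{\ell\ge1}\big((-\bB_{\bu_2})^\ell - (-\bB_{\bu_1})^\ell\big)$ and expand each bracket as a telescoping sum $\sum_{j=0}^{\ell-1}(-\bB_{\bu_2})^j(-\delta\bB_{\bu})(-\bB_{\bu_1})^{\ell-1-j}$ where $\delta\bB_{\bu} = \bB_{\bu_2}-\bB_{\bu_1} = \int_0^t\nabla_y\delta\bu\,\d\tau$; then the $\CB^{d/q}_{q,1}$ and $\nabla_y$-in-$\CB^s_{q,1}$ estimates follow exactly as in part (1) with one factor replaced by the difference, and the products $\fA_{\bu_2}\fA_{\bu_2}-\fA_{\bu_1}\fA_{\bu_1}$ etc. are linearized by $a_2 a_2 - a_1 a_1 = (a_2-a_1)a_2 + a_1(a_2-a_1)$. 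The time-derivative difference estimate is the bookkeeping-heaviest: differentiating the telescoped series in $t$ and tracking where $\pd_t$ lands (on $\delta\bB_{\bu}$ giving $\nabla_y\delta\bu$, or on $\nabla_y\delta\bB_{\bu}$ giving $\nabla_y^2\delta\bu$, or on one of the $\bB_{\bu_i}$ factors giving $\nabla_y\bu_i$) produces precisely the four terms listed; the remaining factor $\nabla_y\delta\bB_{\bu}$ or $\nabla_y^2\bu_i$ is always placed in $\CB^s_{q,1}$ and all the $\bB$-type factors in $\CB^{d/q}_{q,1}$, so Proposition \ref{prop:p1} applies term by term. I expect the main obstacle to be purely organizational: keeping the combinatorial multiplicities under control when $\pd_t$ and $\nabla_y$ both act on a long product, and verifying that every resulting monomial can indeed be split so that exactly one factor carries the $\CB^s_{q,1}$-norm while all others carry the (algebra) $\CB^{d/q}_{q,1}$-norm — this is what forces the specific combination of terms on the right-hand sides and requires the smallness of $c_0$ to sum the series.
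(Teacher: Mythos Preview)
Your proposal is correct and follows essentially the same approach as the paper: the paper notes that $\fA_{\bu}$ is given by a Neumann series in $\int_0^t\nabla\bu\,\d\tau$, invokes the Banach algebra property of $\CB^{d/q}_{q,1}(\HS)$ (Proposition \ref{lem:pro}) and the product estimate $\|uv\|_{\CB^s_{q,1}} \le C\|u\|_{\CB^s_{q,1}}\|v\|_{\CB^{d/q}_{q,1}}$ (Proposition \ref{prop:p1}), and then defers the remaining bookkeeping to Lemmas A.1 and A.2 in \cite{SSZ20}. Your plan simply spells out that bookkeeping explicitly --- the term-by-term differentiation of the series, the telescoping for differences, and the placement of exactly one factor in $\CB^s_{q,1}$ with the rest in the algebra $\CB^{d/q}_{q,1}$ --- which is precisely what the referenced lemmas carry out.
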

\begin{proof}
Notice that $\fA_{\bu}$ is given by a Neumann series expansion of
$\int^t_0 \nabla\bu(\,\cdot\, , \tau)\d\tau$ and 
that  
\begin{equation}\label{neumann:8.1}
\bigg\|\int^t_0 \nabla \bu(\,\cdot\, , \tau)\d\tau\bigg\|_{L_\infty(\HS)}
\le \int^T_0 \|\nabla \bu(\,\cdot\, , \tau)\|_{\CB^{d/q}_{q,1}(\HS)}\d \tau
\le c_0.
\end{equation}
Here, we have used the fact that $\CB^{d \slash q}_{q, 1} (\HS) \hookrightarrow L_\infty (\HS)$.
From Proposition \ref{lem:pro} 
we know that $\CB^{d/q}_{q, 1} (\HS)$ is a Banach algebra 
and from Corollary \ref{prop:p1} 
we have $\|uv\|_{\CB^s_{q,1}(\HS)} \le C\|u\|_{\CB^s_{q,1}(\HS)}
\|v\|_{\CB^{d/q}_{q,1}(\HS)}$
{\color{black} for every $u \in \CB^s_{q, 1} (\HS)$ and $v \in \CB^{d \slash q}_{q, 1} (\HS)$.}
Hence, the desired estimates may be proved along the same argument as in 
the proof of Lemmas A.1 and A.2 in \cite{SSZ20}.
The proof is complete.
\end{proof}

\subsection{Proof of Theorem \ref{th-local-well-posedness-fixed}}
{\color{black} In this subsection, we assume that $q$ and $s$ satisfy the condition
described in Theorem \ref{th-local-well-posedness-fixed}.}
Let $\bb \in B^s_{q,1}(\HS)^d$ be a fixed initial data and 
$\ba \in B^s_{q,1}(\HS)$ is any initial data such that 
$\|\bb-\ba\|_{B^s_{q,1}(\HS)} < \sigma$,  Let $(\bu_{\ba}, \sfQ_{\ba})$ and 
$(\bu_{\bb}, \sfQ_{\bb})$ be solutions to the following linear system:
\begin{equation}
\label{eq-linear-L}
\left\{\begin{aligned}
\pd_t \bu_{\bd} - \DV (\mu\BD(\bu_{\bd}) -  \sfQ_{\bd}\BI) & = 0 
& \quad & \text{in $\BR^d_+ \times \BR_+$}, \\
\dv \bu_{\bd} & = 0 & \quad & \text{in $\BR^d_+ \times \BR_+$}, \\
(\mu\BD(\bu_{\bd}) - \sfQ_{\bd}\BI) \bn_0 & = 0 & 
\quad & \text{on $\pd \BR^d_+ \times \BR_+$}, \\
\bu_{\bd} \vert_{t = 0} & = \bd & \quad & \text{in $\BR^d_+$}.	
\end{aligned}\right.		
\end{equation}	
for $\bd \in \{\ba, \bb \}$. 
By virtue of Theorem \ref{th-MR-inhomogeneous}, we obtain a unique global 
solution $(\bu_{\bd}, \sfQ_{\bd})$ to \eqref{eq-linear-L} satisfying
\begin{equation}\label{small:8.0}
e^{-\gamma t}(\bu_{\bd}, \sfQ_{\bd}) \in \Big( L_{1} (\BR_+, B^{s + 1}_{q, 1} (\HS)^d) 
\cap W^1_{1} (\BR_+, B^s_{q, 1} (\HS)^d) \Big)
\times L_{1} (\BR_+, B^{s + 1}_{q, 1} (\HS) 
+ \wh B^{s + 1}_{q, 1} (\HS))
\end{equation}
as well as
\begin{equation}\label{initial:8.1}
\lVert (\pd_t \bu_{\bd}, \nabla \sfQ_{\bd}) 
\rVert_{L_1 ((0 ,T), B^s_{q, 1} (\HS))}
+ \lVert \bu_{\bd} \rVert_{L_1 ((0, T), B^{s + 2}_{q, 1} (\HS))}
\le C e^{\gamma T} \lVert \bd \rVert_{B^s_{q, 1} (\HS)}
\end{equation}
for any finite $T > 0$ and $\bd \in \{\ba, \bb \}$ (cf. \eqref{est-ET} {\color{black} below}).
Here, $\gamma > 0$ is the same constant as in Theorem \ref{th-MR-inhomogeneous}. \par
Let $T \in (0, 1)$ and $\omega \in (0, 1\slash2)$ be positive small numbers determined later
such that 
\begin{equation}\label{small:8.1}
\lVert (\pd_t \bu_{\bb}, \nabla \sfQ_{\bb}) 
\rVert_{L_1 ((0 ,T), B^s_{q, 1} (\HS))}
+ \lVert \bu_{\bb} \rVert_{L_1 ((0, T), B^{s + 2}_{q, 1} (\HS))}
\le \omega/4.
\end{equation}
Since $\bu_{\bb}$ and $\sfQ_{\bb}$ satisfy \eqref{small:8.0}, we may 
choose $T>0$ according to $\omega$ such that \eqref{small:8.1}
holds. 
Moreover, 
\begin{align}
&\lVert (\pd_t (\bu_{\ba}-\bu_{\bb}), \nabla (\sfQ_{\ba}-\sfQ_{\bb})) 
\rVert_{L_1 ((0 ,T), B^s_{q, 1} (\HS))}
+ \lVert \bu_{\ba}-\bu_{\bb} \rVert_{L_1 ((0, T), B^{s + 2}_{q, 1} (\HS))}
\\
&\quad \le C e^{\gamma T} \lVert \ba-\bb \rVert_{B^s_{q, 1} (\HS)}
< Ce^{\gamma T} \sigma.
\end{align}
Thus, choosing $\sigma > 0$ so small that $Ce^{\gamma T}\sigma < \omega/4$ 
is satisfied, we infer from \eqref{small:8.1} that
\begin{equation}\label{small:8.2}
\lVert (\pd_t \bu_{\ba}, \nabla \sfQ_{\ba}) 
\rVert_{L_1 ((0 ,T), B^s_{q, 1} (\HS))}
+ \lVert \bu_{\ba} \rVert_{L_1 ((0, T), B^{s + 2}_{q, 1} (\HS))}
\le \omega/2.
\end{equation}
Notice that we may choose $\sigma$ independent of $T$ since $T < 1$.
Define an underlying space $S_{T, \omega}$ by setting 
\begin{equation}
\label{def-STomega}
S_{T, \omega}
= \left\{ (\bu, \sfQ) \enskip \left| \enskip
\begin{aligned}
\bu & \in W^1_1 ((0, T), B^s_{q, 1} (\HS)^d) \cap L_1 ((0, T), B^{s + 2}_{q, 1} (\HS)^d), \\
 \sfQ & \in L_1((0, T), B^s_{q,1}(\HS) + \wh B^{s+1}_{q,1,0}(\HS)), \\
& {\color{black} \lVert (\bu, \sfQ) \rVert_{S_T}} \le \omega, \quad \bu|_{t=0} = \ba
\end{aligned}\right.\right\},	
\end{equation}
where we have set
\begin{align}
{\color{black} \lVert (\bu, \sfQ) \rVert_{S_T}} & := 
[\bu]_{q, s, T} + \lVert \nabla\sfQ \rVert_{L_1 ((0, T), B^s_{q, 1} (\HS))}, \\
{\color{black} [\bu]_{q, s, T}} & := \lVert \pd_t \bu \rVert_{L_1 ((0, T), B^s_{q, 1} (\HS))}
+ \lVert \bu \rVert_{L_1 ((0, T), B^{s + 2}_{q, 1} (\HS))}.
\end{align}
By \eqref{small:8.2}, it holds $(\bu_{\ba}, \sfQ_{\ba}) \in S_{T, \omega}$. 
Given $(\bw, \sfP) \in S_{T, \omega}$, 
we intend to find a fixed point in $S_{T, \omega}$ for the mapping 
$\Phi \colon (\bw, \sfP) \mapsto (\bu, \sfQ)$ with $(\bu,\sfQ)$ 
which is 
the solution to
\begin{align}
\label{iteration:eq1}
\left\{\begin{aligned}
\pd_t \bu - \DV (\mu \BD(\bu) - \sfQ \BI) & = \bF (\bw) & \quad & \text{in $\HS \times (0, T)$}, \\
\dv \bu & = G_\mathrm{div} (\bw) = \dv \bG (\bw) & \quad & \text{in $\HS \times (0, T)$}, \\
(\mu \BD(\bu) - \sfQ \BI) \bn_0 & = (\BH (\bw) \vert_{\pd \HS})
\bn_0 & \quad & \text{on $\pd \HS \times (0, T)$}, \\
\bu \vert_{t = 0} & = \ba & \quad & \text{in $\HS$},
\end{aligned}\right.
\end{align}	
Moreover, to simplify the notation,
we shall set $\bH (\bw) := \BH (\bw) \wt \bn_0$, 
where $\wt \bn_0$ means the extension of 
$\bn_0$ to $\HS$. Then we see that $(\bU, Q) :
= (\bu - \bu_{\ba}, \sfQ - \sfQ_{\ba})$ solves
\begin{align}
\label{eq-fixed-point}
\left\{\begin{aligned}
\pd_t \bU - \DV(\mu\BD(\bU) - Q\BI) & = \bF (\bw) & \quad & \text{in $\HS \times (0, T)$}, \\
\dv \bU & = G_\mathrm{div} (\bw) = \dv \bG (\bw) & \quad & \text{in $\HS \times (0, T)$}, \\
(\mu\BD(\bU) -  Q\BI) \bn_0 & 
= \bH (\bw) {\color{black} \vert_{\pd \HS}} & \quad & \text{on $\pd \HS \times (0, T)$}, \\
\bU \vert_{t = 0} & = 0 & \quad & \text{in $\HS$}.
\end{aligned}\right.
\end{align}
For $(\bw, \sfP) \in S_{T, \omega}$,
we infer from $B^{d \slash q}_{q, 1} (\HS) \hookrightarrow L_\infty (\HS)$ that
\begin{equation}
\bigg\|\int^t_0\nabla \bw(\,\cdot\, , \tau)\d\tau\bigg\|_{L_\infty(\HS)}
\le \int^T_0 \lVert\nabla\bw(\,\cdot\, , \tau)\|_{B^{d/q}_{q,1}(\HS)}\d\tau
\le C_1 \omega < \frac{1}{2} \qquad \text{for every $t>0$},
\end{equation}
where we have assumed $\omega < (2 C_1)^{- 1}$ if necessary,
and hence $\bF(\bw)$, $G_\mathrm{div}(\bw)$, $\bG(\bw)$ and $\bH(\bw)$ are well-defined.
To solve {\color{black}System \eqref{eq-fixed-point}}, we extend the 
right-hand members to $t \in \BR$ so that we may use Theorem \ref{th-MR-inhomogeneous}.
To this end, for a function $f$ defined on $(0, T)$ we define
\begin{equation}
\sfE_{(T)} f (\,\cdot\, , \tau) = \begin{cases}
{\color{black}0}, & {\color{black} t \le 0}, \\
f (\,\cdot\, , t), & 0 < t < T, \\
f (\,\cdot\, , 2 T - t), & T \le t < 2 T, \\
0, & t \ge 2 T.
\end{cases}
\end{equation} 
In addition, if $f (\,\cdot\, , 0) = 0$, there holds
\begin{equation}
\pd_t (\sfE_{(T)} f (\,\cdot\, , t)) = \begin{cases}
{\color{black}0}, & {\color{black} t \le 0}, \\
(\pd_t f) (\,\cdot\, , t), & 0 < t < T, \\
- (\pd_t f) (\,\cdot\, , 2 T - t), & T \le t < 2 T, \\
0, & t \ge 2 T.
\end{cases}
\end{equation} 	
It is easy to verify that
\begin{equation}
\label{est-ET}
\begin{split}
\lVert f (\,\cdot\, , t) \rVert_{L_1 ((0, T), X)} & \le
\lVert \sfE_{(T)} f (\,\cdot\, , t) \rVert_{L_1 (\BR, X)}
\le 2 \lVert f (\,\cdot\, , t) \rVert_{L_1 ((0, T), X)}, \\
\lVert (\pd_t f) (\,\cdot\, , t) \rVert_{L_1 ((0, T), X)} & \le
\lVert \pd_t (\sfE_{(T)} f (\,\cdot\, , t))
\rVert_{L_1 (\BR, X)}
\le 2 \lVert (\pd_t f) (\,\cdot\, , t) \rVert_{L_1 ((0, T), X)}
\end{split}
\end{equation} 	
for any Banach space $X$. 
Let $\bF_0 (\bw)$ be the zero extension of $\bF (\bw)$ with respect to
the time variable $t$. Consider the \textit{linear} system
\begin{align}
\label{eq-fixed-point-tilde}
\left\{\begin{aligned}
\pd_t \wt \bU - \DV(\mu(\wt \bU) - \wt Q\BI) & = \bF_0 (\bw)
& \quad & \text{in $\HS \times \BR$}, \\
\dv \wt \bU & = E_{(T)} G_\mathrm{div} (\bw) = \dv \Big(E_{(T)} \bG (\bw)\Big) 
& \quad & \text{in $\HS \times \BR$}, \\
(\mu(\wt \bU) - \wt Q\BI)\bn_0 & = E_{(T)} \bH (\bw) \vert_{\pd \HS}
& \quad & \text{on $\pd \HS \times \BR$}, \\
\wt \bU \vert_{t = 0} & = 0 & \quad & \text{in $\HS$}.	
\end{aligned}\right.	
\end{align}
Clearly, we find that $(\wt \bU, \wt  Q) \vert_{t \in (0, T)}$ is 
also a solution to \eqref{eq-fixed-point}.
Since the right-hand members of \eqref{eq-fixed-point-tilde} {\color{black} are}
defined on $\BR$, we may apply Theorem \ref{th-MR-inhomogeneous} to obtain
\begin{multline}\label{linear:8.1}
\lVert e^{-\gamma t}(\pd_t \wt \bU, \nabla \wt  Q) \rVert_{L_{1} (\BR_+, B^s_{q, 1} (\HS))} 
+ \lVert e^{-\gamma t}\wt \bU \rVert_{L_{1} (\BR_+, B^{s + 2}_{q, 1} (\HS))} 
+ \sup_{t \in [0, \infty)} e^{- \gamma t} \lVert \wt \bU (t) \rVert_{B^s_{q, 1} (\HS)} \\
\le C \bigg(
\Big\lVert e^{-\gamma t}\Big(\bF_0 (\bw) , \pd_t \Big( E_{(T)} \bG (\bw) \Big) \Big) 
\Big\rVert_{L_{1} (\BR, B^s_{q, 1} (\HS))} 
+ \lVert e^{-\gamma t}\nabla(E_{(T)} G_\mathrm{div}, E_{(T)} \bH) 
\rVert_{L_{1} (\BR, B^{s}_{q, 1} (\HS))} \\
+ \Big\lVert  e^{-\gamma t}E_{(T)} \bH 
\Big\rVert_{W^{1/2}_{1} (\BR, B^s_{q, 1} (\HS))} \bigg),
\end{multline}
where we have used the well-known embedding
$W^1_1 (\BR_+, X) \hookrightarrow \mathrm{BC} ([0, \infty), X)$
for a Banach space $X$.
By virtue of \eqref{est-ET} and Proposition \ref{prop-B7}
in Appendix B below, the above estimate may be read as
\begin{multline}\label{nonest:-1}
\lVert (\pd_t \bU, \nabla Q) \rVert_{L_1 ((0, T), B^s_{q, 1} (\HS))} 
+ \lVert \bU \rVert_{L_1 ((0, T), B^{s + 2}_{q, 1} (\HS))}
+ \sup_{0 \le t < T} \lVert \bU (t) \rVert_{B^s_{q, 1} (\HS)} \\
\le C e^{\gamma T} \Big(
\lVert (\bF (\bw), \pd_t \bG (\bw)) 
\rVert_{L_1 ((0, T), B^s_{q, 1} (\HS))} 
+ \lVert (G_\mathrm{div} (\bw), \bH (\bw)) 
\rVert_{L_1 ((0, T), B^{s + 1}_{q, 1} (\HS))} \\
+ \lVert {\color{black} \pd_t} \bH (\bw) 
\rVert_{L_{1} ((0, T), B^{s-1}_{q,1} (\HS))} \Big).
\end{multline}
The right-hand side of \eqref{nonest:-1} is easily 
bounded by means of Propositions \ref{prop:p1}
and \ref{prop-Lagrangian} except for the last term.
In fact, we have
$\|\nabla \bw\|_{L_1((0, T), B^{s+1}_{q,1}(\HS))}
\le C_1 \omega \le 1\slash2$ provided that $\omega \le (2 C_2)^{- 1}$.  
Thus, by Propositions \ref{prop:p1} and \ref{prop-Lagrangian}, there hold
\begin{equation}\label{nonest:8.0} \begin{aligned}
\lVert \bF (\bw) \rVert_{L_1 ((0, T), B^s_{q, 1} (\HS))} 
& \le C \omega^2, \\
\lVert G_\mathrm{div} (\bw) \rVert_{L_1 ((0, T), B^{s + 1}_{q, 1} (\HS))}
& \le C \omega^2, \\
\lVert \pd_t \bG (\bw) \rVert_{L_1 ((0, T), B^s_{q, 1} (\HS))} 
& \le C \omega^2, \\
\lVert \bH (\bw) \rVert_{L_1 ((0, T), B^{s + 1}_{q, 1} (\HS))}
& \le C \omega^2.
\end{aligned}\end{equation}
To estimate the last term of the right-hand side 
of \eqref{nonest:-1} it suffices to  consider the term of the form:
$\nabla \bw \, \sfF(\int^t_0\nabla \bw\d\tau)$, 
where $\sfF(\sfs)$ is given by a Neumann series expansion
with respect to $\sfs$ for $|\sfs| \le \omega < 1\slash2$ such that $\sfF(0) = 0$,
i.e., $\sfF(\sfs) = \sum_{j=1}^\infty \sfs^j$.
Let $\sfF'$ be the derivative of $\sfF$.
Then, it follows from Propositions \ref{prop:p2}, \ref{lem:pro}, and \ref{prop-8.7} that
\begin{align}
& \bigg\|\pd_t\bigg(\nabla \bw \, \sfF\bigg(\int^t_0\nabla \bw
\d\tau\bigg)\bigg)\bigg\|_{B^{s-1}_{q,1}(\HS)}\\
&\quad \le \bigg\|(\pd_t\nabla \bw) \, \sfF\bigg(\int^t_0\nabla \bw\d\tau\bigg)\bigg\|_{B^{s-1}_{q,1}(\HS)}
+\bigg\|\nabla \bw \nabla \bw \, \sfF'\bigg(\int^t_0\nabla \bw\d\tau\bigg)\bigg\|_{B^{s-1}_{q,1}(\HS)}
\\
&\quad\le C \left\{\|\pd_t \bw\|_{B^s_{q,1}(\HS)}
\bigg \|\, \sfF \bigg(\int^t_0\nabla \bw\d\tau \bigg) \bigg\|_{B^{d/q}_{q,1}(\HS)} 
+ \|\nabla \bw\|_{B^{s-1}_{q,1}(\HS)}
\bigg\|\nabla \bw  \, \sfF'
\bigg(\int^t_0\nabla \bw\d\tau\bigg)\bigg\|_{B^{d/q}_{q,1}(\HS)} \right\} \\
& \quad \le C \Big(\|\pd_t\bw\|_{B^s_{q,1}(\HS)} 
\|\nabla\bw\|_{L_1((0, T), B^{d/q}_{q,1}(\HS))}
+ \|\bw\|_{B^s_{q,1}(\HS)} \|\bw\|_{B^{d/q+1}_{q,1}(\HS)} \Big),
\end{align}
where the identity $\sfF' (s) = 1 + \sum_{j = 1}^\infty j s^{j - 1}$ has been used.
Since $d/q + 1 < s+2$ as follows from the condition 
\eqref{cond-qs-local1}, we may choose $\theta \in (0, 1)$ such that 
$d/q+1 = s(1-\theta) + (s+2)(1-\theta)$, and hence we 
infer from the interpolation inequality that 
$\|\bw\|_{B^{d/q+1}_{q,1}(\HS)} \le C\|\bw\|_{B^{s}_{q,1}(\HS)}^{1-\theta}
\|\bw\|_{B^{s+2}_{q,1}(\HS)}^\theta$. Thus, by the embedding
$B^{d\slash q +1}_{q, 1}(\HS) \hookleftarrow B^{s+1}_{q,1} (\HS)$
and the Young inequality, there holds
\begin{equation}
\label{boundary-estimate}
\begin{split}
\|\bw\|_{B^s_{q,1}(\HS)}\|\bw\|_{B^{d/q+1}_{q,1}(\HS)}
& \le C\|\bw\|_{B^s_{q,1}(\HS)}^{2-\theta}\|\bw\|_{B^{s+2}_{q,1}(\HS)}^{\theta} \\
& \le 
C_\theta \Big( \epsilon \|\bw\|_{B^{s+2}_{q,1}(\HS)} + \epsilon^{- \frac{\theta}{1-\theta}}
\|\bw\|_{B^s_{q,1}(\HS)}^{\frac{2-\theta}{1-\theta}} \Big)
\end{split}
\end{equation}
for every $\epsilon > 0$. Thus, we have 
\begin{equation}
\begin{split}
\|\pd_t\bH(\bw)\|_{L_1((0, T), B^{s-1}_{q,1}(\HS)}
&\le C \Big( \epsilon \|\bw\|_{L_1((0, T), B^{s+2}_{q,1}(\HS))}
+ \epsilon^{- \frac{\theta}{1-\theta}}
\|\bw\|_{L_\infty ((0, T), B^s_{q,1}(\HS))}^{\frac{2-\theta}{1-\theta}}T
\\
& \quad + \|\pd_t\bw\|_{L_1((0, T), B^s_{q,1}(\HS))}\|\bw\|_{
L_1 ((0, T), B^{s+2}_{q,1}(\HS))} \Big).
\end{split}
\end{equation}
Since it is assumed that $\|\bb-\ba\|_{B^s_{q,1}(\HS)} < \sigma$, there holds
\begin{equation}
\|\bw\|_{L_\infty ((0,T), B^s_{q,1}(\HS))} \le C\|\ba\|_{B^s_{q,1}(\HS)}
+ \|\pd_t\bw\|_{L_1((0, T), B^s_{q,1}(\HS))} 
\le C \Big(\|\bb\|_{B^s_{q,1}(\HS)} + \sigma + \omega \Big),
\end{equation}
where the estimate $\lVert \ba \rVert_{B^s_{q, 1} (\HS)} \le 
\lVert \bb - \ba \rVert_{B^s_{q, 1} (\HS)} + \|\bb\|_{B^s_{q,1}(\HS)}$
has been used. Hence, we end up with
\begin{equation}
\label{nonest:8.1}
\lVert \pd_t \bH (\bw) \rVert_{L_1 ((0, T), B^{s - 1}_{q, \infty} (\HS))}
\le C \bigg\{\omega^2 + \epsilon \omega + 
\epsilon^{- \frac{\theta}{1 - \theta}} T \Big(\|\bb\|_{B^s_{q,1}(\HS)} + \sigma
+ \omega \Big)^{\frac{2-\theta}{1-\theta}} \bigg\}.
\end{equation}
Combining \eqref{nonest:-1}, \eqref{nonest:8.0}, and \eqref{nonest:8.1}
implies that
\begin{equation}\label{nonest:8.2}\begin{aligned}
&\lVert {\color{red}e^{- \gamma t}} (\pd_t \wt \bU, \wt \nabla Q) \rVert_{L_{{\color{red}1}} (\BR_+, B^s_{q, 1} (\HS))} 
+ \lVert {\color{red}e^{- \gamma t}} \wt \bU \rVert_{L_{\color{red} 1} (\BR_+, B^{s + 2}_{q, 1} (\HS))} 
+ \sup_{t \in [0, \infty)} \lVert \wt \bU (t) \rVert_{B^s_{q, 1} (\HS)} \\
& \qquad \le C \bigg\{\omega^2 + \epsilon \omega 
+ \epsilon^{- \frac{\theta}{1 - \theta}} T \Big(\|\bb\|_{B^s_{q,1}(\HS)} + \sigma
+ \omega \Big)^{\frac{2-\theta}{1-\theta}} \bigg\}.
\end{aligned}\end{equation}
Since $0 < T < 1$ implies $Ce^{\gamma T} \le Ce^\gamma$, 
it follows from \eqref{nonest:8.2} that
\begin{equation}\label{nonest:8.2*}\begin{aligned}
&\lVert (\pd_t \bU, \nabla Q) \rVert_{L_1((0, T), B^s_{q, 1} (\HS))} 
+ \lVert \bU \rVert_{L_1((0, T), B^{s + 2}_{q, 1} (\HS))}
+ \sup_{t \in [0, T)} \lVert \bU (t) \rVert_{B^s_{q, 1} (\HS)} \\
& \qquad \le C_3 \bigg\{\omega^2 + \epsilon \omega 
+ \epsilon^{- \frac{\theta}{1 - \theta}} T \Big(\|\bb\|_{B^s_{q,1}(\HS)} + \sigma
 + \omega \Big)^{\frac{2-\theta}{1-\theta}} \bigg\}.
\end{aligned}\end{equation}
We first take $\epsilon$ such that $\epsilon = 1\slash(6 C_3)$,
and then
choosing $\omega>0$ and $T > 0$ such that $C_3 \omega < 1 \slash 6$ and
$C_3 \epsilon^{- \theta \slash (1 - \theta)} T (\|\bb\|_{B^s_{q,1}(\HS)}
+ \omega)^{(2-\theta)\slash(1-\theta)} < \omega/6$, we have 
$\lVert (\bU, Q) \rVert_{S_T} \le \omega/2.$
Thus, setting $\bu = \bu_{\ba} + \bU$ and $\sfQ = \sfQ_{\ba} +Q$ , by \eqref{small:8.2}
we have $\lVert (\bu, \sfQ) \rVert_{S_T} \le \omega$, 
which shows that $(\bu, \sfQ) \in  S_{T, \omega}$. 
Namely, we see that $\Phi$ maps $S_{T, \omega}$ into itself. \par
To verify that the map $\Phi$ is contractive, 
we consider two pairs of functions 
$(\bw^{(1)}, \sfQ^{(1)}), (\bw^{(2)}, \sfQ^{(2)}) 
\in S_{T, \omega}$
and set $(\bu^{(\ell)}, \sfP^{(\ell)}) 
= \Phi (\bw^{(\ell)}, \sfQ^{(\ell)})$, $\ell = 1, 2$.
Let $\delta \bu = \bu^{(2)} - \bu^{(1)}$ and
$\delta \sfP = \sfP^{(2)} - \sfP^{(1)}$. Then we have
\begin{align}
\label{eq-difference}
\left\{\begin{aligned}
\pd_t (\delta \bu) - \DV(\mu\BD(\delta \bu) -  \delta \sfP\BI) 
& = \bF (\bw^{(2)}) - \bF (\bw^{(1)}) 
& \quad & \text{in $\HS \times (0, T)$}, \\
\dv (\delta \bu) & = G_\mathrm{div} (\bw^{(2)}) - G_\mathrm{div} (\bw^{(1)}) 
& \quad & \text{in $\HS \times (0, T)$}, \\
(\mu\BD(\delta \bu) -  \delta \sfP\BI) \bn_0 
& = \bH (\bw^{(2)}) - \bH (\bw^{(1)})
& \quad & \text{on $\pd \HS \times (0, T)$}, \\
(\delta \bu) \vert_{t = 0} & = 0 & \quad & \text{in $\HS$}	
\end{aligned}\right.	
\end{align}
with
\begin{equation}
G_\mathrm{div} (\bw^{(2)}) - G_\mathrm{div} (\bw^{(1)}) 
= \dv \Big(\bG (\bw^{(2)}) - \bG (\bw^{(1)}) \Big) \qquad \text{in $\HS \times (0, T)$}.
\end{equation}
Then, we see that
\begin{align}
\bF (\bw^{(2)}) - \bF (\bw^{(1)}) 
& = \bigg(\int_0^t \nabla (\delta \bw) (y, \tau) \d \tau \bigg)^\top
\Big(\pd_t \bw^{(2)} - \mu \Delta \bw^{(2)} - \mu \nabla \dv \bw^{(2)} \Big) \\
& \quad + \bigg(\int_0^t \nabla \bw^{(1)} (y, \tau) \d \tau \bigg)^\top
\Big(\pd_t (\delta \bw) - \mu \Delta (\delta \bw) 
- \mu \nabla \dv (\delta \bw) \Big) \\
& \quad +\mu \bigg(\int_0^t \nabla (\delta \bw) (y, \tau) \d \tau \bigg)^\top \dv 
\Big( (\BA_{\bw^{(2)}} \BA_{\bw^{(2)}}^\top - \BI) \nabla \bw^{(2)} \Big)\\
&\quad + \mu\bigg\{\BI + \bigg(\int^t_0\nabla \bw^{(1)}(y, \tau)\d\tau\bigg)^\top \bigg\}
 \dv \bigg( (\delta (\BA_{\bw} \BA_{\bw}^\top)) \nabla \bw^{(2)} \bigg) \\
& \quad + \mu\bigg\{\BI + \bigg(\int^t_0\nabla \bw^{(1)}(y, \tau)\d\tau\bigg)^\top \bigg\} \dv 
\Big((\BA_{\bw^{(1)}} \BA_{\bw^{(1)}}^\top - \BI) \nabla (\delta \bw) \Big) \\
& \quad + \mu\big( \nabla_y(\delta\BA_{\bw}^\top):\nabla_y\bw^{(2)}\big)
+\mu\nabla_y\big((\BA^\top_{\bw^{(1)}}-\BI):\nabla_y(\delta\bw)\big), \\
G_\mathrm{div} (\bw^{(2)}) - G_\mathrm{div} (\bw^{(1)}) 
& = - (\delta \BA_{\bw}^\top) \colon \nabla \bw^{(2)}
+ (\BI - \BA_{\bw^{(1)}}^\top) \colon \nabla (\delta \bw), \\
\bG (\bw^{(2)}) - \bG (\bw^{(1)}) 
& = - (\delta \BA_{\bw}) \bw^{(2)} + (\BI - \BA_{\bw^{(1)}}) \delta \bw, \\
\bH (\bw^{(2)}) - \bH (\bw^{(1)})
& = \mu \bigg[ (\nabla_y (\delta \bw)) (\BI-\BA_{\bw^{(2)}}^\top) 
- (\nabla_y (\bw^{(1)})) \delta \BA_{\bw}^\top \\
& \quad + \bigg(\int_0^t \nabla_y (\delta \bw) (y, \tau) \d \tau \bigg)^\top         
[\nabla_y \bw^{(2)}]^\top \BA_{\bw^{(2)}} (\BI-\BA_{\bw^{(2)}}^\top) \\
& \quad + \bigg\{\BI + \bigg(\int_0^t \nabla_y \bw^{(2)} (y, \tau) \d \tau \bigg)^\top\bigg\}  
[\nabla_y (\delta \bw)]^\top \BA_{\bw^{(2)}} (\BI-\BA_{\bw^{(2)}}^\top) \\
& \quad + \bigg\{\BI + \bigg(\int_0^t \nabla_y \bw^{(1)} (y, \tau) \d \tau \bigg)^\top\bigg\}  
[\nabla_y \bw^{(1)}]^\top (\delta \BA_{\bw}) (\BI-\BA_{\bw^{(2)}}^\top) \\
& \quad - \bigg\{\BI + \bigg(\int_0^t \nabla_y \bw^{(1)} (y, \tau) \d \tau \bigg)^\top\bigg\}
[\nabla_y \bw^{(1)}]^\top \BA_{\bw^{(1)}} (\delta \BA_{\bw}^\top) \\
& \quad + \bigg(\int_0^t \nabla_y (\delta \bw) (y, \tau) \d\tau \bigg)^\top
[\nabla_y \bw^{(2)}]^\top \BA_{\bw^{(2)}} \\
& \quad + \bigg(\int_0^t \nabla_y \bw^{(1)} (y, \tau) \d\tau \bigg)^\top
[\nabla_y (\delta \bw)]^\top \BA_{\bw^{(2)}} \\
& \quad + \bigg(\int_0^t \nabla_y \bw^{(1)} (y, \tau) \d\tau \bigg)^\top
[\nabla_y \bw^{(1)}]^\top (\delta \BA_{\bw}) \\
& \quad + [\nabla_y (\delta \bw)]^\top (\BI-\BA_{\bw^{(2)}}) 
- [\nabla_y (\bw^{(1)})]^\top \delta \BA_{\bw} \bigg] \wt \bn_0.
\end{align}
Similarly to \eqref{nonest:8.0}, by Proposition \ref{prop-Lagrangian},
there hold
\begin{align}
\|\bF(\bw^{(2)}) - \bF(\bw^{(1)})\|_{L_1((0, T), B^s_{q,1}(\HS)}
& \le C\omega [\delta\bw]_{q, s, T}, \\
\|G_\mathrm{div}(\bw^{(2)}) - G_\mathrm{div}(\bw^{(1)})\|_{L_1((0, T), B^{s+1}_{q,1}(\HS)}
& \le C\omega [\delta\bw]_{q, s, T}, \\
\|\pd_t(\bG(\bw^{(2)}) - \bG(\bw^{(1)})\|_{L_1((0, T), B^{s}_{q,1}(\HS)}
& \le C\omega [\delta\bw]_{q, s, T}, \\
\|\bH(\bw^{(2)}) - \bH(\bw^{(1)})\|_{L_1((0, T), B^{s+1}_{q,1}(\HS)}
& \le C\omega [\delta\bw]_{q, s, T}.
\end{align}
To estimate $\|\pd_t(\bH(\bw^{(2)}) - \bH(\bw^{(1)}))\|_{L_1((0, T), 
B^{s-1}_{q,1}(\HS))}$, analogously to the proof of \eqref{nonest:8.1}, 
it suffices to estimate
\begin{equation}
\pd_t \bigg\{\nabla \bw^{(2)}\sfF\bigg(\int^t_0\nabla \bw^{(2)}\d\tau\bigg)
-\nabla \bw^{(1)}\sfF\bigg(\int^t_0\nabla \bw^{(1)}\d\tau \bigg) \bigg\}=: \sum_{m = 1}^5 I_m (t),
\end{equation}
where we have set
\begin{align}
I_1 (t) & = \nabla\pd_t(\delta\bw) \sfF\bigg(\int^t_0\nabla \bw^{(2)}\d\tau\bigg), \\
I_2 (t) & = \nabla\pd_t\bw^{(1)}\bigg\{\sfF\bigg(\int^t_0\nabla \bw^{(2)}\d\tau\bigg)
-\sfF\bigg(\int^t_0\nabla\bw^{(1)}\d\tau\bigg)\bigg\}, \\
I_3 (t) &=(\nabla\delta\bw)\sfF'\bigg(\int^t_0\nabla \bw^{(2)}\d\tau\bigg)\nabla\bw^{(2)},\\
I_4 (t) & = \nabla\bw^{(1)}\sfF'\bigg(\int^t_0\nabla \bw^{(2)}\d\tau\bigg)\nabla\delta\bw, \\
I_5 (t) & = \nabla\bw^{(1)}\bigg\{\sfF'\bigg(\int^t_0 \nabla\bw^{(2)}\d\tau\bigg)
- \sfF'\bigg(\int^t_0\nabla\bw^{(1)}\d\tau\bigg)\bigg\}\nabla\bw^{(1)}.
\end{align}
Recalling that $B^{d/q}_{q,1} (\HS)$ is a Banach algebra, $\sfF(0) = 0$, and 
$\int^T_0\|\nabla \bw^{(\ell)} (\,\cdot\, , \tau)\|_{B^{d/q}_{q,1}(\HS)}\d\tau 
\le \omega \le 1\slash2$, $\ell = 1, 2$,
we infer from Propositions \ref{prop:p2} and \ref{prop-8.7} that
\begin{align}
\|I_1 (t)\|_{B^{s-1}_{q,1}(\HS)}& \le C\|\pd_t(\delta\bw)(\,\cdot\, , t)\|_{B^s_{q,1}(\HS)}
\int^T_0\|\nabla\bw^{(2)}(\,\cdot\, , \tau)\|_{B^{d/q}_{q,1}(\HS)}\d\tau, \\
\|I_2 (t)\|_{B^{s-1}_{q,1}(\HS)}& \le C\|\pd_t\bw^{(1)}(\,\cdot\, , t)\|_{B^s_{q,1}(\HS)}
\int^T_0\|\nabla (\delta\bw)(\,\cdot\, , \tau)\|_{B^{d/q}_{q,1}(\HS)}\d\tau, \\
\|I_3 (t)\|_{B^{s-1}_{q,1}(\HS)} & \le \| (\delta\bw)(\,\cdot\, , t)\|_{B^s_{q,1}(\HS)}
\|\nabla\bw^{(2)}(\,\cdot\, , t)\|_{B^{d/q}_{q,1}(\HS)}, \\
\|I_4 (t)\|_{B^{s-1}_{q,1}(\HS)} & \le C\|\nabla\bw^{(1)}(\,\cdot\, , t)\|_{B^{d/q}_{q,1}(\HS)}
\|(\delta\bw)(\,\cdot\, , t)\|_{B^s_{q,1}(\HS)}, \\
\|I_5 (t)\|_{B^{s-1}_{q,1}(\HS)}& \le C\|\bw^{(1)}(\,\cdot\, , t)\|_{B^s_{q,1}(\HS)}
\|\nabla\bw^{(1)}(\,\cdot\, , t)\|_{B^{d/q}_{q,1}(\HS)}\int^T_0\|\nabla(\delta\bw)(\,\cdot\, ,
\tau)\|_{B^{d/q}_{q,1}(\HS)}\d\tau.
\end{align}
Recall that $s < d/q + 1 < s+2$ as follows from \eqref{cond-qs-local1}. 
For the estimate of $I_5 (t)$, choosing $\theta\in(0, 1)$ such that 
$d/q+ 1 = (1-\theta)s + \theta(s+2)$, we have
\begin{equation}
\|\bw^{(1)}(\,\cdot\, , t)\|_{B^s_{q,1}(\HS)}
\|\nabla\bw^{(1)}(\,\cdot\, , t)\|_{B^{d/q}_{q,1}(\HS)}
\le C_\theta \Big(\|\bw^{(1)}(\,\cdot\, , t)\|_{B^{s+2}_{q,1}(\HS)} 
+ \|\bw^{(1)}(\,\cdot\, , t)\|_{B^s_{q,1}(\HS)}^{\frac{2-\theta}{1-\theta}} \Big).
\end{equation}
Since $(\delta \bw)|_{t=0} = 0$, we observe
\begin{equation}
\|(\delta \bw)(\,\cdot\, , t)\|_{B^s_{q,1}(\HS)} \le C\|\pd_t(\delta \bw)
\|_{L_1((0, T),B^s_{q,1}(\HS))}.
\end{equation}
Moreover, there holds
\begin{equation}
\|\bw^{(1)}(\,\cdot\, , t)\|_{B^s_{q,1}(\HS)} \le C \bigg(\|\ba\|_{B^s_{q,1}(\HS)}
+ \int^T_0\|\pd_t\bw^{(1)}(\,\cdot\, , \tau)\|_{B^s_{q,1}(\HS)} \d \tau \bigg)
\le C \Big(\|\bb\|_{B^s_{q,1}(\HS)} + \omega \Big).
\end{equation}
Summing up, we obtain
\begin{align}
&\|\pd_t(\bH(\bw^{(2)}) - \bH(\bw^{(1)}))\|_{L_1((0, T), B^{s-1}_{q,1}(\HS))}
\\
&\quad
\le C \bigg\{\omega + T {\color{black} \Big(\|\bb\|_{B^s_{q,1}(\HS)}+\omega\Big)^{\frac{2-\theta}{1-\theta}}}
\bigg\} \Big(\|\delta\bw\|_{L_1((0, T), B^{s+2}_{q,1}(\HS))}
+ \|\pd_t(\delta\bw)\|_{L_1((0, T), B^s_{q,1}(\HS))} \Big).
\end{align}
Thus, it holds
\begin{equation}
{\color{black} \lVert (\delta\bu, \delta\sfP) \rVert_{S_T}} 
\le C_4 \bigg\{\omega + 
T \Big(\|\bb\|_{B^s_{q,1}(\HS)}+\omega \Big)^{\frac{2-\theta}{1-\theta}}\bigg\}
{\color{black} \lVert (\delta\bw, \delta\sfQ) \rVert_{S_T}}.
\end{equation}
Choosing $\omega$ and $T$ {\color{black}smaller such} that 
\begin{equation}
C_4 \bigg\{\omega + T \Big(\|\bb\|_{B^s_{q,1}(\HS)}+\omega 
\Big)^{\frac{2-\theta}{1-\theta}}\bigg\} \le \frac12
\end{equation}
{\color{black} if necessary}, we have
${\color{black} \lVert (\delta\bu, \delta\sfP) \rVert_{S_T}} 
\le (1/2) \lVert (\delta\bw, \delta\sfQ) \rVert_{S_T}$,
which shows that the map $\Phi$ is contractive. Hence, 
by the Banach fixed point theorem, there exists a unique $(\bu, Q)
\in S_{T, \omega}$ which {\color{black}satisfies} $\Phi(\bu, Q) = (\bu, Q)$.
Obviously, $(\bu, Q)$ is a required unique solution of equations
\eqref{eq-fixed}.
Therefore, the proof of Theorem \ref{th-local-well-posedness-fixed} is 
{\color{black} complete}. \par
\subsection{Proof of Theorem \ref{thm:lw.2}}
The proof of Theorem \ref{thm:lw.2} is similar to 
the procedure in the previous subsection.
{\color{black}The only difference here arises in the estimates for} the nonlinear term $\bH(\bw)$.
Let $S_{T, \omega}$ be the space defined in \eqref{def-STomega},
where $\omega \in (0, 1 \slash 2)$ is a small number determined later.
{\color{black} In contrast to the proof of Theorem \ref{th-local-well-posedness-fixed}}, 
the time interval $(0, T)$ is arbitrarily given. From \eqref{initial:8.1}, we choose $c_0 > 0$
so small that 
\begin{equation}\label{smallness:8.1}
\|(\pd_t\bu_{\ba}, \sfQ_{\ba})\|_{L_1((0, T), B^s_{q,1}(\HS))}
+ \|\bu_{\ba}\|_{L_1((0, T), B^{s+2}_{q,1}(\HS))}
\le Ce^{\gamma T}\|\ba\|_{B^s_{q,1}(\HS)}
\le Ce^{\gamma T}c_0 \le \omega/2
\end{equation}
is satisfied, where
$\omega$ will be chosen small enough later. 
Given $(\bw, \sfQ) \in S_{T, \omega}$, let 
${\color{black} (\wt \bU, \wt Q)} :=(\bu-\bu_a, \sfQ-Q_{\ba})$ solve
equations \eqref{eq-fixed-point-tilde},
where the boundary term $E_{(T)}\bH(\bw)$ is replaced with  
$\BH({\color{black} \wt \bw})$. Here, $\BH$ is a nonlinear term given in the fourth line 
of \eqref{def-nonlinear-terms} and ${\color{black} \wt \bw}$ is defined by
\begin{equation}
{\color{black} \wt \bw} = \wt  \bu_{\ba} + E_{(T)}(\bw-\bu_{\ba})
\end{equation}
{\color{black}with} $\wt \bu_{\ba}$ defined by 
\begin{equation}
\wt \bu_{\ba} = \begin{cases} \bu_{\ba}, &\quad t > 0, \\
\psi_2 (t)\bu_{\ba}(\,\cdot\, , -t), & \quad t < 0,
\end{cases}
\end{equation}
where $\psi_2 (t)$ is a smooth function defined on $\BR$ such that 
$\psi_2 (t) = 1$ for $t > - 1$ and $\psi_2 (t) = 0$ for $t < - 2$.
{\color{black} It is easy to find that $(\wt \bU, \wt  Q) \vert_{t \in (0, T)}$ 
is a solution to \eqref{eq-fixed-point}}. \par
To estimate $\BH({\color{black} \wt \bw})$, we {\color{black} use} the following estimate:
\begin{equation}\label{nonest:8.4}
 \|e^{-\gamma t}fg\|_{W^{1/2}_{1}(\BR, B^s_{q,1}(\HS))}
\le C\|e^{-\gamma t}f\|_{W^{1/2}_{1}(\BR, B^s_{q,1}(\HS))}
\Big(\|g\|_{L_\infty(\BR, B^{d/q}_{q,1}(\HS))}
+ \|\pd_tg\|_{L_1(\BR, B^{d/q}_{q,1}(\HS))} \Big).
\end{equation}
In fact, using Corollary \ref{prop:p1}, we have the following two estimates:
\begin{align}
\|e^{-\gamma t}fg\|_{L_1(\BR, B^s_{q,1}(\HS))} & 
\le C\|e^{-\gamma t}
f\|_{L_1(\BR, B^s_{q,1}(\HS))}\|g\|_{L_\infty(\BR, B^{d/q}_{q,1}(\HS))}
\\
\|e^{-\gamma t}fg\|_{W^1_1(\BR, B^s_{q,1}(\HS))} 
& \le C \Big(\|e^{-\gamma t}f\|_{W^1_1(\BR, B^s_{q,1}(\HS))}
\|g\|_{L_\infty(\BR, B^{d/q}_{q,1}(\HS))} \\
& \quad + \|e^{-\gamma t}f\|_{L_\infty(\BR, B^s_{q,1}(\HS))}
\|\pd_tg\|_{L_1(\BR, B^{d/q}_{q,1}
(\HS))} \Big) \\
&\le C\|e^{-\gamma t}f\|_{W^1_1(\BR, B^s_{q,1}(\HS))} \Big(
\|g\|_{L_\infty(\BR, B^{d/q}_{q,1}(\HS))}
+ \|\pd_tg\|_{L_1(\BR, B^{d/q}_{q,1}(\HS))} \Big),
\end{align}
where we have used the fact that 
$\|e^{-\gamma  t}f(\,\cdot\, , t)\|_{B^s_{q,1}(\HS)} 
\le \int_{- \infty}^\infty\|e^{-\gamma t}\pd_tf(\,\cdot\, , t)\|_{B^s_{q,1}(\HS)}\d t$.
Interpolating these two estimates with the real interpolation method 
and using 
\begin{equation}
W^{1/2}_1(\BR, B^s_{q, 1} (\HS)) = 
(L_1(\BR, B^s_{q, 1}(\HS)), W^1_1(\BR, B^s_{q, 1} (\HS)))_{1/2,1},
\end{equation}
see \eqref{inhomo-int.1}, we have \eqref{nonest:8.4}. 
Notice that the estimate of $\BH (\wt \bw)$ follows from the bound of the term
of the form $\nabla \wt \bw \sfF (\int_0^t \nabla \wt \bw \d \tau)$,
where $\sfF$ is the Neumann series introduced in the proof of
Theorem \ref{th-local-well-posedness-fixed}. 
It follows from \eqref{nonest:8.4} that
\begin{equation}
\bigg\|e^{-\gamma t} \nabla \wt \bw \sfF \bigg(\int_0^t \nabla \wt \bw \d \tau \bigg) 
\bigg\|_{W^{1/2}_{1}(\BR, B^s_{q,1}(\HS))}
\le C\|e^{-\gamma t}\nabla \wt \bw\|_{W^{1/2}_{1}(\BR, B^s_{q,1}(\HS))}
\| \wt \bw\|_{L_1(\BR, B^{s+2}_{q,1}(\HS))}.
\end{equation}
Since it follows from Proposition \ref{prop-B7} that
\begin{equation}
\| e^{-\gamma t}\nabla \wt \bw\|_{W^{1/2}_{1} (\BR, B^s_{q,1}(\HS))}
\le C \Big(\|e^{-\gamma t} \wt \bw\|_{W^1_{1} (\BR, B^s_{q,1}(\HS))}
+ \|e^{-\gamma t}\wt \bw\|_{L_{1} (\BR, B^{s+2}_{q,1}(\HS))} \Big),
\end{equation}
we deduce that
\begin{equation}
\bigg\|e^{-\gamma} \nabla \wt \bw \sfF \bigg(\int_0^t \nabla \wt \bw \d \tau \bigg) 
\bigg\|_{W^{1/2}_{1}(\BR, B^s_{q,1}(\HS))}
\le C\omega^2.	
\end{equation}
Namely, 
\begin{equation}
\|e^{-\gamma t}\BH(\wt \bw)\|_{W^{1/2}_{1}(\BR, B^s_{q,1}(\HS))}
\le C\omega^2.
\end{equation}
Since $B^{s+1}_{q,1}(\HS)$ is a Banach algebra (cf. Corollary \ref{prop:p1}), we have 
\begin{equation}
\|e^{-\gamma t} \BH( \wt \bw)\|_{L_{1} (\BR, B^{s+1}_{q,1}(\HS))}
\le C\omega^2.
\end{equation}
Combining these estimates with the estimates \eqref{nonest:8.0}$_{1,2,3}$,
by \eqref{linear:8.1} we have 
\begin{equation}
\|(\pd_t\bU, \nabla Q)\|_{L_1((0, T), B^s_{q,1}(\HS))}
+ \|\bU\|_{L_1((0, T), B^{s+2}_{q,1}(\HS))}
\le {\color{black} C_5} e^{\gamma T}\omega^2.
\end{equation}
Choosing $\omega>0$ so small that ${\color{black} C_5} e^{\gamma T}\omega<1/2$ 
is valid, we observe $\lVert (\bU, Q) \rVert_{S_T} < \omega/2$. 
Therefore, we see that $(\bu, \sfQ) =(\bu_{\ba}+\bU, \sfQ_{\ba} + Q)$ 
belongs to $S_{T, \omega}$, which shows that $\Phi$ maps $S_{T, \omega}$ into itself.
Proving contraction of $\Phi$ is similar to the proof of 
Theorem \ref{th-local-well-posedness-fixed} by using the aforementioned argument 
for the estimate of the boundary data $\BH$, and hence we may omit its proof.
{\color{black} Consequently}, $\Phi$ is a contraction mapping on $S_{T, \omega}$. 
Thus, there exists a unique fixed point 
$(\bu, \sfP) \in S_{T, \omega}$ of the mapping $\Phi$,
which is a unique solution to System \eqref{eq-fixed}.
The proof of Theorem \ref{thm:lw.2} is complete.
\subsection{Proof of Theorem \ref{th-global-well-posedness-fixed}}
In the following, we now assume that $q$ and $s$ satisfy
$d - 1 < q  < 2d$ and $s=-1+d/q$, respectively, which stems from
Proposition \ref{prop:p1}. More precisely, we see that the estimates
\begin{align}
\label{est-nonlinear-eg-homogeneous-0}
\bigg\lVert \pd_t u \cdot \int_0^\infty \nabla v (\,\cdot\, , \tau) 
\d \tau \bigg\rVert_{\dot B^s_{q, 1} (\HS)}
& \le C \lVert \pd_t u \rVert_{\dot B^s_{q, 1} (\HS)}
\int_0^\infty \|\nabla^2 v (\,\cdot\, , \tau)\|_{\dot B^s_{q, 1} (\HS)} \d \tau,\\
\label{est-nonlinear-eg-homogeneous-1}
\bigg\lVert \nabla^2 u \cdot \int_0^\infty \nabla v (\,\cdot\, , \tau) \d \tau 
\bigg\rVert_{\dot B^s_{q, 1} (\HS)}
& \le C \lVert \nabla^2 u \rVert_{\dot B^s_{q, 1} (\HS)}
 \int_0^\infty \|\nabla^2 v (\,\cdot\, , \tau)\|_{\dot B^s_{q, 1} (\HS)} \d \tau,
\end{align}
are valid whenever $d - 1 < q  < 2d$ and $s=-1+d/q$ are satisfied.
In fact, it follows from Proposition \ref{prop:p1} that
there holds
\begin{equation}
\bigg\lVert \pd_t u \cdot \int_0^\infty \nabla v (\,\cdot\, , \tau) 
\d \tau \bigg\rVert_{\dot B^s_{q, 1} (\HS)}
\le C \lVert \pd_t u \rVert_{\dot B^s_{q, 1} (\HS)}
\int_0^\infty \|\nabla v (\,\cdot\, , \tau)\|_{\dot B^{d \slash q}_{q, 1} (\HS)}
\d \tau 
\end{equation}
with all $(q, s)$ subject to \eqref{cond-qs}. Together with 
\cite[Cor. 3.20]{DHMTpre}, we deduce that
\begin{equation}
\bigg\lVert \pd_t u \cdot \int_0^\infty \nabla v (\,\cdot\, , \tau) 
\d \tau \bigg\rVert_{\dot B^s_{q, 1} (\HS)}
\le C \lVert \pd_t u \rVert_{\dot B^s_{q, 1} (\HS)}
\int_0^\infty \|\nabla^2 v (\,\cdot\, , \tau)\|_{\dot B^{- 1 + d \slash q}_{q, 1} (\HS)}
\d \tau 
\end{equation}
is valid provided that $(q, s)$ satisfies \eqref{cond-qs}.
Thus, we find that \eqref{est-nonlinear-eg-homogeneous-0} holds
with $s = - 1 + {\color{red}d} \slash q$
{\color{red}since the embedding $\dot B^s_{q, 1} (\HS) \hookrightarrow
\dot B^{- 1 + d\slash q}_{q, 1} (\HS)$ holds if and only if $s = - 1 + d \slash q$}. 
In addition, by Proposition \ref{prop:p1},
it is necessary to assume $d - 1 < q < 2 d$ to obtain 
\eqref{est-nonlinear-eg-homogeneous-0}. Likewise, we see that
\eqref{est-nonlinear-eg-homogeneous-1} holds for $d - 1 < q  < 2d$ and $s=-1+d/q$.
\par
Let $\ba \in \dot B^{- 1 + d \slash q}_{q,1}(\HS)^d$ 
be an initial data satisfying the smallness condition: 
$\|\ba\|_{\dot B^{- 1 + d \slash q}_{q,1}(\HS)} \le c_1\omega =: c_0$.  
Here, $c_1$ and $\omega$ are positive numbers determined later. 
Without loss of generality, we assume $0 < c_1, \omega < 1\slash2$.
Let $(\bu_{\ba}, \sfQ_{\ba})$  be solutions to System \eqref{eq-linear-L}.	
By virtue of Theorem \ref{th-MR-homogeneous}, we obtain a unique global 
solution $(\bu_{\ba}, \sfQ_{\ba})$ to \eqref{eq-linear-L} satisfying
\begin{equation}\label{small:8.0*}
\pd_t\bu_{\ba}, \nabla \sfQ_{\ba} \in L_1(\BR_+, \dot B^{- 1 + d \slash q}_{q,1}(\HS)^d),
\qquad \nabla^2\bu_{\ba} \in L_1(\BR_+, \dot B^{- 1 + d \slash q}_{q,1}(\HS)^{d^3})
\end{equation}
as well as
\begin{equation}\label{initial:8.1*}
\lVert (\pd_t \bu_{\ba}, \nabla^2\bu_{\ba}, \nabla \sfQ_{\ba}) 
\rVert_{L_1 (\BR_+, \dot B^s_{q, 1} (\HS))}
\le C_6 \lVert \ba \rVert_{B^s_{q, 1} (\HS)} \le C_6 c_1\omega.
\end{equation}
\par
Define an underlying space $\dot S_{\infty, \omega}$ by setting 
\begin{equation}
\dot S_{\infty, \omega}
= \left\{ (\bu, \sfQ) \enskip \left\vert \enskip
\begin{aligned}
\pd_t \bu, \nabla \sfQ & \in L_1(\BR_+, \dot B^{- 1 + 1 \slash q}_{q, 1} (\HS)^d), \\
\nabla^2 \bu & \in L_1(\BR_+, \dot B^{- 1 + d \slash q}_{q,1}(\HS)^{d^3}), \\
& \lVert (\bu, \sfQ) \rVert_{\dot S_\infty} \le \omega, \quad \bu|_{t=0} = \ba
\end{aligned}\right.\right\},	
\end{equation}	
where we have set
\begin{align}
\lVert (\bu, \sfQ) \rVert_{\dot S_\infty} & := 
\lVert (\pd_t \bu, \nabla^2 \bu, \nabla \sfQ) 
\rVert_{L_1 (\BR_+, \dot B^{- 1 + d \slash q}_{q, 1} (\HS))}.
\end{align}
Choosing $c_1 > 0$ so small that $C_6 c_1< 1/2$ is fulfilled, 
we see that $(\bu_{\ba}, \sfQ_{\ba})$ satisfies
\begin{equation}\label{initial:8.1**}
\lVert(\bu_{\ba}, \sfQ_{\ba})\rVert_{\dot S_\infty} \le \omega/2.
\end{equation}
Namely, there holds $(\bu_{\ba}, \sfQ_{\ba}) \in \dot S_{\infty, \omega}$. \par 
Given $(\bw, \sfP) \in \dot S_{\infty,\omega}$, 
we intend to find a fixed point in $\dot S_{\infty, \omega}$ for the mapping 
$\Phi \colon (\bw, \sfP) \mapsto (\bu, \sfQ)$ with $(\bu,\sfQ)$ 
which is the solution to \eqref{iteration:eq1}.
Then we see that $(\bU, Q) := (\bu - \bu_{\ba}, \sfQ - \sfQ_{\ba})$ solves
\eqref{eq-fixed-point}. 
For $(\bw, \sfP) \in \dot S_{\infty,\omega}$, we infer from the embedding
$L_\infty (\HS) \hookleftarrow \dot B^{d\slash q}_{q, 1} (\HS)$ and
\cite[Cor. 3.20]{DHMTpre} that
\begin{equation}
\label{8.24*}
\begin{split}
\bigg\|\int^t_0\nabla \bw(\,\cdot\, , \tau)\d\tau\bigg\|_{L_\infty(\HS)}
& \le \int^\infty_0 \lVert\nabla\bw(\,\cdot\, , \tau)\|_{\dot B^{d/q}_{q,1}(\HS)}\d\tau \\
& \le C_7 \int^\infty_0 \lVert\nabla^2\bw(\,\cdot\, , \tau)
\|_{\dot B^{- 1 + d/q}_{q,1}(\HS)}\d\tau \\
& \le C_7 \omega \\
& < \frac{1}{2}
\end{split}	
\end{equation}
for every $t>0$, provided that $\omega < (2 C_7)^{- 1}$ additionally if necessary.
Hence, we see that the right-hand members $\bF(\bw)$, $G_\mathrm{div}(\bw)$, $\bG(\bw)$, 
and $\bH(\bw)$ of \eqref{eq-fixed-point} are well-defined.
To solve System \eqref{eq-fixed-point}, 
we extend the right-hand members to $t \in \BR$
so that we may use Theorem \ref{th-MR-homogeneous}.
To this end, for a function $f$ defined on $\BR_+$ we set
\begin{equation}
\sfE_0 [f] (\,\cdot\, , t) = \begin{cases}
f(\,\cdot\,, t) & \quad\text{for $t>0$}, \\
0 &\quad \text{for $t < 0$}, 
\end{cases}		
\qquad
\sfE_1 [f] (\,\cdot\, , t) = \begin{cases}
f(\,\cdot\,, t) & \quad\text{for $t>0$}, \\
f(\,\cdot\,, - t) &\quad \text{for $t < 0$}.
\end{cases}		
\end{equation} 
Notice that, if $f (\,\cdot\, , 0) = 0$, there holds
\begin{equation}
\pd_t \sfE_0[f](\,\cdot\, , t)  = \begin{cases}
f(\,\cdot\,, t) & \quad\text{for $t>0$}, \\
0 &\quad  \text{for $t < 0$}.
\end{cases}	
\end{equation} 	
It is easy to verify that
\begin{equation}
\begin{aligned}
\lVert \sfE_\ell [f] (\,\cdot\, , t) \rVert_{L_1 (\BR, X)}
& \le 2\lVert f (\,\cdot\, , t) \rVert_{L_1 (\BR_+, X)}, & \quad & \ell = 0, 1, \\
\lVert \pd_t \sfE_\ell [f](\,\cdot\, , t)
\rVert_{L_1 (\BR, X)}
& \le 2 \lVert (\pd_t f) (\,\cdot\, , t) \rVert_{L_1 (\BR_+,  X)},
& \quad & \ell = 0, 1
\end{aligned}
\end{equation} 		
for a Banach space $X$.
Recalling  the definition of nonlinear terms \eqref{def-nonlinear-terms},
we define their extensions to $t\in\BR$ by 
\begin{equation}
\begin{split}
\wt \bF (\bw) & := 
\sfE_0\bigg[\bigg(\int^t_0\nabla \bw \d\tau \bigg)^\top \bigg]
\sfE_1 [\pd_t \bw - \mu \Delta_y \bw ] \\
& \quad + \mu \bigg(\BI+\sfE_0\bigg[\bigg(\int^t_0\nabla \bw \d\tau \bigg)^\top \bigg]\bigg)
\dv_y \Big( \sfE_0 [(\BA_{\bw} \BA_{\bw}^\top - \BI)]\sfE_1[ \nabla_y \bw] \Big) 
\\
&\quad + \mu \nabla_y \big(\sfE_0[\BA_{\bw}^\top - \BI] \colon 
\sfE_1[\nabla_y \bw] \big), \\
\wt G_\mathrm{div} (\bw) & := \sfE_0[\BI - \BA_{\bw}^\top] \colon 
\sfE_1[\nabla_y \bw,] \\
\wt \bG (\bw) & := \sfE_0[\BI - \BA_{\bw}]\sfE_1[\bw], 
\\
\wt \bH (\bw) & := 
\mu \bigg[ \sfE_1 \bigg[\nabla \bw + \bigg\{\BI+\bigg(\int^t_0\nabla \bw \d\tau\bigg)^\top \bigg\}
[\nabla \bw]^\top \BA_{\bw} \bigg] \sfE_0 (\BI-\BA_{\bw}^\top) \\
& \;\quad + \sfE_0 \bigg[\bigg(\int_0^t \nabla \bw \d \tau \bigg)^\top \bigg] 
\sfE_1 \Big([\nabla \bw]^\top \BA_{\bw} \Big) 
+ \sfE_1 [\nabla \bw]^\top \sfE_0 (\BI-\BA_{\bw}) \bigg] \wt \bn_0.
\end{split}	
\end{equation}
Since $G_\mathrm{div} (\bw)$ and $\bG (\bw)$ vanish for $t < 0$,
there holds $\wt G_\mathrm{div} (\bw) = \dv \wt \bG (\bw)$ for all $t \in \BR$.
Since we have $\wt \bF(\bw) = \bF(\bw)$, $\wt G_\mathrm{div} (\bw) 
= \wt G_\mathrm{div}(\bw)$, $\wt \bG(\bw) = \wt \bG(\bw)$, and 
$\wt \bH(\bw) = \wt \bH(\bw)$ for $t > 0$, we may consider the following
\textit{linear} system:
\begin{align}
\label{eq-fixed-point-tilde*}
\left\{\begin{aligned}
\pd_t \wt \bU - \DV(\mu\BD(\wt \bU) -  \wt Q\BI) & = \wt \bF (\bw)
& \quad & \text{in $\HS \times \BR$}, \\
\dv \bU & = \wt G_\mathrm{div} (\bw) = \dv \wt \bG (\bw) 
& \quad & \text{in $\HS \times \BR$}, \\
(\mu\BD(\wt \bU) -  \wt Q\BI) \bn_0 & = \wt \bH (\bw)
& \quad & \text{on $\pd \HS \times \BR$}, \\
\wt \bU \vert_{t = 0} & = 0 & \quad & \text{in $\HS$}.		
\end{aligned}\right.		
\end{align}
Clearly, we find that $(\wt \bU, \wt Q) \vert_{t \in \BR_+}$ is
also a solution to \eqref{eq-fixed-point}.
Since the right-hand members of \eqref{eq-fixed-point-tilde*} is
defined on $\BR$, we may apply Theorem \ref{th-MR-homogeneous} to obtain
\begin{multline}\label{linear:8.1*}
\lVert (\pd_t \wt \bU, \nabla^2 \wt \bU, \nabla \wt Q)
\rVert_{L_1(\BR_+, \dot B^{- 1 + d \slash q}_{q, 1} (\HS))} +
\sup_{t \in [0, \infty)} \|\wt \bU(\,\cdot\,, t)\|_{\dot B^{- 1 + d \slash q}_{q,1}(\HS)} \\
\le C \bigg(\lVert (\wt \bF(\bw), \nabla \wt G_\mathrm{div} (\bw), 
\pd_t \wt\bG(\bw), \nabla \wt \bH(\bw))
\rVert_{L_1 (\BR, \dot B^{- 1 + d \slash q}_{q, 1} (\HS))} 
+\|\wt \bH(\bw)\|_{\dot W^{1/2}_1(\BR, \dot B^{- 1 + d \slash q}_{q, 1} (\HS))} \bigg).			
\end{multline}
where we have used the well-known embedding
$\dot W^1_1 (\BR_+, X) \hookrightarrow \mathrm{BC} ([0, \infty), X)$
for a Banach space $X$. To bound the right-hand side of
\eqref{linear:8.1*}, we use the following inequality
\begin{equation}
\label{reverse-Benstein}
\lVert \nabla \bw \rVert_{L_1 (\BR_+, \dot B^{d \slash q}_{q, 1} (\HS))}
\le C \lVert \nabla^2 \bw \rVert_{L_1 (\BR_+, \dot B^{- 1 + d \slash q}_{q, 1} (\HS))},
\end{equation}
which follows from \cite[Cor. 3.20]{DHMTpre}.
Then, by virtue of \eqref{8.24*}, we see that
\begin{equation}\label{nonest:8.0*} 
\begin{aligned}
\lVert \wt \bF (\bw) \|_{L_1 (\BR, \dot B^{- 1 + d \slash q}_{q, 1} (\HS))} 
& \le C \omega^2, \\
\lVert \nabla \wt G_\mathrm{div} (\bw)
\rVert_{L_1 (\BR, \dot B^{- 1 + d \slash q}_{q, 1} (\HS))}
& \le C \omega^2, \\
\lVert \pd_t \wt \bG (\bw) \rVert_{L_1 (\BR, \dot B^{- 1 + d \slash q}_{q, 1} (\HS))} 
& \le C \omega^2, \\
\lVert \nabla \wt \bH (\bw) \rVert_{L_1 (\BR, \dot B^{- 1 + d \slash q}_{q, 1} (\HS))}
& \le C \omega^2, \\
\|\wt \bH(\bw) \|_{\dot W^{1/2}_1(\BR, \dot B^{- 1 + d \slash q}_{q,1}(\HS))}
& \le C\omega^2.		
\end{aligned}\end{equation}
Here, the estimates \eqref{nonest:8.0*}$_{1,2,3}$ are direct consequences
of Propositions \ref{prop:p1}, \ref{lem:pro}, and \ref{prop-Lagrangian}
and the embedding 
$\dot W^1_1 (\BR_+, \dot B^{- 1 + d \slash q}_{q, 1} (\HS)) 
\hookrightarrow \mathrm{BC} ([0, \infty), \dot B^{- 1 + d \slash q}_{q, 1} (\HS))$.
As it has been seen in the proof of Theorem \ref{th-local-well-posedness-fixed},
to show the estimates \eqref{nonest:8.0*}$_{4,5}$, it suffices to bound the term
of the form $\sfE_1 [\nabla \bw] \sfE_0 [\sfF (\int_0^t \nabla \bw \d \tau)]$.
Together with Propositions \ref{prop:p1} and \ref{lem:pro},
we infer from \eqref{reverse-Benstein} that
\begin{align}
\bigg\| \nabla \bigg(\sfE_1 [\nabla \bw] \sfE_0 
\bigg[\sfF \bigg(\int_0^t \nabla \bw \d \tau\bigg)\bigg] \bigg)
\bigg\|_{L_1(\BR, \dot B^{-1+d\slash q}_{q,1} (\HS))}
& \le C\|\nabla^2 \bw\|_{L_1(\BR_+, \dot B^{- 1 + d \slash q}_{q,1} (\HS))}
\|\nabla \bw \|_{L_1(\BR_+, \dot B^{d/q}_{q,1})} \\
& \le C\omega^2.
\end{align}
Namely, we have \eqref{nonest:8.0*}$_4$.
To show \eqref{nonest:8.0*}$_5$, we use the following estimate:
\begin{equation}\label{1/2-estimate}
\|fg\|_{\dot W^{1/2}_1(\BR, \dot B^{- 1 + d \slash q}_{q,1} (\HS))} 
\le C\|f\|_{\dot W^{1/2}_1(\BR, \dot B^{- 1 + d \slash q}_{q,1} (\HS))}
\Big(\|g\|_{L_\infty(\BR, \dot B^{d/q}_{q,1} (\HS))}
+ \|\pd_tg\|_{L_1(\BR, \dot B^{d/q}_{q,1} (\HS))} \Big),
\end{equation}
which may be proved along the same argument as in \eqref{nonest:8.4}
with the aid of \eqref{homo-int.1}. 
Applying the estimate \eqref{1/2-estimate}, we have
\begin{align}
&\bigg\| \nabla \bigg(\sfE_1 [\nabla \bw] \sfE_0 
\bigg[\sfF \bigg(\int_0^t \nabla \bw \d \tau\bigg)\bigg] \bigg)
\bigg\|_{\dot W^{1/2}_1(\BR, \dot B^s_{q,1})} \\
&\quad \le C\|\nabla \sfE_1[\bw]\|_{\dot W^{1/2}(\BR, \dot B^s_{q,1} (\HS))}
\int^\infty_0\|\nabla\bw (\,\cdot\, , \tau)\|_{\dot B^{d/q}_{q,1} (\HS)}\d\tau.
\end{align}
due to Propositions \ref{prop:p1} and \ref{lem:pro}.
Since Proposition \ref{prop-B7} implies
\begin{equation}
\|\nabla \sfE_1[\bw]\|_{\dot W^{1/2}(\BR, \dot B^{- 1 + d \slash q}_{q,1} (\HS))}
\le C \Big(\|\pd_t\bw\|_{L_1(\BR_+, \dot B^{- 1 + d \slash q}_{q,1} (\HS))}
+ \|\nabla^2\bw\|_{L_1(\BR_+, \dot B^{- 1 + d \slash q}_{q,1} (\HS))} \Big),
\end{equation}
we arrive at \eqref{nonest:8.0*}$_5$. Thus, from
\eqref{linear:8.1} and \eqref{nonest:8.0} there holds
$\lVert (\bU, Q) \rVert_{\dot S_\infty} \le C_8 \omega^2$.
Assuming that $0 < \omega < \min \{1\slash2, (2 C_6)^{- 1}, (2 C_7)^{- 1}, 
(2 C_8)^{- 1}\}$ is fulfilled, we have 
$\lVert (\bU, Q)\rVert_{\dot S_\infty} \le \omega/2$. 
Thus, $\bu= \bu_{\ba}+\bU$ and $\sfQ = \sfQ_{\ba} 
+ Q$ are solutions to \eqref{iteration:eq1}
and satisfy $\lVert (\bu, \sfQ) \rVert_{\dot S_\infty} \le \omega$.  
Hence, the mapping $\Phi$ maps $\dot S_{\infty, \omega}$ into itself. \par
To verify that the map $\Phi$ is contractive, 
we consider two pairs of functions 
$(\bw^{(1)}, \sfQ^{(1)}), (\bw^{(2)}, \sfQ^{(2)}) 
\in \dot S_{\infty, \omega}$
and set $(\bu^{(\ell)}, \sfP^{(\ell)}) 
= \Phi (\bw^{(\ell)}, \sfQ^{(\ell)})$, $\ell = 1, 2$.
Let $\delta \bu = \bu^{(2)} - \bu^{(1)}$ and
$\delta \sfP = \sfP^{(2)} - \sfP^{(1)}$. Then we see that
$(\delta \bu, \delta \sfP)$ solves \eqref{eq-difference} with $T = \infty$.
Analogously to \eqref{nonest:8.0}, by Propositions
\ref{prop:p1}, \ref{lem:pro}, and \ref{prop-Lagrangian},
there hold
\begin{align}
\|\bF(\bw^{(2)}) - \bF(\bw^{(1)})\|_{L_1(\BR_+, \dot B^{- 1 + d \slash q}_{q,1}(\HS))}
& \le C\omega \lVert (\delta\bw, \delta\sfQ)\rVert_{\dot S_\infty}, \\
\|\nabla( G_\mathrm{div} (\bw^{(2)}) - G_\mathrm{div} (\bw^{(1)}))
\|_{L_1(\BR_+, \dot B^{- 1 + d \slash q}_{q,1}(\HS))}
& \le C\omega \lVert (\delta\bw, \delta\sfQ)\rVert_{\dot S_\infty} \\
\|\pd_t(\bG(\bw^{(2)}) - \bG(\bw^{(1)}))
\|_{L_1(\BR_+, \dot B^{- 1 + d \slash q}_{q,1}(\HS))}
& \le C\omega \lVert (\delta\bw, \delta\sfQ)\rVert_{\dot S_\infty}, \\
\|\nabla(\bH(\bw^{(2)}) - \bH(\bw^{(1)}))
\|_{L_1(\BR_+, \dot B^{- 1 + d \slash q}_{q,1}(\HS))}
& \le C\omega\lVert (\delta\bw, \delta\sfQ)\rVert_{\dot S_\infty},\\
\|\bH(\bw^{(2)}) - \bH(\bw^{(1)})
\|_{\dot W^{1/2}_1(\BR_+, \dot B^{d \slash q}_{q,1}(\HS))}
& \le C\omega\lVert (\delta\bw, \delta\sfQ)\rVert_{\dot S_\infty}.	
\end{align}
Thus, we infer from Theorem \ref{th-MR-homogeneous} that
$\lVert (\delta\bu, \delta\sfP)\rVert_{\dot S_\infty}
\le C_9 \omega \lVert (\delta\bw, \delta\sfQ)\rVert_{\dot S_\infty}$.
Taking $\omega$ so small that $\omega <(2 C_9)^{- 1}$ if necessary,  
we see that the map $\Phi$ is contractive.  Hence, 
by the Banach fixed point theorem, there exists a unique $(\bu, \sfQ)
\in \dot S_{\infty, \omega}$ which satisfies $\Phi(\bu, \sfQ) = (\bu, \sfQ)$.
Clearly, $(\bu, \sfQ)$ is a required unique global strong solution to
System \eqref{eq-fixed}. The proof of Theorem 
\ref{th-global-well-posedness-fixed} is complete. 
\subsection{Proof of Corollaries \ref{cor-local-well-posedness-original},
\ref{cor:lw.2}, and \ref{cor-global-well-posedness-original}}
From \eqref{emb-BC1}, we see that
$\bu \in L_1 ((0, T), \mathrm{BC}^1 (\overline{\HS})^d)$. 
Thus, by virtue of the classical Picard-Lindel\"of theorem, 
there exists a unique $C^1$-flow $\bX_{\bu} (\,\cdot\, , t)$ satisfying \eqref{representation-Xu}. 
Furthermore, by a similar argument as in \cite[Sec. 8.3]{DHMTpre}, we see that
$\bX_{\bu} (\,\cdot\, , t)$ is a $C^1$-diffeomorphism from $\HS$ onto $\Omega (t)$ and measure 
preserving {\color{black}for every $t \in [0, T)$}. 
In fact, the divergence free condition $\dv \bv = 0$ yields $\det \BA_{\bu} = 1$,
see \cite[p. 382]{Sol88}. In addition, the inverse of $\bX_{\bu} (\,\cdot\,, t)$ exists
for each $t \in [0, T)$ due to the estimate on $\bu$ together with the condition 
\eqref{integral-smallness}. It follows from \ref{emb-BC1} that $\bX_{\bu}(\,\cdot\, , t)$ is a 
$C^1$-function from $\HS$ onto $\Omega (t)$ for each $t \in [0, T)$, since
$\bu \in L_1 ((0, T), \mathrm{BC}^1 (\overline{\HS})^d)$. 
In the following, let $\bX_{\bu}^{- 1}$ be the inverse of $\bX_{\bu}$.
\par
For any function $F \in \CB^s_{q, 1} (\HS)$, $1 < q < \infty$, 
$- \min (d \slash q, d \slash q') < s \le d \slash q$,
it follows from the chain rule (and the transformation 
rule for integrals) that
\begin{equation}
\label{norm-equivalence}
\lVert F \circ \bX_{\bu}^{- 1} \rVert_{\CB^s_{q, 1} (\Omega (t))}
\le C \lVert F \rVert_{\CB^s_{q, 1} (\HS)}
\end{equation}
with a constant $C > 0$. Indeed, \eqref{norm-equivalence} for the case 
$\CB^s_{q, 1} = \dot B^s_{q, 1}$ was proved in \cite[Prop. 8.7]{DHMTpre}
and for the case $\CB^s_{q, 1} = B^s_{q, 1}$ may be proved along the
same way as in the discussion given in Section 8.3 in \cite{DHMTpre}.
We now recall the relation 
$(\bv, P) = (\bu, {\color{black}\sfQ}) \circ \bX_{\bu}^{- 1}$ and the definition 
$\BA_{\bu} := (\nabla_y \bX_{\bu})^{- 1}$. 
Let $\BA_{\bu}^\top = (A_{j, k})$. There holds
\begin{equation}
\nabla_x (\bv, P) = (\BA_{\bu}^\top \nabla_y (\bu, Q)) \circ \bX_{\bu}^{- 1}, 
\quad \pd_{x_j} \pd_{x_k} \bv 
= \sum_{\ell, \ell' = 1}^d \Big( A_{j, \ell} \pd_{y_\ell} 
(A_{k, \ell'} \pd_{y_{\ell'}} \bu ) \Big) \circ \bX_{\bu}^{- 1},
\quad j, k = 1, \ldots, d,
\end{equation}
Concerning the time derivative of $\bv$, we rely on the relation
\begin{equation}
\pd_t \bv = (\pd_t \bu) \circ \bX_{\bu}^{- 1} 
- \Big((\bu \circ \bX_{\bu}^{- 1}) \cdot \nabla_x \Big) \bv
\end{equation}
By Theorem \ref{th-local-well-posedness-fixed} and
Proposition \ref{prop-Lagrangian}, we arrive at the
desired estimate stated in Corollary \ref{cor-local-well-posedness-original}. 
The proof of Corollary \ref{cor-local-well-posedness-original} 
is complete. Likewise, corollaries \ref{cor:lw.2} and 
\ref{cor-global-well-posedness-original} 
may be proved in the same manner by using Theorems~\ref{thm:lw.2} and
\ref{th-global-well-posedness-fixed}, respectively.
\appendix
\section{Recasting the system in Lagrangian coordinates}
\label{sec-A}
For the reader's convenience, we provide here how to derive
\eqref{def-nonlinear-terms}. To this end, we use the following 
well-known formulas:
\begin{alignat}2
\nabla_x & = \BA_{\bu}^\top \nabla_y, & \qquad 
\dv_x (\,\cdot\,) & = \BA_{\bu}^\top \colon \nabla_y (\,\cdot\,) 
= \dv_y (\BA_{\bu} \,\cdot \, ), \\
\bn & = \frac{\BA_{\bu}^\top \bn_0}{\lvert \BA_{\bu}^\top \bn_0 \rvert}, 
& \qquad
\nabla_x \dv_x (\, \cdot \,) & = \BA_{\bu}^\top \nabla_y \dv_y (\, \cdot \,)
+ \BA_{\bu}^\top \nabla_y ((\BA_{\bu}^\top - \BI) \colon \nabla_y \, \cdot \, ),
\end{alignat}
{\color{black}see, e.g., \cite[p. 383]{Sol88}.}
{\color{black} In fact,} as it was proved in \cite[{\color{black} p. 382}]{Sol88}, 
there holds $\det \BA_{\bu} = 1$ as follows from the divergence-free condition, 
which yields the first formula $\nabla_x = \BA_{\bu}^\top \nabla_y$.
By using these formulas, it is easy to verify the representations of
$G_\mathrm{div} (\bu)$ and $\bG (\bu)$. Hence, it suffices to derive
the representations of $\bF (\bu)$ and $\BH (\bu)$.  \par
By a direct calculation, we observe
\begin{equation}
\DV_x(\mu\BD(\bv) - P\BI) = \mu \Delta_x \bv 
+ \mu \nabla_x \dv_x \bv - \nabla_x P.
\end{equation}
We see that
\begin{align}
&\pd_t \bv + (\bv \cdot \nabla_x) \bv = \pd_t \bu, \\
&\Delta_x \bv = \dv_x\nabla_x \bv 
= \dv_y(\BA_{\bu} \BA_{\bu}^\top\nabla_y \bu)
= \dv_y((\BA_{\bu} \BA_{\bu}^\top-\BI)\nabla_y \bu) + \Delta_y \bu, \\
&\nabla_x\dv_x\bv = \BA_{\bu}^\top\nabla_y (\BA_{\bu}^\top:\nabla_y\bu)
= \BA_{\bu}^\top\nabla_y((\BA_{\bu}^\top-\BI):\nabla_y\bu)+
\BA_{\bu}^\top\nabla_y\dv_y\bu, \\
&\nabla_x P = \BA_{\bu}^\top\nabla_y \sfQ.
\end{align}
Since $\BA_{\bu}^\top$ is invertible and $(\BA_{\bu}^\top)^{-1}
= \BI + \left(\int^t_0\nabla \bu\d\tau \right)^\top$, 
the equation
\eqref{eq-original}$_1$ is transformed into
\begin{align}
& \pd_t \bu - \mu \Delta_y \bu - \mu \nabla_y \dv_y \bu
+ \nabla_y \sfQ \\
& = \bigg(\int^t_0\nabla\bu\d\tau\bigg)^\top
\Big(\pd_t \bu - \mu \Delta_y \bu\Big) 
+ \mu \bigg\{\BI + \bigg(\int^t_0\nabla\bu\d\tau\bigg)^\top\bigg\}
\dv_y \Big( (\BA_{\bu} \BA_{\bu}^\top - \BI) \nabla_y \bu \Big) \\
& \quad + \mu \nabla_y \Big((\BA_{\bu}^\top - \BI) \colon \nabla_y \bu \Big).	
\end{align}
Combined with 
\begin{equation}
\DV_y (\mu \BD(\bu) - \sfQ \BI) 
= \mu \Delta_y \bu + \mu \nabla_y \dv_y \bu - \nabla_y \sfQ,
\end{equation}
we have the representation of $\bF (\bu)$. Note that $\bF (\bu)$
does not contain the pressure $\sfQ$. \par
It remains to deal with $\BH (\bu)$. It is easy to find that
\begin{equation}
\mu\BD(\bv) -  P\BI = \mu \Big(\BA_{\bu}^\top \nabla_y \bu 
+ [\nabla \bu]^\top \BA_{\bu} \Big) - \sfQ \BI.
\end{equation}
Together with the boundary condition \eqref{eq-original}$_3$, it follows that
\begin{equation}
\mu \Big(\BA_{\bu}^\top \nabla_y \bu 
+ [\nabla \bu]^\top \BA_{\bu} \Big) 
\frac{\BA_{\bu}^\top \bn_0}{\lvert \BA_{\bu}^\top \bn_0 \rvert}
- \sfQ \frac{\BA_{\bu}^\top \bn_0}{\lvert \BA_{\bu}^\top \bn_0 \rvert} = 0
\qquad \text{on $\pd \HS$}.
\end{equation}
Multiplying this equation by $\lvert \BA_{\bu}^\top \bn_0 
\rvert (\BA_{\bu}^\top)^{- 1}$ yields
\begin{equation}
\mu \Big(\nabla_y \bu + (\BA_{\bu}^\top)^{- 1} 
[\nabla \bu]^\top \BA_{\bu} \Big) 
\BA_{\bu}^\top \bn_0 - \sfQ \bn_0 = 0
\qquad \text{on $\pd \HS$}.
\end{equation}
Namely, we have
\begin{align}
&(\mu \BD(\bu) - \sfQ \BI) \bn_0\\
&= \mu \Big[ \Big(\nabla_y \bu + (\BA_{\bu}^\top)^{- 1}
[\nabla_y \bu]^\top \BA_{\bu} \Big) (\BI-\BA_{\bu}^\top)
+ (\BI - (\BA_{\bu}^\top)^{- 1}) [\nabla_y \bu]^\top \BA_{\bu}
+ [\nabla_y \bu]^\top (\BI-\BA_{\bu}) \Big]\bn_0.
\end{align}
Since there holds
$(\BA_{\bu}^\top)^{- 1} = \BI + (\int_0^t \nabla_y \bu \d \tau )^\top$,
we obtain the representation of $\BH (\bu)$.

\section{Technical tools}\label{ap.B}

\begin{prop}
\label{prop-Fourier-multiplier}
Let $1 < q < \infty$, $1 \le r \le \infty$, $s \in \BR$, and $d \ge 1$.
Suppose that $m (\xi)$ is a complex-valued smooth function on 
$\BR^d \setminus \{0\}$ satisfying $\lvert \pd_\xi^\alpha m (\xi) \rvert 
\le C \lvert \xi \rvert^{- \lvert \alpha \rvert}$ for every 
$\alpha \in \BN^d$ such that $\lvert \alpha \rvert \le [d \slash 2] + 1$,
where $[d \slash 2]$ stands for the integer part of $d \slash 2$. 
There exists a constant $C$ such that
\begin{equation}
\lVert \CF^{- 1} [m (\xi) \CF [f]] \rVert_{\CB^s_{q, r} (\BR^d)} 
\le C \lVert f \rVert_{\CB^s_{q, r} (\BR^d)}.
\end{equation}	
\end{prop}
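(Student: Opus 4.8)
The plan is to prove this by the standard Littlewood--Paley reduction: localize the operator $Tf:=\mathcal F^{-1}[m\,\mathcal F[f]]$ to dyadic frequency annuli, estimate each localized piece by a convolution against an $L^1$ kernel whose $L^1$ norm is \emph{uniform} in the annulus, and resum in the $\ell^r(2^{js})$ norm. First I would fix, once and for all, a function $\widetilde\phi\in C^\infty_0(\BR^d\setminus\{0\})$ with $\widetilde\phi\equiv 1$ on a neighbourhood of $\supp\phi$ and $\supp\widetilde\phi\subset\{1/4\le|\xi|\le 4\}$, and set $\widetilde\Delta_j:=\Delta_{j-1}+\Delta_j+\Delta_{j+1}$ for $j\ge 2$ (with $\widetilde\Delta_1:=\Delta_0+\Delta_1+\Delta_2$), and $\widetilde{\dot\Delta}_j$ analogously in the homogeneous case. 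Since $\widehat{\widetilde\Delta_j f}$ coincides with $\widehat{\Delta_j f}$ on $\supp\phi(2^{-j}\cdot)$, one gets for $j\ge 1$ (resp.\ for all $j\in\BZ$ when $\CB^s_{q,r}=\dot B^s_{q,r}$) the identity $\Delta_j(Tf)=K_j*\widetilde\Delta_j f$ with $K_j:=\CF^{-1}[\widetilde\phi(2^{-j}\cdot)\,m]$.

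\textbf{The uniform kernel bound (main step).} The core of the argument is to show $\|K_j\|_{L^1(\BR^d)}\le C$ with $C$ independent of $j$. Rescaling $\xi=2^j\eta$ gives $\|K_j\|_{L^1}=\|\CF^{-1}[\widetilde\phi\,m(2^j\cdot)]\|_{L^1}$, and the scaling invariance of the hypothesis, namely $|\pd^{\alpha}_\eta\{m(2^j\eta)\}|=2^{j|\alpha|}|(\pd^\alpha m)(2^j\eta)|\le C|\eta|^{-|\alpha|}$, shows that $\{\widetilde\phi\,m(2^j\cdot)\}_j$ is a bounded family in $C^{[d/2]+1}$ supported in the fixed annulus $\{1/4\le|\xi|\le 4\}$. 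Writing $g_j:=\CF^{-1}[\widetilde\phi\,m(2^j\cdot)]$ and using the Cauchy--Schwarz bound $\|g_j\|_{L^1}\le\|(1+|x|)^{-([d/2]+1)}\|_{L^2(\BR^d)}\,\|(1+|x|)^{[d/2]+1}g_j\|_{L^2(\BR^d)}$, the first factor is finite precisely because $2([d/2]+1)>d$, while the second is, by Plancherel, dominated by $C\|\widetilde\phi\,m(2^j\cdot)\|_{H^{[d/2]+1}(\BR^d)}$, which is bounded uniformly in $j$ by the displayed derivative bounds and the fixed compact support. Hence $\sup_j\|K_j\|_{L^1}<\infty$, and Young's inequality yields $\|\Delta_j(Tf)\|_{L^q}\le C\|\widetilde\Delta_j f\|_{L^q}$ for all $j\ge 1$ (resp.\ all $j\in\BZ$).

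\textbf{The low-frequency block.} In the inhomogeneous case the block $j=0$ needs separate treatment, since $\phi_0$ contains the origin. Here I would note that $\phi_0\,m$, extended by $0$ through the origin, is compactly supported, smooth away from $0$, and satisfies $|\pd^\alpha(\phi_0 m)|\le C|\xi|^{-|\alpha|}$ for $|\alpha|\le[d/2]+1$, i.e.\ it is a classical Mikhlin--H\"ormander multiplier; therefore $\CF^{-1}[\phi_0 m\,\CF[\cdot]]$ is bounded on $L^q(\BR^d)$ for $1<q<\infty$. Since $\phi_0\hat f=\phi_0\,\widehat{(\Delta_0+\Delta_1)f}$ (because $\supp\phi_0\cap\supp\phi(2^{-k}\cdot)=\emptyset$ for $k\ge 2$), this gives $\|\Delta_0(Tf)\|_{L^q}\le C\|(\Delta_0+\Delta_1)f\|_{L^q}$. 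In the homogeneous case there is no exceptional block and this step is vacuous.

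\textbf{Resummation and the expected obstacle.} Multiplying the block estimates by $2^{js}$, absorbing the index shifts in $\widetilde\Delta_j$ via $2^{(j\pm1)s}\le 2^{|s|}2^{js}$, and taking the $\ell^r$ norm over $j\in\BN_0$ (resp.\ $j\in\BZ$) yields $\|Tf\|_{\CB^s_{q,r}(\BR^d)}\le C\|f\|_{\CB^s_{q,r}(\BR^d)}$. The genuinely delicate point is the uniform $L^1$ kernel bound of the second step, together with the bookkeeping relating $\supp\widetilde\phi$ to $\supp\phi$ so that $\widetilde\Delta_j$ reproduces $\phi(2^{-j}\cdot)$; I expect the sharp derivative count $[d/2]+1$ to be the place requiring care, as it enters both in the Plancherel/Cauchy--Schwarz estimate for $\|g_j\|_{L^1}$ and in the classical multiplier theorem for the $j=0$ piece. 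Everything else is routine.
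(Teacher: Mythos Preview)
Your proposal is correct, but it takes a different route from the paper. The paper's proof is a two--line argument: since Fourier multipliers commute, one has $\dot\Delta_j(Tf)=\CF^{-1}[m\,\CF[\dot\Delta_j f]]$ (and likewise for $\Delta_j$), and then the classical Mikhlin--H\"ormander theorem on $L_q(\BR^d)$, applied once to the single multiplier $m$, gives $\|\dot\Delta_j(Tf)\|_{L_q}\le C\|\dot\Delta_j f\|_{L_q}$ with $C$ independent of $j$; summing in $\ell^r(2^{js})$ finishes. There is no separate treatment of the low-frequency block, no auxiliary fattened cut-off $\widetilde\phi$, and no explicit kernel bound.

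Your approach instead localizes $m$ itself, proving $\sup_j\|\CF^{-1}[\widetilde\phi(2^{-j}\cdot)m]\|_{L_1}<\infty$ via scaling and the $H^{[d/2]+1}$ Cauchy--Schwarz trick, then uses Young's inequality. This is more hands-on and essentially reproves the $L_q$ multiplier bound on each annulus rather than invoking it; the payoff is that your high-frequency argument works for all $1\le q\le\infty$ (Young's inequality needs no restriction), so in the homogeneous case you actually obtain the result beyond the stated range of $q$. The paper's route is shorter because it takes the $L_q$ Mikhlin--H\"ormander theorem as a black box and exploits the commutation $[\Delta_j,T]=0$ to avoid any $j$-dependent analysis.
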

\begin{proof}
We first consider the homogeneous Besov space case.  From the definition of
$\dot B^s_{q,r}(\BR^d)$ it follows that 
$$\|\CF^{-1}[m(\xi)\CF[f]]\|_{\dot B^s_{q,r}(\BR^d)}
= \|2^{js}\|\dot \Delta_j \CF^{-1}[m(\xi)\CF[f]]\|_{L_q(\BR^d)}\|_{\ell^r(\BZ)}.
$$
Since $\dot \Delta_j \CF^{-1}[m(\xi)\CF[f]]= \CF^{-1}_\xi[m(\xi)\phi(2^{-j}\xi)\CF[f]]
= \CF^{-1}_\xi[m(\xi)\CF[\dot \Delta_j f]]$,
by the Fourier multiplier theorem of Mihlin-Ho\"rmander type, we have
$\|\dot \Delta_j \CF^{-1}[m(\xi)\CF[f]]\|_{L_q(\BR^d)} \leq C\|\dot \Delta_j f\|_{L_q(\BR^d)}$.
Thus, 
$$\lVert \CF^{- 1} [m (\xi) \CF [f]] \rVert_{\dot B^s_{q, r} (\BR^d)} 
\leq C\|2^{js}\|\dot \Delta_j f\|_{L_q(\BR^d)}\|_{\ell^r(\BZ)} = C\|f\|_{\dot B^s_{q,r}(\BR^d)}.
$$
Likewise, we have
\begin{align*}
\lVert \CF^{- 1} [m (\xi) \CF [f]] \rVert_{\dot B^s_{q, r} (\BR^d)} 
&= \|2^{js}\|\Delta_j \CF^{- 1} [m (\xi) \CF [f]] \|_{L_q(\BR^d)}\|_{\ell^r(\BN_0)} \\
&\leq C\|2^{js}\|\Delta_j f \|_{L_q(\BR^d)}\|_{\ell^r(\BN_0)}
= C\|f\|_{B^s_{q,r}(\BR^d)}.
\end{align*}
This completes the proof of Proposition \ref{prop-Fourier-multiplier}.
\end{proof}

\begin{prop}
\label{prop-real-interpolation}
Let $X_0$ and $X_1$ be Banach spaces which are an interpolation couple,
and let $Y$ be another Banach space. 
Assume that $0 < \sigma_0, \sigma_1, \theta < 1$ satisfy
$1 = (1 - \theta)(1- \sigma_0) + \theta(1+ \sigma_1)$. Let $\gamma \ge 0$.
For $t > 0$ let $T (t) \colon Y \to  X_0+X_1$ 
be a bounded linear operator such that
\begin{equation}
\label{assumption-decay}
\begin{split}
\lVert T (t) f \rVert_Y 
& \le C e^{\gamma t} t^{- 1 + \sigma_0} \lVert f \rVert_{X_0}, 
\qquad f \in X_0, \\
\lVert T (t) f \rVert_Y
& \le C e^{\gamma t} t^{- 1 - \sigma_1} \lVert f \rVert_{X_1}, 
\qquad f \in X_1.
\end{split}
\end{equation}
Then, there holds
\begin{equation}
\int^\infty_0 e^{-\gamma t}\lVert T (t) f \rVert_Y\d t
\le C \lVert f \rVert_{(X_0, X_1)_{\theta, 1}}
\end{equation}
with a constant $C > 0$ independent of $\gamma$.
\end{prop}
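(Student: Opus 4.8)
The plan is to reduce the estimate to the standard description of the real interpolation space $(X_0,X_1)_{\theta,1}$ through Peetre's $K$-functional. Recall that, up to equivalence of norms,
\[
\|f\|_{(X_0,X_1)_{\theta,1}}\sim\int_0^\infty s^{-\theta}K(s,f)\,\frac{ds}{s},\qquad
K(s,f)=\inf_{f=g_0+g_1}\big(\|g_0\|_{X_0}+s\|g_1\|_{X_1}\big),
\]
the infimum being over all decompositions $f=g_0+g_1$ with $g_j\in X_j$. The exponent hypothesis enters only through its elementary consequence: expanding $1=(1-\theta)(1-\sigma_0)+\theta(1+\sigma_1)$ gives $\sigma_0=\theta(\sigma_0+\sigma_1)$, i.e. $\theta=\sigma_0/(\sigma_0+\sigma_1)$, and in particular $\sigma_0+\sigma_1>0$.

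First I would combine the two decay bounds in \eqref{assumption-decay}. For every $t>0$ and \emph{every} decomposition $f=g_0+g_1$, linearity of $T(t)$ and the triangle inequality give
\[
\|T(t)f\|_Y\le Ce^{\gamma t}\big(t^{-1+\sigma_0}\|g_0\|_{X_0}+t^{-1-\sigma_1}\|g_1\|_{X_1}\big)
=Ce^{\gamma t}t^{-1+\sigma_0}\big(\|g_0\|_{X_0}+t^{-(\sigma_0+\sigma_1)}\|g_1\|_{X_1}\big).
\]
Taking the infimum over all such decompositions on the right-hand side — which avoids any measurable-selection issue, since no choice of $g_0,g_1$ is actually made — yields the pointwise bound
\[
\|T(t)f\|_Y\le Ce^{\gamma t}t^{-1+\sigma_0}K\big(t^{-(\sigma_0+\sigma_1)},f\big),\qquad t>0.
\]
Since $t\mapsto t^{-(\sigma_0+\sigma_1)}$ is a continuous decreasing bijection of $(0,\infty)$ onto itself, the right-hand side is a continuous, hence measurable, function of $t$.

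Next I would multiply by $e^{-\gamma t}$ — which cancels the exponential, so the resulting constant is independent of $\gamma$ — integrate in $t$, and perform the substitution $u=t^{-(\sigma_0+\sigma_1)}$. One checks that under this substitution the measure $t^{-1+\sigma_0}\,dt$ transforms, up to the factor $(\sigma_0+\sigma_1)^{-1}$, into $u^{-\theta}\,\frac{du}{u}$ — this is exactly where $\theta=\sigma_0/(\sigma_0+\sigma_1)$ is used. Hence
\[
\int_0^\infty e^{-\gamma t}\|T(t)f\|_Y\,dt
\le C\int_0^\infty t^{-1+\sigma_0}K\big(t^{-(\sigma_0+\sigma_1)},f\big)\,dt
=\frac{C}{\sigma_0+\sigma_1}\int_0^\infty u^{-\theta}K(u,f)\,\frac{du}{u}
\le C'\|f\|_{(X_0,X_1)_{\theta,1}},
\]
which is the asserted inequality.

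There is no serious obstacle here: the argument is a routine $K$-functional computation, and the weight $e^{-\gamma t}$ plays no active role because it is annihilated by the $e^{\gamma t}$ in \eqref{assumption-decay}. The only points needing a little care are (i) passing to the infimum over decompositions \emph{before} integrating, so that no measurable family of near-optimal decompositions is required, and (ii) verifying that the change of variables $u=t^{-(\sigma_0+\sigma_1)}$ matches the $\theta$-weight precisely, which is the content of the relation $1=(1-\theta)(1-\sigma_0)+\theta(1+\sigma_1)$. If one wished to avoid the $K$-functional altogether, the same conclusion could be obtained from the $J$-method representation of $(X_0,X_1)_{\theta,1}$ together with the two bounds in \eqref{assumption-decay}, but the route above is the most economical.
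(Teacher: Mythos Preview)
Your proof is correct and takes a genuinely different route from the paper's. The paper proceeds by dyadic discretization: it sets $b_k(f)=\sup_{t\in[2^k,2^{k+1}]}e^{-\gamma t}\|T(t)f\|_Y$, bounds $\int_0^\infty e^{-\gamma t}\|T(t)f\|_Y\,dt\le\sum_{k\in\BZ}2^k b_k(f)$, reads off from the two decay assumptions that the map $f\mapsto(b_k(f))_k$ is bounded from $X_j$ into the weighted sequence space $\ell^{1-\sigma_0}_\infty(\BZ)$ resp.\ $\ell^{1+\sigma_1}_\infty(\BZ)$, and then invokes the known identification $(\ell^{1-\sigma_0}_\infty,\ell^{1+\sigma_1}_\infty)_{\theta,1}=\ell^1_1$ from Bergh--L\"ofstr\"om. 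Your argument bypasses this discretization entirely by working pointwise with the $K$-functional: the bound $\|T(t)f\|_Y\le Ce^{\gamma t}t^{-1+\sigma_0}K(t^{-(\sigma_0+\sigma_1)},f)$ followed by the explicit substitution $u=t^{-(\sigma_0+\sigma_1)}$ lands directly on the $K$-method norm. Your route is more self-contained and elementary (no external sequence-space interpolation result is needed), and makes transparent exactly where the relation $\theta=\sigma_0/(\sigma_0+\sigma_1)$ enters; the paper's route, on the other hand, packages the analytic content into a single citation and may generalize more readily to other target interpolation functors.
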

\begin{proof}
The proof is based on real interpolation. For $k \in \BZ$ set
\begin{equation}
b_k (f) := \sup_{t \in [2^k, 2^{k + 1}]} e^{- \gamma t} 
\lVert T (t) f \rVert_Y.
\end{equation}
We observe that
\begin{equation}
\label{est-B2}
\int^\infty_0 e^{-\gamma t}\lVert T (t) f \rVert_Y\d t
= \sum_{k \in \BZ} \int_{2^k}^{2^{k + 1}} e^{- \gamma t} 
\lVert T (t) f \rVert_Y\d t 
\le \sum_{k \in \BZ} 2^k b_k (f).
\end{equation}
Then we infer from the assumptions \eqref{assumption-decay} that
\begin{alignat}2
b_k (f) & \le C \sup_{t \in [2^k, 2^{k + 1}]} t^{- 1 + \sigma_0} 
\lVert f \rVert_{X_0} \le C 2^{- k (1 - \sigma_0)} \lVert f \rVert_{X_0}, 
& \qquad & f \in X_0, \\
b_k (f) & \le C \sup_{t \in [2^k, 2^{k + 1}]} t^{- 1 - \sigma_1} 
\lVert f \rVert_{X_1} \le C 2^{- k (1 + \sigma_1)} \lVert f \rVert_{X_1}, 
& \qquad & f \in X_1.
\end{alignat}
Namely, there hold
\begin{alignat}2
\lVert (b_k)_{k \in \BZ} \rVert_{\ell^{1 - \sigma_0}_\infty (\BZ)}
& \le C \lVert f \rVert_{X_0},
& \qquad & f \in X_0, \\
\lVert (b_k)_{k \in \BZ} \rVert_{\ell^{1 + \sigma_1}_\infty (\BZ)}
& \le C \lVert f \rVert_{X_1},
& \qquad & f \in X_1.
\end{alignat}
Since $(\ell^{1 - \sigma_0}_\infty (\BZ), \ell^{1 + \sigma_1}_\infty (\BZ))_{\theta, 1}
= \ell^1_1 (\BZ)$ due to \cite[Thm. 5.6.1]{BLbook}, it follows that
\begin{equation}
\label{est-B3}
\sum_{k \in \BZ} 2^k b_k (f) = \lVert b_k (f)_{k \in \BZ} \rVert_{\ell^1_1 (\BZ)}
\le C \lVert f \rVert_{(X_0, X_1)_{\theta, 1}}.
\end{equation}
Thus, the desired estimates follow from \eqref{est-B2} and \eqref{est-B3}.
\end{proof}
\begin{prop}
\label{prop-B4}
Let $1 < q < \infty$, $1 \leq r \leq \infty-$, and $- d \slash q' < s < d \slash q$  
$($or $- d \slash q' < s \le d \slash q$ if $r = 1$$)$. 
Then, $C^\infty_0 (\BR^d)$ is dense in $\dot B^s_{q,r} (\BR^d)$.
\end{prop}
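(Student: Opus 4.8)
The plan is to approximate a general $f \in \dot B^s_{q,r}(\BR^d)$ in two moves: first replace it by a band-limited function whose Fourier transform is supported away from the origin, then cut that function off in physical space. Two preliminary remarks fix the framework. Because $s < d/q$ (resp.\ $s \le d/q$ when $r = 1$), the space $\dot B^s_{q,r}(\BR^d)$ is realized as a genuine subspace of $\CS'(\BR^d)$, not merely of $\CS'(\BR^d)/\CP(\BR^d)$ (see the discussion of homogeneous spaces in Section~\ref{sec-2.2}, and \cite[Rem.~2.24]{BCD}, \cite{B88}); this makes ``$C^\infty_0(\BR^d)$ dense in $\dot B^s_{q,r}(\BR^d)$'' an unambiguous assertion. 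Because $s > -d/q'$, one has $C^\infty_0(\BR^d) \subset \dot B^s_{q,r}(\BR^d)$: for $\chi \in C^\infty_0(\BR^d)$ the high-frequency blocks decay rapidly, while Bernstein's inequality gives $\|\dot\Delta_j\chi\|_{L_q} \le C 2^{jd/q'}\|\chi\|_{L_1}$, so $2^{js}\|\dot\Delta_j\chi\|_{L_q} \le C 2^{j(s + d/q')}$ is $\ell^r(j)$-summable (resp.\ tends to $0$) as $j \to -\infty$ exactly when $s > -d/q'$.

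For the first move I would use the Littlewood--Paley partial sums $f_N := \sum_{|j| \le N}\dot\Delta_j f$. Each $\dot\Delta_j f$ lies in $L_q(\BR^d)$ (since $2^{js}\|\dot\Delta_j f\|_{L_q} \le \|f\|_{\dot B^s_{q,r}(\BR^d)}$), so $f_N \in L_q(\BR^d) \cap C^\infty(\BR^d)$ with $\CF[f_N]$ supported in a fixed annulus $\{a \le |\xi| \le b\}$, $0 < a < b < \infty$, and $f_N \to f$ in $\dot B^s_{q,r}(\BR^d)$ — by summability of the $\ell^r$-tail when $1 \le r < \infty$, and by the defining condition $2^{js}\|\dot\Delta_j f\|_{L_q}\to 0$ as $|j|\to\infty$ when $r = \infty-$. (For $1 \le r < \infty$ one could instead quote density of $\CS_0(\BR^d)$ from \cite[Thm.~5.1.5]{Tbook83}, as recorded in the remark after Proposition~\ref{prop-dual-interpolation}.) Thus, by a diagonal argument, it suffices to approximate a fixed such $g := f_N$ by $C^\infty_0(\BR^d)$ functions in $\dot B^s_{q,r}(\BR^d)$.

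For the second move, pick $\varphi \in C^\infty_0(\BR^d)$ with $\varphi \equiv 1$ near the origin, set $g_R(x) := \varphi(x/R)g(x) \in C^\infty_0(\BR^d)$ and $h_R := g - g_R = (1 - \varphi(\cdot/R))g$, supported in $\{|x| \ge R\}$; I would show $\|h_R\|_{\dot B^s_{q,r}(\BR^d)} \to 0$ as $R \to \infty$. At high frequencies ($j \ge 0$) one checks that $\|\partial^\alpha h_R\|_{L_q} \to 0$ for every $\alpha$ (the derivatives of $g$ lie in $L_q$ and are integrated over $\{|x| \gtrsim R\}$, while the derivatives of $1 - \varphi(\cdot/R)$ carry a factor $R^{-|\alpha|}$), so $\|\dot\Delta_j h_R\|_{L_q} \le C 2^{-jM}\|\nabla^M h_R\|_{L_q}$ with $M \in \BN$, $M > s$, sums to $o(1)$. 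At low frequencies ($j < 0$) there are two kinds of $j$: for the finitely many with $2^{j+1} \ge a$ one has simply $\|\dot\Delta_j h_R\|_{L_q} \le \|h_R\|_{L_q} \le \|g\|_{L_q(|x| \ge R)} \to 0$; for those with $2^{j+1} < a$ one has $\dot\Delta_j g = 0$, hence $\dot\Delta_j h_R = -\dot\Delta_j(\varphi(\cdot/R)g)$, whose Fourier transform $\phi(2^{-j}\cdot)\big(R^d\hat\varphi(R\cdot) * \CF[g]\big)$ is supported in $\{|\xi| \le 2^{j+1}\}$ and, on that set, bounded together with all its derivatives by $C_L R^{-L}$ for every $L$ — it is $\phi(2^{-j}\cdot)$ times the convolution of the distribution $\CF[g]$ (which lives in $\{|\xi| \ge a\}$) against the Schwartz kernel $R^d\hat\varphi(R\cdot)$ concentrated near $0$, evaluated where $|\xi| < a$. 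A Bernstein/Hausdorff--Young estimate for functions with Fourier support in a ball of radius $2^{j+1}$ then yields $\|\dot\Delta_j h_R\|_{L_q} \le C_L R^{-L}2^{jd/q'}$, so that $\sum_{2^{j+1} < a}2^{js}\|\dot\Delta_j h_R\|_{L_q} \le C_L R^{-L}\sum_j 2^{j(s + d/q')} \le C_{L,a}R^{-L}$, the geometric series converging precisely because $s > -d/q'$. In the $r = \infty-$ normalization one replaces sums by suprema throughout; the same bounds give both $h_R \in \dot B^s_{q,\infty-}(\BR^d)$ and $\|h_R\|_{\dot B^s_{q,\infty-}(\BR^d)} \to 0$.

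The hard part will be precisely this low-frequency analysis. Homogeneous blocks $\dot\Delta_j$ with $j \to -\infty$ are invisible to Sobolev norms, and the naive bound $\|\dot\Delta_j h_R\|_{L_q} \le \|h_R\|_{L_q}$ is summable against $2^{js}$ only for $s > 0$; it is the vanishing of $\CF[g]$ near the origin — transmitted to the smallness of $\CF[\varphi(\cdot/R)g]$ there — that produces the decisive $2^{jd/q'}$ gain and hence the whole range $s > -d/q'$. By contrast the upper bound $s < d/q$ (resp.\ $\le d/q$ when $r = 1$) enters only structurally, to legitimize regarding $\dot B^s_{q,r}(\BR^d)$ as a space of tempered distributions and the inclusion $C^\infty_0(\BR^d) \subset \dot B^s_{q,r}(\BR^d)$; the remaining bookkeeping for $r = \infty-$ (replacing sums by suprema and checking $2^{js}\|\dot\Delta_j h_R\|_{L_q} \to 0$ as $|j| \to \infty$) adds nothing essentially new.
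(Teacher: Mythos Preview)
Your argument is correct and takes a genuinely different route from the paper's. Both proofs share the same two-stage architecture---reduce to functions whose Fourier transform avoids the origin, then truncate in physical space---but the implementations diverge. For the first stage the paper invokes the density of $\CS_0(\BR^d)$ (Schwartz functions with $0\notin\supp\CF[f]$) in $\dot B^s_{q,r}(\BR^d)$ from \cite[Prop.~2.27]{BCD}, whereas you work with the partial Littlewood--Paley sums $f_N=\sum_{|j|\le N}\dot\Delta_j f$, which is more elementary and handles $r=\infty-$ directly from the definition. For the second stage the paper uses a slick multiplicative trick: writing $(1-\varphi_R)f=(1-\varphi_R)w_N^{-1}\cdot w_Nf$ with $w_N(x)=(1+|x|^2)^N$, it observes that $w_Nf\in\CS_0(\BR^d)$ and then applies the Abidi--Paicu/Haspot product estimate $\|uv\|_{\dot B^s_{q,r}}\le C\|v\|_{\dot B^{d/q}_{q,\infty}\cap L_\infty}\|u\|_{\dot B^s_{q,r}}$ (Proposition~\ref{prop:APH}), together with $\|(1-\varphi_R)w_N^{-1}\|_{\dot B^{d/q}_{q,\infty}\cap L_\infty}\le CR^{-\sigma}$. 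The range $-d/q'<s<d/q$ enters there precisely as the hypothesis of that product estimate. Your approach instead does a direct block-by-block analysis, with the separated-support convolution argument at low frequency producing the factor $2^{jd/q'}$ and hence the same constraint $s>-d/q'$ via the geometric sum $\sum 2^{j(s+d/q')}$. The paper's route is shorter once Proposition~\ref{prop:APH} is in hand; yours is self-contained and makes the role of the lower threshold $-d/q'$ more transparent. One minor point worth tightening when you write this out: the passage from the $L_\infty$ bound on $\CF[\dot\Delta_j h_R]$ to the $L_q$ bound with the factor $2^{jd/q'}$ is cleanest done by rescaling $\xi=2^j\zeta$ so that $\dot\Delta_j h_R(x)=2^{jd}\Psi_j(2^jx)$ with $\Psi_j$ having all Schwartz seminorms $\le C_LR^{-L}$ uniformly in $j$; this immediately gives $\|\dot\Delta_j h_R\|_{L_q}=2^{jd/q'}\|\Psi_j\|_{L_q}\le C_LR^{-L}2^{jd/q'}$ for every $q$, without splitting into $q\lessgtr 2$.
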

\begin{proof}
We first note that $\CS_0 (\BR^d) := \{f \in \CS (\BR^d) \mid 0 \notin \supp (\CF [f])\}$
is dense in $\dot B^s_{q,r} (\BR^d)$, see \cite[Prop. 2.27]{BCD}. 
Let $\varphi \in C^\infty_0 (\BR^d)$ be a function such that $\varphi (x) = 1$ for 
$\lvert x \rvert \le 1$ and $\varphi (x) = 0$ for $\lvert x \rvert \ge 2$. In addition,
set $\varphi_R (x) := \varphi (x \slash R)$. It suffices to show
\begin{equation}
\lim_{R\to\infty} \lVert (1 - \varphi_R) f \rVert_{\dot B^s_{q,r} (\BR^d)} = 0 \qquad
\text{for any $f \in \CS_0 (\BR^d)$},
\end{equation}
since $\varphi_R f \in C^\infty_0 (\BR^d)$. For any $N \in \BN$, let 
$w_N (x) = (1 + \lvert x \rvert^2 )^N$ be a weight function. Then we see that
$w_N f \in \CS_0 (\BR^d)$ due to $f \in \CS_0 (\BR^d)$. Indeed, $f \in \CS_0 (\BR^d)$
implies $\supp (\CF[f]) \cap \{0\} = \emptyset$, which together with
$\CF[w_N f] (\xi) = (1 - \Delta_\xi)^N \CF[f] (\xi)$ yields
$\supp (\CF[w_N f]) \cap \{0\} = \emptyset$. Writing $(1 - \varphi_R ) f = (1 - \varphi_R) w_N^{- 1} w_N f$,
we infer from Proposition \ref{prop:APH} that
\begin{equation}
\lVert (1 - \varphi_R ) f \rVert_{\dot B^s_{q,r} (\BR^d)} 
= \lVert (1 - \varphi_R) w_N^{- 1} w_N f \rVert_{\dot B^s_{q,r} (\BR^d)} 
\le C\lVert (1 - \varphi_R) w_N^{- 1} 
\rVert_{\dot B^{d\slash q}_{q,\infty} (\BR^d) \cap L_\infty (\BR^d)} 
\lVert w_N f \rVert_{\dot B^s_{q,r} (\BR^d)}
\end{equation}
Since there hold
\begin{align*}
\lvert (1 - \varphi_R (x) ) \pd^\alpha w_N^{- 1} (x) \rvert &\le \begin{cases} C|x|^{-2N} 
&\quad |x| \geq R, \\ 0&\quad |x| \leq R, \end{cases} \\
|(\pd^\beta (1 - \varphi_R (x) )) \pd^\alpha w_N^{-1}(x)| &\le \begin{cases} CR^{-1}|x|^{-2N} 
&\quad |x| \geq R, \\ 0&\quad |x| \leq R, \end{cases}
\end{align*}
for $\beta \not=0$, 
we see that
\begin{align}
\lVert (1 - \varphi_R) w_N^{- 1} \rVert_{\dot B^{d\slash q}_{q,\infty} (\BR^d) \cap L_\infty (\BR^d)} 
& \le C \lVert (1 - \varphi_R) w_N^{- 1} \rVert_{H^{d+1}_q (\BR^d)} \\
& \le C \sum_{0 \le \lvert \alpha \rvert \le d+1} 
\lVert (1 - \varphi_R) \pd^\alpha w_N^{- 1} \rVert_{L_q (\BR^d)}
+ C_{\varphi, N, d} R^{-1} \\
& \le C R^{- \sigma}
\end{align}
with some positive constant $\sigma$, where the embedding 
\begin{equation}
\dot B^{d\slash q}_{q,\infty} (\BR^d) \cap L_\infty (\BR^d)
\hookleftarrow B^{d/q}_{q, 1} (\BR^d)
\hookleftarrow H^{d+1}_q (\BR^d), 
\end{equation}
has been used. 
Hence, we deduce that
\begin{equation}
\limsup_{R \to \infty} \lVert (1 - \varphi_R ) f \rVert_{\dot B^s_{q,r} (\BR^d)} 
\le C \limsup_{R \to \infty} R^{- \sigma} \lVert w_N f \rVert_{\dot B^s_{q,r} (\BR^d)} = 0.
\end{equation}
The proof for the case $s = d \slash q$ with $r = 1$ is similar, since we have
\begin{equation}
\lVert (1 - \varphi_R ) f \rVert_{\dot B^{d \slash q}_{q,1} (\BR^d)} 
= \lVert (1 - \varphi_R ) w_N^{-1} w_N f \rVert_{\dot B^{d \slash q}_{q,1} (\BR^d)} 
\le \lVert (1 - \varphi_R ) w_N^{-1} \rVert_{\dot B^{d \slash q}_{q,1} (\BR^d)} 
\lVert w_N f \rVert_{\dot B^{d \slash q}_{q,1} (\BR^d)},
\end{equation}
see Proposition \ref{lem:pro}. The proof is complete.
\end{proof}
\begin{prop}\label{prop:Bdense} Let $1 < q < \infty$, $1 \leq r \leq \infty-$, and $-1+1/q < s < 1/q$. 
Let $f \in \wh \CB^{s+1}_{q,r}(\BR^d)$.
Then, there exists a sequence $\{f_j\}_{j=1}^\infty \subset C^\infty_0(\BR^d)$ such that 
\begin{align}
\lim_{j\to\infty}\|\nabla(f_j-f)\|_{\CB^s_{q,r}(\BR^d)} = 0.
\end{align}
\end{prop}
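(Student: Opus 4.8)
The plan is to reduce the statement, by projecting onto gradient fields, to the special case in which $f$ is a Newtonian potential with explicit spatial decay, and then to approximate such a potential by a mollification--truncation procedure. Note first that the hypothesis $-1+1/q<s<1/q$ implies $-d/q'<s<d/q$: indeed $s<1/q<d/q$, while $-d/q'<-1+1/q$ because $(d-1)(1-q)/q<0$ for $d\ge2$ and $q>1$. Hence $C^\infty_0(\BR^d)$ is dense in $\CB^s_{q,r}(\BR^d)$ by Proposition \ref{prop-density} in the inhomogeneous case and by Proposition \ref{prop-density-homogeneous} in the homogeneous case (both for $1\le r\le\infty-$), so there is a sequence $\bh_j\in C^\infty_0(\BR^d)^d$ with $\bh_j\to\nabla f$ in $\CB^s_{q,r}(\BR^d)$.

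Next I would apply the Helmholtz projection onto gradients. The matrix-valued symbol $P(\xi)=(\xi_k\xi_\ell/|\xi|^2)_{k,\ell}$ is smooth and homogeneous of degree $0$ and satisfies $P(\xi)^2=P(\xi)$; by Proposition \ref{prop-Fourier-multiplier} the associated operator $T$ is a bounded projection of $\CB^s_{q,r}(\BR^d)^d$ onto the closed subspace of curl-free fields, which on $\BR^d$ is the space of gradients. In particular $T(\nabla f)=\nabla f$, so $\nabla\psi_j:=T\bh_j\to\nabla f$ in $\CB^s_{q,r}(\BR^d)$, where $\psi_j:=\Delta^{-1}(\dv\bh_j)$ (one checks $T\bh_j=\nabla\psi_j$ directly on the Fourier side). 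Since $\dv\bh_j\in C^\infty_0(\BR^d)$ has vanishing integral, $\psi_j$ is a genuine tempered distribution, and elliptic regularity together with the classical expansion of the Newtonian potential of a compactly supported mean-zero datum give $\psi_j\in C^\infty(\BR^d)$ with $|\partial^\alpha\psi_j(x)|\le C_{j,\alpha}(1+|x|)^{1-d-|\alpha|}$ for all $\alpha$; in particular $\nabla\psi_j$ decays like $|x|^{-d}$. It thus suffices, for each fixed $j$, to approximate $\nabla\psi_j$ by gradients of $C^\infty_0(\BR^d)$-functions.

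For this I would truncate. Fix $\varphi\in C^\infty_0(\BR^d)$ with $\varphi\equiv1$ on $\{|x|\le1\}$ and $\varphi\equiv0$ on $\{|x|\ge2\}$, put $\varphi_R(x):=\varphi(x/R)$, and set $f_{j,R}:=\varphi_R\psi_j\in C^\infty_0(\BR^d)$, so that $\nabla f_{j,R}=\varphi_R\nabla\psi_j+(\nabla\varphi_R)\psi_j$. Multiplication by $\varphi_R$ is bounded on $\CB^s_{q,r}(\BR^d)$ uniformly in $R\ge1$ (in the inhomogeneous case because $\|\partial^\beta\varphi_R\|_{L_\infty}\le\|\partial^\beta\varphi\|_{L_\infty}$; in the homogeneous case because $M_{\varphi_R}$ is conjugate to $M_{\varphi}$ by dilations whose reciprocal powers of $R$ cancel), and $\varphi_R\to1$, so a three-$\varepsilon$ argument based on the density already used gives $\varphi_R\nabla\psi_j\to\nabla\psi_j$ in $\CB^s_{q,r}(\BR^d)$. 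For the remaining term, $w_R:=(\nabla\varphi_R)\psi_j$ is supported in $\{R\le|x|\le2R\}$ with $|\partial^\alpha w_R|\le C_jR^{-d-|\alpha|}$ there, hence $\|w_R\|_{H^N_q(\BR^d)}\le C_{j,N}R^{-d/q'}$ for every $N$, while $\|w_R\|_{L^1(\BR^d)}\le C_j$. Splitting the dyadic sum defining $\|w_R\|_{\CB^s_{q,r}(\BR^d)}$ at $2^k\sim R^{-1}$ and using $\|\dot\Delta_k w_R\|_{L_q}\le C\min\{2^{kd/q'}\|w_R\|_{L^1},\,2^{-kN}\|w_R\|_{\dot H^N_q}\}$ gives $\|w_R\|_{\CB^s_{q,r}(\BR^d)}\le C_jR^{-(s+d/q')}\to0$, the summability in both frequency regimes being exactly the condition $s>-d/q'$. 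Therefore $\nabla f_{j,R}\to\nabla\psi_j$ in $\CB^s_{q,r}(\BR^d)$, and a diagonal choice $f_j:=f_{j,R(j)}$ with $R(j)$ large enough finishes the proof.

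The main obstacle is the control of $(\nabla\varphi_R)\psi_j$. It forces one to use that the potential $\psi_j$ genuinely decays, which rests on the mean-zero property $\int_{\BR^d}\dv\bh_j\,dx=0$ (without it, for $d=2$, one would only get a logarithmically growing principal term for $\psi_j$); and it forces one to estimate the low-frequency block of the homogeneous Besov norm of the compactly supported bump $w_R$, where the only available bound is through $\|w_R\|_{L^1}=O(1)$, so the decaying factor $R^{-(s+d/q')}$ is produced solely by $s>-d/q'$, which in turn is guaranteed by $s>-1+1/q$. A secondary, more routine point is the $R$-uniform boundedness of multiplication by $\varphi_R$ on the homogeneous space $\dot B^s_{q,r}(\BR^d)$, for which the dilation-conjugation argument is the clean route.
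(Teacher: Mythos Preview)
Your proof is correct and takes a genuinely different route from the paper's. The paper never passes through the Helmholtz projection; instead it works directly with the given $f$ by introducing a two-sided frequency truncation
\[
v_k=\CF^{-1}\bigl[(\chi(2^{-k}\xi)-\chi(2^{k}\xi))\,\widehat f\,\bigr],
\]
writes $\partial_\ell v_k=J^+_k*\partial_\ell f-J^-_k*\partial_\ell f$, and shows separately that the high-frequency mollifier part $J^+_k*\partial_\ell f\to\partial_\ell f$ and the low-frequency part $J^-_k*\partial_\ell f\to0$ in $\CB^s_{q,r}(\BR^d)$; only after this does it truncate in space via $w_k=\chi(2^{-2k}\,\cdot\,)v_k$. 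The delicate step in the paper is the low-frequency estimate, which requires a careful bound on $\|J^-_k\|_{\CB^s_{q,r}(\BR^d)}$ by case distinction on the sign of $s$ and a further cutoff $\chi_m$ to localize $\partial_\ell f$.

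Your approach trades this Fourier-side low-frequency analysis for potential theory: by first approximating $\nabla f$ by $C^\infty_0$-fields and then Helmholtz-projecting, you replace the abstract $f$ by a concrete Newtonian potential $\psi_j$ whose decay $|\partial^\alpha\psi_j(x)|\lesssim(1+|x|)^{1-d-|\alpha|}$ (crucially using $\int\dv\bh_j=0$, which saves the $d=2$ case) makes the spatial truncation error $(\nabla\varphi_R)\psi_j$ estimable by a direct dyadic split. Both arguments ultimately pivot on the same inequality $s>-d/q'$ to make the low-frequency/far-field contribution summable, and your dilation-conjugation argument for the $R$-uniform boundedness of $M_{\varphi_R}$ on $\dot B^s_{q,r}$ is cleaner than what the paper does in the analogous place. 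The paper's route has the minor advantage of not invoking the Helmholtz projection or Newtonian-potential expansions, keeping everything inside Littlewood--Paley calculus; yours is arguably more transparent about \emph{why} the low-frequency obstruction disappears.
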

\begin{proof} Let $\chi(\xi) \in C^\infty_0(\BR^d)$ which equals to $1$ for $|\xi| \leq 1$ and 
$0$ for $|\xi| \geq 2$.  Set $\phi_k(\xi) = \chi(2^{-k}\xi) - \chi(2^{-k+1}\xi)$ 
for $k\in\BZ$.
We see that there hold
\begin{equation}\label{A4.1}\begin{aligned}	
&0 \leq \phi_k(\xi) \leq 1, \quad 
{\rm supp}\, \phi_k(\xi) \subset \{\xi \in \BR^d \mid 2^{k-1} \leq |\xi| \leq 2^{k+1}\}, \\
&\sum_{k\in \BZ}\phi_k(\xi) = 1 \quad\text{for $\xi \in \BR^d\setminus\{0\}$}.
\end{aligned}\end{equation}
Since $f \in \CS'(\BR^d)$, the Fourier transform $\wh f$ of $f$ is well-defined and we see that
\begin{equation}
\left(\sum_{h=-k+1}^k \phi_h(\xi)\right)\wh f(\xi) = (\chi(2^{-k}\xi)- \chi(2^k\xi))\wh f(\xi)
\end{equation}
Set 
\begin{equation}\label{A4.2}
v_k = 
\CF^{-1}_{\xi}\left[ \left(\sum_{h=-k+1}^k \phi_h(\xi)\right)\wh f(\xi) \right]
=\CF^{-1}_{\xi}[ (\chi(2^{-k}\xi)- \chi(2^k\xi))\wh f(\xi)].
\end{equation}
Writing $1 = -\sum_{\ell=1}^d i\xi_\ell|\xi|^{-2}i\xi_\ell$, we have
\begin{equation}
v_k = 
\sum_{\ell=1}^d\sum_{h=-k+1}^k\CF^{-1}_{\xi}[ \phi_h(\xi)(-i\xi_\ell)|\xi|^{-2}]*(\pd_\ell f).    
\end{equation}
Then, we see that there exists a constant $M>0$ independent of $k$ such that
\begin{equation}\label{B.5}
\|v_k\|_{\CB^s_{q,r}(\BR^d)} \leq 2^k M \|\nabla f\|_{\CB^s_{q,r}(\BR^d)}.
\end{equation}
In fact, 
let $g_h^\ell(x) = \CF^{-1}_\xi[\phi_h(\xi)(-i\xi_\ell)|\xi|^{-2}]$. Setting 
$\psi_{0,\ell}(\xi) = \phi_0(\xi) |\xi|^{-2}(-i\xi_\ell)$, we have
\begin{equation}
g_h^\ell(x) = 2^{(d-1)h}\CF_{\xi}^{-1} [\psi_{0,\ell}(2^h\xi)] (x).
\end{equation}
Since $\psi_{0,\ell} \in C^\infty_0(D_{1/2,2})$ with
$D_{1/2,2} = \{\xi \in \BR^d \mid 2^{-1} \leq |\xi| \leq 2\}$, we observe
\begin{equation}
\|g^h_\ell\|_{L_1(\BR^d)} = 2^{-h}\|\CF_{\xi}^{-1}[\phi_{0,\ell}]\|_{L_1(\BR^d)}.    
\end{equation}
Thus, there holds
\begin{align}
\sum_{h=-k+1}^k\|g^\ell_h*(\pd_\ell f)\|_{\CB^s_{q,r}(\BR^d)} & \leq  
\left(\sum_{h=-k+1}^k2^{-h}\|\CF_{\xi}^{-1}[\psi_{0,\ell}]\|_{L_1(\BR^d)}\right)
\|\pd_\ell f\|_{\CB^s_{q,r}(\BR^d)} \\
& \leq 2^k \|\CF_{\xi}^{-1}[\psi_{0,\ell}]\|_{L_1(\BR^d)}\|\pd_\ell f\|_{\CB^s_{q,r}(\BR^d)},
\end{align}
which implies \eqref{B.5} with $M = \sum_{\ell=1}^d \|\CF_{\xi}^{-1}[\psi_{0,\ell}]\|_{L_1(\BR^d)}$. 
Here, we have used the Young inequality in Besov spaces:
\begin{equation}
\label{Young-Besov}
\lVert F*G \rVert_{\CB^s_{q,r} (\BR^d)} \le \lVert F \rVert_{L_1 (\BR^d)} \lVert G \rVert
_{\CB^s_{q,r} (\BR^d)}
\end{equation}
for functions $F \in \CB^s_{q,r} (\BR^d)$ and $G \in L_1 (\BR^d)$, 
which may be proved by the definition of Besov spaces via the Littlewood--Paley dyadic decomposition, 
the H\"older inequality, and the standard Young inequality for convolutions.
\par
Next, we shall show that 
\begin{equation}\label{A4.4} \lim_{k\to\infty} \|\pd_\ell v_k - \pd_\ell f\|_{\CB^s_{q,r}(\BR^d)} = 0.
\end{equation}
To this end, we set $\CF^{-1}_{\xi}[\chi(2^{\mp k}\xi)](x) = J^\pm_k(x)$.  From \eqref{A4.2} it follows that 
$\pd_\ell v_k = J^+_k*(\pd_\ell f) - J^-_k*(\pd_\ell f)$.
To prove \eqref{A4.4}, it suffices to show that 
\begin{align}
\lim_{k\to\infty}\|J^+_k*(\pd_\ell f) - \pd_\ell f\|_{\CB^s_{q,r}(\BR^d)} & = 0, \label{A4.5} \\
\lim_{k\to\infty}\|J^-_k*(\pd_\ell f)\|_{\CB^s_{q,r}(\BR^d)} & = 0. \label{A4.6}
\end{align}
First, we shall show \eqref{A4.5}.  
Recall that $C^\infty_0(\BR^d)$ is dense in $\CB^s_{q,r}(\BR^d)$
for $1 <q < \infty$, $1 \leq r \leq \infty-$ and $-1+1/q < s < 1/q$. Since $\pd_\ell f
\in \CB^s_{q,r}(\BR^d)$, for any
$\epsilon > 0$ there exists a function $g \in C^\infty_0(\BR^d)$ such that 
\begin{equation}\label{apro:A1}
\|\pd_\ell f- g\|_{\CB^s_{q,r}(\BR^d)}< \epsilon.
\end{equation} 
By using the triangle inequality, we may bound $\|J^+_k*(\pd_\ell f) - \pd_\ell f\|_{\CB^s_{q,r}(\BR^d)}$ by  
\begin{equation}
\|J^+_k*(\pd_\ell f) - \pd_\ell f \|_{\CB^s_{q,r}(\BR^d)} 
\leq \|J^+_k*(\pd_\ell f-g)\|_{\CB^s_{q,r}(\BR^d)} + \|J^+_k*g - g\|_{\CB^s_{q,r}(\BR^d)} 
+ \|g-\pd_\ell f\|_{\CB^s_{q,r}(\BR^d)}.
\end{equation}
Since $J^{\pm}_k(x) = \CF^{-1}[\chi(2^{\mp k}\xi)](x) = 2^{kd}\CF^{-1}[\chi](2^{\pm k}x)$, 
we have
\begin{equation}
\int_{\BR^d} J^\pm_k(x)\d x = \int_{\BR^d}\CF^{-1}[\chi](x)\d x = 
\int_{\BR^d} e^{-i\xi\cdot\ x}\CF^{-1}[\chi](x)\d x\bigg\vert_{\xi=0} = \chi(0) = 1
\end{equation}
as well as
\begin{equation}\label{A4.9}
\|J^\pm_k\|_{L_1(\BR^d)} \leq \|\CF^{-1}[\chi]\|_{L_1(\BR^d)} < \infty.
\end{equation}
Thus, it follows from \eqref{Young-Besov} that
\begin{align}
\|J^+_k*(\pd_\ell f-g)\|_{\CB^s_{q,r}(\BR^d)} 
&\leq \|J^+_k\|_{L_1(\BR^d)}\|\pd_\ell f- g\|_{\CB^s_{q,r}(\BR^d)} \\
&\leq \|\CF^{-1}[\chi]\|_{L_1(\BR^d)}
\|\pd_\ell f-g\|_{\CB^s_{q,r}(\BR^d)} \\
&\leq \|\CF^{-1}[\chi]\|_{L_1(\BR^d)} \epsilon.
\end{align}
On the other hand, there holds
\begin{equation}
J^+_k*g - g = \int_{\BR^d} \CF^{-1}[\chi](y)(g(x-2^{-k}y) - g(x))\d y
= \int_{\BR^d}\CF^{-1}[\chi](y) \bigg(\int^1_0 (\nabla g)(x- \theta 2^{-k}y)\d\theta\bigg)2^{-k}y \d y.
\end{equation}
Noting that $\|(\nabla g)(\,\cdot- \theta 2^{-k}y)\|_{\CB^s_{q,r}(\BR^d)} 
= \|\nabla g\|_{\CB^s_{q,r}(\BR^d)}$, we have
\begin{equation}
\|J^+_k*g-g\|_{\CB^s_{q,r}(\BR^d)} 
\leq 2^{-k}\int_{\BR^d} |\CF^{-1}[\chi](y) y|\d y\, \|\nabla g\|_{\CB^s_{q,r}(\BR^d)}
\to 0 \quad\text{as $k \to\infty$}.
\end{equation}
Summarizing, we observe that
\begin{equation}
\limsup_{k\to\infty} \|J^+_k*(\pd_\ell f) - \pd_\ell f\|_{\CB^s_{q,r}(\BR^d)}
\leq (\|\CF^{-1}[\chi]\|_{L_1(\BR^d)}+1)\epsilon.
\end{equation}
Since $\epsilon > 0$ is chosen arbitrarily, we have \eqref{A4.5}. \par
We next prove \eqref{A4.6}. To this end, setting $\chi_m(x) = \chi(2^{-m}x)$ for $m\in\BN$, we 
consider the decomposition 
\begin{equation}
J^-_k*\pd_\ell f = L^1_{k,m} + L^2_{k,m}
\end{equation}
with
$L^1_{k,m} = J^-_k*[(1-\chi_m)\pd_\ell f]$ and $L^2_{k,m} = J^-_k*[\chi_m \pd_\ell f]$.
Since $\pd_\ell f\in \CB^s_{q,r}(\BR^d)$, we see that 
\begin{equation}\label{A4.7} \lim_{m\to\infty}\|(1-\chi_m)\pd_\ell f\|_{\CB^s_{q,r}(\BR^d)} = 0. 
\end{equation}
In fact, for any $\epsilon > 0$ there exists a $g \in C^\infty_0(\BR^d)$ such that
\begin{equation}\label{A4.8}
\|\pd_\ell f- g\|_{\CB^s_{q,r}(\BR^d)} < \epsilon.
\end{equation}
By the triangle inequality and \eqref{cor:prod.1}, we have
\begin{align*}
&\|(1-\chi_m)\pd_\ell f\|_{\CB^s_{q,r}(\BR^d)} \\
&\quad \leq \|(1-\chi_m)(\pd_\ell f-g)\|_{\CB^s_{q,r}(\BR^d)} + \|(1-\chi_m)g\|_{\CB^s_{q,r}(\BR^d)} \\
&\quad \leq \|\chi_m(\pd_\ell f-g)\|_{\CB^s_{q,r}(\BR^d)} 
+ \|\pd_\ell f- g\|_{\CB^s_{q,r}(\BR^d)} + \|(1-\chi_m)g\|_{\CB^s_{q,r}(\BR^d)}\\
&\quad \leq C\Big(\|\chi_m\|_{\CB^{d/q}_{q,\infty}(\BR^d) \cap L_\infty(\BR^d)}
+ 1\Big)
\epsilon + \|(1-\chi_m)g\|_{\CB^s_{q,r}(\BR^d)} \\
&\quad \leq C\epsilon + \|(1-\chi_m)g\|_{\CB^s_{q,r}(\BR^d)}.
\end{align*}
Here $C$ is a constant independent of $m$.  Since the support of $g$ is compact, 
for a large $m \geq 1$, we see that $(1-\chi_m)g=0$, and hence it follows that
$$\limsup_{m\to \infty}\|(1-\chi_m)\pd_\ell f\|_{\CB^s_{q,r}(\BR^d)} \leq C\epsilon,$$
which together with the arbitrariness of the choice of $\epsilon>0$ yields \eqref{A4.7}. 
Thus, combining \eqref{A4.7} and \eqref{A4.9}, we see that for any $\epsilon > 0$
there exists a large $m_0 \in \BN$ such that for any $m \geq m_0$ there holds
\begin{equation}\label{A4.10}
\|L^1_{k,m}\|_{\CB^s_{q,r}(\BR^d)} \leq \epsilon. 
\end{equation}
\par
Next, we shall show that
\begin{equation}\label{A4.18}
\limsup_{k\to\infty} \|L^2_{k,m}\|_{\CB^s_{q,r}(\BR^d)} = 0
\quad \text{for any $m \in \BN$}.
\end{equation}
In fact, since $C^\infty_0(\BR^d)$ is dense in $\CB^s_{q,r}(\BR^d)$, for any $\epsilon > 0$, 
let  $g \in C^\infty_0(\BR^d)$ such that \eqref{A4.8} holds. 
By the triangle inequality, we obtain
\begin{equation}\label{A4.15}
\|J^-_k*[\chi_m\pd_\ell f]\|_{\CB^s_{q,r}(\BR^d)} \leq \|J^-_k*[\chi_m(\pd_\ell f -g)]\|_{\CB^s_{q,r}(\BR^d)}
+ \|J^-_k*[\chi_m g]\|_{\CB^s_{q,r}(\BR^d)}
\end{equation}
Using \eqref{A4.9} and \eqref{Young-Besov}, we have
\begin{equation}\label{A4.16}\begin{aligned}
&\|J^-_k*[\chi_m(\pd_\ell f- g)]\|_{\CB^s_{q,r}(\BR^d)} \\
&\quad \leq C\|\CF[\chi]\|_{L_1(\BR^d)}\|\chi_m\|_{\CB^{d/q}_{q,\infty}(\BR^d) \cap L_\infty(\BR^d)}
\|\pd_\ell f -g\|_{\CB^s_{q,r}(\BR^d)}\\
&\quad\leq C\epsilon
\end{aligned}\end{equation}
with some constant $C>0$ independent of $m$. On the other hand, 
it follows from \eqref{Young-Besov} that
\begin{equation}\label{A4.17}
\|J^-_k*[\chi_mg]\|_{\CB^s_{q,r}(\BR^d)} \leq \|J^-_k\|_{\CB^s_{q,r}(\BR^d)} \|\chi_m g\|_{L_1(\BR^d)}.
\end{equation}
In the case that $0 < s < 1/q$, we observe that
\begin{align*}
\|J^-_k\|_{L_q(\BR^d)} & = \|2^{-kd}\CF[\chi](2^{-k} \cdot\,)\|_{L_q(\BR^d)}
\leq 2^{-kd(1-1/q)}\|\CF[\chi]\|_{L_q(\BR^d)}, \\
\|\nabla J^-_k\|_{L_q(\BR^d)} & = 2^{-k}\|2^{-kd}(\nabla \CF[\chi])(2^{-k} \cdot\,)\|_{L_q(\BR^d)}
\leq 2^{-kd(1-1/q)}2^{-k}\|\nabla \CF[\chi]\|_{L_q(\BR^d)},
\end{align*}
Interpolating these two inequalities and noting that $2^{-k} < 1$ for $k \in \BN$, we have
\begin{equation}\label{A4.12}
\|J^-_k\|_{\CB^s_{q,r}(\BR^d)} \leq 2^{-(1-1/q)kd}\|\CF[\chi]\|_{\CB^s_{q,r}(\BR^d)}
\quad\text{for $k \in \BN$ if $0 < s < 1/q$}.
\end{equation}
On the other hand, in the case that $-1+1/q < s < 0$,
since $\CB^s_{q,r}(\BR^d) = \CB^{-s}_{q', r'}(\BR^d)^*$ and 
$C^\infty_0(\BR^d)$ is dense in $\CB^{-s}_{q', r'}(\BR^d)$ for $1 \leq r' \leq \infty-$, 
for any $\varphi \in C^\infty_0(\BR^d)$ we observe that
\begin{align*}
|(2^{-kd}\CF[\chi](2^{-k}x), \varphi)_{\BR^d}|
&= \bigg|\int_{\BR^d} 2^{-kd}\CF[\chi](2^{-k}x)\varphi(x)\d x\bigg| \\
& = \bigg|\int_{\BR^d} \CF[\chi](y)\varphi(2^k y)\d y\bigg| \\
&\leq \|\CF[\chi\|_{\CB^s_{q,r}(\BR^d)} \|\varphi(2^k\cdot\,)\|_{\CB^{-s}_{q', r'}(\BR^d)}.
\end{align*}
Since $0 < -s < 1/q'$, we see that 
\begin{align*}
\|\varphi(2^k\cdot\,)\|_{L_{q'}(\BR^d)} &= 2^{-kd/q'}\|\varphi\|_{L_{q'}(\BR^d)}, \\
\|\nabla \varphi(2^k\cdot\,)\|_{L_{q'}(\BR^d)}& \leq 2^k 2^{-kd/q'}\|\nabla\varphi\|_{L_{q'}(\BR^d)},
\end{align*}
which implies that 
$$\|\varphi(2^k\cdot\,)\|_{\CB^{-s}_{q',r'}(\BR^d)}
\leq 2^{k|s|-kd/q'}\|\varphi\|_{\CB^{-s}_{q',r'}(\BR^d)}.
$$
Here, to show this estimate, $1\le2^k$ has been used for the case 
$\CB^{-s}_{q',r'} (\BR^d) = B^{-s}_{q',r'} (\BR^d)$.
Thus, we deduce that
\begin{equation}\label{A4.13}
\|2^{-kd}\CF[\chi](2^{-k}\cdot\,)\|_{\CB^s_{q,r}(\BR^d)} \leq 2^{-kd(\frac{1}{q'}-\frac{|s|}{d})}\|\CF[(\chi)\|_{\CB^s_{q,r}(\BR^d)}
\end{equation}
for $-1/q' < s < 0$. Notice that there holds 
\begin{equation}
\frac{1}{q'}-\frac{|s|}{d} > \frac{1}{q'} -\frac{1}{dq'} = \frac{1}{q'}\bigg(1-\frac1d\bigg) > 0,   
\end{equation}
since $-1+1/q' < |s| < 1/q'$.
The remaining case $s=0$ may be achieved by interpolating \eqref{A4.12} and \eqref{A4.13}. 
Hence, there exists a positive number $\sigma_s$ such that for all $k \in \BN$ we have
\begin{equation}\label{A4.14}
\|J^-_k\|_{\CB^s_{q,r}(\BR^d)} = \|2^{-kd}\CF[\chi](2^{-k}\cdot\,)\|_{\CB^s_{q,r}(\BR^d)} \leq 2^{-\sigma_s k}\|\CF[\chi]\|_{\CB^s_{q,r}(\BR^d)}
\end{equation}		
with $1 < q < \infty$ and $-1+1/q<s<1/q$. 
Together with \eqref{A4.15}, \eqref{A4.16}, and \eqref{A4.17}, we infer that 
$$\|J^-_k*[\chi_m \pd_\ell f]\|_{\CB^s_{q,r}(\BR^d)}  \leq C\epsilon + 
2^{-\sigma_s k}\|\CF[\chi]\|_{\CB^s_{q,r}(\BR^d)}\|g\|_{L_1(\BR^d)}.
$$
Letting $k\to\infty$, we have
$$\limsup_{k\to\infty} 	\|J^-_k*[\chi_m \pd_\ell f]\|_{\CB^s_{q,r}(\BR^d)} \leq C\epsilon.
$$
Since the choice of $\epsilon > 0$ is arbitrary, we arrive at \eqref{A4.18}.
Recalling that  $J^-_k*\pd_\ell f= L^1_{k,m}+ L^2_{k,m}$ and using \eqref{A4.10} and \eqref{A4.18}, 
we see that for any $\epsilon > 0$, we have
$$\limsup_{k\to\infty} \|J^-_k*\pd_\ell f\|_{\CB^s_{q,r}(\BR^d)} \leq \epsilon.
$$
Together with the arbitrariness of the choice of $\epsilon > 0$, we have \eqref{A4.6}.
Finally, \eqref{A4.5} and \eqref{A4.6} gives \eqref{A4.4}.\par
Now, we shall complete the proof of Proposition \ref{prop:Bdense}.  Let $w_k(x) = \chi(2^{-2k}x)v_k(x)$ that is 
an element of $C^\infty_0(\BR^d)$. We shall show that 
\begin{equation}\label{A4.19}
\lim_{k\to \infty} \|\nabla w_k-\nabla f\|_{\CB^s_{q,r}(\BR^d)} = 0.
\end{equation}
To this end, we write
$$\nabla w_k - \nabla f = 2^{-2k}(\nabla\chi)(2^{-2k}x)v_k(x) + \chi(2^{-2k}x)\nabla(v_k-f)
-(1-\chi(2^{-2k}))\nabla f.$$
By \eqref{B.5}, we have
\begin{equation}
\label{B21}\begin{aligned}
&\|2^{-2k}(\nabla\chi)(2^{-2k}\cdot\,)v_k\|_{\CB^s_{q,r}(\BR^d)}\\
& \quad \leq C2^{-2k}\|(\nabla\chi)(2^{-2k}\cdot\,)
\|_{\CB^{d/q}_{q,\infty}(\BR^d)\cap L_\infty(\BR^d)} 
\|v_k\|_{\CB^s_{q,r}(\BR^d)} \\
&\quad  \leq CM2^{-k}\|\nabla f\|_{\CB^s_{q,r}(\BR^d)}
\end{aligned}
\end{equation}with some constant $C > 0$ independent of $k$.
In addition, we have
\begin{equation}
\label{B22}
\begin{aligned}
&\|\chi(2^{-2k}\cdot\,)\nabla(v_k-f)\|_{\CB^s_{q,r}(\BR^d)} \\
&\quad  \leq C\|\chi(2^{-2k}\cdot\,)\|_{\CB^{d/q}_{q,\infty}(\BR^d) \cap L_\infty(\BR^d)}
\|\nabla(v_k-f)\|_{\CB^s_{q,r}(\BR^d)} \\
&\quad  \leq C\|\nabla(v_k-f)\|_{\CB^s_{q,r}(\BR^d)}
\end{aligned}
\end{equation}
with come constant $C>0$ independent of $k$. 
Combining estimates \eqref{B21} and \eqref{B22} with \eqref{A4.4} and \eqref{A4.7}, 
we have \eqref{A4.19}. The proof of Proposition \ref{prop:Bdense} is complete.			
\end{proof}
In our approach, it is crucial to use the operator $\Lambda^{1/2}_\gamma f$, 
which is introduced in \eqref{half-derivative.1}, to control
the boundary terms. To this end, we 
shall prove the following propositions.
\begin{prop}\label{half-derivative1} Let $X$ be a Banach space and $\gamma \ge 0$. Define
\begin{equation}
\Lambda_\gamma^{1/2}f := \CF^{-1}[(\gamma + i\tau)^{1/2}\CF[f]](t)
= \frac{1}{2\pi}\int_{- \infty}^\infty e^{it\tau}(\gamma + i\tau)^{1/2}\CF[f](\tau)\d\tau.
\end{equation}
If $\gamma > 0$ there holds
\begin{equation}
\|\Lambda_\gamma^{1/2}f\|_{L_1(\BR, X)} \le C_\gamma \|f\|_{W^{1/2}_1(\BR, X)},
\end{equation}
whereas if $\gamma = 0$ there holds
\begin{equation}
\|\Lambda_0^{1/2}f\|_{L_1(\BR, X)} \le C \|f\|_{\dot W^{1/2}_1(\BR, X)}.
\end{equation}
\end{prop}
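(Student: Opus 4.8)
The plan is to reduce the bound to an explicit (Balakrishnan-type) integral representation of the half-derivative and then to recognise the resulting expression as a multiple of the Sobolev-Slobodeckij seminorm. The starting point is the elementary identity $z^{1/2}=\frac{1}{2\sqrt\pi}\int_0^\infty(1-e^{-sz})\,s^{-3/2}\,\d s$, valid for all $z\in\BC$ with $\RE z\ge0$, which follows by integrating by parts and using $\int_0^\infty s^{-1/2}e^{-sz}\,\d s=\Gamma(1/2)\,z^{-1/2}=\sqrt\pi\,z^{-1/2}$. Applying this with $z=\gamma+i\tau$, multiplying by $\CF[f](\tau)$, and using $\CF^{-1}[e^{-is\tau}\CF[f]](t)=f(t-s)$, I would obtain the representation
\begin{equation}
\label{rep-half-Lambda}
\Lambda_\gamma^{1/2}f(t)=\frac{1}{2\sqrt\pi}\int_0^\infty\bigl(f(t)-e^{-s\gamma}f(t-s)\bigr)\,s^{-3/2}\,\d s .
\end{equation}
I would first prove \eqref{rep-half-Lambda} for $f$ in a dense subclass, e.g.\ $f\in C^\infty_0(\BR,X)$, where $\CF[f]$ is rapidly decreasing so that Fubini's theorem justifies interchanging $\CF^{-1}$ with $\int_0^\infty\d s$ (the $s$-integral converges absolutely: near $s=0$ the bracket is $O(s)$, hence the integrand is $O(s^{-1/2})$, and near $s=\infty$ it is $O(s^{-3/2})$).

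Given \eqref{rep-half-Lambda}, I would split $f(t)-e^{-s\gamma}f(t-s)=\bigl(f(t)-f(t-s)\bigr)+\bigl(1-e^{-s\gamma}\bigr)f(t-s)$, take the $X$-norm inside, integrate in $t$, and apply Tonelli's theorem to get
\begin{equation}
\|\Lambda_\gamma^{1/2}f\|_{L_1(\BR,X)}\le\frac{1}{2\sqrt\pi}\int_0^\infty s^{-3/2}\|f-f(\,\cdot\,-s)\|_{L_1(\BR,X)}\,\d s+\frac{\|f\|_{L_1(\BR,X)}}{2\sqrt\pi}\int_0^\infty(1-e^{-s\gamma})\,s^{-3/2}\,\d s .
\end{equation}
By translation invariance of $\|\cdot\|_{L_1(\BR,X)}$ and the evenness of the weight, the first term is a fixed multiple of $\int_{\BR}|h|^{-3/2}\|f(\,\cdot\,+h)-f\|_{L_1(\BR,X)}\,\d h$, which by the remark in Section~\ref{sec-2.2} is the seminorm of $W^{1/2}_1(\BR,X)$ and of $\dot W^{1/2}_1(\BR,X)$. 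For the second term the same identity with $z=\gamma$ gives $\int_0^\infty(1-e^{-s\gamma})s^{-3/2}\,\d s=2\sqrt{\pi\gamma}$, so this term equals $\sqrt\gamma\,\|f\|_{L_1(\BR,X)}$. Hence for $\gamma>0$ one arrives at $\|\Lambda_\gamma^{1/2}f\|_{L_1(\BR,X)}\le C_\gamma\bigl(\|f\|_{L_1(\BR,X)}+[f]_{W^{1/2}_1(\BR,X)}\bigr)=C_\gamma\|f\|_{W^{1/2}_1(\BR,X)}$, while for $\gamma=0$ the second term vanishes identically, leaving $\|\Lambda_0^{1/2}f\|_{L_1(\BR,X)}\le C\,[f]_{W^{1/2}_1(\BR,X)}=C\|f\|_{\dot W^{1/2}_1(\BR,X)}$.

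Finally, since $C^\infty_0(\BR,X)$ is dense in both $W^{1/2}_1(\BR,X)$ and $\dot W^{1/2}_1(\BR,X)$ and $\Lambda_\gamma^{1/2}f$ is well defined for any such $f$ as a tempered distribution (the multiplier $(\gamma+i\tau)^{1/2}$ grows at most like $|\tau|^{1/2}$, and for $\gamma=0$ its singularity $|\tau|^{1/2}$ at the origin is locally integrable, the homogeneous case being understood modulo constants, consistently with \eqref{rep-half-Lambda}), both estimates extend to all admissible $f$ by density. The step I expect to require the most care is the rigorous justification of \eqref{rep-half-Lambda} — the interchange of $\CF^{-1}$ with the $s$-integral and, in the case $\gamma=0$, the treatment of the borderline-singular symbol $(i\tau)^{1/2}$ within the $\CS'(\BR,X)/\CP(\BR,X)$ framework; once \eqref{rep-half-Lambda} is in hand, the remaining estimates are elementary.
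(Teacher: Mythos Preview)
Your proof is correct and takes a genuinely different route from the paper's. The paper proceeds via Littlewood--Paley theory: it writes
\[
\Lambda_\gamma^{1/2}f=\sum_{j\ge 0}\Psi_j*\Delta_j f,
\qquad
\Psi_j=\CF^{-1}\!\bigl[(\gamma+i\tau)^{1/2}\chi_j\bigr],
\]
shows $\|\Psi_j\|_{L_1(\BR)}\le C_\gamma 2^{j/2}$ by a dilation argument, and then reads off the Besov norm $\sum_j 2^{j/2}\|\Delta_j f\|_{L_1(\BR,X)}$, invoking the identification $W^{1/2}_1=B^{1/2}_{1,1}$ (and the homogeneous analogue for $\gamma=0$). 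Your argument bypasses the dyadic machinery entirely: the Balakrishnan formula $z^{1/2}=\tfrac{1}{2\sqrt\pi}\int_0^\infty(1-e^{-sz})\,s^{-3/2}\d s$ yields the representation \eqref{rep-half-Lambda}, and the $L_1$ bound then falls out directly as the Gagliardo seminorm plus (for $\gamma>0$) an explicit $\sqrt\gamma\,\|f\|_{L_1}$ term. Your approach is more elementary --- it does not need the $W^{1/2}_1=B^{1/2}_{1,1}$ equivalence and produces explicit constants --- while the paper's approach fits naturally into its ambient Besov-space framework and generalises more readily to other multiplier symbols. Your Fubini justification is sound: for $f\in C^\infty_0(\BR,X)$ one has $\int_0^\infty s^{-3/2}\lvert1-e^{-s(\gamma+i\tau)}\rvert\,\d s\le C\lvert\gamma+i\tau\rvert^{1/2}$, which against the Schwartz function $\CF[f]$ gives absolute convergence of the double integral.
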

\begin{proof} We first deal with the case $\gamma > 0$. 
We use the dyadic decomposition given in \eqref{dyadic:1} with $d=1$. Recall that
$\phi(\tau) \in C^\infty(\BR)$ such that $\supp(\phi) \subset \{\tau \in \BR \mid
1/2 \le |\tau| \le 2\}$ such that $\sum_{j \in \BZ} \phi(2^{-j}\tau) = 1$ for 
$\tau \in \BR\setminus\{0\}$, and set $\phi_0(\tau) = 1- \sum_{j = 1}^\infty \phi(2^{-j}\tau)$.
Obviously, $\phi_0 \in C^\infty_0(\BR)$ which equals $1$ for $|t| \le 1/4$ and $0$ for 
$|\tau| \geq 8$. Let $\chi(\tau)$ and $\chi_0$ be $C^\infty_0(\BR)$ functions such that
$\chi(\tau) = 1$ for $1/2 \le |\tau| \le 2$ and $\chi(\tau) = 0$ for 
$|\tau| \le 1/8$ or $|\tau| \geq 4$, and $\chi_0(\tau) = 1$ for $|\tau| \le 8$ and 
$\chi_0(\tau) = 0$ for $|\tau| \geq 16$.  Notice that there hold
$\phi(2^{-j}\tau)\chi(2^{-j}\tau) = \phi(2^{-j}\tau)$ and $\phi_0(\tau)\chi_0(\tau) = \phi_0(\tau)$.
Let $\phi_j(\tau) = \phi(2^{-j}\tau)$ and $\chi_j (\tau) = \phi(2^{-j}\tau)$ for $j \in \BN$. 
Recall that the nonhomogeneous dyadic blocks $\{\Delta_j\}_{j \in \BZ}$ 
is defined by \eqref{dyadic:1}. We observe that
\begin{equation}
\label{id-B4}
\begin{split}
\Lambda_\gamma^{1/2}f & = \sum_{j=0}^\infty \CF^{-1}[(\gamma + i\tau)^{1/2}\phi_j(\tau)\CF[f](\tau)
\\
&= \sum_{j=0}^\infty \CF^{-1}[(\gamma + i\tau)^{1/2}\chi_j(\tau) \phi_j(\tau)\CF[f](\tau)]\\
& = \sum_{j=0}^\infty \Psi_j*\Delta_jf.
\end{split}
\end{equation}
Here, we have set
\begin{equation}
\Psi_j(t) = \frac{1}{2\pi}\int_{- \infty}^\infty e^{it\tau}(\gamma + i\tau)^{1/2}
\chi_j(2^{-j}\tau)\d\tau, \qquad j \in \BN_0.
\end{equation}
Using the change of variable: $2^{-j}\tau = \tau'$, we have
\begin{equation}
\Psi_j(t) = 2^{j/2}2^{j}\int_{- \infty}^\infty e^{i(2^jt)\tau'} (2^{-j}\gamma + i\tau')^{1/2}\phi(\tau')\d\tau'.
\end{equation}
Since $\chi(\tau') \in C^\infty_0 (\BR)$ and $\supp(\chi) \subset \{\tau' \in \BR \mid 
1/8 \le |\tau'| \le 8\}$, setting $\Xi_j(t) = 
\int_{- \infty}^\infty e^{it\tau'} (2^{-j}\gamma + i\tau')^{1/2}\phi(\tau')\d\tau'$, we have $\Psi_j(t) 
= 2^{j/2}2^j \Xi(2^jt)$ and $\|\Xi_j\|_{L_1(\BR)} \le C_\gamma$, where $C_\gamma$ is a constant 
independent of $j \in \BN$. Thus, by the change of variable: $2^j t= \tau'$, we see that 
$\|\Psi_j\|_{L_1(\BR)} \le C_\gamma 2^{j/2}$ for $j \in \BN$. Hence, it follows that 
\begin{equation}
\|\Lambda_\gamma^{1/2}f \|_{L_1(\BR, X)} 
\le C_\gamma \sum_{j = 0}^\infty 2^{j/2}\|\Delta_jf\|_{L_1(\BR, X)}
\le C_\gamma \|f\|_{W^{1/2}_1(\BR, X)}.
\end{equation}
The case $\gamma = 0$ may be proved similarly, if one replaces $\Delta_j$ by
$\dot \Delta_j$ in \eqref{id-B4}. The proof is complete.
\end{proof}
\begin{prop}\label{prop-B7}
Let $1 \le q \le \infty$, $s \in \BR$, and $0 < \theta < 1$.
Then the following two assertions are valid.
\begin{enumerate}
\item For every $f \in L_1 (\BR, B^{s + 2}_{q, 1} (\BR^d_+)) 
\cap W^1_1 (\BR, B^s_{q, 1} (\BR^d_+))$ it holds
\begin{equation}
\lVert f \rVert_{W^\theta_1 (\BR, B^{s + 2 (1 - \theta)}_{q, 1} (\BR^d_+))} 
\le C \lVert f \rVert_{L_1 (\BR, B^{s + 2}_{q, 1} (\BR^d_+))}^{1 - \theta}
\lVert f \rVert_{W^1_1 (\BR, B^s_{q, 1} (\BR^d_+))}^\theta.
\end{equation}
\item Assume additionally that $- 1 + 1 \slash q < s < 1 \slash q$. 
For every measurable function $f (x, t)$ in $\HS \times \BR$ such that 
$\pd_t f \in L_1 (\BR, \dot B^s_{q, 1} (\BR^d_+))$ and 
$\nabla^2 f \in L_1 (\BR, \dot B^s_{q, 1} (\BR^d_+))$ it holds
\begin{equation}
\lVert f \rVert_{\dot W^\theta_1 (\BR, B^{s + 2 (1 - \theta)}_{q, 1} (\BR^d_+))} 
\le C \lVert \nabla^2 f \rVert_{L_1 (\BR, \dot B^s_{q, 1} (\BR^d_+))}^{1 - \theta}
\lVert \pd_t f \rVert_{L_1 (\BR, \dot B^s_{q, 1} (\BR^d_+))}^\theta.
\end{equation}
\end{enumerate}
\end{prop}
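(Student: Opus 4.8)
The plan is to derive both parts as instances of the \emph{mixed--derivative inequality}, which I would prove by a bi-parameter Littlewood--Paley estimate in the variables $(t,x)$. Since the Besov spaces on $\BR^d_+$ are defined by restriction, I would first fix a bounded extension operator $\mathcal E$ from $\CB^\sigma_{q,1}(\BR^d_+)$ to $\CB^\sigma_{q,1}(\BR^d)$ for all $\sigma$ in the relevant range; this exists for the inhomogeneous scale for every $\sigma$, and for the homogeneous scale under the hypothesis $-1+1/q<s<1/q$, which moreover guarantees that $\dot B^\sigma_{q,1}$ with $\sigma\in\{s,s+2,s+2(1-\theta)\}$ are well defined as subspaces of $\CS'/\CP$, that $\|\nabla^2 g\|_{\dot B^s_{q,1}}\simeq\|g\|_{\dot B^{s+2}_{q,1}}$, and that $(\dot B^s_{q,1},\dot B^{s+2}_{q,1})_{1-\theta,1}=\dot B^{s+2(1-\theta)}_{q,1}$ (all of the type recorded in Section~\ref{sec-2.2}). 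Since $\mathcal E$ commutes with $\pd_t$, applying $\mathcal E$ in $x$ and restricting back reduces everything to $\BR\times\BR^d$, with the full Besov norm available in $x$. In part~(2) the left-hand member is read in the homogeneous scale, consistently with the right-hand side.

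Next I would use the identifications $W^\theta_1(\BR,X)=B^\theta_{1,1}(\BR,X)$ and $\dot W^\theta_1(\BR,X)=\dot B^\theta_{1,1}(\BR,X)$ from Section~\ref{sec-2.2}, together with the dyadic characterisation of $\CB^{s+2(1-\theta)}_{q,1}$, so that, writing $\Delta^t_j,\Delta^x_k$ for the Littlewood--Paley blocks in $t$ and $x$, the problem reduces to bounding
\[
\sum_{j,k}2^{j\theta}2^{k(s+2(1-\theta))}\|\Delta^t_j\Delta^x_k f\|_{L_1(\BR,L_q(\BR^d))}.
\]
For each block I would combine two elementary bounds applied to $g=\Delta^x_k f$: the reverse Bernstein inequality in time $\|\Delta^t_j g\|_{L_1(\BR,L_q)}\le C2^{-j}\|\pd_t g\|_{L_1(\BR,L_q)}$, uniform in $j$ (via the kernel bound $\|\CF^{-1}[(i\tau)^{-1}\phi(2^{-j}\tau)]\|_{L_1(\BR)}\simeq2^{-j}$), and the trivial bound $\|\Delta^t_j g\|_{L_1(\BR,L_q)}\le C\|g\|_{L_1(\BR,L_q)}$. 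Setting $a_k:=2^{ks}\|\pd_t\Delta^x_k f\|_{L_1(L_q)}$ and $b_k:=2^{k(s+2)}\|\Delta^x_k f\|_{L_1(L_q)}$, so that $\sum_k a_k=\|\pd_t f\|_{L_1(B^s_{q,1})}$ and $\sum_k b_k=\|f\|_{L_1(B^{s+2}_{q,1})}$ (and $\sum_k b_k\simeq\|\nabla^2 f\|_{L_1(\dot B^s_{q,1})}$ in part~(2)), the two bounds give, with $m=j-2k$, $m_+=\max(m,0)$, $m_-=\max(-m,0)$,
\[
2^{j\theta}2^{k(s+2(1-\theta))}\|\Delta^t_j\Delta^x_k f\|_{L_1(L_q)}
\le C\min\bigl(2^{-(1-\theta)m}a_k,\,2^{\theta m}b_k\bigr)
\le C\bigl(2^{-(1-\theta)m_+}a_k+2^{-\theta m_-}b_k\bigr).
\]
Summing the geometric series in $m$ and then over $k$ yields the additive estimate $\|f\|_{W^\theta_1(\BR,B^{s+2(1-\theta)}_{q,1})}\le C(\|\pd_t f\|_{L_1(\BR,B^s_{q,1})}+\|f\|_{L_1(\BR,B^{s+2}_{q,1})})$, and its homogeneous analogue with $\|\nabla^2 f\|_{L_1(\dot B^s_{q,1})}$ and the seminorm $\dot W^\theta_1$.

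Finally I would upgrade the additive bound to the multiplicative form by a dilation argument. Applying the additive estimate to $f_\lambda(x,t):=f(x,\lambda t)$ and using $\|f_\lambda\|_{L_1(X)}=\lambda^{-1}\|f\|_{L_1(X)}$, $\|\pd_t f_\lambda\|_{L_1(X)}=\|\pd_t f\|_{L_1(X)}$, $[f_\lambda]_{W^\theta_1(X)}=\lambda^{\theta-1}[f]_{W^\theta_1(X)}$, $\|\nabla^2 f_\lambda\|_{L_1(X)}=\lambda^{-1}\|\nabla^2 f\|_{L_1(X)}$, then optimising over $\lambda>0$, gives $[f]_{W^\theta_1(B^{s+2(1-\theta)}_{q,1})}\le C\|f\|_{L_1(B^{s+2}_{q,1})}^{1-\theta}\|\pd_t f\|_{L_1(B^s_{q,1})}^{\theta}$. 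Combining this with $\|\pd_t f\|_{L_1(B^s_{q,1})}\le\|f\|_{W^1_1(B^s_{q,1})}$, with $B^{s+2}_{q,1}\hookrightarrow B^s_{q,1}$, and with the interpolation bound $\|f\|_{L_1(B^{s+2(1-\theta)}_{q,1})}\le C\|f\|_{L_1(B^s_{q,1})}^{\theta}\|f\|_{L_1(B^{s+2}_{q,1})}^{1-\theta}$ (from $(B^s_{q,1},B^{s+2}_{q,1})_{1-\theta,1}=B^{s+2(1-\theta)}_{q,1}$ and the fact that real interpolation commutes with $L_1$-Bochner spaces) yields part~(1); part~(2) is the same computation in the homogeneous scale, where the dilation scaling is already clean because $\dot W^\theta_1$ is a seminorm. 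I expect the main obstacle to be the bookkeeping of this bi-parameter step—namely the uniform reverse Bernstein inequality in $t$ and, in part~(2), the careful invocation of the homogeneous Besov machinery (subspace-of-$\CS'/\CP$ issue, the $\nabla^2$ equivalence, the homogeneous interpolation identity, and a bounded homogeneous extension) within the range $-1+1/q<s<1/q$.
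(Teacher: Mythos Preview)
Your argument is correct, but it follows a genuinely different route from the paper's.

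For part~(1) the paper works directly with the difference-quotient characterisation of $W^\theta_1$: for each \emph{spatial} Littlewood--Paley block $\Delta_k^x f$ it splits the integral over the time-shift $h$ at a level $N=N(k)$, uses the mean-value bound $\|h\|^{-1}\|\Delta_k^x(f(\cdot,t+h)-f(\cdot,t))\|_{L_q}\le\|\Delta_k^x\pd_tf\|_{L_q}$ for $|h|\le N$ and the triangle inequality for $|h|\ge N$, and optimises $N$ to obtain the \emph{per-block} multiplicative bound $\|\Delta_k^x f\|_{L_1(L_q)}^{1-\theta}\|\Delta_k^x\pd_tf\|_{L_1(L_q)}^{\theta}$; a single application of H\"older's inequality for the series in $k$ then yields the product form directly, with no dilation step. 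For part~(2) the paper does not redo the analysis but simply invokes the whole-space estimate from \cite[Prop.~8.3]{DHMTpre} together with the higher-order reflection $E$ and the identity $\pd^\alpha Ef=E_\alpha\pd^\alpha f$, so that only the boundedness of $E$ and $E_\alpha$ on $\dot B^s_{q,1}$ with $-1+1/q<s<1/q$ is needed.

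Your bi-parameter approach---Littlewood--Paley in $t$ as well, then the additive mixed-derivative bound, then time dilation---is equally valid and has the virtue of giving a uniform, self-contained proof of both parts without citing an external source for part~(2). Two small points are worth flagging. First, the reverse Bernstein bound $\|\Delta^t_j g\|_{L_1}\le C2^{-j}\|\pd_tg\|_{L_1}$ is not available for the inhomogeneous block $j=0$; this causes no harm because for $j=0$ one has $m=-2k\le0$ and the $b_k$-branch alone already gives the required contribution. Second, in part~(2) you need $\|\nabla^2\mathcal Ef\|_{L_1(\dot B^s(\BR^d))}\le C\|\nabla^2 f\|_{L_1(\dot B^s(\HS))}$; the cleanest way to secure this is the paper's device $\pd^\alpha\mathcal Ef=\mathcal E_\alpha\pd^\alpha f$, which reduces everything to boundedness on $\dot B^s_{q,1}$ with $s$ in the stated range, rather than asserting boundedness of $\mathcal E$ on $\dot B^{s+2}_{q,1}$ directly.
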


\begin{proof}
The idea of the proof here is taken from the proof of
Proposition 8.3 on \cite{DHMTpre}. \\
\textbf{(1)}
Let $\widetilde f \in L_1 (\BR, B^{s + 2}_{q, 1} (\BR^d)) 
\cap W^1_1 (\BR, B^s_{q, 1} (\BR^d))$ be any extension of 
$f \in L_1 (\BR, B^{s + 2}_{q, 1} (\BR^d_+)) 
\cap W^1_1 (\BR, B^s_{q, 1} (\BR^d_+))$. If we establish
\begin{equation}
\label{est-interpolation-whole}
\lVert \widetilde f \rVert_{W^\theta_1 (\BR, B^{s + 2 (1 - \theta)}_{q, 1} (\BR^d))} 
\le C \lVert \widetilde f \rVert_{L_1 (\BR, B^{s + 2}_{q, 1} (\BR^d))}^{1 - \theta}
\lVert \widetilde f \rVert_{W^1_1 (\BR, B^{s + 2}_{q, 1} (\BR^d))}^\theta,
\end{equation}
then taking the infimum on all extensions gives the desired result
due to our definition of (inhomogeneous) Besov spaces by restriction.
Hence, it suffices to show \eqref{est-interpolation-whole}.
In the following, we shall write $f$ instead of $\wt f$ to simplify the notation.
To show \eqref{est-interpolation-whole}, we first note that 
\begin{equation}
\lVert f \rVert_{W^\theta_1 (\BR, B^{s + 2 (1 - \theta)}_{q, 1} (\BR^d))}
= \lVert f \rVert_{L_1 (\BR, B^{s + 2 (1 - \theta)}_{q, 1} (\BR^d))}
+ \int_\BR \lvert h \rvert^{- (\theta + 1)} \lVert f (\,\cdot\, , 
\,\cdot + h) - f (\,\cdot\, , \,\cdot\,) 
\rVert_{L_1 (\BR, B^{s + 2 (1 - \theta)}_{q, 1} (\BR^d))} \d h.
\end{equation}
Notice that we have
\begin{align}
\lVert f (\,\cdot\, , \,\cdot + h) - f (\,\cdot\, , \,\cdot\,) 
\rVert_{L_1 (\BR, B^{s + 2 (1 - \theta)}_{q, 1} (\BR^d))}
& = \int_\BR \sum_{j = 0}^\infty 2^{j (s + 2 (1 - \theta))} 
\lVert \Delta_j ( f (\,\cdot\, , t + h) - f (\,\cdot\, , t) )
\rVert_{L_q (\BR^d)} \d t \\
& = \sum_{j = 0}^\infty 2^{j (s + 2 (1 - \theta))} \int_\BR 
\lVert \Delta_j ( f (\,\cdot\, , t + h) - f (\,\cdot\, , t) )
\rVert_{L_q (\BR^d)} \d t
\end{align}
due to the monotone convergence theorem.
Thus, we may write
\begin{align}
\lVert f \rVert_{W^\theta_1 (\BR, B^{s + 2 (1 - \theta)}_{q, 1} (\BR^d))}
& = \lVert f \rVert_{L_1 (\BR, B^{s + 2 (1 - \theta)}_{q, 1} (\BR^d))} \\
& + \sum_{j = 0}^\infty 2^{j (s + 2 (1 - \theta))} \int_\BR \int_\BR
\lvert h \rvert^{- (\theta + 1)} 
\lVert \Delta_j ( f (\,\cdot\, , t + h) - f (\,\cdot\, , t) )
\rVert_{L_q (\BR^d)} \d t \d h.
\end{align}
For $j \in \BN_0$, we set
\begin{align}
I_{j, N}^{(\ell)} (f) & := \int_\BR \int_{\lvert h \rvert \le N} 
\lvert h \rvert^{- (\theta + 1)} 
\lVert \Delta_j ( f (\,\cdot\, , t + h) - f (\,\cdot\, , t) )
\rVert_{L_q (\BR^d)} \d t \d h, \\
I_{j, N}^{(h)} (f) & := \int_\BR \int_{\lvert h \rvert \ge N} 
\lvert h \rvert^{- (\theta + 1)}
\lVert \Delta_j ( f (\,\cdot\, , t + h) - f (\,\cdot\, , t) )
\rVert_{L_q (\BR^d)} \d t \d h.
\end{align}
Since it holds
\begin{equation}
\lvert h \rvert^{- 1} \lVert \Delta_j ( f (\,\cdot\, , t + h) 
- f (\,\cdot\, , t) ) \rVert_{L_q (\BR^d)}
\le \lVert \Delta_j \pd_t f (\,\cdot\, , t) \rVert_{L_q (\BR^d)},
\end{equation}
we see that
\begin{align}
I^{(\ell)}_{j, N} (f) & \le \int_{\lvert h \rvert \le N}
\lvert h \rvert^{- \theta} \d h \int_\BR 
\lvert h \rvert^{- 1} \lVert \Delta_j ( f (\,\cdot\, , t + h) - 
f (\,\cdot\, , t) ) \rVert_{L_q (\BR^d)} \d t \\
& \le C N^{1 - \theta} \int_\BR \lVert \Delta_j 
\pd_t f (\,\cdot\, , t) \rVert_{L_q (\BR^d)} \d t.
\end{align}
Likewise, we also have
\begin{align}
I^{(h)}_{j, N} (f) & =\int_{|h| \geq N}|h|^{-(\theta+1)}\d h
\int_{- \infty}^\infty \|\Delta_j(f(\,\cdot\, , t+h) - f(\,\cdot\, , t))\|_{L_q(\BR^d)}\d t\\
&\le 2\int_{|h| \geq N}|h|^{-(\theta+1)}\d h
\int_{- \infty}^\infty \|\Delta_jf(\,\cdot\, , t))\|_{L_q(\BR^d)}\d t \\
& \le C N^{- \theta} \int_\BR \lVert \Delta_j 
f (\,\cdot\, , t) \rVert_{L_q (\BR^d)} \d t.
\end{align}
We now take $N > 0$ such that
\begin{equation}
N = \lVert \Delta_j f \rVert_{L_1 (\BR, L_q (\BR^d))}
\lVert \Delta_j \pd_t f \rVert_{L_1 (\BR, L_q (\BR^d))}^{- 1}
\end{equation}
that yields
\begin{equation}
I^{(\ell)}_{j, N} (f) \le C \lVert \Delta_j
f \rVert_{L_1 (\BR, L_q (\BR^d))}^{1 - \theta} 
\lVert \Delta_j \pd_t f \rVert_{L_1 (\BR, L_q (\BR^d))}^\theta
\end{equation}
and
\begin{equation}
I^{(h)}_{j, N} (f) \le C \lVert \Delta_j 
f \rVert_{L_1 (\BR, L_q (\BR^d))}^{1 - \theta} 
\lVert \Delta_j \pd_t f \rVert_{L_1 (\BR, L_q (\BR^d))}^\theta.
\end{equation}
Thus, it follows that
\begin{equation}
\begin{split}
\lVert f \rVert_{W^\theta_1 (\BR, B^{s + 2 (1 - \theta)}_{q, 1} (\BR^d))}
& \le \lVert f \rVert_{L_1 (\BR, B^{s + 2 (1 - \theta)}_{q, 1} (\BR^d))} \\
& \quad + \sum_{j = 0}^\infty 2^{j (s + 2 (1 - \theta))} \lVert \Delta_j 
f \rVert_{L_1 (\BR, L_q (\BR^d))}^{1 - \theta} 
\lVert \Delta_j \pd_t f \rVert_{L_1 (\BR, L_q (\BR^d))}^\theta.
\end{split}
\end{equation}
We infer from the H\"older inequality for series that
\begin{equation}
\begin{split}
& \sum_{j = 0}^\infty 2^{j (s + 2 (1 - \theta))} \lVert \Delta_j 
f \rVert_{L_1 (\BR, L_q (\BR^d))}^{1 - \theta} 
\lVert \Delta_j \pd_t f \rVert_{L_1 (\BR, L_q (\BR^d))}^\theta \\
& \quad \le \bigg(\sum_{j = 0}^\infty 2^{j (s + 2)} \lVert \Delta_j
f \rVert_{L_1 (\BR, L_q (\BR^d))} \bigg)^{1 - \theta} 
\bigg(\sum_{j = 0}^\infty 2^{j s} \lVert \Delta_j \pd_t f 
\rVert_{L_1 (\BR, L_q (\BR^d))} \bigg)^\theta \\
& \quad = \lVert f \rVert_{L_1 (\BR, B^{s + 2}_{q, 1} (\BR^d))}^{1 - \theta}
\lVert \pd_t f \rVert_{L_1 (\BR, B^s_{q, 1} (\BR^d))}^\theta,
\end{split}
\end{equation}
where the monotone convergence theorem is used to obtain 
the last identity. Therefore, we arrive at
\begin{align}
\lVert f \rVert_{W^\theta_1 (\BR, B^{s + 2 (1 - \theta)}_{q, 1} (\BR^d))}
& \le C \Big(\lVert f \rVert_{L_1 (\BR, B^{s + 2 (1 - \theta)}_{q, 1} (\BR^d))} 
+ \lVert f \rVert_{L_1 (\BR, B^{s + 2}_{q, 1} (\BR^d))}^{1 - \theta}
\lVert \pd_t f \rVert_{L_1 (\BR, B^s_{q, 1} (\BR^d))}^\theta \Big) \\
& \le C \lVert f \rVert_{L_1 (\BR, B^{s + 2}_{q, 1} (\BR^d))}^{1 - \theta}
\lVert f \rVert_{W^1_1 (\BR, B^s_{q, 1} (\BR^d))}^\theta,
\end{align}
where the interpolation inequality
\begin{equation}
\lVert f \rVert_{L_1 (\BR, B^{s + 2 (1 - \theta)}_{q, 1} (\BR^d))}
\le \lVert f \rVert_{L_1 (\BR, B^{s + 2}_{q, 1} (\BR^d))}^{1 - \theta}
\lVert f \rVert_{L_1 (\BR, B^s_{q, 1} (\BR^d))}^\theta 
\end{equation}
is applied (cf. \cite[Thm.~2.2.10]{HNVW} and \cite[Thm. 1.18.4.]{Tbook78}). 
\vskip.4pc
\noindent \textbf{(2)} For $\alpha \in \BN_0^d$ satisfying $\lvert \alpha \rvert \le2$ 
let $E$ and $E_\alpha$ be extension operators such that
\begin{align}
E f (x) & = \begin{cases}
f (x), & \quad x \in \HS, \\
\displaystyle \sum_{j = 1}^3 a_j f (x', - j x_d), & \quad x \in \BR^d \setminus \HS, 
\end{cases} \\
E_\alpha f (x) & = \begin{cases}
f (x), & \quad x \in \HS, \\
\displaystyle \sum_{j = 1}^3 (- j)^{\alpha_d} a_j f (x', - j x_d), 
& \quad x \in \BR^d \setminus \HS, 
\end{cases} 
\end{align}
where the coefficients $a_1, a_2, a_3$ are the unique solutions to 
$\sum_{j = 1}^3 (- j)^k a_j = 1$, $k = 0, 1, 2$. It is well-known that
there holds $\pd^\alpha E f (x) = E_\alpha \pd^\alpha f (x)$, see, e.g.,
\cite[Thm. 5.19]{AFbook}. 
We infer from \cite[Prop. 8.3]{DHMTpre} and \cite[Prop.~2.3]{AP07} that
\begin{align}
\lVert f \rVert_{\dot W^\theta_1 (\BR, B^{s + 2 (1 - \theta)}_{q, 1} (\BR^d_+))} 
& \le \lVert E f \rVert_{\dot W^\theta_1 (\BR, B^{s + 2 (1 - \theta)}_{q, 1} (\BR^d))} \\
& \le C \lVert \nabla E f \rVert_{L_1 (\BR, \dot B^{s + 1}_{q, 1} (\BR^d))}^{1 - \theta}
\lVert \pd_t E f \rVert_{L_1 (\BR, \dot B^s_{q, 1} (\BR^d))}^\theta \\
& \le C \lVert \nabla^2 E f \rVert_{L_1 (\BR, \dot B^s_{q, 1} (\BR^d))}^{1 - \theta}
\lVert E \pd_t f \rVert_{L_1 (\BR, \dot B^s_{q, 1} (\BR^d))}^\theta \\
& \le C \lVert E_\alpha \nabla^2 f \rVert_{L_1 (\BR, \dot B^s_{q, 1} (\BR^d))}^{1 - \theta}
\lVert E \pd_t f \rVert_{L_1 (\BR, \dot B^s_{q, 1} (\BR^d))}^\theta.
\end{align}
Since it follows from \cite[Cor. 2.2.1]{DM15}
that $E \colon \dot B^s_{q, r} (\HS) \to \dot B^s_{q, r} (\BR^d)$ and 
$E_\alpha \colon \dot B^s_{q, r} (\HS) \to \dot B^s_{q, r} (\BR^d)$ are bounded due
to the restriction $- 1 + 1 \slash q < s < 1 \slash q$,
we arrive at the desired estimate. The proof is complete.
\end{proof}

\section*{Competing Interests}
On behalf of all authors, the corresponding author declares that there is no conflict of interest.


\begin{bibdiv}
\begin{biblist}
\bib{AP07}{article}{
	author={Abidi, Hammadi},
	author={Paicu, Marius},
	title={Existence globale pour un fluide inhomog\`ene},
	journal={Ann. Inst. Fourier (Grenoble)},
	volume={57},
	date={2007},
	number={3},
	pages={883--917},
}

\bib{AFbook}{book}{
	author={Adams, Robert A.},
	author={Fournier, John J. F.},
	title={Sobolev spaces},
	volume={140},
	edition={2},
	publisher={Elsevier/Academic Press, Amsterdam},
	date={2003},
}

\bib{A97}{article}{
	author={Amann, Herbert},
	title={Operator-valued Fourier multipliers, vector-valued Besov spaces,
		and applications},
	journal={Math. Nachr.},
	volume={186},
	date={1997},
	pages={5--56},
}

\bib{BCD}{book}{
	author={Bahouri, Hajer},
	author={Chemin, Jean-Yves},
	author={Danchin, Rapha\"{e}l},
	title={Fourier analysis and nonlinear partial differential equations},
	series={Grundlehren der mathematischen Wissenschaften 
		},
	volume={343},
	publisher={Springer, Heidelberg},
	date={2011},
}

\bib{B80}{article}{
	author={Beale, J. Thomas},
	title={The initial value problem for the Navier-Stokes equations with a
		free surface},
	journal={Comm. Pure Appl. Math.},
	volume={34},
	date={1981},
	number={3},
	pages={359--392},
}

\bib{BLbook}{book}{
	author={Bergh, J\"{o}ran},
	author={L\"{o}fstr\"{o}m, J\"{o}rgen},
	title={Interpolation spaces. An introduction},
	publisher={Springer-Verlag, Berlin-New York},
	date={1976},
}

\bib{B88}{article}{
	author={Bourdaud, G.},
	title={R\'ealisations des espaces de Besov, homog\`{e}nes},
	journal={Ark. Mat.},
	volume={26},
	date={1988},
	pages={41--54},
}

\bib{DHMTpre}{article}{
	author={Danchin, Rapha\"{e}l},
	author={Hieber, Matthias},
	author={Mucha, Piotr Bogus\l aw},
	author={Tolksdorf, Patrick},
	title={Free boundary problems via Da Prato-Grisvard theory},
	journal={Memoirs of the American Mathematical Society},
	note={to appear, Available at \url{https://arxiv.org/abs/2011.07918}},
}

\bib{DM15}{article}{
		author={Danchin, Rapha\"{e}l},
		author={Mucha, Piotr Bogus\l aw},
		title={Critical functional framework and maximal regularity in action on
				systems of incompressible flows},
		journal={M\'{e}m. Soc. Math. Fr. (N.S.)},
		number={143},
		date={2015},
}
	
\bib{DG75}{article}{
	author={Da Prato, G.},
	author={Grisvard, P.},
	title={Sommes d'op\'{e}rateurs lin\'{e}aires et \'{e}quations diff\'{e}rentielles
		op\'{e}rationnelles},
	journal={J. Math. Pures Appl. (9)},
	volume={54},
	date={1975},
	number={3},
	pages={305--387},
}

\bib{DSBook}{book}{
	author={Denisova, I. V.},
	author={Solonnikov, V. A.},
	title={Motion of a drop in an incompressible fluid},
	series={Advances in Mathematical Fluid Mechanics},
	note={Lecture Notes in Mathematical Fluid Mechanics},
	publisher={Birkh\"{a}user/Springer, Cham},
	date={2021}, 
}


\bib{EKS21}{article}{
	author={Eiter, Thomas},
	author={Kyed, Mads},
	author={Shibata, Yoshihiro},
	title={On periodic solutions for one-phase and two-phase problems of the
		Navier-Stokes equations},
	journal={J. Evol. Equ.},
	volume={21},
	date={2021},
	number={3},
	pages={2955--3014},
}


\bib{F86}{article}{
	author={Franke, Jens},
	title={On the spaces ${\bf F}_{pq}^s$ of Triebel-Lizorkin type: pointwise
		multipliers and spaces on domains},
	journal={Math. Nachr.},
	volume={125},
	date={1986},
	pages={29--68},
}



\bib{Gpre}{article}{
	author={Gaudin, Anatole},
	title={On homogeneous Sobolev and Besov spaces on the whole and the half
		space},
	journal={Tunis. J. Math.},
	volume={6},
	date={2024},
	number={2},
	pages={343--404},
}



\bib{GS91}{article}{
	author={Grubb, Gerd},
	author={Solonnikov, Vsevolod A.},
	title={Boundary value problems for the nonstationary Navier-Stokes
		equations treated by pseudo-differential methods},
	journal={Math. Smay d.},
	volume={69},
	date={1991},
	number={2},
	pages={217--290},
}

\bib{H11}{article}{
	author={Haspot, Boris},
	title={Well-posedness in critical spaces for the system of compressible
		Navier-Stokes in larger spaces},
	journal={J. Differential Equations},
	volume={251},
	date={2011},
	number={8},
	pages={2262--2295},
}


\bib{HNVW}{book}{
	author={Hyt\"{o}nen, Tuomas},
	author={van Neerven, Jan},
	author={Veraar, Mark},
	author={Weis, Lutz},
	title={Analysis in Banach spaces. Vol. I. Martingales and
		Littlewood-Paley theory},
	volume={63},
	publisher={Springer, Cham},
	date={2016},
}


\bib{KPW13}{article}{
	author={K\"{o}hne, Matthias},
	author={Pr\"{u}ss, Jan},
	author={Wilke, Mathias},
	title={Qualitative behaviour of solutions for the two-phase Navier-Stokes
		equations with surface tension},
	journal={Math. Ann.},
	volume={356},
	date={2013},
	number={2},
	pages={737--792},
}

\bib{KS22}{article}{
	author={Kubo, Takayuki},
	author={Shibata, Yoshihiro},
	title={On the evolution of compressible and incompressible viscous fluids with a 
            sharp interpace},
	journal={Mathmatics},
	volume={6},
	date={2021},
	pages={621},
}

{\color{magenta}

\bib{Kuopre}{article}{
	author={Kuo, Jou-Chun},
	title={Maximal $L_1$-regularity for the compressible Navier-Stokes equations},
	note={preprint, Available at \url{https://arxiv.org/abs/2403.01424}},
}

}


\bib{KSpreprint}{article}{
	author={Kuo, Jou-Chun},
	author={Shibata, Yoshihiro},
	title={$L_1$ approach to the compressible viscous fluid flows in the
		half-space},
	journal={Algebra i Analiz},
	volume={36},
	date={2024},
	number={3},
	pages={103--151},
}


\bib{M73}{article}{
	author={Muramatu, Tosinobu},
	title={On Besov spaces and Sobolev spaces of generalized functions
		definded on a general region},
	journal={Publ. Res. Inst. Math. Sci.},
	volume={9},
	date={1973/74},
	pages={325--396},
}

\bib{M76}{article}{
   author={Muramatu, Tosinobu},
   title={On the dual of Besov spaces},
   journal={Publ. Res. Inst. Math. Sci.},
   volume={12},
   date={1976/77},
   number={1},
   pages={123--140},
}



{\color{magenta}



\bib{OS24}{article}{
	author={Ogawa, Takayoshi},
	author={Shimizu, Senjo},
	title={Free boundary problems of the incompressible Navier-Stokes
		equations with non-flat initial surface in the critical Besov space},
	journal={Math. Ann.},
	volume={390},
	date={2024},
	number={2},
	pages={3155--3219},
}
}

\bib{OS22}{article}{
	author={Oishi, Kenta},
	author={Shibata, Yoshihiro},
	title={On the global well-posedness and decay of a free boundary
		problem of the Navier-Stokes equation in unbounded domains},
	journal={Mathematics},
	volume={10},
	date={2022},
	number={5},
	pages={774},
}


\bib{PS10}{article}{
	author={Pr\"{u}ss, Jan},
	author={Simonett, Gieri},
	title={On the two-phase Navier-Stokes equations with surface tension},
	journal={Interfaces Free Bound.},
	volume={12},
	date={2010},
	number={3},
	pages={311--345},
}

\bib{PSbook}{book}{
	author={Pr\"{u}ss, Jan},
	author={Simonett, Gieri},
	title={Moving interfaces and quasilinear parabolic evolution equations},
	series={Monographs in Mathematics},
	volume={105},
	publisher={Birkh\"{a}user/Springer, [Cham]},
	date={2016},
}



\bib{SSpre}{article}{
	author={Saito, Hirokazu},
	author={Shibata, Yoshihiro},
	title={On the global wellposedness of free boundary problem for the
		Navier-Stokes system with surface tension},
	journal={J. Differential Equations},
	volume={384},
	date={2024},
	pages={1--92},
}

\bib{SSZ20}{article}{
	author={Saito, Hirokazu},
	author={Shibata, Yoshihiro},
	author={Zhang, Xin},
	title={Some free boundary problem for two-phase inhomogeneous
		incompressible flows},
	journal={SIAM J. Math. Anal.},
	volume={52},
	date={2020},
	number={4},
	pages={3397--3443},
}

\bib{Shi14}{article}{
   author={Shibata, Yoshihiro},
   title={On the $\scr{R}$-boundedness of solution operators for the Stokes
   equations with free boundary condition},
   journal={Differential Integral Equations},
   volume={27},
   date={2014},
   number={3-4},
   pages={313--368},
}

\bib{Shi15}{article}{
	author={Shibata, Yoshihiro},
	title={On some free boundary problem of the Navier-Stokes equations in
		the maximal $L_p$-$L_q$ regularity class},
	journal={J. Differential Equations},
	volume={258},
	date={2015},
	number={12},
	pages={4127--4155},
}


\bib{Shi20}{article}{
	author={Shibata, Yoshihiro},
	title={$\mathcal R$ boundedness, maximal regularity and free boundary
		problems for the Navier Stokes equations},
	conference={
		title={Mathematical analysis of the Navier-Stokes equations},
	},
	book={
		series={Lecture Notes in Math.},
		volume={2254},
		publisher={Springer, Cham},
	},
	date={[2020] 
	},
	pages={193--462},
}

\bib{Shi23}{article}{
	author={Shibata, Yoshihiro},
	title={Spectral analysis approach to the maximal regularity for the Stokes 
	equations and free boundary problem for the Navier-Stokes equations},
	journal={RIMS K\^{o}ky\^{u}roku},
	volume={2266},
	date={2023},
	number={},
	pages={1--47},
}

\bib{SS20}{article}{
	author={Shibata, Yoshihiro},
	author={Saito, Hirokazu},
	title={Global well-posedness for incompressible-incompressible two-phase
		problem},
	book={
		series={Adv. Math. Fluid Mech.},
		publisher={Birkh\"{a}user/Springer, Cham},
	},
	date={2020
	},
	pages={157--347},
}

\bib{SS12}{article}{
	author={Shibata, Yoshihiro},
	author={Shimizu, Senjo},
	title={On the maximal $L_p$-$L_q$ regularity of the Stokes problem with
		first order boundary condition; model problems},
	journal={J. Math. Soc. Japan},
	volume={64},
	date={2012},
	number={2},
	pages={561--626},
}


\bib{ShiZhang23}{article}{
   author={Shibata, Yoshihiro},
   author={Zhang, Xin},
   title={Global wellposedness of the 3D compressible Navier-Stokes
   equations with free surface in the maximal regularity class},
   journal={Nonlinearity},
   volume={36},
   date={2023},
   number={7},
   pages={3710--3733},
}

\bib{Sol77}{article}{
	author={Solonnikov, V. A.},
	title={Solvability of the problem of the motion of a viscous
		incompressible fluid that is bounded by a free surface},
	journal={Izv. Akad. Nauk SSSR Ser. Mat.},
	volume={41},
	date={1977},
	number={6},
	
}

\bib{Sol88}{article}{
	author={Solonnikov, V. A.},
	title={Unsteady motion of an isolated volume of a viscous incompressible
		fluid},
	language={Russian},
	journal={Izv. Akad. Nauk SSSR Ser. Mat.},
	volume={51},
	date={1987},
	number={5},
	pages={1065--1087, 1118},
	translation={
		journal={Math. USSR-Izv.},
		volume={31},
		date={1988},
		number={2},
		pages={381--405},
		issn={0025-5726},
	},
}

\bib{SD}{article}{
	author={Solonnikov, Vsevolod Alexeevich},
	author={Denisova, Irina Vladimirovna},
	title={Classical well-posedness of free boundary problems in viscous
		incompressible fluid mechanics},
	conference={
		title={Handbook of mathematical analysis in mechanics of viscous
			fluids},
	},
	book={
		publisher={Springer, Cham},
	},
	date={2018},
	pages={1135--1220},
}

\bib{Tbook78}{book}{
	author={Triebel, Hans},
	title={Interpolation theory, function spaces, differential operators},
	series={North-Holland Mathematical Library},
	volume={18},
	publisher={North-Holland Publishing Co., Amsterdam-New York},
	date={1978},
}

\bib{Tbook83}{book}{
	author={Triebel, Hans},
	title={Theory of function spaces},
	series={Monographs in Mathematics},
	volume={78},
	publisher={Birkh\"{a}user Verlag, Basel},
	date={1983},
	pages={284},
}

\bib{W01}{article}{
	author={Weis, Lutz},
	title={Operator-valued Fourier multiplier theorems and maximal
		$L_p$-regularity},
	journal={Math. Ann.},
	volume={319},
	date={2001},
	number={4},
	pages={735--758},
}


\bib{Yosida}{book}{
	author={Yosida, K\^{o}saku},
	title={Functional analysis},
	series={Grundlehren der Mathematischen Wissenschaften},
	volume={123},
	edition={6},
	publisher={Springer-Verlag, Berlin-New York},
	date={1980},
}	
\end{biblist}
\end{bibdiv}

\end{document}